\documentclass[12pt]{amsart}
\usepackage{amsfonts,amssymb,amsmath,euscript}
%\input ../MyTexUtils/Macros
%\input ../ListOfRef/MyMakeLit.tex
%%%%%%%%%%%%%%%%%%%%%%%%%%%%%%%%%%%%%%%%%%%%%%%%%%%%%%%%%%%%%%%%%%%%%%%%%%%%%%%%%%%%
%%%%%%%%%%%%%%%%%%%%%%%%%%%%%%%%%%%%%%%%%%%%%%%%%%%%%%%%%%%%%%%%%%%%%%%%%%%%%%%%%%%%
%%%%%%%%%%%%%%%%%%%%%%%%%%%%%%%%%%%%%%%%%%%%%%%%%%%%%%%%%%%%%%%%%%%%%%%%%%%%%%%%%%%%
% The most general commands
\newcommand*{\abs}[1]{\left\lvert#1\right\rvert}   % absolute value of #1
\newcommand*{\set}[1]{\left\{#1\right\}}           % set of ...
\newcommand*{\brs}[1]{\left(#1\right)}             % brackets
\newcommand*{\norm}[1]{\left\Vert#1\right\Vert}    % norm of #1
\newcommand*{\hilb}{\mathcal H}                     % Hilbert space

\newcommand*{\mbC}{{\mathbb C}}
\newcommand*{\mbR}{{\mathbb R}}
\newcommand*{\mbZ}{{\mathbb Z}}
\newcommand*{\mbN}{{\mathbb N}}

\newcommand*{\Tr}{\operatorname{Tr}}        % usual trace
\renewcommand*{\det}{\operatorname{det}}     % determinant

%%%%%%%%%%%%%%%%%%%%%%%%%%%%%%%%%%%%%%%%%%%%%%%%%%%%%%%%%%%%%%%%%%%%%%%%%%%%%%%%%%%%
%%%%%%%%%%%%%%%%%%%%%%%%%%%%%%%%%%%%%%%%%%%%%%%%%%%%%%%%%%%%%%%%%%%%%%%%%%%%%%%%%%%%
%%%%%%%%%%%%%%%%%%%%%%%%%%%%%%%%%%%%%%%%%%%%%%%%%%%%%%%%%%%%%%%%%%%%%%%%%%%%%%%%%%%%

% AMS commands
%\theoremstyle{plain}

    \newtheorem{thm}{Theorem}                     [section]
    \newtheorem{thm*}{Theorem}
    \newtheorem{prop}[thm]{Proposition}
    \newtheorem{lemma}[thm]{Lemma}
    \newtheorem{cor}[thm]{Corollary}

    \newtheorem{lemma*}{Lemma}    %for AMS \newtheorem*
    
    \newtheorem{assump}[thm]{Assumption}

%%%%%%%%%%%%%%%%%%%%%%%%%%%%%%%%%%%%%%%%%%%%%%%%%%%%%%%%%%%%%%%%%%%%%%%%
%\theoremstyle{definition} %AMS
    \newtheorem{defn}[thm]{Definition}                 %[section]
    \newtheorem{example}[thm]{Example}                 %[section]
                     %[section]
                     %[section]
                     %[section]
                     %[section]
                     %[section]
                     %[section]
                     %[section]
                     %[section]
                     %[section]
    \newtheorem{rems}[thm]{Remark}                     %[section]
                %[section]
    
    \newtheorem{rems*}{Remark}   %for AMS \newtheorem*

    % Russian analogues of these macros
%    \newtheorem*{rdefn*}{Определение}
%    \newtheorem*{rrems*}{Замечание}
%    \newtheorem*{rproof*}{Доказательство}
%    \newtheorem*{rnot*}{Обозначение}

%     \newtheorem{rdefn}[rthm]{Определение}
%     \newtheorem{rexample}[rthm]{Пример}
%     \newtheorem{robs}[rthm]{Наблюдение}
%     \newtheorem{rques}[rthm]{Вопрос}
%     \newtheorem{rrems}[rthm]{Замечание}
    %\newtheorem{rsolvedques}[ques]{Решенная задача}
%     \newtheorem{rzadacha}[rthm]{Задача}
%     \newtheorem{rprobl}[rthm]{Проблема}
%     \newtheorem{rhypo}[rthm]{Гипотеза}
%%%%%%%%%%%%%%%%%%%%%%%%%%%%%%%%%%%%%%%%%%%%%%%%%%%%%%%%%%%%%%%%%%%%%%%%

%\renewenvironment{proof}{{\it Proof.}}{$\Box$\par} %AMS
%\newenvironment{proof}{{\it Proof.}}{\nopagebreak$\Box$\nopagebreak\par\nopagebreak}

 % {\newpage} %{\small \today}

\newcommand{\ndef}{\newcommand*}
\def\rndef{\renewcommand}

\ndef{\myaddress}[1]{\begin{center} \it\small #1 \end{center}}

%  use these commands if you want the labels of lemmas, theorems, ... to be printed
% \let\mmylabel\label
% \rndef{\label}[1]{\mmylabel{#1} \margcom{\underline{\small \sf #1}}}

%\let\mysection\section
%\rndef{\section}[1]{\mysection{#1} {\tiny \today}}

%   \ndef{\ben}{\begin{displaymath}} \ndef{\een}{\end{displaymath}}
%   \ndef{\bean}{\begin{eqnarray*}}  \ndef{\eean}{\end{eqnarray*}}
%   \ndef{\bea}{\begin{eqnarray}}    \ndef{\eea}{\end{eqnarray}}
%   \ndef{\bml}{\begin{multline}}    \ndef{\eml}{\end{multline}}
%   \ndef{\be}{\begin{equation}}     \ndef{\ee}{\end{equation}}

\ndef{\clA}{{\mathcal A}} \ndef{\rmA}{{\mathrm A}} \ndef{\mbA}{{\mathbb A}} \ndef{\bfA}{{\mathbf A}} \ndef{\euA}{{\EuScript A}} \ndef{\frA}{{\mathfrak A}}
\ndef{\clB}{{\mathcal B}} \ndef{\rmB}{{\mathrm B}} \ndef{\mbB}{{\mathbb B}} \ndef{\bfB}{{\mathbf B}} \ndef{\euB}{{\EuScript B}} \ndef{\frB}{{\mathfrak B}}
\ndef{\clC}{{\mathcal C}} \ndef{\rmC}{{\mathrm C}}                          \ndef{\bfC}{{\mathbf C}} \ndef{\euC}{{\EuScript C}} \ndef{\frC}{{\mathfrak C}}
\ndef{\clD}{{\mathcal D}} \ndef{\rmD}{{\mathrm D}} \ndef{\mbD}{{\mathbb D}} \ndef{\bfD}{{\mathbf D}} \ndef{\euD}{{\EuScript D}} \ndef{\frD}{{\mathfrak D}}
\ndef{\clE}{{\mathcal E}} \ndef{\rmE}{{\mathrm E}} \ndef{\mbE}{{\mathbb E}} \ndef{\bfE}{{\mathbf E}} \ndef{\euE}{{\EuScript E}} \ndef{\frE}{{\mathfrak E}}
\ndef{\clF}{{\mathcal F}} \ndef{\rmF}{{\mathrm F}} \ndef{\mbF}{{\mathbb F}} \ndef{\bfF}{{\mathbf F}} \ndef{\euF}{{\EuScript F}} \ndef{\frF}{{\mathfrak F}}
\ndef{\clG}{{\mathcal G}} \ndef{\rmG}{{\mathrm G}} \ndef{\mbG}{{\mathbb G}} \ndef{\bfG}{{\mathbf G}} \ndef{\euG}{{\EuScript G}} \ndef{\frG}{{\mathfrak G}}
\ndef{\clH}{{\mathcal H}} \ndef{\rmH}{{\mathrm H}} \ndef{\mbH}{{\mathbb H}} \ndef{\bfH}{{\mathbf H}} \ndef{\euH}{{\EuScript H}} \ndef{\frH}{{\mathfrak H}}
\ndef{\clI}{{\mathcal I}} \ndef{\rmI}{{\mathrm I}} \ndef{\mbI}{{\mathbb I}} \ndef{\bfI}{{\mathbf I}} \ndef{\euI}{{\EuScript I}} \ndef{\frI}{{\mathfrak I}}
\ndef{\clJ}{{\mathcal J}} \ndef{\rmJ}{{\mathrm J}} \ndef{\mbJ}{{\mathbb J}} \ndef{\bfJ}{{\mathbf J}} \ndef{\euJ}{{\EuScript J}} \ndef{\frJ}{{\mathfrak J}}
\ndef{\clK}{{\mathcal K}} \ndef{\rmK}{{\mathrm K}} \ndef{\mbK}{{\mathbb K}} \ndef{\bfK}{{\mathbf K}} \ndef{\euK}{{\EuScript K}} \ndef{\frK}{{\mathfrak K}}
\ndef{\clL}{{\mathcal L}} \ndef{\rmL}{{\mathrm L}} \ndef{\mbL}{{\mathbb L}} \ndef{\bfL}{{\mathbf L}} \ndef{\euL}{{\EuScript L}} \ndef{\frL}{{\mathfrak L}}
\ndef{\clM}{{\mathcal M}} \ndef{\rmM}{{\mathrm M}} \ndef{\mbM}{{\mathbb M}} \ndef{\bfM}{{\mathbf M}} \ndef{\euM}{{\EuScript M}} \ndef{\frM}{{\mathfrak M}}
\ndef{\clN}{{\mathcal N}} \ndef{\rmN}{{\mathrm N}}                          \ndef{\bfN}{{\mathbf N}} \ndef{\euN}{{\EuScript N}} \ndef{\frN}{{\mathfrak N}}
\ndef{\clO}{{\mathcal O}} \ndef{\rmO}{{\mathrm O}} \ndef{\mbO}{{\mathbb O}} \ndef{\bfO}{{\mathbf O}} \ndef{\euO}{{\EuScript O}} \ndef{\frO}{{\mathfrak O}}
\ndef{\clP}{{\mathcal P}} \ndef{\rmP}{{\mathrm P}} \ndef{\mbP}{{\mathbb P}} \ndef{\bfP}{{\mathbf P}} \ndef{\euP}{{\EuScript P}} \ndef{\frP}{{\mathfrak P}}
\ndef{\clQ}{{\mathcal Q}} \ndef{\rmQ}{{\mathrm Q}}                          \ndef{\bfQ}{{\mathbf Q}} \ndef{\euQ}{{\EuScript Q}} \ndef{\frQ}{{\mathfrak Q}}
\ndef{\clR}{{\mathcal R}} \ndef{\rmR}{{\mathrm R}}                          \ndef{\bfR}{{\mathbf R}} \ndef{\euR}{{\EuScript R}} \ndef{\frR}{{\mathfrak R}}
\ndef{\clS}{{\mathcal S}} \ndef{\rmS}{{\mathrm S}} \ndef{\mbS}{{\mathbb S}} \ndef{\bfS}{{\mathbf S}} \ndef{\euS}{{\EuScript S}} \ndef{\frS}{{\mathfrak S}}
\ndef{\clT}{{\mathcal T}} \ndef{\rmT}{{\mathrm T}} \ndef{\mbT}{{\mathbb T}} \ndef{\bfT}{{\mathbf T}} \ndef{\euT}{{\EuScript T}} \ndef{\frT}{{\mathfrak T}}
\ndef{\clU}{{\mathcal U}} \ndef{\rmU}{{\mathrm U}} \ndef{\mbU}{{\mathbb U}} \ndef{\bfU}{{\mathbf U}} \ndef{\euU}{{\EuScript U}} \ndef{\frU}{{\mathfrak U}}
\ndef{\clV}{{\mathcal V}} \ndef{\rmV}{{\mathrm V}} \ndef{\mbV}{{\mathbb V}} \ndef{\bfV}{{\mathbf V}} \ndef{\euV}{{\EuScript V}} \ndef{\frV}{{\mathfrak V}}
\ndef{\clW}{{\mathcal W}} \ndef{\rmW}{{\mathrm W}} \ndef{\mbW}{{\mathbb W}} \ndef{\bfW}{{\mathbf W}} \ndef{\euW}{{\EuScript W}} \ndef{\frW}{{\mathfrak W}}
\ndef{\clX}{{\mathcal X}} \ndef{\rmX}{{\mathrm X}} \ndef{\mbX}{{\mathbb X}} \ndef{\bfX}{{\mathbf X}} \ndef{\euX}{{\EuScript X}} \ndef{\frX}{{\mathfrak X}}
\ndef{\clY}{{\mathcal Y}} \ndef{\rmY}{{\mathrm Y}} \ndef{\mbY}{{\mathbb Y}} \ndef{\bfY}{{\mathbf Y}} \ndef{\euY}{{\EuScript Y}} \ndef{\frY}{{\mathfrak Y}}
\ndef{\clZ}{{\mathcal Z}} \ndef{\rmZ}{{\mathrm Z}}                          \ndef{\bfZ}{{\mathbf Z}} \ndef{\euZ}{{\EuScript Z}} \ndef{\frZ}{{\mathfrak Z}}

\ndef{\tA}{{\widetilde A}} \ndef{\tcA}{{\widetilde\clA}} \ndef{\ttcA}{\widetilde{\tcA}} \ndef{\sfA}{{\textsf A}} \ndef{\ttA}{\widetilde{\tA}} \ndef{\dzA}{{A^\sharp}}
\ndef{\tB}{{\widetilde B}} \ndef{\tcB}{{\widetilde\clB}} \ndef{\ttcB}{\widetilde{\tcB}} \ndef{\sfB}{{\textsf B}} \ndef{\ttB}{\widetilde{\tB}} \ndef{\dzB}{{B^\sharp}}
\ndef{\tC}{{\widetilde C}} \ndef{\tcC}{{\widetilde\clC}} \ndef{\ttcC}{\widetilde{\tcC}} \ndef{\sfC}{{\textsf C}} \ndef{\ttC}{\widetilde{\tC}} \ndef{\dzC}{{C^\sharp}}
\ndef{\tD}{{\widetilde D}} \ndef{\tcD}{{\widetilde\clD}} \ndef{\ttcD}{\widetilde{\tcD}} \ndef{\sfD}{{\textsf D}} \ndef{\ttD}{\widetilde{\tD}} \ndef{\dzD}{{D^\sharp}}
\ndef{\tE}{{\widetilde E}} \ndef{\tcE}{{\widetilde\clE}} \ndef{\ttcE}{\widetilde{\tcE}} \ndef{\sfE}{{\textsf E}} \ndef{\ttE}{\widetilde{\tE}} \ndef{\dzE}{{E^\sharp}}
\ndef{\tF}{{\widetilde F}} \ndef{\tcF}{{\widetilde\clF}} \ndef{\ttcF}{\widetilde{\tcF}} \ndef{\sfF}{{\textsf F}} \ndef{\ttF}{\widetilde{\tF}} \ndef{\dzF}{{F^\sharp}}
\ndef{\tG}{{\widetilde G}} \ndef{\tcG}{{\widetilde\clG}} \ndef{\ttcG}{\widetilde{\tcG}} \ndef{\sfG}{{\textsf G}} \ndef{\ttG}{\widetilde{\tG}} \ndef{\dzG}{{G^\sharp}}
\ndef{\tH}{{\widetilde H}} \ndef{\tcH}{{\widetilde\clH}} \ndef{\ttcH}{\widetilde{\tcH}} \ndef{\sfH}{{\textsf H}} \ndef{\ttH}{\widetilde{\tH}} \ndef{\dzH}{{H^\sharp}}
\ndef{\tI}{{\widetilde I}} \ndef{\tcI}{{\widetilde\clI}} \ndef{\ttcI}{\widetilde{\tcI}} \ndef{\sfI}{{\textsf I}} \ndef{\ttI}{\widetilde{\tI}} \ndef{\dzI}{{I^\sharp}}
\ndef{\tJ}{{\widetilde J}} \ndef{\tcJ}{{\widetilde\clJ}} \ndef{\ttcJ}{\widetilde{\tcJ}} \ndef{\sfJ}{{\textsf J}} \ndef{\ttJ}{\widetilde{\tJ}} \ndef{\dzJ}{{J^\sharp}}
\ndef{\tK}{{\widetilde K}} \ndef{\tcK}{{\widetilde\clK}} \ndef{\ttcK}{\widetilde{\tcK}} \ndef{\sfK}{{\textsf K}} \ndef{\ttK}{\widetilde{\tK}} \ndef{\dzK}{{K^\sharp}}
\ndef{\tL}{{\widetilde L}} \ndef{\tcL}{{\widetilde\clL}} \ndef{\ttcL}{\widetilde{\tcL}} \ndef{\sfL}{{\textsf L}} \ndef{\ttL}{\widetilde{\tL}} \ndef{\dzL}{{L^\sharp}}
\ndef{\tM}{{\widetilde M}} \ndef{\tcM}{{\widetilde\clM}} \ndef{\ttcM}{\widetilde{\tcM}} \ndef{\sfM}{{\textsf M}} \ndef{\ttM}{\widetilde{\tM}} \ndef{\dzM}{{M^\sharp}}
\ndef{\tN}{{\widetilde N}} \ndef{\tcN}{{\widetilde\clN}} \ndef{\ttcN}{\widetilde{\tcN}} \ndef{\sfN}{{\textsf N}} \ndef{\ttN}{\widetilde{\tN}} \ndef{\dzN}{{N^\sharp}}
\ndef{\tO}{{\widetilde O}} \ndef{\tcO}{{\widetilde\clO}} \ndef{\ttcO}{\widetilde{\tcO}} \ndef{\sfO}{{\textsf O}} \ndef{\ttO}{\widetilde{\tO}} \ndef{\dzO}{{O^\sharp}}
\ndef{\tP}{{\widetilde P}} \ndef{\tcP}{{\widetilde\clP}} \ndef{\ttcP}{\widetilde{\tcP}} \ndef{\sfP}{{\textsf P}} \ndef{\ttP}{\widetilde{\tP}} \ndef{\dzP}{{P^\sharp}}
\ndef{\tQ}{{\widetilde Q}} \ndef{\tcQ}{{\widetilde\clQ}} \ndef{\ttcQ}{\widetilde{\tcQ}} \ndef{\sfQ}{{\textsf Q}} \ndef{\ttQ}{\widetilde{\tQ}} \ndef{\dzQ}{{Q^\sharp}}
\ndef{\tR}{{\widetilde R}} \ndef{\tcR}{{\widetilde\clR}} \ndef{\ttcR}{\widetilde{\tcR}} \ndef{\sfR}{{\textsf R}} \ndef{\ttR}{\widetilde{\tR}} \ndef{\dzR}{{R^\sharp}}
\ndef{\tS}{{\widetilde S}} \ndef{\tcS}{{\widetilde\clS}} \ndef{\ttcS}{\widetilde{\tcS}} \ndef{\sfS}{{\textsf S}} \ndef{\ttS}{\widetilde{\tS}} \ndef{\dzS}{{S^\sharp}}
\ndef{\tT}{{\widetilde T}} \ndef{\tcT}{{\widetilde\clT}} \ndef{\ttcT}{\widetilde{\tcT}} \ndef{\sfT}{{\textsf T}} \ndef{\ttT}{\widetilde{\tT}} \ndef{\dzT}{{T^\sharp}}
\ndef{\tU}{{\widetilde U}} \ndef{\tcU}{{\widetilde\clU}} \ndef{\ttcU}{\widetilde{\tcU}} \ndef{\sfU}{{\textsf U}} \ndef{\ttU}{\widetilde{\tU}} \ndef{\dzU}{{U^\sharp}}
\ndef{\tV}{{\widetilde V}} \ndef{\tcV}{{\widetilde\clV}} \ndef{\ttcV}{\widetilde{\tcV}} \ndef{\sfV}{{\textsf V}} \ndef{\ttV}{\widetilde{\tV}} \ndef{\dzV}{{V^\sharp}}
\ndef{\tW}{{\widetilde W}} \ndef{\tcW}{{\widetilde\clW}} \ndef{\ttcW}{\widetilde{\tcW}} \ndef{\sfW}{{\textsf W}} \ndef{\ttW}{\widetilde{\tW}} \ndef{\dzW}{{W^\sharp}}
\ndef{\tX}{{\widetilde X}} \ndef{\tcX}{{\widetilde\clX}} \ndef{\ttcX}{\widetilde{\tcX}} \ndef{\sfX}{{\textsf X}} \ndef{\ttX}{\widetilde{\tX}} \ndef{\dzX}{{X^\sharp}}
\ndef{\tY}{{\widetilde Y}} \ndef{\tcY}{{\widetilde\clY}} \ndef{\ttcY}{\widetilde{\tcY}} \ndef{\sfY}{{\textsf Y}} \ndef{\ttY}{\widetilde{\tY}} \ndef{\dzY}{{Y^\sharp}}
\ndef{\tZ}{{\widetilde Z}} \ndef{\tcZ}{{\widetilde\clZ}} \ndef{\ttcZ}{\widetilde{\tcZ}} \ndef{\sfZ}{{\textsf Z}} \ndef{\ttZ}{\widetilde{\tZ}} \ndef{\dzZ}{{Z^\sharp}}

\ndef{\bfa}{{\mathbf a}}
\ndef{\bfb}{{\mathbf b}}
\ndef{\bfc}{{\mathbf c}}
\ndef{\bfd}{{\mathbf d}}

\ndef{\euu}{{\EuScript u}}

%  \ndef{\D}{\Delta}
%  \ndef{\Gamma}{\Gamma}
%  \ndef{\Lambda}{\Lambda}
%  \ndef{\beta}{\beta}
  \ndef{\eps}{\varepsilon}
%  \ndef{\gamma}{\gamma}
%  \ndef{\lambda}{\lambda}
%  \ndef{\s}{\sigma}
%  \ndef{\D}{\Delta}
%  \ndef{\Gamma}{\Gamma}

%\ndef{\le}{\leqslant}
%\ndef{\ge}{\geqslant}

\let\geq\geqslant
\let\leq\leqslant

\ndef{\lims}[1]{\lim\limits_{#1}}
\ndef{\sums}[1]{\sum\limits_{#1}}
\ndef{\ints}[1]{\int_{#1}}
\ndef{\sups}[1]{\sup\limits_{#1}}
\ndef{\liminfty}[1]{\lims{#1\to\infty}}
\ndef{\suminf}[1]{\sums{#1=1}^\infty}

\ndef{\limo}[1]{\omega\mbox{-}\!\!\!\lims{#1\to\infty}}          % \omega limit
\ndef{\limL}[1]{\rmL\mbox{-}\!\!\!\lims{#1\to\infty}}            % ``L" limit
\ndef{\limLOne}[1]{\clL_1\mbox{-}\!\lims{#1}}
\ndef{\tildelimo}[1]{\tilde\omega\mbox{-}\!\!\!\lims{#1\to\infty}}
\ndef{\slim}{\mathrm{s}\mbox{-}\!\!\lim}          % strong limit
\ndef{\wlim}{\mathrm{w}\mbox{-}\!\lim}          % strong limit

\ndef{\Aut}{\operatorname{Aut}}      % group of automorphisms
\ndef{\Ch}{\operatorname{ch}}        % Chern character
\ndef{\End}{\operatorname{End}}      % group of endomorphisms
\ndef{\Hom}{\operatorname{Hom}}      % group of homomorphisms
\rndef{\ker}{\operatorname{ker}}      % kernel of an operator
\ndef{\coker}{\operatorname{coker}}      % kernel of an operator
\ndef{\im}{\operatorname{im}}        % image of an operator
\ndef{\Log}{\operatorname{Log}}      % logarithm
\ndef{\OP}{\operatorname{OP}}        % abstract PDO's
\ndef{\Op}{\operatorname{Op}}        % abstract PDO's
\ndef{\Symb}{\operatorname{Symb}}    % symbol
\ndef{\Wres}{\operatorname{Wres}}    % Wodzicki residue
\ndef{\cl}{\operatorname{cl}}        % Clifford
\ndef{\com}{\operatorname{com}}
\ndef{\const}{\operatorname{const}}  % constant
\ndef{\conv}{\operatorname{conv}}    % convex hull
%\ndef{\detFK}{\operatorname{det_{FK}}} % Fuglede-Kadison's determinant
\ndef{\Var}{\operatorname{Var}}
\ndef{\Cov}{\operatorname{Cov}}

\ndef{\detFK}[1]{\Delta\brs{#1}} % Fuglede-Kadison's determinant
\ndef{\detFKrel}[2]{\Delta_{#2}\brs{#1}} % Fuglede-Kadison's determinant

\ndef{\adj}{\operatorname{adj}}    % diagonal operator
\ndef{\diag}{\operatorname{diag}}    % diagonal operator
\ndef{\dist}{\operatorname{dist}}    % distance
\ndef{\dom}{\operatorname{dom}}      % domain
\ndef{\ec}{\operatorname{ec}}        % essential codimension
\ndef{\id}{\mathrm{Id}}                        % identity operator
\ndef{\ind}{\operatorname{ind}}      % index
\ndef{\mydeg}{\operatorname{deg}}    % degree of a diff. form
\ndef{\op}{\operatorname{op}}
\ndef{\rank}{\operatorname{rank}}
\ndef{\res}{\operatorname{res}}      % residue
%\ndef{\rng}{\operatorname{ran}}      % range
\ndef{\ran}{\operatorname{ran}}      % range
\ndef{\sflow}{\operatorname{sf}}     % spectral flow
\ndef{\isf}{\operatorname{isf}}      % infinitesimal spectral flow
\ndef{\sign}{\operatorname{sign}}    % signum (a la C.-Ph.)
\ndef{\sgn}{\operatorname{sgn}}      % signum (a la Connes)
\ndef{\sing}{\operatorname{sing}}    % singular
\ndef{\supp}{\operatorname{supp}}    % support
\ndef{\tr}{\operatorname{tr}}        % trace
\ndef{\var}{\operatorname{var}}      % variation of measure
\ndef{\vol}{\operatorname{vol}}      % volume or volume form
\ndef{\wn}{\operatorname{wn}}        % winding number
\ndef{\wres}{\operatorname{wres}}    % Wodzicki residue
\rndef{\Im}{\operatorname{Im}}       % imaginary part of an operator
\rndef{\Re}{\operatorname{Re}}       % real part of an operator

\ndef{\prng}[1]{\mathrm R_{#1}} % {[\rng {#1}]}          % projection onto the range of #1
\ndef{\pker}[1]{\mathrm N_{#1}} % {[\ker {#1}]}          % projection onto the kernel of #1
\ndef{\rprng}[2]{\mathrm R_{#1}^{#2}}           % projection onto the relative range of #1
\ndef{\rpker}[2]{\mathrm N_{#1}^{#2}}           % projection onto the relative kernel of #1
\ndef{\rsupp}[1]{\supp_r(#1)}
\ndef{\lsupp}[1]{\supp_l(#1)}
%%%%%%%%%%%%%%%%%%%%%%%%%%%%%%%%%%%%%%%%%%%%%%%%%%%%%%%%%%%%%%%%%%%%%%%%%%%%%%%%%%%%%%%%%%%%%%%%%%%%%%%%%%%%%
%        SSF theory notation
%%%%%%%%%%%%%%%%%%%%%%%%%%%%%%%%%%%%%%%%%%%%%%%%%%%%%%%%%%%%%%%%%%%%%%%%%%%%%%%%%%%%%%%%%%%%%%%%%%%%%%%%%%%%%
\ndef{\rslv}[1]{R_z(#1)}      % resolvent
\ndef{\HH}{H}                 % initial operator of perturbation theory
\ndef{\tHH}{\tilde \HH}       % final operator of perturbation theory
\ndef{\VV}{V}                 % perturbation operator of perturbation theory
\ndef{\Rz}{R_z}               % resolvent of the initial operator
\ndef{\tRz}{\tR_z}            % resolvent of the final operator
\ndef{\psif}[1]{#1^{[1]}} % {\psi_{#1}}  % divided difference
\ndef{\WPlus}[1]{W_{#1}(\mbR)} %\ndef{\CPlus}[1]{C^{#1+}(\mbR)}
%\ndef{\eps}{\epsilon}
\newcommand{\Cc}{C_\csupp^\infty(\mbR)}
\newcommand{\Texp}{\mathrm{T}\!\exp}
\newcommand{\Phia}{\Phi^{(a)}}
\newcommand{\Phis}{\Phi^{(s)}}
\newcommand{\xia}{\xi^{(a)}}
\newcommand{\xis}{\xi^{(s)}}
\newcommand{\mua}{\mu^{(a)}}
\newcommand{\mus}{\mu^{(s)}}
\newcommand{\phia}{\phi^{(a)}}

%%%%%%%%%%%%%%%%%%%%%%%%%%%%%%%%%%%%%%%%%%%%%%%%%%%%%%%%%%%%%%%%%%%%%%%%%%%%%%%%%%%%%%%%%%%%%%%%%%%%%%%%%%%%%
%        Diff geometry notation
%%%%%%%%%%%%%%%%%%%%%%%%%%%%%%%%%%%%%%%%%%%%%%%%%%%%%%%%%%%%%%%%%%%%%%%%%%%%%%%%%%%%%%%%%%%%%%%%%%%%%%%%%%%%%
\ndef{\bndl}{\xi}                         % vector bundle
\ndef{\bndlA}{\eta}                       % vector bundle
\ndef{\GlueMap}{\varphi}                  % glue map of a bundle
\ndef{\ChartMap}{h}                       % chart diffeomorphism map of a manifold
\ndef{\chern}{\ensuremath{\mathrm{ch}}}
%%%%%%%%%%%%%%%%%%%%%%%%%%%%%%%%%%%%%%%%%%%%%%%%%%%%%%%%%%%%%%%%%%%%%%%%%%%%%%%%%%%%%%%%%%%%%%%%%%%%%%%%%%%%%
%        Hilbert space notation
%%%%%%%%%%%%%%%%%%%%%%%%%%%%%%%%%%%%%%%%%%%%%%%%%%%%%%%%%%%%%%%%%%%%%%%%%%%%%%%%%%%%%%%%%%%%%%%%%%%%%%%%%%%%%
\ndef{\hilba}{\clH^{(a)}}                    % absolutely continuous part of Hilbert space (wrt to a s.a. operator)
\ndef{\hilbs}{\clH^{(s)}}                    % singular part of Hilbert space (wrt to a s.a. operator)
   \ndef{\hilbasargument}{(\hilb)} %{(\hilb)}
\ndef{\LpH}[1]{\clL_{#1}\hilbasargument}       % the set of ...
\ndef{\saLpH}[1]{\clL_{sa}^{#1}\hilbasargument}       % the set of ...
%\ndef{\TrCl}{\LpH{1}}                          % the set of TRACE-CLASS operators
\ndef{\clBH}{\clB\hilbasargument}              % the set of BOUNDED linear operators on Hilbert space
\ndef{\ubBH}{\clB_1\hilbasargument}            % the unit ball of the algebra of all BOUNDED linear operators on Hilbert space
\ndef{\clCH}{\clC\hilbasargument}              % the set of CLOSED DENSELY-DEFINED linear operators on Hilbert space
\ndef{\clKH}{\clK\hilbasargument}              % the set of COMPACT operators
\ndef{\clFH}{\clF\hilbasargument}              % the set of BOUNDED FREDHOLM operators
\ndef{\clUH}{\clU\hilbasargument}              % the set of UNITARIES on Hilbert space
\ndef{\clCFH}{{\clC\clF}\hilbasargument}       % the set of CLOSED DENSELY-DEFINED FREDHOLM OPERATORS on Hilbert space
\ndef{\saBH}{\clB_{sa}\hilbasargument}         % the set of S.-A. BOUNDED operators on Hilbert space
\ndef{\saCH}{\clC_{sa}\hilbasargument}         % the set of CLOSED DENSELY-DEFINED S.-A. operators on Hilbert space
\ndef{\saFH}{\clF_{sa}\hilbasargument}         % the set of BOUNDED FREDHOLM s.-a. operators
\ndef{\saKH}{\clK_{sa}\hilbasargument}         % the set of COMPACT S.-A. operators
\ndef{\saCFH}{\clC\clF_{sa}\hilbasargument}    % the set of CLOSED DENSELY-DEFINED S.-A. FREDHOLM operators on Hilbert space
\ndef{\clUFH}{\clU\clF\hilbasargument}         % the set of UNITARIES such that U+I is FREDHOLM
\ndef{\Uinj}{\clU_{inj}\hilbasargument}        % the set of UNITARIES such that U-I is injective
\ndef{\UFinj}{\clU\clF_{inj}\hilbasargument}   % the set of UNITARIES such that U-I is INJECTIVE and U+I is FREDHOLM

\ndef{\spproj}[2]{E^{#1}_{#2}}                      % spectral projection of #1
\ndef{\spprojb}[2]{E^{#2}_{#1}}                     % spectral projection of #1

%%%%%%%%%%%%%%%%%%%%%%%%%%%%%%%%%%%%%%%%%%%%%%%%%%%%%%%%%%%%%%%%%%%%%%%%%%%%%%%%%%%%%%%%%%%%%%%%%%%%%%%%%%%%%
%        Semifinite vNa notation
%%%%%%%%%%%%%%%%%%%%%%%%%%%%%%%%%%%%%%%%%%%%%%%%%%%%%%%%%%%%%%%%%%%%%%%%%%%%%%%%%%%%%%%%%%%%%%%%%%%%%%%%%%%%%
\ndef{\LpN}[1]{\clL^{#1}(\clN,\tau)}     % noncommutative \mathcal L_p space
\ndef{\saLpN}[1]{\clL^{#1}_{sa}(\clN,\tau)} % s.-a. part of noncommutative \mathcal L_p space
\ndef{\rLpN}[1]{L^{#1}(\clN,\tau)}       % noncommutative L_p space
\ndef{\clAND}{(\clA,\clN,D)}             % spectral triple (A,N,D)
\ndef{\clBA}{{\clB(\clA)}}
\ndef{\saKN}{{\clK_{sa}(\clN,\tau)}}          % s.-a. \tau-compact operators
\ndef{\clKN}{{\clK(\clN,\tau)}}          % \tau-compact operators
\ndef{\clKtN}{{\clK(\tilde\clN,\tau)}}   % \tau-compact (maybe unbounded) operators
\ndef{\clFN}{{\clF(\clN,\tau)}}          % \tau-Fredholm operators
\ndef{\saFN}{{\clF_{sa}(\clN,\tau)}}     % self-adjoint \tau-Fredholm operators
\ndef{\clPN}{\clP(\clN)}                 % projections of N
\ndef{\clQN}{\clQ(\clN,\tau)}            % Calkin algebra N/K
\ndef{\infPN}{{\clP_\tau^\infty(\clN)}}  % infinite projections of N
\ndef{\clOF}[2]{\clF_{#1\mbox{-}#2}(\clN,\tau)}         % relatively Fredholm operators
\ndef{\oind}[2]{{\rm \tau\mbox{-}ind}_{#1\mbox{-}#2}}   % relative index
\ndef{\tind}{\tau\mbox{-}\ind}                  % semifinite index
\ndef{\DInd}{\ind_{\clD,\tau}}           % semifinite index
\ndef{\BF}{Breuer-Fredholm}              % Breuer-Fredholm
\ndef{\skewfred}[2]{$(#1\cdot #2)$ $\tau$\tire Fredholm}   % skew corner Fredholm
\ndef{\affl}{\eta}                       % affiliated
\ndef{\vNa}{von Neumann algebra}         % von Neumann algebra
\ndef{\nsf}{faithful normal semifinite } % normal semifinite faithful
\ndef{\taubrs}[1]{\tau\brackets{#1}}     % n.s.f. trace with brackets
\ndef{\sqbrs}[1]{[#1]}        % brackets
\ndef{\Sqbrs}[1]{\big[#1\big]}        % brackets
\ndef{\SqBrs}[1]{\Big[#1\Big]}        % brackets
%\ndef{\taupre}{$\tau$\tire}

%%%%%%%%%%%%%%%%%%%%%%%%%%%%%%%%%%%%%%%%%%%%%%%%%%%%%%%%%%%%%%%%%%%%%%%%%%%%%%%%%%%%%%%%%%%%%%%%%%%%%%%%%%%%%
%        NG notation
%%%%%%%%%%%%%%%%%%%%%%%%%%%%%%%%%%%%%%%%%%%%%%%%%%%%%%%%%%%%%%%%%%%%%%%%%%%%%%%%%%%%%%%%%%%%%%%%%%%%%%%%%%%%%
\ndef{\domd}{\bigcap\limits_{n\ge 0} \dom\;\delta^n}         % domain of \delta^n's
\ndef{\DiffOP}{{\rm \clD}}
\ndef{\ADA}{\clA \cup [\clD,\clA]}
\ndef{\DixIdeal}[1]{\LpH{#1,\infty}}               % Dixmier ideal
\ndef{\dixideal}{\ell^{1,\infty}}                  % commutative Dixmier ideal
\ndef{\WDixIdeal}{\LpH{1,\mathrm w}}               % weak Dixmier ideal
\ndef{\DixIdealPos}[1]{\DixIdeal{#1}_+}            % positive part of Dixmier ideal
\ndef{\DixIdealN}[1]{\LpN{#1,\infty}}              % semifinite Dixmier ideal
\ndef{\DixIdealNPar}[2]{\clL^{#1,\infty}_{#2}(\clN,\tau)}    % semifinite Dixmier ideal
\ndef{\DixIdealNPos}[1]{\LpN{#1,\infty}_+}                   % positive part of semifinite Dixmier ideal
\ndef{\TrD}{\Tr_\omega}                                      % Dixmier trace
\ndef{\tauD}{{\tau_\omega}}                                  % semifinite Dixmier trace
%\ndef{\ILog}{\frac 1{\log(1+t)}}
\ndef{\ILogN}{\frac 1{\log(1+N)}}
\ndef{\DixNorm}[1]{\norm{#1}_{(1,\infty)}}                   % Dixmier norm
\ndef{\DixInt}[1]{\ints 0^t \mu_s(#1)\,ds}
\ndef{\DixIntL}[1]{\ints 0^{\lambda_{1/t}(#1)}\mu_s(#1)\,ds}
    \ndef{\SmallIdeal}{{\clL^{1, \mathrm w}}}
    \ndef{\SmallIdealMeas}{{\clL^{1, \mathrm w}_m}}
    \ndef{\DixIntII}[1]{\int_0^t \mu_s(#1)\,ds}
    \ndef{\DixIntf}[1]{\Phi_t(#1)}
    \ndef{\DixIntg}[1]{\Psi_t(#1)}

\ndef{\lpi}{\clL^{1,\pi}(\clN,\tau)}
%%%%%%%%%%%%%%%%%%%%%%%%%%%%%%%%%%%%%%%%%%%%%%%%%%%%%%%%%%%%%%%%%%%%%%%%%%%%%%%%%%%%%%%%%%%%%%%%%%%%%%%%%%%%%
%        General mathematical notation
%%%%%%%%%%%%%%%%%%%%%%%%%%%%%%%%%%%%%%%%%%%%%%%%%%%%%%%%%%%%%%%%%%%%%%%%%%%%%%%%%%%%%%%%%%%%%%%%%%%%%%%%%%%%%

\ndef{\strl}[1]{\stackrel \longrightarrow {#1}}
\ndef{\IIinfty}{$\mathrm{II}_\infty$\ }

\ndef{\fourier}[1]{\clF(#1)}          % Fourier transform of #1
\ndef{\HaarMeasBohrs}{\nu}            % Haar measure of the Bohr compact
\ndef{\BrownsMeas}{\mu}               % Brown's measure
\ndef{\BohrCont}[1]{\tilde{#1}}       % continuation of a function to the Bohr compact
\ndef{\APMean}{{M}}                   % mean value of a.p. function
\ndef{\CDSS}{{\clA_B}}                % Coburn-Douglas-Schaeffer-Singer's factor
\ndef{\matr}{{\rm Mat}}               % standard matrix algebra
\ndef{\seque}[1]{\ensuremath{\{#1_n\}_{n=1}^\infty}}    % sequence of numbers  a_1, a_2, ...
\ndef{\sequen}[2]{\ensuremath{\{#1_#2\}_{#2=1}^\infty}}    % sequence of numbers  a_1, a_2, ...
\ndef{\Seque}[1]{\ensuremath{\left(#1_0,#1_1,#1_2,\dots\right)}}    % sequence of numbers  a_1, a_2, ...
\ndef{\Cesaro}{H}                           % the Cesaro operator (on sequences)
\ndef{\CesaroRPlus}{M}                      % the Cesaro operator on positive semiaxis
\ndef{\Dilation}{D}                         % the dilation operator (on sequences)
\ndef{\Shift}{T}                            % the shift operator (on sequences)

\ndef{\TrNorm}[1]{\norm{#1}_1}              % trace norm of #1
\ndef{\HSNorm}[1]{\norm{#1}_2}              % Hilbert-Schmidt norm of #1
%\ndef{\tauinfnorm}[1]{\norm{#1}_{\clL^1}}   % $clL^1$- norm of #1               %{\norm{#1}_{\clL^1}}
\ndef{\InftyNorm}[1]{\norm{#1}_\infty}      % uniform norm of #1
\ndef{\normQN}[1]{\norm{#1}_{\clQN}}        % Calkin norm of #1
\ndef{\clLpnorm}[2]{\norm{#2}_{\clL^{#1}}}    % $1,\infty$- trace norm of #1
\ndef{\clLnorm}[1]{\clLpnorm{1}{#1}}    % $1,\infty$- trace norm of #1

\ndef{\ccurve}{\gamma}                      % a curve in complex plane for Cauchy integral

\ndef{\Brs}[1]{\big(#1\big)}                % brackets
\ndef{\BRS}[1]{\Big(#1\Big)}                % brackets
\ndef{\scal}[2]{\left\langle #1,#2\right\rangle}               % scalar product
\ndef{\Scal}[1]{\left\langle #1\right\rangle}               % scalar product
\ndef{\precprec}{\prec\!\!\!\prec}
\ndef{\qeq}{\stackrel?=}
\ndef{\spectrum}[1]{\sigma_{#1}} %{\mathrm{Spec}(#1)}       % spectrum of an operator
\ndef{\spectruma}[1]{\sigma^{(a)}_{#1}} %{\mathrm{Spec}(#1)}       % absolutely continuous spectrum of an operator
%\ndef{\numrange}[1]{\mathrm{W}(#1)}                         % numerical range of an operator
\rndef{\emptyset}{\varnothing}                              % empty set
\ndef{\csupp}{c}                           % subscript for compactly supported functions
\ndef{\closure}[1]{\overline{#1}}
\ndef{\linspan}[1]{\mathrm{span}\,{#1}}
\ndef{\bddborel}[1]{B(#1)}                 % the space of bounded Borel functions on the measure space #1
\ndef{\charfunc}{\chi}
%\rndef{\ln}{\ln}
\ndef{\FrDer}{\euD}                        % Fr\'echet derivative
\ndef{\LieDer}[1]{\pounds_{#1}\,}          % Lie derivative
\ndef{\dds}{\left.\frac d{ds} \right|_{s = 0}}
\ndef{\ortcmp}[1]{#1^{\scriptscriptstyle \perp}}            % orthogonal complement of projection #1
%\ndef{\ortcmp}[1]{\bar {#1}}            % orthogonal complement of projection #1
\ndef{\Laplace}{\Delta}                    % Laplace operator

\ndef{\matrPQ}[3]
{
    \left(
      \begin{array}{cc}
        #1_{11} & #1_{12} \\
        #1_{21} & #1_{22}
      \end{array}
    \right)_{[#2,#3]}
}
%%%%%%%%%%%%%%%%%%%%%%%%%%%%%%%%%%%%%%%%%%%%%%%%%%%%%%%%%%%%%%%%%%%%%%%%%%%%%%%%%%%%%%%%%%%%%%%%%%%%%%%%%%%%%
%        General notation
%%%%%%%%%%%%%%%%%%%%%%%%%%%%%%%%%%%%%%%%%%%%%%%%%%%%%%%%%%%%%%%%%%%%%%%%%%%%%%%%%%%%%%%%%%%%%%%%%%%%%%%%%%%%%

\ndef{\margOK}{\marginpar{\bf \small OK}}

\newcounter{margcomcount}
\setcounter{margcomcount}{0}
\ndef{\margcom}[1]{\marginpar{\bf \small #1} \addtocounter{margcomcount}{1}
   \index{\indexcom{{\bf COMMENT: #1}}}}

% \newcounter{margproof}
% \setcounter{margproof}{0}
% \ndef{\margproof}{\marginpar{\bf \small PROOF} \addtocounter{margproof}{1}
%   \index{**** \indexcom{{\bf PROOF}}}}

% \newcounter{margdetails}
% \setcounter{margdetails}{0}
% \ndef{\margdetails}{\marginpar{\bf Details} \addtocounter{margdetails}{1}
%   \index{**** \indexcom{{\bf DETAILS}}}}

% \newcounter{margproofb}
% \setcounter{margproofb}{0}
% \ndef{\margproofb}[1]{\marginpar{\bf \small Proof(B) #1} \addtocounter{margproofb}{1}
%   \index{**** \indexcom{{\bf PROOF(B): #1}}}}

% \newcounter{margdetailsb}
% \setcounter{margdetailsb}{0}
% \ndef{\margdetailsb}[1]{\marginpar{\bf \small Details(B)} \addtocounter{margdetailsb}{1}
%   \index{**** \indexcom{{\bf DETAILS(B): \\ #1}}}}

% \newcounter{margdetailsc}
% \setcounter{margdetailsc}{0}
% \ndef{\margdetailsc}[1]{\marginpar{\bf \small Details(C)} \addtocounter{margdetailsc}{1}
%   \index{**** \indexcom{{\bf DETAILS(C): \\ #1}}}}

% \newcounter{margcomcountb}
% \setcounter{margcomcountb}{0}
% \ndef{\margcomb}[1]{\marginpar{\bf \small #1} \addtocounter{margcomcountb}{1}
%    \index{\indexcom{{\bf COMMENT(B): \\ #1}}}}

%%%%%%%%%%%%%%%%%%%%%%%%%%%%%%%%%%%%%%%%%%%%%%%%%%%%%%%%%%%%%%%%%%%%%%%%%%%%%%%%%%%%%%%%%%%%%

\ndef{\mytimes}{\!\times\!}
\ndef{\sss}[1]{\subsubsection{}\label{#1}}

%\ndef{\vrphi}{\phi}
\rndef{\phi}{\varphi} \ndef{\OpenUnitDisk}{D}
\ndef{\RHS}{RHS}                            % right hand side
\ndef{\LHS}{LHS} %right and left hand side  % left hand side
%\ndef{\bproof}{{\par\it Proof\ \ }}
%\ndef{\eproof}{$\Box$ \normalsize}
\ndef{\ttt}{\Leftrightarrow}
\ndef{\then}{\Rightarrow}
\ndef{\tto}{\longrightarrow}
\ndef{\nno}{\nonumber\\}
\ndef{\newn}[1]{\index{#1} {\bfseries #1}}       % new notion
%\rndef{\emph}[1]{{\bfseries #1}}
\ndef{\la}{\langle}
\ndef{\ra}{\rangle}
\ndef{\dbar}{{\;\bar{\phantom{o}} \!\!\!\! d}}
%\ndef{\stl}[1]{\stackrel{#1}}
\ndef{\stl}[1]{\stackrel{\vbox to 0pt{\vss\hbox{$\scriptstyle #1$}}}}
\ndef{\mathcomment}[1]{{\hfill \qquad\qquad\qquad\text{by (#1)}}}        % for comments at the ends of lines with math formulas
\ndef{\mathcomm}[1]{{\hfill \qquad\qquad\qquad\qquad\qquad\text{#1}}}        % for comments at the ends of lines with math formulas
\ndef{\details}[1]{\smallskip\begin{center} {\bf Here:}
#1\end{center}\medskip} \ndef{\indexcom}[1]{ --- #1}
\ndef{\longsim}{\ \sim \ }              % for use in formulas.
\ndef{\tire}{-}              % for use in formulas.
\ndef{\intinfinf}{\int_{-\infty}^\infty}
%%%%%%%%%%%%%%%%%%%%%%%%%%%%%%%%%%%%%%%%%%%%%%%%%%%%%%%%%%%%%%%%%%%%%%%%%%%%%%%%%%%%%%%%%%%%%%%%%%%%%%%%%%%
%            References to definitions
%%%%%%%%%%%%%%%%%%%%%%%%%%%%%%%%%%%%%%%%%%%%%%%%%%%%%%%%%%%%%%%%%%%%%%%%%%%%%%%%%%%%%%%%%%%%%%%%%%%%%%%%%%%
%\ndef{\refnsftrace}{\cite[V.\,2.\,1]{TakI}} % reference to definition of n.f.s. trace %\cite[Definition V.2.1]{TakI}
%\ndef{\refaffloper}{\cite[IV.\,5, Exercise 3]{TakI}} % reference to definition of affiliated operator
%\ndef{\refsemifinvNa}{\cite[V.\,1.\,21]{TakI}} %  semifinite vNa
%\ndef{\reftaumeasurable}{\cite[Definition 1.2]{FK86PJM}} % tau-measurable operator
%\ndef{\reftautraceclassaffl}{\cite[V.2, p.\,320]{TakI}} % tau-trace class affiliated operator
%\ndef{\refinvoperideal}{\cite[Appendix A.2]{CP2}} % invariant operator ideal
%\ndef{\reftautracenorm}{\cite[V.2, p.\,320]{TakI}} % tau trace norm (1-norm)
%\ndef{\reftaucompact}{\cite{}} % tau compact operator
%\ndef{\reftauFredholm}{\cite[Appendix B]{PR94JFA}} % tau Fredholm operator
%
%%%%%%%%%%%%%%%%%%%%%%%%%%%%%%%%%%%%%%%%%%%%%%%%%%%%%%%%%%%%%%%%%%%%%%%%%%%%%%%%%%%%%%%%%%%%%%%%%%%%%%%%%%%%%
%        CPRS notation
%%%%%%%%%%%%%%%%%%%%%%%%%%%%%%%%%%%%%%%%%%%%%%%%%%%%%%%%%%%%%%%%%%%%%%%%%%%%%%%%%%%%%%%%%%%%%%%%%%%%%%%%%%%%%
     \ndef{\npartial}{\slash\!\!\!\partial}
     \ndef{\Heis}{\operatorname{Heis}}
     \ndef{\Solv}{\operatorname{Solv}}
     \ndef{\Spin}{\operatorname{Spin}}
     \ndef{\SO}{\operatorname{SO}}
     %\ndef{\ind}{\operatorname{ind}}
     \ndef{\Index}{\operatorname{index}}
     %\ndef{\ch}{\operatorname{ch}}
     %\ndef{\rank}{\operatorname{rank}}
     %    Absolute value notation

%             \ndef{\LL}{{\clL}}
%             \ndef{\B}{{\clB}}
%             \ndef{\K}{{\clK}}
%             \ndef{\oo}{{\clO}}
%             \ndef{\PP}{{\clP}}
             %\ndef{\coker}{{\mbox coker}}
             \ndef{\p}{\partial}
             \ndef{\dd}{|\clD|}
             \ndef{\n}{\parallel}

\let\LatexCite=\cite  % just renaming

\let\ifnumref\iffalse % use this command if you want LETTER REFERENCES like [CM]

% \ifuncited checks if '\csname#4\endcsname' is equal to '\relax'
\ndef{\ifuncited}[4]{\expandafter\ifx\csname used#4\endcsname\relax}

\ndef{\ifcited}[4]{\expandafter\ifx\csname used#4\endcsname\relax\else}
%  use this variant of \ifcited if you want ALL references in your base to be printed
%\ndef{\ifcited}[4]{\iftrue}
%  use this variant of \ifcited if you want only books in your base to be printed
%\ndef{\ifcited}[4]{\ifnum#3=\zzBk}
%  use this variant of \ifcited if you want only papers in your base to be printed
%\ndef{\ifcited}[4]{\ifnum#3=\zzPr}

% for sorting the bibliography use this variant
%\ndef{\ifcited}[4]{\ifx#2\sbCompAn}

% if the reference #2 is met for the first time in the paper then add it to the bibliography 
% list of the paper else do nothing
% (in order to avoid the same reference to occur more than once in the references list)

%%%%%%%%%%%%%%%%%%%%%%%%%%%%%%%%%%%%%%%%%%%%%%%%%%%%%%%%%%%%%%%%%%%%%%%%%%%%%%%%%%%%%%%%%%%%%%%%%%%%
%  Example of usage of \pbbi:
%          \newcount\bibiShu
%          \pbbi0\sfPseuDO1{Sh}{ShuPDO}{M.\,A.\,Shubin}{PDO and Spectral theory}{Springer-Verlag, Berlin, 1978}{\bibiShu}
%
  \ndef{\papertitle}[1]{ \emph{#1}, }
  \ndef{\paperauthor}[2]{#2}  %{\ifnum#1=0$^*$\fi#2}
  \ndef{\pbbi}[9]{%
      \ifcited{#1}{#2}{#3}{#5}%
        \ifnumref%
          \bibitem{#5}\paperauthor{#1}{#6},\papertitle{#7}#8.%
        \else%
          \advance #9 by 1%
          \ifnum#9<1%
            \bibitem[#4]{#5}\paperauthor{#1}{#6}, \papertitle{#7}#8.%
          \else%
%            \bibitem[#4$_{\the#9}$]{#5}\paperauthor{#1}{#6},\papertitle{#7}#8.%
            \bibitem[#4{\the#9}]{#5}\paperauthor{#1}{#6},\papertitle{#7}#8.%
          \fi%
        \fi%
      \fi%
  }
  \ndef{\mbbi}[8]{%
     \ifcited{#1}{#2}{#3}{#5}%
        \ifnumref%
          \bibitem{#5}\paperauthor{#1}{#6},\papertitle{#7}#8.%
        \else%
          \bibitem[#4]{#5}\paperauthor{#1}{#6},\papertitle{#7}#8.%
        \fi%
     \fi%
  }
%%%%%%%%%%%%%%%%%%%%%%%%%%%%%%%%%%%%%%%%%%%%%%%%%%%%%%%%%%%%%%%%%%%%%%%%%%%%%%%%%%%%%%%%%%%%%%%%%%%%

% if the reference #1 is used for the first time then add it into the list of used references
\ndef{\AddCite}[1]{%
   \ifuncited{0}{0}{0}{#1}%
     \expandafter\gdef\csname used#1\endcsname {}%
   \fi%
}

% taking into account that there can be several references in one \cite like \cite{CPS, Co4, DHK}
\def\ProcessCite#1,{%
     \ifx\relax#1%
         \let\next=\relax%
     \else%
         \AddCite{#1}%
         \let\next=\ProcessCite%
     \fi%
     \next%
}

\ndef{\AddCites}[1]{\ProcessCite#1,\relax,}

% to be used if a reference has not an additional argument like  \cite{CPS}
\ndef{\CiteWithoutExtension}[1]{%
   \AddCites{#1}%
   \LatexCite{#1}%
}

% to be used if a reference has an additional argument like  \cite[Lemma 2.8]{CPS}
\def\CiteWithExtension[#1]#2{%
   \AddCites{#2}%
   \LatexCite[#1]{#2}%
}

% if there is an additional argument (like \cite[Lemma 2.8]{CPS}) 
%then perform \CiteWithExtension else perform \CiteWithoutExtension
\ndef{\CleverCite}{%
    \ifx\NChar[ %
       \let\MyCite=\CiteWithExtension %
    \else %
       \let\MyCite=\CiteWithoutExtension %
    \fi %
    \MyCite%
}

\renewcommand{\cite}{\futurelet\NChar\CleverCite}
%%%%%%%%%%%%%%%%%%%%%%%%%%%%%%%%%%%%%%%%%%%%%%%%%%%%%%%%%%%%%%%%%%%%%%%%%%%%%%%%%%%%
%                           End of MyMakeLit.tex
%%%%%%%%%%%%%%%%%%%%%%%%%%%%%%%%%%%%%%%%%%%%%%%%%%%%%%%%%%%%%%%%%%%%%%%%%%%%%%%%%%%%

%%%%%%%%%%%%%%%%%%%%%%%%%%%%%%%%%%%%%%%%%%%%%%%%%%%%%%%%%%%%%%%%%%%%%%%%%%%%%%%%%%%%
%                               List of journals
%%%%%%%%%%%%%%%%%%%%%%%%%%%%%%%%%%%%%%%%%%%%%%%%%%%%%%%%%%%%%%%%%%%%%%%%%%%%%%%%%%%%
      \ndef{\volume}[1]{{\bf #1}}
      \ndef{\VolYearPP}[3]{\ifnum#2=0 (to appear)\else\volume{#1} (#2), #3\fi}
      \ndef{\VolNoYearPP}[4]{\ifnum#3=0 (to appear)\else\volume{#1} #2 (#3), #4\fi}
      \ndef{\libcode}[1]{}%{{,\bf\ #1}}

\ndef{\jnActaMath}[3]{Acta Math. \VolYearPP{#1}{#2}{#3}}                       % ActM        \libcode{510.5 A18}
\ndef{\jnAdvMath}[3]{Adv.\,in~Math. \VolYearPP{#1}{#2}{#3}}                     % AdvM        \libcode{510.5 A24}
\ndef{\jnAlgAnal}[3]{Algebra i~Analiz \VolYearPP{#1}{#2}{#3}}
\ndef{\jnAmerJMath}[3]{Amer.\,J.\,Math. \VolYearPP{#1}{#2}{#3}}                  % AJM         \libcode{}
\ndef{\jnAmerMathMonth}[3]{Amer.\,Math.\,Monthly \VolYearPP{#1}{#2}{#3}}         % AMM         \libcode{510.5 A3}
\ndef{\jnAnnMath}[4]{Ann. of~Math. \VolNoYearPP{#1}{#2}{#3}{#4}}               % AnnM        \libcode{510.5 A61}
\ndef{\jnAnalMath}[3]{J. Anal. Math. \VolYearPP{#1}{#2}{#3}}                   % JAnalM      \libcode{}
\ndef{\jnArchRatMechAnal}[3]{Arch. Rational Mech. Anal. \VolYearPP{#1}{#2}{#3}}                   % JAnalM      \libcode{}
\ndef{\jnBullLondMathSoc}[3]{Bull. London Math. Soc. \VolYearPP{#1}{#2}{#3}}   % BLMS        \libcode{}
\ndef{\jnBullAMS}[3]{Bull. Amer. Math. Soc. \VolYearPP{#1}{#2}{#3}}   % BLMS        \libcode{}
\ndef{\jnCanMathBull}[3]{Canad. Math. Bull. \VolYearPP{#1}{#2}{#3}}            % CMB         \libcode{}
\ndef{\jnCanMath}[3]{Canad. J.~Math. \VolYearPP{#1}{#2}{#3}}             % CJM         \libcode{}
\ndef{\jnCommMathPhys}[3]{Comm. Math. Phys. \VolYearPP{#1}{#2}{#3}}             % CMP         \libcode{530.1505 C73}
\ndef{\jnCommPDE}[3]{Comm. Partial Differential Equations \VolYearPP{#1}{#2}{#3}}             % CMP         \libcode{530.1505 C73}
\ndef{\jnComptRendue}[3]{C.\,R.~Acad. Sci. Paris S\'er. A-B \VolYearPP{#1}{#2}{#3}}      % CR   \libcode{505 A 161}
\ndef{\jnContMath}[3]{Contemporary Math. \VolYearPP{#1}{#2}{#3}}               %
\ndef{\jnDukeMJ}[3]{Duke Math. J. \VolYearPP{#1}{#2}{#3}}
\ndef{\jnDiffGeom}[3]{J.~Diff. Geom. \VolYearPP{#1}{#2}{#3}}                   % JDG         \libcode{516.3605 J86}
\ndef{\jnErgodicTheory}[3]{Ergodic Theory and Dynamical Systems \VolYearPP{#1}{#2}{#3}} % ETDS  \libcode{}
\ndef{\jnFuncAnal}[3]{J.~Functional Analysis \VolYearPP{#1}{#2}{#3}}           % JFA         \libcode{515.05 J88}
\ndef{\jnFunkAnalPril}[4]{Funct. Anal. Appl. \VolNoYearPP{#1}{#2}{#3}{#4}}  % JFAP
\ndef{\jnGAFA}[3]{GAFA \VolYearPP{#1}{#2}{#3}}                                 % GAFA        \libcode{}
\ndef{\jnIHES}[3]{IHES Publ. Math. (Paris) \VolYearPP{#1}{#2}{#3}}             % IHES        \libcode{}
\ndef{\jnIEOT}[3]{Integral Equations Operator Theory   \VolYearPP{#1}{#2}{#3}} %         \libcode{515.4505 I61}
\ndef{\jnIsrMath}[3]{Israel J.~Math. \VolYearPP{#1}{#2}{#3}}                   % IMJ         \libcode{}
\ndef{\jnKTheory}[3]{K-Theory \VolYearPP{#1}{#2}{#3}}                          % KT          \libcode{}
\ndef{\jnLetMathPhys}[3]{Lett. Math. Phys. \VolYearPP{#1}{#2}{#3}}             % LMP         \libcode{}
\ndef{\jnMathAnn}[3]{Math. Ann. \VolYearPP{#1}{#2}{#3}}                        % MAnn        \libcode{510.5 M51}
\ndef{\jnMathAnalAppl}[3]{J.~Math. Anal. and Appl. \VolYearPP{#1}{#2}{#3}}     % MathAnalAppl  \libcode{510.5 J88} only in repository
\ndef{\jnMathNachr}[3]{Math.\,Nachr. \VolYearPP{#1}{#2}{#3}}
\ndef{\jnMathPhys}[3]{J. Math. Phys. \VolYearPP{#1}{#2}{#3}}
\ndef{\jnMathSocJap}[3]{J. Math. Soc. Japan \VolYearPP{#1}{#2}{#3}}
\ndef{\jnOperTheory}[3]{J.~Operator Theory \VolYearPP{#1}{#2}{#3}}             % JOT         \libcode{}
\ndef{\jnPacJMath}[3]{Pacific J.~Math. \VolYearPP{#1}{#2}{#3}}                  % PJM         \libcode{510.5 P11}
\ndef{\jnPositivity}[3]{Positivity \VolYearPP{#1}{#2}{#3}}
\ndef{\jnProcAmerMS}[3]{Proc. Amer. Math. Soc. \VolYearPP{#1}{#2}{#3}}         % PAMS        \libcode{510.5 A5p}
\ndef{\jnProcCambPhilSoc}[3]{Math. Proc. Camb. Phil. Soc. \VolYearPP{#1}{#2}{#3}}
\ndef{\jnReineAngew}[3]{J.~Reine Angew. Math. \VolYearPP{#1}{#2}{#3}}          % JRAM        \libcode{510.5 J86}
\ndef{\jnTokyoMath}[3]{Tokyo J.~Math. \VolYearPP{#1}{#2}{#3}}
\ndef{\jnTopology}[3]{Topology \VolYearPP{#1}{#2}{#3}}
\ndef{\jnTransAmerMathSoc}[3]{Trans. Amer. Math. Soc. \VolYearPP{#1}{#2}{#3}}
\ndef{\jnIzvANSSSR}[3]{Izv. Akad. Nauk SSSR, Ser. Mat. \VolYearPP{#1}{#2}{#3}}
\ndef{\jnIzvVyshUchZav}[3]{Izv. Vyssh. Uch. Zav., Mat. \VolYearPP{#1}{#2}{#3} (Russian)}
\ndef{\jnIzdatLenUniv}[2]{Izdat. Leningrad. Univ., Leningrad, (#1), #2 (Russian)}
\ndef{\jnFieldsInsComm}[3]{Fields Inst. Comm. \VolYearPP{#1}{#2}{#3}}
%            Russian journals
\ndef{\jnDoklANSSSR}[3]{Dokl. Akad. Nauk SSSR \VolYearPP{#1}{#2}{#3}}
\ndef{\jnMatZametki}[3]{Matem. zametki \VolYearPP{#1}{#2}{#3}}
\ndef{\jnRussMathSurvey}[3]{Russian Math. Surveys \VolYearPP{#1}{#2}{#3}}
\ndef{\jnSibMathJ}[3]{Sib. Math.~J. \VolYearPP{#1}{#2}{#3}}
\ndef{\jnSovMath}[3]{J.~Soviet math. \VolYearPP{#1}{#2}{#3}}
\ndef{\jnTransMoscMathSoc}[3]{Trans. Moscow Math. Soc. \VolYearPP{#1}{#2}{#3}}
\ndef{\jnUMN}[3]{Uspekhi Mat. Nauk \VolYearPP{#1}{#2}{#3}}

\ndef{\bkTransMathMon}[2]{Trans. Math. Monographs, AMS, \volume{#1}, #2}

%%%%%%%%%%%%%%%%%%%%%%%%%%%%%%%%%%%%%%%%%%%%%%%%%%%%%%%%%%%%%%%%%%%%%%%%%%%%%%%%%%%%
%                              List of publishers
%%%%%%%%%%%%%%%%%%%%%%%%%%%%%%%%%%%%%%%%%%%%%%%%%%%%%%%%%%%%%%%%%%%%%%%%%%%%%%%%%%%%
\ndef{\pbBirkhauser}[1]{Birkh\"auser, Boston, #1}
\ndef{\pbFactorial}[1]{Moscow, Factorial, #1}
\ndef{\pbGauthier}[1]{Gauthier-Villars, Paris, #1}
\ndef{\pbNauka}[1]{Moscow, Nauka, #1 (Russian)}
\ndef{\pbNaukaR}[1]{Москва, Наука, #1}
\ndef{\pbPrinceton}[1]{Princeton University Press, Princeton, New Jersey, #1}
\ndef{\pbPublPerish}[1]{Publish or Perish Inc., Berkeley, #1}
\ndef{\pbSpringer}[1]{Springer-Verlag, #1}

%%%%%%%%%%%%%%%%%%%%%%%%%%%%%%%%%%%%%%%%%%%%%%%%%%%%%%%%%%%%%%%%%%%%%%%%%%%%%%%%%%%%
%                              List of authors
%%%%%%%%%%%%%%%%%%%%%%%%%%%%%%%%%%%%%%%%%%%%%%%%%%%%%%%%%%%%%%%%%%%%%%%%%%%%%%%%%%%%
\ndef{\myauthor}[1]{\mbox{#1}}

\ndef{\Agmon}{\myauthor{Sh.\,Agmon}}
\ndef{\Ahiezer}{\myauthor{N.\,I.\,Ahiezer}}
\ndef{\Arazy}{\myauthor{J.\,Arazy}}
\ndef{\Aronszajn}{\myauthor{N.\,Aronszajn}}
\ndef{\Astashkin}{\myauthor{S.\,V.\,Astashkin}}
\ndef{\Atiyah}{\myauthor{M.\,Atiyah}}
\ndef{\Avron}{\myauthor{J.\,E.\,Avron}}
\ndef{\Azamov}{\myauthor{N.\,A.\,Azamov}}
\ndef{\Banach}{\myauthor{S.\,Banach}}
\ndef{\Benameur}{\myauthor{M-T.\,Benameur}}
\ndef{\Bennett}{\myauthor{C.\,Bennett}}
\ndef{\Berezin}{\myauthor{F.\,A.\,Berezin}}
\ndef{\Berline}{\myauthor{N.\,Berline}}
\ndef{\Birman}{\myauthor{M.\,Sh.\,Birman}}
\ndef{\Blackadar}{\myauthor{B.\,Blackadar}}
\ndef{\Bogolyubov}{\myauthor{N.\,N.\,Bogolyubov}}
\ndef{\Bonsall}{\myauthor{F.\,F.\,Bonsall}}
\ndef{\Bony}{\myauthor{J.\,F.\,Bony}}
\ndef{\BoosBavnbek}{\myauthor{B.\,Boo$\beta$-Bavnbek}}
\ndef{\Bott}{\myauthor{R.\,Bott}}
\ndef{\Branges}{\myauthor{L.\,de Branges}}
\ndef{\Bratteli}{\myauthor{O.\,Bratteli}}
\ndef{\Bredon}{\myauthor{G.\,E.\,Bredon}}
\ndef{\Breuer}{\myauthor{M.\,Breuer}}
\ndef{\Brown}{\myauthor{L.\,G.\,Brown}}
\ndef{\Bruneau}{\myauthor{V.\,Bruneau}}
\ndef{\Buslaev}{\myauthor{V.\,S.\,Buslaev}}
\ndef{\Carey}{\myauthor{A.\,L.\,Carey}}
\ndef{\CareyRW}{\myauthor{R.\,W.\,Carey}} %Richard
\ndef{\Cartan}{\myauthor{H.\,Cartan}}
\ndef{\Chilin}{\myauthor{V.\,I.\,Chilin}}
\ndef{\Coburn}{\myauthor{L.\,A.\,Coburn}}
\ndef{\Connes}{\myauthor{A.\,Connes}}
\ndef{\Cornfeld}{\myauthor{I.\,P.\,Cornfeld}}
\ndef{\Daletskii}{\myauthor{Yu.\,L.\,Daletski\u\i}}   %Daletskii
\ndef{\Dixmier}{\myauthor{J.\,Dixmier}}
\ndef{\DoddsPG}{\myauthor{P.\,G.\,Dodds}}
\ndef{\DoddsTK}{\myauthor{T.\,K.\,Dodds}}
\ndef{\Douglas}{\myauthor{R.\,G.\,Douglas}}
\ndef{\Dubrovin}{\myauthor{B.\,A.\,Dubrovin}}
\ndef{\Dugundji}{\myauthor{J.\,Dugundji}}
\ndef{\Duncan}{\myauthor{J.\,Duncan}}
\ndef{\Dunford}{\myauthor{N.\,Dunford}}
\ndef{\Dykema}{\myauthor{K.\,J.\,Dykema}}
\ndef{\Edwards}{\myauthor{R.\,E.\,Edwards}}
\ndef{\Eilenberg}{\myauthor{S.\,Eilenberg}}
\ndef{\Entina}{\myauthor{S.\,B.\,\`Entina}}
\ndef{\Fack}{\myauthor{T.\,Fack}} %Thierry
\ndef{\Faddeev}{\myauthor{L.\,D.\,Faddeev}}
\ndef{\Farber}{\myauthor{M.\,Farber}}
\ndef{\Farforovskaya}{\myauthor{Yu.\,B.\,Farforovskaya}}
\ndef{\Federer}{\myauthor{H.\,Federer}}
\ndef{\Fedosov}{\myauthor{B.\,V.\,Fedosov}}
\ndef{\Figiel}{\myauthor{T.\,Figiel}} %Tadeush?
\ndef{\Figueroa}{\myauthor{H.\,Figueroa}}
\ndef{\Fillmore}{\myauthor{P.\,A.\,Fillmore}}
\ndef{\Fomenko}{\myauthor{A.\,T.\,Fomenko}} % Anatolii Trofimovich
\ndef{\Fomin}{\myauthor{S.\,V.\,Fomin}}
\ndef{\Frohlich}{\myauthor{J.\,Fr\"ohlich}}
\ndef{\Fuglede}{\myauthor{B.\,Fuglede}}
\ndef{\Furutani}{\myauthor{K.\,Furutani}}
\ndef{\Gelfand}{\myauthor{I.\,M.\,Gelfand}}
\ndef{\Gesztesy}{\myauthor{F.\,Gesztesy}}     %Fritz
\ndef{\Getzler}{\myauthor{E.\,Getzler}} % Ezra
\ndef{\Gilkey}{\myauthor{P.\,B.\,Gilkey}}
\ndef{\Gitler}{\myauthor{S.\,Gitler}}
\ndef{\Glazman}{\myauthor{I.\,M.\,Glazman}}
\ndef{\Glimm}{\myauthor{J.\,Glimm}}
\ndef{\Gohberg}{\myauthor{I.\,C.\,Gohberg}}
\ndef{\Goldshtein}{\myauthor{Ya.\,Goldshtein}}
\ndef{\Golze}{\myauthor{F.\,Golze}}
\ndef{\GraciaBondia}{\myauthor{J.\,M.\,Gracia-Bond\'{i}a}}
\ndef{\Greenleaf}{\myauthor{F.\,P.\,Greenleaf}}
\ndef{\Gromov}{\myauthor{M.\,Gromov}}
\ndef{\Gunning}{\myauthor{R.\,C.\,Gunning}}
\ndef{\Haagerup}{\myauthor{U.\,Haagerup}}
\ndef{\Haag}{\myauthor{R.\,Haag}}
\ndef{\Halmos}{\myauthor{P.\,R.\,Halmos}}
\ndef{\Hardy}{\myauthor{G.\,H.\,Hardy}}
\ndef{\Herbst}{\myauthor{I.\,W.\,Herbst}}
\ndef{\Higson}{\myauthor{N.\,Higson}}  % Nigel
\ndef{\Hoermander}{\myauthor{L.\,H\"ormander}} % Lars
\ndef{\Hoffman}{\myauthor{K.\,Hoffman}} % Kenneth Hoffman
\ndef{\Ito}{\myauthor{K.\,Ito}}
\ndef{\Ikebe}{\myauthor{T.\,Ikebe}}
\ndef{\Jaffe}{\myauthor{A.\,Jaffe}}
\ndef{\James}{\myauthor{I.\,M.\,James}}
\ndef{\Javrjan}{\myauthor{V.\,A.\,Javrjan}}
\ndef{\Jitomirskaya}{\myauthor{S.\,Jitomirskaya}}
\ndef{\Kadison}{\myauthor{R.\,V.\,Kadison}}
\ndef{\Kalton}{\myauthor{N.\,J.\,Kalton}} % Nigel
\ndef{\Kato}{\myauthor{T.\,Kato}} % Tosio
\ndef{\Kobayashi}{\myauthor{S.\,Kobayashi}}
\ndef{\Koplienko}{\myauthor{L.\,S.\,Koplienko}}
\ndef{\Korotyaev}{\myauthor{E.\,Korotyaev}}
\ndef{\Kosaki}{\myauthor{H.\,Kosaki}}
\ndef{\Kostrykin}{\myauthor{V.\,Kostrykin}}
\ndef{\Kotani}{\myauthor{S.\,Kotani}}
\ndef{\Krein}{\myauthor{Kre\u\i n}}
\ndef{\KreinMG}{\myauthor{M.\,G.\,Kre\u\i n}}
\ndef{\KreinSG}{\myauthor{S.\,G.\,Kre\u\i n}}
\ndef{\Kuroda}{\myauthor{S.\,T.\,Kuroda}}
\ndef{\Leichtnam}{\myauthor{E.\,Leichtnam}}
\ndef{\Lesch}{\myauthor{M.\,Lesch}}
\ndef{\Lesniewski}{\myauthor{A.\,Lesniewski}}
\ndef{\Levitan}{\myauthor{B.\,M.\,Levitan}}
\ndef{\Lidskii}{\myauthor{V.\,B.\,Lidskii}}
\ndef{\Lifshitz}{\myauthor{I.\,M.\,Lifshitz}}
\ndef{\Lindenstrauss}{\myauthor{J.\,Lindenstrauss}}
\ndef{\Loday}{\myauthor{J.-L.\,Loday}}
\ndef{\Lord}{\myauthor{S.\,Lord}}      %Steven
\ndef{\Lorentz}{\myauthor{G.\,Lorentz}}
\ndef{\Magnus}{\myauthor{W.\,Magnus}}
\ndef{\Makarov}{\myauthor{K.\,A.\,Makarov}}
\ndef{\MakarovN}{\myauthor{N.\,Makarov}}
\ndef{\Mathai}{\myauthor{V.\,Mathai}}         %Varghese?
\ndef{\McKean}{\myauthor{H.\,P.\,McKean}}
\ndef{\Mishchenko}{\myauthor{A.\,S.\,Mishchenko}}
\ndef{\Molchanov}{\myauthor{S.\,A.\,Molchanov}}
\ndef{\Moore}{\myauthor{C.\,C.\,Moore}}
\ndef{\Moscovici}{\myauthor{H.\,Moscovici}}  %Henri?
\ndef{\Motovilov}{\myauthor{A.\,K.\,Motovilov}}
\ndef{\Moyer}{\myauthor{R.\,D.\,Moyer}}
\ndef{\Naboko}{\myauthor{S.\,N.\,Naboko}}
\ndef{\Narasimhan}{\myauthor{R.\,Narasimhan}}
\ndef{\Nomizu}{\myauthor{K.\,Nomizu}}
\ndef{\Novikov}{\myauthor{S.\,P.\,Novikov}}
\ndef{\Osterwalder}{\myauthor{K.\,Osterwalder}}
\ndef{\Patodi}{\myauthor{V.\,Patodi}}
\ndef{\Pagter}{\myauthor{B.\,de~Pagter}}  %Ben
\ndef{\Pastur}{\myauthor{L.\,A.\,Pastur}}  %Ben
\ndef{\Pavlov}{\myauthor{B.\,S.\,Pavlov}}
\ndef{\Pedersen}{\myauthor{G.\,K.\,Pedersen}}
\ndef{\Peller}{\myauthor{V.\,V.\,Peller}}
\ndef{\Perera}{\myauthor{V.\,S.\,Perera}}
\ndef{\Petunin}{\myauthor{Ju.\,I.\,Petunin}}
\ndef{\Phillips}{\myauthor{J.\,Phillips}}  %John
\ndef{\Piazza}{\myauthor{P.\,Piazza}}   %Paolo
\ndef{\Pincus}{\myauthor{J.\,D.\,Pincus}}   %Joel
\ndef{\Poincare}{Poincar\'e}
\ndef{\Postnikov}{\myauthor{M.\,M.\,Postnikov}} % Mikhail
\ndef{\Povzner}{\myauthor{A.\,Ya.\,Povzner}}
\ndef{\Prinzis}{\myauthor{R.\,Prinzis}}
\ndef{\Privalov}{\myauthor{I.\,I.\,Privalov}}
\ndef{\Pushnitski}{\myauthor{A.\,B.\,Pushnitski}} % Alexander
\ndef{\Raeburn}{\myauthor{I.\,Raeburn}}
\ndef{\Raikov}{\myauthor{G.\,Raikov}}
\ndef{\Reed}{\myauthor{M.\,Reed}}
\ndef{\Rennie}{\myauthor{A.\,Rennie}}
\ndef{\Rickart}{\myauthor{C.\,E.\,Rickart}}
\ndef{\Riesz}{\myauthor{F.\,Riesz}}
\ndef{\Ringrose}{\myauthor{J.\,Ringrose}}
\ndef{\Rio}{\myauthor{R.\,del Rio}}
\ndef{\Robinson}{\myauthor{D.\,Robinson}}
\ndef{\Rossi}{\myauthor{H.\,Rossi}}
\ndef{\Rudin}{\myauthor{W.\,Rudin}}
\ndef{\Ruelle}{\myauthor{D.\,Ruelle}}
\ndef{\Ruzhansky}{\myauthor{M.\,Ruzhansky}}
\ndef{\Sakai}{\myauthor{Sh.\,Sakai}}
\ndef{\Sargsjan}{\myauthor{I.\,S.\,Sargsjan}}
\ndef{\Sato}{\myauthor{H.\,Sato}}
\ndef{\Schaeffer}{\myauthor{D.\,G.\,Schaeffer}}
\ndef{\Schluchtermann}{\myauthor{G.\,Schluchtermann}}
\ndef{\Schochet}{\myauthor{C.\,Schochet}}
\ndef{\SchroedingerE}{\myauthor{E.\,Schr\"odinger}}
\ndef{\Schroedinger}{\myauthor{Schr\"odinger}}
\ndef{\Schrohe}{\myauthor{E.\,Schrohe}}
\ndef{\Schwartz}{\myauthor{J.\,T.\,Schwartz}}
\ndef{\Sedaev}{\myauthor{A.\,A.\,Sedaev}}
\ndef{\Seiler}{\myauthor{R.\,Seiler}}
\ndef{\Semenov}{\myauthor{E.\,M.\,Semenov}}
\ndef{\Shabat}{\myauthor{B.\,V.\,Shabat}}
\ndef{\Shafarevich}{\myauthor{I.\,R.\,Shafarevich}}
\ndef{\Sharpley}{\myauthor{R.\,Sharpley}}
\ndef{\Shilov}{\myauthor{G.\,E.\,Shilov}}
\ndef{\Shirkov}{\myauthor{D.\,V.\,Shirkov}}
\ndef{\Shubin}{\myauthor{M.\,A.\,Shubin}}
\ndef{\Silverman}{\myauthor{H.\,Silverman}}
\ndef{\Simon}{\myauthor{B.\,Simon}}
\ndef{\Sinai}{\myauthor{Ya.\,G.\,Sinai}}
\ndef{\Singer}{\myauthor{I.\,M.\,Singer}}
\ndef{\Solomyak}{\myauthor{M.\,Z.\,Solomyak}}
\ndef{\Soloviev}{\myauthor{Yu.\,P.\,Soloviev}}
\ndef{\Spivak}{\myauthor{M.\,Spivak}}
\ndef{\Stein}{\myauthor{E.\,M.\,Stein}}
\ndef{\Stenkin}{\myauthor{V.\,V.\,Sten'kin}}
\ndef{\Stratila}{\myauthor{S.\,Stratila}}
\ndef{\Sucheston}{\myauthor{L.\,Sucheston}}
\ndef{\Sukochev}{\myauthor{F.\,A.\,Sukochev}}
\ndef{\Switzer}{\myauthor{R.\,M.\,Switzer}}
\ndef{\SzNagy}{\myauthor{B.\,Sz.-Nagy}}
\ndef{\Takesaki}{\myauthor{M.\,Takesaki}}
\ndef{\Taylor}{\myauthor{M.\,E.\,Taylor}}
\ndef{\Treves}{\myauthor{F.\,Treves}}
\ndef{\Troitsky}{\myauthor{E.\,V.\,Troitsky}}
\ndef{\Tzafriri}{\myauthor{L.\,Tzafriri}}
\ndef{\Varilly}{\myauthor{J.\,C.\,V\'{a}rilly}}
\ndef{\Vergne}{\myauthor{M.\,Vergne}}
\ndef{\Vladimirov}{\myauthor{V.\,S.\,Vladimirov}}
\ndef{\Voiculescu}{\myauthor{D.\,Voiculescu}}
\ndef{\Weiss}{\myauthor{G.\,Weiss}}
\ndef{\Wells}{\myauthor{R.\,O.\,Wells}}
\ndef{\Williams}{\myauthor{J.\,P.\,Williams}}
\ndef{\Winkler}{\myauthor{S.\,Winkler}}
\ndef{\Witten}{\myauthor{E.\,Witten}}
\ndef{\Wodzicki}{\myauthor{M.\,Wodzicki}}
\ndef{\Wojciechowski}{\myauthor{K.\,P.\,Wojciechowski}}
\ndef{\Yafaev}{\myauthor{D.\,R.\,Yafaev}}
\ndef{\Yosida}{\myauthor{K.\,Yosida}}
\ndef{\Zsido}{\myauthor{L.\,Zsido}}

%%%%%%%%%%%%%%%%%%%%%%%%%%%%%%%%%%%%%%%%%%%%%%%%%%%%%%%%%%%%%%%%%%%%%%%%%%%%%%%%%%%%
%    End of MyMakeLit
%%%%%%%%%%%%%%%%%%%%%%%%%%%%%%%%%%%%%%%%%%%%%%%%%%%%%%%%%%%%%%%%%%%%%%%%%%%%%%%%%%%%

\usepackage{graphicx}

%  \DeclareGraphicsRule{*}{mps}{*}{}

%\input Macros
%\input MyMakeLit.tex
\ndef{\mbCz}{\mbC^{(z)}} \ndef{\mbCr}{\mbC^{(r)}} \ndef{\yy}{y}
 % dimension

\newcommand{\LambHF}[1]{\Lambda(#1;F)}
\ndef{\ells}{{\ell_2}} \ndef{\hlambda}{{\mathfrak h_\lambda}}
\ndef{\hlambdao}{{\mathfrak h_\lambda^{(0)}}}
\ndef{\hlambdas}{{\mathfrak h_\lambda^{(s)}}}
\ndef{\hlambdar}{{\mathfrak h_\lambda^{(r)}}}
\sloppy

\textwidth 16cm
\textheight 22cm
\oddsidemargin 0cm
\evensidemargin 0cm
\begin{document}
\title[A.c. and singular spectral shift functions]{Absolutely continuous and singular \\ spectral shift functions}
\author{Nurulla Azamov}
\address{School of Computer Science, Engineering and Mathematics
   \\ Flinders University
   \\ Bedford Park, 5042, SA Australia.}
\email{nurulla.azamov@flinders.edu.au}
 \keywords{Spectral shift function, scattering matrix, absolutely continuous spectral shift function,
 singular spectral shift function, Birman-Krein formula, Pushnitski $\mu$-invariant, infinitesimal spectral flow, infinitesimal scattering matrix}
 \subjclass[2000]{ %Mathematics Subject Classification (2000).
     Primary 47A55; % Perturbation theory
     Secondary 47A11% Local spectral properties
 }
\begin{abstract} Given a self-adjoint operator~$H_0,$ a self-adjoint trace-class operator~$V$
and a fixed Hilbert-Schmidt operator~$F$
with trivial kernel and co-kernel, using the limiting absorption principle an explicit set
of full Lebesgue measure~$\LambHF{H_0} \subset \mbR$ is defined, %in a standard way via the data ($H_0,G$),
such that for all points~$\lambda$ of the set~$\LambHF{H_0+rV} \cap \LambHF{H_0},$ where~$r \in \mbR,$
the wave~$w_\pm(\lambda; H_0+rV,H_0)$ and the scattering matrices
$S(\lambda; H_0+rV,H_0)$ can be defined unambiguously. Many well-known properties of the wave and scattering matrices
and operators are proved, including the stationary formula for the scattering matrix.
%This version of abstract scattering theory is new to the best of the author's knowledge.
This version of abstract scattering theory allows, in particular, to prove that
$$
  \det S(\lambda;H_0+V,H_0) = e^{-2\pi i \xia(\lambda)}, \ \ \text{a.e.} \ \lambda \in \mbR,
$$
where~$\xia(\lambda) = \xia_{H_0+V,H_0}(\lambda)$ is the so called absolutely continuous part of the spectral shift function
defined by
$$
  \xia_{H_0+V,H_0}(\lambda) := \frac d{d\lambda} \int_0^1 \Tr(V E^{(a)}_{H_0+rV}(\lambda)) \,dr
$$
and where~$E_H^{(a)}(\lambda)=E^{(a)}_{(-\infty,\lambda)}(H)$ denotes the absolutely continuous part of the spectral projection.
Combined with the Birman-Krein formula, this implies that the singular part of the spectral shift function
$$
  \xis_{H_0+V,H_0}(\lambda) := \frac d{d\lambda} \int_0^1 \Tr(V E^{(s)}_{H_0+rV}(\lambda)) \,dr
$$
is an almost everywhere integer-valued function, where~$E_H^{(s)}(\lambda)=E^{(s)}_{(-\infty,\lambda)}(H)$ denotes the singular part of the spectral projection.
%
% several new results:
% 1. the scattering matrix is equal to the chronological
% exponential of the infinitesimal scattering matrix;
% \ 2. the trace of the infinitesimal scattering matrix is equal to the absolutely continuous part
% of the infinitesimal spectral flow; \ 3. the absolutely continuous part of the spectral shift
% function is equal to the logarithm of the determinant of the scattering matrix; 4.
% as a consequence, the absolutely continuous part of the spectral shift
% function is equal to the averaged absolutely continuous part of Pushnitski~$\mu$-invariant;
% 5. the singular part of Pushnitski~$\mu$-invariant does not depend on the angle variable
% and is equal to minus the singular part of the spectral shift function
%
% This gives a positive solution of the problem of the integer-valuedness of the singular part of the spectral shift function.
% a construction of the canonical
% direct integral of Hilbert spaces is given, which diagonalizes the self-adjoint operator~$H_0.$
% This construction implies that in the case of finite-rank perturbations
% the spectral shift function in the Birman-\Krein\ formula can be replaced
% by the absolutely continuous part of the spectral shift.
% This in its turn implies that the singular part of the spectral shift function
% for finite-rank perturbations is integer-valued.
\end{abstract}
%acknowledgments{}

\maketitle
%\begin{center} {\small \sf\today} % , \qquad  % \\ \bigskip Preliminary DRAFT \\ \bigskip
%\end{center}

% \newpage
%
% 1. Topology of the singular spectral shift.
%
% 2. Don't take arbitrary pairs of frames. This does not make much sense.
%
% 3. Consider $\mathfrak h_{\lambda+iy} = \overline{lin.span}\set{\phi_j(\lambda+iy), \ j \in \mbZ_\lambda},$
% $P_{\lambda+iy}$ and try to construct $U_{\lambda+iy}\colon \mathfrak h^{(1)}_{\lambda+iy} \to \mathfrak h^{(2)}_{\lambda+iy}.$
%
% 4. Give information about scattering theory in the introduction.
%
% 5. Prove and give properties of the $f(\lambda),$ such as: if $\lambda \in \Lambda(f) \cap \Lambda(g),$
% then $\lambda \in \Lambda(\alpha f + \beta g)$ and $(\alpha f + \beta g)(\lambda) = \alpha f(\lambda) + \beta g(\lambda).$
%
% 6. Give a proof of de la Vallee Poussin theorem.
%
% 7. Give a proof of Privalov's theorem.
%
% 8. Birman-Koplienko-Solomyak inequality: proof.
%
% %9. Give the estimate [(1.6.6),Ya] in the introduction.
%
% 10. Items (iii) and (iv) of subsection 3.10 are identical. Fix this.
%
% 11. Shorten the proof of L. 4.13
%
% 12. Details for (2.7.2) Thm 6.25.
%
% 13. Give a picture for Thm. 9.13.
%
% 14. Details on Born's approximation.
%
% 15. Details for L. 2.13

\newpage
\tableofcontents

%Things to do:
%\begin{enumerate}
%  \item Rewrite the Introduction
%  \item Complete subsection 3.5.1
%  \item Work out References
%\end{enumerate}

%Things to do:
%
%(1) comments on Lippmann-Schwinger equation.
%
%(2) definition of non-tangential directions.
%
%(3) equivalent frames.
%
%(4) Include proof of lim. absorption principle.
%
%(5) give a non-trivial example.

\section{Introduction}
\subsection{Short summary}
In this paper a new approach is given to abstract scattering theory.
This approach is constructive and allows to prove new results in perturbation theory
of continuous spectra of self-adjoint operators which the conventional scattering theory
is not able to achieve.

Among the results of this paper are: for trace-class perturbations
of arbitrary self-adjoint operators:

$\bullet$ A new approach to the spectral theorem for self-adjoint
operators (without singular continuous spectrum) via a special
constructive representation of the absolutely continuous part
(with respect to a fixed self-adjoint operator) of the Hilbert
space as a direct integral of fiber Hilbert spaces.

$\bullet$ A new and constructive proof of existence of the wave matrices and of the wave operators.

$\bullet$ A new proof of the multiplicativity property of the wave matrices and of the wave operators.

$\bullet$ A new and constructive proof of the existence of the scattering matrix and of the scattering operator.

$\bullet$ A new proof of the stationary formula for the scattering matrix.

$\bullet$ A new proof of the Kato-Rosenblum theorem.

\bigskip

This paper does not contain only new proofs of existing theorems.

$\bullet$ A new formula (to the best knowledge of the author) for the scattering matrix in terms of chronological exponential.

The main result of this paper is the following

{\bf Theorem}. Let $H_0$ be a self-adjoint operator and let $V$ be a trace-class self-adjoint operator
in a Hilbert space $\hilb.$ Define a generalized function
$$
  \xis(\phi) = \int_0^1 \Tr\brs{V \phi(H_r^{(s)})}\,dr, \quad \phi \in C_c^\infty(\mbR),
$$
where $H_r:=H_0+rV,$ and $H_r^{(s)}$ is the singular part of the self-adjoint operator $H_r.$
Then $\xis$ is an absolutely continuous measure and its density $\xis(\lambda)$ (denoted by the same symbol!)
is a.e. integer-valued.

Note that in the case of operators with compact resolvent this theorem is well known, and the
function $\xis(\lambda)$ in this case coincides with spectral flow \cite{APS75,APS76,Ge93Top,Ph96CMB,Ph97FIC,CP98CJM,CP2,ACDS,ACS,Azbook}.
Spectral flow is integer-valued just by definition as a total Fredholm index of a path of operators.
In the case of operators with compact resolvent instead of $H_r^{(s)}$ one writes $H_r,$ since in this case
the continuous spectrum is absent, so that $H_r^{(s)} = H_r.$

The above theorem strongly suggests that the function $\xis(\lambda),$ which I call the singular part of the spectral shift function,
calculates the spectral flow of the singular spectrum even in the presence and inside of the absolutely continuous spectrum.

Finally, it is worth to stress that the new approach to abstract scattering theory given in this paper has been invented
with the sole purpose to prove the above theorem. Existing versions of scattering theory turned out to be insufficient for this purpose.
At the same time, this approach seems to have a value of its own.
In particular, I believe that, properly adjusted, this approach may allow to unify
the trace-class and smooth scattering theories, --- a long-standing problem mentioned in the introduction of D.\,Yafaev's book \cite{Ya}.
%This will be enforced in a further paper \cite{Az7}.

%Let~$H_0$ be a self-adjoint operator,~$V$ be a trace-class operator and let~$H_1 = H_0+V.$
% Let~$H_0$ be a self-adjoint operator. The Schr\"odinger equation
% $$
%   \frac {\partial \psi(t)}{\partial t} = \frac 1i H_0\psi(t)
% $$
% describes the dynamics of a quantum system with Hamiltonian (energy operator)~$H_0.$
% In other words, the dynamics of a system with initial state~$\psi = \psi(0)$
% is described by the formula
% $$
%   \psi(t) = e^{-i t H_0}\psi(0).
% $$
% In scattering theory one considers a small (in a certain sense) perturbation~$H_1 = H_0+V$
% of the initial Hamiltonian~$H_0$ by a perturbation operator~$V,$ so that
%
% To the best knowledge of the author, these results/proofs are new.

\subsection{Introduction}
% Let $H_0$ be a self-adjoint operator, let $V$ be a bounded self-adjoint operator
% and let $H_1=H_0+V.$ The following operators, if they exist,
% $$
%   W_{\pm}(H_1,H_0) := \lim_{t \to \pm \infty} e^{itH_1}e^{-itH_0}P_0,
% $$
% where $P_0$ is the projection onto the absolutely continuous subspace of $H_0$
% and where the limit is taken in the strong operator topology,
% are called wave operators. The operator
%
% \begin{center} * \qquad * \qquad * \end{center}
Let~$H_0$ be a self-adjoint operator,~$V$ be a self-adjoint trace-class operator and let~$H_1 = H_0+V.$
The Lifshits-\Krein\ spectral shift function~\cite{Li52UMN,Kr53MS} is the unique~$L_1$-function
$\xi(\cdot) = \xi_{H_1,H_0}(\cdot),$ such that for all~$f \in \Cc$ the trace formula
$$
  \Tr(f(H_1) - f(H_0)) = \int f'(\lambda)\xi_{H_1,H_0}(\lambda)\, d\lambda
$$
holds. The Birman-Solomyak formula for the spectral shift function~\cite{BS75SM} asserts that
\begin{equation} \label{Int: BS formula}
  \xi_{H_1,H_0}(\lambda) = \frac d{d\lambda} \int_0^1 \Tr(V E^{H_r}_\lambda)\,dr, \quad \text{a.e.} \  \lambda,
\end{equation}
where
$$
  H_r = H_0 + rV,
$$
and~$E^{H_r}_\lambda$ is the spectral resolution of~$H_r.$
This formula was established by \Javrjan\ in~\cite{Jav} in the case of perturbations of
the boundary condition of a Sturm-Liouville operator on~$[0,\infty),$ which corresponds to
rank-one perturbation of~$H_0.$
The Birman-Solomyak formula is also called the spectral averaging formula.
A simple proof of this formula was found in~\cite{Si98PAMS}.
There is enormous literature on the subject of spectral averaging, cf. e.g.
\cite{GM03AA,GM00JAnalM,Ko86CM} and references therein. A survey on the spectral shift function
can be found in~\cite{BP98IEOT}.

Let~$S(\lambda; H_1, H_0)$ be the scattering matrix of the pair~$H_0,H_1 = H_0+V$ (cf.~\cite{BE}, see also \cite{Ya}).
In~\cite{BK62DAN} \Birman\ and \KreinMG\ established the following formula
\begin{equation} \label{Int: BK formula}
  \det S(\lambda; H_1, H_0) = e^{-2 \pi i \xi(\lambda)} \quad \text{a.e.} \ \lambda \in \mbR
\end{equation}
for trace-class perturbations~$V = H_1-H_0$ and arbitrary self-adjoint operators~$H_0.$
This formula is a generalization of a similar result of \Buslaev\ and \Faddeev\ ~\cite{BF60DAN}
for Sturm-Liouville operators on~$[0,\infty).$

In~\cite{Az} I introduced the absolutely continuous and singular
spectral shift functions by the formulae
\begin{equation} \label{Int: def of xia}
  \xia_{H_1,H_0}(\lambda) = \frac d{d\lambda} \int_0^1 \Tr\brs{V E^{H_r}_\lambda P^{(a)}(H_r)}\,dr, \quad \text{a.e.} \  \lambda,
\end{equation}
\begin{equation} \label{Int: def of xis}
  \xis_{H_1,H_0}(\lambda) = \frac d{d\lambda} \int_0^1 \Tr\brs{VE^{H_r}_\lambda P^{(s)}(H_r)}\,dr, \quad \text{a.e.} \  \lambda,
\end{equation}
where~$P^{(a)}(H_r)$ (respectively,~$P^{(s)}(H_r)$) is the projection onto the absolutely continuous
(respectively, singular) subspace of~$H_r.$
These formulae are obvious modifications of the Birman-Solomyak spectral averaging formula, and one can see that
$$
  \xi = \xis + \xia.
$$
In~\cite{Az} it was observed that for $n$-dimensional Schr\"odinger operators~$H_r = -\Delta + rV$
with quickly decreasing potentials~$V$ the scattering matrix~$S(\lambda;H_r,H_0)$ is a continuous operator-valued
function of~$r$ and
it was shown that % , and for $1$-dimensional Schr\"odinger operators with short-range potentials,
\begin{equation} \label{Int: xia = -2pi i det S}
  -2\pi i \xia_{H_r,H_0}(\lambda) = \log \det S(\lambda;H_r,H_0),
\end{equation}
where the logarithm is defined in such a way that the function
$$
  [0,r] \ni s \mapsto \log \det S(\lambda;H_s,H_0)
$$
is continuous. It was natural to conjecture that some variant of this formula
should hold in general case. In particular, this formula, compared with the Birman-Krein formula (\ref{Int: BK formula}), has naturally led
to a conjecture that the singular part of the spectral shift function is an a.e. integer-valued function.
In case of $n$-dimensional Schr\"odinger operators with quickly decreasing potentials
this is an obvious result, since these operators do not have singular spectrum on the positive semi-axis.
In~\cite{Az2} it was observed that even in the case of operators which admit embedded eigenvalues
the singular part of the spectral shift function is also either equal to zero on the positive semi-axis
or in any case it is integer-valued.

In this paper I give positive solution of this conjecture for
trace-class perturbations of arbitrary self-adjoint operators.

The proof of (\ref{Int: xia = -2pi i det S}) is based on the following formula for the scattering matrix
\begin{equation} \label{Int: S = T exp...}
  S(\lambda;H_r,H_0) = \Texp\brs{-2\pi i \int_0^r w_+(\lambda;H_0,H_s)\Pi_{H_s}(V)(\lambda)w_+(\lambda;H_s,H_0)\,ds},
\end{equation}
where $\Pi_{H_s}(V)(\lambda)$ is the so-called infinitesimal scattering matrix (see (\ref{F: Pi = ZJZ*})).
If~$\lambda$ is fixed, then for this formula to make sense,
the wave matrix $w_+(\lambda;H_s,H_0)$ has to be defined for all~$s \in [0,r],$
except possibly a discrete set. In the case of Schr\"odinger operators
$$
  H=-\Delta + V
$$
in~$\mbR^n$ with short range potentials (in the sense
of~\cite{Agm}), the wave matrices $w_\pm(\lambda; H_s,H_0)$ are
well-defined, since there are explicit formulae for them, cf.
e.g.~\cite{Agm,BYa92AA2,Kur,KuJMSJ73I,KuJMSJ73II}. For example,
if~$\lambda$ does not belong to the discrete set~$e_+(H)$ of
embedded eigenvalues of~$H,$ then the scattering
matrix~$S(\lambda)$ exists as an operator from~$L_2(\Sigma)$
to~$L_2(\Sigma),$ where $\Sigma = \set{\omega \in \mbR^n\colon
\abs{\omega} = 1}$ (cf. e.g.~\cite[Theorem 7.2]{Agm}).

The situation is quite different in the case of the main setting of abstract scattering
theory~\cite{BW,BE,RS3,Ya}, which considers trace-class perturbations of arbitrary self-adjoint operators.
A careful reading of proofs in~\cite{BE,Ya} shows that
one takes an arbitrary core of the spectrum
of the initial operator~$H_0$ and during the proofs
one throws away from a core of the spectrum
several finite and even countable families of null sets. Furthermore, the nature of the initial core of the spectrum
and the nature of the null sets being thrown away are not clarified. They depend on arbitrarily chosen objects.
This is in sharp contrast with potential scattering theory, where non-existence of the wave matrix or the scattering matrix
at some point~$\lambda$ of the absolutely continuous spectrum means that~$\lambda$ is an embedded eigenvalue, cf. e.g.~\cite{Agm}.

So, in the case of trace-class perturbations of arbitrary
self-adjoint operators, given a fixed~$\lambda$ (from some
predefined full set~$\Lambda$) the existence of the wave matrix
for all~$r \in [0,1],$ except possibly a discrete set, cannot be
established by usual means. In order to make the argument of the
proof of (\ref{Int: S = T exp...}), given in~\cite{Az}, work for
trace-class (to begin with) perturbations of arbitrary
self-adjoint operators, one at least needs to give an explicit set
of full measure~$\Lambda,$ such that for all~$\lambda$
from~$\Lambda$ all the necessary ingredients of scattering theory,
such as~$w_\pm(\lambda; H_r,H_0),$ $S(\lambda;H_r,H_0)$
and~$Z(\lambda;G)$ exist. One of the difficulties here is that the
spectrum of an arbitrary self-adjoint operator, unlike the
spectrum of Schr\"odinger operators, can be very bad: it can, say,
have everywhere dense pure point spectrum, or a singular
continuous spectrum, or even both.

To the best knowledge of the author, abstract scattering theory in its present form (cf.~\cite{BW,BE,RS3,Ya})
does not allow to resolve this problem. In the present paper a new abstract scattering theory is developed (to the best knowledge of the author).

\medskip

In this theory, given a self-adjoint operator~$H_0$ on a Hilbert space~$\hilb$ with the so-called frame
$F$ and a trace-class perturbation~$V,$ an explicit set of full measure
$\LambHF{H_0}$ is defined in a canonical (constructive) way via the data~$(H_0,F),$ such that
for all~$\lambda \in \LambHF{H_0} \cap \LambHF{H_r}$ the wave matrices~$w_\pm(\lambda; H_r,H_0)$
exist, and moreover, explicitly constructed.
\begin{defn} \label{D: def of frame from intro-n}
A \emph{frame}~$F$ in a Hilbert space~$\hilb$ is a
sequence $$\brs{(\phi_1,\kappa_1), (\phi_2,\kappa_2), (\phi_3,\kappa_3),\ldots,},$$
where~$(\kappa_j)_{j=1}^\infty$ is an~$\ell_2$-sequence of positive numbers,
and~$(\phi_j)_{j=1}^\infty$ is an orthonormal basis of~$\hilb.$
\end{defn}
In other words, a frame is a fixed orthonormal basis such that norms of the basis vectors form an $\ell_2$-sequence.
It is convenient to encode the information about a frame in a
Hilbert-Schmidt operator with trivial kernel and co-kernel
$$
  F \colon \hilb \to \clK, \ \ F = \sum\limits_{j =1}^\infty \kappa_j \scal{\phi_j}{\cdot}\psi_j,
$$
where~$\clK$ is another Hilbert space and~$(\psi_j)_{j=1}^\infty$ is an orthonormal basis in~$\clK.$
The nature of the Hilbert space~$\clK$ and of the basis~$(\psi_j)_{j=1}^\infty$ is immaterial,
so that one can actually take~$\clK=\hilb$ and~$(\psi_j)_{j=1}^\infty = (\phi_j)_{j=1}^\infty.$
% Any frame can be written as
% \begin{equation*} % \label{Int: Frame=...}
%   F = \sum\limits_{j =1}^\infty \kappa_j \scal{\phi_j}{\cdot}\psi_j,
% \end{equation*}
% where~$(\kappa_j)$ is a sequence of~$s$-numbers of~$F$ (cf. e.g.~\cite{GK}),
%~$(\phi_j)$ is an orthonormal basis in~$\hilb$

% The frame operator introduces, in particular, a rigging into the Hilbert space~$\hilb,$
% so that there are Hilbert spaces
Once a frame (operator)~$F$ is fixed in~$\hilb,$ given a self-adjoint operator~$H_0$ on~$\hilb,$
the frame enables to construct explicitly:
\begin{enumerate}
 \item an explicit set of full measure~$\LambHF{H_0},$ which depends only on $H_0$ and $F;$
 \item for every~$\lambda \in \LambHF{H_0},$ an explicit (to be fiber) Hilbert space~$\hlambda \subset \ell_2;$
 \item a measurability base~$\set{\phi_j(\cdot)},$~$j=1,2,\ldots,$ where all functions~$\phi_j(\lambda) \in \hlambda,
              j=1,2,\ldots,$ are explicitly defined for \emph{all}~$\lambda \in \LambHF{H_0};$
 \item (as a consequence) a direct integral of Hilbert spaces
$$
  \euH := \int_{\LambHF{H_0}}^\oplus \mathfrak h_\lambda\,d\lambda,
$$
where the case of~$\dim \hlambda = 0$ is \emph{not excluded}.
 \item Further, considered as a rigging, a frame~$F$ generates a triple of Hilbert spaces
 $\hilb_1 \subset \hilb=\hilb_0 \subset \hilb_{-1}$ with scalar products
 $$
   \scal{f}{g}_{\hilb_\alpha} = \scal{\abs{F}^{-\alpha}f}{\abs{F}^{-\alpha}g}, \ \ \alpha = -1,0,1
 $$
 and natural isomorphisms
 $$
   \hilb_{-1} \stackrel {\abs{F}}{\longrightarrow} \hilb \stackrel {\abs{F}}{\longrightarrow} \hilb_1.
 $$
 \item For any $\lambda \in \LambHF{H_0}$ we have an \emph{evaluation} operator
  \begin{gather*}
     \euE_\lambda = \euE_{\lambda+i0} \colon \hilb_1 \to \hlambda;
    \\
     \euE \colon \hilb_1 \to \euH.
  \end{gather*}
   The operator $\euE_\lambda \colon \hilb_1 \to \hlambda$ is a Hilbert-Schmidt operator, and the operator $\euE,$ considered as an operator~$\hilb \to \euH,$
   extends continuously to a \emph{unitary isomorphism} of the absolutely continuous part (with respect to~$H_0$) of~$\hilb$ to $\euH,$
   and, moreover, the operator $\euE$ \emph{diagonalizes} the absolutely continuous part of~$H_0.$
\end{enumerate}
Here is a quick description of this construction.
\begin{defn} \label{D: def of Lambda from intro-n} A point $\lambda \in \mbR$ belongs to $\LambHF{H_0}$ if and only if
  \begin{itemize}
    \item[(i)]  the operator~$F R_{\lambda + iy}(H_0) F^*$ has a limit in the uniform (norm) topology as $y \to 0^+,$ and
    \item[(ii)]  the operator~$F \Im R_{\lambda + iy}(H_0) F^*$ has a limit in the trace-class norm as $y \to 0^+.$
  \end{itemize}
\end{defn}
\noindent
It follows from the limiting absorption principle (cf.~\cite{deBranges,BE} and~\cite[Theorems 6.1.5, 6.1.9]{Ya}),
that $\LambHF{H_0}$ has full Lebesgue measure, and that for all $\lambda \in \LambHF{H_0}$
the matrix
$$
  \phi(\lambda) := (\phi_{ij}(\lambda)) = \frac 1\pi \brs{\kappa_i\kappa_j \scal{\phi_i}{\Im R_{\lambda+i0}(H_0) \phi_j}}
$$
exists and is a non-negative trace-class operator
on $\ell_2$ (Proposition \ref{P: Lambda subset Lambda1}).
% Finally, we define $\phi_j(\lambda)$
% by the formula
% $$
%   \phi_j(\lambda) = \kappa_j^{-1} \eta_j(\lambda),
% $$
% where $\eta_j(\lambda)$ is the~$j$-th column of the square root of $(\phi_{ij}(\lambda)).$
% One should begin consideration of this problem for the case of~$\hilb=L_2(\mbR,d\lambda)$
% and~$H_0 = \lambda.$ with
%
% If~$H_0$ is a self-adjoint operator on a framed Hilbert space $(\hilb,F)$
% and $f \in \hilb_1,$ then one can define $f(\lambda) \in \hlambda$ for any $\lambda \in \LambHF{H_0}.$
%
% If $\lambda \in \LambHF{H_0},$ then the matrix
% $$
%   \phi(\lambda) = (\phi_{ij}(\lambda)) := \frac 1 \pi\brs{ \kappa_i\kappa_j \scal{\phi_i}{ \Im R_{\lambda + i0} \phi_j}}
% $$
% is a well defined non-negative trace-class operator on $\ell_2$ (Proposition \ref{P: Lambda subset Lambda1}).
% Let
% $$
%   \eta(\lambda) = \sqrt{\phi(\lambda)}.
% $$
The value $\phi_j(\lambda)$ of the vector $\phi_j$ at $\lambda \in \LambHF{H_0}$
is defined as the~$j$-th column $\eta_j(\lambda)$ of the Hilbert-Schmidt operator $\sqrt{\phi(\lambda)}$ over the weight $\kappa_j$ of $\phi_j:$
$$
  \phi_j(\lambda) = \kappa_j^{-1} \eta_j(\lambda).
$$
It is not difficult to see that if $f \in \hilb_1(F),$ so that
$$
  f = \sum\limits_{j=1}^\infty \kappa_j \beta_j \phi_j,
$$
where $(\beta_j) \in \ell_2,$ then the series
$$
  f(\lambda) := \sum_{j=1}^\infty \kappa_j \beta_j \phi_j(\lambda) = \sum_{j=1}^\infty \beta_j \eta_j(\lambda)
$$
absolutely converges in $\ell_2.$ The fiber Hilbert space~$\hlambda$ is by definition
the closure of the image of~$\hilb_1$ under the map
$$
  \euE_\lambda \colon \hilb_1 \ni f \mapsto f(\lambda) \in \ell_2.
$$
The image of the set of frame vectors $\phi_j$ under the map $\euE$ form
a measurability base of a direct integral of Hilbert spaces
$$
  \euH := \int_{\LambHF{H_0}}^\oplus \mathfrak h_\lambda\,d\lambda,
$$
and the operator
$$
  \euE \colon \hilb_1 \to \euH
$$
is bounded as an operator from~$\hilb$ to $\euH,$ vanishes on the
singular subspace~$\hilb^{(s)}(H_0)$ of~$H_0,$ is isometric on the
absolutely continuous subspace~$\hilb^{(a)}(H_0)$ of~$H_0$ with
the range $\euH$ (Propositions \ref{P: euE is an isometry},
\ref{P: euE is unitary}) and is diagonalizing for~$H_0$ (Theorem
\ref{T: H0 is diagonal}), that is,
$$
  \euE_\lambda(H_0 f) = \lambda \euE_\lambda f \ \ \text{for all}
  \ \lambda \in \Lambda(H_0,F).
$$

% Summarizing, having a frame in the Hilbert space puts us on a firm ground, where we have
% everything we need to construct the wave matrices $w_\pm(\lambda; H_1,H_0).$
\bigskip
\begin{center}
  * \quad * \quad *
\end{center}
\bigskip

So far, we have had one self-adjoint operator~$H_0$ acting in~$\hilb.$
Let $V$ be a self-adjoint trace-class operator. Let a frame~$F$ be such that
$V = F^*JF,$ where $J \colon \clK \to \clK$ is a self-adjoint bounded operator. Clearly, for any trace-class
operator such a frame exists. This means that the operator $V$ can be considered as a bounded operator
$$
  V \colon \hilb_{-1} \to \hilb_1.
$$
By definition, if $\lambda \in \LambHF{H_0},$ then the limit
$$
  R_{\lambda + i0}(H_0) \colon \hilb_1 \to \hilb_{-1}
$$
exists in the Hilbert-Schmidt norm and the limit
$$
  \Im R_{\lambda + i0}(H_0) \colon \hilb_1 \to \hilb_{-1}
$$
exists in the trace-class norm. So, if $\lambda \in \LambHF{H_0} \cap \LambHF{H_1},$
then the following operator is a well-defined trace-class operator (from~$\hilb_1$ to~$\hilb_{-1}$)
$$
  \mathfrak a_\pm(\lambda;H_1,H_0) := \SqBrs{1- R_{\lambda \mp i0}(H_1)V} \cdot \frac 1\pi \Im R_{\lambda + i0}(H_0).
$$
Let $\lambda \in \LambHF{H_0} \cap \LambHF{H_1},$ where~$H_1 = H_0+V,$
so that both fiber Hilbert spaces~$\hlambdao$ and $\mathfrak h_\lambda^{(1)}$ are well-defined.
Then there exists a unique (for each sign $\pm$) operator
$$
  w_\pm(\lambda; H_1,H_0) \colon \hlambdao \to \mathfrak h_\lambda^{(1)}
$$
such that for any $f,g \in \hilb_1$ the equality
$$
  \scal{\euE_\lambda(H_1) f}{w_\pm(\lambda; H_1,H_0) \euE_\lambda(H_0) g} = \scal{f}{\mathfrak a_\pm(\lambda;H_1,H_0)g}_{1,-1}
$$
holds, where $\scal{\cdot}{\cdot}_{1,-1}$ is the pairing of the rigged Hilbert space $(\hilb_1,\hilb,\hilb_{-1}).$
The operator
$$
  w_\pm(\lambda; H_1,H_0)
$$
is correctly defined, and, moreover, it is unitary and satisfies multiplicative property.
The operator $w_\pm(\lambda; H_1,H_0)$ is actually the wave matrix, which is thus explicitly constructed for \emph{all}
$\lambda$ from the set of full Lebesgue measure $\LambHF{H_0} \cap \LambHF{H_1}.$

So far we considered a pair of operators~$H_0$ and~$H_1.$ But if the aim is to prove the formula (\ref{Int: S = T exp...}),
then one needs to make sure that the wave matrix $w_\pm(\lambda; H_r,H_0)$ exists for all, with a possible exception of some small
set, values of $r \in [0,1].$ It turns out that, indeed, the wave matrix $w_\pm(\lambda; H_r,H_0)$ is defined for all $r$ except a discrete set,
as follows from the following simple but important property of the set $\LambHF{H_0}$ (Theorem \ref{T: R(H0,G) is discrete}):
$$
  \text{if} \ \ \lambda \in \LambHF{H_0}, \ \ \text{then} \  \lambda \in \LambHF{H_r} \ \ \text{for all} \ r \notin R(\lambda, H_0,V;F),
$$
where $R(\lambda, H_0,V;F)$ is a discrete set of special importance called resonance set (see the picture below).
% For this important fact not to be overlooked, the following picture is provided.

\begin{picture}(400,180)
%  \put(0,0){\line(0,1){10}}
%  \put(0,0){\line(1,0){10}}
%  \put(400,200){\line(0,-1){10}}
%  \put(400,200){\line(-1,0){10}}
\put(80,80){\vector(1,0){250}}  % lambda-axis
\put(200,50){\vector(0,1){95}}  % r-axis
\put(330,72){$\lambda$} \put(192,142){$r$}
\put(20,125){{ Crosses denote resonance }}
\put(20,110){{ points $r \in R(\lambda, H_0,V;F).$}}

\put(253,130){{ $R(\lambda_0,H_0,V;F)$ }}
\put(245,70){{ $\lambda_0$ }}

\put(250,20){\line(0,1){135}}
\put(245,35){\line(1,1){10}}
\put(255,35){\line(-1,1){10}}
\put(245,90){\line(1,1){10}}
\put(255,90){\line(-1,1){10}}
\put(245,135){\line(1,1){10}}
\put(255,135){\line(-1,1){10}}

\put(180,20){\line(0,1){135}}
\put(175,78){\line(1,1){10}}
\put(185,78){\line(-1,1){10}}
\put(175,115){\line(1,1){10}}
\put(185,115){\line(-1,1){10}}
\end{picture}

\noindent If $\lambda$ is an eigenvalue of~$H_r = H_0+rV,$ then $r \in R(\lambda,H_0,V;F)$
for any~$F.$ But $R(\lambda,H_0,V;F)$ may contain other points as well, which may depend on~$F.$
This partly justifies the terminology ``resonance points'' and gives a basis for classification of resonance points into two different types.

So, the set $\set{(\lambda,r) \colon \lambda \in \LambHF{H_r}}$ behaves very regularly with respect to $r,$ but it does not do so with respect to~$\lambda:$
while for fixed $r_0 \in \mbR$ and $\lambda_0 \in \mbR$
the set $\set{\lambda \in \mbR \colon \lambda \notin \LambHF{H_{r_0}}}$
can be a more or less arbitrary null set, the set $\set{r \in \mbR \colon \lambda_0 \notin \LambHF{H_r}}$
is a discrete set, i.e. a set with no finite accumulation points.

Further, the multiplicative property of the wave
matrix
$$
  w_\pm(\lambda; H_{r_2},H_{r_0}) = w_\pm(\lambda; H_{r_2},H_{r_1})w_\pm(\lambda; H_{r_1},H_{r_0})
$$
is proved (Theorem \ref{T: mult-ive property of w pm}), where
$r_2,r_1,r_0$ do not belong to the above mentioned discrete
resonance set $R(\lambda, H_0,V;F).$ As is known
(cf.~\cite[Subsection 2.7.3]{Ya}), the proof of this property for
the wave operator $W_\pm(H_1,H_0)$ composes the main difficulty of
the stationary approach to the abstract scattering theory. A bulk
of this paper is devoted to the definition of $w_\pm(\lambda;
H_r,H_0)$ for all $\lambda \in \LambHF{H_r}  \cap \LambHF{H_0}$
and to the proof of the multiplicative property. This is the main
feature of the new scattering theory given in this paper. Further,
for all $\lambda \in \LambHF{H_r} \cap \LambHF{H_0}$ the
scattering matrix~$S(\lambda;H_r,H_0)$ is \emph{defined} as an
operator from $\hlambdao$ to $\hlambdao$ by the formula
$$
  S(\lambda;H_r,H_0) = w^*_+(\lambda;H_r,H_0)w_-(\lambda;H_r,H_0).
$$
The scattering operator $\bfS(H_r,H_0)\colon \hilba(H_0) \to \hilba(H_0)$ is defined as the direct integral of scattering matrices:
$$
  \bfS(H_r,H_0) := \int_{\Lambda(H_0;F) \cap \Lambda(H_r;F)} S(\lambda;H_r,H_0)\,d\lambda.
$$
For all $\lambda \in \LambHF{H_r} \cap \LambHF{H_0}$ the stationary formula for the scattering matrix
$$
  S(\lambda;H_r,H_0) = 1_\lambda - 2\pi i \euE_\lambda rV(1 + rR_{\lambda+i0}(H_0)V)^{-1} \euE^\diamondsuit_\lambda
$$
is proved (Theorem \ref{T: stationary rep-n for SM}).
Though the scattering matrix~$S(\lambda;H_r,H_0)$ does not exist for resonance points $r \in R(\lambda, H_0,V;F),$
a simple but important property of the scattering matrix is that it admits analytic continuation to the resonance points
(Proposition \ref{P: S(r) is continuous}).
The stationary formula enables to show that
for all $\lambda \in \LambHF{H_0}$ and all $r$ not in the resonance set $R(\lambda, H_0,V;F),$
the formula (\ref{Int: S = T exp...}) holds (Theorem \ref{T: S = T exp...}),
where for all non-resonance points $r$ the infinitesimal scattering matrix is defined as
$$
  \Pi_{H_r}(V)(\lambda) = \euE_\lambda(H_r) V \euE_\lambda^\diamondsuit(H_r) \colon \hlambdar \to \hlambdar
$$
and where $\euE_\lambda^\diamondsuit = \abs{F}^{-2}\euE_\lambda^*.$

The main object of the abstract scattering theory
given in~\cite{BE,Ya}, the wave operator $W_\pm(H_r,H_0),$ is defined
as the direct integral of the wave matrices
$$
  W_\pm(H_r,H_0) = \int^\oplus_{\LambHF{H_r} \cap \LambHF{H_0}} w_\pm(\lambda; H_r,H_0)\,d\lambda.
$$
The usual definition
$$
  W_\pm(H_r,H_0) = \mbox{s-}\lim\limits_{t \to \pm \infty} e^{itH_r}e^{-itH_0}P^{(a)}_0
$$
of the wave operator becomes a theorem (Theorem \ref{T: time dep-t WO}).
The formula
$$
  \bfS(H_r,H_0) = W_+^*(H_r,H_0)W_-(H_r,H_0),
$$
which is usually considered as definition of the scattering
operator, obviously holds.

% (by which I mean essentially trace-class perturbation case)
% given in Yafaev's book,
% I examined the proofs in Yafaev's book very closely.
% There one takes a (arbitrary!) core of the spectrum of the initial operator~$H_0$
% and during those proofs one throws away from the spectral line several times
% several null sets and even countable families of null sets. In the end the set of full measure
% under question (for a given fixed $r$) may have "a lot of holes", even though this lot of "holes" is a null set.
% Furthermore, the nature of the initial core of the spectrum and the nature of the null being thrown away
% are not clarified. They depend on arbitrarily chosen objects.
% the singular part of the spectral shift function (singular spectral shift) is a.e. integer-valued function.

This new scattering theory has allowed to prove (\ref{Int: xia = -2pi i det S}) for all $\lambda \in \LambHF{H_0}$
(Theorem \ref{T: xia = -2pi i log det S}). Combined with the Birman-Krein formula (\ref{Int: BK formula}) this implies
that the singular part of the spectral shift function
is an a.e. integer-valued function for arbitrary trace-class perturbations of arbitrary self-adjoint
operators (Theorem \ref{T: xis is int valued}):
$$
  \xis_{H_1,H_0}(\lambda) \in \mbZ \ \ \text{for a.e. } \ \lambda \in \mbR.
$$
Theorem \ref{T: xis is int valued} is the main result of this paper.
This result is to be considered as unexpected, since the definition (\ref{Int: def of xis})
of the singular part of the spectral shift function does not suggest anything like this.

In section \ref{S: Push...} another proof of Theorem \ref{T: xis is int valued} is given which does not use the Birman-Krein formula (\ref{Int: BK formula}),
so that the Birman-Krein formula itself becomes a corollary of Theorem \ref{T: xis is int valued} and (\ref{Int: xia = -2pi i det S}).
This proof uses the so-called $\mu$-invariant introduced by Alexander Pushnitski in \cite{Pu01FA}. Pushnitski $\mu$-invariant measures spectral flow
of scattering phases (eigenvalues of the scattering matrix) through a given point $e^{i\theta}$ on the unit circle $\mbT.$
In section \ref{S: Push...} it is shown that there is another natural way to measure spectral flow of scattering phases. It is shown that the difference
of these two $\mu$-invariants does not depend on the angle variable $\theta$ and is equal (up to a sign) to the singular part of the spectral shift function.

I would like to stress that even though the scattering theory presented in this paper is different in its
nature from the conventional scattering theory given in~\cite{BE,Ya}, many essential ideas
are taken from~\cite{BE,Ya} (cf. also~\cite{BW,RS3}), and essentially no new results appear until subsection
\ref{SS: InfinScatM}, though most of the proofs are original (to the best of the author's knowledge). At the same time,
this new approach to abstract scattering theory is simpler than that of given in~\cite{Ya}, and it is this new approach
which allows one to prove main results of this paper.

\section{Preliminaries}
In these preliminaries I follow mainly~\cite{GK,RS1,SimTrId,Ya}.
Details and (omitted) proofs can be found in these references.
A partial purpose of these preliminaries is to fix notation and terminology.
\subsection{Notation}
$\mbR$ is the set of real numbers.
$\mbC$ is the set of complex numbers.
$\mbC_+$ is the open upper half-plane of the complex plane~$\mbC.$

\subsection{Functions holomorphic in $\mbC_+$}
Proof of the following theorem can be found in \cite{Priv} (see also \cite[\S 1.2]{Ya}).

\begin{thm} \label{T: Luzin-Privalov} (a) If $f \colon \mbC_+ \to \mbC$ is a bounded holomorphic function,
then for a.e. $\lambda \in \mbR$ the angular limit $f(\lambda+i0)$ exists.
\\ (b) If the function $f(\lambda+i0)$ is equal to zero on a set of positive Lebesgue measure then $f = 0.$
\end{thm}
This theorem has a much stronger version, but it is all we need.

\subsection{Measure theory} Here we collect some definitions from measure theory. Details can be found in D.\,Yafaev's book
\cite{Ya}.

The $\sigma$-algebra $B(\mbR)$ of Borel sets is generated by open subsets of~$\mbR.$
By a measure on~$\mbR$ we mean a locally-finite non-negative countably additive function $m$ defined on the $\sigma$-algebra
of Borel sets. Locally-finite means that the measure of any compact set is finite.
By a \emph{Borel support} of a measure $m$ we mean any Borel set~$X$ whose complement has zero $m$-measure:
$m(\mbR \setminus X) = 0.$ By the \emph{closed support} of a measure $m$ we mean the smallest closed Borel support of $m.$
The closed support exists and is unique.

By $|X|$ we denote the Lebesgue measure of a Borel set $X.$
A Borel set $Z$ is called a \emph{null set}, if it has zero Lebesgue measure: $|Z| = 0.$ A Borel set $\Lambda$ is called \emph{full set},
if the complement of~$X$ is a null set: $|\mbR \setminus \Lambda| = 0.$
Full sets will usually be denoted by $\Lambda,$ with indices and arguments, if necessary.

A Borel support $X$ of a measure $m$ is called \emph{minimal}, if for any other Borel support~$X'$
the equality $|X \setminus X'| = 0$ holds. Note that the closed support of a measure is not necessarily minimal.
A minimal Borel support exists, but it is not unique. Two minimal supports can differ by not more than a null set.

A measure $m$ is called \emph{absolutely continuous},
if for any null set $Z$ the equality $m(Z) = 0$ holds. The Radon-Nikodym theorem asserts that a measure $m$
is absolutely continuous if and only if there exists a locally-summable non-negative function $f$ such that for any Borel set~$X$
$$
  m(X) = \int_X f(\lambda)\,d\lambda.
$$
A measure $m$ is called \emph{singular}, if there exists a null Borel support of $m,$
that is, a Borel support of zero Lebesgue measure. Any measure $m$ admits a unique decomposition
$$
  m = m^{(a)}+m^{(s)}
$$
into the sum of an absolutely continuous measure $m^{(a)}$ and a singular measure $m^{(s)}.$

Two measures $m_1$ and $m_2$ have the same spectral type, if they are absolutely continuous
with respect to each other, that is, if $m_1(X) = 0$ for some Borel set $X,$ then $m_2(X) = 0,$
and vice versa.

The abbreviation a.e. will always refer to the Lebesgue measure.

Two measures are mutually singular, if they have non-intersecting Borel supports.

A signed measure is a locally finite countably-additive function $m$ defined on bounded Borel sets.
Every signed measure $m$ admits a unique Hahn decomposition:
$$
  m = m_+-m_-,
$$
where non-negative measures $m_-$ and $m_+$ are mutually singular. The measure $\abs{m}:=m_++m_-$ is called
total variation of $m.$

\subsubsection{Vitali's theorem}
Apart of Lebesgue's dominated convergence theorem, we shall need
Vitali's theorem. This is the following theorem (for a proof see
\cite{Nat}).
\begin{thm} \label{T: Vitali} Let $X$ be a Borel subset of $\mbR.$
Suppose for functions $f_y \in L_1(\mbR),$ $y>0,$ the integrals
$$
  \int_X f_y(\lambda)\,d\lambda
$$
tend to zero uniformly with respect to $y$ as $\abs{X} \to 0.$
Suppose also the same for $X = (-\infty,N) \cup (N,\infty)$ as $N \to \infty.$
If for a.e. $\lambda \in \mbR$
$$
  \lim_{y \to 0} f_y(\lambda) = f(\lambda),
$$
then the function $f$ is summable and
$$
  \lim_{y \to 0} \int_{-\infty}^\infty f_y(\lambda)\,d\lambda = \int_{-\infty}^\infty f(\lambda)\,d\lambda.
$$
\end{thm}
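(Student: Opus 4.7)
The plan is to combine Egorov's theorem with the two uniform-integrability hypotheses, after first strengthening them and establishing that $f$ itself lies in $L^1$. I would begin by upgrading the stated hypothesis (which controls $\left|\int_X f_y\,d\lambda\right|$) to uniform integrability of $|f_y|$: for real-valued $f_y$, split $X = X_y^+ \cup X_y^-$ according to the sign of $f_y$; since $|X_y^{\pm}| \le |X|$, the identity $\int_X |f_y|\,d\lambda = \int_{X_y^+} f_y\,d\lambda - \int_{X_y^-} f_y\,d\lambda$ shows that $\int_X |f_y|\,d\lambda$ still tends to $0$ uniformly in $y$ as $|X| \to 0$ or as $X$ recedes to infinity. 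The complex case follows by applying this to real and imaginary parts.

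Next, I would show that $\sup_y \|f_y\|_1 < \infty$ and deduce that $f \in L^1$. Choose $N$ so that $\int_{|\lambda| > N} |f_y|\,d\lambda \le 1$ for all $y$, and $\delta > 0$ so that $\int_E |f_y|\,d\lambda \le 1$ whenever $|E| < \delta$; covering $[-N, N]$ by finitely many intervals of length less than $\delta$ gives a uniform $L^1$-bound. Fatou's lemma then yields $f \in L^1$, and, applied on each small set or tail, shows that $f$ inherits the same uniform-integrability estimates as the family $\{f_y\}$.

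For the main estimate, fix $\epsilon > 0$ and choose $N$ and $\delta$ (now valid for $g = f$ and $g = f_y$ for every $y$) so that $\int_{|\lambda| > N} |g|\,d\lambda < \epsilon/4$ and $\int_E |g|\,d\lambda < \epsilon/4$ whenever $|E| < \delta$. For any sequence $y_n \to 0^+$, Egorov's theorem applied on the finite-measure interval $[-N, N]$ furnishes a measurable set $E \subset [-N, N]$ with $|[-N, N] \setminus E| < \delta$ on which $f_{y_n} \to f$ uniformly. Splitting
$$
\int_{\mbR} (f_{y_n} - f)\,d\lambda = \int_E (f_{y_n} - f)\,d\lambda + \int_{[-N,N] \setminus E} (f_{y_n} - f)\,d\lambda + \int_{|\lambda| > N} (f_{y_n} - f)\,d\lambda,
$$
the first term tends to $0$ as $n \to \infty$ by uniform convergence on a set of finite measure, while each of the other two has modulus at most $\epsilon/4 + \epsilon/4 = \epsilon/2$ by the uniform-integrability bounds on $f$ and $f_{y_n}$. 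Letting $n \to \infty$ and then $\epsilon \to 0$ gives $\int f_{y_n}\,d\lambda \to \int f\,d\lambda$ along every sequence $y_n \to 0^+$, hence the desired continuous-parameter limit.

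I expect no deep obstacle here: the only delicate points are passing from the signed uniform-integrability hypothesis to uniform integrability of $|f_y|$ (so that the three-part triangle-inequality bookkeeping in the last step is clean), and reducing from the continuous parameter $y \to 0^+$ to sequences so that Egorov's theorem applies in its classical form.
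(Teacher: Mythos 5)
The paper does not prove this theorem; it only cites Natanson, so there is no in-text argument to compare against. Your Egorov-plus-uniform-integrability plan is the standard route and most of it is sound, but the opening step contains a genuine gap.

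You claim the sign-decomposition identity $\int_X|f_y|\,d\lambda=\int_{X_y^+}f_y\,d\lambda-\int_{X_y^-}f_y\,d\lambda$ upgrades the hypotheses on $\int_X f_y$ to hypotheses on $\int_X|f_y|$ both as $|X|\to0$ \emph{and} as $X$ recedes to infinity. The first upgrade is correct, since $|X_y^\pm|\le|X|$ and the small-measure hypothesis applies to $X_y^\pm.$ The second is not: for a tail set $X=\{|\lambda|>N\},$ the sets $X_y^\pm$ are subsets of $X$ that in general have neither small Lebesgue measure nor the form $\{|\lambda|>M\},$ so neither stated hypothesis controls $\int_{X_y^\pm}f_y.$ Everything downstream --- in particular your deduction that $\sup_y\|f_y\|_1<\infty$ by choosing $N$ with $\int_{|\lambda|>N}|f_y|\le1$ for all $y$, hence $f\in L^1$ by Fatou --- rests on this unjustified step.

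The gap is not cosmetic. Let $\phi_n=\chi_{[n,n+1/2)}-\chi_{[n+1/2,n+1)},$ set $f_y=\sum_{1\le n\le 1/y}n^{-1}\phi_n$ for $y\in(0,1),$ and $f=\sum_{n\ge1}n^{-1}\phi_n.$ Each $f_y\in L^1$ and $f_y\to f$ everywhere. Since $\sum_n n^{-1}|X\cap[n,n+1)|\le|X|,$ one has $\sup_y\left|\int_X f_y\,d\lambda\right|\le|X|;$ and since $\int\phi_n\,d\lambda=0,$ one has $\sup_y\left|\int_{|\lambda|>N}f_y\,d\lambda\right|\le\tfrac12(N-1)^{-1}.$ So the hypotheses as literally stated hold, yet $\|f_y\|_1\to\infty$ and $f\notin L^1,$ so the asserted conclusion fails. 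Thus the tail hypothesis must be imposed directly on $|f_y|$ rather than derived; this is in fact how the theorem is used in the paper (Lemma~\ref{L: Zet} and step (B) of the proof of Theorem~\ref{T: time dep-t WO} verify the conditions for $|f_y|$ outright). If you assume the unsigned tail condition from the start and delete the spurious derivation, the remainder of your proof is correct.
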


\subsubsection{Poisson integral}
Let~$F$ be a function of bounded variation on $\mbR.$
Poisson integral $\euP_F$ of $F$ is the following function of two variables:
$$
  \euP_F(x,y) = \frac y\pi \int_{-\infty}^{\infty} \frac {dF(t)}{(x-t)^2+y^2}.
$$
The function
\begin{equation} \label{F: Poisson kernel}
  P_y(x) = \frac 1\pi \frac y{x^2+y^2}
\end{equation}
is the kernel of the Poisson integral and
$$
  \euP_F(x,y) = P_y * dF(x).
$$
The family $\set{P_y(x), y>0}$ form an approximate unit for the delta-function,
that is, all these functions are non-negative, an integral of each of the functions $P_y$
is equal to $1$ and $P_y$ converge in distributions sense to the Dirac's delta function $\delta.$

In case when $F$ is the distribution function of a summable function $f \in L_1(\mbR),$
allowing an abuse of terminology, we also say that $P_y* f(x)$ is the Poisson integral of the function~$f.$

\begin{lemma} \label{L: Zet} Let $g \in L_1(\mbR)$ and let $g_y$ be the Poisson integral of $g.$
If $X$ is a Borel set, then the integral
$$
  \int_X \abs{g_y(\lambda)}\,d\lambda
$$
converges to zero as $\abs{X} \to 0$ and as $N \to \infty$ in $X = (-\infty,-N) \cup (N,\infty)$ uniformly
with respect to $y \in(0,1).$
\end{lemma}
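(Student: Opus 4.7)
The plan is to exploit the fact that Poisson integration is a convolution with a probability density, so it is non-expansive on both $L^1$ and $L^\infty$, and then reduce the general case to these two extreme estimates by a standard decomposition of $g$.

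First I would record the two basic convolution bounds. Writing $g_y = P_y * g$, Young's inequality (which amounts here to the simple observation that $P_y \geq 0$ and $\int P_y = 1$) gives $\|g_y\|_1 \le \|g\|_1$ for every $g \in L^1(\mbR)$ and $\|g_y\|_\infty \le \|g\|_\infty$ whenever $g \in L^\infty(\mbR)$. Both bounds hold uniformly in $y > 0$, which is the whole point: we do not want constants depending on $y$.

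Next, given $\varepsilon > 0$, I would split $g = g_1 + g_2$ where $g_1$ is bounded (say $|g_1| \le M$) and $\|g_2\|_1 < \varepsilon/2$; this is possible by density of $L^1 \cap L^\infty$ in $L^1$ (e.g.\ truncation of $g$). Applying the two bounds above to the pieces, for any Borel set $X$ we obtain
$$
  \int_X |g_y(\lambda)|\,d\lambda
    \le \int_X |g_{1,y}(\lambda)|\,d\lambda + \int_X |g_{2,y}(\lambda)|\,d\lambda
    \le M \cdot |X| + \|g_2\|_1
    < M\cdot|X| + \frac{\varepsilon}{2}.
$$
Choosing $|X| < \varepsilon/(2M)$ gives the bound $\varepsilon$ uniformly in $y > 0$, which is exactly the claim.

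There is no genuine obstacle; the whole point of the decomposition is to avoid the only potential difficulty, namely that for small $y$ the kernel $P_y$ is highly concentrated, so a direct Fubini estimate on $\int_X P_y(\lambda-t)\,d\lambda$ cannot be made small uniformly in the location of $t$. Splitting $g$ into a bounded piece (where we control $g_y$ pointwise by $M$) and a small-$L^1$ piece (where we control $g_y$ in $L^1$) neatly bypasses this and gives the conclusion. The same idea will presumably be used when Vitali's theorem (Theorem~\ref{T: Vitali}) is invoked later, so I would keep the proof short and self-contained.
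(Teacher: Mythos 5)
Your proof is correct, but it takes a genuinely different route from the paper's. The paper argues in one stroke: by absolute continuity of the Lebesgue integral there is $\delta>0$ with $\int_Z|g|<\eps$ whenever $|Z|<\delta$; then Tonelli and the translation invariance of Lebesgue measure give
$$
  \int_X|g_y|\,d\lambda \le \int_\mbR P_y(t)\Bigl(\int_{X-t}|g(\lambda)|\,d\lambda\Bigr)dt < \eps
$$
for all $y>0$, since $|X-t|=|X|<\delta$ for every $t$. The same $\delta$ that witnesses uniform integrability of $g$ thus works for the whole family $\{g_y\}_{y>0}$, with no decomposition of $g$ required. Your argument instead splits $g$ into a bounded piece and a small-$L^1$ piece and uses the two non-expansiveness bounds $\|P_y*\cdot\|_\infty\le\|\cdot\|_\infty$ and $\|P_y*\cdot\|_1\le\|\cdot\|_1$; this is a standard $L^1{+}L^\infty$ splitting and is equally valid. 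The paper's proof is slightly more economical (a single application of Tonelli and no approximation of $g$), and it makes transparent that the family $\{g_y\}$ inherits the uniform-integrability modulus of $g$ verbatim; your version is perhaps more portable, since it only uses that $P_y$ is a probability kernel and would apply with no change to any approximate identity, not just the Poisson kernel, and does not invoke translation invariance of the underlying measure. Either proof would serve the later Vitali argument equally well.
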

% \begin{proof} Recall that $g_y(\lambda) = P_y * g(\lambda),$ where $P_y$ is the kernel of the Poisson integral (\ref{F: Poisson kernel}).
%
% Let $\eps > 0.$ There exists $\delta > 0$ such that for any Borel set $X$ with $\abs{X} < \delta$
% the inequality
% $$
%   \int_X \abs{g(\lambda)}\,d\lambda < \eps
% $$
% holds. Further, if $\abs{X} < \delta,$ then for any $y>0$
% \begin{equation*} % \label{F: }
%   \begin{split}
%      \int_X \abs{g_y(\lambda)}\,d\lambda & = \int_X \abs{\int_\mbR g(\lambda-t)P_y(t)\,dt}\,d\lambda
%      \\ & \leq \int_X \brs{\int_\mbR \abs{g(\lambda-t)}P_y(t)\,dt}\,d\lambda
%      \\ & \leq \int_\mbR P_y(t)\brs{\int_X \abs{g(\lambda-t)}\,d\lambda}\,dt
%      \\ & \leq \int_\mbR P_y(t)\brs{\int_{X-t} \abs{g(\lambda)}\,d\lambda}\,dt
%      \\ & < \eps \int_\mbR P_y(t)\,dt = \eps.
%   \end{split}
% \end{equation*}
% Proof is complete.
% \end{proof}

\subsubsection{Fatou's theorem}
%For reader's convenience, in this subsection we give the proof of Fatou's theorem.
%The proof has been taken from~\cite{Ho}, where Fatou's theorem is proved for the disk in greater generality of non-tangential limit.
The following Fatou's theorem plays an important role in this paper.
For a discussion of this theorem see~\cite{Ya}.
\begin{thm} \label{T: Fatou} Let~$F$ be a function of bounded variation on $\mbR.$
% Let for $y>0$
% $$
%   \euP_F(x,y) = \frac y\pi \int_{-\infty}^{\infty} \frac {dF(t)}{(x-t)^2+y^2}
% $$
% be the Poisson integral of~$F.$
If at some point $x_0 \in \mbR$ the function~$F$ has the symmetric derivative
$$
  F'_{sym}(x_0) := \lim_{h \to 0^+} \frac{F(x_0+h) - F(x_0-h)}{2h},
$$
then the limit of the Poisson integral of $F$
$$
  \lim_{y\to 0^+} \euP_F(x_0,y)
$$
exists and is equal to~$F'_{sym}(x_0).$ In particular, the limit exists for a.e. $x_0.$
\end{thm}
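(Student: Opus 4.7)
The plan is to reduce to $x_0=0$ by a translation and then integrate by parts in the Stieltjes sense. Since $F$ has finite total variation it is bounded, so in
$$
  \euP_F(0,y) = \frac{y}{\pi}\int_{-\infty}^{\infty}\frac{dF(t)}{t^2+y^2}
$$
the boundary terms at $\pm\infty$ vanish after integrating by parts against the $C^1$ function $t\mapsto 1/(t^2+y^2)$, leaving
$$
  \euP_F(0,y) = \frac{y}{\pi}\int_{-\infty}^{\infty}\frac{2tF(t)}{(t^2+y^2)^2}\,dt.
$$
Since the kernel $t\mapsto 2ty/(t^2+y^2)^2$ is odd, folding the integral onto $(0,\infty)$ produces
$$
  \euP_F(0,y) = \int_0^{\infty} K_y(t)\cdot\frac{F(t)-F(-t)}{2t}\,dt,\qquad K_y(t):=\frac{4yt^2}{\pi(t^2+y^2)^2}.
$$

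Next I would verify that $\{K_y\}_{y>0}$ is an approximate identity concentrating at $0^+$: the change of variable $t=ys$ together with the elementary evaluation $\int_0^{\infty}s^2(s^2+1)^{-2}\,ds = \pi/4$ gives $\int_0^{\infty}K_y(t)\,dt = 1$ for every $y>0$, and the same substitution yields $\int_\delta^{\infty}K_y(t)\,dt = \tfrac{4}{\pi}\int_{\delta/y}^{\infty}s^2(s^2+1)^{-2}\,ds \to 0$ as $y\to 0^+$ for every fixed $\delta>0$. The hypothesis on $F'_{sym}(x_0)$ translates to $\tfrac{F(t)-F(-t)}{2t}\to L:=F'_{sym}(x_0)$ as $t\to 0^+$, while on $[\delta,\infty)$ this difference quotient is bounded by $(\sup_{\mbR}\abs{F})/\delta$.

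These ingredients combine into a standard approximate-identity argument: writing $\euP_F(0,y)-L = \int_0^{\infty}K_y(t)\left[\tfrac{F(t)-F(-t)}{2t}-L\right]dt$ and fixing $\eps>0$, choose $\delta$ so that the bracketed factor is smaller than $\eps$ on $(0,\delta)$; the contribution from $(0,\delta)$ is then at most $\eps\int_0^{\delta}K_y\,dt\le\eps$, while the tail $[\delta,\infty)$ is dominated by $\bigl(\abs{L}+(\sup\abs{F})/\delta\bigr)\int_\delta^{\infty}K_y(t)\,dt$, which tends to $0$ with $y$. This gives $\euP_F(x_0,y)\to F'_{sym}(x_0)$. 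The concluding \emph{in particular} clause then follows from the classical Lebesgue theorem that any BV function is ordinarily differentiable almost everywhere, and ordinary differentiability trivially implies symmetric differentiability.

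The main obstacle I anticipate is making the integration-by-parts step fully rigorous, since $dF$ is a signed Borel measure rather than an absolutely continuous one. To handle this cleanly I would first pass to the Hahn decomposition $F=F_+-F_-$ into monotone right-continuous functions, apply the Riemann-Stieltjes integration-by-parts formula to each piece against the smooth function $1/(t^2+y^2)$, and then use boundedness of $F_\pm$ together with the decay of $1/(t^2+y^2)$ to dispose of the boundary terms at $\pm\infty$. Once that technicality is discharged, the remainder of the argument is a routine mollifier estimate.
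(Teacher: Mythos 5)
Your argument is correct and follows essentially the same route as the paper's own proof: integrate $\euP_F$ by parts against $t\mapsto 1/(t^2+y^2)$, fold onto $(0,\infty)$ using oddness of the derivative of the Poisson kernel, recognize $K_y$ as an approximate identity, and test it against the symmetric difference quotient $t\mapsto\tfrac{F(x_0+t)-F(x_0-t)}{2t}$, with the a.e. clause then coming from Lebesgue's differentiation theorem. The only differences (normalizing $K_y$ to have unit integral over $(0,\infty)$ rather than over $\mbR$, writing out the $\eps$--$\delta$ estimate instead of just invoking continuity of the difference quotient at $0$, and spelling out the Jordan-decomposition justification of the Stieltjes integration by parts) are cosmetic.
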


\subsubsection{Privalov's theorem}
Let~$F \colon \mbR \to \mbC$ be a function of bounded variation.
The Cauchy-Stieltjes transform of~$F$ is a function holomorphic in both the upper and the lower complex half-planes $\mbC_\pm;$
this function is defined by the formula
$$
  \clC_F(z) = \int_{-\infty}^\infty (x-z)^{-1}\,dF(x).
$$
The following theorem is known as Privalov's theorem (cf.~\cite{Priv},
\cite[Theorem 1.2.5]{Ya}). This theorem can be formulated for an upper half-plane or, equivalently,
for a unit disk. Proof of the theorem can also be found in~\cite[Chapter VI, \S 59, Theorem 1]{AG}.
\begin{thm} \label{T: Ya 1.2.5}
Let~$F \colon \mbR \to \mbC$ be a function of bounded variation. The limit values
$$
  \clC_F(\lambda \pm i0) := \lim_{y \to 0^+} \clC_F(\lambda \pm iy)
$$
of the Cauchy-Stieltjes transform $\clC_F(z)$ of~$F$ exist for a.e. $\lambda \in \mbR,$ and
for a.e. $\lambda \in \mbR$ the equality
\begin{gather} \label{F: Privalov's formula}
  \clC_F(\lambda \pm i0) = \pm \pi i \frac {dF(\lambda)}{d\lambda} + {\rm p.v.} \int_{-\infty}^\infty (\mu-\lambda)^{-1}\,dF(\mu)
\end{gather}
holds, where the principal value integral on the right-hand side also exists for a.e. $\lambda \in \mbR.$
\end{thm}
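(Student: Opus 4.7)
The plan is to split
\begin{equation*}
  \clC_F(\lambda \pm iy) = R_y^{\pm}(\lambda) \pm i \pi \euP_F(\lambda,y),
\end{equation*}
where $R_y^{\pm}(\lambda) := \int (\mu-\lambda)[(\mu-\lambda)^2+y^2]^{-1}\,dF(\mu)$ is the real part (independent of the sign $\pm$, which is carried entirely by the imaginary part) and $\pi\euP_F(\lambda,y) = \int y[(\mu-\lambda)^2+y^2]^{-1}\,dF(\mu)$. Since $F$ is of bounded variation, $dF$ is a finite signed measure and both integrals are absolutely convergent. The imaginary part is, up to a factor $\pi$, the Poisson integral of $F$, so Theorem \ref{T: Fatou} immediately gives its a.e.\ convergence to $\pi\frac{dF(\lambda)}{d\lambda}$. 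This yields the $\pm\pi i\frac{dF(\lambda)}{d\lambda}$ term in (\ref{F: Privalov's formula}); the sign is determined by whether $y>0$ or $y<0$.

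For the real part the plan is to compare $R_y(\lambda)$ to the truncated principal value
\begin{equation*}
  T_y(\lambda) := \int_{|\mu-\lambda|>y} (\mu-\lambda)^{-1}\,dF(\mu),
\end{equation*}
which is well-defined for every $\lambda$ and $y>0$, and to show that both tend to the same finite limit for a.e.\ $\lambda$. Using the odd symmetry of the kernel $t/(t^2+y^2)$ and introducing the symmetric difference $H(t):=F(\lambda+t)-F(\lambda-t)$, one rewrites
\begin{equation*}
  R_y(\lambda) = \int_0^\infty \frac{t\,dH(t)}{t^2+y^2}, \qquad T_y(\lambda) = \int_y^\infty \frac{dH(t)}{t}.
\end{equation*}
Integrating by parts (boundary contributions vanish because $F$ is bounded and the kernels decay at infinity) converts these into integrals of $H(t)$ against $L^1$-type kernels, placing them in exactly the approximate-identity framework used for Fatou's theorem above. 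At every point $\lambda$ where $F$ is differentiable --- a set of full measure by Lebesgue's differentiation theorem applied to $F\in BV$ --- one has $H(t) = 2tF'(\lambda) + o(t)$ as $t\to 0^+$, and a direct estimate of the kernel difference between $R_y$ and $T_y$ yields $R_y(\lambda) - T_y(\lambda) \to 0$.

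The main obstacle is establishing the a.e.\ existence of the limit of $T_y(\lambda)$ itself, i.e.\ of the principal value integral on the right-hand side of (\ref{F: Privalov's formula}). This is equivalent to a.e.\ convergence of truncations of the Hilbert transform of the measure $dF$. The standard route is a weak-type $(1,1)$ bound for the maximal truncated Hilbert transform via the Hardy--Littlewood maximal function, combined with a density argument: convergence holds trivially for smooth compactly supported $F$, and Banach's continuity principle transfers it to general $F\in BV$ by splitting $dF$ into its absolutely continuous and singular parts. Once the a.e.\ existence of $\lim_{y\to 0^+} T_y(\lambda)$ is in hand, the previous paragraph identifies this limit with $\lim_{y\to 0^+} R_y(\lambda)$, and adding the real and imaginary limits produces formula (\ref{F: Privalov's formula}). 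The same argument applied to approach from the lower half-plane gives the $-$ sign case.
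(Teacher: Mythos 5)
The paper does not actually prove Theorem \ref{T: Ya 1.2.5}; it states it with references to \cite{Priv}, \cite[Theorem 1.2.5]{Ya} and \cite[Chapter VI, \S 59]{AG}, then observes — exactly as in your first paragraph — that since $\frac1\pi\Im\clC_F(\lambda+iy)=\euP_F(\lambda,y)$, the a.e.\ convergence of the imaginary part to $\pm\pi F'(\lambda)$ is an immediate consequence of Fatou's Theorem \ref{T: Fatou}, and explicitly remarks that ``the convergence of the real part to the p.v.\ integral will not be used in this paper, and therefore is omitted.'' So your imaginary-part argument coincides with what the paper actually establishes, and your real-part attempt is material the paper deliberately delegates to the literature.

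That attempt, however, has two problems. First, a symmetry error: with your $H(t)=F(\lambda+t)-F(\lambda-t)$ one has $dH(t)$ corresponding to $F'(\lambda+t)+F'(\lambda-t)$, which is the combination that appears in the \emph{Poisson} kernel. Splitting $R_y(\lambda)=\int \tfrac{\mu-\lambda}{(\mu-\lambda)^2+y^2}\,dF(\mu)$ and $T_y(\lambda)$ at $\mu=\lambda$ and substituting $\mu=\lambda\pm t$ shows both involve the \emph{difference} $F'(\lambda+t)-F'(\lambda-t)$, i.e.\ the derivative of the even quantity $F(\lambda+t)+F(\lambda-t)$, not of your odd $H$. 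This also affects the claimed small-$t$ asymptotic: at a differentiability point the relevant estimate is $F(\lambda+t)+F(\lambda-t)-2F(\lambda)=o(t)$, not $H(t)=2tF'(\lambda)+o(t)$. Second, and more substantively, the density argument you invoke for $\lim_{y\to0^+}T_y(\lambda)$ handles only the absolutely continuous part of $dF$: smooth compactly supported functions are dense in $L^1(\mbR)$, so the weak-type $(1,1)$ maximal bound transfers a.e.\ convergence there, but they are \emph{not} dense in the space of finite signed measures under the total-variation norm, so ``splitting $dF$ into absolutely continuous and singular parts'' does not dispose of the singular part via the Banach continuity principle. The singular part requires a separate argument — for instance the a.e.\ existence of nontangential boundary values for functions of bounded characteristic (the Cauchy transform of a finite measure lies in the Nevanlinna class), or a direct covering estimate exploiting that the symmetric derivative of a singular measure vanishes Lebesgue-a.e.
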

%\begin{proof}
Since the imaginary part of the Cauchy-Stieltjes transform of~$F$ is the Poisson kernel of~$F:$
\begin{equation} \label{F: Im C = Poisson}
  \frac 1\pi \Im \clC_F(\lambda+iy) =\euP_F(\lambda,y),
\end{equation}
the convergence of $\frac 1 \pi \Im \clC_F(\lambda\pm iy)$ to $\pm F'(\lambda)$ for a.e. $\lambda$ follows from Fatou's theorem \ref{T: Fatou}.

\subsubsection{The set $\Lambda(f)$} \label{SS: L(F)}
It is customary to consider a summable function $f \in L_1(\mbR)$ as a class of equivalent functions, where two functions are considered to be
equivalent if they coincide everywhere except a null set.
So, a summable function is defined up to a set of Lebesgue measure zero. In this way,
in general one cannot ask what is the value of a summable function $f$
at, say, $\sqrt 2.$ In this paper we take a different approach. By a summable function we mean a complex-valued summable function $f$
which is \emph{explicitly} defined on some explicit set of full Lebesgue measure.

Given a summable function $f \in L_1(\mbR)$ there are two (among many other) natural ways to assign to the function
a canonical set of full Lebesgue measure $\Lambda,$ so that $f$ is in some natural way defined at \emph{every} point
of $\Lambda$ (see the first paragraph of \cite[p.\,384]{AD56}).

The first way is this. If $f \in L_1(\mbR),$ then one can define a set of full Lebesgue measure $\Lambda'(f)$
as the set of all those numbers $x$ at which the function
$$
  \int_0^x f(t)\,dt
$$
is differentiable. Lebesgue's differentiation theorem says this set is a full one.
If $x \in \Lambda'(f),$ then one can define $f(x)$ by the formula
$$
  f(\lambda) := \frac d{d\lambda} \int_0^\lambda f(x)\,dx.
$$
%such that for all $\lambda \in \Lambda'(f)$ one can assign an explicit value $f(\lambda)$ of the function $f$
%at $\lambda,$

However, there is another canonical set of full Lebesgue measure,
associated with $f:$
$$
  \Lambda(f) := \set{\lambda \in \mbR \colon \ \lim_{y\to 0^+} \Im \clC_F(\lambda+iy) \ \text{exists}},
$$
where~$F(\lambda)=\int_0^\lambda f(x)\,dx$ and $\clC_F(z)$
is the Cauchy-Stieltjes transform of~$F.$ That $\Lambda(f)$ is a full set follows from Theorem \ref{T: Fatou}.
For any $\lambda \in \Lambda(f)$
one can define the value $f(\lambda)$ of the function $f$ at $\lambda$ by the formula
\begin{equation} \label{F: f(l)=1/pi Im C(l+i0)}
  f(\lambda) := \frac 1\pi \Im \clC_F(\lambda+i0) := \frac 1\pi \lim_{y\to 0^+} \Im \clC_F(\lambda+iy) = \lim_{y\to 0^+} f * P_y(\lambda).
\end{equation}
Since $\frac 1\pi \Im \clC_F(\lambda+iy)$ is the Poisson kernel of $F$ (see (\ref{F: Im C = Poisson})),
it follows from Theorem \ref{T: Fatou} that the two explicit summable functions defined in this way are equivalent.

It is clear that two elements $f$ and $g$ of $L_1(\mbR)$ (as equivalence classes) coincide if and only if $\Lambda(f) = \Lambda(g)$
and $f(\lambda) = g(\lambda)$ for all $\lambda \in \Lambda(f).$

So, from now on, all summable functions $f$ are understood in this sense (if not stated otherwise): $f$ is a function on the full set $\Lambda(f)$
defined by (\ref{F: f(l)=1/pi Im C(l+i0)}). Probably, it is worth to stress again that in this definition by a function we mean a function.

\subsubsection{De la Vall\'ee Poussin decomposition theorem}
This is the following theorem (see e.g. \cite[Theorem IV.9.6]{Saks}, \cite{RudRCA}):
\begin{thm} \label{T: Vallee Poussin} Let $m$ be a finite signed measure. Let $\abs{m}$ be the total variation of $m.$
Let $E_{-\infty}$ (respectively, $E_{+\infty}$) be the set where the derivative of the distribution function~$F_m$ of $m$ is $-\infty$
(respectively, $+\infty$).
If $X$ is a Borel subset of $\mbR,$ %(If~$F_m$ is continuous at every point of $X,$)
then
$$
  m(X) = m(X \cap E_{-\infty}) + m(X \cap E_{+\infty}) + \int_X F_m'(t)\,dt
$$
and
$$
  \abs{m}(X) = \abs{m(X \cap E_{-\infty})} + m(X \cap E_{+\infty}) + \int_X \abs{F_m'(t)}\,dt.
$$
\end{thm}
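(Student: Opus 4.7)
The plan is to combine the Jordan and Lebesgue decompositions of $m$ with the classical fact that a finite positive singular measure is concentrated on the set where the symmetric derivative of its distribution function equals $+\infty$. First I would write the Jordan decomposition $m = m_+ - m_-$ into mutually singular finite non-negative measures, then Lebesgue-decompose each with respect to Lebesgue measure, $m_\pm = m_\pm^{(a)} + m_\pm^{(s)}$. Since $m_+ \perp m_-$, also $m_+^{(a)} \perp m_-^{(a)}$, so their Radon-Nikodym densities $f_\pm \geq 0$ satisfy $f_+ f_- = 0$ a.e., giving $F_m' = f_+ - f_-$ and $|F_m'| = f_+ + f_-$ Lebesgue-a.e.\ by the Radon-Nikodym theorem combined with Lebesgue's differentiation theorem. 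In particular $m^{(a)}(X) = \int_X F_m'(t)\,dt$.

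The central step is to show that $m_+^{(s)}$ is concentrated on $E_{+\infty}$ and $m_-^{(s)}$ on $E_{-\infty}$. For this I would invoke the classical lemma: if $\mu$ is a finite positive measure singular with respect to Lebesgue measure, then the symmetric derivative of its distribution function $F_\mu$ is $+\infty$ at $\mu$-almost every point. This is a Vitali/Besicovitch covering argument: the set where the derivative stays finite is shown to be $\mu$-null by covering it with small intervals of controlled $\mu$-to-Lebesgue ratio. Granted this, at $m_+^{(s)}$-a.e.\ point $x$ we have $F_{m_+^{(s)}}'(x) = +\infty$, while $F_{m^{(a)}}'(x)$ is finite a.e.\ (Lebesgue), and $F_{m_-^{(s)}}'(x)$ is finite (in fact zero) outside a $m_-^{(s)}$-null set, which has $m_+^{(s)}$-measure zero by mutual singularity. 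Summing, $F_m'(x) = +\infty$ $m_+^{(s)}$-a.e., so $m_+^{(s)}$ is concentrated on $E_{+\infty}$. The symmetric argument handles $m_-^{(s)}$.

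Once this structural statement is established, the two formulas reduce to bookkeeping. Lebesgue differentiation gives $|E_{\pm\infty}| = 0$, hence $m_\pm^{(a)}(X \cap E_{\pm\infty}) = 0$, while by mutual singularity $m_+^{(s)}$ vanishes on $E_{-\infty}$ and $m_-^{(s)}$ vanishes on $E_{+\infty}$. Therefore $m(X \cap E_{+\infty}) = m_+^{(s)}(X) \geq 0$ and $m(X \cap E_{-\infty}) = -m_-^{(s)}(X) \leq 0$. Substituting these together with $m^{(a)}(X) = \int_X F_m'(t)\,dt$ into $m(X) = m^{(a)}(X) + m_+^{(s)}(X) - m_-^{(s)}(X)$ yields the first identity. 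For the second, $|m|(X) = m_+(X) + m_-(X) = \bigl(m_+^{(a)}(X) + m_-^{(a)}(X)\bigr) + m_+^{(s)}(X) + m_-^{(s)}(X)$, and using $f_+ + f_- = |F_m'|$ a.e.\ together with $m_-^{(s)}(X) = |m(X \cap E_{-\infty})|$ and $m_+^{(s)}(X) = m(X \cap E_{+\infty})$ delivers the claimed formula.

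The main obstacle is the covering-argument lemma that a positive singular finite measure $\mu$ is concentrated on the set where the symmetric derivative of $F_\mu$ is $+\infty$. This is the one genuinely analytical ingredient (cf.\ Saks, \emph{Theory of the Integral}, Ch.~IV); everything else in the proof is formal manipulation of the Jordan and Lebesgue decompositions.
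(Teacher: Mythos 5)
The paper itself does not prove this theorem; it cites Saks \cite[Theorem IV.9.6]{Saks} with a remark dropping a redundant continuity hypothesis, so there is no proof in the paper for your argument to be compared against. Evaluated on its own terms, your plan — Jordan plus Lebesgue decomposition, together with the covering lemma that a finite positive singular measure is carried by the set where its distribution function has symmetric derivative $+\infty$ — is the right framework, and the end-game bookkeeping would indeed go through once the key fact $F_m'(x)=+\infty$ at $m_+^{(s)}$-a.e.\ $x$ is in hand.

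That fact is where the gap sits. Knowing $F_{m_+^{(s)}}'=+\infty$ at $m_+^{(s)}$-a.e.\ $x$ is not the same as knowing $F_m'=+\infty$ there, because the terms $F_{m^{(a)}}$ and $F_{m_-^{(s)}}$ could, a priori, push the limit back down. Your handling of these cross-terms does not hold up: (i) you observe that $F_{m^{(a)}}'$ is finite Lebesgue-a.e., but a Lebesgue-null set can have full $m_+^{(s)}$-measure (that is exactly what singularity means), so this gives no control $m_+^{(s)}$-a.e.; (ii) your claim that $F_{m_-^{(s)}}'$ vanishes outside an $m_-^{(s)}$-null set is reversed — by your own covering lemma $F_{m_-^{(s)}}'=+\infty$ at $m_-^{(s)}$-a.e.\ $x$, so the exceptional set has \emph{full} $m_-^{(s)}$-measure, not zero — and in any case mutual singularity of $m_+^{(s)}$ and $m_-^{(s)}$ does not convert $m_-^{(s)}$-null sets into $m_+^{(s)}$-null sets: a separating Borel set $A$ with $m_-^{(s)}(A)=0$ has \emph{full} $m_+^{(s)}$-measure. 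What you actually need is the ratio form of the covering/Besicovitch differentiation theorem: if $\mu,\nu$ are finite positive Borel measures with $\nu\perp\mu$, then $\nu([x-h,x+h])/\mu([x-h,x+h])\to 0$ for $\mu$-a.e.\ $x$. Applying this with $\mu=m_+^{(s)}$ and $\nu$ equal in turn to $m_+^{(a)},m_-^{(a)},m_-^{(s)}$ (all mutually singular with $m_+^{(s)}$), and with $\nu$ equal to Lebesgue measure on a bounded interval to get $2h/m_+^{(s)}([x-h,x+h])\to 0$ as well, you obtain
$\frac{m([x-h,x+h])}{2h}=\frac{m_+^{(s)}([x-h,x+h])}{2h}\Bigl(1+\frac{m^{(a)}([x-h,x+h])-m_-^{(s)}([x-h,x+h])}{m_+^{(s)}([x-h,x+h])}\Bigr)\to+\infty$
for $m_+^{(s)}$-a.e.\ $x$, which is the missing ingredient. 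The symmetric argument then handles $m_-^{(s)}$, and the rest of your bookkeeping is correct.
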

%Proof of this theorem is quite lengthy and difficult.

Remark. The formulation of \cite[Theorem IV.9.6]{Saks} contains an additional condition that~$F_m$ is continuous at every point of $X.$
This condition is obviously redundant.

\subsubsection{Standard supports of measures} \label{SS: L(m)}
If $m$ is a finite signed measure, then its Cauchy-Stieltjes transform~$\clC_m(z)$
is defined as the Cauchy-Stieltjes transform of its distribution function
$$
  F_m(x) = m((-\infty,x)).
$$
That is,
$$
  \clC_m(z) := \int_{-\infty}^{\infty} \frac {m(dx)}{x - z}.
$$
A finite signed measure has a natural decomposition
$$
  m = m^{(a)}+m^{(s)}
$$
into the sum of an absolutely continuous measure $m^{(a)}$ and a singular measure $m^{(s)}.$
The signed measures $m^{(a)}$ and $m^{(s)}$ are mutually singular.
It is desirable to split the set of real numbers $\mbR$ in some natural way, such that the first
set is a Borel support of the absolutely continuous part $m^{(a)},$ while the second set is a Borel support
of the singular part $m^{(s)}$ of the measure $m.$

It is possible to do so in several ways. The choice which suits our needs in the best way is the following.
To every finite signed measure $m$ we assign the set
$$
  \Lambda(m) := \set{\lambda \in \mbR \colon \ \text{a finite limit} \ \Im \clC_m(\lambda+i0) \in \mbR \ \text{exists}},
$$
This set was introduced by Aronszajn in~\cite{Ar57}.

The following theorem belongs to Aronszajn~\cite{Ar57}.
%In \cite{Ar57} it was stated for measures, and has not been supplied with a proof.
%I have not been able to find a written proof of this theorem in the literature
%(probably, I didn't look well), though it is certainly well-known and is widely used.
%For this reason I give the proof of this theorem for completeness.
\begin{thm} \label{T: complement of Lambda(F)} Let $m$ be a finite signed measure.
The set $\Lambda(m)$ is a full set. The complement of the set $\Lambda(m)$
is a minimal Borel support of the singular part of $m.$
\end{thm}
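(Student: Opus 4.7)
The plan is to prove both parts by combining Fatou's theorem (Theorem \ref{T: Fatou}) with the de la Vallée Poussin decomposition (Theorem \ref{T: Vallee Poussin}), using the identity $\frac 1\pi \Im \clC_m(\lambda+iy) = \euP_{F_m}(\lambda,y)$ from (\ref{F: Im C = Poisson}), where $F_m(x)=m((-\infty,x))$ is the distribution function and is of bounded variation.

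First I would establish that $\Lambda(m)$ is a full set. By Lebesgue's differentiation theorem, the bounded-variation function $F_m$ is differentiable a.e., with a finite derivative $F_m'(\lambda)$ at a.e. $\lambda$. At each such point the symmetric derivative exists and equals $F_m'(\lambda)$, which is finite, so Fatou's theorem yields
$$
  \tfrac{1}{\pi}\Im\clC_m(\lambda+iy) = \euP_{F_m}(\lambda,y)\ \longrightarrow\ F_m'(\lambda) \in \mbR
  \quad\text{as } y\to 0^+,
$$
so the finite limit defining membership in $\Lambda(m)$ exists a.e.

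Next I would show that the complement $\mbR\setminus\Lambda(m)$ is a Borel support of $m^{(s)}$. By de la Vallée Poussin's theorem applied with $X=\mbR\setminus(E_{-\infty}\cup E_{+\infty})$, the first two terms vanish and $m(X)=\int_X F_m'(t)\,dt$, which is absolutely continuous; comparing with $m=m^{(a)}+m^{(s)}$ and the Radon-Nikodym identification $m^{(a)}(X)=\int_X F_m'(t)\,dt$, one gets $m^{(s)}(X)=0$, i.e.\ $m^{(s)}$ is concentrated on $E_{-\infty}\cup E_{+\infty}$. Now at any $\lambda\in E_{\pm\infty}$ the ordinary derivative of $F_m$ equals $\pm\infty$, hence so does the symmetric derivative, and Fatou's theorem (which, as noted in the paper, allows infinite symmetric derivatives) gives $\Im\clC_m(\lambda+iy)\to\pm\infty$. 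Therefore $\lambda\notin\Lambda(m)$, so $E_{-\infty}\cup E_{+\infty}\subset\mbR\setminus\Lambda(m)$, and
$$
  m^{(s)}(\Lambda(m))\;\leq\;m^{(s)}\!\brs{\mbR\setminus(E_{-\infty}\cup E_{+\infty})}\;=\;0.
$$
Equivalently, $m^{(s)}(\mbR\setminus(\mbR\setminus\Lambda(m)))=0$, so $\mbR\setminus\Lambda(m)$ is a Borel support of $m^{(s)}$.

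Minimality is then immediate: for any other Borel support $X'$ of $m^{(s)}$,
$$
  m^{(s)}\!\brs{X'\setminus(\mbR\setminus\Lambda(m))}\;=\;m^{(s)}(X'\cap\Lambda(m))\;\leq\;m^{(s)}(\Lambda(m))\;=\;0.
$$
The main obstacle is the careful handling of the infinite-derivative case: one needs Fatou's theorem to cover symmetric derivatives taking the values $\pm\infty$ (which the paper's version does) so that the Poisson integral genuinely diverges at points of $E_{\pm\infty}$, thereby forcing the singular part's carrier out of $\Lambda(m)$. Once that is in hand, the two-line use of de la Vallée Poussin finishes both the support and the minimality claims.
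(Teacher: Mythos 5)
Your proposal is correct and follows essentially the same route as the paper: both rest on Fatou's theorem for symmetric derivatives (including the infinite case) together with the de la Vallée Poussin decomposition, the only cosmetic difference being that the paper deduces $\Delta\cap E_{\pm\infty}=\emptyset$ for $\Delta\subset\Lambda(m)$ and applies de la Vallée Poussin to $\Delta$ directly, while you establish the equivalent containment $E_{-\infty}\cup E_{+\infty}\subset\mbR\setminus\Lambda(m)$ and apply de la Vallée Poussin to the complement of $E_{-\infty}\cup E_{+\infty}$ before combining. Both versions then dispose of minimality by the same near-trivial computation.
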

% \begin{proof} % This follows from~\cite[Theorem IV.9.6]{Saks} and Fatou's~\cite[Theorem 1.2.7]{Ya}.
% As it was mentioned before, that the set $\Lambda(m)$ is a full set follows from Theorem \ref{T: Fatou}.
%
% What we need to show is that for every bounded $\Delta \subset \Lambda(m)$ the equality
% $$
%   m(\Delta) = m^{(a)}(\Delta)
% $$
% holds. %For this we note that the function~$F_m$ is continuous at all points of $\Delta.$
% %Indeed, if~$F_m$ is discontinuous at some point $\lambda,$ then $\Im \clC_m(\lambda+i0)$ is
% %$-\infty$ or $+\infty;$ so such a point cannot belong to $\Lambda(m).$
% %Further,
% Using notation of Theorem \ref{T: Vallee Poussin}, it follows from Fatous's theorem that the sets
% $$
%   \Delta \cap E_{-\infty} \ \ \text{and} \ \ \Delta \cap E_{+\infty}
% $$
% are empty. Indeed, Fatous's theorem implies that at points $\lambda$ of $E_{\pm\infty}$ the limit $\Im \clC_m(\lambda+i0)$
% is infinite, while at points of $\Delta \subset \Lambda(m)$ the limit $\Im \clC_m(\lambda+i0)$ is finite by definition.
% Consequently, Theorem \ref{T: Vallee Poussin} completes the proof.
% \end{proof}
The main point of this theorem is that it gives a natural splitting of the set of real numbers $\mbR$
into two parts such that the first part supports $m^{(a)}$ and the second part supports $m^{(s)}.$
Actually, the support of the singular part $\mbR \setminus \Lambda(m)$ can be made smaller.
Namely, the set of all points $\lambda \in \mbR$ for which $\Im \clC_m(\lambda+i0)$ equals $+\infty$ or $-\infty$
is a Borel support of the singular part of $m.$

The function $\Im \clC_m(\lambda+iy)$ cannot grow to infinity faster than $C/y.$ If it grows as $C/y,$
then the point $\lambda$ has a non-zero measure equal to $C.$ The set of points where
$\Im \clC_m(\lambda+iy)$ grows as $C/y$ form a Borel support of the discrete part of $m.$ The set of points
where $\Im \clC_m(\lambda+iy)$ grows to infinity slower than $C/y$ form a Borel support of the singular continuous part of $m.$
These Borel supports were also introduced in \cite{Ar57}. Though these supports of the singular part(s) of $m$ are more natural
and finer than $\mbR \setminus \Lambda(m),$ for the purposes of this paper the last support suffices.

Also, imposing different growth conditions on $\Im \clC_m(\lambda+iy),$ such as $\Im \clC_m(\lambda+iy) \sim C/y^\rho,$ where $\rho \in (0,1),$
one can get further finer classification of the singular continuous spectrum, see \cite{Ro} for details.

The set $\Lambda(m)$ is not a minimal Borel support of $m^{(a)},$ but it is not difficult to indicate a natural minimal Borel
support of $m^{(a)}$ (see \cite{Ar57}):
\begin{equation} \label{F: clA(m)=...}
  \clA(m) = \set{\lambda \in \Lambda(m): \Im \clC_m(\lambda+i0) \neq 0}.
\end{equation}
This follows from the fact that for a.e. $\lambda \in \Lambda(m)$
$$
  F_m'(\lambda) = \frac 1\pi \Im \clC_m(\lambda+i0)
$$
and form the fact that the function $\lambda \mapsto F_m'(\lambda)$ is a density of the absolutely continuous part of $m.$
The number $F_m'(\lambda)$ will be considered as a standard value of the density function at points of $\Lambda(m).$
\begin{cor} \label{C: int Delta dF = int Delta F'dl}
  Let~$F$ be a function of bounded variation on~$\mbR$ and let $m$ be the corresponding (signed) measure.
For any Borel subset $\Delta$ of $\Lambda(m)$ the equalities
$$
  \int_\Delta dF(\lambda) = \int _\Delta F'(\lambda)\,d\lambda = \frac 1 \pi \int _\Delta \Im \clC_F(\lambda+i0)\,d\lambda
   = \frac 1 \pi \int _\Delta \Im \clC_{F^{(a)}} (\lambda+i0)\,d\lambda
$$
hold.
\end{cor}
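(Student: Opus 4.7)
The plan is to treat the four quantities as three equalities, each reducing to an elementary consequence of already-established facts. Throughout, let $m$ denote the finite signed measure with distribution function $F$, so that $\Lambda(F) = \Lambda(m)$.

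For the first equality $\int_\Delta dF = \int_\Delta F'(\lambda)\,d\lambda$, I would invoke Theorem \ref{T: complement of Lambda(F)}: since the complement of $\Lambda(m)$ is a Borel support of $m^{(s)}$, and $\Delta \subset \Lambda(F)$, we have $m^{(s)}(\Delta)=0$. Hence $m(\Delta) = m^{(a)}(\Delta)$, and by the Radon--Nikodym theorem $m^{(a)}(\Delta) = \int_\Delta F'(\lambda)\,d\lambda$, where $F'$ is the density of $m^{(a)}$.

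For the second equality, I would use Fatou's theorem (Theorem \ref{T: Fatou}) together with the identity $\frac{1}{\pi}\Im\clC_F(\lambda+iy) = \euP_F(\lambda,y)$ from (\ref{F: Im C = Poisson}). Fatou's theorem gives, for a.e.\ $\lambda \in \mbR$, the existence of $F'_{sym}(\lambda)$ and the equality $\frac{1}{\pi}\Im\clC_F(\lambda+i0) = F'_{sym}(\lambda)$. Since by Lebesgue's differentiation theorem $F'_{sym}(\lambda) = F'(\lambda)$ a.e., the two integrands coincide a.e.\ on $\Delta$, whence the integrals agree.

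For the third equality, split $F = F^{(a)} + F^{(s)}$, so $\clC_F = \clC_{F^{(a)}} + \clC_{F^{(s)}}$. It suffices to show $\Im\clC_{F^{(s)}}(\lambda+i0) = 0$ for a.e.\ $\lambda$. Applying Fatou's theorem to $F^{(s)}$ yields $\frac{1}{\pi}\Im\clC_{F^{(s)}}(\lambda+i0) = (F^{(s)})'_{sym}(\lambda)$ a.e., and since $m^{(s)}$ is concentrated on a Lebesgue null set, Lebesgue differentiation (or de la Vall\'ee Poussin, Theorem \ref{T: Vallee Poussin}) forces $(F^{(s)})'(\lambda) = 0$ a.e., hence also $(F^{(s)})'_{sym}(\lambda)=0$ a.e. So the two integrands in the third equality coincide a.e., and the integrals agree.

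No step presents a real obstacle: each equality is an almost-everywhere pointwise identity that is then integrated over $\Delta$. The only subtlety worth flagging is that for the first equality one should not invoke Fatou/Privalov at all, but rather the Aronszajn-type statement that $\mbR \setminus \Lambda(m)$ supports $m^{(s)}$ (Theorem \ref{T: complement of Lambda(F)}), which is what makes the \emph{non-a.e.} set-theoretic identity $m^{(s)}(\Delta)=0$ available for the given $\Delta$ (not merely ``for a.e.\ $\Delta$'').
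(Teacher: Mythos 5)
Your proposal is correct and follows essentially the same route as the paper: the first equality via Theorem \ref{T: complement of Lambda(F)} (that $\mbR\setminus\Lambda(m)$ carries $m^{(s)}$), the second via Fatou's theorem \ref{T: Fatou}, and the third via the a.e.\ vanishing of the singular part's contribution together with Fatou again. Your observation that the first equality genuinely requires the structural fact about $\Lambda(m)$ (because $\int_\Delta dF$ is a Stieltjes integral that can be nonzero on a Lebesgue-null $\Delta$), whereas the remaining equalities reduce to a.e.\ coincidence of Lebesgue integrands, is precisely the right thing to flag.
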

%\begin{proof} Since, by Theorem \ref{T: complement of Lambda(F)}, $\mu^{(s)}(\Delta) = 0,$ it follows that
%$$
%  \int_\Delta dF(\lambda) = m(\Delta) = m^{(a)}(\Delta),
%$$
%So, the first equality follows. The second equality follows from Theorem~\ref{T: Fatou}. The last equality follows from
%the Lebesgue theorem:~$F'(\lambda) = (F^{(a)})'(\lambda)$ for a.e. $\lambda \in \mbR$
%and Theorem \ref{T: Fatou}.
%\end{proof}
%
%Since $\Lambda(F)$ is a full set, the set~$\mbR \setminus \Lambda(F)$
%is a minimal Borel support of the singular part of $dF(\mu).$ This property is crucial for us:
%we want the singular part to be cut out in some canonical way.

There is another canonical full set associated with a function of bounded variation,
namely, the Lebesgue set of all points where~$F$ is differentiable. But the set $\Lambda(F)$
is easier to deal with, and it seems to be more natural in the context of scattering theory.

\subsection{Bounded operators}
Let~$\hilb$ be a separable Hilbert space with scalar product $\scal{\cdot}{\cdot},$ anti-linear
in the first variable (all Hilbert spaces in this paper are complex and separable).
Let $T$ be a bounded operator on $\hilb.$
The (uniform) norm $\norm{T}$ of a bounded operator $T$ is defined as
$$
  \norm{T} = \sup\limits_{f \in \hilb, \, \norm{f} = 1} \norm{Tf}.
$$
A bounded operator $T$ in $\hilb$ is non-negative, if
$\scal{Tf}{f} \geq 0$ for any $f \in \hilb.$

The algebra of all bounded operators in $\hilb$ is denoted by $\clBH.$
Let $\alpha$ run through some net of indices $I.$

A net of operators $T_\alpha \in \clBH$ converges to $T \in \clBH$
in the strong operator topology, if for any $f \in \hilb$
the net of vectors $T_\alpha f$ converges to $Tf.$ In other words,
the strong operator topology is generated by seminorms $T \mapsto \norm{Tf},$
where $f \in \hilb.$

A net of operators $T_\alpha \in \clBH$ converges to $T \in \clBH$
in the weak operator topology, if for any $f,g \in \hilb$
the net $\scal{T_\alpha f}{g}$ converges to $\scal{Tf}{g}.$
In other words, the weak operator topology is generated by
seminorms $T \mapsto \abs{\scal{Tf}{g}},$ where $f,g \in \hilb.$

The adjoint $T^*$ of a bounded operator $T$ is the unique operator which for all $f,g \in \hilb$ satisfies the equality
$\scal{T^*f}{g} = \scal{f}{Tg}.$ A bounded operator $T$ is self-adjoint if $T^*=T.$

If $T$ is a bounded self-adjoint operator, then for any bounded Borel function $f$ there is a bounded self-adjoint operator $f(T)$
(the Spectral Theorem), such that, in particular, the map $f \mapsto f(T)$ is a homomorphism.

The real $\Re(T)$ and the imaginary $\Im(T)$ parts of an operator $T \in \clB(\hilb)$ are defined by
$$
  \Re(T) = \frac{T+T^*}2 \quad \text{and} \quad \Im(T) = \frac{T-T^*}{2i}.
$$
The real and imaginary parts are self-adjoint operators.

The absolute value $\abs{T}$ of a bounded operator $T$ is the operator
$$
  \abs{T} = \sqrt{T^*T}.
$$

An operator $T \in \clBH$ is Fredholm, if (1) the kernel of $T$
$$
  \ker(T) := \set{f \in \hilb \colon Tf = 0}
$$
is finite-dimensional, (2) the image of $T$
$$
  \im(T) := \set{f \in \hilb \colon \ \exists g \in \hilb \ f = Tg}
$$
is closed and (3) the orthogonal complement (that is, co-kernel $\coker(T)$) of $\im(T)$ is finite-dimensional.
If $T$ is Fredholm, then the index $\ind(T)$ of $T$ is the number
$$
  \ind(T) := \dim \ker(T) - \dim \coker(T) = \dim \ker(T) - \dim \ker(T^*).
$$
\begin{thm} \label{T: Fredholm alternative} (Fredholm alternative) If $K$ is a compact operator, then $1+K$ is Fredholm and $\ind(1+K) = 0.$
\end{thm}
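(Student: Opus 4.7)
The plan is to verify each of the three Fredholm conditions separately using the compactness of $K$, and then compute the index by a homotopy argument.

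First I would show that $\ker(1+K)$ is finite-dimensional. If $f \in \ker(1+K)$ then $f = -Kf$, so the closed unit ball of $\ker(1+K)$ is contained in $K(B_1)$, where $B_1$ is the closed unit ball of $\hilb$. Since $K$ is compact, $K(B_1)$ is relatively compact, so the unit ball of $\ker(1+K)$ is relatively compact. By the classical theorem that a normed space whose unit ball is compact must be finite-dimensional, $\ker(1+K)$ is finite-dimensional.

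Next I would show that $\im(1+K)$ is closed. Suppose $(1+K)x_n \to y$. Decompose $x_n = u_n + v_n$ with $v_n \in \ker(1+K)$ and $u_n \perp \ker(1+K)$; then $(1+K)u_n \to y$. I claim $\norm{u_n}$ is bounded: if not, pass to a subsequence with $\norm{u_n}\to\infty$ and set $\tilde u_n = u_n/\norm{u_n}$, so that $(1+K)\tilde u_n \to 0$. By compactness of $K$ a subsequence of $K\tilde u_n$ converges, hence so does $\tilde u_n \to \tilde u$ with $\norm{\tilde u}=1$, $\tilde u \perp \ker(1+K)$, and $(1+K)\tilde u = 0$, a contradiction. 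So $\norm{u_n}$ is bounded, another application of the compactness of $K$ yields a convergent subsequence of $u_n \to u$, and then $(1+K)u = y$, giving closedness.

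Third, I would handle the cokernel by duality. Since $K$ compact implies $K^*$ compact, the first step applied to $K^*$ shows $\ker(1+K^*)$ is finite-dimensional, and by the closed range equality $\coker(1+K) = \ker((1+K)^*) = \ker(1+K^*)$, the cokernel is finite-dimensional. This completes the proof that $1+K$ is Fredholm.

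Finally, for the index I would use the homotopy $t \mapsto A_t := 1 + tK$, $t\in[0,1]$. Each $tK$ is compact, so by the three steps above every $A_t$ is Fredholm; combined with the standard fact that the index is locally constant (hence constant on connected components) on the Fredholm operators in the norm topology, and continuity of $t \mapsto A_t$, we get $\ind(1+K) = \ind(A_1) = \ind(A_0) = \ind(1) = 0$. The only subtle point — the step I expect to require the most care — is the local constancy of the index under norm-continuous perturbations; this is proved by reducing to finite rank via the decomposition $K = F + R$ with $F$ finite rank and $\norm{R}$ small, so that $1+R$ is invertible by Neumann series and $1+K = (1+R)(1+(1+R)^{-1}F)$ factors through the trivially-index-zero finite-rank case.
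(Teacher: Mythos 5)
The paper does not prove this statement: it appears in the Preliminaries section, which is explicitly cited to \cite{GK,RS1,SimTrId,Ya} with the remark that omitted proofs can be found in those references. So there is no paper proof to compare against, and your proof must be evaluated on its own merits.

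Your three Fredholm verifications are correct and are the standard arguments: the unit ball of $\ker(1+K)$ sits inside the relatively compact set $-K(B_1)$ and is closed, hence compact, so Riesz's lemma gives finite-dimensionality; the closed-range argument via the normalization $\tilde u_n = u_n/\norm{u_n}$ of a hypothetical unbounded minimizing sequence orthogonal to the kernel is the usual one and is carried out correctly; and identifying $\coker(1+K)$ with $\ker(1+K^*)$ and invoking compactness of $K^*$ handles the third condition cleanly. One small expository point in the final step: the homotopy $A_t = 1+tK$ together with local constancy of the index, and the factorization $1+K = (1+R)\bigl(1 + (1+R)^{-1}F\bigr)$ with $F$ finite rank and $\norm{R}<1$, are two separate complete arguments for $\ind(1+K)=0$, not one supporting the other. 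The factorization does not establish local constancy of the index in general; rather, it gives the conclusion directly, since $1+R$ is invertible (index $0$), the second factor is the identity plus a finite-rank operator (index $0$ by reduction to a finite-dimensional block, where rank--nullity applies), and the index is additive under composition (trivially so when one factor is invertible). Either route suffices; presenting the factorization as a lemma needed for the homotopy argument slightly misstates its role, but both arguments you invoke are sound.
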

In particular, if $K$ is compact and if $1+K$ has trivial kernel, then $1+K$ is invertible.

\subsection{Self-adjoint operators} For details regarding the material of this subsection see~\cite{RS1}.

Let~$\hilb$ be a separable Hilbert space with scalar product $\scal{\cdot}{\cdot},$ anti-linear
in the first variable.

By a linear operator $T$ on $\hilb$ one means a linear operator from some linear manifold $\euD(T) \subset \hilb$
to $\hilb.$ The set $\euD(T)$ is called the domain of $T.$
A linear operator $T$ is symmetric if its domain $\euD(T)$ is dense and if for any $f,g \in \euD(T)$
the equality \ $\scal{Tf}{g} = \scal{f}{Tg}$ \ holds.
A linear operator $S$ is an extension of a linear operator $T,$ if $\euD(T) \subset \euD(S)$ and $Sf = Tf$
for all $f \in \euD(T).$ In this case one also writes $T \subset S$ (this inclusion can be considered as inclusion of sets,
if one identifies an operator with its graph).
A linear operator $T$ is closed if $f_1, f_2,\ldots \in \euD(T),$
$f_n \to f$ and $Tf_n \to g$ as $n\to \infty$ imply that $f \in \euD(T)$ and $Tf = g.$
An operator $T$ is closable, if it has a closed extension.
For every closable operator $T$ there exists a minimal (with respect to order $\subset$) closed extension $\overline T.$
The adjoint $T^*$ of a densely defined operator $T$ is a linear operator with domain
$$
  \euD(T^*) := \set{ g \in\hilb \colon \exists h \in \hilb \ \forall f \in \euD(T) \ \scal{Tf}{g} = \scal fh};
$$
such a vector $h$ is unique and by definition $T^*g = h.$ For every densely defined closable operator $T$ its adjoint $T^*$ is closed.
For every densely defined operator $T$ the inclusion $\overline T \subset T^{**}$ holds.
A symmetric operator $T$ satisfies $\overline T \subset T^*.$
A symmetric operator $T$ is called self-adjoint if $T = T^*.$ So, self-adjoint operator is automatically closed.

The resolvent set $\rho(H)$ of an operator~$H$ in $\hilb$ consists of all those complex numbers $z \in \mbC,$
for which the operator~$H-z$ has a bounded inverse with domain dense in $\hilb.$
The resolvent of an operator~$H$ is the operator
$$R_z(H) = (H-z)^{-1}, \ \ z \in \rho(H).$$
The spectrum $\sigma(H)$ of~$H$ is the complement of the resolvent set $\rho(H),$
i.e. $\sigma(H) = \mbC \setminus \rho(H).$

A closed symmetric operator $H$ is self-adjoint if and only if $\ker\brs{H-z} = \set{0}$ for any non-real $z \in \mbC.$
The spectrum of a self-adjoint operator is a subset of $\mbR.$

Let~$H_0$ be a self-adjoint operator with domain $\euD(H_0)$ in~$\hilb.$
By $E_X = E_X^{H_0}$ we denote the spectral projection of the operator~$H_0,$
corresponding to a Borel set $X \subset \mbR$ (cf.~\cite{RS1}).
Usually, dependence on the operator~$H_0$ will be omitted in the
notation of the spectral projection.
If $X = (-\infty,\lambda),$ then we also write
$E_\lambda = E_{(-\infty,\lambda)}.$

By a subspace of a Hilbert space~$\hilb$ we mean a closed linear subspace of~$\hilb.$

If $f,g \in \hilb,$ then the spectral measure associated with $f$ and $g$ is the (signed) measure
$$
  m_{f,g}(X) = \scal{f}{E_X g}.
$$
We also write $m_f = m_{f,f}.$

A vector $f$ is called absolutely continuous (respectively, singular) with respect to~$H_0,$
if the spectral measure $m_f(X) = \scal{E_X f}{f}$ is absolutely continuous (respectively, singular).
The set of all vectors, absolutely continuous with respect to~$H_0,$
form a (closed) subspace of~$\hilb,$ denoted by~$\hilb^{(a)}(H_0).$
The subspace~$\hilb^{(a)}(H_0)$ is called the absolutely continuous subspace (with respect to~$H_0$).
Similarly, the set of all vectors, singular with respect to~$H_0,$
form a subspace of~$\hilb,$ denoted by~$\hilb^{(s)}(H_0).$
The subspace~$\hilb^{(s)}(H_0)$ is called the singular subspace (with respect to~$H_0$).
If there is no danger of confusion, dependence on the self-adjoint operator~$H_0$ is usually omitted.

The absolutely continuous and singular subspaces of~$H_0$ are invariant subspaces of~$H_0.$
That is, if $f \in \hilb^{(a)}(H_0) \cap \euD(H_0)$ then~$H_0f \in \hilb^{(a)}(H_0);$
similarly, if $f \in \hilb^{(s)}(H_0) \cap \euD(H_0)$ then~$H_0f \in \hilb^{(s)}(H_0).$
Also, $\hilb^{(a)}(H_0)$ and $\hilb^{(s)}(H_0)$ are orthogonal, and their direct sum is the whole~$\hilb:$
$$
  \hilb^{(a)} \perp \hilb^{(s)}
$$
and
$$
  \hilb^{(a)} \oplus \hilb^{(s)} = \hilb.
$$
The absolutely continuous (respectively, singular) spectrum $\sigma^{(a)}(H_0)$ (respectively, $\sigma^{(s)}(H_0)$)
of~$H_0$ is the spectrum of the restriction of~$H_0$ to $\hilb^{(a)}(H_0)$
(respectively, to $\hilb^{(s)}(H_0)$).

By $P^{(a)}(H_0)$ we denote the orthogonal projection onto the absolutely continuous subspace of~$H_0.$
If $f \in \hilb,$ then by $f^{(a)}$ we denote the absolutely continuous part
of $f$ with respect to~$H_0,$ i.e. $f^{(a)} = P^{(a)} f.$
% The letter~$G$ will denote a non-negative Hilbert-Schmidt operator.
% By $E_\lambda = E_\lambda^{H_0}$ we denote the spectral resolution of~$H_0.$
% If $f \in \hilb,$ then we set $f_\Delta = E_\Delta f,$ where $\Delta \subset \mbR.$
% So, $f_\Delta^{(a)}$ is $E_\Delta f^{(a)}.$

%The algebra of all bounded operators on~$\hilb$ will be denoted by~$\clB(\hilb).$
The set of all densely defined closed operators on~$\hilb$ will be denoted by~$\clC(\hilb).$

\subsection{Trace-class and Hilbert-Schmidt operators}
\subsubsection{Schatten ideals}
Let $\hilb$ and $\clK$ be Hilbert spaces. A bounded operator $T \colon \hilb \to \clK$ is finite-dimensional,
if its image $\im(T)$ is finite-dimensional.
A bounded operator $T \colon \hilb \to \clK$ is compact, if one of the following equivalent conditions hold:
(1) $T$ is the uniform limit of a sequence of finite-dimensional operators; (2) the closure of the image $T(B_1)$ of the unit ball
$B_1 := \set{f \in \hilb\colon \norm{f} \leq 1}$ is compact in $\clK.$

By $\clL_\infty\brs{\hilb,\clK}$ we denote the set of all compact operators from a Hilbert space~$\hilb$
to a possibly another Hilbert space~$\clK.$ If~$\clK=\hilb,$ then we write $\clL_\infty\brs{\hilb}.$
The same agreement is used in relation to other classes of operators.

The set of compact operators $\clL_\infty\brs{\hilb}$ is an involutive norm-closed two-sided ideal of the algebra $\clB(\hilb).$

Let $T$ be a compact operator in $\hilb.$ If $\lambda \in \mbC$ is an eigenvalue of $T,$ then the root space
of this eigenvalue is the vector space of all those vectors $f$ for which there exists an integer $k =1,2,\ldots,$
such that $(T-\lambda)^k f = 0.$ Root space of any non-zero eigenvalue of a compact operator is finite-dimensional.
The dimension of this root space is called (algebraic) multiplicity of the corresponding eigenvalue. \emph{Spectral measure} $\nu_T$ of a compact operator
$T$
is a measure in $\mbC\setminus\set{0}$ which to every subset $X$ of $\mbC\setminus\set{0}$ assigns
the sum of algebraic multiplicities of all eigenvalues $\lambda$ from the set $X.$ If two bounded
operators $A\colon \hilb \to \clK$ and $B\colon \clK \to \hilb$ are such that the operators $AB$
and $BA$ are compact, then
\begin{equation} \label{F: spec mes(AB)=spec mes(BA)}
  \nu_{AB} = \nu_{BA}.
\end{equation}
Also,
\begin{equation} \label{F: spec mes(A*)=spec mes(A)*}
  \nu_{T^*} = \overline{\nu}_{T}.
\end{equation}

Let~$T$ be a compact operator from~$\hilb$ to~$\clK.$
The absolute value of~$T$ is the self-adjoint compact operator
$$
  \abs{T} := \sqrt{T^*T}.
$$
Singular numbers (or $s$-numbers)
$$
  s_1(T), s_2(T), s_3(T), \ldots
$$
of the operator~$T$ are eigenvalues of $\abs{T},$ listed as a non-increasing sequence,
and such that the number of appearances of each eigenvalue
is equal to the multiplicity of that eigenvalue. Every compact operator $T \in \clL_\infty(\hilb,\clK)$
can be written in the Schmidt representation:
$$
  T = \sum_{n=1}^\infty s_n(T) \scal{\phi_n}{\cdot}\psi_n,
$$
where $(\phi_n)$ is an orthonormal basis in $\hilb,$ and $(\psi_n)$ is an orthonormal basis in~$\clK.$

Singular numbers of a compact operator $T$ have the following property: for any $A,B \in \clB(\hilb)$
\begin{equation} \label{F: Ya (1.6.6)}
  s_n(ATB) \leq \norm{A}\norm{B}s_n(T).
\end{equation}
Also, $s_n(A) = s_n(A^*).$

Let $p \in [1,\infty).$ By $\LpH{p}$ we denote the set of all compact operators~$T$ in~$\hilb,$ such that
$$
  \norm{T}_p := \brs{\sum_{n=1}^\infty s_n^p(T)}^{1/p} < \infty.
$$
The space $(\LpH{p}, \norm{\cdot}_p)$ is an \emph{invariant operator ideal}; this means that
\begin{enumerate}
\item $\LpH{p}$ is a Banach space, %of operators with norm $\norm{\cdot}_p,$
\item $\LpH{p}$ is a $*$-ideal, that is, if $T \in \LpH{p}$ and $A,B \in \clBH,$ then $T^*, AT, TA \in \LpH{p},$
\item for any $T \in \LpH{p}$ and $A,B \in \clBH$ the following inequalities hold:
$$
  \norm{T}_p \geq \norm{T}, \ \norm{T^*}_p = \norm{T}_p \ \text{and} \ \norm{ATB}_p \leq \norm{A} \norm{T}_p\norm{B}.
$$
\end{enumerate}
A norm which satisfies the above three conditions is called unitarily invariant norm.
The ideal $\LpH{p}$ is called the Schatten ideal of $p$-summable operators.

Note that for the definition of the singular numbers $s_1(T), s_2(T), \ldots$ of an operator $T$ it is immaterial whether $T$ acts from $\hilb$
to $\hilb,$ or maybe from $\hilb$ to another Hilbert space~$\clK.$ In the latter case we write $T \in \clL_p(\hilb,\clK).$

Proofs of the following lemmas can be found in \cite[\S 6.1]{Ya}.

\begin{lemma} \label{L: If A is Lp then A=BT} If $A \in \clL_p(\hilb),$ then $A = BT$ (or $A = TB$) for some $B \in \clL_p(\hilb)$
and some compact operator $T.$
\end{lemma}

\begin{lemma} \label{L: An to A so then AnV to AV} Let $A_1, A_2, \ldots$ be a sequence of bounded operators converging to $A$ in the strong operator topology
and let $p \in [1,\infty].$ If $V \in \clL_p(\hilb),$ then $A_nV \to AV$ and $VA_n \to VA$ in $\clL_p(\hilb).$
\end{lemma}

\begin{lemma} \label{L: if An to A weakly and ... then An to A in Lp} Let $A_1, A_2, \ldots$ be a sequence of operators from $\clL_p(\hilb)$
converging to $A$ in the weak operator topology and such that $\norm{A_n} \leq C < \infty.$ Then $A \in \clL_p(\hilb)$
and for any compact operators $T,Y$
$$
  \lim_{n \to \infty} \norm{T(A_n-A)Y}_p = 0.
$$
\end{lemma}

\subsubsection{Trace-class operators}

Operators from $\LpH{1}$ are called trace-class operators. For trace-class operators~$T$ one defines the trace
$\Tr(T)$ by the formula
\begin{equation} \label{F: Tr(T)=sum (T fj,fj)}
  \Tr(T) = \sum_{j=1}^\infty \scal{T\phi_j}{\phi_j},
\end{equation}
where $\set{\phi_j}_{j=1}^\infty$ is an arbitrary orthonormal basis of~$\hilb.$
Sometimes we write $\Tr_\hilb(T)$ instead of $\Tr(T)$ to indicate the Hilbert space which $T$ acts on.
For a trace-class operator~$T$ the series above is absolutely convergent and
is independent from the choice of the basis $\set{\phi_j}_{j=1}^\infty.$ The trace $\Tr \colon \LpH{1} \to \mbC$
is a continuous linear functional, which satisfies the equality
$$
  \Tr(AB) = \Tr(BA),
$$
whenever both products $AB$ and $BA$ are trace-class. In particular, the above equality holds,
if $A$ is trace-class and $B$ is a bounded operator.

The norm $\norm{\cdot}_1$ is called trace-class norm. For any trace-class operator~$T$ the following equality holds:
$$
  \norm{T}_1 = \Tr(\abs{T}).
$$
More generally,
$$
  \norm{T}_p = \brs{\Tr(\abs{T}^p)}^{1/p}.
$$
The Lidskii theorem asserts that for any trace-class operator~$T$
\begin{equation} \label{F: Lidskii thm}
  \Tr(T) = \sum_{j=1}^\infty \lambda_j,
\end{equation}
where $\lambda_1, \lambda_2, \lambda_3, \ldots$ is the list of eigenvalues of~$T$ counting
multiplicities.\footnote{By multiplicity of an eigenvalue $\lambda_j$ of $T$ we always mean algebraic multiplicity; that is,
the dimension of the vector space $\set{f \in \hilb\colon \exists\, k=1,2,\ldots \ (T-\lambda_j)^kf=0}$}

The dual of the Banach space $\LpH{1}$ is the algebra of all bounded operators~$\clBH$ with uniform norm $\norm{\cdot}:$
every continuous linear functional on $\LpH{1}$ has the form
$$
  T \mapsto \Tr(AT)
$$
for some bounded operator $A \in \clBH,$ and, vice versa, any functional of this form is continuous.

\subsubsection{Hilbert-Schmidt operators}
Operators from $\LpH{2}$ are called Hilbert-Schmidt operators. The norm
$$
  \norm{T}_2 = \sqrt{\Tr(\abs{T}^2)}
$$
is also called Hilbert-Schmidt norm.
For a Hilbert-Schmidt operator $T \in \LpH{2}$ and any orthonormal basis $(\phi_j)$ of~$\hilb$
the following equality holds:
\begin{equation} \label{F: HS-norm of T = sum (Tfj)2}
  \norm{T}_2^2 = \sum\limits_{j=1}^\infty \norm{T\phi_j}^2.
\end{equation}
If~$S,T$ are Hilbert-Schmidt operators, then the product~$ST$ is trace-class
and the following inequality holds:
\begin{equation} \label{F: Holder inequality}
  \norm{ST}_1 \leq \norm{S}_2\norm{T}_2.
\end{equation}
This assertion is a particular case of the more general H\"older inequality which follows.
Let $p, q \in [1,+\infty]$ such that $\frac 1p + \frac 1q = 1.$
If~$S \in \LpH{p}$ and $T \in \LpH{q},$ then~$ST$ is trace-class and
$$
  \norm{ST}_1 \leq \norm{S}_p\norm{T}_q,
$$
where $\norm{\cdot}_\infty$ means the usual operator norm.
This inequality implies that
\begin{equation} \label{F: Sn to S and Tn to T then SnTn to ST}
  \text{if} \ \norm{S_n-S}_p \to 0 \ \text{and} \ \norm{T_n-T}_q \to 0 \ \text{then} \ \norm{S_nT_n-ST}_1 \to 0.
\end{equation}

The ideal $\LpH{2}$ is actually a Hilbert space with scalar product
$$
  \scal{S}{T} = \Tr(S^*T).
$$
So, the dual of $\LpH{2}$ is $\LpH{2}$ itself.

\subsubsection{Fredholm determinant}
\label{SS: Fredholm det}
Let $(\phi_j)$ be an orthonormal basis in~$\hilb.$
If~$T$ is a trace-class operator, then one can define the determinant
$\det(1+T)$ of $1+T$ by the formula
$$
  \det(1+T) = \lim_{n \to \infty} \det \big(\scal{(1+T)\phi_i}{\phi_j}\big)_{i,j=1}^n,
$$
where the determinant in the right hand side is the usual finite-dimensional determinant.
For any trace-class operator $T$ the limit in the right hand side exists
and it does not depend on the choice of the orthonormal basis $(\phi_j).$

We list some properties of the determinant.

The determinant has the product property: for any trace-class operators~$S,T$
the equality holds:
\begin{equation} \label{F: det(AB)=det(A)det(B)}
  \det\Big((1+S)(1+T)\Big) = \det(1+S)\det(1+T).
\end{equation}
If $0 \leq S \leq T \in \clL_1(\hilb),$ then
\begin{equation} \label{F: if S<T, then det(S)<det(T)}
  \det(1+S) \leq \det(1+T).
\end{equation}
Also,
\begin{equation} \label{F: det(1+T*)=det(1+T)*}
  \det(1+T^*) = \overline{\det(1+T)}.
\end{equation}
If $0\leq T \in \clL_1(\hilb),$ then
\begin{equation} \label{F: Tr(T)<det(1+T)}
  \Tr(T) \leq \det(1+T).
\end{equation}

The non-linear functional
\begin{equation} \label{F: det is cont-s}
  \clL_1(\hilb) \ni T \mapsto \det(1+T) \ \ \text{is continuous}.
\end{equation}
The following Lidskii formula holds:
\begin{equation} \label{F: Lidskii for det}
  \det(1+T) = \prod_{j=1}^\infty (1+\lambda_j),
\end{equation}
where $\lambda_1, \lambda_2, \lambda_3, \ldots$ is the list of eigenvalues of~$T$ counting multiplicities.

\subsubsection{The Birman-Koplienko-Solomyak inequality}
%\begin{thm} Let
%\end{thm}
The following assertion is called the Birman-Koplienko-Solomyak
inequality\footnote{I thank Prof. P.\,G.\,Dodds for pointing out to this inequality} (cf.~\cite{BKS}).

\begin{thm} \label{T: BKS inequality} If $A$ and $B$ are two non-negative trace-class operators, then
$$
  \norm{\sqrt{A} - \sqrt{B}}_2 \leq \norm{\sqrt{\abs{A-B}}}_2.
$$
\end{thm}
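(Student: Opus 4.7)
The plan is to prove the equivalent trace inequality $\Tr\brs{(\sqrt{A}-\sqrt{B})^2}\le \Tr\abs{A-B}$, first in the comparable case and then by reducing the general case to that via a single non-negative trace-class operator that dominates both $A$ and $B$. The main idea is the reduction step; beyond it only operator monotonicity of the square root and the cyclic property of the trace are needed.

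First I would handle the special case $A\le B$. L\"owner's operator monotonicity of $t\mapsto\sqrt{t}$ gives $\sqrt{A}\le\sqrt{B}$, and a direct expansion combined with cyclicity of the trace yields
\[
\Tr\brs{(\sqrt{B}-\sqrt{A})^2}=\Tr(B-A)-2\Tr\brs{A^{1/4}(\sqrt{B}-\sqrt{A})A^{1/4}}\le \Tr(B-A)=\Tr\abs{A-B},
\]
because $A^{1/4}(\sqrt{B}-\sqrt{A})A^{1/4}\ge 0$ and the trace of a non-negative operator is non-negative.

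For the general case, I would invoke the Jordan decomposition $A-B=(A-B)_+-(A-B)_-$ into orthogonal non-negative trace-class parts, and set
\[
E:=A+(A-B)_-=B+(A-B)_+.
\]
Then $E$ is non-negative and trace-class with $E\ge A$, $E\ge B$, $E-A=(A-B)_-$, and $E-B=(A-B)_+$. Operator monotonicity gives $P:=\sqrt{E}-\sqrt{A}\ge 0$ and $Q:=\sqrt{E}-\sqrt{B}\ge 0$. Writing $\sqrt{A}-\sqrt{B}=Q-P$ and squaring,
\[
\Tr\brs{(\sqrt{A}-\sqrt{B})^2}=\Tr(P^2)+\Tr(Q^2)-2\Tr(PQ).
\]
Since $P,Q\ge 0$, cyclicity gives $\Tr(PQ)=\Tr\brs{Q^{1/2}PQ^{1/2}}\ge 0$, so the cross term can be discarded. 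Applying the first-case bound to the comparable pairs $(A,E)$ and $(B,E)$ yields $\Tr(P^2)\le \Tr(E-A)=\Tr((A-B)_-)$ and $\Tr(Q^2)\le \Tr(E-B)=\Tr((A-B)_+)$, whose sum is exactly $\Tr\abs{A-B}$.

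The main obstacle is recognizing that $E$ is the right auxiliary operator. A frontal expansion produces the non-commutative cross terms $\sqrt{A}\sqrt{B}+\sqrt{B}\sqrt{A}$, which cannot be individually estimated by $\Tr\abs{A-B}$. Introducing $E$ sidesteps this by decomposing one incomparable difference into two comparable ones whose monotone-case error bounds, by virtue of the identity $\Tr((A-B)_-)+\Tr((A-B)_+)=\Tr\abs{A-B}$, add up to the desired quantity with no residual non-commutativity penalty.
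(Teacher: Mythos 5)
Your argument is correct. Note, however, that the paper does not prove Theorem~\ref{T: BKS inequality} at all: it is stated as a cited result, with the remark that it is a special case ($p=\tfrac12$, Hilbert--Schmidt norm) of the Birman--Koplienko--Solomyak inequality $\norm{A^p-B^p}_{\mathfrak S}\le\norm{\abs{A-B}^p}_{\mathfrak S}$ from \cite{BKS}, which in turn follows from Ando's operator-monotone inequality in \cite{Ando}. So there is no in-paper proof to compare against, and what you have supplied is a self-contained elementary derivation of exactly the case the paper needs.

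That said, your route is worth a remark on its own terms. Both cited works establish the inequality for every unitarily invariant norm (and Ando's for every operator-monotone $f$); that generality forces them through majorization of singular values, which is considerably heavier than what the Hilbert--Schmidt case requires. Your proof exploits the special algebraic structure of the trace: in the comparable case $A\le B$ the identity
$$
  \Tr\brs{(\sqrt B-\sqrt A)^2}=\Tr(B-A)-2\Tr\brs{A^{1/4}(\sqrt B-\sqrt A)A^{1/4}}
$$
exhibits the deficit as the trace of a manifestly non-negative operator, so only L\"owner monotonicity of $t\mapsto\sqrt t$ is invoked; and the passage to the general case via $E=A+(A-B)_-=B+(A-B)_+$ is exactly the right reduction, since it replaces the single non-comparable pair by two comparable pairs whose error bounds $\Tr((A-B)_-)$ and $\Tr((A-B)_+)$ sum to $\Tr\abs{A-B}$, while the cross term $\Tr(PQ)=\Tr(Q^{1/2}PQ^{1/2})\ge 0$ is discarded with the right sign. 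Every product you take a trace of is trace-class by H\"older (e.g.\ $A^{1/4}\in\clL_4$, $\sqrt B-\sqrt A\in\clL_2$, so $A^{1/4}(\sqrt B-\sqrt A)A^{1/4}\in\clL_1$), so the cyclic manipulations are all legitimate. The trade-off is that this argument is tailored to the Schatten--von Neumann $p=2$ norm and to the square-root function, whereas the cited references buy the full family of inequalities at the price of the extra machinery.
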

In \cite{BKS} a more general inequality is proved:
$$
  \norm{A^p-B^p}_{\mathfrak S} \leq \norm{\abs{A-B}^p}_{\mathfrak S},
$$
where $p \in (0,1]$ and $\norm{\cdot}_{\mathfrak S}$ is any unitarily invariant norm.

In \cite{Ando}, T.\,Ando (who was not aware of the paper \cite{BKS} at the time of writing \cite{Ando})
proved the following inequality
$$
  \norm{f(A)-f(B)}_{\mathfrak S} \leq \norm{f(\abs{A-B})}_{\mathfrak S},
$$
where $f \colon [0,\infty) \to [0,\infty)$ is any operator-monotone function, that is, a function with property:
if $A \geq B \geq 0,$ then $f(A) \geq f(B) \geq 0.$ Ando's inequality implies the Birman-Koplienko-Solomyak inequality,
since $f(x) = x^p$ with $p \in (0,1]$ is operator-monotone. Ando's inequality was generalized to the setting of semifinite von Neumann algebras
in \cite{DD}.

% Since Theorem \ref{T: BKS inequality} is essential for the following results, and for
% reader's convenience, I give a proof of Theorem \ref{T: BKS inequality}.
% In the proof I follow almost verbatim Ando \cite{Ando}, whose proof is simpler than that of in \cite{BKS}.
%
% {\it Proof of Theorem \ref{T: BKS inequality}.} \ For a compact operator $T \in \clL_\infty(\hilb),$ let
% $$
%   \norm{T}_{(N)} := \sum\limits_{n=1}^N s_n(T),
% $$
% where $s_1(T), s_2(T), \ldots$ denote $s$-numbers of $T.$
%
% (A) If $B,C$ are two non-negative compact operators, then
% $$
%   \norm{(1+B+C)^{-1} - (1+B)^{-1}}_{(N)} \leq \norm{(1+C)^{-1}}_{(N)}.
% $$
% Proof of (A).
%
% $\Box$

\begin{lemma} \label{L: lemma Y} If  $A_n \geq 0,$ $A_n \in \clL_1$ for all $n=1,2\ldots,$ and if $A_n \to A$ in~$\clL_1,$
  then $\sqrt{A_n} \to \sqrt{A}$ in~$\clL_2.$
\end{lemma}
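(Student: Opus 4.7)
The plan is to invoke the Birman--Koplienko--Solomyak inequality (Theorem \ref{T: BKS inequality}) directly; the lemma is essentially an immediate corollary.

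First I would check that the hypotheses of Theorem \ref{T: BKS inequality} apply. Since the $A_n$ are non-negative and $A_n \to A$ in $\clL_1$ (hence also in operator norm), the limit operator $A$ is trace-class and non-negative: for any $f \in \hilb$, $\scal{Af}{f} = \lim_n \scal{A_n f}{f} \geq 0$. Therefore both $A_n$ and $A$ satisfy the hypotheses of the BKS inequality.

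Next I would apply the BKS inequality with the pair $(A, A_n)$, which gives
$$
  \norm{\sqrt{A_n} - \sqrt{A}}_2 \leq \norm{\sqrt{\abs{A_n - A}}}_2.
$$
The right-hand side is easy to evaluate. By the Hilbert--Schmidt norm formula in terms of the trace,
$$
  \norm{\sqrt{\abs{A_n - A}}}_2^2 = \Tr\brs{\brs{\sqrt{\abs{A_n - A}}}^2} = \Tr\brs{\abs{A_n - A}} = \norm{A_n - A}_1,
$$
where the first equality uses self-adjointness and non-negativity of $\sqrt{\abs{A_n - A}}$, and the last equality is the identity $\norm{T}_1 = \Tr(\abs{T})$ recalled in the preliminaries.

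Combining the two displays yields
$$
  \norm{\sqrt{A_n} - \sqrt{A}}_2^2 \leq \norm{A_n - A}_1,
$$
and since by hypothesis $\norm{A_n - A}_1 \to 0$, we conclude $\sqrt{A_n} \to \sqrt{A}$ in $\clL_2$. There is no real obstacle here: the only substantive input is the BKS inequality, which has just been stated; the rest is bookkeeping with the definitions of the Schatten norms.
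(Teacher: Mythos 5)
Your proof is correct and follows essentially the same approach as the paper: apply the Birman--Koplienko--Solomyak inequality to $(A_n,A)$ and observe that $\norm{\sqrt{\abs{A_n-A}}}_2^2=\norm{A_n-A}_1\to 0$. The only addition is the brief (and valid) verification that $A\geq 0$ and $A\in\clL_1$, which the paper leaves implicit.
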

\begin{proof} % This follows from the Birman-Koplienko-Solomyak inequality. In detail,
  It follows from Theorem \ref{T: BKS inequality}, that % for all $N=1,2,\ldots,$
%  $$
%    \norm{\sqrt{A_n} - \sqrt{A}}_{(N)} \leq \norm{\sqrt{\abs{A_n-A}}}_{(N)},
%  $$
%  It follows from \cite[p.\,82]{GK} that
  $$
    \norm{\sqrt{A_n} - \sqrt{A}}_2 \leq \norm{\sqrt{\abs{A_n-A}}}_2 = \sqrt{\norm{A_n - A}_1} \ \to 0,
  $$
  as $n\to\infty.$ The proof is complete.
\end{proof}

%\subsection{Weighted Hilbert space}
%
%Let $\kappa \in \ell_2$ be a non-increasing sequence, such that $\kappa_j > 0$ for all~$j \in \mbN.$
%The Hilbert space $\ell^1_2 = \ell^1_2(\kappa)$ is defined as % \margcom{Do I need it?}
%\begin{equation} \label{F: l21(k)}
%  \ell^1_2(\kappa) := \set{ a \colon \mbN \to \mbC \colon \set{\kappa_ja_j} \in \ell_2}
%\end{equation}
%with the scalar product
%$$
%  \scal{\seque a}{\seque b} = \sum_{j =1}^\infty \kappa_j^2 a_jb_j.
%$$

\subsection{Direct integral of Hilbert spaces}
\label{SS: direct integral: def}
%For reader's convenience I recall the definition of the direct integral of Hilbert spaces
In this subsection I follow~\cite[Chapter 7]{BSbook}.

Let $\Lambda$ be a Borel subset of~$\mbR$ with a Borel measure $\rho$
(we do not need more general measure spaces here), and let
$$
  \set{\hlambda, \ \lambda \in \Lambda}
$$
be a family of Hilbert spaces,
such that the dimension function
$$
  \Lambda \ni \lambda \mapsto \dim\hlambda \in \set{0,1,2\ldots,\infty}
$$
is measurable.
Let $\Omega_0$ be a countable family of vector-functions (or sections)
$f_1,f_2,\ldots$ such that to each $\lambda \in \Lambda$
$f_j$ assigns a vector $f_j(\lambda) \in \hlambda.$
\begin{defn} \label{D: meas. base}
A family $\Omega_0 = \set{f_1,f_2,\ldots}$ of vector-functions is called a \emph{measurability base},
if it satisfies the following two conditions:
\begin{enumerate}
 \item for a.e. $\lambda \in \Lambda$ the set $\set{f_j(\lambda) \colon j \in \mbN}$
     generates the Hilbert space~$\hlambda;$
 \item the scalar product $\scal{f_i(\lambda)}{f_j(\lambda)}$ is $\rho$-measurable
     for all $i,j=1,2,\ldots$
\end{enumerate}
\end{defn}
A vector-function $\Lambda \ni \lambda \mapsto f(\lambda) \in \hlambda$ is called \emph{measurable},
if $\scal{f(\lambda)}{f_j(\lambda)}$ is measurable for all~$j=1,2,\ldots.$
The set of all measurable vector-functions is denoted by $\hat \Omega_0.$

A measurability base $\set{e_j(\cdot)}$ is called \emph{orthonormal},
if for $\rho$-a.e. $\lambda$ the system $\set{e_j(\lambda)}$ --- after throwing out zero vectors out of it ---
forms an orthonormal base of the fiber Hilbert space $\hlambda.$ (This definition of an orthonormal
measurability base slightly differs from the one given in \cite{BSbook}).

If we have a sequence $f_1,f_2, \ldots$ of vectors in a Hilbert space, then by Gram-Schmidt orthogonalization process
we mean the following procedure: for $n=1,2,\ldots$ we replace the function $f_n$ by zero vector if $f_n$
is a linear combination (in particular, if $f_n=0$) of $f_1,\ldots,f_{n-1},$
otherwise, we replace $f_n$ by the unit vector $e_n$ which is a linear combination of $f_1,\ldots,f_{n},$
which is orthogonal to all $f_1, \ldots, f_{n-1}$ and which satisfies the inequality $\scal{e_n}{f_n}>0.$
Obviously, the systems $\set{f_j}$ and $\set{e_j}$ generate the same linear subspace of the Hilbert space.

\begin{lemma}~\cite[Lemma 7.1.1]{BSbook} \label{L: BS Lemma 7.1.1}
  If $\Omega_0$ is a measurability base, then there exists an orthonormal measurability base $\Omega_1$ such that
  $\hat \Omega_0 = \hat \Omega_1,$ that is, sets of measurable vector-functions generated by $\Omega_0$ and $\Omega_1$ coincide.
\end{lemma}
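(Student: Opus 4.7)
The natural strategy is to apply the Gram-Schmidt orthogonalization procedure \emph{pointwise} in $\lambda \in \Lambda$ to the given sequence $f_1, f_2, \ldots,$ and then verify that the resulting orthonormal sections $e_1, e_2, \ldots$ form a measurability base generating the same $\hat\Omega$.

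First I would define $(e_n(\lambda))$ as follows. Set $e_1(\lambda) = f_1(\lambda)/\norm{f_1(\lambda)}$ on the set where $f_1(\lambda) \neq 0,$ and $e_1(\lambda) = 0$ otherwise. Inductively, having defined $e_1(\lambda), \ldots, e_{n-1}(\lambda),$ put
$$
  g_n(\lambda) = f_n(\lambda) - \sum_{k=1}^{n-1} \scal{e_k(\lambda)}{f_n(\lambda)} e_k(\lambda),
$$
and set $e_n(\lambda) = g_n(\lambda)/\norm{g_n(\lambda)}$ on the set $\set{\lambda\colon g_n(\lambda) \neq 0}$ and $e_n(\lambda) = 0$ elsewhere. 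Since for fixed $\lambda$ the system $\set{f_j(\lambda)}$ generates $\hlambda,$ the procedure described in the paragraph preceding the lemma ensures that, after discarding the zero vectors, $\set{e_n(\lambda)}$ is an orthonormal basis of $\hlambda$ for a.e. $\lambda.$

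Next I would verify measurability by induction on $n.$ The inductive hypothesis is that $\scal{e_k(\lambda)}{f_j(\lambda)}$ is $\rho$-measurable for all $k \leq n-1$ and all $j.$ Assuming this, the inner products $\scal{e_k(\lambda)}{f_n(\lambda)}$ are measurable, hence so is
$$
  \norm{g_n(\lambda)}^2 = \scal{f_n(\lambda)}{f_n(\lambda)} - \sum_{k=1}^{n-1} \abs{\scal{e_k(\lambda)}{f_n(\lambda)}}^2.
$$
The set $N_n = \set{\lambda\colon g_n(\lambda) = 0}$ is therefore measurable, and on its complement $\norm{g_n(\lambda)}^{-1}$ is a measurable positive function. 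For any $j,$
$$
  \scal{e_n(\lambda)}{f_j(\lambda)} = \norm{g_n(\lambda)}^{-1}\brs{\scal{f_n(\lambda)}{f_j(\lambda)} - \sum_{k=1}^{n-1}\overline{\scal{e_k(\lambda)}{f_n(\lambda)}}\scal{e_k(\lambda)}{f_j(\lambda)}}
$$
on $\Lambda \setminus N_n$ and equals $0$ on $N_n,$ so it is measurable. This closes the induction and shows that $\scal{e_i(\lambda)}{e_j(\lambda)}$ is measurable for all $i,j$; thus $\Omega_1 = \set{e_j(\cdot)}$ is an orthonormal measurability base.

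Finally I would establish $\hat\Omega_0 = \hat\Omega_1.$ A section $f$ belongs to $\hat\Omega_1$ iff $\scal{f(\lambda)}{e_j(\lambda)}$ is measurable for every $j.$ Since each $e_j(\lambda)$ is a finite linear combination of $f_1(\lambda),\ldots,f_j(\lambda)$ with coefficients that are measurable functions of $\lambda$ (visible from the inductive formula above), measurability of $\scal{f(\lambda)}{f_k(\lambda)}$ for all $k$ implies measurability of $\scal{f(\lambda)}{e_j(\lambda)}$ for all $j,$ so $\hat\Omega_0 \subset \hat\Omega_1.$ Conversely, the same inductive formula can be inverted: on the complement of $N_n,$ $f_n(\lambda) = \norm{g_n(\lambda)} e_n(\lambda) + \sum_{k<n}\scal{e_k(\lambda)}{f_n(\lambda)}e_k(\lambda),$ while on $N_n$ the vector $f_n(\lambda)$ is a combination of $f_1(\lambda),\ldots,f_{n-1}(\lambda)$ — equivalently of $e_1(\lambda),\ldots,e_{n-1}(\lambda)$ — with coefficients obtained by solving an explicit measurable linear system. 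Thus $\scal{f(\lambda)}{f_n(\lambda)}$ is measurable whenever all $\scal{f(\lambda)}{e_j(\lambda)}$ are, giving $\hat\Omega_1 \subset \hat\Omega_0.$

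The main obstacle is purely technical: tracking measurability through the case split $g_n(\lambda) = 0$ versus $g_n(\lambda) \neq 0.$ This is handled cleanly because the set $N_n$ is itself Borel (being a level set of a measurable function), so defining $e_n$ piecewise on $N_n$ and its complement does not destroy measurability of the inner products, which is all that the definition of a measurability base requires.
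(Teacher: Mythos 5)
Your proof is correct and follows the same fundamental strategy as the paper: apply Gram--Schmidt pointwise in $\lambda$ and track measurability of the coefficients. The only place where you deviate from the paper's argument is in establishing $\hat\Omega_1 \subset \hat\Omega_0.$ The paper expands $\scal{h(\lambda)}{f_j(\lambda)}$ via the infinite Parseval series $\sum_{k=1}^\infty \scal{h(\lambda)}{e_k(\lambda)}\scal{e_k(\lambda)}{f_j(\lambda)}$ (equation~(\ref{F: (h,g)=sum (h,ej)(ej,g)})), using that each $f_j$ was just shown to be $\Omega_1$-measurable; you instead invert the triangular Gram--Schmidt relation to write each $f_n$ as a \emph{finite} measurable combination of $e_1,\ldots,e_n.$ Both routes are valid, and yours has the small advantage of avoiding a countable limit. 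However, you overcomplicate your own argument: the case split on $N_n$ is unnecessary, because the identity
\[
  f_n(\lambda) = \norm{g_n(\lambda)}\,e_n(\lambda) + \sum_{k<n}\scal{e_k(\lambda)}{f_n(\lambda)}\,e_k(\lambda)
\]
holds for \emph{all} $\lambda$ --- on $N_n$ the first summand vanishes automatically since $\norm{g_n(\lambda)}=0$ --- so there is no linear system to solve. With that simplification your proof is clean and complete.
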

% \begin{proof} We apply the Gram-Schmidt orthogonalization procedure to the vectors $f_1(\lambda),f_2(\lambda),\ldots$
% for a.e. $\lambda \in \Lambda:$
% $$
%   e_n(\lambda) = \sum_{j=1}^n a_{nj}(\lambda)f_j(\lambda).
% $$
% The functions $a_{nj}(\lambda),$ being algebraic expressions of the scalar
% products $\scal{f_i(\lambda)}{f_j(\lambda)},$ are measurable.
% Let $\Omega_1 = \set{e_j}.$ The set $\set{e_j(\cdot)}$ is an orthonormal measurability base by construction.
% From the last equality it follows that every $\Omega_0$-measurable function is also $\Omega_1$-measurable.
%
% %It follows that all $e_j$'s are $\Omega_0$-measurable. then every $f_j$ will be $\Omega_1$-measurable,
% Now it is left to show that any $\Omega_1$-measurable function $h(\cdot)$ is also $\Omega_0$-measurable.
% If $g(\cdot)$ is an $\Omega_0$-measurable function, then for a.e. $\lambda$
% \begin{equation} \label{F: (h,g)=sum (h,ej)(ej,g)}
%   \scal{h(\lambda)}{g(\lambda)} = \sum_{k=1}^\infty \scal{h(\lambda)}{e_k(\lambda)} \scal{e_k(\lambda)}{g(\lambda)},
% \end{equation}
% in particular, the last equality holds for $g=f_j.$ It follows that every $\Omega_1$-measurable function $h$ is also $\Omega_0$-measurable.
% \end{proof}

\begin{lemma}~\cite[Corollary 7.1.2]{BSbook}  (i) If $f(\cdot)$
and $g(\cdot)$ are measurable vector-functions, then the function
\ $\Lambda \ni \lambda \mapsto
\scal{f(\lambda)}{g(\lambda)}_\hlambda$ \ is also measurable.
\\ (ii) If $f(\cdot)$ is a measurable vector-function, then the function \ $\Lambda \ni \lambda \mapsto
\norm{f(\lambda)}_\hlambda$ \ is measurable.
\end{lemma}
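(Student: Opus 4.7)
The plan is to reduce both parts to the previous lemma by passing to an orthonormal measurability base. By Lemma \ref{L: BS Lemma 7.1.1}, given the measurability base $\Omega_0 = \{f_j\}$ generating the direct integral, we can produce an orthonormal measurability base $\Omega_1 = \{e_j\}$ with the same class of measurable vector-functions. So a vector-function $h(\cdot)$ is measurable if and only if each scalar function $\lambda \mapsto \scal{h(\lambda)}{e_k(\lambda)}$ is measurable.

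For part (i), I would start with measurable $f(\cdot)$ and $g(\cdot)$ and use the Parseval-type expansion displayed in equation (\ref{F: (h,g)=sum (h,ej)(ej,g)}) of the previous proof: for a.e.\ $\lambda \in \Lambda$,
$$
  \scal{f(\lambda)}{g(\lambda)}_{\hlambda} = \sum_{k=1}^\infty \scal{f(\lambda)}{e_k(\lambda)} \scal{e_k(\lambda)}{g(\lambda)}.
$$
By the characterization above, each factor $\scal{f(\lambda)}{e_k(\lambda)}$ and $\scal{e_k(\lambda)}{g(\lambda)}$ is a measurable scalar function of $\lambda$, hence so is their (complex conjugate) product. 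The partial sums are therefore measurable, and since the series converges pointwise a.e.\ (Bessel/Parseval applied in $\hlambda$), the pointwise limit is also measurable. This proves (i).

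For part (ii), I would simply specialize part (i) to $g = f$ to obtain measurability of $\lambda \mapsto \scal{f(\lambda)}{f(\lambda)}_{\hlambda} = \norm{f(\lambda)}_{\hlambda}^2$, and then observe that $\norm{f(\lambda)}_{\hlambda} = \sqrt{\norm{f(\lambda)}_{\hlambda}^2}$ is measurable since the square root is a Borel (in fact continuous) function on $[0,\infty)$.

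No real obstacle is expected here; the only subtle point is that equation (\ref{F: (h,g)=sum (h,ej)(ej,g)}) is only asserted for a.e.\ $\lambda$, but this is harmless, since measurability of a function is not affected by altering it on a null set, and I may safely modify $\scal{f(\cdot)}{g(\cdot)}$ on the exceptional null set without changing its measurability class.
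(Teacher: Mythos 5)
Your proof is correct and takes essentially the same route as the paper: pass to an orthonormal measurability base via Lemma \ref{L: BS Lemma 7.1.1}, expand $\scal{f(\lambda)}{g(\lambda)}$ using the Parseval-type identity (\ref{F: (h,g)=sum (h,ej)(ej,g)}), and deduce (ii) from (i). The paper merely states this more tersely; your added detail on measurability of the partial sums and the pointwise limit is fine.
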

% \begin{proof} (i) By Lemma \ref{L: BS Lemma 7.1.1}, one can assume that $f$ and $ g$ are $\set{e_j}$-measurable.
% The claim then follows from (\ref{F: (h,g)=sum (h,ej)(ej,g)}).
%  (ii) follows from (i).
% \end{proof}

Two measurable functions are \emph{equivalent}, if they coincide
for $\rho$-a.e. $\lambda \in \Lambda.$ d The direct integral of
Hilbert spaces
\begin{equation} \label{F: def of direct integral}
  \euH = \int_\Lambda^\oplus \hlambda\,\rho(d\lambda)
\end{equation}
consists of all (equivalence classes of) measurable vector-functions $f(\lambda),$ such that
$$
  \norm{f}_\euH^2 := \int_\Lambda \norm{f(\lambda)}_\hlambda^2\,\rho(d\lambda) < \infty.
$$
The scalar product of $f,g \in \euH$ is given by the formula
$$
  \scal{f}{g}_\euH = \int_\Lambda \scal{f(\lambda)}{g(\lambda)}_\hlambda\,\rho(d\lambda).
$$
The set of square-summable vector-functions with this scalar product is a Hilbert space.

\begin{lemma} \label{L: BS Lemma 7.1.5} \cite[Lemma 7.1.5]{BSbook}
Let $\set{\hlambda \colon \lambda \in \Lambda}$ be a family of Hilbert spaces
with an orthogonal measurability base $\set{e_j(\cdot)},$ let $f_0 \in L_2(\Lambda, d\rho)$
be a fixed function, such that $f_0 \neq 0$ for $\rho$-a.e. $\lambda.$ Then the linear span of the set
of functions
$$
  \set{f_0(\lambda) \chi_\Delta(\lambda) e_j(\lambda) \colon j=1,2,\ldots, \Delta \ \text{is a Borel subset of } \ \Lambda}
$$
is dense in the Hilbert space (\ref{F: def of direct integral}).
\end{lemma}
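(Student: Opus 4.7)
\medskip

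The plan is to reduce the assertion in two stages: first from an arbitrary $f \in \euH$ to finite linear combinations of vector-functions of the form $c_j(\lambda) e_j(\lambda)$, and then, for each fixed $j$, from the scalar factor $c_j(\lambda)$ to linear combinations of functions of the form $f_0(\lambda) \chi_\Delta(\lambda)$.

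For the first reduction, I would fix $f \in \euH$ and set $c_j(\lambda) := \scal{e_j(\lambda)}{f(\lambda)}_{\hlambda}$. By the preceding lemma on measurability of scalar products, each $c_j(\cdot)$ is $\rho$-measurable. Since $\set{e_j(\lambda)}$, after removing zero vectors, is an orthonormal basis of $\hlambda$ for a.e.\ $\lambda$, Parseval's identity gives $\sum_j |c_j(\lambda)|^2 = \norm{f(\lambda)}_\hlambda^2$ for a.e.\ $\lambda$. Setting $f_N(\lambda) := \sum_{j=1}^N c_j(\lambda) e_j(\lambda)$, we have $\norm{f(\lambda) - f_N(\lambda)}_\hlambda^2 \to 0$ pointwise a.e.\ and $\norm{f(\lambda) - f_N(\lambda)}_\hlambda^2 \leq \norm{f(\lambda)}_\hlambda^2$, so Lebesgue's dominated convergence theorem yields $\norm{f - f_N}_\euH \to 0$. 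Thus it suffices to approximate, for each fixed $j$, a single term $c_j(\lambda) e_j(\lambda)$ in $\euH$ by linear combinations of functions of the required form.

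Because $\norm{e_j(\lambda)}_\hlambda \leq 1$ for a.e.\ $\lambda$ (it is either $0$ or $1$ by orthonormality of the base), for any scalar function $\phi \in L^2(\Lambda, d\rho)$ we have
$$
  \norm{c_j(\cdot) e_j(\cdot) - \phi(\cdot) e_j(\cdot)}_\euH^2 \;=\; \int_\Lambda |c_j(\lambda) - \phi(\lambda)|^2 \norm{e_j(\lambda)}_\hlambda^2 \,\rho(d\lambda) \;\leq\; \norm{c_j - \phi}_{L^2(\Lambda,d\rho)}^2,
$$
and also $c_j \in L^2(\Lambda, d\rho)$ because $\sum_j \int |c_j|^2 \,d\rho = \norm{f}_\euH^2 < \infty$. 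So the problem reduces to the purely scalar claim: the linear span of $\set{f_0 \chi_\Delta \colon \Delta \subset \Lambda \text{ Borel}}$ is dense in $L^2(\Lambda, d\rho)$.

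This scalar claim follows from a standard orthogonal complement argument, which I expect to be the only genuinely nontrivial step (though it is still short). Suppose $h \in L^2(\Lambda, d\rho)$ is orthogonal to every $f_0 \chi_\Delta$. Since $h, f_0 \in L^2(\Lambda, d\rho)$, the product $h \overline{f_0}$ lies in $L^1(\Lambda, d\rho)$, and the hypothesis reads $\int_\Delta h(\lambda) \overline{f_0(\lambda)} \, \rho(d\lambda) = 0$ for every Borel set $\Delta \subset \Lambda$. Consequently $h \overline{f_0} = 0$ $\rho$-a.e., and since $f_0 \neq 0$ $\rho$-a.e.\ by hypothesis, we conclude $h = 0$ $\rho$-a.e. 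Therefore the linear span in question has trivial orthogonal complement and is dense in $L^2(\Lambda, d\rho)$. Combined with the two reductions above, this proves the lemma.
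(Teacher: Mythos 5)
Your proof is correct; note, though, that the paper itself gives no proof of this lemma, simply citing it from Birman--Solomyak \cite[Lemma 7.1.5]{BSbook}, so there is no internal argument to compare against. Your three-step reduction --- (i) Parseval plus dominated convergence to truncate to finitely many $j$, (ii) the pointwise contraction estimate $\norm{(c_j-\phi)e_j}_\euH \leq \norm{c_j-\phi}_{L^2(\Lambda,d\rho)}$ using $\norm{e_j(\lambda)}_\hlambda\in\{0,1\}$, and (iii) the scalar density of $\mathrm{span}\{f_0\chi_\Delta\}$ in $L^2(\Lambda,d\rho)$ via the orthogonal-complement argument (any $h$ orthogonal to all $f_0\chi_\Delta$ has $h\overline{f_0}=0$ a.e., hence $h=0$ a.e.\ since $f_0\neq 0$ a.e.) --- is exactly the standard proof of this direct-integral fact and is gap-free.
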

%\margcom{Def of \\ o.m.base\\ and \\ Lemma \\ 7.1.5}

There is an example of the direct integral of Hilbert spaces
relevant to this paper (cf. e.g.~\cite[Chapter 7]{BSbook}). Let
$\mathfrak h$ be a fixed Hilbert space, let $\set{\hlambda, \
\lambda \in \Lambda}$ be a family of subspaces of~$\mathfrak h$
and let $P_\lambda$ be the orthogonal projection onto~$\hlambda.$
Let the operator-function $P_\lambda,\,\lambda \in \Lambda,$ be
weakly measurable. Let $(\omega_j)$ be an orthonormal basis in
$\mathfrak h.$ The family of vector-functions $f_j(\lambda) =
\set{P_\lambda \omega_j}$ is a measurability base for the family
of Hilbert spaces $\set{\hlambda, \ \lambda \in \Lambda}.$ The
direct integral of Hilbert spaces (\ref{F: def of direct
integral}) corresponding to this family is naturally isomorphic
(in an obvious way) to the subspace of $L_2(\Lambda, \mathfrak
h),$ which consists of all measurable square integrable
vector-functions $f(\cdot),$ such that $f(\lambda) \in \hlambda$
for a.e. $\lambda \in \Lambda$~\cite[Chapter 7]{BSbook}.

One of the versions of the Spectral Theorem says that for any self-adjoint operator~$H$
in~$\hilb$ there exists a direct integral of Hilbert spaces~(\ref{F: def of direct integral})
and an isomorphism
$$
  \euF \colon \hilb \to \euH,
$$
such that~$H_0$ is diagonalized in this representation:
$$
  \euF(Hf)(\lambda) = \lambda \euF(f)(\lambda), \ f \in \dom(H),
$$
for $\rho$-a.e. $\lambda \in \Lambda.$

\subsection{Operator-valued holomorphic functions}
In this subsection I follow mainly Kato's book~\cite{Kato}.
Proofs and details can be found in this book of Kato.
See also \cite[Chapter III]{HPh}.

Let~$X$ be a Banach space. Let~$G$ be a region (open connected subset) of the complex plane~$\mbC.$
A vector-function $f \colon G \to X$ is called \emph{holomorphic} (or strongly holomorphic), if for every $z \in G$
the limit
$$
  f'(z) := \lim_{h\to 0} \frac{f(z+h)-f(z)}h
$$
exists. A vector-function $f\colon G \to X $ is holomorphic is and only if it is weakly holomorphic; that is, if for any continuous linear
functional $l$ on~$X$ the function $l(f(z))$ is holomorphic in~$G.$ The proof can be found in
\cite[Theorem III.1.37]{Kato} (see also~\cite[Theorem III.3.12]{Kato},~\cite{RS1}).

A vector-function $f \colon G \to X$ is holomorphic at $z_0 \in G$ if and only if $f$ is analytic
at $z_0,$ that is, if $f$ admits a power series representation
$$
  f(z) = f_0 + (z-z_0)f_1 + (z-z_0)^2f_2 + \ldots + (z-z_0)^n f_n + \ldots
$$
with a non-zero radius of convergence, where $f_0, f_1, \ldots \in X.$

In this paper we consider only holomorphic families of compact operators on a Hilbert space.
In one occasion we consider also a holomorphic family of operators of the form $1+T(z),$ where $T(z)$
is a holomorphic family of compact operators.

Let $T \colon G \to \clL_\infty(\hilb)$ be a holomorphic family of compact operators.
Let $z \in G$ and let $\Gamma$ be a piecewise smooth contour in the resolvent set $\rho(T(z))$ of $T(z).$
Assume that there is only a finite number of eigenvalues (counting multiplicities) $\lambda_1(z), \lambda_2(z), \ldots, \lambda_h(z)$
of $T(z)$ inside of $\Gamma.$
%be eigenvalues of $T(z)$ which lie inside the curve $\Gamma.$
%If $z \in G$ and if $\Gamma$ is a closed smooth curve in the resolvent set of $T(z),$
%then
The operator
\begin{equation} \label{F: Riesz integral}
  P(z) = \frac 1{2\pi i} \int_\Gamma \brs{\zeta - T(z)}^{-1}\,d\zeta.
\end{equation}
is an idempotent operator\footnote{We do not use the word projection here, since by projection we mean an orthogonal idempotent.}
(an idempotent operator is a bounded operator $E$ which satisfies the equality $E^2=E$),
corresponding to the set of eigenvalues $\lambda_1(z), \lambda_2(z), \ldots, \lambda_h(z).$
The idempotent $P(z)$ is called the Riesz idempotent operator. By the Cauchy theorem, $P(z)$ does not change, if $\Gamma$ is changed continuously
inside the resolvent set of $T.$ The range of $P(z)$ is the direct sum of root spaces
of eigenvalues $\lambda_1(z), \lambda_2(z), \ldots, \lambda_h(z).$

Let $z \in G.$ If $\lambda_j(z)$ is a simple (that is, of algebraic multiplicity 1) non-zero eigenvalue of $T(z),$ then in some neighbourhood of $z$
it depends holomorphically on $z$ and remains to be simple. So does the idempotent operator $P_j(z)$ associated with the eigenvalue $\lambda_j(z).$
In particular, the eigenvector $v_j(z),$ corresponding to $\lambda_j(z),$ is also a holomorphic function in a neighbourhood of $z.$

The situation is not so simple, if the eigenvalue $\lambda_j(z)$ is not simple at some point $z_0 \in G.$ In this case in a neighbourhood
of $z_0$ the eigenvalue $\lambda_j(z)$ splits (more exactly, may split and most likely does split) into several different eigenvalues
$\lambda_{z_0,1}(z), \lambda_{z_0,2}(z), \ldots, \lambda_{z_0,p}(z),$ where $p$ is the multiplicity of $\lambda_j(z_0).$
The functions $\lambda_{z_0,1}(z), \lambda_{z_0,2}(z), \ldots, \lambda_{z_0,p}(z)$ represent branches of a multi-valued holomorphic function
with branch point $z_0.$ So, they can have an algebraic singularity at $z_0,$ though they are still continuous at $z_0.$
The idempotent of the whole group of eigenvalues $\lambda_{z_0,1}(z), \lambda_{z_0,2}(z), \ldots, \lambda_{z_0,p}(z)$ is holomorphic
in a neighbourhood of $z_0;$ but the idempotent of a subgroup of the group firstly is not defined at $z_0$ and secondly as $z\to z_0$ it (more exactly, its norm) may go to infinity
--- that is, it can have a pole at $z_0$ (see e.g.~\cite[Theorem II.1.9]{Kato}).
Note that this is possible since an idempotent is not necessarily self-adjoint.

All these potentially ``horrible'' things cannot happen, if the holomorphic family of operators $T(z)$ is \emph{symmetric}.
This means that the region~$G$ has a non-empty intersection with the real-axis $\mbR$ and for $\Im z = 0$ the operator $T(z)$
is self-adjoint, or --- at the very least --- normal. Fortunately, in this paper we shall deal only with such symmetric families of holomorphic functions.
Namely, if the family $T(z)$ is symmetric, then \ (1) \ eigenvalues $\lambda_1(z), \lambda_2(z), \lambda_3(z), \ldots$
of $T(z)$ are analytic functions for real values of $z$
(more exactly, they can be enumerated at every point $z$ in such a way that they become analytic) \ (2) \ the eigenvectors $v_1(z),
v_2(z), v_3(z), \ldots$ of $T(z)$ corresponding to those eigenvalues are analytic as well.
The eigenvectors admit analytic continuation to any real point $z_0,$ where some eigenvalue is not simple, since
in this case all Riesz idempotents of the group of isolated eigenvalues are orthogonal,
and --- as a consequence --- bounded. So, the Riesz idempotents cannot have a singularity at $z_0$
and thus are analytic at $z_0.$ It follows that the eigenvalues are also analytic.

For details see Kato's book.

%So does , for all $z \in G$ close enough to $z_0,$ the
%Eigenvalues $\lambda_1(z), \lambda_2(z), \lambda_3(z), \ldots$ of $T(z)$ (counting multiplicities)
%are holomorphic functions $z \in G.$
%
%In the light of the preceding explanatory text, the following two lemmas should not seem surprising.
\begin{lemma} \label{L: lemma YY}  Let $A \colon [0,1) \ni y \mapsto A_y \in \clL_1(\hilb),$ $A_y \geq 0.$
\begin{enumerate}
  \item[(i)] If $A_y$ is a real-analytic function for $y>0$ with values in~$\clL_1,$
    then $\sqrt{A_y}$ is a real-analytic function for $y>0$ with values in~$\clL_2.$
  \item[(ii)] If, moreover, $A_y$ is continuous at $y = 0$ in~$\clL_1,$ then $\sqrt{A_y}$ is
    continuous at $y = 0$ in~$\clL_2.$
\end{enumerate}
\end{lemma}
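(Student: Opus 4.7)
The plan for part (ii) is to invoke Lemma~\ref{L: lemma Y} directly. Continuity of $y\mapsto A_y$ at $y=0$ in $\clL_1$ gives $A_{y_n}\to A_0$ in $\clL_1$ for every sequence $y_n\to 0^+$, and since all operators in sight are non-negative, Lemma~\ref{L: lemma Y} (a consequence of the Birman--Koplienko--Solomyak inequality) yields $\sqrt{A_{y_n}}\to\sqrt{A_0}$ in $\clL_2$, which is precisely $\clL_2$-continuity of $y\mapsto\sqrt{A_y}$ at $y=0$.

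For part (i), I would fix $y_0>0$ and work in a real neighborhood of $y_0$. Real-analyticity of $A_y$ in $\clL_1$ allows me to extend $y\mapsto A_y$ to a $\clL_1$-valued holomorphic function on a complex disc $\set{z\in\mbC\colon\abs{z-y_0}<\delta}$; its restriction to the real diameter is a symmetric analytic family of self-adjoint non-negative trace-class operators. Applying the Kato--Rellich analytic perturbation theory for symmetric analytic families recalled in the subsection immediately preceding this lemma, I obtain real-analytic eigenvalue functions $\lambda_j(y)\ge 0$ and a real-analytic orthonormal system of eigenvectors $v_j(y)$ for $y$ in a real neighborhood of $y_0$.

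I would then define $\sqrt{A_y}$ by the spectral series
\[
  \sqrt{A_y}\;=\;\sum_{j=1}^{\infty}\sqrt{\lambda_j(y)}\,\scal{v_j(y)}{\cdot}\,v_j(y)
\]
and verify convergence in $\clL_2$ by means of the Parseval identity
\[
  \Bigl\|\sqrt{A_y}-\sum_{j=1}^{N}\sqrt{\lambda_j(y)}\,\scal{v_j(y)}{\cdot}\,v_j(y)\Bigr\|_2^{\,2}\;=\;\Tr A_y-\sum_{j=1}^{N}\lambda_j(y).
\]
Since $y\mapsto\Tr A_y$ is real-analytic and in particular continuous, the right-hand side tends to zero uniformly on any closed subinterval of the neighborhood as $N\to\infty$. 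Each finite partial sum is a real-analytic $\clL_2$-valued function of $y$, so uniform $\clL_2$-convergence transfers analyticity to the full sum $\sqrt{A_y}$.

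The main technical obstacle I anticipate is analyticity of the scalar factors $\sqrt{\lambda_j(y)}$ at points where $\lambda_j$ vanishes. A non-negative real-analytic function has zeros of even order $2k$, but near such a zero $y_\ast$ one has $\sqrt{\lambda_j(y)}=\abs{y-y_\ast}^{k}$ times an analytic nonvanishing factor --- which is real-analytic only when $k$ is even. Overcoming this requires exploiting the analytic freedom in the sign of the eigenvector $v_j(y)$ across $y_\ast$, together with a regrouping of terms sharing a common Riesz idempotent subspace, so as to restore analyticity of the combined rank-one $\clL_2$-contribution. Once this delicate step is handled, the uniform-convergence argument of the previous paragraph completes the proof.
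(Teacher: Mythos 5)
Part (ii) matches the paper's one-line reduction to Lemma~\ref{L: lemma Y}. For part (i), your route---Kato--Rellich analyticity of eigenvalues and eigenvectors, a spectral series, and Parseval together with Dini's theorem to get uniform $\clL_2$-convergence of the partial sums---is in the same spirit as, but noticeably more careful than, the paper's. The paper instead relies on the equivalence of weak and strong $\clL_2$-analyticity, asserting without justification that $\Tr(D\sqrt{A_y})$ is analytic for every Hilbert--Schmidt $D$; your Parseval--Dini step supplies exactly that missing uniform-convergence argument.

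The obstacle you flag in your final paragraph is, however, genuine and your proposed remedy does not close it. Replacing $v_j(y)$ by $\pm v_j(y)$ leaves the rank-one projector $\scal{v_j(y)}{\cdot}\,v_j(y)$ unchanged, and for an isolated simple eigenvalue touching zero there is nothing to regroup: on $\ell_2$ take $A_y = \diag\,((y-\tfrac12)^2,\,1,\,0,\,0,\,\ldots),$ a nonnegative family that is real-analytic in $\clL_1$ on all of $(0,1)$, yet $\sqrt{A_y} = \diag\,(\abs{y-\tfrac12},\,1,\,0,\,\ldots)$ is not even $\clL_2$-differentiable at $y = \tfrac12$. The lemma is therefore false as stated; the paper's own proof has the same gap, since it simply declares the arithmetic square roots $\sqrt{\alpha_j(y)}$ to be analytic. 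What in fact rescues every application of the lemma in this paper is that the relevant family $A_y = \phi(\lambda + iy)$ has trivial kernel for every $y>0$ (property (ii) of subsection~\ref{SS: phi(lambda+i eps)}), so its eigenvalues never vanish on $(0,1)$ and the square roots are manifestly analytic there. That positivity hypothesis---or an explicit remark that it holds in all applications---needs to be added; your sign-flip-and-regroup device is not a substitute for it.
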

% \begin{proof} (i) follows from equivalence of the weak and the strong analyticity.
% Indeed, analyticity of $A_y$ implies weak analyticity of $A_y,$
% i.e. that for any $X \in \clBH$ the function $\Tr(XA_y)$ is
% analytic. Choosing $X$ to be the projection to the eigenvector corresponding to $\alpha_j,$
% we conclude that all eigenvalue functions
% $\alpha_j(\lambda)$ of $A_y$ are analytic. %(with possible algebraic singularities).
% Hence, the eigenvalue
% functions $\sqrt{\alpha_j(\lambda)}$ (probably, it is not pointless to mention,
% that the square root here is the arithmetic one) of $\sqrt{A_y}$ are also
% analytic. It follows that for any $D \in \LpH{2}$ the function
% $\Tr(D \sqrt{A_y})$ is analytic. This means that $\sqrt{A_y}$ is
% weakly $\LpH{2}$-analytic. Hence, $\sqrt{A_y}$ is also strongly
% $\LpH{2}$-analytic.
%   (ii)~follows from Lemma \ref{L: lemma Y}. %\margdetails
% \end{proof}
\begin{thm} \label{T: lemma X} Let $A_y,$ $y \in [0,1),$ be a family of non-negative Hilbert-Schmidt (respectively, compact) operators,
real-analytic in~$\clL_2$ (respectively, in $\norm{\cdot}$) for
$y>0.$ Then there exists a family $\set{e_j(y)}$ of orthonormal
bases, consisting of eigenvectors of $A_y,$ such that all vector-functions $(0,1)\ni y \mapsto e_j(y),
j=1,2,\ldots,$ are real-analytic functions, as well as the
corresponding eigenvalue functions $\alpha_j(y).$ Moreover, if
$A_y$ is continuous at $y=0$ in the Hilbert-Schmidt norm, then all
eigenvalue functions $\alpha_j(y)$ are also continuous at $y = 0,$
and if $\alpha_j(0)>0,$ then the corresponding eigenvector
function $e_j(y)$ can also be chosen to be continuous at $y=0.$
\end{thm}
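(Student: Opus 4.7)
The plan is to apply the Kato--Rellich analytic perturbation theory for symmetric families of compact operators (surveyed in the paragraphs preceding the statement) on $(0,1),$ and to read off continuity at $y=0$ from the continuity of the Riesz projection onto each isolated positive eigenvalue of $A_0.$ The main technical point will be matching the analytic eigenvectors to a continuous-at-$0$ basis when the limiting eigenvalue of $A_0$ has multiplicity greater than one.

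For (i), on the connected interval $(0,1)$ the map $y\mapsto A_y$ is a real-analytic family of non-negative compact operators, i.e.\ a symmetric analytic family in the sense of the preceding paragraphs; for such families every Riesz idempotent of a cluster of non-zero eigenvalues is orthogonal at real $y$ and hence uniformly bounded, so no poles or algebraic singularities can form. Rellich's theorem then yields globally real-analytic enumerations $\alpha_j(y)$ of the non-zero eigenvalues and a globally real-analytic orthonormal system of eigenvectors $e_j(y);$ any remaining kernel is handled by transporting a fixed orthonormal basis of $\ker A_{y_0}$ through the complementary analytic idempotent. For the eigenvalue continuity in (ii), $A_y \to A_0$ in $\clL_2$ gives $\norm{A_y-A_0}\to 0,$ and~(\ref{F: Ya (1.6.6)}) combined with the identity $s_n(A)=\alpha_n(A)$ for non-negative operators delivers term-by-term convergence of the eigenvalue multisets; since each analytic $\alpha_j(\cdot)$ is a particular real-analytic enumeration, $\lim_{y\to 0^+}\alpha_j(y)$ exists, and after a single relabelling at $y=0$ it equals $\alpha_j(0).$

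For the eigenvector statement fix $j$ with $\alpha_0:=\alpha_j(0)>0.$ Compactness of $A_0$ makes $\alpha_0$ isolated of some finite multiplicity $m,$ and norm-continuity of $A_y$ at $0$ lets us enclose $\alpha_0$ by a circle $\Gamma\subset\mbC$ lying in $\rho(A_y)$ for all small $y;$ the Riesz projection
\[
  P_y \;=\; \frac{1}{2\pi i}\int_\Gamma (\zeta - A_y)^{-1}\,d\zeta
\]
is then norm-continuous at $y=0,$ has rank $m$ for small $y,$ and tends to the orthogonal projection $P_0$ onto the eigenspace of $A_0$ at $\alpha_0.$ Since $\norm{P_y-P_0}<1$ for small $y,$ the Sz.-Nagy near-identity unitary carries $P_0\hilb$ isometrically onto $P_y\hilb$ and converges to $1_{P_0\hilb};$ fixing any orthonormal basis $\set{\tilde e_k(0)\colon k\in J}$ of $P_0\hilb$ (with $J=\set{k\colon \alpha_k(0)=\alpha_0}$ of cardinality $m$) and transporting it by this unitary produces a continuous-at-$0$ orthonormal basis of $P_y\hilb.$ The remaining and genuinely delicate point is to reconcile this local-at-$0$ choice with the Rellich analytic family $\set{e_k(y)\colon k \in J}$ on $(0,1);$ the two are orthonormal bases of the same $m$-dimensional space $P_y\hilb$ and therefore differ by a real-analytic $U(m)$-valued unitary $V(y),$ and the freedom in Rellich's theorem to select orthonormal bases of each eigenspace at one reference point of $(0,1)$ is precisely enough to arrange $V(y)\to 1$ as $y\to 0^+$ while $V$ remains analytic on the connected interval $(0,1)$ --- a matter of analytic continuation inside the finite-dimensional compact Lie group $U(m).$
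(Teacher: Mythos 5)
Parts (i) and (ii) of your argument are sound in outline, apart from a small slip: inequality (\ref{F: Ya (1.6.6)}) controls singular numbers of \emph{products} $ATB$, not differences, so it does not yield term-by-term convergence of the eigenvalue multisets; what you want is the estimate $\abs{s_n(A)-s_n(B)}\le\norm{A-B}$, and to pass from the decreasing rearrangement to the analytic branches $\alpha_j$ one also needs a short intermediate-value argument exploiting the fact that $\sigma(A_y)$ eventually avoids any fixed gap of $\sigma(A_0)\cap(0,\infty)$. The genuine gap, however, is in your final paragraph. Having shown the total Riesz projection $P_y$ onto the cluster of eigenvalues near $\alpha_0=\alpha_j(0)$ converges to $P_0$, and having built the continuous frame $\tilde e_k(y)$ of $P_y\hilb$ by Sz.-Nagy transport, you compare it with the Rellich analytic eigenbasis $e_k(y)$, obtain a unitary $V(y)\in U(m)$, and assert that the freedom in Rellich's construction lets you arrange $V(y)\to 1$. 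That freedom is only a \emph{constant} diagonal phase: re-choosing the orthonormal reference eigenvectors at a single $y_0\in(0,1)$ replaces $V(y)$ by $V(y)D$ with $D$ a fixed diagonal unitary, which helps only if $V(y)$ already converges --- and in general it need not.

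In fact the assertion itself fails when $\alpha_j(0)$ has multiplicity $m>1$, so the gap cannot be bridged by any amount of cleverness inside $U(m)$. On a $2$-dimensional coordinate plane in $\ell_2$ put
\[
A_y \;=\; 1 + y\begin{pmatrix}\cos(1/y) & \sin(1/y)\\[2pt] \sin(1/y) & -\cos(1/y)\end{pmatrix}
\]
and extend by a fixed positive Hilbert--Schmidt operator on the orthogonal complement. Every entry is real-analytic for $y>0$, $A_y\ge 1-y>0$, and $\norm{A_y-A_0}=y\to 0$, so all hypotheses of the theorem hold. The eigenvalues on the block are $1\pm y$, both tending to $1>0$, yet the unit eigenvector for $1+y$ equals $(\cos(1/(2y)),\sin(1/(2y)))^T$ up to a scalar of modulus one, and its coordinate magnitudes oscillate without limit as $y\to 0^+$; no phase choice, indeed no choice at all, makes it continuous at $0$. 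Thus convergence of the total Riesz projection (which is exactly Kato's Theorem IV.3.16, and is what your Sz.-Nagy step genuinely proves) is strictly weaker than convergence of the individual eigenvectors. Be aware that the paper's own proof is a bare citation of Kato \S VII.3 and Theorem IV.3.16 and does not address this point either; the eigenvector-continuity claim is safe when $\alpha_j(0)$ is a \emph{simple} eigenvalue of $A_0$, which is probably the intended reading, but as stated it needs that (or some comparable) extra hypothesis.
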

% \begin{proof}
% The first part follows from~\cite[Theorem II.6.1]{Kato} and
% \cite[\S II.6.2]{Kato}, %\margdetails
% cf. also~\cite[\S VII.3]{Kato}. % \margcom{II ??}
% Continuity of $\alpha_j(y)$ at $y=0$ follows from upper semi-continuity of the spectrum~\cite[\S IV.3.1]{Kato}.
% That the eigenvalue functions $e_j(y)$ can be chosen to be continuous at $y=0,$
% provided that $\alpha_j(0)>0,$ follows from~\cite[\S VII.3]{Kato} (cf. also~\cite[Theorem IV.3.16]{Kato}).
% \end{proof}

\subsubsection{Operator-valued meromorphic functions}
Let $G$ be a region in $\mbC.$ Let $z_0 \in G$ and let $T \colon G \setminus \set{z_0} \to \clBH$
be a holomorphic family of bounded operators in a deleted neighbourhood of $z_0.$ Then $T$ admits a Laurent expansion:
$$
  T(z) = \sum_{n=-\infty}^\infty (z-z_0)^n T_n,
$$
where $T_n$ are bounded operators.

%\begin{proof} %\footnote{to be deleted later}
%Let $f,g \in \hilb.$ Then the scalar function $\scal{T(z)f}{g}$
%is holomorphic in a deleted neighbourhood of $z_0$ and so it admits a Laurent expansion
%\begin{equation} \label{F: (T(z)f,g)=...Laurent}
%  \scal{T(z)f}{g} = \sum_{n=-\infty}^\infty a_n(f,g)(z-z_0)^n
%\end{equation}
%in the deleted neighbourhood.
%The functional $a_n$ is obviously bilinear. It is also bounded. To see this we multiply both sides of the last formula
%by $(z-z_0)^{-n-1}$ and integrate it over a circle $\Gamma = \set{z \in G \colon \abs{z-z_0} = r}$
%with small enough $r$ (so that $\Gamma$ and its interior $\subset G$) to get
%$$
%  a_n(f,g) = \frac 1{2\pi i} \int _\Gamma (z-z_0)^{-n-1}\scal{T(z)f}{g}\,dz.
%$$
%Since $T(z)$ is continuous on $\Gamma,$ the maximum $M$ of its norm $\norm{T(z)}$ on $\Gamma$ is finite.
%It follows that $\norm{a_n}$ is bounded by $\frac M{r^{n}}.$
%
%So, each $a_n$ can be identified with a bounded operator $T_n;$ that is,
%for any $f,g\in \hilb$ the equality $\scal{T_n f}{g} = a_n(f,g)$ holds. Since $\scal{T(z)f}{g}$ is uniformly bounded
%on any compact subset of $G \setminus \set{z_0},$ it follows from (\ref{F: (T(z)f,g)=...Laurent}) and the Banach-Steinhaus principle
%that the operator Laurent series
%$$
%  \sum_{n=-\infty}^\infty (z-z_0)^n T_n
%$$
%converges at all points close enough to $z_0$ and its sum is equal to $T(z).$
%\end{proof}
%\medskip

Let $N = \min\set{n \colon T_n \neq 0}.$ If $N > -\infty,$ then $T(z)$ is said to have a pole of order $N$
at $z_0.$

\subsubsection{Analytic Fredholm alternative}
This is the following theorem (see e.g. \cite[Theorem VI.14]{RS1}, \cite[Theorem 1.8.2]{Ya}).

\begin{thm} \label{T: analytic Fredholm alternative}
Let $G$ be an open connected subset of $\mbC.$ Let $T \colon G \to \clL_\infty(\hilb)$
be a holomorphic family of compact operators in $G.$ If the family of operators $1+T(z)$
is invertible at some point $z_1 \in G,$ then it is invertible at all points of $G$ except the discrete set
$$
  \euN := \set{z \in G \colon 1 \in \sigma(T(z))}.
$$
Further, the operator-function~$F(z) := (1+T(z))^{-1}$ is meromorphic and the set of its poles is $\euN.$ Moreover, in the expansion of
$F(z)$ in a Laurent series in a neighbourhood of any point $z_0 \in \euN$ the coefficients of negative powers of $z-z_0$ are finite dimensional
operators.
\end{thm}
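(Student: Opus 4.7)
The plan is to reduce the problem to a finite-dimensional one in a neighbourhood of each point, and then use connectedness of $G$ to globalize.

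Fix $z_0\in G$. Since $T(z_0)$ is compact, the Schmidt representation gives a finite-rank operator $P$ with $\norm{T(z_0)-P}<\tfrac12$. Since $T$ is holomorphic, hence norm-continuous, there is a closed disk $D=\overline{D_r(z_0)}\subset G$ on which $\norm{T(z)-P}<\tfrac12$ uniformly. Setting $A(z):=1+T(z)-P$, the Neumann series shows $A(z)$ is invertible on $D$ with $A(z)^{-1}$ holomorphic there. Writing
$$
  1+T(z)=A(z)\bigl(1+A(z)^{-1}P\bigr),
$$
the invertibility of $1+T(z)$ is equivalent to that of $1+A(z)^{-1}P$. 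First I would observe that $A(z)^{-1}P$ has range contained in the finite-dimensional subspace $\mathcal{M}:=\operatorname{im}P$, hence $1+A(z)^{-1}P$ preserves the decomposition $\hilb=\mathcal{M}\oplus\mathcal{M}^\perp$ up to the identity on $\mathcal{M}^\perp$. Its Fredholm determinant equals the ordinary finite-dimensional determinant
$$
  d(z):=\det\bigl(1_{\mathcal{M}}+Q A(z)^{-1}P\bigl|_{\mathcal{M}}\bigr),
$$
where $Q$ is the projection onto $\mathcal{M}$; this $d(z)$ is a scalar holomorphic function on $D$. By the Fredholm alternative, $1+A(z)^{-1}P$ is invertible at $z\in D$ iff $d(z)\neq 0$.

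Next I would perform the global step. Split $G=G_1\sqcup G_2$, where $G_1$ is the set of $z\in G$ admitting a disk neighbourhood on which the locally constructed $d$ vanishes identically, and $G_2$ is the set of $z\in G$ admitting a disk neighbourhood on which the locally constructed $d$ has only isolated zeros. The local analysis of the previous paragraph and the identity principle for scalar holomorphic functions show that both $G_1$ and $G_2$ are open and exhaust $G$. Since $1+T(z_1)$ is invertible at the distinguished point $z_1$, we have $d(z_1)\neq 0$ in the construction around $z_1$, so $z_1\in G_2$ and $G_2\neq\varnothing$. Connectedness of $G$ forces $G_1=\varnothing$, hence $G=G_2$, and therefore $\mathcal{N}=\{z\in G:1\in\sigma(T(z))\}$ is discrete in $G$.

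It remains to show $F(z):=(1+T(z))^{-1}$ is meromorphic on $G$ with finite-dimensional principal parts. Near $z_0\in\mathcal{N}$ we compute, in the local factorization,
$$
  F(z)=\bigl(1+A(z)^{-1}P\bigr)^{-1}A(z)^{-1}.
$$
Because $1+A(z)^{-1}P$ is of the form $1+$ finite-rank, Cramer's rule applied inside $\mathcal{M}$ gives
$$
  \bigl(1+A(z)^{-1}P\bigr)^{-1}=1-A(z)^{-1}P\,\frac{C(z)}{d(z)},
$$
where $C(z)$ is the matrix of cofactors (a holomorphic finite-rank operator-valued function on $D$). Thus $F(z)=A(z)^{-1}-A(z)^{-1}P\,C(z)A(z)^{-1}/d(z)$, exhibiting $F$ as holomorphic plus $1/d(z)$ times a holomorphic finite-rank operator. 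The Laurent expansion at a zero $z_0$ of $d$ therefore has holomorphic part bounded, while the principal part is the Laurent tail of a finite-rank holomorphic family divided by the scalar $d$, whose coefficients are finite linear combinations of values and derivatives of that finite-rank family, hence finite-rank.

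The main obstacle I expect is the bookkeeping of the last step: one must verify that the division by $d(z)$, after splitting off the holomorphic part, does produce finite-rank coefficients in every negative power of $z-z_0$, rather than merely at the leading order. The clean way is the factorization above, where the entire principal part inherits the finite-rank property from $A(z)^{-1}P\,C(z)A(z)^{-1}$. Everything else is a routine combination of Neumann series, connectedness, and the scalar identity principle.
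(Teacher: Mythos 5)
The paper does not actually prove this theorem --- it merely cites \cite[Theorem VI.14]{RS1} and \cite[Theorem 1.8.2]{Ya} --- so there is no internal argument to compare against. Your approach is the standard Reed--Simon proof (finite-rank approximation plus a scalar determinant plus connectedness), which is exactly what those references do, and the skeleton is sound.

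There is, however, one genuine error in the execution. You claim that $A(z)^{-1}P$ has range contained in $\mathcal{M}=\operatorname{im}P$ and hence that $1+A(z)^{-1}P$ is upper triangular with respect to $\mathcal{M}\oplus\mathcal{M}^\perp$. That is false: $P$ has range $\mathcal{M}$, but $A(z)^{-1}$ does not preserve $\mathcal{M}$, so the range of $A(z)^{-1}P$ is $A(z)^{-1}\mathcal{M}$, a $z$-dependent finite-dimensional subspace that in general differs from $\mathcal{M}$. As written, your $d(z)=\det\bigl(1_{\mathcal{M}}+QA(z)^{-1}P|_{\mathcal{M}}\bigr)$ is not the Fredholm determinant of $1+A(z)^{-1}P$, and your Cramer's-rule formula for $(1+A(z)^{-1}P)^{-1}$ does not hold. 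The fix is cheap: either factor in the opposite order, $1+T(z)=\bigl(1+PA(z)^{-1}\bigr)A(z)$, so that the finite-rank piece $PA(z)^{-1}$ genuinely has range in the fixed space $\mathcal{M}$ and the upper-triangular block structure, the determinant $d(z)=\det_{\mathcal M}\bigl(1+PA(z)^{-1}|_{\mathcal M}\bigr)$, and the Cramer-adjugate expression all go through verbatim; or, equivalently, invoke the identity that $1+BC$ is invertible iff $1+CB$ is, together with $\det(1+BC)=\det(1+CB)$, to pass from $A(z)^{-1}P$ to $PA(z)^{-1}$. With that correction, the remaining steps (openness of $G_1,G_2$, disjointness, exhaustion, connectedness, and the Laurent bookkeeping giving finite-rank principal parts) are correct, and the argument is complete.

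Two very minor additions you may want to make explicit: that the Taylor coefficients of the holomorphic finite-rank operator $R(z)=PA(z)^{-1}C(z)A(z)^{-1}$ at $z_0$ are themselves finite-rank (because their ranges lie in the fixed space $\mathcal{M}$), so the Laurent coefficients of $R(z)/d(z)$ are finite-rank even when $d$ has a higher-order zero; and that $F$ genuinely has a pole at every $z_0\in\euN$ (if it extended holomorphically, $(1+T(z_0))F(z_0)=1$ would follow by continuity, contradicting $z_0\in\euN$), so the pole set is exactly $\euN$ as claimed.
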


\subsection{The limiting absorption principle}
\label{SS: lim absorb principle}
We recall two theorems from~\cite{Ya} (cf. also~\cite{BW}), which are absolutely crucial for this paper.
They were established by \Branges\ ~\cite{deBranges} and \Birman\ and \Entina\ ~\cite{BE}.

Because of importance of these two theorems for what follows, we shall give their proofs,
even though they follow verbatim those in \cite{Ya}.
\begin{thm} \label{T: Ya thm 6.1.5} { \rm~\cite[Theorem 6.1.5]{Ya}}
\ Let~$\hilb$ and~$\clK$ be two Hilbert spaces.
Suppose~$H_0$ is a self-adjoint operator in the Hilbert space~$\hilb$
and~$F \colon \hilb \to \clK$ is a Hilbert-Schmidt operator. Then for a.e. $\lambda \in \mbR$ the operator-valued function
$F E^{H_0}_\lambda F^* \in \clL_1(\clK)$ is differentiable in the trace-class norm, the operator-valued function
$F \Im R_{\lambda+i\yy}(H_0)F^*$ has a limit in the trace-class norm as $\yy \to 0,$ and
\begin{equation} \label{F: Ya thm 6.1.5}
  \frac 1\pi \lim_{\yy \to 0} F \Im R_{\lambda+i\yy}(H_0)F^* = \frac {d}{d\lambda} (F E_\lambda F^*),
\end{equation}
where the limit and the derivative are taken in the trace-class norm.
\end{thm}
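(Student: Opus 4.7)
The strategy has two movements: (a) on a full set in $\lambda,$ deduce weak-operator convergence of $\tfrac 1\pi F\Im R_{\lambda+i\yy}(H_0) F^*$ and convergence of its trace from Fatou's theorem applied to associated scalar spectral measures; and (b) promote this to trace-norm convergence using positivity, via a tail tightness estimate plus a Hölder block decomposition. The same two-step argument, with $\Im R_{\lambda+i\yy}$ replaced by the positive difference quotient $h^{-1}E_{[\lambda,\lambda+h)}^{H_0},$ simultaneously yields a.e. trace-norm differentiability of $\lambda\mapsto F E_\lambda^{H_0} F^*$ and identifies both limits with the same operator.

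For (a), fix an orthonormal basis $(\psi_n)$ of $\clK$ and set $\sigma(X):=\Tr(F E_X^{H_0} F^*)=\sum_n\norm{E_X^{H_0} F^*\psi_n}^2,$ a finite nonnegative Borel measure with $\sigma(\mbR)=\norm{F}_2^2.$ By the spectral theorem, $\tfrac 1\pi\Tr(F\Im R_{\lambda+i\yy}(H_0)F^*)=\euP_\sigma(\lambda,\yy)$ and $\tfrac 1\pi\scal{F\Im R_{\lambda+i\yy}(H_0)F^*\psi_n}{\psi_m}$ is the Poisson integral of the BV function $t\mapsto\scal{E_t^{H_0}F^*\psi_n}{F^*\psi_m}.$ Apply Theorem~\ref{T: Fatou} to $\sigma,$ to every matrix-element distribution function, and to every tail measure $\sigma_N:=\sum_{n>N}\norm{E_\cdot^{H_0}F^*\psi_n}^2,$ then intersect the resulting countably many full sets. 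On the full set $\Lambda^0$ so obtained, every matrix element of $A_\yy:=\tfrac 1\pi F\Im R_{\lambda+i\yy}(H_0)F^*$ and its trace converge as $\yy\to 0^+,$ and $\sigma_N'(\lambda)\to 0$ as $N\to\infty$ (since $\sigma_N(\mbR)\to 0,$ $\sigma_N'$ is pointwise monotone in $N,$ and $\int\sigma_N'\leq\sigma_N(\mbR)$).

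For (b), fix $\lambda\in\Lambda^0.$ The matrix $(M_{nm})$ with $M_{nm}:=\lim_{\yy\to 0^+}\scal{A_\yy\psi_n}{\psi_m}$ is positive semi-definite, and the identity $\sum_{n\leq N}M_{nn}=\sigma'(\lambda)-\sigma_N'(\lambda)$ together with $\sigma_N'(\lambda)\to 0$ gives $\sum_n M_{nn}=\sigma'(\lambda),$ so $(M_{nm})$ defines a nonnegative trace-class operator $A$ on $\clK$ with $\Tr A=\sigma'(\lambda).$ Given $\eps>0,$ choose $N$ with $\sigma_N'(\lambda)<\eps;$ by~(a), $\limsup_{\yy\to 0^+}\sum_{k>N}\scal{A_\yy\psi_k}{\psi_k}\leq\eps$ and similarly $\sum_{k>N}\scal{A\psi_k}{\psi_k}\leq\eps.$ Let $P_N$ be the projection onto $\mathrm{span}(\psi_1,\dots,\psi_N)$ and $Q_N:=1-P_N.$ The block $P_N(A_\yy-A)P_N\to 0$ in trace norm by finite-dimensional matrix-element convergence. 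For the cross and tail blocks, Cauchy--Schwarz in $\LpH 2$ gives
\[
  \norm{P_N X Q_N}_1\leq\sqrt{\Tr X}\,\sqrt{\Tr(Q_N X Q_N)}\quad\text{for}\quad X\geq 0,
\]
so both cross blocks of $A_\yy$ and of $A$ have trace norm $\leq\sqrt{M\eps}$ eventually (with $M:=\sup_\yy\Tr A_\yy<\infty$), and the $Q_N$-diagonal blocks have trace norm $\leq\eps.$ Summing the six contributions, $\limsup_{\yy\to 0^+}\norm{A_\yy-A}_1\leq 4\sqrt{M\eps}+2\eps,$ and since $\eps$ is arbitrary, $A_\yy\to A$ in trace norm.

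Running the same block-decomposition argument with $A_\yy$ replaced by the difference quotient $h^{-1}F E_{[\lambda,\lambda+h)}^{H_0}F^*\geq 0$ (using Lebesgue differentiation of the matrix-element BV functions and of $\sigma$ in place of Fatou) yields trace-norm differentiability of $\lambda\mapsto F E_\lambda^{H_0}F^*$ on a full set, with derivative determined by the same matrix $(M_{nm}(\lambda));$ this gives~(\ref{F: Ya thm 6.1.5}). The main obstacle is step~(b): upgrading weak-operator plus trace convergence to trace-norm convergence for nonnegative operators. Nonnegativity is essential (it enables the $\LpH 2$-Cauchy--Schwarz estimate on the cross blocks) and so is the tail tightness produced by $\sigma_N'\to 0;$ without these, weak-operator convergence of $A_\yy$ together with $\Tr A_\yy\to\Tr A$ does not by itself force trace-norm convergence.
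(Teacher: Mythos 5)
The paper does not give a proof of this result; it is quoted directly from Yafaev's book (Theorem 6.1.5, attributed to de Branges and Birman--Entina), so there is no in-paper proof to compare against. Your argument is self-contained and essentially correct. Step (a) is clean: Fatou's theorem applied to the Poisson integrals of the scalar spectral measure $\sigma,$ of each matrix-element BV function, and of each tail measure $\sigma_N,$ together with the identity $\sum_{n\le N}M_{nn}=\sigma'(\lambda)-\sigma_N'(\lambda),$ correctly identifies the candidate limit as a nonnegative trace-class operator on a common full set. Step (b) is a valid elementary proof of the lemma (Yafaev's Lemma 6.1.4, a Gr\"umm-type result) that for nonnegative trace-class operators, matrix-element convergence plus trace convergence forces $\clL_1$-convergence; the positivity-driven cross-block estimate $\norm{P_NXQ_N}_1\le\sqrt{\Tr X}\,\sqrt{\Tr(Q_NXQ_N)}$ is exactly what makes the block decomposition close up. Two small remarks. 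First, your closing sentence overstates the independence of tail tightness: once the weak limit $A$ is known to be trace-class and $\Tr A_\yy\to\Tr A,$ tail tightness is automatic --- pick a finite-rank $P$ with $\Tr\bigl((1-P)A(1-P)\bigr)<\eps;$ then $\Tr\bigl((1-P)A_\yy(1-P)\bigr)=\Tr A_\yy-\Tr(PA_\yy P)\to\Tr A-\Tr(PAP)<\eps.$ Nonnegativity, by contrast, really is essential, as you note. Second, the differentiability of $\lambda\mapsto FE_\lambda F^*$ is a two-sided limit; you treat only $h>0,$ but the left difference quotients $|h|^{-1}FE_{[\lambda-|h|,\lambda)}F^*$ are likewise nonnegative so the identical block argument applies, and on a common full set (Lebesgue differentiation for each matrix element and for $\sigma,\,\sigma_N$) both one-sided derivatives exist and agree with the Fatou symmetric derivative, giving a genuine trace-norm derivative equal to the matrix $(M_{nm}(\lambda)).$
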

\begin{proof} Let $T_z = F R_{z}(H_0)F^*$ and let $\Im z > 0.$

(A) Let $D$ be a dense set of linear combinations of some basis in $\hilb.$
Let $f,g \in D.$ The function
$$
  \scal{f}{\frac 1\pi \Im T_z g}
$$
is the Poisson integral of the measure $\Delta \mapsto \scal{f}{E_\Delta^{H_0} g}.$
By Theorem \ref{T: Fatou}, there exists a set of full measure $\Lambda_1$
such that for any $f,g \in D$ and for any $\lambda \in \Lambda_1$ the limit of $\scal{f}{\frac 1\pi \Im T_{\lambda+iy} g}$ as $y \to 0^+$ exists.
For any

(B) The function
$\Tr \brs{\frac 1\pi \Im T_{\lambda+iy}}$
is the Poisson integral of the measure $\Delta \mapsto \Tr(F E_\Delta^{H_0} F^*).$
By Theorem \ref{T: Fatou}, there exists a set $\Lambda_2$ of full measure, such that for all $\lambda \in \Lambda_2$
there exists a limit of $\Tr (\frac 1\pi \Im T_{\lambda+iy})$ as $y \to 0^+.$ Since the operator $\Im T_{\lambda+iy}$
is non-negative, it follows that for any $\lambda \in \Lambda_2$ there exists numbers $C(\lambda), y_0(\lambda) > 0,$ such that
$$
  \norm{\Im T_{\lambda+iy}}_1 \leq C(\lambda)
$$
for all $y < y_0(\lambda).$

(C) It follows from (A) and (B) that for all $\lambda$ from the full set $\Lambda = \Lambda_1 \cap \Lambda_2,$
the operator $\Im T_{\lambda+iy}$ has weak limit as $y \to 0^+.$

(D)
By Lemma \ref{L: If A is Lp then A=BT}, the operator $F$ can be written in the form $F = TG,$ where $T$ is a compact operator
and $G \in \clL_2(\hilb).$ By (B), for a.e. $\lambda \in \mbR$ \ the operator $\norm{G \Im R_{\lambda+iy}(H_0)G^*}_1 \leq C(\lambda)$ as $y \to 0^+,$ and by (C)
for a.e. $\lambda \in \mbR$ the operator $G\Im R_{\lambda+iy}G^*$ weakly converges as $y \to 0^+.$ Combining this with Lemma \ref{L: if An to A weakly and ... then An to A in Lp}, it follows
that $F\Im R_{\lambda+iy}F^* = T(GR_{\lambda+iy}G^*)T^*$ converges in $\clL_2(\hilb)$ for a.e. $\lambda \in \mbR.$

(E) Proof of $\clL_1$-differentiability of the function $\lambda \mapsto F E_\lambda F^*$ and of (\ref{F: Ya thm 6.1.5}) is similar and we omit the details
which can be found in \cite[\S 6.1]{Ya}.
\end{proof}
Another reason for omitting the second part of the proof of this theorem is that,
while for this paper it is crucial that the $\clL_1$-limit of $F \Im R_{\lambda+i\yy}(H_0)F^*$ exists for a.e. $\lambda,$ differentiability
of the function $\lambda \mapsto F E_\lambda F^*$ and the equality (\ref{F: Ya thm 6.1.5}) are not so important.
In fact, as Fatou's Theorem \ref{T: Fatou} shows, the derivative of a function and the limit value of its Poisson integral are in some sense
identical notions, that is, the limit of Poisson integral can be considered as a modified definition of the derivative, and one can choose to work with either of them.
In the framework of scattering theory, the limit of Poisson integral is much more convenient. On the other hand, theorems of analysis are proved for usual derivative,
and Fatou's theorem allows to exploit properties of the usual derivative.

\begin{thm} \label{T: Ya thm 6.1.9} { \rm~\cite[Theorem 6.1.9]{Ya}}
\ Suppose~$H_0$ is a self-adjoint operator in a Hilbert space~$\hilb$
and~$F \in \clL_2(\hilb,\clK).$ Then for a.e. $\lambda \in \mbR$ the operator-valued function
$F R_{\lambda\pm i\yy}(H_0)F^*$ has a limit in~$\clL_2(\clK)$ as $\yy \to 0.$
\end{thm}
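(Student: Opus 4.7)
My plan is to decompose
$$
  FR_{\lambda+iy}(H_0)F^* = F\Re R_{\lambda+iy}(H_0)F^* + i\,F\Im R_{\lambda+iy}(H_0)F^*
$$
and exploit the stronger convergence already established for the imaginary part. By Theorem~\ref{T: Ya thm 6.1.5}, for a.e.\ $\lambda$ the operator $F\Im R_{\lambda+iy}(H_0)F^*$ converges in the trace-class norm, hence \emph{a fortiori} in the Hilbert-Schmidt norm since $\norm{\cdot}_2 \leq \norm{\cdot}_1.$ So the real task is to prove $\clL_2$-convergence of the real part $F\Re R_{\lambda+iy}(H_0)F^*$ for a.e.\ $\lambda.$

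At the matrix-element level the conclusion follows from Privalov's theorem. Fix an orthonormal basis $\{\psi_n\}$ of $\clK$ and a Schmidt decomposition $F = \sum_n \kappa_n \scal{\phi_n}{\cdot}\psi_n$. The $(n,m)$-th matrix entry of $FR_zF^*$ in $\{\psi_n\}$ is $\kappa_n\kappa_m \scal{\phi_n}{R_z(H_0)\phi_m} = \kappa_n\kappa_m\,\clC_{\mu_{n,m}}(z)$, where $\mu_{n,m}(X):=\scal{E_X^{H_0}\phi_m}{\phi_n}$ is a finite complex measure of total variation at most $1$. By Theorem~\ref{T: Ya 1.2.5}, for each fixed pair $(n,m)$ the scalar limit $\clC_{\mu_{n,m}}(\lambda+i0)$ exists for a.e.\ $\lambda$, and intersecting countably many full-measure sets yields a single full set on which every matrix element converges.

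To promote this matrix-element convergence to Hilbert-Schmidt norm convergence, I would write
$$
  \|FR_zF^* - FR_{z'}F^*\|_2^2
   \;=\; \sum_{n,m}\kappa_n^2\kappa_m^2\,\bigl|\clC_{\mu_{n,m}}(z) - \clC_{\mu_{n,m}}(z')\bigr|^2,
$$
whose summands each tend to $0$ as $z,z'\to \lambda+i0$ on the full set just produced. To exchange $\lim$ with $\sum$ for a.e.\ $\lambda,$ one needs a uniform-in-$y$ bound on the partial sums, split into the real-part and imaginary-part contributions. The imaginary part is dominated by $\|F\Im R_{\lambda+iy}F^*\|_1^2$, which is controlled by Theorem~\ref{T: Ya thm 6.1.5}. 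The real part is handled by the $L^2$-boundedness of the Hilbert transform (Plancherel for the conjugate Poisson kernel), which gives an integral estimate $\int_\mbR|\Re\clC_{\mu_{n,m}}(\lambda+iy)|^2\,d\lambda \leq C\int|\mu_{n,m}'(\lambda)|^2\,d\lambda$ independent of $y.$ Combined with Vitali's theorem (Theorem~\ref{T: Vitali}) this transfers the pointwise convergence into $\clL_2$-norm convergence for a.e.~$\lambda$.

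The main obstacle is exactly this last step: the passage from scalar Privalov limits (with $(n,m)$-dependent exceptional null sets) to a uniform norm statement on a single full-measure set of $\lambda$'s. The delicate point is that the measures $\mu_{n,m}$ may have a nontrivial singular part, so pointwise bounds on $|\clC_{\mu_{n,m}}(\lambda+iy)|$ are not uniform in $y$; control must be achieved after integration in $\lambda,$ using the Hilbert-transform estimate and Vitali's theorem to descend back to pointwise statements in $\lambda.$ Once this uniform integral control is secured, the combination of the pointwise matrix-element limits from Privalov and the trace-class limit of the imaginary part from Theorem~\ref{T: Ya thm 6.1.5} yields the desired $\clL_2$-convergence of $FR_{\lambda\pm iy}(H_0)F^*$ for a.e.~$\lambda.$
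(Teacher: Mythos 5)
The paper does not prove this statement: Theorem~\ref{T: Ya thm 6.1.9} is explicitly \emph{recalled} from Yafaev's book, attributed to de~Branges and Birman--Entina, and its proof is not reproduced here. So there is no proof in the paper to compare against; I can only assess your argument on its own terms.

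Your decomposition into real and imaginary parts, and the remark that the imaginary part is handled by Theorem~\ref{T: Ya thm 6.1.5} (trace-class convergence implies Hilbert--Schmidt convergence), are both fine. The genuine gap lies in your treatment of the real part. The proposed estimate
$$
  \int_\mbR \bigl|\Re\clC_{\mu_{n,m}}(\lambda+iy)\bigr|^2\,d\lambda \;\leq\; C \int_\mbR \bigl|\mu_{n,m}'(\lambda)\bigr|^2\,d\lambda, \quad \text{uniformly in } y,
$$
is false for general finite complex measures, which is exactly the case at hand. Already for the singular example $\mu=\delta_0$ one computes $\Re\clC_\mu(\lambda+iy)=-\lambda/(\lambda^2+y^2)$, so that
$$
  \int_\mbR \bigl|\Re\clC_\mu(\lambda+iy)\bigr|^2\,d\lambda = \int_\mbR \frac{\lambda^2}{(\lambda^2+y^2)^2}\,d\lambda = \frac{C}{y} \to \infty,
$$
whereas the right-hand side of your inequality would be zero. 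More generally, the $L^2$-boundedness of the Hilbert transform applies only when the density one transforms lies in $L^2$; here $\mu_{n,m}$ has total variation $\leq 1$, so its absolutely continuous density is merely in $L^1$ and, worse, any singular part of $\mu_{n,m}$ makes the conjugate Poisson integral blow up in $L^2$ as $y\to 0$. You anticipated this difficulty in your last paragraph, but the resolution you propose --- pass to integral bounds via Plancherel and come back via Vitali --- is blocked precisely by the failure of the very Plancherel estimate you need. This is not a presentation issue; without a uniform-in-$y$ integral bound on the real part, the interchange of $\lim_{y\to 0}$ with $\sum_{n,m}$ cannot be justified, so the passage from matrix-element Privalov limits to Hilbert--Schmidt norm convergence breaks down.

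A correct proof (as in Yafaev's Theorem 6.1.9, following de Branges and Birman--Entina) cannot be reduced to scalar Hilbert-transform $L^2$ bounds combined with Vitali. It exploits the operator-valued Herglotz structure of $z\mapsto FR_z(H_0)F^*$ (nonnegativity of the imaginary part) together with finer singular-value estimates relating $FR_zF^*$ to $F\Im R_z F^*$, and treats the singular spectrum by a more delicate argument. That extra structure is exactly what is lost when you project onto individual matrix elements and try to control them by scalar harmonic analysis alone.
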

\begin{proof} Let $T_z = F R_{z}(H_0)F^*$ and let $\Im z > 0.$

(A) Claim: $\abs{\det\brs{1-iT_{z}}} \geq 1.$

Proof. We have, using (\ref{F: det(1+T*)=det(1+T)*}) and (\ref{F: det(AB)=det(A)det(B)}),
\begin{equation*}
  \begin{split}
    \abs{\det\brs{1-iT_{z}}}^2 & = \det\brs{(1-iT_{z})^*}\det\brs{1-iT_{z}}
     \\ & = \det\brs{1+iT_{\bar z}}\det\brs{1-iT_{z}}
     \\ & = \det\big[\brs{1+iT_{\bar z}}\brs{1-iT_{z}}\big]
     \\ & = \det\brs{1+iT_{\bar z} -iT_{z} + T_{\bar z} T_{z}}.
  \end{split}
\end{equation*}
Since $iT_{\bar z} -iT_{z} = 2\Im T_{z} \geq 0$ and $T_{\bar z} T_{z} \geq 0,$
it follows from (\ref{F: if S<T, then det(S)<det(T)}) that $\det\brs{1+iT_{\bar z} -iT_{z} + T_{\bar z} T_{z}} \geq 1.$
Hence, $\abs{\det\brs{1-iT_{z}}} \geq 1.$

(B) Claim: for a.e. $\lambda \in \mbR$ the limit $\lim_{y \to 0^+} \det\brs{1-iT_{\lambda+iy}}$ exists.

Proof. Let $f(z) = \det\brs{1-iT_{z}}.$ The function $g(z) = 1/f(z)$ is holomorphic in the upper-half plane $\mbC_+,$
and by (A) it is bounded. It follows from Theorem \ref{T: Luzin-Privalov}(a) that $g(\lambda+i0)$ exists for a.e. $\lambda$ and by Theorem \ref{T: Luzin-Privalov}(b)
this limit is non-zero for a.e. $\lambda.$ It follows that $f(\lambda+i0)$ exists and is finite for a.e. $\lambda.$

(C) Claim: for a.e. $\lambda \in \mbR$ \ $\norm{T_{\lambda+iy}}_2 \leq C(\lambda)$ as $y \to 0^+.$

Proof. Using (\ref{F: Tr(T)<det(1+T)}), we have
\begin{equation*}
    \norm{T_{z}}_2^2 = \Tr(T_{\bar z}T_{z}) \leq \det\brs{1+T_{\bar z}T_{z}}.
\end{equation*}
Since $iT_{\bar z} -iT_{z} = 2\Im T_{z} \geq 0,$ it follows from (\ref{F: if S<T, then det(S)<det(T)}) that
$$
  \norm{T_{z}}_2^2 \leq \det\brs{1+T_{\bar z}T_{z}} \leq \det\brs{1+iT_{\bar z} -iT_{z} + T_{\bar z} T_{z}} = \abs{\det\brs{1-iT_{z}}}^2.
$$
Now (B) completes the proof.

(D) Claim: for a.e. $\lambda$ the operator $T_{\lambda+iy}$ weakly converges as $y \to 0^+.$

Proof. Let $D$ be a dense set in $\hilb$ of linear combinations of some basis. By Theorem \ref{T: Ya 1.2.5}, there exists a set of full measure $\Lambda$
such that for any $f,g \in D$ and for any $\lambda \in \Lambda$ the limit of $\scal{f}{T_{\lambda+iy} g}$ as $y \to 0^+$ exists.
It follows from this and (C) that $T_{\lambda+iy}$ weakly converges as $y \to 0^+.$

(E) By Lemma \ref{L: If A is Lp then A=BT}, the operator $F$ can be written in the form $F = TG,$ where $T$ is a compact operator
and $G \in \clL_2(\hilb).$ By (C), for a.e. $\lambda \in \mbR$ \ the operator $\norm{GR_{\lambda+iy}G^*}_2 \leq C(\lambda)$ as $y \to 0^+$ and by (D)
for a.e. $\lambda \in \mbR$ the operator $GR_{\lambda+iy}G^*$ weakly converges as $y \to 0^+.$ Combining this with Lemma \ref{L: if An to A weakly and ... then An to A in Lp}, it follows
that $FR_{\lambda+iy}F^* = T(GR_{\lambda+iy}G^*)T^*$ converges in $\clL_2(\hilb)$ for a.e. $\lambda \in \mbR.$
\end{proof}

\Naboko\ has shown that in this theorem the convergence in~$\clL_2(\clK)$ can be replaced by
the convergence in~$\clL_p(\clK)$ with any $p>1.$ In general, the convergence in~$\clL_1(\clK)$ does not hold
(cf. ~\cite{Nab87,Nab89,Nab90}).

Theorem \ref{T: Ya thm 6.1.5} plays a more key role in this paper compared to Theorem \ref{T: Ya thm 6.1.9}.
Moreover, existence of the limit of $F R_{\lambda\pm i\yy}(H_0)F^*$ in the Hilbert-Schmidt norm is not necessary at all
for what follows, --- existence of the limit in the usual norm will suffice.
For the purposes of this paper, in the second condition of Definition \ref{D: def of Lambda from intro-n}
of the full set $\Lambda(H_0,F)$ one can replace norm convergence by $\clL_p$-convergence with any $p \in (1,\infty],$
--- the set $\Lambda(H_0,F)$ will still have the full Lebesgue measure.
Since the norm-topology is weaker than the Hilbert-Schmidt topology, the set $\Lambda(H_0,F)$ becomes larger, if we use norm convergence
in Definition \ref{D: def of Lambda from intro-n}(ii), but this is not a point. It turns out that
for generalization of the results of this paper to the case of non-compact perturbations norm convergence is more
preferable: this allows to enlarge the set of non-compact perturbations covered by the theory.

% In Theorem \ref{T: Ya thm 6.1.5} we need only existence of the limit in the left hand side of
% (\ref{F: Ya thm 6.1.5}) for a.e. $\lambda \in \mbR.$
% The differentiability of the function~$F E_\lambda F^*$
% in the trace-class norm is not used in this paper.
%
% The set of full measure in~$\mbR$ from Theorem \ref{T: Ya thm 6.1.5}
% will be denoted by ($\delta$ for the Dirac $\delta$-function)
% \begin{equation} \label{F: def of Lambda delta}
%   \Lambda_\delta(H_0,F) :=  \set{\lambda \in \mbR | \ \exists\, \ \LpH{1}\mbox{-}\Im F R_{\lambda+i0}(H_0)F^*}.
% \end{equation}
% The set of full measure in~$\mbR$ from Theorem \ref{T: Ya thm 6.1.9}
% will be denoted by ($R$ for resolvent)
% \begin{equation} \label{F: def of Lambda R}
%   \Lambda_R(H_0,F) := \set{\lambda \in \mbR | \ \exists\, \ \LpH{2}\mbox{-}F R_{\lambda\pm i0}(H_0)F^*}.
% \end{equation}

\section{Framed Hilbert space}
\subsection{Definition} \label{SS: def of frame}
In this section we introduce the so called framed Hilbert space and study
several objects associated with it. Before giving formal definition, I would like
to explain the idea which led to this notion.

Let~$H_0$ be a self-adjoint operator on a Hilbert space~$\hilb,$ and let~$H_1$ be its trace-class perturbation.
Our ultimate purpose is to explicitly define the wave matrix $w_{\pm}(\lambda; H_1,H_0)$
at a fixed point $\lambda$ of the spectral line. The wave matrix $w_{\pm}(\lambda; H_1,H_0)$ acts between fiber Hilbert spaces
$\hlambda(H_0)$ and~$\hlambda(H_1)$ from the direct integrals of Hilbert spaces
$$
  \int_{\hat \sigma(H_0)}^\oplus \mathfrak h_\lambda(H_0)\,d\lambda \quad \text{and} \quad \int_{\hat \sigma(H_1)}^\oplus \mathfrak h_\lambda(H_1)\,d\lambda,
$$
diagonalizing absolutely continuous parts of the operators~$H_0$ and~$H_1,$ where $\hat \sigma(H_j)$ is a core of the spectrum of~$H_j.$
Before defining $w_{\pm}(\lambda; H_1,H_0),$ one should first define explicitly the fiber Hilbert spaces
$\hlambda(H_0)$ and~$\hlambda(H_1).$
Moreover, given a vector $f \in \hilb,$ it is necessary to be able to assign an explicit value $f(\lambda) \in \hlambda$
of the vector $f$ at a single point $\lambda \in \mbR.$
Obviously, the vectors $f(\lambda)$ generate the fiber Hilbert space $\hlambda.$
So, one of the first important questions to ask is:
\begin{equation} \label{F: What is f(l)}
  \text{What is} \quad f(\lambda)?
\end{equation}
Actually, since the measure $d\lambda$ in the direct integral decomposition of the Hilbert space can be replaced
by any other measure $\rho(d\lambda)$ with the same spectral type, it is not difficult to see, that $f(\lambda)$
does not make sense, as it is. Indeed, let us consider an operator of multiplication by a continuous function $f(x)$
on the Hilbert space $L_2([-\pi,\pi]).$
The Hilbert space $L_2([-\pi,\pi])$ can be represented as a direct integral of one-dimensional
Hilbert spaces $\mathfrak h_\lambda \simeq \mbC:$
$$
  L_2([-\pi,\pi]) = \int_{[-\pi,\pi]}^\oplus \mbC\,dx.
$$
(As a measurability base one can take here the system which consists of only one function, say, $e^{inx},$
where $n$ is any integer; in particular, a non-zero constant function will do).
Since $f(x)$ is continuous we can certainly say what is, say, $f(0).$ But the measure $d\theta$ can be replaced by any
other measure of the same spectral type; for example, by
$$
  d\rho(x) = \brs{2+\sin \frac1{x}}dx.
$$
The Spectral Theorem says, that the operator of multiplication $M_f$ by $f(x)$ does not notice this change of measure;
that is, the operator $M_f$ will stay in the same unitary equivalence class. At the same time, now it is difficult to say
what $f(0)$ is. That is, the value $f(\lambda) \in \hlambda$ of a vector $f$ at a point $\lambda$ of the spectral line is affected
by the choice of a measure in its spectral type. As a consequence, the expression $f(\lambda)$ does not make sense.
The measure $\rho$ defined by the above formula is far from being the worst scenario: instead of $\sin \frac1{x}$
one can take, say, any $L_\infty$-function bounded below by $-1.$ In this case, we have a difficulty to define the value of $f$
at any point.

In order to give meaning to $f(\lambda),$ one needs to introduce some additional
structure. (One can see that fixing a measure $d\rho$ in the spectral type does not help).
There are different approaches to this problem. Firstly, if we try to
single out what enables to give meaning to $f(x)$ for all $x$ in the case of the measure $dx,$
we see that this additional structure is of geometric character: it is the (Riemannian) metric.
The problem is that in the setting of arbitrary self-adjoint operators we don't have a metric.
But the metric is fully encoded
in the Dirac operator $\frac 1i \frac d{d\theta}$ (see \cite[Chapter VI]{CoNG}).
The operator $\frac 1i \frac d{d\theta}$ on $L_2(\mbT)$ has discrete spectrum
and so it is identified by a sequence of its eigenvalues and by the orthonormal basis of its eigenvectors.
This type of data consisting of numbers and vectors of the Hilbert space can be easily dealt with in the abstract situation.

So, to see in another way what kind of additional structure can allow to define~$f(\lambda),$ let us assume, to begin with,
that there is a fixed unit vector $\phi_1 \in \hilb.$ In this case, it is possible to define the number
$$
  \scal{f(\lambda)}{\phi_1(\lambda)}
$$
for a.e. $\lambda,$ by formula (\ref{F: f(l)=1/pi Im C(l+i0)}), since the above scalar product is a summable
function of $\lambda.$
Note, that neither $f(\lambda),$ nor $\phi_1(\lambda)$ are yet defined,
but their scalar product is defined.

If there are many enough (unit) vectors $\phi_1,\phi_2,\ldots,$ then one can hope
that the knowledge of all the scalar products $\scal{f(\lambda)}{\phi_j(\lambda)}$
will allow to recover the vector $f(\lambda) \in \hlambda.$ (Note, that we don't know yet
what exactly~$\hlambda$ is). But this is still not the case. Note that the scalar product
$\scal{\phi_j(\lambda)}{\phi_k(\lambda)}$ should satisfy the formal equality
\begin{equation} \label{F: key equality}
  \scal{\phi_j(\lambda)}{\phi_k(\lambda)} = \la \phi_j | \delta(H_0-\lambda) | \phi_k \ra
  = \frac 1\pi \la \phi_j | \Im (H_0-\lambda-i0)^{-1} | \phi_k \ra,
\end{equation}
where $\la \phi | A | \psi \ra$ is physicists' (Dirac's) notation for $\scal{\phi}{A\psi}.$
That this equality must hold for the absolutely continuous part $H_0^{(a)}$ can be seen from
$$
  \scal{\phi_j}{j_\eps(H_0^{(a)}-\lambda)\phi_k} = \int_\mbR j_\eps(\mu-\lambda) \scal{\phi_j(\mu)}{\phi_k(\mu)}\,d\mu,
$$
where $j_\eps$ is an approximate unit for the Dirac $\delta$-function.
In order to satisfy this key equality, we use an artificial trick. We assign to each vector
$\phi_j$ a weight $\kappa_j>0$ such that $(\kappa_j) = (\kappa_1,\kappa_2,\ldots) \in \ell_2.$
Now, we form the matrix
$$
  \phi(\lambda) := \brs{\kappa_j\kappa_k \frac 1\pi \la \phi_j | \Im (H_0-\lambda-i0)^{-1} | \phi_k \ra}.
$$
Using the limiting absorption principle (Theorem \ref{T: Ya thm 6.1.5}),
it can be easily shown that this matrix is a non-negative trace-class matrix. Now,
if we define $\phi_j(\lambda)$ as the~$j$th column of the square root of the matrix $\phi(\lambda)$ over $\kappa_j,$
then $\phi_j(\lambda)$ will become an element of $\ell_2$ and
the equality (\ref{F: key equality}) will be satisfied. For all $\lambda$ from some explicit set of full Lebesgue measure,
which depends only on~$H_0$ and the data $(\phi_j,\kappa_j),$
this allows to define the value $f(\lambda)$
at $\lambda$ for each $f = \phi_j, \ j = 1,2,\ldots$
and, consequently, for any vector from the dense manifold of finite linear combinations of $\phi_j.$
Finally, the fiber Hilbert space~$\hlambda$ can be defined as a linear subspace of $\ell_2$ generated
by $\phi_j(\lambda)$'s.

Evidently, the data $(\phi_j,\kappa_j)$ can be encoded in a single Hilbert-Schmidt operator
\ $F = \sum\limits_{j =1}^\infty \kappa_j \scal{\phi_j}{\cdot}\psi_j,$ \
where $(\psi_j)$ is an arbitrary orthonormal system in a possibly another Hilbert space.
Actually, in the case of $\hilb = L_2(M)$ discussed above, where $M$ is a Riemannian manifold, $F$
can be chosen to be the appropriate negative power of the Laplace-Beltrami operator $\Delta.$
%(shifted by a small scalar operator, if necessary, to make it invertible).

This justifies introduction of the following
\begin{defn} A \emph{frame} in a Hilbert space~$\hilb$ is a Hilbert-Schmidt operator \
$F \colon \hilb \to \clK,$ \
with trivial kernel and co-kernel, of the following form
\begin{equation} \label{F: Frame=...}
  F = \sum\limits_{j =1}^\infty \kappa_j \scal{\phi_j}{\cdot}\psi_j,
\end{equation}
where~$\clK$ is another Hilbert space,
and where $(\kappa_j) \in \ell_2$ is a fixed decreasing sequence of $s$-numbers of~$F,$
all of which are non-zero, $(\phi_j)$ is a fixed orthonormal basis in~$\hilb,$ and
$(\psi_j)$ is an orthonormal basis in~$\clK.$

A \emph{framed Hilbert space} is a pair $(\hilb,F),$ consisting of a Hilbert space~$\hilb$
and a frame~$F$ in~$\hilb.$
\end{defn}
Throughout this paper we shall work with only one frame~$F,$ with some restrictions imposed later on it, and
$\kappa_j,$ $\phi_j$ and $\psi_j$ will be as in the formula (\ref{F: Frame=...}).

What is important in the definition of a frame is the orthonormal basis $(\phi_j)$
and the $\ell_2$-sequence of weights $(\kappa_j)$ of the basis vectors.
The Hilbert space~$\clK$ is of little importance, if any.
For the most part of this paper,
one can take~$\clK = \hilb$ and~$F$ to be self-adjoint, but later we shall see
that the more general definition given above is more useful.

A frame introduces rigidity into the Hilbert space. In particular, a frame fixes a measure
on the spectrum of a self-adjoint operator by the formula $\mu(\Delta) = \Tr(FE_\Delta^H F^*).$
In other words, a frame fixes a measure in its spectral type.

% The last condition means that there is no requirement on the kernel of~$F^*.$
%
%\begin{rems} It is possible to consider non-compact frames as well.
%  The advantage of Hilbert-Schmidt frames is that they allow to consider
%  arbitrary self-adjoint operators on~$\hilb.$
%\end{rems}

For further use, we note trivial relations
\begin{equation} \label{F: F phi j=...}
  F\phi_j = \kappa_j \psi_j, \quad F^*\psi_j = \kappa_j \phi_j.
\end{equation}

\subsection{Spectral triple associated with an operator on a framed Hilbert space}

To the pair $(H_0,F)$ consisting of a self-adjoint operator $H_0$
and a frame operator $F$ one can assign a spectral triple
\cite{CoNG}.
%A frame~$F$ can be looked at as an operator, which introduces an
%abstract metric structure into the pair $(H_0,\hilb).$
The involutive algebra~$\clA$ of a spectral triple
$(\clA,\hilb,\abs{F}^{-1})$ is given by
\begin{equation*}
  \clA = \set{\phi(H) \colon \phi \in C_b(\mbR), \ [\abs{F}^{-1},\phi(H)] \in \clBH}.
\end{equation*}
Here the class $C_b$ of all continuous bounded functions on $\mbR$ can be replaced by $L_\infty.$
Let us check that $\clA$ is an algebra. If $\phi_1(H), \, \phi_2(H) \in \clA$ and $\alpha_1, \, \alpha_2 \in \mbC,$
then obviously $\phi_1(H)^* = \bar \phi(H) \in \clA$ and
$\alpha_1 \phi_1(H) + \alpha_2 \phi_2(H)\in \clA.$ Now, if $\phi_1(H), \phi_2(H) \in \clA,$
then the operator
$$
  [\abs{F}^{-1},\phi_1(H)\phi_2(H)] = [\abs{F}^{-1},\phi_1(H)]\phi_2(H) + \phi_1(H)[\abs{F}^{-1},\phi_2(H)]
$$
is also bounded, so that $\phi_1(H)\phi_2(H) \in \clA.$
Consequently, $\clA$ is an involutive algebra. The second axiom of the spectral triple is satisfied obviously,
that is the resolvent $(\abs{F}^{-1}-z)^{-1}$ of the operator $\abs{F}^{-1}$ is compact for non-real $z.$

\subsection{Non-compact frames}
In the pair $(H_0,F),$ consisting of a self-adjoint operator $H_0$
and a frame operator $F,$ $H_0$ and $F$ are independent of each
other. One can consider more general pairs $(H_0,F)$ which, I
believe, may be useful in generalizing the present work to the
case of non-trace-class (non-compact) perturbations $V.$

A generalized frame operator $F$ for a self-adjoint operator $H_0$
is an operator $F \colon \hilb \to \clK,$ such that \ (1) domain
of $F$ contains all subspaces $E^{H_0}_\Delta \hilb$ with bounded
Borel $\Delta,$ \ (2) for any bounded Borel $\Delta$ the operator
$F E^{H_0}_\Delta$ is Hilbert-Schmidt, and (3) the kernel of $F
E^{H_0}_\Delta$ as an operator on $E^{H_0}_\Delta \hilb$ is trivial.

For such pairs one can construct a sheaf of Hilbert spaces
over $\mbR$ which diagonalizes $H_0.$ Details of this construction
and its applications to scattering theory for non-compact
perturbations appear elsewhere.

\subsection{The set $\LambHF{H_0}$ and the matrix $\phi(\lambda)$}
Let~$H_0$ be a self-adjoint operator in a framed Hilbert space $(\hilb,F).$

By $E_\lambda = E_\lambda^{H_0},$ $\lambda \in \mbR,$ we denote
the family of spectral projections of~$H_0.$ For any (ordered)
pair of indices $(i,j)$ one can consider a finite (signed) measure
\begin{equation} \label{F: def of m(ij)}
  m_{ij}(\Delta) := \la \phi_i, E_\Delta^{H_0}\phi_j \ra.
\end{equation}
We denote by
\begin{equation} \label{F: def of Lambda0}
  \Lambda_0(H_0,F)
\end{equation}
the intersection of all the sets $\Lambda(m_{ij}),$ $i,j \in
\mbN$ \ (see subsection \ref{SS: L(m)}), even though it depends only on~$H_0$ and the vectors
$\phi_1, \phi_2, \phi_3, \ldots$ \
So, for any $\lambda \in \Lambda_0(H_0,F)$ the limit
$$
  \phi_{ij}(\lambda) := \frac 1\pi \kappa_i\kappa_j\la \phi_i, \Im R_{\lambda+ i0}(H_0)\phi_j \ra
$$
exists. %, where the limit $\lambda + i0$ is taken in any angle.
It follows that, for any $\lambda \in \Lambda_0(H_0,F),$ one can form an infinite matrix
$$
  \phi(\lambda) = \brs{\phi_{ij}(\lambda)}_{i,j=1}^\infty.   %\frac 1\pi \brs{\Im \scal{\phi_i}{R_{\lambda+i0}\phi_j}_{i,j \in \mbN}}.
$$

Our aim is to consider $\phi(\lambda)$ as an operator on $\ells.$
Evidently, the matrix $\phi(\lambda)$ is symmetric in the sense that for any $i,j = 1,2,\ldots$
$$
  \overline{\phi_{ij}(\lambda)} = \phi_{ji}(\lambda).
$$
But it may turn out that $\phi(\lambda)$ is not a matrix of a bounded,
or even of an unbounded, operator on $\ells.$ So, we have to investigate the set of points,
where $\phi(\lambda)$ determines a bounded self-adjoint operator on $\ells.$
As is shown below,
it turns out that $\phi(\lambda)$ is a trace-class operator on a set of full measure.

% \subsection{The set $\LambHF{H_0}$\!}

% Given a self-adjoint operator~$H_0$ on a framed Hilbert space $(\hilb,F),$
% we define the sets $\Lambda_p(H_0,F),$ where $p \in (0,\infty],$
% by
% $$
%   \Lambda_p(H_0,F) :=  \set{\lambda \in \Lambda_0 | \phi(\lambda) \in \clL_{p}(\ells)}.
% $$
In the following definition one of the central notions of this paper is introduced.
\begin{defn} \label{D: Lambda(H0,F)}
The standard set of full Lebesgue measure $\LambHF{H_0},$ associated with
a self-adjoint operator~$H_0$ acting on a framed Hilbert space $(\hilb,F),$
consists of those points $\lambda \in \mbR,$ at which the limit
of~$FR_{\lambda+iy}(H_0)F^*$ (as $y \to 0^+$) exists in the uniform norm and the limit of
$F \Im R_{\lambda+iy}(H_0)F^*$ exists in~$\clL_1$-norm.

In other words, a number $\lambda$ belongs to $\LambHF{H_0}$ if and only if it belongs
to both sets of full measure from Theorems \ref{T: Ya thm 6.1.5} and \ref{T: Ya thm 6.1.9}.
\end{defn}

\begin{prop} \label{P: Lambda(H0) has full meas}
  For any self-adjoint operator~$H_0$ on a framed Hilbert space $(\hilb,F)$
  the set $\LambHF{H_0}$ has full Lebesgue measure.
\end{prop}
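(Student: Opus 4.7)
The proof is essentially immediate from the two limiting absorption principle theorems cited in Subsection~\ref{SS: lim absorb principle}. My plan is as follows.

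First I would invoke the definition of a frame to note that $F \colon \hilb \to \clK$ is, by construction, a Hilbert-Schmidt operator with trivial kernel and co-kernel. In particular, $F \in \clL_2(\hilb, \clK)$, which is exactly the hypothesis required by both Theorem~\ref{T: Ya thm 6.1.5} (applied to the real and imaginary parts of the sandwiched resolvent) and Theorem~\ref{T: Ya thm 6.1.9}. Denote by $\Lambda_1$ the full-measure set of $\lambda$ at which $F \Im R_{\lambda + iy}(H_0) F^*$ has a limit in the trace-class norm as $y \to 0^+$ (this is condition~(ii) of Definition~\ref{D: Lambda(H0,F)}), as guaranteed by Theorem~\ref{T: Ya thm 6.1.5}; and by $\Lambda_2$ the full-measure set of $\lambda$ at which $F R_{\lambda + iy}(H_0) F^*$ has a limit in the Hilbert-Schmidt norm as $y \to 0^+$ (condition~(i)), as guaranteed by Theorem~\ref{T: Ya thm 6.1.9}.

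Then by the very wording of Definition~\ref{D: Lambda(H0,F)} we have $\LambHF{H_0} = \Lambda_1 \cap \Lambda_2$. Since the complements $\mbR \setminus \Lambda_1$ and $\mbR \setminus \Lambda_2$ are both Lebesgue null sets, so is their union, which is the complement of $\Lambda_1 \cap \Lambda_2$. Hence $\LambHF{H_0}$ has full Lebesgue measure, completing the proof.

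There is no real obstacle here: the content of the proposition has been entirely offloaded onto the two deep theorems of de~Branges and Birman-Entina recalled in Subsection~\ref{SS: lim absorb principle}, and the proposition itself is simply the observation that $\LambHF{H_0}$ is the intersection of the two explicit sets of full measure supplied by those theorems. The nontrivial mathematics lies in the cited theorems, not in the present statement.
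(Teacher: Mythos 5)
Your proof is correct and matches the paper's: the paper also simply cites Theorems~\ref{T: Ya thm 6.1.5} and~\ref{T: Ya thm 6.1.9}, and the observation that $\LambHF{H_0}$ is the intersection of the two full-measure sets they supply is exactly the intended argument. You have merely spelled out the one-line proof in full detail.
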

\begin{proof} This follows from Theorems \ref{T: Ya thm 6.1.5} and \ref{T: Ya thm 6.1.9}.
\end{proof}
The following proposition gives one of the two main properties of the set $\LambHF{H_0}.$
\begin{prop} \label{P: Lambda subset Lambda1}
Let~$H_0$ be a self-adjoint operator acting on a framed Hilbert space $(\hilb,F).$
If $\lambda \in \LambHF{H_0},$ then the matrix $\phi(\lambda)$ exists, is non-negative and is trace-class.
\end{prop}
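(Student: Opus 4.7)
The plan is to realise the matrix $\phi(\lambda)$ as the matrix, in the orthonormal basis $(\psi_j)$ of $\clK$, of the operator
$$
  T(\lambda) := \frac 1\pi \lim_{y\to 0^+} F\,\Im R_{\lambda+iy}(H_0)\,F^{*},
$$
transported to $\ells$ via the obvious unitary $\clK \to \ells$, $\psi_j \mapsto e_j$. Once this identification is in place, the three claimed properties (existence, non-negativity, and trace-class membership) will follow directly from the defining property of $\LambHF{H_0}$ combined with the observation that $\Im R_{\lambda+iy}(H_0) \geq 0$ for $y>0$.

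First I would use the identities $F^{*}\psi_j = \kappa_j \phi_j$ (formula (\ref{F: F phi j=...})) to compute, for every $y>0$,
$$
  \scal{\psi_i}{F\,\Im R_{\lambda+iy}(H_0)\,F^{*}\psi_j} = \kappa_i\kappa_j \scal{\phi_i}{\Im R_{\lambda+iy}(H_0)\phi_j}.
$$
By the definition of $\LambHF{H_0}$ (Definition \ref{D: Lambda(H0,F)}), the left-hand side, multiplied by $1/\pi$, converges as $y\to 0^+$ to $\scal{\psi_i}{T(\lambda)\psi_j}$, because matrix elements in a fixed orthonormal basis are continuous with respect to the trace-class norm (indeed with respect to the operator norm). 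Hence each individual limit $\phi_{ij}(\lambda)$ exists and is exactly $\scal{\psi_i}{T(\lambda)\psi_j}$, so the matrix $\phi(\lambda)$ is well-defined and represents $T(\lambda)$ under the unitary $\clK \to \ells$.

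Next I would verify non-negativity and trace-class membership of the limit. For each $y>0$ the resolvent identity gives
$$
  \Im R_{\lambda+iy}(H_0) = y\bigl((H_0-\lambda)^2 + y^2\bigr)^{-1} \geq 0,
$$
so $A_y := F\,\Im R_{\lambda+iy}(H_0)\,F^{*}$ is non-negative (it is of the form $F B F^{*}$ with $B\geq 0$). The cone of non-negative operators is closed in the trace-class norm, because $\scal{T(\lambda)f}{f} = \lim_{y\to 0^+} \scal{A_y f}{f} \geq 0$ for every $f \in \clK$. Since $\LambHF{H_0}$ forces the convergence to take place in $\clL_1(\clK)$, the limit $T(\lambda)$ is itself a non-negative trace-class operator on $\clK$. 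Transporting to $\ells$ via $\psi_j \mapsto e_j$ preserves both non-negativity and the Schatten class, so $\phi(\lambda)$ is a non-negative trace-class operator on $\ells$.

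There is no real obstacle here: the entire argument is a direct unpacking of Definition \ref{D: Lambda(H0,F)} together with the elementary fact that trace-class convergence preserves both matrix coefficients and the non-negativity cone; the only point that requires a line of care is the passage from the $\clL_2$/$\clL_1$ convergence of $F R_{\lambda+iy}(H_0) F^{*}$ to convergence of individual matrix entries, which is immediate because $\abs{\scal{\psi_i}{S\psi_j}} \leq \norm{S}$ for any bounded $S$.
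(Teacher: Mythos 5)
Your proof is correct and rests on the same central observation as the paper's: $\phi(\lambda)$ is, up to the unitary $\psi_j \mapsto e_j$, exactly the matrix of $\frac 1\pi F\,\Im R_{\lambda+i0}(H_0)F^*$ in the basis $(\psi_j)$, so all three claimed properties are inherited from the trace-class limit. Your version is in fact a little leaner than the paper's: the paper first verifies existence of the entries and Hilbert–Schmidt membership by invoking the $\clL_2$-convergence of $FR_{\lambda\pm iy}(H_0)F^*$ (condition (i) of Definition~\ref{D: Lambda(H0,F)}), and only then uses the $\clL_1$-convergence of $F\Im R_{\lambda+iy}(H_0)F^*$ (condition (ii)) to deduce trace-class membership; you correctly note that condition (ii) alone already gives everything, since trace-class convergence controls matrix entries, preserves the positive cone, and obviously yields trace-class membership of the limit. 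You also spell out the non-negativity argument ($\Im R_{\lambda+iy}(H_0)\geq 0$, hence $A_y \geq 0$, and the positive cone is weakly closed), which the paper asserts without elaboration.
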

\begin{proof} Let $\lambda \in \LambHF{H_0}.$
Since for $\lambda \in \LambHF{H_0}$ the limit
$$
  F R_{\lambda\pm i0}(H_0)F^* = \lim\limits_{y\to 0^+} F R_{\lambda\pm iy}(H_0)F^*
$$
exists in the Hilbert-Schmidt norm, it follows that for any pair $(i,j)$ the limit
$$
  P^*_iF R_{\lambda\pm i0}(H_0) F^*P_j = \lim\limits_{y\to 0^+} P^*_iF R_{\lambda\pm iy}(H_0) F^*P_j
$$
also exists in the Hilbert-Schmidt norm, where $P_j = \scal{\phi_j}{\cdot}\psi_j.$ This is equivalent to the existence of the limit
$$
  \la \phi_i, R_{\lambda\pm i0}(H_0)\phi_j \ra = \lim\limits_{y\to 0^+} \la \phi_i, R_{\lambda\pm iy}(H_0)\phi_j \ra.
$$
Hence, $\LambHF{H_0} \subset \Lambda_0(H_0,F);$ so $\phi(\lambda)$ exists for any $\lambda \in \LambHF{H_0}.$

The matrix $\phi(\lambda)$ is unitarily equivalent to the non-negative trace-class operator $F \frac 1\pi
\Im R_{\lambda+iy}(H_0)F^*.$ Hence, $\phi(\lambda)$ is also non-negative and trace-class.

\end{proof}
% \begin{cor} The set $\Lambda_1(H_0,F)$ has full Lebesgue measure. Hence,
%   all the sets $\Lambda_p(H_0,F),$ $p \in [1,\infty],$ have full Lebesgue measure.
% \end{cor}
% \begin{proof} This follows from Propositions \ref{P: Lambda(H0) has full meas}
%  and \ref{P: Lambda subset Lambda1}.
% %and Theorems \ref{T: Ya thm 6.1.5} and \ref{T: Ya thm 6.1.9}.
% \end{proof}
%
%The arguments~$H_0$ and~$F$ of the set $\LambHF{H_0}$ will be often omitted.

\begin{lemma} \label{L: phi(lambda) is meas}
  The operator function $\LambHF{H_0} \ni \lambda \mapsto \phi(\lambda) \in \clL_1(\ells)$
  is measurable.
\end{lemma}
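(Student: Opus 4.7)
The plan is to realise $\phi(\lambda)$ as (a unitary copy of) the trace-class operator $\tfrac{1}{\pi} F \Im R_{\lambda+i0}(H_0) F^* \in \clL_1(\clK)$ and then to obtain its measurability as the pointwise limit of continuous $\clL_1$-valued functions.

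First, I would make the identification explicit: the orthonormal basis $(\psi_j)$ of $\clK$ yields a unitary $U \colon \clK \to \ells$, $U\psi_j = e_j$. Using $F^*\psi_j = \kappa_j \phi_j$ from (\ref{F: F phi j=...}), one computes
$$
\tfrac{1}{\pi}\scal{\psi_i}{F\Im R_{\lambda+i0}(H_0)F^*\psi_j}
= \tfrac{1}{\pi}\kappa_i\kappa_j\scal{\phi_i}{\Im R_{\lambda+i0}(H_0)\phi_j}
= \phi_{ij}(\lambda),
$$
so $\phi(\lambda) = \tfrac{1}{\pi} U\bigl(F\Im R_{\lambda+i0}(H_0)F^*\bigr)U^{-1}$. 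Measurability of $\phi(\cdot)$ in $\clL_1(\ells)$ is therefore equivalent to measurability of $\lambda \mapsto F\Im R_{\lambda+i0}(H_0)F^*$ in $\clL_1(\clK)$.

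Second, for each fixed $y>0$ I would establish that $\lambda \mapsto F\Im R_{\lambda+iy}(H_0)F^*$ is \emph{continuous} from $\mbR$ into $\clL_1(\clK)$. Since $R_z(H_0)^* = R_{\bar z}(H_0)$, one has the factorisation
$$
F\Im R_{\lambda+iy}(H_0)F^* \;=\; y\bigl(FR_{\lambda+iy}(H_0)\bigr)\bigl(FR_{\lambda+iy}(H_0)\bigr)^*,
$$
so it suffices to show that $\lambda \mapsto FR_{\lambda+iy}(H_0)$ is continuous in the Hilbert--Schmidt norm. This follows from the resolvent identity
$$
FR_{\lambda_1+iy}(H_0) - FR_{\lambda_2+iy}(H_0) = (\lambda_1-\lambda_2)\,FR_{\lambda_1+iy}(H_0)R_{\lambda_2+iy}(H_0),
$$
together with the Hilbert--Schmidt property of $F$ and the uniform bound $\|R_{\lambda\pm iy}(H_0)\| \le 1/y$.

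Third, by Definition~\ref{D: Lambda(H0,F)}, for every $\lambda \in \LambHF{H_0}$ the $\clL_1(\clK)$-limit
$$
F\Im R_{\lambda+i0}(H_0)F^* \;=\; \lim_{n\to\infty} F\Im R_{\lambda+i/n}(H_0)F^*
$$
exists; hence $\phi(\lambda)$ is the pointwise norm-limit on $\LambHF{H_0}$ of the continuous (hence Borel-measurable) $\clL_1$-valued functions constructed in the previous step. Because $\clL_1(\ells)$ is a separable Banach space, such a pointwise limit of measurable functions is itself Borel measurable, which is exactly the claim. No step here presents a genuine obstacle; the only mild bookkeeping is the unitary identification $\clK \simeq \ells$ and verifying that the factorisation above actually gives \emph{trace-class} rather than merely Hilbert--Schmidt continuity at each $y>0$.
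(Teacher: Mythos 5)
Your proposal is correct and follows essentially the same route the paper does: realise $\phi(\lambda)$ as the pointwise ($\clL_1$-norm) limit, as $y\to 0^+$, of the functions $\lambda \mapsto \phi(\lambda+iy)$ and then invoke measurability of pointwise limits. The paper's proof is a single sentence noting that $\phi(\lambda+iy)$ has continuous matrix elements, implicitly relying on the fact that weak (entrywise) measurability suffices for a separable target; you go further and establish $\clL_1$-norm continuity of $\lambda\mapsto F\Im R_{\lambda+iy}(H_0)F^*$ at each fixed $y>0$ via the factorisation $y\,(FR_{\lambda+iy})(FR_{\lambda+iy})^*$ and the resolvent identity. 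This extra care is not strictly needed but is perfectly valid and makes the argument self-contained; the unitary identification $\clK\simeq\ells$ is the same bookkeeping the paper uses in \ref{SS: phi(lambda+i eps)}(i).
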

Indeed, $\phi(\lambda)$ is an a.e. point-wise limit of matrices $\phi(\lambda+iy)$ with continuous matrix elements.
%Proof is evident.

\subsection{A core of the singular spectrum $\mbR \setminus \Lambda(H_0,F)$}

We call a null set $X \subset \mbR$ a \emph{core} of the singular spectrum of~$H_0,$
if the operator $E^{H_0}_{\mbR \setminus X} H_0$ is absolutely continuous.
Evidently, any core of the singular spectrum contains the pure point spectrum.
Apart of it, a core of the singular spectrum contains a null Borel
support of the singular continuous spectrum.

\begin{lemma} \label{L: if the complement of Lambda is not a core ...}
Let~$H_0$ be a self-adjoint operator on~$\hilb$
and let $\Lambda$ be a full set. If~$\mbR \setminus \Lambda$ is not a core of
the singular spectrum of~$H_0,$ then there exists a null set $X \subset \Lambda,$ such that $E_X \neq 0.$
\end{lemma}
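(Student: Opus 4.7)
The plan is to unpack what the hypothesis says and then extract a vector witnessing singular behavior inside $\Lambda$.

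First I would reformulate the assumption. Since $\Lambda$ is full, $\mbR\setminus\Lambda$ is already a null set, so the only way $\mbR\setminus\Lambda$ fails to be a core of the singular spectrum is that the restriction $H_0\!\restriction\! E_\Lambda\hilb$ is not absolutely continuous. Equivalently, there exists $f\in E_\Lambda\hilb$ (so $f=E_\Lambda f$) whose spectral measure $m_f(\Delta)=\scal{E_\Delta f}{f}$ has a non-trivial singular part. This is immediate: if every $f\in E_\Lambda\hilb$ had an a.c. spectral measure, then $E_\Lambda\hilb$ would consist of absolutely continuous vectors and $E_\Lambda H_0$ would be absolutely continuous.

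Next I would produce the null set $X$. Write the Lebesgue decomposition $m_f=m_f^{(a)}+m_f^{(s)}$. By standard measure theory, the singular part $m_f^{(s)}$ admits a Borel null support $Y\subset\mbR$, and by assumption $m_f^{(s)}\neq 0$, so $m_f^{(s)}(Y)>0$. Because $f=E_\Lambda f$, the measure $m_f$ itself is concentrated on $\Lambda$, i.e.\ $m_f(\mbR\setminus\Lambda)=0$, and hence so is its singular part: $m_f^{(s)}(\mbR\setminus\Lambda)=0$. Set
$$
  X := Y \cap \Lambda.
$$
Then $X$ is a Borel subset of $\Lambda$, and $|X|\leq|Y|=0$, so $X$ is a null set.

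Finally I would verify $E_X\neq 0$ by computing $m_f(X)$. Since $Y\setminus\Lambda$ is contained in a set of zero $m_f$-measure and $Y$ is Lebesgue-null (so $m_f^{(a)}(Y)=0$), we get
$$
  \scal{E_X f}{f} = m_f(Y\cap\Lambda) = m_f(Y) = m_f^{(s)}(Y) > 0.
$$
Therefore $E_X f\neq 0$, in particular $E_X\neq 0$, which is exactly what the lemma claims.

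The only step that needs any real care is the very first one, namely the logical equivalence between ``$\mbR\setminus\Lambda$ is not a core of the singular spectrum'' and ``$E_\Lambda H_0$ has a non-trivial singular part''; once this reformulation is made explicit, the remaining construction is a direct application of the Lebesgue decomposition of a spectral measure, and no genuine obstacle arises.
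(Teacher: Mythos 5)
Your proof is correct. The structure is the same as the paper's — locate the singular part, find a Lebesgue-null Borel support for it, intersect with $\Lambda$, and check that the resulting spectral projection is non-zero — but you carry it out at the level of a \emph{single vector's} spectral measure rather than at the operator level. The paper instead cites~\cite[Lemma 1.3.6]{Ya} to produce a full Borel set $Z_a$ with $E_{Z_a}E_\Lambda = P^{(a)}(H_0 E_\Lambda)$, then takes $X = \Lambda \setminus Z_a$, so that $E_X = P^{(s)}(H_0 E_\Lambda)$; this yields the slightly stronger conclusion that $E_X$ is the \emph{entire} singular projection of $H_0 E_\Lambda$, not merely a non-zero projection with $E_X f \neq 0$ for one vector $f$. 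Your route is more elementary: it uses only the Lebesgue decomposition of the scalar measure $m_f$ and the trivial facts that $m_f$ is concentrated on $\Lambda$ and that the a.c. part vanishes on the Lebesgue-null set $Y$. Both give exactly what the lemma asks for, and your avoidance of~\cite[Lemma 1.3.6]{Ya} is a small simplification.
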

\begin{proof} Let $Z_a$ be a full set such that $E_{Z_a}$ is the projection onto the absolutely continuous
subspace of~$H_0 E_\Lambda.$ Such a set exists by~\cite[Lemma 1.3.6]{Ya}.
If~$\mbR \setminus \Lambda$ is not a core of the singular spectrum, then the operator~$H_0 E_\Lambda$
is not absolutely continuous. So, the set $X := \Lambda \setminus Z_a$ is a null set and $E_X \neq 0.$
\end{proof}
\begin{prop} \label{P: Lambda 0 is a core of sing. sp}
For any self-adjoint operator~$H_0$ on a framed Hilbert space $(\hilb,F),$
the set~$\mbR \setminus \Lambda_0(H_0,F)$ is a core of the singular spectrum of~$H_0.$
\end{prop}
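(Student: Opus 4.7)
The plan is to combine Aronszajn's theorem (Theorem \ref{T: complement of Lambda(F)}), applied to each scalar spectral measure $m_{jj}$, with the contrapositive of Lemma \ref{L: if the complement of Lambda is not a core ...}.

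First I would observe that $\Lambda_0(H_0,F) = \bigcap_{i,j=1}^\infty \Lambda(m_{ij})$ is a full set, as the countable intersection of full sets. This makes Lemma \ref{L: if the complement of Lambda is not a core ...} applicable with $\Lambda = \Lambda_0(H_0,F)$, so it suffices to rule out the existence of a null set $X \subset \Lambda_0(H_0,F)$ with $E_X \neq 0$.

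Next, suppose for contradiction that such a null set $X$ exists. Since $(\phi_j)_{j=1}^\infty$ is an orthonormal basis of $\hilb$ and $E_X \neq 0$, there exists at least one index $j$ with $E_X \phi_j \neq 0$, hence
$$
  m_{jj}(X) = \scal{\phi_j}{E_X \phi_j} = \norm{E_X \phi_j}^2 > 0.
$$
On the other hand, by definition of $\Lambda_0(H_0,F)$ we have $X \subset \Lambda(m_{jj})$. Aronszajn's theorem (Theorem \ref{T: complement of Lambda(F)}) asserts that $\mbR \setminus \Lambda(m_{jj})$ is a Borel support of the singular part of $m_{jj}$, so that $m_{jj}$ restricted to $\Lambda(m_{jj})$ coincides with its absolutely continuous part. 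Since $X$ has Lebesgue measure zero and $m_{jj}^{(a)}$ is absolutely continuous, we obtain $m_{jj}(X) = m_{jj}^{(a)}(X) = 0$, contradicting the previous inequality.

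This contradiction shows that no such $X$ exists, and by Lemma \ref{L: if the complement of Lambda is not a core ...} the set $\mbR \setminus \Lambda_0(H_0,F)$ is a core of the singular spectrum of $H_0$. The only potentially delicate point is confirming that Aronszajn's characterization of $\Lambda(m_{jj})$ for each $j$ combines cleanly with the definition of $\Lambda_0$ as a countable intersection; but since we are using a single $m_{jj}$ to derive the contradiction, we need only the trivial inclusion $\Lambda_0(H_0,F) \subset \Lambda(m_{jj})$, so no real obstacle arises.
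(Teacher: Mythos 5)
Your proof is correct and follows essentially the same route as the paper: both assume toward contradiction a null set $X\subset\Lambda_0(H_0,F)$ with $E_X\neq 0$ (via Lemma~\ref{L: if the complement of Lambda is not a core ...}), pick a frame vector $\phi_j$ with $E_X\phi_j\neq 0$, and then use Theorem~\ref{T: complement of Lambda(F)} together with the inclusion $X\subset\Lambda(m_{jj})$ to derive $m_{jj}(X)=0$, a contradiction. The only cosmetic difference is that the paper phrases the contradiction through $m_{jj}^{(s)}(X)=m_{jj}(X)\neq 0$ while you split $m_{jj}(X)=m_{jj}^{(a)}(X)=0$ using absolute continuity and $|X|=0$; these are the same observation.
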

\begin{proof}
Assume the contrary. Then by Lemma \ref{L: if the complement of Lambda is not a core ...}
there exists a null subset $X$ of $\Lambda_0(H_0,F)$
such that $E_X \neq 0.$ % This means that there exists a unit vector $\psi \in \hilb,$ such that $E_X \psi = \psi.$
Since $\brs{\phi_j}$ is a basis, there exists $\phi_j,$ such that $E_X \phi_j \neq 0.$
Hence, $\scal{E_X \phi_j}{\phi_j} \neq 0,$ that is,
$$m_{jj}^{(s)}(X) = m_{jj}(X) \neq 0,$$
where $m_{jj}$ is the spectral measure of $\phi_j$ (see (\ref{F:
def of m(ij)})). Since $X \subset \Lambda(m_{jj}),$ this
contradicts the fact that the complement of $\Lambda(m_{jj})$ is a
Borel support of $m_{jj}^{(s)}$ (see Theorem \ref{T: complement of
Lambda(F)}).
\end{proof}
Since $\LambHF{H_0} \subset \Lambda_0(H_0,F),$ it follows that
\begin{cor} \label{C: Lambda is a core of sing. sp}
For any self-adjoint operator~$H_0$ on a framed Hilbert space $(\hilb,F),$
the set~$\mbR \setminus \LambHF{H_0}$ is a core of the singular spectrum of~$H_0.$
\end{cor}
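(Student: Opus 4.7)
The plan is to deduce the corollary directly from Proposition \ref{P: Lambda 0 is a core of sing. sp} via the elementary inclusion $\LambHF{H_0} \subset \Lambda_0(H_0,F)$, which is already implicit in the proof of Proposition \ref{P: Lambda subset Lambda1}. So the work splits into two small observations: verify the inclusion, and then observe that cores of the singular spectrum are closed under enlargement within the class of null sets.

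First I would check the inclusion. By definition, $\lambda \in \LambHF{H_0}$ means that $F R_{\lambda+iy}(H_0) F^*$ has an $\clL_2$-limit as $y \to 0^+$. Sandwiching with the rank-one operators $P_j = \scal{\phi_j}{\cdot}\psi_j$ and using $FP_j^* \psi_i = \kappa_j \delta_{ij} \psi_j$ (cf.\ (\ref{F: F phi j=...})), this forces $\scal{\phi_i}{R_{\lambda+iy}(H_0)\phi_j}$ to have a limit as $y \to 0^+$ for every pair $(i,j)$, and hence in particular for its imaginary part. Therefore $\lambda \in \Lambda(m_{ij})$ for all $i,j$, which means $\lambda \in \Lambda_0(H_0,F)$.

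Taking complements gives $\mbR \setminus \Lambda_0(H_0,F) \subset \mbR \setminus \LambHF{H_0}$, and the latter is a null set by Proposition \ref{P: Lambda(H0) has full meas}. The remaining step is the general monotonicity fact: if $X \subset Y$ are null sets and $X$ is a core of the singular spectrum of $H_0$, then so is $Y$. This follows because $\mbR \setminus Y \subset \mbR \setminus X$, so $E^{H_0}_{\mbR \setminus Y} H_0 = E^{H_0}_{\mbR \setminus Y} \bigl(E^{H_0}_{\mbR \setminus X} H_0\bigr)$ is the restriction of an absolutely continuous operator to an invariant subspace, and hence is itself absolutely continuous.

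Applying this monotonicity with $X = \mbR \setminus \Lambda_0(H_0,F)$ (a core by Proposition \ref{P: Lambda 0 is a core of sing. sp}) and $Y = \mbR \setminus \LambHF{H_0}$ yields the corollary. There is no genuine obstacle: the entire argument is a one-line deduction from the preceding proposition plus the inclusion of full sets, and the monotonicity observation is formal.
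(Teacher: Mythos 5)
Your argument is correct and matches the paper's own proof, which also simply notes the inclusion $\LambHF{H_0} \subset \Lambda_0(H_0,F)$ and invokes Proposition \ref{P: Lambda 0 is a core of sing. sp}. You have merely spelled out two steps that the paper leaves implicit (the verification of the inclusion via sandwiching with the rank-one operators $P_j$, which is in fact already established inside the proof of Proposition \ref{P: Lambda subset Lambda1}, and the monotonicity of cores of the singular spectrum under enlargement by null sets), and both are done correctly.
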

Since $\LambHF{H_0}$ has full measure,
this corollary means that the set $\LambHF{H_0}$ cuts out the singular spectrum of~$H_0$
from~$\mbR.$ Given a frame operator~$F \in \clL_2(\hilb,\clK),$ we consider the set
$\mbR \setminus \LambHF{H_0}$ as a standard core of the singular spectrum of~$H_0,$
associated with the given frame~$F.$

% Though~$\mbR \setminus \Lambda_0(H_0,F)$ is smaller than~$\mbR \setminus \LambHF{H_0},$
% the set $\LambHF{H_0}$ is a better candidate for the standard core of the singular spectrum.

\subsection{The Hilbert spaces~$\hilb_\alpha(F)$\!}
Let $\alpha \in \mbR.$ In analogy with Sobolev spaces $W_\alpha^2$ (see e.g.~\cite[\S IX.6]{RS2},~\cite{Co95LMPh}),
given a framed Hilbert space $(\hilb,F),$ % \margcom{$W_2$ or $W^2$?}
we introduce the Hilbert spaces~$\hilb_\alpha(F).$ By  definition,
$\hilb_\alpha(F)$ is the completion of the linear manifold
\begin{equation} \label{F: euD - def-n}
  \euD = \euD(F) := \set{f \in \hilb \colon \ f = \sum\limits_{j=1}^N \beta_j \phi_j, \ N < \infty}
\end{equation}
in the norm
$$
  \norm{f}_{\hilb_\alpha(F)} = \norm{\abs{F}^{-\alpha} f},
$$
with the scalar product
$$
  \scal{f}{g}_{\hilb_\alpha(F)} = \scal{\abs{F}^{-\alpha} f}{\abs{F}^{-\alpha} g}.
$$
That is, if $f = \sum\limits_{j=1}^N \beta_j\phi_j,$ then
\begin{equation} \label{F: norm(f)alpha}
  \norm{f}_{\hilb_\alpha(F)} = \brs{\sum\limits_{j=1}^N \abs{\beta_j}^2 \kappa_j^{-2\alpha} }^{1/2}.
\end{equation}
Since~$F$ has trivial kernel, $\norm{\cdot}_{\hilb_\alpha(F)}$ is indeed a norm.
The scalar product of vectors $f = \sum\limits_{j=1}^N \alpha_j \phi_j$ and
$g = \sum\limits_{j=1}^N \beta_j \phi_j$ in~$\hilb_\alpha(F)$ is given by the formula
$$
  \scal{f}{g}_{\hilb_\alpha(F)} = \sum\limits_{j=1}^N \bar{\alpha}_j \beta_j \kappa_j^{-2\alpha}.
$$

The Hilbert space~$\hilb_\alpha(F)$ has a natural orthonormal basis $(\kappa_j^{\alpha}\phi_j).$
Since $$\abs{F}^\gamma (\kappa_j^{\alpha}\phi_j) = \kappa_j^{\alpha+\gamma}\phi_j,$$ it follows that
\begin{lemma} \label{L: F maps H(A) to H(a+1)}
  For any $\alpha, \gamma \in \mbR$
  the operator $\abs{F}^{\gamma} \colon \euD \to \euD$ is unitary as
  an operator from~$\hilb_{\alpha}(F)$ to~$\hilb_{\alpha+\gamma}(F).$
\end{lemma}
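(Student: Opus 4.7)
The plan is very short: recognize that both $\mathfrak h_\alpha(F)$ and $\mathfrak h_{\alpha+\gamma}(F)$ carry explicit orthonormal bases coming from the frame, and that $|F|^\gamma$ sends one basis bijectively onto the other.

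First I would record the basis. From formula (\ref{F: norm(f)alpha}) one reads off that the vectors
$$
  e_j^{(\alpha)} := \kappa_j^{\alpha} \phi_j, \quad j = 1,2,\ldots,
$$
lie in the dense manifold $\euD$ and satisfy $\scal{e_i^{(\alpha)}}{e_j^{(\alpha)}}_{\hilb_\alpha(F)} = \delta_{ij}$, and their finite linear combinations already span $\euD$; so $(e_j^{(\alpha)})$ is an orthonormal basis of $\hilb_\alpha(F)$. The same computation with $\alpha$ replaced by $\alpha+\gamma$ gives the orthonormal basis $(e_j^{(\alpha+\gamma)})$ of $\hilb_{\alpha+\gamma}(F)$.

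Next I would observe that the frame vectors $\phi_j$ are eigenvectors of $|F|$ with eigenvalues $\kappa_j$ (a direct consequence of the Schmidt representation (\ref{F: Frame=...})), hence they are eigenvectors of $|F|^\gamma$ with eigenvalues $\kappa_j^\gamma$ for every $\gamma \in \mbR$. In particular the functional calculus expression $|F|^\gamma$ is well defined on $\euD$ regardless of the sign of $\gamma$, and
$$
  |F|^\gamma e_j^{(\alpha)} = \kappa_j^\alpha |F|^\gamma \phi_j = \kappa_j^{\alpha+\gamma}\phi_j = e_j^{(\alpha+\gamma)}.
$$
Thus $|F|^\gamma$ maps the orthonormal basis $(e_j^{(\alpha)})$ of $\hilb_\alpha(F)$ bijectively onto the orthonormal basis $(e_j^{(\alpha+\gamma)})$ of $\hilb_{\alpha+\gamma}(F)$.

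Finally, any operator between Hilbert spaces which is densely defined and sends some orthonormal basis to an orthonormal basis is isometric, and since its image contains a basis it is surjective, hence unitary. Applying this to $|F|^\gamma$ restricted to $\euD$ gives the claim. There is no real obstacle here; the only point one must be careful about is that $|F|^{\gamma}$ makes sense on $\euD$ for \emph{negative} $\gamma$ as well, and this is handled by the fact that all $\kappa_j$ are strictly positive (which is part of the definition of a frame), so that the formula $|F|^\gamma \phi_j = \kappa_j^\gamma \phi_j$ defines $|F|^\gamma$ unambiguously on the algebraic span $\euD$.
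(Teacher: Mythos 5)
Your argument is exactly the paper's: the paper notes that $(\kappa_j^\alpha\phi_j)$ is an orthonormal basis of $\hilb_\alpha(F)$ and that $\abs{F}^\gamma(\kappa_j^\alpha\phi_j)=\kappa_j^{\alpha+\gamma}\phi_j$, and concludes the lemma from there. You fill in the routine verifications (that $\phi_j$ are eigenvectors of $\abs{F}$, that a densely defined map carrying an orthonormal basis onto an orthonormal basis extends to a unitary, and that $\abs{F}^\gamma$ is meaningful on $\euD$ for negative $\gamma$ because all $\kappa_j>0$), but the underlying idea is identical.
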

It follows that all Hilbert spaces~$\hilb_\alpha(F)$ are naturally isomorphic,
the natural isomorphism being the appropriate power of $\abs{F}.$

Plainly,~$\hilb_0(F) = \hilb.$ Let $\alpha, \beta \in \mbR.$ If $\alpha < \beta,$ then \
$\hilb_\beta(F) \subset \hilb_\alpha(F).$
The inclusion operator
$$
  i_{\alpha,\beta}\colon \hilb_\beta(F) \hookrightarrow \hilb_\alpha(F)
$$
is compact with Schmidt representation
$$
  i_{\alpha,\beta} = \sum_{j=1}^\infty \kappa_j^{\beta-\alpha} \scal{\kappa_j^\beta\phi_j}{\cdot}_{\hilb_\beta}\kappa_j^\alpha \phi_j.
$$
It follows that the $s$-numbers of the inclusion operator $i$ are $s_j(i) = \kappa_j^{\beta-\alpha}.$
In particular, the inclusion operator
$$
  i_{\alpha,\alpha+1}\colon \hilb_{\alpha+1}(F) \hookrightarrow \hilb_\alpha(F)
$$
is Hilbert-Schmidt with $s$-numbers $s_j=\kappa_j.$

% \begin{proof} ANOTHER PROOF
%   If $f \in \euD,$ $ f = \sum\limits_{j=1}^N \alpha_j\phi_j,$ then
%   \begin{equation*}
%    \begin{split}
%     \norm{\abs{F}^\gamma f}_{\hilb_{\alpha+\gamma}(F)}^2
%       & = \norm{\sum\limits_{j=1}^N \alpha_js_j^\gamma\phi_j}_{\hilb_{\alpha+\gamma}(F)}^2
%       \\ & = \sum\limits_{j=1}^N \abs{\alpha_j \kappa_j^\gamma}^2 \kappa_j^{-2(\alpha+\gamma)}
%        = \sum\limits_{j=1}^N \abs{\alpha_j}^2 \kappa_j^{-2\alpha}
%       = \norm{f}_{\hilb_{\alpha}(F)}^2.
%    \end{split}
%   \end{equation*}
% \end{proof}
% One can see that the inclusion
% $\hilb_{\alpha+1}(F) \hookrightarrow \hilb_{\alpha}(F)$
% is Hilbert-Schmidt for all $\alpha \in \mbR.$

Since we shall work in a fixed framed Hilbert space $(\hilb,F),$
the argument~$F$ of the Hilbert spaces~$\hilb_\alpha(F)$ will be often omitted.

% One can introduce~$\hilb_{\infty} = \bigcap_{\alpha\in \mbR} \hilb_{\alpha}$
% and~$\hilb_{-\infty} = \bigcup_{\alpha\in \mbR} \hilb_{\alpha}.$
% One says that an operator $A \colon \hilb_{\infty} \to \hilb_{-\infty}$ has order $\gamma,$
% if it is bounded as an operator~$\hilb_{\beta}(F) \to \hilb_{\beta + \gamma}(F)$
% for all $\beta \in \mbR.$
% In this case one says that the \emph{order} of $A$ is $-\gamma.$
% For example,
% $
%   \order(F) = 1, \qquad   \order(F^{-1}) = -1.
% $

\begin{prop} \label{P: if GAnG converges ...} % Let~$F$ be a non-negative Hilbert-Schmidt operator.
Let $\set{A_\iota \in \clBH, \iota \in I}$ be a net of bounded operators on a Hilbert space
with frame~$F.$
The net of operators
$$
  \abs{F} A_\iota \abs{F} \colon \hilb \to \hilb
$$
converges in~$\clBH$ (respectively, in~$\clL_p(\hilb)$) if and only if the net of operators
$$
  A_\iota \colon \hilb_1 \to \hilb_{-1}
$$
converges in~$\clB(\hilb_1(F), \hilb_{-1}(F))$
(respectively, in~$\clL_p(\hilb_1, \hilb_{-1})$).
\end{prop}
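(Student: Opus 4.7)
My plan is to reduce the claim to the elementary fact that composition with a unitary operator preserves both the operator norm and every Schatten $p$-norm. By the preceding lemma (applied with $\alpha=0$ and $\gamma=\pm 1$), the maps $\abs{F}\colon \hilb \to \hilb_1$ and $\abs{F}\colon \hilb_{-1} \to \hilb$ are unitary isomorphisms; equivalently, $\abs{F}^{-1}\colon \hilb_1 \to \hilb$ and $\abs{F}^{-1}\colon \hilb \to \hilb_{-1}$ are unitary. The crux is then the ``sandwich'' identity
$$
  A_\iota \;=\; \abs{F}^{-1}\cdot\bigl(\abs{F}A_\iota\abs{F}\bigr)\cdot \abs{F}^{-1},
$$
in which the leftmost $\abs{F}^{-1}$ is interpreted as the unitary $\hilb\to\hilb_{-1}$, the middle factor is the bounded operator $\abs{F}A_\iota\abs{F}\in\clBH$, and the rightmost $\abs{F}^{-1}$ is interpreted as the unitary $\hilb_1\to\hilb$. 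This writes $A_\iota$ as an operator $\hilb_1\to\hilb_{-1}$, which is automatically legitimate because the inclusions $\hilb_1\hookrightarrow\hilb\hookrightarrow\hilb_{-1}$ are continuous.

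Applying this identity to the difference $A_\iota - A_{\iota'}$ and using the fact that singular values are invariant under multiplication by unitaries on either side, one obtains
$$
  \norm{A_\iota-A_{\iota'}}_{\clB(\hilb_1,\hilb_{-1})} \;=\; \norm{\abs{F}(A_\iota-A_{\iota'})\abs{F}}_{\clBH},
$$
together with the analogous identity in which each norm is replaced by the corresponding Schatten $p$-norm. Consequently, the net $\{\abs{F}A_\iota\abs{F}\}$ is Cauchy in $\clBH$ (respectively in $\clL_p(\hilb)$) if and only if the net $\{A_\iota\}$ is Cauchy in $\clB(\hilb_1,\hilb_{-1})$ (respectively in $\clL_p(\hilb_1,\hilb_{-1})$). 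Since all the target spaces are complete, the two convergence statements are equivalent; moreover, the two limits are related by the same sandwich with $\abs{F}^{\pm 1}$.

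There is no real obstacle here: the proposition is a packaging of the fact that the map $T\mapsto \abs{F}^{-1}T\abs{F}^{-1}$ is an isometric isomorphism from $\clBH$ onto $\clB(\hilb_1,\hilb_{-1})$, and simultaneously from $\clL_p(\hilb)$ onto $\clL_p(\hilb_1,\hilb_{-1})$, with inverse $S\mapsto \abs{F}S\abs{F}$. The only point requiring a bit of attention is the bookkeeping: one must be careful which copy of $\abs{F}$ (resp.\ $\abs{F}^{-1}$) is being regarded as a unitary between which pair of spaces, and this is the only content of the argument beyond invoking the preceding lemma.
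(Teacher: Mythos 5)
Your proof is correct and takes essentially the same route as the paper: both identify $A_\iota = \abs{F}^{-1}\bigl(\abs{F}A_\iota\abs{F}\bigr)\abs{F}^{-1}$ as a sandwich by the unitaries $\abs{F}^{-1}\colon\hilb_1\to\hilb$ and $\abs{F}^{-1}\colon\hilb\to\hilb_{-1}$ supplied by Lemma~\ref{L: F maps H(A) to H(a+1)}, and then invoke unitary invariance of the operator and Schatten norms. The only (harmless) excess in your version is the detour through Cauchy nets and completeness; since $T\mapsto\abs{F}^{-1}T\abs{F}^{-1}$ is an isometric bijection, convergence transfers directly to convergence without needing the Cauchy formulation.
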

%\begin{proof} % It is meant here that the restriction of $A_\iota$ to~$\hilb_1(F)$
%  % converges in~$\clB(\hilb_1(F), \hilb_{-1}(F)).$
%  The operators $\abs{F}^{-1} \colon \hilb_1 \to \hilb$ and $\abs{F}^{-1} \colon \hilb \to \hilb_{-1}$
%  are unitary by Lemma \ref{L: F maps H(A) to H(a+1)}. Hence, the composition
%  of operators $\abs{F}^{-1} \colon \hilb_1 \to \hilb,$ $\abs{F} A_\iota \abs{F} \colon \hilb \to \hilb$
%  and $\abs{F}^{-1} \colon \hilb \to \hilb_{-1}$
%  converges in~$\clB(\hilb_1(F), \hilb_{-1}(F))$ (respectively, in~$\clL_p(\hilb_1(F), \hilb_{-1}(F))$)
%  if and only if the operator $\abs{F} A_\iota \abs{F} \colon \hilb \to \hilb$ converges in
% ~$\clBH$ (respectively, in~$\clL_p(\hilb)$).
%\end{proof}
Elements of~$\hilb_1$ are regular (smooth), while elements of~$\hilb_{-1}$ are non-regular.
In this sense, the frame operator~$F$ increases smoothness of vectors.
\begin{rems*} If $\alpha > 0,$ then the triple $(\hilb_\alpha,\hilb,\hilb_{-\alpha})$
forms a rigged Hilbert space. So, a frame in a Hilbert space generates a natural rigging.
At the same time, a frame evidently contains essentially more information, than a rigging.
\end{rems*}

\subsubsection{Diamond conjugate}
\label{SSS: diamond conjugate}
Let $\alpha \in \mbR.$
On the product~$\hilb_\alpha \times \hilb_{-\alpha}$ there exists a unique bounded
form $\scal{\cdot}{\cdot}_{\alpha,-\alpha}$ such that for any $f,g \in \hilb_{\abs{\alpha}}$
$$
  \scal{f}{g}_{\alpha,-\alpha} = \scal{f}{g}.
$$
% The form $\scal{\cdot}{\cdot}_{-\alpha,\alpha}$ on~$\hilb_\alpha \times \hilb_{-\alpha}$
% is defined similarly.
Let~$\clK$ be a Hilbert space.
For any bounded operator $A \colon \hilb_\alpha \to \clK,$
there exists a unique bounded operator $A^\diamondsuit \colon \clK \to \hilb_{-\alpha}$
such that for any $f \in \clK$ and $g \in \hilb_{\alpha}$ the equality
$$
  \scal{A^\diamondsuit f}{g}_{-\alpha,\alpha} = \scal{f}{A g}_{\clK}
$$
holds.
% Similarly, for any bounded operator $B \colon \clK \to \hilb_\alpha,$
% there exists a unique bounded operator $B^\heartsuit \colon \hilb_{-\alpha} \to \clK$
% such that for any $g \in \clK$ and $f \in \hilb_{-\alpha}$ the equality
% $$
%   \scal{B^\heartsuit f}{g}_{\clK} = \scal{f}{B g}_{-\alpha,\alpha}
% $$
% holds.
In particular, if $A \colon \hilb_1 \to \clK$ and $f,g \in \hilb_1,$ then
\begin{equation} \label{F: (f,A*Ag)=(Af,Ag)}
  \scal{f}{A^\diamondsuit A g}_{1,-1} = \scal{Af}{A g}_{\clK}.
\end{equation}

There is a connection between the diamond conjugate and usual conjugate
$$
  A^\diamondsuit = \abs{F}^{-2\alpha}A^*  % \ \text{and} \  B^\heartsuit = B^*\abs{F}^{2\alpha},
$$
where $A^* \colon \clK \to \hilb_\alpha$ and $\abs{F}^{-2\alpha} \colon \hilb_\alpha \to \hilb_{-\alpha}.$
%and similarly for $B.$
It follows from Lemma \ref{L: F maps H(A) to H(a+1)},
that if $A$ belongs to~$\clL_p(\hilb_\alpha,\clK),$ %(respectively $B \in \clL_p(\clK,\hilb_\alpha)$),
then $A^\diamondsuit$ belongs to~$\clL_p(\clK,\hilb_{-\alpha}).$
%(respectively $B^\heartsuit \in \clL_p(\hilb_{-\alpha},\clK)$).

\subsection{The trace-class matrix $\phi(\lambda+i\yy)$}
\label{SS: phi(lambda+i eps)}

Let~$H_0$ be a self-adjoint operator on a framed Hilbert space $(\hilb,F).$
Let $\lambda$ be a fixed point of $\LambHF{H_0}.$
For any $\yy \geq 0,$ we introduce the matrix
\begin{equation} \label{F: phi(l+i eps)=(si sj phi i Im R ...)}
  \phi(\lambda+i\yy) = \frac 1\pi \brs{\kappa_i\kappa_j \scal{\phi_i}{\Im R_{\lambda+i\yy}(H_0)\phi_j}}
\end{equation}
and consider it as an operator on $\ell_2.$

We note several elementary properties of $\phi(\lambda+i\yy).$
\begin{itemize}
  \item[(i)] For all $\yy \geq 0,$ \ $\phi(\lambda+i\yy)$ is a non-negative trace-class operator on $\ell_2$
  and its trace is equal to
    % \item[(ii)] For all $\yy \geq 0,$ the trace of $\phi(\lambda+i\yy)$ is equal
    the trace of $\frac 1 \pi F \Im R_{\lambda+i\yy}(H_0)F^*.$
  % \item[(iii)]

  This follows from Theorem \ref{T: Ya thm 6.1.5} and the fact that $\phi(\lambda+i\yy)$
  is unitarily equivalent to $\frac 1 \pi F \Im R_{\lambda+i\yy}(H_0)F^*.$
  \item[(ii)] For all $\yy > 0,$ the kernel of $\phi(\lambda+i\yy)$ is trivial. \\
    This follows from the fact that the kernel of
   ~$F \Im R_{\lambda+i\yy}(H_0)F^*$ is trivial. Indeed, otherwise for some non-zero $f \in \clK$
   ~$F \Im R_{\lambda+i\yy}(H_0)F^* f = 0$ $\then$ $\ker R_{\lambda+i\yy}(H_0) \ni F^*f \neq 0,$ which is impossible.
  \item[(iii)] The matrix $\phi(\lambda+i\yy)$ is a real-analytic function of the parameter $\yy>0$ with values in~$\clL_1(\ell_2),$
    and it is continuous in~$\clL_1(\ell_2)$ up to $\yy = 0,$ as
    it follows from Theorem \ref{T: Ya thm 6.1.5}.
  \item[(iv)] The estimate \ $s_n(\phi(\lambda+i\yy)) \leq \yy^{-1}
  \kappa_n^2$ \
   holds. This follows from the equality $s_n(A^*A) = s_n(AA^*)$ and the estimate (\ref{F: Ya (1.6.6)}). %~\cite[(1.6.6)]{Ya}.
\end{itemize}

\subsection{The Hilbert-Schmidt matrix $\eta(\lambda+i\yy)$}
\label{SS: matrix eta}

Let $\lambda \in \LambHF{H_0}.$ For any $\yy \geq 0,$ we also introduce the matrix
\begin{equation} \label{F: eta = sqrt phi}
  \eta(\lambda+i\yy) = \sqrt {\phi(\lambda+i\yy)}.
\end{equation}
We list elementary properties of $\eta(\lambda+i\yy).$
\begin{enumerate}
  \item[(i)] For all $y\geq 0,$ $\eta(\lambda+i\yy)$ is a non-negative Hilbert-Schmidt operator on $\ell_2.$
    %This follows from \ref{SS: phi(lambda+i eps)}(i).
  \item[(ii)] If $\yy > 0,$ then the kernel of $\eta(\lambda+i\yy)$ is trivial. \\
      %This follows from the fact that the kernel of $\phi(\lambda+i\yy)$ is trivial, \ref{SS: phi(lambda+i eps)}(ii).
  \item[(iii)] The matrix $\eta(\lambda+i\yy)$ % converges to $\eta(\lambda)$ in some topology (??) ($\LpH{2}$?). Not sure.
      is a real-analytic function of the parameter $\yy>0$ with values in $\LpH{2}.$
      %This follows from Lemma \ref{L: lemma YY} and \ref{SS: phi(lambda+i eps)}(iii).
  \item[(iv)] The matrix $\eta(\lambda+i\yy)$ is continuous in $\LpH{2}$ up to $\yy = 0.$
       %This follows from Lemma \ref{L: lemma YY} and \ref{SS: phi(lambda+i eps)}(iii).
  \item[(v)] The estimate
   $s_n(\eta(\lambda+i\yy)) \leq \yy^{-1/2} \kappa_n % \alpha_j(\eta(\lambda+i\yy)) :=
   $
   holds.
   %This follows from the similar estimate for $s$-numbers of $\phi(\lambda+i\yy),$ see \ref{SS: phi(lambda+i eps)}(iv).
\end{enumerate}

\subsection{Eigenvalues $\alpha_j(\lambda+i\yy)$ of $\eta(\lambda+i\yy)$}
\label{SS: alpha j}
Let $\lambda \in \LambHF{H_0}.$

We denote by $\alpha_j(\lambda+i\yy)$ the~$j$-th eigenvalue of $\eta(\lambda+i\yy)$ (counting multiplicities).

We list elementary properties of $\alpha_j(\lambda+i\yy).$
\begin{enumerate}
  \item[(i)] For $y>0,$ all eigenvalues $\alpha_j(\lambda+i\yy)$ are strictly positive. %This follows from \ref{SS: matrix eta}(ii).
  \item[(ii)] For $y\geq 0,$ the sequence $(\alpha_j(\lambda+i\yy))$ belongs to $\ell_2.$ %This follows from \ref{SS: matrix eta}(i).
  \item[(iii)] The functions $(0,\infty) \ni \yy \mapsto \alpha_j(\lambda+i\yy)$ can be chosen
           to be real-analytic (after proper enumeration).
      This follows from Theorem \ref{T: lemma X} and \ref{SS: matrix eta}(iii). %~\cite[Theorem VII.1.8]{Kato}. \margdetails
  \item[(iv)] All $\alpha_j(\lambda+i\yy)$ converge as $\yy \to 0.$
      This follows from Theorem \ref{T: lemma X} and \ref{SS: matrix eta}(iv). % \ref{SS: matrix eta}(iii) and~\cite[Theorem IV.3.5]{Kato}. \margdetails
\end{enumerate}

\subsection{Zero and non-zero type indices}
\label{SS: index of non-zero type}
Let $\lambda \in \LambHF{H_0}.$
While the eigenvalues $\alpha_j(\lambda+i\yy)$ of the matrix $\eta(\lambda+iy)$ are strictly
positive for $y>0,$ the limit values $\alpha_j(\lambda)$ of some of them can
be equal to zero. We say that the eigenvalue function
$\alpha_j(\lambda+i\yy)$ is of \emph{non-zero type}, if its limit
is not equal to zero. Otherwise we say that it is of \emph{zero
type}. We denote the set of non-zero type indices
by~$\clZ_\lambda.$

% The number of indices of non-zero type is equal to the dimension of the fiber Hilbert space~$\hlambda.$
% The number of indices of non-zero type can be equal to $0,$ which means that the corresponding point $\lambda$
% of $\LambHF{H_0}$ does not belong to the standard support of the absolutely continuous spectrum. \margcom{Are you sure?}

Though it is not necessary, we agree to enumerate functions $\alpha_j(\lambda+i\yy)$
in such a way, that the sequence $\set{\alpha_j(\lambda+i0)}$ is decreasing.

\subsection{Vectors $e_j(\lambda+i\yy)$}
\label{SS: e j}

For any $\lambda \in \LambHF{H_0}$ we consider the sequence
of normalized eigenvectors
$$
  e_j(\lambda+i\yy) \in \ell_2, \ j=1,2,\ldots
$$
of the non-negative Hilbert-Schmidt matrix $\eta(\lambda+i\yy).$
These vectors are also eigenvectors of $\phi(\lambda+i\yy).$
We enumerate the functions $e_j(\lambda+i\yy)$ in such a way that
\begin{equation} \label{F: eta e j = alpha j eta j}
  \eta(\lambda+i\yy) e_j(\lambda+i\yy) = \alpha_j(\lambda+i\yy) e_j(\lambda+i\yy), \qquad y > 0,
\end{equation}
where enumeration of $\alpha_j(\lambda+i\yy)$ is given in subsection \ref{SS: index of non-zero type}.

We list elementary properties of $e_j(\lambda+i\yy)$'s.
\begin{enumerate}
  \item[(i)] If $y>0,$ then the sequence $e_j(\lambda+i\yy) \in \ell_2, \ j=1,2,\ldots$ is an orthonormal basis of $\ell_2.$
  \item[(ii)] The functions $(0,\infty) \ni \yy \mapsto e_j(\lambda+i\yy) \in \ell_2$ can be chosen to be real-analytic.
     This follows from Theorem \ref{T: lemma X} and the item \ref{SS: matrix eta}(iii).
  \item[(iii)] For indices~$j$ of non-zero type, the functions
      $[0,\infty) \ni \yy \mapsto e_j(\lambda+i\yy) \in \ell_2$ are continuous up to $\yy = 0.$
     This follows from Theorem \ref{T: lemma X} and \ref{SS: matrix eta}(iv).
  \item[(iv)]  We say that $e_j(\lambda+iy)$ is of (non-)zero type, if the corresponding eigenvalue function $\alpha_j(\lambda+iy)$
  is of (non-)zero type. Non-zero type vectors $e_j(\lambda+iy)$ have limit values $e_j(\lambda+i0),$
  which form an orthonormal system in $\ell_2,$ as it follows from Theorem \ref{T: lemma X}.

  Note that zero-type vectors $e_j(\lambda+iy)$ may not converge as $y \to 0.$

  \item[(v)] For non-zero type indices~$j$ the vectors $e_j(\lambda+i0)$ are measurable.
    % This is trivial.
\end{enumerate}

\subsection{Vectors $\eta_j(\lambda+i\yy)$}
\label{SS: eta j}
Let $\lambda \in \LambHF{H_0}.$
We introduce the vector $\eta_j(\lambda+i\yy)$ as the~$j$-th column of the Hilbert-Schmidt matrix
$\eta(\lambda+i\yy)$ (see (\ref{F: eta = sqrt phi})).
This definition implies that
\begin{equation} \label{F: (eta j,eta k)=phi(jk)}
  \scal{\eta_j(\lambda+i\yy)}{\eta_k(\lambda+i\yy)} = \phi_{jk}(\lambda+i\yy).
\end{equation}

We list elementary properties of $\eta_j(\lambda+i\yy)$'s.
\begin{enumerate}
  \item[(i)] For all $\yy \geq 0,$ all vectors $\eta_j(\lambda+i\yy)$ belong to $\ell_2.$ \\
    %This is a consequence of the fact that $\eta(\lambda+i\yy)$ is a bounded operator.
  \item[(ii)] For all $\yy \geq 0,$ the norms of vectors $\eta_j(\lambda+i\yy)$ constitute a sequence
  $$
    (\norm{\eta_1(\lambda+i\yy)}, \norm{\eta_2(\lambda+i\yy)}, \norm{\eta_3(\lambda+i\yy)}, \ldots),
  $$
  which belongs to $\ell_2.$
  This follows from the fact that $\eta(\lambda+iy)$ is a Hilbert-Schmidt operator for all $y\geq 0$
  %  (see \ref{SS: matrix eta}(i) and \ref{SS: matrix eta}(iv)\,).
  \item[(iii)] \label{I: eta j complete} If $\yy > 0,$ then the set of vectors $\set{\eta_j(\lambda+i\yy)}$ is complete in $\ell_2.$
%      \\ This follows from the following easily checked equality

  Proof. We have,
      \begin{equation} \label{F: e j (l+i eps)=alpha j sum ... phi k}
        e_j(\lambda+i\yy) = \alpha_j^{-1}(\lambda+i\yy) \sum\limits_{k=1}^\infty e_{kj}(\lambda+i\yy) \eta_k(\lambda+i\yy), \qquad y \geq
        0.
      \end{equation}
  Indeed, it follows from (\ref{F: eta e j = alpha j eta j}) that
  \begin{equation*} %\label{F: e j (l+i eps)=alpha j sum ... phi k}
   \begin{split}
    e_j(\lambda+i\yy) & = \alpha_j^{-1}(\lambda+i\yy) \eta(\lambda+i\yy)e_j(\lambda+i\yy)
       \\ & = \alpha_j^{-1}
         \left(\begin{array}{ccc}
           \eta_{11} & \eta_{12} & \ldots \\
           \eta_{21} & \eta_{22} & \ldots \\
           \ldots & \ldots & \ldots \\
         \end{array} \right)
         \left(\begin{array}{c}
           e_{1j}  \\
           e_{2j}  \\
           \ldots
         \end{array} \right)
        = \alpha_j^{-1} \left(\begin{array}{c}
           \eta_{11}e_{1j}+\eta_{12}e_{2j}+\ldots \\
           \eta_{21}e_{1j}+\eta_{22}e_{2j}+\ldots \\
           \ldots
         \end{array} \right)
       \\ & = \alpha_j^{-1}(\lambda+i\yy) \sum\limits_{k=1}^\infty e_{kj}(\lambda+i\yy) \eta_k(\lambda+i\yy), \qquad y \geq 0,
   \end{split}
  \end{equation*}
  where, in case of $y=0,$ the summation is over indices~$j$ of non-zero type.
  Hence, the set of vectors $\set{\eta_1(\lambda+i\yy), \eta_2(\lambda+i\yy), \ldots}$ is complete.
  Note also, that the linear combination above is absolutely convergent,
  according to (ii).
  \item[(iv)] Let $y>0.$
     If for some $\beta = (\beta_j) \in \ell_2$ the equality
     $$ \sum\limits_{j=1}^\infty \beta_j \eta_j(\lambda+i\yy) = 0$$
     holds, then $(\beta_j) = 0.$

     Assume the contrary. We have
      \begin{equation*}
       \begin{split}
        \eta(\lambda+i\yy) \beta & = \left[\begin{array}{c} \beta_1 \eta_{11}(\lambda+i\yy) + \beta_2 \eta_{12}(\lambda+i\yy) + \ldots
            \\ \ldots \\ \beta_1 \eta_{i1}(\lambda+i\yy) + \beta_2 \eta_{i2}(\lambda+i\yy) + \ldots \\ \ldots \end{array}\right]
         \\ & = \beta_1 \eta_1(\lambda+i\yy) + \beta_2 \eta_2(\lambda+i\yy) + \ldots
         \\ & = 0,
       \end{split}
      \end{equation*}
      where the second equality makes sense, since the series $\sum\limits_{j=1}^\infty \beta_j \eta_j(\lambda+i\yy)$
      is absolutely convergent by \ref{SS: eta j}(ii). It follows that $\beta$ is an eigenvector of $\eta(\lambda+i\yy)$ corresponding to a zero eigenvalue.
      Since, by \ref{SS: matrix eta}(ii), for $y>0$ the matrix $\eta(\lambda+i\yy)$ does not have zero eigenvalues, we get a contradiction.

   \item[(v)] Vectors $\eta_j(\lambda+i\yy)$ converge to $\eta_j(\lambda)$ in $\ell_2$ as $\yy \to 0.$
      This follows from property \ref{SS: matrix eta}(iv) of $\eta(\lambda+i\yy).$
      % \margcom{I think that this item can be improved. Who knows?}
\end{enumerate}

\subsection{Unitary matrix $e(\lambda+i\yy)$}
\label{SS: matrix e(lambda+i eps)}
Let $\lambda \in \LambHF{H_0}.$
We can form a matrix
$$
  e(\lambda+i\yy) = (e_{jk}(\lambda+i\yy)),
$$
whose columns are $e_j(\lambda+i\yy),$~$j=1,2,\ldots.$
Since vectors $e_j(\lambda+i\yy),$~$j=1,2,\ldots,$ form an orthonormal basis of $\ell_2,$
this matrix is unitary and it diagonalizes the matrix $\eta(\lambda+iy):$
$$
  e(\lambda+i\yy)^*\eta(\lambda+i\yy)e(\lambda+i\yy)
    = \diag (\alpha_1(\lambda+i\yy), \alpha_2(\lambda+i\yy), \ldots),
%       \left(\begin{matrix}
%         \alpha_1(\lambda+i\yy) & 0 & 0 & \ldots \\
%         0 & \alpha_2(\lambda+i\yy) & 0 & \ldots \\
%         0 & 0 & \alpha_3(\lambda+i\yy) &  \ldots \\
%         \ldots & \ldots & \ldots & \ldots
%       \end{matrix}\right),
$$
where $(\alpha_j(\lambda+i\yy)) \in \ell_2$ are eigenvalues of $\eta(\lambda+i\yy),$
see subsection \ref{SS: alpha j}.

\subsection{Vectors $\phi_j(\lambda+i\yy)$}
\label{SS: phi j}
Let $\lambda \in \LambHF{H_0}.$
Now we introduce vectors
\begin{equation} \label{F: phi j(l+iy)=k(-1)eta j(l+iy)}
  \phi_j(\lambda+i\yy) = \kappa_j^{-1}\eta_j(\lambda+i\yy) \in \ell_2.
\end{equation}
It may seem to be more consistent to denote by $\phi_j(\lambda+i\yy)$ the~$j$-th column of
the matrix $\phi(\lambda+i\yy).$ But, firstly, we don't need columns of $\phi(\lambda+i\yy),$
secondly, there is an advantage of this notation. Namely, $\phi_j(\lambda)$ can be considered
as the value of the vector $\phi_j \in \hilb$ at $\lambda \in \LambHF{H_0},$ as we shall
see later (see Section \ref{S: direct integral}).
% in the direct integral (\ref{F: direct integral}).

Some properties of $\phi_j(\lambda+i\yy).$
\begin{enumerate}
  \item[(i)] All vectors $\phi_j(\lambda+i\yy)$ belong to $\ell_2.$
    This follows from $\eta_j(\lambda+i\yy) \in \ell_2,$ see \ref{SS: eta j}(i).
  \item[(ii)] If $y>0,$ then the set of vectors $\set{\phi_j(\lambda+i\yy)}$ is complete in $\ell_2.$
    This follows from a similar property of $\set{\eta_j(\lambda+i\yy)},$ see \ref{SS: eta j}(iii).
  \item[(iii)] \label{phi j's are lin independent} % Actually, more can be said.
    Let $y>0.$ If $(\kappa^{-1}_j\beta_j) \in \ell_2$ and
     $$ \sum\limits_j \beta_j \phi_j(\lambda+i\yy) = 0,$$
     then $(\beta_j) = 0.$
    % This follows from similar property of $\eta_j(\lambda+i\yy)$ (see \ref{SS: eta j}(iv))
    % and (\ref{F: phi j(l+iy)=k(-1)eta j(l+iy)}).
  \item[(iv)] The following equality holds
  \begin{equation} \label{F: scal phi j(l+ieps), phi k(l+ieps) = ...}
    \scal{\phi_j(\lambda+i\yy)}{\phi_k(\lambda+i\yy)} = \frac 1 \pi \scal{\phi_j}{\Im R_{\lambda+i\yy}(H_0) \phi_k}.
  \end{equation}
 This immediately follows from the definition of $\phi_j(\lambda+i\yy)$'s.
  Indeed, using (\ref{F: phi j(l+iy)=k(-1)eta j(l+iy)}), (\ref{F: (eta j,eta k)=phi(jk)}) and (\ref{F: phi(l+i eps)=(si sj phi i Im R ...)}),
  \begin{equation*}
    \begin{split}
      \scal{\phi_j(\lambda+i\yy)}{\phi_k(\lambda+i\yy)} & = \kappa^{-1}_j \kappa^{-1}_k \scal{\eta_j(\lambda+i\yy)}{\eta_k(\lambda+i\yy)}
        \\ & = \kappa^{-1}_j\kappa^{-1}_k \phi_{jk}(\lambda+i\yy)
        \\ & = \frac 1 \pi \scal{\phi_j}{\Im R_{\lambda+i\yy}(H_0) \phi_k}.
    \end{split}
  \end{equation*}

  \item[(v)] It follows from (\ref{F: e j (l+i eps)=alpha j sum ... phi k}) and (\ref{F: phi j(l+iy)=k(-1)eta j(l+iy)}),
      that each $e_j(\lambda+i\yy)$ can be written as a linear combination of $\phi_j(\lambda+i\yy)$'s
      with coefficients of the form $\kappa_j\beta_j,$ where $(\beta_j) \in \ell_2:$
      \begin{equation} \label{F: e j (l+i eps)=alpha j sum ... eta k}
        e_j(\lambda+i\yy) = \alpha_j^{-1}(\lambda+i\yy) \sum\limits_{k=1}^\infty \kappa_k e_{kj}(\lambda+i\yy) \phi_k(\lambda+i\yy).
      \end{equation}
      Moreover, this representation is unique, according to (iii).
   \item[(vi)] For all~$j = 1,2,\ldots$ \ \ $\norm{\phi_j(\lambda+i\yy)}_{\ell_2} \leq (\yy\pi)^{-1/2}.$
%      \\ Indeed, by (\ref{F: scal phi j(l+ieps), phi k(l+ieps) = ...}),
%        $$
%          \norm{\phi_j(\lambda+i\yy)}^2 = \frac 1 \pi \scal{\phi_j}{\Im R_{\lambda+i\yy}(H_0) \phi_j} \leq \frac 1{\yy\pi}.
%        $$
   \item[(vii)] \ $\phi_j(\lambda+i\yy)$ converges to $\phi_j(\lambda)$ in $\ell_2,$ as $\yy \to
   0$ (recall that $\lambda \in \Lambda(H_0,F)$).
      This follows from \ref{SS: eta j}(v).
   \item[(viii)]  \label{F: (phi(l) j,phi(l) k) = (phi j,Im R phi k)} The equality
    $$
     \scal{\phi_j(\lambda)}{\phi_k(\lambda)}_{\ells} = \frac 1 \pi \scal{\phi_j}{\Im R_{\lambda+i0}(H_0)\phi_k}_{\hilb}
    $$
    holds.

    Proof. Since for $\lambda \in \Lambda(H_0;F)$ the limit on the right hand side exists by \ref{SS: phi(lambda+i eps)}(iii),
    this follows from (vii) and (iv).
\end{enumerate}

\subsection{The operator $\euE_{\lambda+i\yy}$}
\label{SS: euE's}

Let $\lambda \in \LambHF{H_0}.$
Let
 $$
   \euE_{\lambda+i\yy} \colon \hilb_1 \to \ell_2
 $$
be a linear operator defined on the frame vectors by the formula
 \begin{equation} \label{F: euE(l+iy)phi=phi(l+iy)}
   \euE_{\lambda+i\yy} \phi_j = \phi_j(\lambda+i\yy).
 \end{equation}
% We consider $\euE_{\lambda+i\yy}$ as an operator from~$\hilb_1$ to $\ell_2.$

We list some properties of $\euE_{\lambda+i\yy},$ which more or
less immediately follow from the definition.
\begin{enumerate}
  \item[(i)] For $y>0,$ the equality
  $$
    \scal{\euE_{\lambda+i\yy} \phi_j}{\euE_{\lambda+i\yy} \phi_k}_{\ell_2}
    = \frac 1 \pi \scal{\phi_j}{\Im R_{\lambda+i\yy}(H_0)\phi_k}_\hilb
  $$
  holds. %This is immediate from the definition of $\euE_{\lambda+i\yy}$ and (\ref{F: scal phi j(l+ieps), phi k(l+ieps) = ...}).
  It follows that
  \begin{equation} \label{F: euE(*)euE=1/pi Im R(z)}
       \euE^*_{\lambda+i\yy}\euE_{\lambda+i\yy} = \frac 1\pi \Im R_{\lambda+i\yy}(H_0).
  \end{equation}

  \item[(ii)] Let $y>0.$ The operator $\euE_{\lambda+i\yy}$ is a Hilbert-Schmidt operator as an operator
  from~$\hilb_1$ to $\ell_2.$ Moreover,
  $$
    \norm{\euE_{\lambda+i\yy}}_{\clL_2(\hilb_1,\ell_2)}^2 = \frac 1 \pi \Tr_\clK\brs{F \Im R_{\lambda+i\yy}(H_0)F^*}.
  $$
  \\ Indeed, evaluating the trace of $\euE_{\lambda+i\yy}^*\euE_{\lambda+i\yy}$
  in the orthonormal basis $\set{\kappa_j\phi_j}$ of~$\hilb_1,$ we get, using (i) and (\ref{F: F phi j=...}),
  \begin{equation*}
   \begin{split}
     \sum\limits_{j=1}^\infty & \scal{\euE_{\lambda+i\yy}^*\euE_{\lambda+i\yy}\kappa_j\phi_j}{\kappa_j\phi_j}_{\hilb_1}
     \\ & \qquad = \sum\limits_{j=1}^\infty \kappa_j^2 \scal{\euE_{\lambda+i\yy}\phi_j}{\euE_{\lambda+i\yy}\phi_j}_{\ell_2}
     \\ & \qquad = \frac 1 \pi \sum\limits_{j=1}^\infty \kappa_j^2 \scal{\phi_j}{\Im R_{\lambda+i\yy}(H_0) \phi_j}_{\hilb}      \mathcomment{i}
     \\ & \qquad = \frac 1 \pi \sum\limits_{j=1}^\infty \scal{F^*\psi_j}{\Im R_{\lambda+i\yy}(H_0) F^*\psi_j}_{\hilb}           \mathcomment{\ref{F: F phi j=...}}
     \\ & \qquad = \frac 1 \pi \Tr_\clK(F\Im R_{\lambda+i\yy}(H_0)F^*).                \mathcomment{\ref{F: Tr(T)=sum (T fj,fj)}}
   \end{split}
  \end{equation*}
  \item[(iii)] The norm of $\euE_{\lambda+i\yy} \colon \hilb_1 \to \ell_2$ is $\leq \norm{\eta\brs{\lambda+iy}}_2.$
  Indeed, if $\beta = (\beta_j) \in \ell_2,$ then \ $f:= \sum\limits_{j=1}^\infty \kappa_j\beta_j\phi_j \in \hilb_1$
  with $\norm{f}_{\hilb_1} = \norm{\beta},$ and, using (\ref{F: euE(l+iy)phi=phi(l+iy)}), (\ref{F: phi j(l+iy)=k(-1)eta j(l+iy)})
  and Schwarz inequality, one gets
  \begin{equation*}
   \begin{split}
    \norm{\euE_{\lambda+i\yy} f} & = \norm{\sum\limits_{j=1}^\infty \kappa_j\beta_j\phi_j(\lambda+iy)}
         = \norm{\sum\limits_{j=1}^\infty \beta_j\eta_j(\lambda+iy)}
         \\ & \leq \norm{\beta} \cdot \brs{\sum_{j=1}^\infty \norm{\eta_j(\lambda+iy)}^2}^{1/2}
            = \norm{f}_{\hilb_1} \cdot \norm{\eta(\lambda+iy)}_2.
   \end{split}
  \end{equation*}

  \item[(iv)] For all $\yy > 0,$ the operator $\euE_{\lambda+i\yy} \colon \hilb_1 \to \ell_2$ has trivial kernel.
  \\ Indeed, otherwise for some non-zero vector $f \in \hilb_1,$
  $$
    0 = \scal{\euE_{\lambda+i\yy}f}{\euE_{\lambda+i\yy}f} = \frac 1 \pi \scal{f}{\Im R_{\lambda+i\yy}(H_0)f}.
  $$
  % It follows that $\Im R_{\lambda+i\yy}(H_0)$ has non-trivial kernel.
  Combining this equality with the formula
    $$
      \Im R_{\lambda+i\yy}(H_0) = \yy R_{\lambda-i\yy}(H_0)R_{\lambda+i\yy}(H_0),
    $$
    one infers that $R_{\lambda+i\yy}(H_0)$ has non-trivial kernel.
    But this is impossible.
  \item[(v)] The operator $\euE_{\lambda+i\yy} \colon \hilb_1 \to \ell_2$ as a function of $\yy > 0$
  is real-analytic in~$\clL_2(\hilb_1, \ell_2).$
  % \\ Proof. The proof is similar to the proof of the next item.
  \item[(vi)] The operator $\euE_{\lambda+i\yy} \colon \hilb_1 \to \ell_2$ converges in the Hilbert-Schmidt norm
    to $\euE_{\lambda},$ as $y \to 0.$
    \\ Proof. We have, in the orthonormal basis $\set{\kappa_j\phi_j}$ of~$\hilb_1,$
       \begin{equation*}
         \begin{split}
           \norm{\euE_{\lambda+i\yy}- \euE_{\lambda}}_{\clL_2(\hilb_1)}^2 & = \sum_{j=1}^\infty \norm{\brs{\euE_{\lambda+i\yy}- \euE_{\lambda}}(\kappa_j\phi_j)}^2 \mathcomment{\ref{F: HS-norm of T = sum (Tfj)2}}
            \\ & = \sum_{j=1}^\infty \norm{\kappa_j\phi_j\brs{\lambda+i\yy}- \kappa_j\phi_j\brs{\lambda}}^2 \mathcomment{\ref{F: euE(l+iy)phi=phi(l+iy)}}
            \\ & = \sum_{j=1}^\infty \norm{\eta_j\brs{\lambda+i\yy}- \eta_j\brs{\lambda}}^2  \mathcomment{\ref{F: phi j(l+iy)=k(-1)eta j(l+iy)}}
            \\ & = \norm{\eta\brs{\lambda+i\yy} - \eta\brs{\lambda}}_2^2 \to 0,        \mathcomment{\ref{F: HS-norm of T = sum (Tfj)2}}
         \end{split}
       \end{equation*}
       where the convergence holds by \ref{SS: matrix eta}(iv).
   \item[(vii)] It follows that the equality in (i) holds for $y=0$ as well
  $$
    \scal{\euE_{\lambda} \phi_j}{\euE_{\lambda} \phi_k}_{\ell_2}
    = \frac 1 \pi \scal{\phi_j}{\Im R_{\lambda+i0}(H_0)\phi_k}_\hilb.
  $$
  Moreover, the operator $\euE_{\lambda} \colon \hilb_1 \to \ell_2$ is also Hilbert-Schmidt and
  $$
    \norm{\euE_{\lambda}}_{\clL_2(\hilb_1,\ell_2)}^2 = \frac 1 \pi \Tr_\clK\brs{F \Im R_{\lambda+i0}(H_0)F^*}.
  $$
\end{enumerate}

\subsection{Vectors $b_j(\lambda+i\yy) \in \hilb_1$}
\label{SS: b j}

Let $y>0$ and $\lambda \in \Lambda(H_0,F).$
For each~$j=1,2,3\ldots$ we introduce the vector $b_j(\lambda+i\yy) \in \hilb_1$
as a unique vector from the Hilbert space~$\hilb_1$ with property
\begin{equation} \label{F: def of bj}
  \euE_{\lambda+i\yy} b_j(\lambda+i\yy) = e_j(\lambda+i\yy).
\end{equation}
% Note that by property (6) of $\phi_j(\lambda+i\yy),$ such a vector exists:
% one sets $b_j(\lambda+i\yy)$ to be equal to the linear combination of $\phi_j$'s with the same coefficients,
% as in the decomposition of $e_j(\lambda+i\yy)$ in $\phi_j(\lambda+i\yy)$'s.

Property \ref{SS: phi j}(v) of $\phi_j(\lambda+i\yy) = \euE_{\lambda+i\yy}\phi_j$ implies that the vector
\begin{equation} \label{F: b j(l+ieps) = alpha sum ...}
   b_j(\lambda+i\yy) = \alpha_j^{-1}(\lambda+i\yy) \sum\limits_{k=1}^\infty \kappa_k e_{kj}(\lambda+i\yy) \phi_k
\end{equation}
satisfies the above equation,
where $e_{kj}(\lambda+i\yy)$ is the $k$'s coordinate of $e_{j}(\lambda+i\yy).$
Property \ref{SS: phi j}(iii) of $\phi_j(\lambda+i\yy)$ implies that such representation is unique.

The representation (\ref{F: b j(l+ieps) = alpha sum ...}) shows that
the functions $(0,\infty) \ni y \mapsto b_j(\lambda+i\yy) \in \hilb_1$ are continuous,
since, by Schwarz inequality and $\norm{e_j(\lambda+iy)} = 1,$
the series in the right hand side of (\ref{F: b j(l+ieps) = alpha sum ...}) absolutely converges
locally uniformly with respect to $y>0.$

We list some properties of the vectors $b_j(\lambda+i\yy).$
\begin{enumerate}
  \item[(i)] The relations
   \begin{gather*}
     \norm{b_j(\lambda+i\yy)}_\hilb \leq \alpha_j^{-1}(\lambda+i\yy) \norm{F}_2,
     \\ \norm{b_j(\lambda+i\yy)}_{\hilb_1} = \alpha_j^{-1}(\lambda+i\yy)
   \end{gather*}
   hold.
%  The inequality follows from (\ref{F: b j(l+ieps) = alpha sum ...}) and Schwarz inequality.
%  The equality follows from the definition of~$\hilb_1$-norm and from $\norm{e_j(\lambda+iy)}=1.$
  \item[(ii)] Vectors $b_j(\lambda+i\yy),$ $j=1,2,\ldots,$ are linearly independent. %in~$\hilb_1.$
%   \\ Indeed, this is because the vectors $\euE_{\lambda+i\yy} b_j(\lambda+i\yy) = e_j(\lambda+i\yy),$ $j=1,2,\ldots,$ are linearly independent.
  \item[(iii)] The system of vectors $\set{b_j(\lambda+i\yy)}$ is complete in~$\hilb_1$ (and, consequently, in~$\hilb$ as well).

  Proof. This follows from the equality
  \begin{equation} \label{F: phi(l)=alpha(l)(-1)sum(j)...e(j)}
    \phi_l = \kappa_l^{-1} \sum_{j=1}^\infty \bar e_{lj}(\lambda+iy)\alpha_j(\lambda+iy) b_j(\lambda+iy), \quad l=1,2,\ldots
  \end{equation}
  This equality itself follows from (\ref{F: b j(l+ieps) = alpha sum ...}) and from the unitarity of the matrix $(e_{jk}(\lambda+iy)).$
%%    \\ Since $\euE_{\lambda+i\yy}$ maps $b_j(\lambda+i\yy)$'s
%%       to $e_j(\lambda+i\yy)$'s, which form a basis of $\ell_2,$ the completeness of $b_j$'s
%%       follows from the fact that $\euE_{\lambda+i\yy}$ has trivial kernel, by \ref{SS: euE's}(iv).
  \item[(iv)] The equality \
   \begin{equation} \label{F: (euE b_j,euE b_k)=delta(jk)}
     \scal{\euE_{\lambda+i\yy} b_j(\lambda+i\yy)} {\euE_{\lambda+i\yy} b_k(\lambda+i\yy)} = \delta_{jk}
   \end{equation}
   \ holds. %\\ This immediately follows from the definition of $b_j(\lambda+i\yy)$'s and the fact that the system $\set{e_j(\lambda+i\yy)}$
   %is an orthonormal basis in $\ell_2$ (see item \ref{SS: e j}(i)).
  \item[(v)] The equality \
   $$
     \frac \yy \pi \scal{R_{\lambda\pm i\yy}(H_0) b_j(\lambda+i\yy)} {R_{\lambda\pm i\yy}(H_0) b_k(\lambda+i\yy)} = \delta_{jk}
   $$
   \ holds.

   Proof. We have
   \begin{equation*}
     \begin{split}
       \frac \yy \pi & \scal{R_{\lambda+i\yy}(H_0) b_j(\lambda+i\yy)} {R_{\lambda + i\yy}(H_0) b_k(\lambda+i\yy)}
          \\ & \quad = \frac \yy \pi \scal{b_j(\lambda+i\yy)} {R_{\lambda - i\yy}(H_0)R_{\lambda + i\yy}(H_0) b_k(\lambda+i\yy)}
          \\ & \quad = \scal{b_j(\lambda+i\yy)} {\frac 1\pi \Im R_{\lambda + i\yy}(H_0) b_k(\lambda+i\yy)}
          \\ & \quad = \scal{b_j(\lambda+i\yy)} {\euE^*_{\lambda+i\yy}\euE_{\lambda+i\yy} b_k(\lambda+i\yy)}     \mathcomment{\ref{F: euE(*)euE=1/pi Im R(z)}}
          \\ & \quad = \scal{\euE_{\lambda+i\yy}b_j(\lambda+i\yy)} {\euE_{\lambda+i\yy} b_k(\lambda+i\yy)}
          \\ & \quad = \delta_{jk}.       \mathcomment{\ref{F: (euE b_j,euE b_k)=delta(jk)}}
     \end{split}
   \end{equation*}

   %This follows from the previous formula and property \ref{SS: euE's}(i) of $\euE_{\lambda+i\yy}.$
   \item[(vi)] The set of vectors $\sqrt{\frac \yy\pi}\set{R_{\lambda+i\yy}(H_0) b_j(\lambda+i\yy)}$ is an orthonormal basis in~$\hilb.$\\
   Proof. By (v), it is enough to show that this set is complete.
   If for a non-zero vector $g$
   $$
     \scal{R_{\lambda+i\yy}(H_0) b_j(\lambda+i\yy)}{g} = 0
   $$ for all~$j,$ then
   $$
     \scal{b_j(\lambda+i\yy)}{R_{\lambda-i\yy}(H_0) g} = 0
   $$
   for all~$j.$ By completeness (iii) of the set
   $\set{b_j(\lambda+i\yy)},$ one infers from this that $R_{\lambda-i\yy}(H_0) g = 0.$ This is impossible,
    since $R_{\lambda-i\yy}(H_0)$ has trivial kernel.
    \item[(vii)] The set of vectors $\sqrt{\frac \yy\pi}\set{R_{\lambda+i\yy}(H_0) b_j(\lambda+i\yy)}$ form an orthonormal basis of~$\hilb$
    for each choice of the sign $\pm.$ This follows from previous items.
   \item[(vii)] If~$j$ is of non-zero type, then
      $b_j(\lambda+i\yy) \in \hilb_1$ converges in~$\hilb_1$ to $b_j(\lambda+i0) \in \hilb_1.$
      This follows from the convergence of $e_j(\lambda+i\yy)$ in $\ell_2$ (see item \ref{SS: e j}(iii))
     and (\ref{F: b j(l+ieps) = alpha sum ...}).
\end{enumerate}

\section{The evaluation operator $\euE_\lambda$}
\label{S: direct integral}

As it was mentioned before, a frame in a Hilbert space~$\hilb,$ on which a self-adjoint operator
$H_0$ acts, allows to define explicitly the fiber Hilbert space $\mathfrak h_\lambda$
of the direct integral of Hilbert spaces diagonalizing~$H_0,$ with the purpose to define
$f(\lambda)$ as an element of $\mathfrak h_\lambda$ for a dense set~$\hilb_1$ of vectors and any $\lambda$
from a fixed set of full Lebesgue measure $\LambHF{H_0}.$ In this section we give this construction.

Let~$H_0$ be a self-adjoint operator on a fixed framed Hilbert space $(\hilb,F),$
where the frame~$F$ is given by (\ref{F: Frame=...}).
For $\lambda \in \LambHF{H_0}$ (see Definition \ref{D: Lambda(H0,F)}),
we have a Hilbert-Schmidt operator (see item \ref{SS: euE's}(vi))
$$
  \euE_\lambda \colon \hilb_1 \to \ell_2,
$$
defined by the formula
\begin{equation} \label{F: def of E(l) f}
  \euE_\lambda f = \sum\limits_{j=1}^\infty \beta_j \eta_j(\lambda),
\end{equation}
where $f = \sum\limits_{j=1}^\infty \beta_j \kappa_j \phi_j \in \hilb_1,$ $(\beta_j) \in \ell_2$
(see item (v) of subsection \ref{SS: eta j} for definition of $\eta_j(\lambda)$).
(Remark: the formula (\ref{F: def of E(l) f}) is one of the most important definitions in this paper).
Since, by \ref{SS: eta j}(ii), $(\norm{\eta_j(\lambda)}) \in \ell_2,$ the series above converges absolutely:
by the Schwarz inequality
$$
  \sum\limits_{j=1}^\infty \norm{\beta_j \eta_j(\lambda)}_{\ell_2} \leq \norm{\beta}_{\ell_2} \brs{\sum\limits_{j=1}^\infty \norm{\eta_j(\lambda)}_{\ell_2}^2}^{1/2}
    = \norm{\beta}_{\ell_2} \norm{\eta(\lambda)}_2.
$$
The set $\euE_\lambda \hilb_1$ is a pre-Hilbert space.
We denote the closure of this set in $\ell_2$ by~$\hlambda:$
\begin{equation} \label{F: def of hlambda}
  \hlambda := \overline{\euE_\lambda \hilb_1}.
\end{equation}
It is clear that the dimension function $\LambHF{H_0} \ni \lambda \mapsto \dim\hlambda$ is Borel measurable,
since, by definition,
$$
  \dim\hlambda = \rank(\eta(\lambda)) \in \set{0,1,2,\ldots, \infty},
$$
and it's clear that the matrix $\eta(\lambda)$ is Borel measurable.
Since the matrix $\phi(\lambda)$ is self-adjoint, it is also clear that
$$
  \dim\hlambda = \rank(\phi(\lambda)).
$$
One can give one more formula for $\dim(\lambda)$
$$
  \mathrm{Card} \set{j\colon j \ \text{is of non-zero type}} = \dim\hlambda.
$$
\begin{lemma} The system of vector-functions $\set{\phi_j(\lambda), j=1,2,\ldots}$
  satisfies the axioms of the measurability base (Definition \ref{D: meas. base}) for the family
  of Hilbert spaces $\set{\hlambda}_{\lambda \in \LambHF{H_0}},$ given by (\ref{F: def of hlambda}).
\end{lemma}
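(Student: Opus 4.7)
The plan is to verify the two axioms of Definition \ref{D: meas. base}: first, that for every $\lambda \in \LambHF{H_0}$ the vectors $\set{\phi_j(\lambda)}_{j=1}^\infty$ generate $\hlambda$; second, that each scalar product $\LambHF{H_0} \ni \lambda \mapsto \scal{\phi_j(\lambda)}{\phi_k(\lambda)}_{\ells}$ is Borel measurable.

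For the completeness axiom, I would first invoke (\ref{F: euE(l+iy)phi=phi(l+iy)}) at $\yy = 0$ (which is justified by item \ref{SS: euE's}(vi) giving the $\clL_2(\hilb_1,\ells)$-convergence $\euE_{\lambda+i\yy} \to \euE_\lambda$) to obtain $\phi_j(\lambda) = \euE_\lambda \phi_j$. In particular every $\phi_j(\lambda)$ lies in $\euE_\lambda\hilb_1 \subset \hlambda$. Conversely, the manifold $\euD$ of finite linear combinations of $\phi_1, \phi_2,\ldots$ is dense in $\hilb_1$, since $(\kappa_j \phi_j)$ is an orthonormal basis of $\hilb_1$ and $\euD$ is precisely the linear span of that basis. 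The operator $\euE_\lambda \colon \hilb_1 \to \ells$ is bounded (Hilbert--Schmidt by item \ref{SS: euE's}(vii)), so $\euE_\lambda(\euD)$ is dense in $\euE_\lambda(\hilb_1)$, and therefore dense in $\hlambda = \overline{\euE_\lambda\hilb_1}$. But $\euE_\lambda(\euD)$ is exactly the linear span of $\set{\phi_j(\lambda)}_{j=1}^\infty$, so this system generates $\hlambda$.

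For the measurability axiom, the natural tool is item \ref{SS: phi j}(viii), which gives the explicit formula
$$
  \scal{\phi_j(\lambda)}{\phi_k(\lambda)}_{\ells} = \frac{1}{\pi} \scal{\phi_j}{\Im R_{\lambda+i0}(H_0)\phi_k}_\hilb, \quad \lambda \in \LambHF{H_0}.
$$
For each fixed $\yy > 0$ the function $\lambda \mapsto \scal{\phi_j}{\Im R_{\lambda+i\yy}(H_0)\phi_k}$ is continuous (indeed analytic) on $\mbR$, and on $\LambHF{H_0}$ its boundary value as $\yy \to 0^+$ exists by Definition \ref{D: Lambda(H0,F)}. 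A pointwise limit of measurable functions is measurable, so the right-hand side — and hence the scalar product — is Borel measurable on $\LambHF{H_0}$. Alternatively one can cite Lemma \ref{L: phi(lambda) is meas}, which already records measurability of the whole matrix $\phi(\lambda) = \brs{\kappa_j\kappa_k \scal{\phi_j(\lambda)}{\phi_k(\lambda)}}$ in $\clL_1(\ells)$, and extract each entry by pairing with the rank-one operator $\scal{e_k}{\cdot}e_j$.

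I do not anticipate a real obstacle: both axioms reduce to structural facts already assembled in the previous subsections, and the only point requiring a modicum of care is keeping straight the $\kappa_j$-weights relating $\euE_\lambda\phi_j$, $\phi_j(\lambda)$, and $\eta_j(\lambda)$, which is handled by (\ref{F: phi j(l+iy)=k(-1)eta j(l+iy)}) and (\ref{F: euE(l+iy)phi=phi(l+iy)}).
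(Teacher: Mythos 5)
Your proof is correct and follows essentially the same route as the paper: the generating property is the definition of $\hlambda$ as $\overline{\euE_\lambda\hilb_1}$ combined with density of $\euD$ in $\hilb_1$, and measurability comes from the identity in \ref{SS: phi j}(viii) plus the pointwise-limit argument (which the paper treats as implicit, and which is also packaged in Lemma \ref{L: phi(lambda) is meas}). You merely spell out the density and limit steps in more detail than the paper does.
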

\begin{proof} For any fixed $\lambda \in \LambHF{H_0},$ vectors $\phi_1(\lambda),\phi_2(\lambda),\ldots$
generate~$\hlambda$ by definition.
Measurability of functions $\LambHF{H_0} \ni \lambda \mapsto \scal{\phi_j(\lambda)}{\phi_k(\lambda)}$
follows from \ref{SS: phi j}(viii). So, both axioms of the measurability base hold.
\end{proof}
The field of Hilbert spaces
$$
  \set{\hlambda \colon \lambda \in \LambHF{H_0}}
$$
with measurability base
\begin{equation} \label{F: measurability base}
  \lambda \mapsto \euE_\lambda \phi_j = \phi_j(\lambda), \ \ j = 1,2,\ldots
\end{equation}
determines a direct integral of Hilbert spaces (see subsection \ref{SS: direct integral: def})
\begin{equation} \label{F: direct integral}
  \euH := \int_{\LambHF{H_0}}^\oplus \hlambda \,d\lambda.
\end{equation}

% \begin{rems} To the best knowledge of the author, the construction of the direct integral
% (\ref{F: direct integral}), presented above, is new.
% \end{rems}

The vector $\phi_j(\lambda)$ is to be interpreted as the value of the vector $\phi_j$
at $\lambda,$ as we shall see later.
Note that though the vectors $\phi_j(\lambda) \in \hlambda, \ j = 1,2,\ldots$ depend on the sequence $(\kappa_j)$
of weights of the frame~$F,$ their norms and scalar products
$$
  \norm{\phi_j(\lambda)}_\hlambda, \ \ \scal{\phi_j(\lambda)}{\phi_k(\lambda)}
$$
are independent of weights, as directly follows from \ref{SS: phi j}(viii).
This also means that if two frames $F_1$ and $F_2$ have different weights, but the same frame vectors,
and if $\lambda$ belongs to both full sets $\Lambda(H_0,F_1)$ and $\Lambda(H_0,F_2),$ then the Hilbert
spaces $\hlambda(H_0,F_1)$ and $\hlambda(H_0,F_2)$ are naturally isomorphic. The isomorphism is given
by the correspondence
$$
  \phi_j^{(1)}(\lambda) \longleftrightarrow \phi_j^{(2)}(\lambda), \ j=1,2,\ldots,
$$
where $\phi_j^{(k)}(\lambda),$ $k=1,2,$ is the vector constructed using the frame $F_k.$

\begin{example} \rm
Let $\lambda \in [0,2\pi).$
Let~$\hilb = L_2(\mbT) \ominus \set{\mathrm{constants}}$ and let
$$
  F = \sum_{j\in \mbZ^*} \abs{j}^{-1} \scal{\phi_j}{\cdot}\phi_j,
$$
where $\phi_j = e^{-ij\lambda}$ and $\mbZ^* = \set{\pm 1,\pm 2,\ldots}.$ Let~$H_0$ be
the multiplication by $\lambda$ on $[0,2\pi) \equiv \mbT.$ In this case
$$
  \phi(\lambda) = \brs{\abs{jk}^{-1} e^{i(j-k)\lambda}}_{j,k \in \mbZ^*}
$$
and $\LambHF{H_0} = \mbR.$
For all $\lambda \in [0,2\pi),$ this matrix has rank one, so that there is only one index of non-zero type and $\dim \hlambda = 1.$
This corresponds to the fact that~$H_0$ has simple spectrum.
Vectors $f$ from~$\hilb_1$ are absolutely continuous functions with $L_2$ derivative.
The value of $\phi_j$ at $\lambda$ should be interpreted as the~$j$th column of $\eta(\lambda) = \sqrt{\phi(\lambda)}$
over $\abs{j}.$
For the only non-zero type index $1$ we have $$\alpha_1(0)^2 = 2\sum\limits_{n=1}^\infty n^{-2}.$$
The matrix $\eta(\lambda)$ is usually difficult to calculate, but in this case it can be easily calculated. Since $\phi(\lambda)$
is one-dimensional, it follows that $$\eta(\lambda) = \alpha_1^{-1}(0)\phi(\lambda).$$
So, in this case it is possible to write down an explicit formula for the evaluation operator~$\euE_\lambda.$
If $f \in \hilb_1,$ then the Fourier series of $f$ is
$$
  f = \sum_{j \in \mbZ^*} {\abs{j}}^{-1}{\beta_j} e^{-ij\lambda},
$$
where $(\beta_j) \in \ell_2(\mbZ^*)$ and by (\ref{F: def of E(l) f})
$$
  \euE_\lambda f = \eta(\lambda)\beta = \alpha_1(0)^{-1} \sum_{k\in\mbZ^*} \abs{k}^{-1}\beta_k e^{-ik\lambda} \brs{\abs{j}^{-1}e^{ij\lambda}}_{j \in \mbZ^*}
  = f(\lambda) \psi(\lambda),
$$
where $\psi(\lambda) = \alpha_1(0)^{-1} \brs{\abs{j}^{-1}e^{ij\lambda}}_{j \in \mbZ^*}$ is a normalized vector from $\ell_2(\mbZ^*).$
The one-dimensional Hilbert space $\hlambda$ is spanned by $\psi(\lambda).$
\end{example}

\begin{lemma} \label{L: norm of phi j(l) less 1} For any~$j = 1,2,\ldots,$ the function $\euE\phi_j$ belongs to $\euH$ and $\norm{\euE\phi_j}_\euH \leq 1.$
\end{lemma}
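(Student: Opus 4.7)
The plan is to compute $\norm{\euE\phi_j}_\euH^2$ explicitly as an integral over $\LambHF{H_0}$ and compare it with the total mass of the spectral measure of $\phi_j$. By the definition of the norm on the direct integral~(\ref{F: direct integral}) and the definition of the measurability base~(\ref{F: measurability base}),
$$
  \norm{\euE\phi_j}_\euH^2 = \int_{\LambHF{H_0}} \norm{\phi_j(\lambda)}_{\ell_2}^2\,d\lambda.
$$
The first step is to use property \ref{SS: phi j}(viii) with $k = j$ to rewrite the integrand:
$$
  \norm{\phi_j(\lambda)}_{\ell_2}^2 = \frac{1}{\pi}\scal{\phi_j}{\Im R_{\lambda+i0}(H_0)\phi_j}_{\hilb},
$$
which is well-defined for every $\lambda \in \LambHF{H_0}$ since $\LambHF{H_0}\subset \Lambda_0(H_0,F)\subset \Lambda(m_{jj})$ (here $m_{jj}$ is the spectral measure (\ref{F: def of m(ij)}) of $\phi_j$).

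Next, I would recognize the right-hand side as the standard density of the absolutely continuous part of $m_{jj}$. Indeed, $m_{jj}$ is a finite non-negative measure with $m_{jj}(\mbR)=\norm{\phi_j}^2=1,$ and the imaginary part of its Cauchy-Stieltjes transform is precisely
$$
  \Im\clC_{m_{jj}}(\lambda+i0) = \scal{\phi_j}{\Im R_{\lambda+i0}(H_0)\phi_j}_{\hilb}.
$$
By Theorem~\ref{T: Fatou} (see also the discussion in subsections~\ref{SS: L(F)} and~\ref{SS: L(m)}), for every $\lambda\in\Lambda(m_{jj})$ this quantity equals $\pi$ times the density $F_{m_{jj}}'(\lambda)$ of the absolutely continuous part of $m_{jj}$.

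Applying Corollary~\ref{C: int Delta dF = int Delta F'dl} to $\Delta = \LambHF{H_0}$, one obtains
$$
  \int_{\LambHF{H_0}} \norm{\phi_j(\lambda)}_{\ell_2}^2\,d\lambda
   = \int_{\LambHF{H_0}} \frac{1}{\pi}\Im\clC_{m_{jj}^{(a)}}(\lambda+i0)\,d\lambda
   = m_{jj}^{(a)}(\LambHF{H_0}) \leq m_{jj}(\mbR) = \norm{\phi_j}^2 = 1.
$$
In particular $\euE\phi_j \in \euH$, and the required inequality $\norm{\euE\phi_j}_\euH \leq 1$ follows. There is no serious obstacle here: the only point requiring a little care is that the pointwise identity from \ref{SS: phi j}(viii) is being integrated over a full set, and one must invoke Corollary~\ref{C: int Delta dF = int Delta F'dl} (rather than just Fatou's theorem applied pointwise) to pass from the pointwise density to the measure-theoretic inequality $m_{jj}^{(a)}(\mbR) \leq m_{jj}(\mbR)$.
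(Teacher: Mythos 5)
Your proof is correct and follows essentially the same route as the paper: you compute $\norm{\euE\phi_j}^2_\euH$ as the integral of $\frac{1}{\pi}\scal{\phi_j}{\Im R_{\lambda+i0}(H_0)\phi_j}$ over $\LambHF{H_0}$, recognize the integrand as the density of the absolutely continuous part of the spectral measure $m_{jj}$ of $\phi_j$, and bound the integral by the total mass $m_{jj}(\mbR)=1$. The only slight difference is that you invoke Corollary~\ref{C: int Delta dF = int Delta F'dl} explicitly in the last step, where the paper simply integrates the a.e.~derivative $\frac{d}{d\lambda}\scal{E_\lambda\phi_j}{\phi_j}$ and states the bound; this is a cosmetic rather than substantive difference.
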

\begin{proof} We only need to show that $\phi_j(\lambda)=\euE_\lambda\phi_j$ is square summable and that the estimate holds.
It follows from \ref{SS: phi j}(viii) %and (\ref{F: F phi j=...})
that
\begin{equation*}
  \begin{split}
     \scal{\euE\phi_j}{\euE\phi_j}_\euH & = \int_{\LambHF{H_0}} \scal{\phi_j(\lambda)}{\phi_j(\lambda)}\,d\lambda
     \\ & = \frac 1\pi \int_{\LambHF{H_0}} \scal{\phi_j}{\Im R_{\lambda+i0}(H_0) \phi_j}\,d\lambda = :(E).
     % \\ & = \frac {\kappa_j^{-2}}\pi \int_{\LambHF{H_0}} \scal{\psi_j}{F\Im R_{\lambda+i0}(H_0)F^* \psi_j}\,d\lambda =:(E).
  \end{split}
\end{equation*}
Since $\frac 1\pi \scal{\phi_j}{\Im R_{\lambda+iy}(H_0) \phi_j}$ is the Poisson integral of the function $\scal{\phi_j}{E_\lambda^{H_0}\phi_j},$
it follows from Theorem \ref{T: Fatou} that $$\frac 1\pi \scal{\phi_j}{\Im R_{\lambda+i0}(H_0) \phi_j} = \frac d{d\lambda} \scal{\phi_j}{E_\lambda^{H_0}\phi_j}$$
for a.e. $\lambda.$ Consequently,
 \begin{equation*}
   \begin{split}
      (E) = \int_{\LambHF{H_0}} \frac {d}{d\lambda} \scal{\phi_j}{E^{H_0}_\lambda \phi_j}\,d\lambda \leq 1.
   \end{split}
 \end{equation*}
% According to Theorem \ref{T: Ya thm 6.1.5},
% \begin{equation*}
%   \begin{split}
%      (E) & = \kappa_j^{-2} \int_{\LambHF{H_0}} \scal{\psi_j}{\clL_1\mbox{-}\frac {d}{d\lambda} (F E_\lambda F^*)\psi_j}\,d\lambda
%       \\ & = \kappa_j^{-2} \int_{\LambHF{H_0}} \frac {d}{d\lambda} \scal{\psi_j}{F E_\lambda F^*\psi_j}\,d\lambda
%       \leq \kappa_j^{-2} \scal{\psi_j}{F F^*\psi_j} = 1.
%   \end{split}
% \end{equation*}
\end{proof}
\begin{cor} \label{C: phi j(l),phi k(l) is summable} For any pair of indices~$j$ and $k$ the function
$$
  \LambHF{H_0} \ni \lambda \mapsto \scal{\phi_j(\lambda)}{\phi_k(\lambda)}_{\hlambda}
$$
is summable and its $L_1$-norm is $\leq 1.$
\end{cor}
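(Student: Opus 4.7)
The plan is to deduce this corollary directly from the preceding Lemma~\ref{L: norm of phi j(l) less 1} by a two-step application of the Cauchy-Schwarz inequality. The first application is pointwise in the fiber Hilbert space $\hlambda$, and the second is in $L^2(\LambHF{H_0}, d\lambda)$.

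First I would use the pointwise Cauchy-Schwarz inequality in $\hlambda$ to get
$$
  |\scal{\phi_j(\lambda)}{\phi_k(\lambda)}_{\hlambda}| \leq \norm{\phi_j(\lambda)}_{\hlambda} \cdot \norm{\phi_k(\lambda)}_{\hlambda}
$$
for every $\lambda \in \LambHF{H_0}$. Since each factor on the right is the fiber norm of $\euE_\lambda \phi_j$ and $\euE_\lambda \phi_k$ respectively, and the scalar product on the left is measurable (as both sides of the direct integral construction are built so that the measurability base has measurable scalar products, cf.~item \ref{SS: phi j}(viii)), the function on the left is measurable.

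Next I would integrate over $\LambHF{H_0}$ and apply the Cauchy-Schwarz inequality in $L^2(\LambHF{H_0}, d\lambda)$:
\begin{equation*}
  \begin{split}
    \int_{\LambHF{H_0}} |\scal{\phi_j(\lambda)}{\phi_k(\lambda)}_{\hlambda}|\,d\lambda
      & \leq \int_{\LambHF{H_0}} \norm{\phi_j(\lambda)}_{\hlambda}\norm{\phi_k(\lambda)}_{\hlambda}\,d\lambda
      \\ & \leq \brs{\int_{\LambHF{H_0}} \norm{\phi_j(\lambda)}_{\hlambda}^2\,d\lambda}^{1/2}
         \brs{\int_{\LambHF{H_0}} \norm{\phi_k(\lambda)}_{\hlambda}^2\,d\lambda}^{1/2}.
  \end{split}
\end{equation*}
Finally, by Lemma~\ref{L: norm of phi j(l) less 1} each of the two factors on the right equals $\norm{\euE\phi_j}_{\euH}$ and $\norm{\euE\phi_k}_{\euH}$ respectively, and each is bounded above by $1$, so the product is $\leq 1$. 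This immediately gives both summability and the stated bound on the $L^1$-norm.

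No step here looks hard: the result is essentially an immediate Cauchy-Schwarz corollary of the previous lemma, and the only thing to verify carefully is measurability of $\lambda \mapsto \scal{\phi_j(\lambda)}{\phi_k(\lambda)}_{\hlambda}$, which is already built into the choice of measurability base in~(\ref{F: measurability base}).
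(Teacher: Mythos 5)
Your proof is correct and takes the same route as the paper: the paper's proof consists of exactly the two Schwarz inequalities (pointwise in $\hlambda$ and then in $L^2$) combined with Lemma~\ref{L: norm of phi j(l) less 1}, which is precisely what you spell out. No gap.
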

\begin{proof} This follows from Schwarz inequality and Lemma \ref{L: norm of phi j(l) less 1}.
\end{proof}

% *************************************************
%
% \begin{lemma} If $f \in \hilb_1,$ then the function
% $$
%   \LambHF{H_0} \ni \lambda \mapsto \euE_\lambda^\diamondsuit\euE_\lambda f \in \hilb_{-1}
% $$
% is integrable in~$\hilb_{-1}$ (in the sense of Bochner).
% \end{lemma}
% \begin{proof} For $g \in \hilb_1,$ consider the function
% $$
%   \LambHF{H_0} \ni \lambda \mapsto \scal{\euE_\lambda^\diamondsuit\euE_\lambda f}{g}_{-1,1} =:a(\lambda).
% $$
% By Lemma \ref{L: norm of phi j(l) less 1}, the function
% $$
%   a(\lambda) = \scal{\euE_\lambda f}{\euE_\lambda g}
% $$
% is summable. It follows that
% $$
%   \abs{\scal{\euE f}{\euE g}} \leq \int_\Lambda \abs{\scal{\euE_\lambda f}{\euE_\lambda g}}\,d\lambda
%     = \int_\Lambda \abs{\scal{\euE_\lambda^\diamondsuit \euE_\lambda f}{g}_{-1,1}}\,d\lambda < \infty.
% $$
% Let
% $$
%   l = \int_\Lambda \scal{\euE_\lambda f}{\euE_\lambda g}\,d\lambda.
% $$
% It follows that
% $$
%   ..
% $$
% \end{proof}
% \begin{lemma} If $f,g \in \hilb_1,$ then for any bounded measurable function $h$
% $$
%   \int_\Lambda h(\lambda) \scal{\euE_\lambda^\diamondsuit \euE_\lambda f}{g}_{-1,1}\,d\lambda
%   = \scal{\int_\Lambda h(\lambda) \euE_\lambda^\diamondsuit \euE_\lambda f\,d\lambda}{g}_{-1,1}.
% $$
% \end{lemma}
% \begin{lemma} If $f \in \hilb_{-1}$ and $g \in \hilb_1,$ then
% $$
%   \abs{\scal{f}{g}_{-1,1}} \leq \norm{f}_{-1}\norm{g}_{1}.
% $$
% \end{lemma}
%
% *************************************************
%
% e.g.~\cite{BSbook}).

A function $\LambHF{H_0}\ni \lambda \to f(\lambda) \in \ells$
will be called \emph{$\euH$-measurable,} if $f(\lambda) \in \hlambda$ for a.e. $\lambda \in \LambHF{H_0},$
if $f(\cdot)$ is measurable with respect to the measurability base (\ref{F: measurability base})
and if $f \in \euH$ (i.e. if $f$ is square summable).

We can define a linear operator \ $\euE \colon \hilb_1 \to \euH$
with dense domain $\euD,$ by the formula
\begin{equation} \label{F: def of euE}
  (\euE \phi_j)(\lambda) = \phi_j(\lambda),
\end{equation}
where $\euD$ is defined by (\ref{F: euD - def-n}).

One can define a standard minimal core~$\clA(H_0,F)$
of the absolutely continuous spectrum of~$H_0,$
acting on a framed Hilbert space, by the formula
$$
  \clA(H_0,F) = \bigcup\limits_{i,j=1}^\infty \clA(m_{ij}),
$$
where $m_{ij}(\Delta) = \scal{E_\Delta \phi_i}{\phi_j}$ is a (signed) spectral measure, and $\clA(m)$ is a minimal support of the absolutely continuous part of $m,$
defined by (\ref{F: clA(m)=...}).
\begin{prop} The dimension of the fiber Hilbert space~$\hlambda$ is not zero
  if and only if $\lambda \in \clA(H_0,F).$
\end{prop}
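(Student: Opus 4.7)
The dimension of the fiber is read off from $\phi(\lambda)$: by the remarks immediately after \eqref{F: def of hlambda} we have $\dim\hlambda = \rank\phi(\lambda)$, so it suffices to determine for which $\lambda \in \LambHF{H_0}$ the non-negative trace-class matrix $\phi(\lambda)$ (Proposition~\ref{P: Lambda subset Lambda1}) is non-zero. Since $\phi(\lambda)\ge 0$, the Cauchy--Schwarz inequality for positive semidefinite forms yields
\[
  |\phi_{ij}(\lambda)|^2 \le \phi_{ii}(\lambda)\phi_{jj}(\lambda),
\]
so $\phi(\lambda) = 0$ if and only if $\phi_{ii}(\lambda)=0$ for every $i\in\mathbb N$.

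Next, I identify the diagonal entries with densities of scalar spectral measures. Since $\langle\phi_i,R_z(H_0)\phi_i\rangle = \clC_{m_{ii}}(z)$, we have
\[
  \phi_{ii}(\lambda) \;=\; \frac{\kappa_i^2}{\pi}\,\Im\clC_{m_{ii}}(\lambda+i0).
\]
The inclusion $\LambHF{H_0}\subset\Lambda_0(H_0,F)\subset\Lambda(m_{ii})$ (cf.\ the proof of Proposition~\ref{P: Lambda subset Lambda1}) ensures this boundary value exists. Because $m_{ii}$ is a positive measure, $\Im\clC_{m_{ii}}(\lambda+i0)\ge 0$, and by the very definition of $\clA(m_{ii})$ from subsection~\ref{SS: L(m)},
\[
  \phi_{ii}(\lambda)>0 \quad\Longleftrightarrow\quad \lambda\in\clA(m_{ii}).
\]
Combining with the previous paragraph, $\dim\hlambda\ne 0$ iff $\lambda\in\bigcup_{i}\clA(m_{ii})$.

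It remains to show $\bigcup_i \clA(m_{ii}) = \bigcup_{i,j}\clA(m_{ij}) = \clA(H_0,F)$. The inclusion $\subseteq$ is tautological. For the reverse, if $\lambda\in\clA(m_{ij})$ then $\Im\clC_{m_{ij}}(\lambda+i0)\ne 0$, whence $\phi_{ij}(\lambda)\ne 0$; Cauchy--Schwarz then forces $\phi_{ii}(\lambda)>0$, i.e.\ $\lambda\in\clA(m_{ii})$. This closes the loop.

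\emph{Main obstacle.} The only delicate point is making sense of $\clA(m_{ij})$ for $i\ne j$, since $m_{ij}$ is then a complex measure rather than a signed one. One handles this either by extending the definition of $\clA$ verbatim to complex measures (the argument above uses only that $\clA(m_{ij}) \subseteq \{\lambda\in\Lambda(m_{ij}):\Im\clC_{m_{ij}}(\lambda+i0)\ne 0\}$), or by polarization, writing $m_{ij}$ as a complex combination of the four positive spectral measures of $\phi_i\pm\phi_j$ and $\phi_i\pm i\phi_j$ and reducing to the diagonal case already treated. Either route makes the Cauchy--Schwarz step legitimate, and the proof is complete.
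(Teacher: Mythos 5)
Your reduction to the diagonal via Cauchy--Schwarz is a clean idea and handles the forward implication $(\dim\hlambda\neq 0 \Rightarrow \lambda\in\clA(H_0,F))$ without any issues, since the diagonal measures $m_{ii}$ are positive and the formula $\clA(m)=\{\lambda:\Im\clC_m(\lambda+i0)\neq 0\}$ is unambiguous there. But the reverse implication is not actually established, and the gap is not in making sense of $\clA(m_{ij})$ but in the step you mark as ``whence $\phi_{ij}(\lambda)\ne 0$.'' For $i\neq j$ the two quantities are genuinely different objects: writing $\Gamma_{ij}(z):=\scal{\phi_i}{R_z(H_0)\phi_j}$, one has
\[
  \phi_{ij}(\lambda)\;\propto\;\scal{\phi_i}{\Im R_{\lambda+i0}\phi_j}=\tfrac{1}{2i}\bigl(\Gamma_{ij}-\overline{\Gamma_{ji}}\bigr),
  \qquad
  \Im\clC_{m_{ij}}(\lambda+i0)=\tfrac{1}{2i}\bigl(\Gamma_{ij}-\overline{\Gamma_{ij}}\bigr),
\]
the first swapping indices under conjugation and the second not. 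These agree on the diagonal but not off it: a Hermitian block such as $\Gamma_{12}=i$, $\Gamma_{21}=-i$ makes all $\phi_{kl}(\lambda)$ vanish while $\Im\Gamma_{12}=1\neq 0$. So taking the formula $\Im\clC_m\neq 0$ as the ``verbatim extension'' of $\clA$ to the complex measure $m_{ij}$ does not give the implication you need --- it would in fact make the proposition false. The polarization route you mention in the ``Main obstacle'' paragraph does not obviously repair this either: polarization expresses $m_{ij}$ as a complex combination of four positive measures, but the minimal a.c.\ support of a combination is in general only \emph{contained in}, not equal to, the union of the individual supports, so this does not deliver $\clA(m_{ij})\subseteq\bigcup_k\clA(m_{ii})$.

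What the paper's proof tacitly uses, and what closes the gap, is a different reading of $\clA(m_{ij})$: it is introduced as the \emph{standard minimal Borel support of $m_{ij}^{(a)}$}, i.e.\ the set where the a.c.\ density $\frac{d}{d\lambda}\scal{\phi_i}{E^{H_0}_\lambda\phi_j}$ is nonzero. For the diagonal (positive) case this coincides with the formula $\Im\clC_m\neq 0$; for $i\neq j$ it is the right generalization. By the limiting absorption principle (Theorem~\ref{T: Ya thm 6.1.5}, equivalently formula~(\ref{F: (phi i,Im R phi j) = der (phi i,E(l)phi j)})), this density equals $\frac{1}{\pi}\scal{\phi_i}{\Im R_{\lambda+i0}(H_0)\phi_j}$ at every $\lambda\in\LambHF{H_0}$, i.e.\ $\kappa_i^{-1}\kappa_j^{-1}\phi_{ij}(\lambda)$. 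Since the weights $\kappa_i,\kappa_j$ are strictly positive, one gets the genuine equivalence $\lambda\in\clA(m_{ij})\iff\phi_{ij}(\lambda)\neq 0$ for every pair $(i,j)$, and both directions of the proposition follow immediately from $\dim\hlambda=\rank\phi(\lambda)$; the Cauchy--Schwarz reduction then becomes an optional bonus observation rather than a load-bearing step.
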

\begin{proof} $(\Leftarrow).$ If $\lambda \in \clA(H_0,F),$ then for some pair $(i,j)$ of indices
$\lambda \in \clA(m_{ij}).$ This means that the limit~$\clC_{m_{ij}}(\lambda+i0)$ exists and is not zero.
This implies that $\phi(\lambda) = (\phi_{ij}(\lambda)) \neq 0,$ as well as $\eta(\lambda) \neq 0.$
So, the Hilbert space~$\hlambda$ is generated by at least one non-zero
vector $\phi_j(\lambda).$

$(\Rightarrow).$ If $\dim \hlambda \neq 0,$ then by definition (\ref{F: def of hlambda})
of~$\hlambda$ for some index~$j$
the vector $\phi_j(\lambda) = \kappa_j^{-1}\eta_j(\lambda)$ is non-zero. Hence, the matrix $\eta(\lambda)$ is non-zero.
It follows that $\phi(\lambda)$ is non-zero. If $\phi_{ij}(\lambda) \neq 0,$ then $\lambda \in \clA(m_{ij}).$
So, $\lambda \in \clA(H_0,F).$
\end{proof}
It follows from this Proposition that the direct integral (\ref{F: direct integral}) can be rewritten as
\begin{equation} % \label{F: direct integral 2}
  \euH = \int_{\clA(H_0,F)}^\oplus \hlambda \,d\lambda.
\end{equation}
Hence, instead of the full set $\LambHF{H_0}$ one can use~$\clA(H_0,F).$
However, since the set $\LambHF{H_0}$ has full Lebesgue measure, it is more convenient to work with.

\medskip

Recall that the vectors $e_j(\lambda), j =1,2,\ldots,$ corresponding to non-zero
type indices~$j,$ are the limit values of the non-zero type eigenvectors $e_j(\lambda+iy), j =1,2,\ldots$
of $\eta(\lambda+iy) = \sqrt {\phi(\lambda+iy)}.$
\begin{lemma} \label{L: e j(l) is basis} The system of $\ell_2$-vectors $\set{e_j(\lambda) \colon j \ \text{is of non-zero type}}$
is an orthonormal basis of~$\hlambda.$
\end{lemma}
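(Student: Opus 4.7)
The plan is to verify three things: that the non-zero type vectors $\{e_j(\lambda)\}$ are orthonormal, that each lies in $\hlambda$, and that they span $\hlambda$. Orthonormality is immediate from property \ref{SS: e j}(iv), which asserts that the limits of the non-zero type eigenvectors form an orthonormal system in $\ell_2$.

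To show $e_j(\lambda)\in\hlambda$ for each non-zero type $j$, I would use the vectors $b_j(\lambda+iy)$ from \ref{SS: b j}. By property \ref{SS: b j}(vii), for non-zero type $j$ the vectors $b_j(\lambda+iy)$ converge in $\hilb_1$ to some $b_j(\lambda)\in\hilb_1$ as $y\to 0^+$. Combined with the convergence $\euE_{\lambda+iy}\to\euE_\lambda$ in $\clL_2(\hilb_1,\ell_2)$ from \ref{SS: euE's}(vi) and the uniform boundedness of $\|b_j(\lambda+iy)\|_{\hilb_1}$, the triangle inequality gives
\begin{equation*}
\euE_{\lambda+iy}b_j(\lambda+iy)\ \longrightarrow\ \euE_\lambda b_j(\lambda)\quad\text{in }\ell_2.
\end{equation*}
But by construction $\euE_{\lambda+iy}b_j(\lambda+iy)=e_j(\lambda+iy)$, which converges to $e_j(\lambda)$ by \ref{SS: e j}(iv). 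Hence $e_j(\lambda)=\euE_\lambda b_j(\lambda)\in\euE_\lambda\hilb_1\subset\hlambda$.

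For completeness, I would apply the spectral theorem to the non-negative compact self-adjoint operator $\eta(\lambda)$ on $\ell_2$. Its non-zero eigenvalues are exactly the non-zero $\alpha_j(\lambda)$ (i.e.\ non-zero type indices), with corresponding eigenvectors $e_j(\lambda)$, and the orthogonal complement of their closed span is $\ker\eta(\lambda)$. Therefore the range of $\eta(\lambda)$ is contained in the closed span of $\{e_j(\lambda):j\text{ of non-zero type}\}$. Since $\eta_j(\lambda)$ is the $j$-th column of $\eta(\lambda)$, i.e.\ $\eta(\lambda)$ applied to the $j$-th standard basis vector of $\ell_2$, each $\eta_j(\lambda)$ lies in this closed span, and thus so does $\phi_j(\lambda)=\kappa_j^{-1}\eta_j(\lambda)$. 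Since by definition (\ref{F: def of hlambda}) the vectors $\phi_j(\lambda)$ generate $\hlambda$, we conclude that $\hlambda$ is contained in $\overline{\mathrm{span}}\{e_j(\lambda):j\text{ of non-zero type}\}$, giving the reverse inclusion.

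The main obstacle is the continuity argument in step two: one must carefully combine the $\clL_2$-convergence $\euE_{\lambda+iy}\to\euE_\lambda$ with the $\hilb_1$-convergence $b_j(\lambda+iy)\to b_j(\lambda)$, which only works for non-zero type indices because $\alpha_j(\lambda+iy)$ stays bounded away from $0$ and the series defining $b_j$ in (\ref{F: b j(l+ieps) = alpha sum ...}) has a well-defined limit. For zero-type indices neither $b_j(\lambda+iy)$ nor $e_j(\lambda+iy)$ need converge, which is precisely why they are excluded from the basis.
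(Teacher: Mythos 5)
Your proof is correct in strategy and conclusion, and it takes a route that differs from the paper's in both halves. For membership of $e_j(\lambda)$ in $\hlambda$, the paper passes to the limit $y\to 0^+$ directly in the series identity (\ref{F: e j (l+i eps)=alpha j sum ... phi k}), which expresses $e_j(\lambda+iy)$ as an (infinite) linear combination of the $\eta_k(\lambda+iy)$. You instead exploit the operator formulation: combine $b_j(\lambda+iy)\to b_j(\lambda+i0)$ in $\hilb_1$ (property \ref{SS: b j}(vii)) with $\euE_{\lambda+iy}\to\euE_\lambda$ in $\clL_2(\hilb_1,\ell_2)$ (property \ref{SS: euE's}(vi)) and deduce $e_j(\lambda)=\euE_\lambda b_j(\lambda+i0)\in\euE_\lambda\hilb_1\subset\hlambda$. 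That version is actually cleaner, since it hides the series-interchange behind the established operator continuity. For completeness, the paper passes to the limit in the eigenexpansion $\eta_i(\lambda+iy)=\sum_k\alpha_k(\lambda+iy)e_{ik}(\lambda+iy)e_k(\lambda+iy)$, showing directly that each $\eta_i(\lambda)$ lies in the closed span of the non-zero-type $e_k(\lambda)$, whereas you argue abstractly via the spectral theorem: $\hlambda=\overline{\operatorname{Ran}\eta(\lambda)}=(\ker\eta(\lambda))^\perp$, and this equals the closed span of the $e_j(\lambda)$ because those are precisely the eigenvectors of $\eta(\lambda)$ for its non-zero eigenvalues. Your abstract argument is valid, but it leans more heavily on the Kato-theoretic fact (behind Theorem \ref{T: lemma X}) that every non-zero eigenvalue of $\eta(\lambda)$, with its full multiplicity, is accounted for by limits of the $\alpha_j(\lambda+iy)$ and that the corresponding $e_j(\lambda)$ span the eigenspaces; you assert this rather than justify it. The paper's explicit expansion of $\eta_i(\lambda)$ sidesteps this multiplicity bookkeeping at the cost of a tail estimate on the series, so each version trades one technical point for another; neither is a genuine gap, but you should make the eigenvalue/eigenvector multiplicity claim explicit and note that it follows from the Riesz-idempotent continuity underlying Theorem \ref{T: lemma X}.
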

\begin{proof} Firstly, by \ref{SS: e j}(iv), the system of vectors $\set{e_j(\lambda) \colon j \ \text{is of non-zero type}}$
is orthonormal. In part (A) it is shown that this system is a subset of~$\hlambda;$ in part (B) it is shown that the system is complete
in ~$\hlambda.$

(A) % Let $X = \set{e_j(\lambda) \colon j \ \text{is of non-zero type}}.$
By definition (\ref{F: def of hlambda}) of~$\hlambda,$ it is generated by \ $\set{\phi_1(\lambda),\phi_2(\lambda), \ldots},$
or, which is the same, by $\set{\eta_1(\lambda),\eta_2(\lambda), \ldots}.$
For a non-zero type index~$j,$ one can take the limit $y \to 0^+$ in (\ref{F: e j (l+i eps)=alpha j sum ... phi k})
to get
$$
  e_j(\lambda) = \alpha_j(\lambda)^{-1} \sum_{k=0}^\infty e_{kj}(\lambda) \eta_k(\lambda).
$$
It follows that $\set{e_j(\lambda) \colon j \ \text{is of non-zero type}} \subset \hlambda.$

(B) For any index $i$ the following formula holds
\begin{equation} \label{F: eta j(l) = sum k e k(l)}
  \eta_i(\lambda+iy) = \sum_{k=1}^\infty \alpha_k(\lambda+iy) e_{ik}(\lambda+iy)e_{k}(\lambda+iy).
\end{equation}
Indeed, this equality is equivalent to the following one
$$
  \scal{\eta_i(\lambda+iy)}{e_j(\lambda+iy)} = \alpha_j(\lambda+iy) e_{ij}(\lambda+iy).
$$
This equality follows from (\ref{F: eta e j = alpha j eta j}).
Passing to the limit in (\ref{F: eta j(l) = sum k e k(l)}), one gets
\begin{equation*}
  \eta_i(\lambda) = \sum_{k=1, k \in \clZ_\lambda}^\infty \alpha_k(\lambda) e_{ik}(\lambda)e_{k}(\lambda).
\end{equation*}
It follows that the system $\set{e_{j}(\lambda), j \ \text{is of non-zero type}}$ is complete in~$\hlambda.$
%  Completeness of~$X$ in~$\hlambda$ follows from
%  the fact that any eigenvector of $\eta(\lambda)$ corresponding to the zero eigenvalue
%  is orthogonal to all vectors $\eta_j(\lambda),$ which by definition generate~$\hlambda.$
\end{proof}

This lemma implies that $\set{e_j(\lambda)}$ is an orthonormal measurability base for the direct integral $\euH.$

Let $P_\lambda \in \clB(\ells)$ be the projection onto~$\hlambda.$

\begin{lemma} \label{L: left inverse of phi(lambda) exists}
There exists a measurable operator-valued function
$\LambHF{H_0}\ni \lambda \mapsto \psi(\lambda) \in \clC(\ells)$
such that $\psi(\lambda)$ is a self-adjoint operator and
$$
  \psi(\lambda)\phi(\lambda) = P_\lambda.
$$
\end{lemma}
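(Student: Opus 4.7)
The plan is to define $\psi(\lambda)$ by Borel functional calculus applied to $\phi(\lambda)$ with the function
$$
  f(x) := \begin{cases} x^{-1}, & x > 0, \\ 0, & x = 0, \end{cases}
$$
setting $\psi(\lambda) := f(\phi(\lambda))$. Because $\phi(\lambda)$ is a non-negative trace-class self-adjoint operator on $\ells$ by Proposition \ref{P: Lambda subset Lambda1}, the functional calculus yields a closed, self-adjoint, in general unbounded, operator $\psi(\lambda) \in \clC(\ells)$.

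To verify the identity $\psi(\lambda)\phi(\lambda) = P_\lambda$, I would first check that for every $v \in \ells$ the vector $\phi(\lambda) v$ lies in $\euD(\psi(\lambda))$, since the spectral-measure estimate
$$
  \int_0^\infty |f(\mu)|^2\,d\scal{E^{\phi(\lambda)}_\mu \phi(\lambda) v}{\phi(\lambda) v}
  = \int_0^\infty \chi_{(0,\infty)}(\mu)\,d\scal{E^{\phi(\lambda)}_\mu v}{v} \leq \norm{v}^2
$$
is finite. Since $xf(x) = \chi_{(0,\infty)}(x)$ as Borel functions on $[0,\infty)$, functional calculus then gives $\psi(\lambda)\phi(\lambda) = \chi_{(0,\infty)}(\phi(\lambda))$, which is the orthogonal projection onto $\overline{\im \phi(\lambda)} = \ker(\phi(\lambda))^\perp$. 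Because $\phi(\lambda) = \eta(\lambda)^2$ with $\eta(\lambda)$ non-negative self-adjoint, $\ker \phi(\lambda) = \ker \eta(\lambda)$, so $\overline{\im \phi(\lambda)} = \overline{\im \eta(\lambda)}$; the latter equals $\hlambda$ since the columns $\eta_j(\lambda)$ of $\eta(\lambda)$ span $\im \eta(\lambda)$, and $\hlambda$ is by (\ref{F: def of hlambda}) the closed span of $\phi_j(\lambda) = \kappa_j^{-1}\eta_j(\lambda)$.

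Measurability of $\psi(\lambda)$ with values in $\clC(\ells)$ I would verify through its resolvent, adopting the standard convention that such a function is measurable precisely when its resolvent at one (equivalently, every) non-real point is measurable as a bounded-operator-valued function. With $g(x) := (f(x)+i)^{-1} = x/(1+ix)$ for $x > 0$ and $g(0) := 0$, the function $g$ is bounded and continuous on $[0,\infty)$, and
$$
  (\psi(\lambda) + i)^{-1} = g(\phi(\lambda)) = \phi(\lambda)\brs{1 + i\phi(\lambda)}^{-1}.
$$
Since $\phi(\cdot)$ is $\clL_1$-measurable by Lemma \ref{L: phi(lambda) is meas} (hence also uniformly measurable) and $A \mapsto A(1+iA)^{-1}$ is norm-continuous on the bounded self-adjoint operators, the resolvent is measurable as required.

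The only delicate point is pinning down the notion of measurability for closed-operator-valued functions; once that is reduced to the measurability of a single resolvent, the whole argument is routine bounded functional calculus, and no further obstacle arises.
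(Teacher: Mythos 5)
Your proof is correct and takes essentially the same route as the paper: define $\psi(\lambda)$ via the spectral theorem as $0$ on $\ker\phi(\lambda)$ and $\phi(\lambda)^{-1}$ on $\ker\phi(\lambda)^\perp$ (i.e. the Moore--Penrose pseudo-inverse). The paper's own proof is a one-liner and does not spell out the measurability verification; your reduction to measurability of a single bounded resolvent $\phi(\lambda)(1+i\phi(\lambda))^{-1}$, together with the norm continuity of $A\mapsto A(1+iA)^{-1}$ and Lemma \ref{L: phi(lambda) is meas}, is a clean way to close that gap, and the rest matches the paper's argument.
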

\begin{proof} Since $\phi(\lambda)$ is a non-negative compact
operator, this follows from the spectral theorem. We just set $\psi(\lambda) = 0$
on $\ker \phi(\lambda)$ and $\psi(\lambda) = \phi(\lambda)^{-1}$ on $\ker \phi(\lambda)^\perp.$
\end{proof}
\begin{cor} The family of orthogonal projections
  $P_\lambda \colon \ells \to \hlambda$
  is weakly measurable.
\end{cor}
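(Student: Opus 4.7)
The claim asks that $\lambda \mapsto \langle P_\lambda f, g\rangle$ be Borel measurable for every $f,g \in \ells.$ The preceding Lemma \ref{L: left inverse of phi(lambda) exists} gives the identity $\psi(\lambda)\phi(\lambda)=P_\lambda,$ but the operator $\psi(\lambda)$ may be unbounded, so rather than invoking this product directly, my plan is to replace $\psi(\lambda)$ by a uniformly bounded regularization.

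Specifically, since $\phi(\lambda)$ is a non-negative compact operator on $\ells,$ the spectral theorem gives
$$
  P_\lambda \;=\; \chi_{(0,\infty)}(\phi(\lambda)) \;=\; \text{s-}\!\lim_{n\to\infty}\, T_n(\lambda), \qquad T_n(\lambda) := \phi(\lambda)\bigl(\phi(\lambda)+\tfrac1n\bigr)^{-1},
$$
because $x/(x+1/n)$ is a uniformly bounded sequence of continuous functions converging pointwise on $[0,\infty)$ to $\chi_{(0,\infty)}(x),$ and the spectral projection onto $(\ker \phi(\lambda))^\perp$ is by definition $P_\lambda$ (the range of $\phi(\lambda)$ is dense in $\hlambda$ by construction in~\ref{F: def of hlambda}).

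Step one is to verify that each $\lambda \mapsto T_n(\lambda) \in \clBH$ is weakly measurable. By Lemma \ref{L: phi(lambda) is meas} the function $\lambda \mapsto \phi(\lambda)$ is $\clL_1$-measurable, and the map $A\mapsto A(A+\tfrac1n)^{-1}$ is norm-continuous on the cone of non-negative trace-class operators (one checks this via the resolvent identity together with the uniform bound $\norm{(A+1/n)^{-1}}\le n$, which costs only a few lines). Hence $\lambda \mapsto T_n(\lambda)$ is norm-measurable as a composition of measurable functions, so in particular weakly measurable. Step two is to pass to the limit: for fixed $f,g\in\ells,$
$$
  \langle P_\lambda f,g\rangle \;=\; \lim_{n\to\infty} \langle T_n(\lambda) f,g\rangle,
$$
a pointwise limit of measurable scalar functions, hence measurable. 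Since $f,g$ were arbitrary, $\lambda \mapsto P_\lambda$ is weakly measurable.

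The only mildly technical point is the norm-continuity of the regularization $A\mapsto A(A+1/n)^{-1}$ on $\{A\ge 0\}$; everything else is formal. A cosmetic alternative, avoiding even this point, would be to appeal to Lemma \ref{L: e j(l) is basis}: the non-zero-type eigenvectors $\set{e_j(\lambda)}$ form an orthonormal basis of $\hlambda$ and are measurable by item \ref{SS: e j}(v), so the finite partial sums $\sum_{j\le N, j\in\clZ_\lambda} \scal{e_j(\lambda)}{\cdot}e_j(\lambda)$ are weakly measurable and converge strongly to $P_\lambda,$ giving the same conclusion.
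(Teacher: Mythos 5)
Your proposal is correct, and it is more careful than what the paper actually writes. The paper's own proof of this corollary is a one-liner: it asserts the claim "follows from Lemmas \ref{L: phi(lambda) is meas} and \ref{L: left inverse of phi(lambda) exists}," i.e.\ from $P_\lambda=\psi(\lambda)\phi(\lambda)$ with both factors "measurable." You correctly identify the soft spot: $\psi(\lambda)$ is in general unbounded (it lives in $\clC(\ells)$, not $\clBH$), so what "measurable" means for that factor, and why the product is then weakly measurable, is left implicit. Your regularization $T_n(\lambda)=\phi(\lambda)\bigl(\phi(\lambda)+\tfrac1n\bigr)^{-1}$ avoids the unbounded object altogether: each $T_n(\lambda)$ is bounded, norm-measurable as the composition of the $\clL_1$-measurable $\lambda\mapsto\phi(\lambda)$ with the norm-continuous map $A\mapsto A(A+\tfrac1n)^{-1}$ on $\set{A\ge 0}$, and $T_n(\lambda)\to P_\lambda$ strongly because $x\mapsto x/(x+\tfrac1n)$ is a uniformly bounded sequence of continuous functions converging pointwise to $\chi_{(0,\infty)}$; weak measurability of $P_\lambda$ then follows as a pointwise limit. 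Your "cosmetic alternative" via the measurable eigenbasis $\set{e_j(\lambda)\colon j\in\clZ_\lambda}$ (Lemma \ref{L: e j(l) is basis}, item \ref{SS: e j}(v)) is likely closest in spirit to what the paper actually has in mind, since the lemma's $\psi(\lambda)$ is built spectrally from exactly those vectors, so the product $\psi(\lambda)\phi(\lambda)$ is implicitly that diagonal truncation argument. Either route you give is a complete and valid proof; the regularization version has the small advantage of needing only norm-measurability of $\phi(\lambda)$, with no direct appeal to a measurable eigenbasis.
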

\begin{proof} This follows from Lemmas \ref{L: phi(lambda) is meas}
  and \ref{L: left inverse of phi(lambda) exists}.
\end{proof}
\begin{lemma} \label{L: f is H meas iff it is meas}
A function $f \colon \LambHF{H_0}\ni \lambda \mapsto f(\lambda) \in \hlambda$
is $\euH$-measurable if and only if it
is measurable as a function $\LambHF{H_0}\to \ells$ and is square summable.
\end{lemma}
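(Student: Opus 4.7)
The plan is to observe that both notions of measurability reduce, via the separability of $\ells$ and the fact that the norm on $\hlambda$ is inherited from $\ells$, to the measurability of scalar products in $\ells$. Translation between the measurability base $\{\phi_j(\cdot)\}$ and the canonical basis $\{\delta_k\}$ of $\ells$ is made via the orthonormal measurability base $\{e_j(\cdot)\colon j\in\clZ_\lambda\}$ supplied by Lemma \ref{L: e j(l) is basis}. The square-summability conditions are automatically the same, because $\norm{f(\lambda)}_{\hlambda}=\norm{f(\lambda)}_{\ells}$ whenever $f(\lambda)\in\hlambda$.

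For the forward direction, assume $f$ is $\euH$-measurable. By Lemma \ref{L: BS Lemma 7.1.1} the measurability base $\{\phi_j(\cdot)\}$ can be replaced by the orthonormal base $\{e_j(\cdot)\colon j\in\clZ_\lambda\}$ without changing the class of measurable vector-functions, so $\lambda\mapsto\scal{e_j(\lambda)}{f(\lambda)}_{\ells}$ is measurable for every $j\in\clZ_\lambda$. Since $\{e_j(\lambda)\}_{j\in\clZ_\lambda}$ is an orthonormal basis of $\hlambda$ (Lemma \ref{L: e j(l) is basis}) and $f(\lambda)\in\hlambda$, one has the $\ells$-convergent expansion
$$
  f(\lambda)=\sum_{j\in\clZ_\lambda}\scal{e_j(\lambda)}{f(\lambda)}_{\ells}\,e_j(\lambda).
$$
Taking the $k$-th coordinate in $\ells$ yields
$$
  (f(\lambda))_k=\sum_{j\in\clZ_\lambda}\scal{e_j(\lambda)}{f(\lambda)}_{\ells}\,(e_j(\lambda))_k,
$$
a pointwise limit of measurable functions: the inner products are measurable by hypothesis, and $(e_j(\lambda))_k$ is measurable by property \ref{SS: e j}(v). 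Hence $f$ is weakly measurable into $\ells$, and therefore Borel measurable by separability of $\ells$ (Pettis).

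For the converse, assume $f$ is Borel measurable as an $\ells$-valued function with $f(\lambda)\in\hlambda$ a.e.\ and $\int\norm{f(\lambda)}_{\ells}^{2}\,d\lambda<\infty$. By separability of $\ells$, Borel measurability is equivalent to weak measurability, so $\lambda\mapsto\scal{v(\lambda)}{f(\lambda)}_{\ells}$ is measurable for any Borel measurable $\ells$-valued $v(\cdot)$. Apply this with $v(\lambda):=\phi_j(\lambda)$: the function $\lambda\mapsto\phi_j(\lambda)$ is $\ells$-Borel measurable because $\phi_j(\lambda+iy)$ is continuous in $\lambda$ for each fixed $y>0$ and, by property \ref{SS: phi j}(vii), converges in $\ells$ to $\phi_j(\lambda)$ as $y\to 0^+$. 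Consequently $\lambda\mapsto\scal{\phi_j(\lambda)}{f(\lambda)}_{\ells}$ is measurable for every $j$, which is precisely the required measurability of $f$ with respect to the base $\{\phi_j(\cdot)\}$; combined with the given square-summability, this makes $f$ an element of $\euH$, i.e.\ $\euH$-measurable.

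There is no serious obstacle; the only subtle point is the two-step passage in the forward direction through the orthonormal base $\{e_j(\lambda)\}$, which is needed because the individual functions $\phi_j(\cdot)$ are not orthonormal and their inner products $\scal{\phi_j(\lambda)}{f(\lambda)}$ alone do not directly control the coordinates $(f(\lambda))_k$ in $\ells$.
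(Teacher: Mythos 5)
Your proposal is correct in substance, and the direction ``Borel measurable $\Rightarrow$ $\euH$-measurable'' coincides with the paper's argument; but for the other direction you take a genuinely different route. The paper avoids the orthonormal basis $\{e_j(\lambda)\}$ entirely: from the measurability of $\scal{\phi_j(\lambda)}{f(\lambda)}$ it assembles the $\ells$-valued function $\eta(\lambda)f(\lambda)$ (using the self-adjointness of $\eta(\lambda)$), squares to get $\phi(\lambda)f(\lambda)$, and then applies the measurable left-inverse $\psi(\lambda)$ of Lemma~\ref{L: left inverse of phi(lambda) exists}, so that $f(\lambda)=P_\lambda f(\lambda)=\psi(\lambda)\phi(\lambda)f(\lambda)$ is measurable. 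Your approach instead expands $f(\lambda)$ in the fiber orthonormal basis supplied by Lemma~\ref{L: e j(l) is basis} and exhibits each $\ells$-coordinate $(f(\lambda))_k$ as a pointwise limit of measurable functions. The two arguments trade one technicality for another: the paper must apply the unbounded operator-valued function $\psi(\lambda)$ and needs $f(\lambda)\in\hlambda$ so that $P_\lambda f(\lambda)=f(\lambda)$; you avoid the inverse, but then the summation index set $\clZ_\lambda$ varies with $\lambda$, and you must implicitly use that the condition ``$j\in\clZ_\lambda$'' (i.e.\ $\alpha_j(\lambda)>0$) is a measurable condition on $\lambda$ so that each summand $\mathbf 1_{\{j\in\clZ_\lambda\}}\scal{e_j(\lambda)}{f(\lambda)}(e_j(\lambda))_k$ is measurable.

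Two small points worth fixing. First, the citation to Lemma~\ref{L: BS Lemma 7.1.1} is not quite right: that lemma produces \emph{some} orthonormal measurability base by Gram--Schmidt, not the particular family $\{e_j(\cdot)\}$ of eigenvectors of $\eta(\lambda)$. That $\{e_j(\cdot)\}$ is an orthonormal measurability base generating the same measurable class as $\{\phi_j(\cdot)\}$ is what the un-numbered remark following this lemma in the paper asserts, and it must be checked via the two change-of-basis expansions in the proof of Lemma~\ref{L: e j(l) is basis} (with measurable coefficients $\alpha_j(\lambda)$, $e_{kj}(\lambda)$). Second, be explicit that you handle the $\lambda$-dependence of $\clZ_\lambda$ by the measurability of $\alpha_j(\cdot)$, as noted above. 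With these adjustments your argument is a legitimate alternative to the paper's.
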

\begin{proof}
(If)\,  Since the functions $\phi_j(\lambda)$ are measurable, if a function $f \colon \LambHF{H_0}\to \ells$
is measurable and $f(\lambda) \in \hlambda,$ then all the functions
$\scal{f(\lambda)}{\phi_j(\lambda)}_{\hlambda} = \scal{f(\lambda)}{\phi_j(\lambda)}_{\ells}$
are measurable. Hence, $f$ is $\euH$-measurable.

(Only if)\, Let $f(\lambda) \in \hlambda$ be $\euH$-measurable, i.e. be such that for any~$j$
$$
  \scal{\phi_j(\lambda)}{f(\lambda)}
$$
is measurable and $\norm{f(\lambda)}_\hlambda \in L_2(\Lambda,d\lambda).$ This implies that the vector
$$
  (\kappa_j\scal{\phi_j(\lambda)}{f(\lambda)}) = (\scal{\eta_j(\lambda)}{f(\lambda)}) = \eta(\lambda) f(\lambda)
$$
is measurable. % This means that $\eta(\lambda) f(\lambda)$ is measurable.
So, the function $\eta^2(\lambda) f(\lambda) = \phi(\lambda) f(\lambda)$
is also measurable. Since by Lemma \ref{L: left inverse of phi(lambda) exists}
there exists a measurable function $\psi(\lambda),$
such that $\psi(\lambda)\phi(\lambda) = P_\lambda,$
the function $f(\lambda)$ is also measurable.
\end{proof}

\begin{prop} \label{P: linear combinations of eta j}
Let $\chi_\Delta(\cdot)$ be the characteristic function of $\Delta.$
The set of finite linear combinations of functions
$$
  \LambHF{H_0}\ni \lambda \mapsto \chi_\Delta(\lambda) \phi_j(\lambda) \in \ells,
$$
where $\Delta$ is an arbitrary Borel subset of $\Lambda$ and~$j = 1,2,\ldots,$
is dense in~$\euH.$
\end{prop}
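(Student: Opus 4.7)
The plan is to prove density by a standard orthogonality argument: I will show that any $f \in \euH$ orthogonal to every $\chi_\Delta\phi_j$ must vanish almost everywhere, hence be the zero element of $\euH.$ The strategy reduces the proposition to the defining property that for each $\lambda \in \LambHF{H_0},$ the fiber $\hlambda$ is generated by the sequence $\set{\phi_j(\lambda)}_{j=1}^\infty.$

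First I would check that each $\chi_\Delta(\lambda)\phi_j(\lambda)$ actually belongs to $\euH.$ Measurability is immediate from Lemma~\ref{L: f is H meas iff it is meas}, since $\phi_j(\cdot)$ is measurable with respect to the base (\ref{F: measurability base}) and $\chi_\Delta$ is a bounded measurable scalar multiplier; square-summability follows from Lemma~\ref{L: norm of phi j(l) less 1}. Next I would take an arbitrary $f \in \euH$ with $\scal{\chi_\Delta\phi_j}{f}_\euH = 0$ for every Borel $\Delta \subset \LambHF{H_0}$ and every $j,$ and unfold the inner product as
$$
  \int_\Delta \scal{\phi_j(\lambda)}{f(\lambda)}_\hlambda\,d\lambda = 0.
$$
The integrand $g_j(\lambda) := \scal{\phi_j(\lambda)}{f(\lambda)}_\hlambda$ is measurable (scalar product of two $\euH$-measurable vector fields) and integrable by Cauchy--Schwarz together with $\norm{\phi_j(\cdot)}_\hlambda \in L^2(\LambHF{H_0})$ (Lemma~\ref{L: norm of phi j(l) less 1}) and $\norm{f(\cdot)}_\hlambda \in L^2(\LambHF{H_0})$ (definition of $\euH$). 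Since $\int_\Delta g_j\,d\lambda = 0$ for every Borel $\Delta,$ standard measure theory forces $g_j(\lambda) = 0$ for a.e.~$\lambda.$

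Taking the countable union over $j$ of the resulting exceptional null sets, I would obtain a single null set $N \subset \LambHF{H_0}$ outside of which $\scal{\phi_j(\lambda)}{f(\lambda)}_\hlambda = 0$ simultaneously for all $j.$ By the definition (\ref{F: def of hlambda}) of the fiber Hilbert space, the vectors $\set{\phi_j(\lambda) : j \geq 1}$ have dense linear span in $\hlambda,$ so $f(\lambda) = 0$ for a.e.~$\lambda \in \LambHF{H_0},$ and therefore $f = 0$ in $\euH.$ This establishes density of the linear span of $\set{\chi_\Delta\phi_j}.$

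The argument is essentially a reduction to the construction of $\hlambda,$ so the only potentially delicate point is the measurability and integrability of $g_j,$ but both are immediate from the earlier lemmas. I do not anticipate any serious obstacle; no manipulation of the orthonormal basis $\set{e_j(\lambda)}$ or of Lemma~\ref{L: BS Lemma 7.1.5} is required, although one could alternatively derive the statement as a corollary of Lemma~\ref{L: BS Lemma 7.1.5} by expanding each $e_j(\lambda)$ via (\ref{F: e j (l+i eps)=alpha j sum ... eta k}) and approximating the coefficient functions by simple functions, at the cost of controlling the tail on the set $\set{\alpha_j(\lambda) > \varepsilon}.$
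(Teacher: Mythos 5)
Your proof is correct, but it takes a genuinely different route from the paper's. The paper dispatches the proposition in one line by invoking Lemma~\ref{L: BS Lemma 7.1.5}, which gives density of $\set{f_0\,\chi_\Delta\, e_j}$ for a fixed nonvanishing $L^2$-weight $f_0$ and the \emph{orthonormal} measurability base $\set{e_j(\cdot)}.$ Applying that lemma to the claim silently requires passing between the two generating systems $\set{e_j(\lambda)}$ and $\set{\phi_j(\lambda)}$ via the measurable-coefficient expansions (\ref{F: e j (l+i eps)=alpha j sum ... eta k}) and (\ref{F: eta j(l) = sum k e k(l)}), and then approximating the coefficient functions by simple functions — precisely the alternative route you flag at the end, whose tail control is the irritating part. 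Your orthogonality argument sidesteps all of this: you work directly with $\set{\phi_j(\lambda)},$ using only that $\euH$ is a Hilbert space (so trivial orthogonal complement of a subspace implies density), that $\norm{\phi_j(\cdot)}_\hlambda$ is square-summable (Lemma~\ref{L: norm of phi j(l) less 1}), and that $\hlambda$ is by construction (\ref{F: def of hlambda}) the closed span of $\set{\phi_j(\lambda)}.$ One small point worth stating explicitly: the conclusion $g_j(\lambda)=0$ a.e.\ from the vanishing of $\int_\Delta g_j\,d\lambda$ over all Borel $\Delta$ does use the local integrability of $g_j,$ which you correctly secure via Cauchy--Schwarz and the $L^2$-bounds on $\norm{\phi_j(\cdot)}_\hlambda$ and $\norm{f(\cdot)}_\hlambda.$ Your version is more elementary and more self-contained than the paper's citation; the paper's is shorter by leaning on Birman--Solomyak. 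Both are valid.
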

\begin{proof}
%This is well-known (cf. e.g.~\cite[Lemma 7.1.5]{BSbook}).
This follows from Lemma \ref{L: BS Lemma 7.1.5}.
% that the linear span of functions \margdetails
% $$
%   \chi_\Delta(\lambda) \phi_j(\lambda),
% $$
% where $\Delta$ is an arbitrary Borel subset of $\Lambda$ and~$j = 1,2,\ldots,$
% is dense in the space of all square summable measurable functions $f \colon \LambHF{H_0}\ni \lambda \mapsto f(\lambda) \in \ells,$
% such that $f(\lambda) \in \hlambda.$ So, Lemma \ref{L: f is H meas iff it is meas} completes the proof.
\end{proof}

% \begin{rems}
% We don't need to choose a core of the absolutely continuous spectrum
% in a special way, since in the direct integral~(\ref{F: direct integral})
% the fiber Hilbert spaces~$\hlambda$ have zero dimension, if $\lambda$
% does not belong to a core of the spectrum. {\it This sentence needs to be made more precise.}
% \end{rems}
% \begin{rems} If $f \in \hilb_1,$ then $f$ is a smooth vector, in the sense that $\euE_\lambda f$
% makes sense for all $\lambda \in \LambHF{H_0}.$ Indeed, in this case for $f = \sum\limits_{j=1} \beta_j\kappa_j\phi_j$
% with $(\beta_j) \in \ell_2,$ the series
% $$
%   \euE_\lambda f = \sum\limits_{j=1} \beta_j\kappa_j \euE_\lambda \phi_j = \sum\limits_{j=1} \beta_j\eta_j(\lambda)
% $$
% converges absolutely, since the norms of $\eta_j(\lambda)$ form an $\ell_2$ sequence.
% If $f \notin \hilb_1,$
% then $\euE_\lambda f$ makes sense for a.e. $\lambda \in \LambHF{H_0}.$
% \end{rems}

\subsection{$\euE$ is an isometry}

Note that the system $\set{\phia_j}$ is complete in~$\hilb^{(a)},$ though it is, in general, not linearly independent.
\begin{prop} \label{P: euE is an isometry}
Let~$H_0$ be a self-adjoint operator on a framed Hilbert space $(\hilb,F).$
  The operator $\euE \colon \hilb_1 \to \euH,$ defined by {\rm~(\ref{F: def of euE}),} is bounded
  as an operator from~$\hilb$ to $\euH,$ so that one can define
 ~$\euE$ on the whole~$\hilb$ by continuity.
  The operator $\euE \colon \hilb \to \euH,$ thus defined, vanishes on~$\hilbs$
  and is isometric on~$\hilba.$
\end{prop}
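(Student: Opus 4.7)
The plan is to compute $\|\euE f\|_\euH^2$ on the dense manifold $\euD$ (finite linear combinations of frame vectors), show that it equals $\|f^{(a)}\|_\hilb^2$, and then extend by continuity. Fix $f = \sum_{j=1}^N \alpha_j \phi_j \in \euD$. By the definition of $\euE$ and interchanging the finite sum with the $\hlambda$-inner product, then invoking item \ref{SS: phi j}(viii), for every $\lambda \in \LambHF{H_0}$
$$
  \|(\euE f)(\lambda)\|_\hlambda^2 = \sum_{j,k=1}^N \bar\alpha_j \alpha_k \scal{\phi_j(\lambda)}{\phi_k(\lambda)}_{\hlambda} = \frac{1}{\pi}\Im \scal{f}{R_{\lambda+i0}(H_0) f}_\hilb.
$$

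Let $m_f$ be the (non-negative) spectral measure of $f$ with distribution function $F_f(\lambda) = \scal{E^{H_0}_\lambda f}{f}$. Since $\frac{1}{\pi}\Im \scal{f}{R_{\lambda+iy}(H_0)f}$ is the Poisson integral of $F_f$, Theorem \ref{T: Fatou} together with Lebesgue's differentiation theorem identifies the right-hand side above with the density $\frac{dm_f^{(a)}}{d\lambda}$ of the absolutely continuous part of $m_f$ for a.e. $\lambda$. Integrating over $\LambHF{H_0}$ and applying Corollary \ref{C: int Delta dF = int Delta F'dl} to $m_f^{(a)}$ yields
$$
  \|\euE f\|_\euH^2 = \int_{\LambHF{H_0}} \frac{dm_f^{(a)}}{d\lambda}\,d\lambda = m_f^{(a)}\bigl(\LambHF{H_0}\bigr).
$$

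By Corollary \ref{C: Lambda is a core of sing. sp}, the complement $\mbR\setminus \LambHF{H_0}$ is a core of the singular spectrum of $H_0$, so it is a Borel support of $m_f^{(s)}$; equivalently $\LambHF{H_0}$ carries the full mass of $m_f^{(a)}$. Hence
$$
  \|\euE f\|_\euH^2 = m_f^{(a)}(\mbR) = \scal{P^{(a)}(H_0)f}{f}_\hilb = \|f^{(a)}\|_\hilb^2.
$$
In particular, $\|\euE f\|_\euH \leq \|f\|_\hilb$ for every $f \in \euD$. Since $\euD$ is dense in $\hilb$, the operator $\euE$ extends by continuity to a bounded operator $\hilb \to \euH$ of norm at most one; and because the projection $P^{(a)}(H_0)$ is continuous on $\hilb$, the identity $\|\euE f\|_\euH = \|f^{(a)}\|_\hilb$ persists for all $f \in \hilb$ by passing to the limit on both sides. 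This immediately gives the two stated conclusions: $\euE$ annihilates $\hilbs(H_0)$ (since there $f^{(a)}=0$), and $\euE$ is an isometry on $\hilba(H_0)$ (since there $f^{(a)}=f$).

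The only step that is not a direct computation is the identification of $\LambHF{H_0}$ as a support of $m_f^{(a)}$, which is exactly what the core-of-singular-spectrum property established in Corollary \ref{C: Lambda is a core of sing. sp} provides; everything else is a straightforward combination of the fiberwise formula \ref{SS: phi j}(viii), Fatou's theorem, and density.
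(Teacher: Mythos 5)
Your proof is correct and follows essentially the same route as the paper's: both use the identity in \ref{SS: phi j}(viii) to express $\|(\euE f)(\lambda)\|^2_\hlambda$ via the boundary value of $\Im R_{\lambda+i0}(H_0)$, invoke Fatou's theorem to identify this with the a.c.\ density of the spectral measure, integrate over the full set $\LambHF{H_0}$, and extend by continuity from $\euD$. The only cosmetic difference is that you aggregate the quadratic form in $f$ before applying Fatou's theorem, whereas the paper works with the bilinear expressions in the pairs $(\phi_j,\phi_k)$; the citation of Corollary~\ref{C: int Delta dF = int Delta F'dl} for the already absolutely continuous measure $m_f^{(a)}$ is superfluous, since for an a.c.\ measure the identity holds for every Borel set directly, but this does not affect the validity of the argument.
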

\begin{proof}
Firstly, we show that~$\euE$ is bounded.
It follows from the item \ref{F: (phi(l) j,phi(l) k) = (phi j,Im R phi k)}(viii) that
\begin{equation*}
 \begin{split}
   \scal{\euE\phi_j}{\euE\phi_k}_\euH
     & = \int_{\Lambda} \scal{\euE_\lambda \phi_j}{\euE_\lambda {\phi_k}}_\hlambda\,d\lambda
   \\ & = \int_{\Lambda} \scal{\phi_j(\lambda)}{\phi_k(\lambda)}_\hlambda\,d\lambda
       = \frac 1\pi \int_{\Lambda} \la \phi_j, \Im R_{\lambda+i0}(H_0)\, \phi_k \ra \,d\lambda.
 \end{split}
\end{equation*}
%the last equality by Corollary \ref{C: int Delta dF = int Delta F'dl}.
Since by Theorem~\ref{T: Fatou}
\begin{equation} \label{F: (phi i,Im R phi j) = der (phi i,E(l)phi j)}
  % \la \phi_j(\lambda), \phi_k(\lambda) \ra =
  \frac 1\pi \la \phi_j, \Im R_{\lambda+i0}(H_0)\phi_k \ra = \frac d{d\lambda} \la \phi_j, E_{\lambda}\phi_k \ra
     \quad \text{for a.e.} \  \lambda \in \Lambda,
\end{equation}
it follows that
$$  \scal{\euE\phi_j}{\euE\phi_k}_\euH
      = \int_{\Lambda} \frac {d\la \phi_j, E_\lambda {\phi_k} \ra}{d\lambda}\,d\lambda.
$$
%Since, by Proposition \ref{P: Lambda 0 is a core of sing. sp},
%$\mbR \setminus \LambHF{H_0}$ is a core of the singular spectrum of~$H_0,$
This implies that % \margdetails
\begin{equation} \label{F: (euE phi j,euE phi k)}
 \begin{split}
  %\int_{\Lambda} \scal{\euE_\lambda \phi_j}{\euE_\lambda {\phi_k}}\,d\lambda
  \scal{\euE\phi_j}{\euE\phi_k}_\euH
     = \int_{\Lambda} \frac {d\la \phi_j, E^{(a)}_\lambda {\phi_k} \ra}{d\lambda}\,d\lambda
     = \la \phi_j, E_\Lambda^{(a)} {\phi_k} \ra
     = \la \phia_j, \phia_k \ra.
 \end{split}
\end{equation}
This equality implies that for any $f \in \euD$ \ (see (\ref{F: euD - def-n}) for the definition of $\euD$)
$
  \norm{\euE  f} = \norm{ f^{(a)}} \leq  \norm{ f},
$ \
and so, $\euE$ is bounded. Since also $\norm{\euE  f} = \norm{P^{(a)} f}$
for all $ f$ from the dense set~$\euD,$ it follows that for any $ f \in \hilb$ \
$
  \norm{\euE  f} = \norm{P^{(a)} f}.
$
This implies that~$\euE$ vanishes on~$\hilbs$ and it is an isometry on~$\hilba.$
The proof is complete.
\end{proof}
This Proposition implies that for any $f \in \hilb$ we have a vector-function $ f(\lambda) = \euE_\lambda(f)$
as an element of the direct integral (\ref{F: direct integral}).
The function $ f(\lambda)$ is defined
for a.e. $\lambda \in \Lambda,$ while for regular vectors $f \in \hilb_1$ \ $f(\lambda)$
is defined for all $\lambda \in \LambHF{H_0}.$
\begin{lemma} \label{L: (xi,eta)=int (xi(l),eta(l))dl}
For any $ f,g \in \hilba$ the equality
$$
  \scal{ f}{g} = \int_{\Lambda} \scal{ f(\lambda)}{g(\lambda)}\,d\lambda
$$
holds.
\end{lemma}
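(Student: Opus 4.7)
The plan is to deduce this directly from Proposition \ref{P: euE is an isometry} by polarization. The proposition tells us that the extended operator $\euE \colon \hilb \to \euH$ satisfies $\norm{\euE f}_\euH = \norm{P^{(a)}f}_\hilb$ for every $f \in \hilb$; in particular, for $f \in \hilba$ we have $\norm{\euE f}_\euH = \norm{f}_\hilb$, so $\euE$ restricts to an isometry $\hilba \to \euH$.

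First, I would apply the complex polarization identity
$$
  4\scal{f}{g} = \norm{f+g}^2 - \norm{f-g}^2 + i\norm{f+ig}^2 - i\norm{f-ig}^2
$$
both in $\hilba$ and in $\euH$. Since $\hilba$ is a linear subspace (closed under $\pm, \pm i \cdot$), and since $\euE$ is linear and preserves the four norms appearing on the right, the identity gives
$$
  \scal{f}{g}_\hilb = \scal{\euE f}{\euE g}_\euH
$$
for all $f,g \in \hilba$.

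Next, I would rewrite the right-hand side using the definition of the scalar product in the direct integral $\euH = \int_\Lambda^\oplus \hlambda\,d\lambda$, namely
$$
  \scal{\euE f}{\euE g}_\euH = \int_\Lambda \scal{(\euE f)(\lambda)}{(\euE g)(\lambda)}_\hlambda\,d\lambda,
$$
which is well defined since both $\euE f$ and $\euE g$ are square-integrable sections. By definition $(\euE f)(\lambda) = f(\lambda)$ and $(\euE g)(\lambda) = g(\lambda)$, which yields the claimed equality.

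There is no real obstacle here: the whole content was packaged into Proposition \ref{P: euE is an isometry}, and the lemma is the standard polarized form of the isometry statement. The only minor point to verify is that $f(\lambda) := (\euE f)(\lambda)$ is indeed the vector-function alluded to in the statement; this is precisely how $f(\lambda)$ was defined for general $f \in \hilba$ immediately after the proposition (extending by continuity from $\hilb_1$), so no additional work is needed.
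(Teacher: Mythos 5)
Your argument is correct and is essentially the same as the paper's: the paper proves the lemma by citing the bilinear identity $\scal{\euE\phi_j}{\euE\phi_k}_\euH = \scal{\phi^{(a)}_j}{\phi^{(a)}_k}$ established in the proof of Proposition \ref{P: euE is an isometry} and extending by linearity and continuity, whereas you recover the same sesquilinear identity by polarizing the quadratic isometry $\norm{\euE f}=\norm{P^{(a)}f}$; both routes rest on the same underlying computation and lead to the same conclusion. (A minor remark: since the paper's inner product is antilinear in the first argument, the polarization identity you quote actually returns $4\scal{g}{f}$ rather than $4\scal{f}{g}$, but as this is applied consistently on both sides the argument is unaffected.)
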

\begin{proof} Indeed, the right hand side of this equality is, by definition, $\scal{\euE f}{\euE g}_\euH,$
  which by (\ref{F: (euE phi j,euE phi k)}) is equal to~$\scal{f}{g}_\hilb.$
\end{proof}

\subsection{$\euE$ is a unitary}
The aim of this subsection is to show that the restriction of the operator $\euE \colon \hilb \to \euH$
to~$\hilba$ is unitary.

\begin{lemma} Let $\Delta$ be a Borel subset of $\Lambda = \LambHF{H_0}.$
If $ f \in E_{\Lambda \setminus \Delta} \hilb,$ then $ f(\lambda)$
is equal to zero on $\Delta$ for a.e. $\lambda \in \Delta.$
\end{lemma}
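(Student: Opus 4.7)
The strategy is to test $f(\lambda)$ against each measurability-base vector $\phi_j(\lambda)$ and appeal to Fatou's theorem applied to the complex-valued spectral measures $m_{\phi_j,f}(B):=\scal{\phi_j}{E_B^{H_0} f}$.

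First I would reduce to the case $f\in\hilba\cap E_{\Lambda\setminus\Delta}\hilb$. Since $\euE$ vanishes on $\hilbs$ by Proposition~\ref{P: euE is an isometry} and since $P^{(a)}(H_0)$ commutes with every spectral projection of~$H_0$, replacing $f$ with $f^{(a)}=P^{(a)}f$ leaves $f(\lambda)=\euE_\lambda f$ unchanged and keeps $f$ in $E_{\Lambda\setminus\Delta}\hilb$. Then for every Borel $B\subset\Delta$,
\[
  m_{\phi_j,f}(B)=\scal{\phi_j}{E_B E_{\Lambda\setminus\Delta}f}=0,
\]
so $m_{\phi_j,f}$ vanishes on every Borel subset of $\Delta$. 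Hence the density of its absolutely continuous part is zero a.e.\ on~$\Delta$, and Fatou's theorem (Theorem~\ref{T: Fatou}) applied to the Poisson integral of~$m_{\phi_j,f}$ yields
\[
  \lim_{y\to 0^+}\frac 1\pi\scal{\phi_j}{\Im R_{\lambda+iy}(H_0)f}=0\qquad\text{for a.e.\ }\lambda\in\Delta.
\]

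The main technical step is the identification
\[
  \scal{\phi_j(\lambda)}{f(\lambda)}_\hlambda=\lim_{y\to 0^+}\frac 1\pi\scal{\phi_j}{\Im R_{\lambda+iy}(H_0)f}\qquad\text{for a.e.\ }\lambda\in\Lambda,
\]
for every $f\in\hilb$. I would prove this by a continuity/density argument. For $f=\phi_k$ the equality is precisely item~\ref{SS: phi j}(viii) combined with Fatou. For general~$f$, both sides define bounded linear maps $\hilb\to L^1(\Lambda)$ of norm at most $1$. The left-hand side map is controlled by Cauchy--Schwarz together with Lemma~\ref{L: norm of phi j(l) less 1} and Proposition~\ref{P: euE is an isometry}:
\[
  \int_\Lambda\abs{\scal{\phi_j(\lambda)}{f(\lambda)}}\,d\lambda\leq\norm{\euE\phi_j}_\euH\,\norm{\euE f}_\euH\leq\norm{f}_\hilb.
\]
The right-hand side is controlled by the total-variation bound $\abs{m_{\phi_j,f}}(\mbR)\leq\norm{\phi_j}\norm{f}$, which one obtains by writing $\abs{\scal{\phi_j}{E_B f}}=\abs{\scal{E_B\phi_j}{E_B f}}\leq\norm{E_B\phi_j}\norm{E_B f}$ and applying Cauchy--Schwarz to the $\ell_2$-sequences associated with any Borel partition. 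Since both maps agree on the dense manifold $\euD$ by linearity, they coincide on all of~$\hilb$.

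Combining these two facts gives $\scal{\phi_j(\lambda)}{f(\lambda)}_\hlambda=0$ for a.e.\ $\lambda\in\Delta$ for each~$j$. Taking the countable intersection of the corresponding full-measure subsets produces a full-measure $\Delta_0\subset\Delta$ on which $f(\lambda)\perp\phi_j(\lambda)$ in $\hlambda$ for every~$j$. Since the family $\set{\phi_j(\lambda)}_{j\geq 1}$ generates $\hlambda$ by definition~(\ref{F: def of hlambda}), this forces $f(\lambda)=0$ in~$\hlambda$ for every $\lambda\in\Delta_0$, which is the claim. The main obstacle is the continuity/density step identifying the fiber-wise scalar product with the Poisson boundary value; the reduction to $\phi_j$-tests and the Fatou step are otherwise routine.
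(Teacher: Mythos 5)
Your proof is correct, but it follows a genuinely different route from the paper's. The paper proves a restricted Parseval identity
$\norm{E_\Delta g}^2 = \int_\Delta\norm{g(\lambda)}^2\,d\lambda$ for $g\in\euD$
(using \ref{SS: phi j}(viii), Fatou, and Corollary~\ref{C: int Delta dF = int Delta F'dl} under the double sum), then approximates $f\in E_{\Lambda\setminus\Delta}\hilb$ by $f_N\in P^{(a)}\euD$, uses the isometry $\int_\Lambda\norm{f(\lambda)-f_N(\lambda)}^2\,d\lambda=\norm{f-f_N}^2\to 0$, and the identity $E_\Delta f_N = E_\Delta(f_N - f)$ to squeeze $\int_\Delta\norm{f(\lambda)}^2\,d\lambda=0$ in one $L^2$-step. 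Your proof instead works fiber-by-fiber: you first establish the identification $\scal{\phi_j(\lambda)}{f(\lambda)}_\hlambda=\lim_{y\to 0^+}\tfrac1\pi\scal{\phi_j}{\Im R_{\lambda+iy}(H_0)f}$ a.e.\ for arbitrary $f\in\hilb$ (by viewing both sides as bounded linear maps $\hilb\to L^1(\Lambda)$ of norm $\leq 1$, agreeing on $\euD$ via \ref{SS: phi j}(viii)), then observe that $m_{\phi_j,f}$ vanishes on Borel subsets of $\Delta$ so its Poisson boundary value is $0$ a.e.\ there, and finally intersect over $j$ and invoke that $\set{\phi_j(\lambda)}$ generates $\hlambda$. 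Your approach is a bit longer but yields as a by-product the extension of \ref{SS: phi j}(viii) from $\euD\times\euD$ to $\euD\times\hilb$ (a.e.), which is a reusable intermediate fact not stated explicitly in the paper; the paper's approach is more direct, handling the $L^2$ quantity $\norm{f(\lambda)}$ without the detour through the individual coordinates $\scal{\phi_j(\lambda)}{f(\lambda)}$. One small remark: your initial reduction to $f\in\hilba$ is harmless but unnecessary --- since $\Lambda\setminus\Delta\subset\Lambda(H_0;F)$ and $\mbR\setminus\Lambda(H_0;F)$ is a core of the singular spectrum (Corollary~\ref{C: Lambda is a core of sing. sp}), any $f\in E_{\Lambda\setminus\Delta}\hilb$ is automatically absolutely continuous, as the paper notes.
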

\begin{proof}
(A) If $g = \sum\limits_{j=1}^N \alpha_j\phi_j \in \euD$ (see (\ref{F: euD - def-n})), then $\norm{E_\Delta g}^2 = \int_\Delta \scal{g(\lambda)}{g(\lambda)}\,d\lambda.$
\\ Proof of (A).
\begin{equation*}
 \begin{split}
   \int_\Delta \scal{g(\lambda)}{g(\lambda)}\,d\lambda
     & = \sum\limits_{j=1}^N \sum\limits_{k=1}^N \bar \alpha_j\alpha_k \int_\Delta \scal{\phi_j(\lambda)}{\phi_k(\lambda)}\,d\lambda \\
     & = \sum\limits_{j=1}^N \sum\limits_{k=1}^N \bar \alpha_j\alpha_k \int_\Delta \frac d{d\lambda} \scal{\phi_j}{E_\lambda\phi_k}\,d\lambda
               \mathcomm{by Thm. \ref{T: Ya 1.2.5}}\\
     & = \sum\limits_{j=1}^N \sum\limits_{k=1}^N \bar \alpha_j\alpha_k \int_\Delta \frac d{d\lambda} \scal{\phi_j}{E^{(a)}_\lambda\phi_k}\,d\lambda
                                                      \mathcomm{by Cor. \ref{C: int Delta dF = int Delta F'dl}}\\
     & = \sum\limits_{j=1}^N \sum\limits_{k=1}^N \bar \alpha_j\alpha_k \scal{\phi_j}{E^{(a)}_\Delta\phi_k}\\
     & = \norm{E^{(a)}_\Delta g}^2.
\end{split}
\end{equation*}
Since $\Delta \subset \LambHF{H_0},$ it follows from Corollary \ref{C: Lambda is a core of sing. sp} that $E_\Delta^{(a)} = E_\Delta.$
It follows that $\norm{E^{(a)}_\Delta g}^2 = \norm{E_\Delta g}^2.$
%So, $\norm{E^{(a)}_\Delta f} = \norm{E_\Delta f}.$

(B) Proof of the lemma.
Note that $f \in E_{\Lambda \setminus \Delta} \hilb$ implies that $f$ is an absolutely continuous vector for~$H_0.$
Consequently, there exists a sequence $f_1, f_2, \ldots $ of vectors from $P^{(a)}\euD$ converging to $f$ (in~$\hilb$).
Then by Lemma \ref{L: (xi,eta)=int (xi(l),eta(l))dl}
$$
  \int_{\LambHF{H_0}} \scal{ f(\lambda)-f_n(\lambda)}{ f(\lambda)-f_n(\lambda)}\,d\lambda = \norm{ f-f_n}^2  \to 0.
$$
Since by (A)
$$
  \int_{\Delta} \scal{f_n(\lambda)}{f_n(\lambda)}\,d\lambda = \norm{E_\Delta f_n}^2 = \norm{E_\Delta ( f-f_n)}^2 \leq \norm{ f-f_n}^2 \to 0,
$$
it follows that
$$
  \int_{\Delta} \scal{ f(\lambda)}{ f(\lambda)}\,d\lambda = 0.
$$
So, $ f(\lambda) = 0$ for a.e. $\lambda \in \Delta.$
\end{proof}
\begin{cor} \label{C: if ED xi=ED eta then xi(l)=eta(l)}
Let $\Delta$ be a Borel subset of $\Lambda(H_0,F)$ and let $f,g \in \hilb.$
If $E_\Delta  f = E_\Delta g,$ then $ f(\lambda) = g(\lambda)$ for a.e. $\lambda \in \Delta.$
\end{cor}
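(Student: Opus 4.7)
The plan is to reduce the corollary to the preceding lemma by linearity of $\euE_\lambda$. Set $h := f-g$. The hypothesis $E_\Delta f = E_\Delta g$ is equivalent to $E_\Delta h = 0$, which means $h \in E_{\mathbb{R}\setminus \Delta}\hilb$. Since $\euE_\lambda$ is linear, showing $h(\lambda) = 0$ for a.e.\ $\lambda \in \Delta$ is exactly what we need.

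Next, I would split $h$ according to the disjoint decomposition
\[
  \mathbb{R}\setminus \Delta \;=\; (\Lambda\setminus \Delta)\;\sqcup\;(\mathbb{R}\setminus \Lambda),
\]
where $\Lambda = \LambHF{H_0}$. Writing $h = E_{\Lambda\setminus \Delta} h + E_{\mathbb{R}\setminus \Lambda} h$, I would treat the two pieces separately. For the first summand, the preceding lemma applies verbatim (with $E_{\Lambda\setminus\Delta}h$ in place of $f$) and yields $(E_{\Lambda\setminus \Delta} h)(\lambda) = 0$ for a.e.\ $\lambda \in \Delta$.

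For the second summand, Corollary \ref{C: Lambda is a core of sing. sp} tells us that $\mathbb{R}\setminus \Lambda$ is a core of the singular spectrum of $H_0$, so $E_{\mathbb{R}\setminus \Lambda} h \in \hilbs(H_0)$. By Proposition \ref{P: euE is an isometry}, the operator $\euE$ vanishes on $\hilbs$, hence $\bigl(E_{\mathbb{R}\setminus\Lambda}h\bigr)(\lambda) = 0$ for a.e.\ $\lambda \in \Lambda$, and in particular for a.e.\ $\lambda \in \Delta$. Adding the two contributions gives $h(\lambda) = 0$ a.e.\ on $\Delta$, which is the claim.

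The proof is essentially bookkeeping around the decomposition $\hilb = E_\Lambda \hilb \oplus E_{\mathbb{R}\setminus\Lambda}\hilb$ and there is no real obstacle; the only mild point to watch is that $\euE$ is initially defined on $\hilb_1$ and only extended to $\hilb$ by continuity, so that the vanishing statements are almost-everywhere statements rather than pointwise on $\LambHF{H_0}$. This is harmless because the conclusion of the corollary itself is also almost-everywhere.
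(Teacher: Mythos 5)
Your proof is correct and follows the route the paper intends (the paper states the corollary with no written proof, presenting it as an immediate consequence of the preceding lemma). Your decomposition $h = E_{\Lambda\setminus\Delta}h + E_{\mathbb{R}\setminus\Lambda}h$ is precisely the bookkeeping step the paper elides: the hypothesis only places $h=f-g$ in $E_{\mathbb{R}\setminus\Delta}\hilb$, not in $E_{\Lambda\setminus\Delta}\hilb$ as the lemma literally requires, and you correctly dispose of the extra piece $E_{\mathbb{R}\setminus\Lambda}h$ by observing it is singular (its spectral measure is concentrated on the null set $\mathbb{R}\setminus\Lambda$), hence annihilated by $\euE$ via Proposition~\ref{P: euE is an isometry}.
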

\begin{cor} \label{C: euE(eDelta f)=chi Delta(l)f(l)}
For any Borel subset $\Delta$ of $\LambHF{H_0}$ and any $f \in \hilb$
$$
  \euE(E_\Delta f)(\lambda) = \chi_\Delta(\lambda)f(\lambda) \qquad \text{a.e.} \ \lambda \in \mbR.
$$
\end{cor}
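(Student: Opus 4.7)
The plan is to deduce this corollary as a direct consequence of the preceding Lemma (on vanishing of $f(\lambda)$ for $f \in E_{\Lambda \setminus \Delta}\hilb$) and Corollary~\ref{C: if ED xi=ED eta then xi(l)=eta(l)}, by splitting $\mbR$ into the three disjoint pieces $\Delta$, $\LambHF{H_0} \setminus \Delta$, and $\mbR \setminus \LambHF{H_0}$. The last piece is a null set so contributes nothing to ``a.e.\ $\lambda \in \mbR$''.

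First I would treat $\lambda \in \Delta$. Set $g := E_\Delta f$. Since $E_\Delta$ is a projection, $E_\Delta g = E_\Delta^2 f = E_\Delta f = E_\Delta f$, so $E_\Delta g = E_\Delta f$. Corollary~\ref{C: if ED xi=ED eta then xi(l)=eta(l)} applied to the pair $(g,f)$ then gives $g(\lambda) = f(\lambda)$ for a.e.\ $\lambda \in \Delta$, which is precisely
$$
  \euE(E_\Delta f)(\lambda) = f(\lambda) = \chi_\Delta(\lambda) f(\lambda), \qquad \text{a.e.\ } \lambda \in \Delta,
$$
since $\chi_\Delta \equiv 1$ on $\Delta$.

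Second I would treat $\lambda \in \LambHF{H_0} \setminus \Delta$. Write $\Delta' := \LambHF{H_0} \setminus \Delta$, so that $\Delta = \LambHF{H_0} \setminus \Delta'$. The vector $E_\Delta f$ lies in $E_{\LambHF{H_0} \setminus \Delta'}\hilb$, so applying the preceding Lemma with $\Delta$ replaced by $\Delta'$ yields $(E_\Delta f)(\lambda) = 0$ for a.e.\ $\lambda \in \Delta'$. Since $\chi_\Delta$ vanishes on $\Delta'$, this gives $\euE(E_\Delta f)(\lambda) = 0 = \chi_\Delta(\lambda) f(\lambda)$ a.e.\ on $\Delta'$.

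Combining the two cases, and noting that $|\mbR \setminus \LambHF{H_0}| = 0$, we obtain the claimed identity a.e.\ on $\mbR$. There is no real obstacle here: both required inputs (the Lemma and Corollary~\ref{C: if ED xi=ED eta then xi(l)=eta(l)}) have already been established, and the only mild subtlety is the bookkeeping of applying the Lemma to the complementary Borel set $\Delta'$ inside $\LambHF{H_0}$.
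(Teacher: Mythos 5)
Your proposal is correct and is exactly the intended argument: the paper leaves this corollary unproved because it follows immediately from the unnamed Lemma and Corollary~\ref{C: if ED xi=ED eta then xi(l)=eta(l)} by the decomposition $\mbR = \Delta \cup (\LambHF{H_0}\setminus\Delta) \cup (\mbR\setminus\LambHF{H_0})$ that you describe. The one step worth spelling out — that $\Lambda\setminus\Delta' = \Delta$ so the Lemma applies to $E_\Delta f \in E_\Delta\hilb = E_{\Lambda\setminus\Delta'}\hilb$ — you have handled correctly.
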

\begin{cor} \label{C: (ED xi,ED eta) = int D (xi l,eta l)}
Let $\Delta$ be a Borel subset of $\LambHF{H_0}.$
For any $ f, g \in \hilb,$
$$
  \scal{E_\Delta  f}{E_\Delta g} = \int_\Delta \scal{ f(\lambda)}{g(\lambda)}\,d\lambda.
$$
\end{cor}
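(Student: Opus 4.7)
The plan is to combine the preceding Corollary \ref{C: euE(eDelta f)=chi Delta(l)f(l)} with the isometry property of $\euE$ on the absolutely continuous subspace (Proposition \ref{P: euE is an isometry}).

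First, I would observe that $E_\Delta f$ and $E_\Delta g$ are absolutely continuous vectors with respect to $H_0$. Indeed, since $\Delta \subset \LambHF{H_0}$ and, by Corollary \ref{C: Lambda is a core of sing. sp}, the complement $\mbR \setminus \LambHF{H_0}$ is a core of the singular spectrum of $H_0$, the singular part of the spectral measure of any vector is supported off $\Delta$; equivalently, $E_\Delta$ commutes with $P^{(a)}(H_0)$ and $E_\Delta P^{(s)}(H_0) = 0$. Hence $E_\Delta f, E_\Delta g \in \hilba(H_0)$.

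Next, by Proposition \ref{P: euE is an isometry}, the operator $\euE \colon \hilb \to \euH$ is isometric on $\hilba(H_0)$, so
$$
  \scal{E_\Delta f}{E_\Delta g}_\hilb = \scal{\euE(E_\Delta f)}{\euE(E_\Delta g)}_\euH.
$$
Applying Corollary \ref{C: euE(eDelta f)=chi Delta(l)f(l)}, we have $\euE(E_\Delta f)(\lambda) = \chi_\Delta(\lambda) f(\lambda)$ and $\euE(E_\Delta g)(\lambda) = \chi_\Delta(\lambda) g(\lambda)$ for a.e.\ $\lambda \in \LambHF{H_0}$. Substituting into the definition of the scalar product on $\euH$,
$$
  \scal{\euE(E_\Delta f)}{\euE(E_\Delta g)}_\euH
  = \int_{\LambHF{H_0}} \chi_\Delta(\lambda) \scal{f(\lambda)}{g(\lambda)}_\hlambda\, d\lambda
  = \int_\Delta \scal{f(\lambda)}{g(\lambda)}_\hlambda\, d\lambda,
$$
which yields the claimed identity.

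There is no real obstacle here; the statement is essentially a polarization of Lemma \ref{L: (xi,eta)=int (xi(l),eta(l))dl} localized to $\Delta$, and all the ingredients (isometry of $\euE$ on $\hilba$, the absolute continuity of $E_\Delta f$, and the pointwise formula for $\euE(E_\Delta f)$) have already been established in the preceding lemmas and corollaries. The only subtlety worth double-checking is that $E_\Delta f \in \hilba(H_0)$, which hinges on the fact that $\mbR \setminus \LambHF{H_0}$ supports the singular part of $H_0$; this is exactly Corollary \ref{C: Lambda is a core of sing. sp}.
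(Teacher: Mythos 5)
Your proof is correct and matches the approach the paper intends (the paper states this corollary without proof, as an immediate consequence of the preceding lemma and Corollary \ref{C: euE(eDelta f)=chi Delta(l)f(l)}). The key observations you supply — that $E_\Delta f, E_\Delta g \in \hilba(H_0)$ because $\Delta \subset \LambHF{H_0}$ and Corollary \ref{C: Lambda is a core of sing. sp}, then polarizing the isometry and substituting $\euE(E_\Delta f)(\lambda) = \chi_\Delta(\lambda) f(\lambda)$ — are exactly the intended chain of reasoning, and the identification $\scal{E_\Delta f}{E_\Delta g}_\hilb = \scal{\euE(E_\Delta f)}{\euE(E_\Delta g)}_\euH$ is equivalent to invoking Lemma \ref{L: (xi,eta)=int (xi(l),eta(l))dl}.
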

\begin{prop} \label{P: euE is unitary}
  The map $\euE \colon \hilba \to \euH$ is unitary.
\end{prop}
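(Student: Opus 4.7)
The plan is to use Proposition \ref{P: euE is an isometry}, which already gives that $\euE$ restricted to $\hilba$ is an isometry into $\euH$. Since an isometry has closed range, it suffices to prove that the image $\euE(\hilba)$ is dense in $\euH$, and then surjectivity follows.

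To produce a dense subset inside the image, I would invoke Proposition \ref{P: linear combinations of eta j}: finite linear combinations of the vector-functions $\chi_\Delta(\lambda)\phi_j(\lambda)$, with $\Delta$ ranging over Borel subsets of $\LambHF{H_0}$ and $j=1,2,\ldots$, are dense in $\euH$. Thus it is enough to exhibit, for each such pair $(\Delta,j)$, a vector of $\hilba$ whose image under $\euE$ equals $\chi_\Delta(\lambda)\phi_j(\lambda)$.

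The natural candidate is $P^{(a)} E_\Delta \phi_j$, which lies in $\hilba$ since $\hilba$ is $H_0$-invariant and $P^{(a)}$ maps into it. By Corollary \ref{C: euE(eDelta f)=chi Delta(l)f(l)} applied to $f=\phi_j$, we have $\euE(E_\Delta\phi_j)(\lambda)=\chi_\Delta(\lambda)\phi_j(\lambda)$ for a.e.\ $\lambda$. Decomposing $E_\Delta\phi_j=P^{(a)}E_\Delta\phi_j+P^{(s)}E_\Delta\phi_j$ and using that $\euE$ vanishes on $\hilbs$ (Proposition \ref{P: euE is an isometry} again), we conclude
\[
 \euE(P^{(a)}E_\Delta\phi_j)(\lambda)=\chi_\Delta(\lambda)\phi_j(\lambda)\quad\text{a.e.\ }\lambda.
\]
Hence every vector in the dense set described above lies in $\euE(\hilba)$.

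Combining the two facts, $\euE(\hilba)$ is a closed subspace of $\euH$ containing a dense subset, so it coincides with $\euH$. Together with the isometry property from Proposition \ref{P: euE is an isometry}, this yields that $\euE\colon\hilba\to\euH$ is unitary. The only point that requires care is the replacement of $E_\Delta\phi_j$ by $P^{(a)}E_\Delta\phi_j$; however, since $\mbR\setminus\LambHF{H_0}$ is a core of the singular spectrum (Corollary \ref{C: Lambda is a core of sing. sp}) and $\Delta\subset\LambHF{H_0}$, one actually has $E_\Delta=E_\Delta^{(a)}$, which makes this step automatic.
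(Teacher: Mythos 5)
Your proof is correct and follows exactly the paper's route: isometry from Proposition \ref{P: euE is an isometry}, Corollary \ref{C: euE(eDelta f)=chi Delta(l)f(l)} to exhibit preimages of the generators $\chi_\Delta(\cdot)\phi_j(\cdot)$, and density from Proposition \ref{P: linear combinations of eta j}. The extra care you take with $P^{(a)}E_\Delta\phi_j$ versus $E_\Delta\phi_j$ (and the observation that $E_\Delta=E_\Delta^{(a)}$ for $\Delta\subset\LambHF{H_0}$) is a correct and slightly more explicit version of what the paper leaves implicit.
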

\begin{proof}
It has already been proven (Proposition \ref{P: euE is an isometry})
that~$\euE$ is an isometry with initial space~$\hilba.$
So, it is enough to show that the range of~$\euE$ coincides with~$\euH.$
Corollary \ref{C: euE(eDelta f)=chi Delta(l)f(l)} implies
% \ref{C: if ED xi=ED eta then xi(l)=eta(l)} shows that if
% $ f(\cdot) = \euE(f),$ then $\chi_\Delta(\cdot) f(\cdot) = \euE(E_\Delta f),$
% where $\Delta \subset \Lambda$ is an arbitrary Borel subset of $\LambHF{H_0}$
% and $f \in \hilb.$ This means
that the range of~$\euE$ contains all functions of the form $\chi_\Delta(\cdot) \phi_j(\cdot),$
where $\Delta$ is an arbitrary Borel subset of $\LambHF{H_0}$ and~$j = 1,2,3\ldots$
Consequently, Proposition \ref{P: linear combinations of eta j} completes the proof.
% if $ f(\lambda)$ is an $\euH$-measurable vector-function,
% then for any bounded Borel set $\Delta$ the vector-function $\chi_\Delta(\lambda) f(\lambda)$
% is also $\euH$-measurable. Now, Proposition \ref{P: linear combinations of eta j} completes the proof.
\end{proof}

\subsection{Diagonality of~$H_0$ in $\euH$}
The aim of this subsection is to prove Theorem \ref{T: H0 is diagonal},
which asserts that the direct integral $\euH$ is a spectral representation of~$\hilb$ for the operator~$H^{(a)}_0.$

Using standard step-function approximation argument, it follows from Corollary \ref{C: euE(eDelta f)=chi Delta(l)f(l)} that
\begin{thm} \label{T: euE h(H0)f=...} For any bounded Borel function $h$ on $\LambHF{H_0}$
and any $f \in \hilb$
$$
  \euE_\lambda(h(H_0)f) = h(\lambda) \euE_\lambda f \qquad \text{for a.e.} \ \lambda \in \Lambda.
$$
\end{thm}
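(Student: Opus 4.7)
The plan is to bootstrap from Corollary~\ref{C: euE(eDelta f)=chi Delta(l)f(l)} using the standard three-step ladder: characteristic functions $\to$ simple functions $\to$ bounded Borel functions, via uniform approximation.

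First, for $h = \chi_\Delta$ with $\Delta \subset \LambHF{H_0}$ a Borel set, one has $h(H_0) = E_\Delta,$ so Corollary~\ref{C: euE(eDelta f)=chi Delta(l)f(l)} gives exactly
$$
  \euE_\lambda(h(H_0)f) = \chi_\Delta(\lambda)\, f(\lambda) = h(\lambda)\,\euE_\lambda f
$$
for a.e.\ $\lambda \in \LambHF{H_0}.$ By linearity the same identity holds for any simple Borel function $h = \sum_{j=1}^N c_j \chi_{\Delta_j}$ on $\LambHF{H_0}.$

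Next, for an arbitrary bounded Borel $h,$ choose a sequence of simple Borel functions $h_n$ on $\LambHF{H_0}$ with $\sup_\lambda |h_n(\lambda) - h(\lambda)| \to 0$ and $\sup_n \|h_n\|_\infty \leq \|h\|_\infty.$ Two limits have to be compared. On the operator side, the Spectral Theorem gives $\|h_n(H_0) - h(H_0)\| \to 0,$ hence $h_n(H_0)f \to h(H_0)f$ in $\hilb;$ since $\euE\colon \hilb \to \euH$ is bounded (Proposition~\ref{P: euE is an isometry}), we obtain
$$
  \euE(h_n(H_0)f) \longrightarrow \euE(h(H_0)f) \quad \text{in } \euH.
$$
On the multiplication side, $\euE f \in \euH$ and $h_n \to h$ uniformly, so $h_n(\cdot)(\euE f)(\cdot)$ converges to $h(\cdot)(\euE f)(\cdot)$ in $\euH,$ because
$$
  \int_{\LambHF{H_0}} |h_n(\lambda)-h(\lambda)|^2 \|\euE_\lambda f\|_{\hlambda}^2\,d\lambda \leq \|h_n - h\|_\infty^2\,\|\euE f\|_\euH^2 \longrightarrow 0.
$$

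By the previous step applied to each simple $h_n,$ the two sides of the display agree pointwise a.e.\ for every $n,$ hence their $\euH$-limits coincide as elements of $\euH.$ Finally, passing to a subsequence for which convergence in $\euH$ is pointwise a.e.\ in the $\hlambda$-norm on $\LambHF{H_0},$ we conclude that
$$
  \euE_\lambda(h(H_0)f) = h(\lambda)\,\euE_\lambda f \quad \text{for a.e. } \lambda \in \LambHF{H_0}.
$$
There is no real obstacle: the only mildly subtle point is the passage from norm convergence in $\euH$ to pointwise a.e.\ equality of vector-valued functions, which is handled by extracting an a.e.-convergent subsequence on both sides simultaneously.
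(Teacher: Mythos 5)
Your proof is correct and is exactly the argument the paper invokes: the paper states only that the theorem follows from Corollary~\ref{C: euE(eDelta f)=chi Delta(l)f(l)} "using standard step-function approximation argument," and your three-step ladder (characteristic functions via the corollary, simple functions by linearity, bounded Borel functions by uniform approximation using boundedness of $\euE$ from Proposition~\ref{P: euE is an isometry}) is precisely that argument, spelled out. The final subsequence-extraction step is actually superfluous, since the conclusion is just the statement that two elements of $\euH$ coincide, which is what you already have once both $\euH$-limits are identified.
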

%
% \begin{thm} For any bounded continuous function $h$ on $\LambHF{H_0}$
% and any $f \in \hilb_1$
% $$
%   \euE_\lambda(h(H_0)f) = h(\lambda) \euE_\lambda f \qquad \text{for all} \ \lambda \in \Lambda.
% $$
% \end{thm}
%
\noindent This theorem implies the following result.
\begin{thm} \label{T: H0 is diagonal}
$H_0^{(a)}$ is naturally isomorphic to the operator of multiplication by $\lambda$ on~$\euH$
via the unitary mapping $\euE \colon \hilba \to \euH:$
$$
  \euE_\lambda(H_0  f) = \lambda \euE_\lambda  f \qquad \text{for a.e.} \ \lambda \in \mbR.
$$
\end{thm}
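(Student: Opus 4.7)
The plan is to deduce this from Theorem \ref{T: euE h(H0)f=...} by a truncation-and-limit argument, since that theorem requires $h$ to be bounded while here we need $h(\lambda)=\lambda$, which is not bounded on $\mbR$.

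First I would fix $f \in \dom(H_0)$ (the equation $H_0 f$ requires this) and, for each $N \in \mbN$, introduce the bounded Borel function
$$
  h_N(\mu) := \mu \, \chi_{[-N,N]}(\mu), \qquad \mu \in \mbR.
$$
Applying Theorem \ref{T: euE h(H0)f=...} to $h_N$ gives, for every $N$,
$$
  \euE_\lambda(h_N(H_0) f) = h_N(\lambda) \, \euE_\lambda f \qquad \text{for a.e.\ } \lambda \in \LambHF{H_0}.
$$
By the spectral theorem, $h_N(H_0) f = E^{H_0}_{[-N,N]} H_0 f$, and since $f \in \dom(H_0)$ we have $H_0 f \in \hilb$, so $h_N(H_0) f \to H_0 f$ in $\hilb$ as $N \to \infty$. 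By Proposition \ref{P: euE is an isometry} the operator $\euE \colon \hilb \to \euH$ is bounded, hence
$$
  \euE(h_N(H_0) f) \to \euE(H_0 f) \quad \text{in } \euH.
$$

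Next I would analyse the right-hand side: the scalar-multiplication functions $\lambda \mapsto h_N(\lambda) \euE_\lambda f$ converge pointwise to $\lambda \mapsto \lambda \euE_\lambda f$ for every $\lambda$, and moreover they converge in $\euH$ to this function provided $\lambda \euE_\lambda f$ is square-summable; this square-summability follows from the computation (valid in the bounded case via Theorem \ref{T: euE h(H0)f=...}, and extended by Fatou's lemma applied to the increasing nonnegative integrands $|h_N(\lambda)|^2 \|\euE_\lambda f\|^2 = \|\euE_\lambda(h_N(H_0)f)\|^2$) that
$$
  \int_{\LambHF{H_0}} |\lambda|^2 \, \|\euE_\lambda f\|_{\hlambda}^2 \, d\lambda \leq \|\euE(H_0 f)\|_\euH^2 \leq \|H_0 f\|^2.
$$
Thus both sides of the identity for $h_N$ converge in $\euH$ to well-defined elements, and the equality passes to the limit, giving
$$
  \euE_\lambda(H_0 f) = \lambda \, \euE_\lambda f \quad \text{in } \euH,
$$
i.e.\ pointwise for a.e.\ $\lambda \in \LambHF{H_0}$ after passing to a subsequence converging a.e. Since the complement of $\LambHF{H_0}$ has Lebesgue measure zero, this is the stated equality for a.e.\ $\lambda \in \mbR$.

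Finally, to obtain the ``natural isomorphism'' assertion, I would combine this identity with Proposition \ref{P: euE is unitary}: the restriction of $\euE$ to $\hilba$ is a unitary onto $\euH$, and $\hilba$ is an invariant subspace for $H_0$; the displayed identity, restricted to $f \in \hilba \cap \dom(H_0)$ (a core for $H_0^{(a)}$), says precisely that $\euE$ intertwines $H_0^{(a)}$ with the multiplication operator $M_\lambda$ on $\euH$. The main (minor) obstacle is just the careful handling of the unbounded function $\mu \mapsto \mu$: one has to verify that $\lambda \, \euE_\lambda f \in \euH$ before the limit can be taken in $\euH$, which is exactly where the Fatou-type estimate above enters.
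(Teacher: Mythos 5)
Your truncation argument is correct, and it fills in the implication the paper only asserts: just before Theorem \ref{T: H0 is diagonal} the author remarks that it follows from Theorem \ref{T: euE h(H0)f=...} (``This theorem implies the following result''), and your proof is precisely a careful unwinding of that implication, with the cut-off functions $h_N(\mu) = \mu\chi_{[-N,N]}(\mu)$, the boundedness of $\euE$ from Proposition \ref{P: euE is an isometry}, and a Fatou/dominated-convergence step to justify that $\lambda \mapsto \lambda\,\euE_\lambda f$ lies in $\euH$. The paper, however, then deliberately gives a \emph{different} proof: it tests the claimed identity against $g(\lambda)$ for $g \in E_\Delta\hilb$, reduces to $f = E_\Delta\phi_j$, $g = E_\Delta\phi_k$, and invokes the Yafaev-type lemma (Lemma \ref{L: l(f,Eg)'=(Hf,Eg)'}) that $\lambda\,\tfrac{d}{d\lambda}\la f, E_\lambda g\ra = \tfrac{d}{d\lambda}\la H_0 f, E_\lambda g\ra$ a.e., together with Corollary \ref{C: (ED xi,ED eta) = int D (xi l,eta l)}, so the unbounded multiplier $\lambda$ is handled via a classical a.e.-differentiation fact about spectral measures rather than by truncation. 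Your route is more elementary and self-contained (it avoids the external differentiation lemma), at the cost of the explicit $N\to\infty$ limit argument; the paper's route is shorter once the lemma is granted. One minor point: after you identify the two $\euH$-limits of $\euE(h_N(H_0)f)$, equality in $\euH$ already means equality a.e., so the remark about ``passing to a subsequence'' is unnecessary — though it does no harm.
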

\noindent Nonetheless, we give another proof of this theorem.
\begin{lemma} \label{L: l(f,Eg)'=(Hf,Eg)'}~\cite[(1.3.12)]{Ya}
  Let~$H$ be a self-adjoint operator on Hilbert space~$\hilb,$ and let $f,g \in \hilb.$
  Then for a.e. $\lambda \in \mbR$
  $$
    \lambda \frac d{d\lambda} \la f, E_\lambda g \ra = \frac d{d\lambda} \la H_0 f, E_\lambda g \ra,
  $$
\end{lemma}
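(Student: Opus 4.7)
The plan is to realize both derivatives as densities of the absolutely continuous parts of two naturally related complex measures on $\mbR$, and then relate those two measures via the spectral theorem.

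First, assuming $f \in \dom(H_0)$ (which is implicit since $H_0 f$ appears), I would introduce the complex spectral measure $m_{f,g}(\Delta) := \scal{f}{E_\Delta g}$, whose distribution function is $F(\lambda) := \scal{f}{E_\lambda g}$. By the spectral theorem, using that $E_\Delta$ commutes with $H_0$ on $\dom(H_0)$ and that $dm_{f,E_\Delta g}(\mu) = \chi_\Delta(\mu)\,dm_{f,g}(\mu)$, one obtains
$$
  \scal{H_0 f}{E_\lambda g} = \int_{(-\infty,\lambda)} \mu\, dm_{f,g}(\mu).
$$
Thus $G(\lambda) := \scal{H_0 f}{E_\lambda g}$ is the distribution function of the complex measure $d\nu(\mu) := \mu\, dm_{f,g}(\mu)$, while $F$ is the distribution function of $m_{f,g}$.

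Second, I would apply the Lebesgue decomposition $m_{f,g} = m_{f,g}^{(a)} + m_{f,g}^{(s)}$. Since the identity map $\mu \mapsto \mu$ is everywhere finite and locally bounded, the decomposition of $\nu$ into absolutely continuous and singular parts is simply
$$
  d\nu^{(a)}(\mu) = \mu\, dm_{f,g}^{(a)}(\mu), \qquad d\nu^{(s)}(\mu) = \mu\, dm_{f,g}^{(s)}(\mu),
$$
because $\mu\, dm_{f,g}^{(a)}$ is absolutely continuous (with density $\mu \cdot dm_{f,g}^{(a)}/d\mu$) while $\mu\, dm_{f,g}^{(s)}$ remains supported on the same null Borel support of $m_{f,g}^{(s)}$.

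Third, I would invoke the de la Vall\'ee Poussin decomposition theorem (Theorem \ref{T: Vallee Poussin}), or equivalently Fatou's theorem (Theorem \ref{T: Fatou}), which identifies the derivative of the distribution function of a finite complex measure with the density of its absolutely continuous part, a.e. Applied to $F$ and $G$ this yields
$$
  F'(\lambda) = \frac{dm_{f,g}^{(a)}}{d\lambda}(\lambda) \quad \text{and} \quad G'(\lambda) = \lambda \cdot \frac{dm_{f,g}^{(a)}}{d\lambda}(\lambda)
$$
for a.e. $\lambda \in \mbR$, from which the claimed identity $G'(\lambda) = \lambda F'(\lambda)$ follows at once. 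There is essentially no obstacle: the only subtlety is the finiteness of $\nu$, which is guaranteed for $f \in \dom(H_0)$ by the estimate $\int |\mu|\, d|m_{f,g}|(\mu) \leq \|H_0 f\|\cdot\|g\|$ from the spectral calculus. The crux is the elementary observation that multiplication by the spectral parameter preserves the Lebesgue decomposition of a finite measure.
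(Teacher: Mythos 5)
The paper itself gives no proof of this lemma; it is quoted verbatim from Yafaev's book (the citation $[1.3.12]$ in \cite{Ya}) and used as a black box in the proof of Theorem \ref{T: H0 is diagonal}. Your task was therefore to reconstruct the argument, and your reconstruction is correct.

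Your route is the natural one: identify $F(\lambda)=\scal{f}{E_\lambda g}$ and $G(\lambda)=\scal{H_0f}{E_\lambda g}$ as distribution functions of the spectral measure $m_{f,g}$ and of $\nu=\mu\,dm_{f,g}$ respectively (the identity $G(\lambda)=\int_{(-\infty,\lambda)}\mu\,dm_{f,g}(\mu)$ follows from $\scal{H_0f}{E_\lambda g}=\scal{H_0E_\lambda f}{g}$ together with the spectral theorem; equivalently, $\nu=m_{H_0f,g}$, which also gives finiteness of $\nu$ at once), observe that multiplication by the locally bounded function $\mu\mapsto\mu$ preserves the Lebesgue decomposition, and then apply the a.e.\ differentiation statement available in the paper (Theorem \ref{T: Fatou} or Theorem \ref{T: Vallee Poussin}, applied separately to real and imaginary parts since $m_{f,g}$ is a complex measure) to convert the equality of absolutely continuous densities into the claimed a.e.\ identity of derivatives. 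Two small points worth making explicit in a final write-up: (a) the hypothesis $f\in\dom(H_0)$ must be added to the statement, as you noted, since otherwise $\scal{H_0f}{E_\lambda g}$ is undefined; (b) the inequality $\int|\mu|\,d|m_{f,g}|\le\norm{H_0f}\,\norm{g}$ is best justified via the Cauchy--Schwarz estimate $\sum_k|\mu_k|\,|m_{f,g}(\Delta_k)|\le(\sum_k\mu_k^2\norm{E_{\Delta_k}f}^2)^{1/2}(\sum_k\norm{E_{\Delta_k}g}^2)^{1/2}$ over partitions, or more simply by observing $\nu=m_{H_0f,g}$ so that $|\nu|(\mbR)\le\norm{H_0f}\,\norm{g}$ by the standard total-variation bound for spectral measures of two vectors. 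With these points made precise, the proof stands.
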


\noindent {\it Proof of Theorem \ref{T: H0 is diagonal}.} \
It is enough to show that for any $ f \in E_\Delta \hilb,$ and for a.e. $\lambda \in \Delta$
the equality $\euE_\lambda (H_0 f) = \lambda f(\lambda)$ holds, where $\Delta$ is any bounded Borel subset of $\Lambda.$

This is equivalent to the statement: for any $g \in E_\Delta \hilb$
$$
  \int_\Delta \scal{\euE_\lambda(H_0 f)}{g(\lambda)}\,d\lambda = \int_\Delta \lambda\scal{ f(\lambda)}{g(\lambda)}\,d\lambda.
$$
By continuity of~$H_0E_\Delta^{H_0}$ and of the multiplicator $\lambda \chi_\Delta(\lambda),$
it is enough to consider the case of $ f = E_\Delta \phi_j \in \hilba$ and $g = E_\Delta \phi_k \in \hilba.$
Then, by (\ref{F: (phi i,Im R phi j) = der (phi i,E(l)phi j)}) and Corollary \ref{C: if ED xi=ED eta then xi(l)=eta(l)},
the right hand side of the previous formula is
\begin{equation*}
 \begin{split}
  \int_\Delta \lambda \frac d{d\lambda}\scal{\phi_j}{E_\lambda\phi_k}\,d\lambda
    = \int_\Delta \frac d{d\lambda}\scal{H_0\phi_j}{E_\lambda\phi_k}\,d\lambda
    = \scal{H_0\phi_j}{E_\Delta \phi_k},
 \end{split}
\end{equation*}
where Lemma \ref{L: l(f,Eg)'=(Hf,Eg)'} has been used. Now,
Corollary \ref{C: (ED xi,ED eta) = int D (xi l,eta l)} completes the proof.
$\Box$

\bigskip

A complete set of unitary invariants of the absolutely continuous part~$H_0^{(a)}$
of the operator~$H_0$ is given by the sequence $(\Lambda_0, \Lambda_1, \Lambda_2, \ldots),$
where $$\Lambda_n = \set{\lambda \in \LambHF{H_0} \colon \dim \hlambda = n}.$$

One of the versions of the spectral theorem says that there exists
a direct integral representation
$$
  \hilba \cong \int_{\hat \sigma}^\oplus \hlambda\,\rho(d\lambda),
$$
of the Hilbert space~$\hilba,$ which diagonalizes~$H_0^{(a)},$ where $\hat \sigma$
is a core of the spectrum of~$H_0,$ and $\rho$ is a measure from the spectral type of~$H_0.$
Actually, instead of changing the measure $\rho$ in its spectral type, it is possible to
change (renormalize) the scalar product of the fiber Hilbert spaces~$\hlambda.$
In the construction of the direct integral, given in this section, a frame in~$\hilb$
in particular fixes a renormalization of scalar products in fiber Hilbert spaces.

The operator $\euE_\lambda$ is the evaluation operator which answers the question $(\ref{F: What is f(l)}).$
As we have seen, for any vector $f \in \hilb_1$ and any point $\lambda$ of the set of full Lebesgue measure $\LambHF{H_0},$
one can define the value of the vector $f$ at $\lambda$ by the formula
$$
  f(\lambda) = \euE_\lambda f.
$$
Vectors $f,$ which belong to $\hilb_1,$ can be defined at every point of the set $\LambHF{H_0},$ since they are regular;
or, rather the contrary, vectors of $\hilb_1$ should be considered regular, since they can be defined at every point of $\LambHF{H_0}.$
If a vector $f$ is not regular, that is, if $f \notin \hilb_1,$ then one can define its value only at almost every
point of $\LambHF{H_0}.$
Results of this section fully justify this interpretation of the operator $\euE_\lambda.$

\begin{rems*} \rm Recall that a vector $f$ is called cyclic for a self-adjoint operator $H_0,$
if vectors $H_0^k f,$ $k=0,1,2,\ldots$ generate the whole Hilbert space $\hilb.$
The construction of the direct integral obviously implies that if $H_0$ has a cyclic vector then $\dim \mathfrak h_\lambda \leq 1$
for all $\lambda \in \LambHF{H_0}.$
\end{rems*}

\begin{rems*}
  Clearly, the family $\Omega_1 :=\set{e_j(\lambda)}$ is a measurability base
and it generates the same set of measurable vector-functions as the measurability base $\Omega_0:=\set{\phi_j(\lambda)};$
that is $\hat \Omega_0 = \hat \Omega_1.$ The family $\Omega_1$ is an orthonormal measurability base.
\end{rems*}

\section{The resonance set $R(\lambda; \set{H_r}, F)$}
\label{S: Tr(z)}

In the previous section we have defined the evaluation operator $\euE_\lambda.$
The evaluation operator is defined on the set $\LambHF{H_0}.$ Since eventually
the operator $H_0$ is going to be perturbed, %to $H_r = H_0+V_r,$ where $V$ is a perturbation,
one needs to investigate what happens to the set $\LambHF{H_0}$ when $H_0$ is perturbed.
Clearly, the complement of $\LambHF{H_0}$ consists of points where the operator $H_0$ behaves in some sense badly.
Indeed, by Corollary \ref{C: Lambda is a core of sing. sp} the set $\mbR\setminus \LambHF{H_0}$ is a core of the singular spectrum of $H_0.$
So, one of the reasons, for which a vector $f \in \hilb$ cannot be defined at some point $\lambda\in\mbR$
is that $\lambda$ can be an eigenvalue of $H_0.$

Many results of this section are generally well-known (for rank-one
perturbations), cf. e.g.~\cite{Ar57,Agm,SW,SimTrId2}. I do not
claim any originality for them.

So far we have considered a single fixed self-adjoint operator~$H_0$ on a Hilbert space~$\hilb$
with a frame~$F.$ Now we are going to perturb~$H_0$ by self-adjoint trace-class operators.
%Here $r$ is a real number (coupling constant).

We say that an operator-function $\mbR \ni r \mapsto A(r)$ is piecewise analytic in appropriate norm,
if there is a finite or infinite increasing sequence of numbers $r_j, \ j \in \mbZ$ with no finite accumulation points,
such that the restriction of $A(r)$ to any interval $[r_{j-1},r_j]$ has analytic continuation in the norm to a neighbourhood of
that interval. We do not assume continuity of a piecewise-analytic path.

Given a frame $F \in \clL_2(\hilb,\clK)$ in a Hilbert space $\hilb,$
we introduce a vector space $\clA(F)$ of trace-class operators by
\begin{equation} \label{F: clA(F)}
  \clA(F) = \set{ FJF^* \colon J \in \clB(\clK)}.
\end{equation}
For an operator $FJF^* \in \clA(F)$ we define its norm by
$$
  \norm{FJF^*}_{\clA(F)} = \norm{J}.
$$
Obviously, the vector space $\clA(F)$ with such a norm is a Banach space.

\begin{assump}\label{A: assumption on Hr}
Let $F \colon \hilb \to \clK$ be a frame operator in a Hilbert space $\hilb.$
We assume that the path
$$
  \mbR \ni r \mapsto H_r
$$
of self-adjoint operators in $\hilb$ satisfies the following conditions:
\\ (i) \ $H_r = H_0 + V_r,$
\\ (ii) \ $V_r = F^* J_r F,$ where $J_r$ is a bounded self-adjoint operator on the Hilbert space $\clK,$
\\ (iii) \ the path $$\mbR \ni r \mapsto J_r \in \clB(\clK)$$ is continuous and piecewise real-analytic.
\end{assump}
In other words, $H_r \in H_0 + \clA(F)$ and the path $\set{H_r}$ is $\clA(F)$-analytic.

Clearly, $V_0 = 0.$ Obviously, the path
$\set{V_r}$ is continuous and piecewise real-analytic with values in $\clL_1(\clK),$ so that the trace-class derivative
$$
  \dot V_r = F^* \dot J_r F
$$
exists and it is trace-class. Since the derivative $\dot V_r$ belongs to $\clA(F),$ it can be considered as an operator
$\hilb_{-1} \to \hilb_1.$ Clearly, $\dot V_r$ satisfies the following condition:
\begin{equation} \label{F: V hilb(-1) to hilb(1)}
  \dot V_r \colon \hilb_{-1} \to \hilb_1 \quad \text{is a bounded operator}.
\end{equation}

Assumption \ref{A: assumption on Hr} is not too restrictive, as the following lemma shows.
\begin{lemma} \label{L: exists F for H0+rV} Let $H$ be a self-adjoint operator in $\hilb$ and let $V$ be a self-adjoint trace-class operator in $\hilb.$
There exists a frame $F \in \clL_2(\hilb,\clK)$ and a path $\set{H_r}$ which satisfies Assumption \ref{A: assumption on Hr},
such that $H_0=H$ and $H_1 = H+V.$
\end{lemma}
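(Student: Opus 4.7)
The plan is to make the path $r\mapsto H_r$ depend linearly on $r$, namely $H_r := H + rV$, so that real-analyticity of $J_r$ is automatic; then the entire problem reduces to producing a single frame $F$ and a single bounded self-adjoint operator $J$ on $\clK$ such that $V = F^* J F$. Setting $J_r := rJ$ immediately verifies every clause of Assumption \ref{A: assumption on Hr}, with $H_0 = H$ and $H_1 = H+V$.

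To produce $F$ and $J$, I would diagonalise $V$. Since $V$ is self-adjoint and trace-class (hence compact), the spectral theorem provides an orthonormal basis $(e_j)_{j=1}^\infty$ of $\hilb$ with $Ve_j = \mu_j e_j$, where $\mu_j \in \mbR$, $\sum_j |\mu_j| < \infty$, and the basis may be chosen so that the sequence $(|\mu_j|)$ is non-increasing in $j$. Some $\mu_j$ may be zero; this is precisely the source of the one nontrivial point below. Define positive weights
\[
   \sigma_j^2 \;:=\; |\mu_j| + 2^{-j}, \qquad j = 1,2,\ldots
\]
The sequence $(\sigma_j)$ is positive, strictly decreasing and $\ell_2$-summable. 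Take $\clK := \hilb$, $\phi_j = \psi_j := e_j$, $\kappa_j := \sigma_j$, and
\[
   F \;:=\; \sum_{j=1}^\infty \sigma_j \scal{e_j}{\cdot}\, e_j \;\in\; \clL_2(\hilb);
\]
this is a frame in the sense of the paper: it is Hilbert--Schmidt, self-adjoint, and has trivial kernel and cokernel because every $\sigma_j > 0$.

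Next, put
\[
   J \;:=\; \sum_{j=1}^\infty \frac{\mu_j}{\sigma_j^2}\,\scal{e_j}{\cdot}\,e_j,
\]
which is well-defined, bounded and self-adjoint on $\hilb$ since $|\mu_j/\sigma_j^2| \le 1$ by the choice of $\sigma_j$. Acting on the basis vectors one computes
\[
   F^* J F\, e_k \;=\; \sigma_k \cdot \frac{\mu_k}{\sigma_k^2}\cdot \sigma_k\, e_k \;=\; \mu_k e_k \;=\; V e_k,
\]
so $V = F^*JF$. Taking $J_r := rJ$ gives $V_r = F^* J_r F$ with $r \mapsto J_r$ real-analytic (in fact affine) in the operator norm of $\clB(\clK)$, completing the verification of Assumption \ref{A: assumption on Hr}.

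The only real obstacle is that the naive choice $F = |V|^{1/2}$ fails when $V$ has a nontrivial kernel, for then $F$ is not a frame (it loses injectivity, and hence triviality of cokernel, on $\ker V$). The small additive term $2^{-j}$ in $\sigma_j^2$ is precisely what fixes this: it preserves $\ell_2$-summability and the non-increasing order of the weights while ensuring $\sigma_j > 0$ for every $j$, and it re-enters $J$ only through the bounded ratio $\mu_j/\sigma_j^2$.
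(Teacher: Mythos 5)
Your construction is correct and follows essentially the same route as the paper: take the straight path $H_r = H+rV$, diagonalize $V$, and build a diagonal frame $F$ and a diagonal bounded $J$ with $V = F^*JF$. The paper does this with a case split — when $\ker V = \{0\}$ it takes $F = \sqrt{|V|}$ and $J = \operatorname{sign}(V)$, and when $\ker V \neq \{0\}$ it patches $F$ by adding an arbitrary injective self-adjoint Hilbert--Schmidt operator on $\ker V$ — whereas your choice $\sigma_j^2 = |\mu_j| + 2^{-j}$ treats both cases by one uniform formula. Both land in the same place, and your observation $|\mu_j|/\sigma_j^2 \le 1$ is exactly what is needed to make $J$ bounded.

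One small point to correct: the claim that $(e_j)$ can be enumerated so that $(|\mu_j|)$ is non-increasing is false whenever $V$ has both infinitely many nonzero eigenvalues and an infinite-dimensional kernel — a non-increasing sequence of nonnegative reals that contains infinitely many strictly positive terms and infinitely many zeros cannot exist. Hence your $(\sigma_j)$ need not be decreasing with your labeling. This does not damage the proof: what is actually used is only that $\sigma_j > 0$ for every $j$, $(\sigma_j)\in\ell_2$, and $|\mu_j|/\sigma_j^2 \le 1$, all of which hold for any enumeration; the operator $F$ is then Hilbert--Schmidt with trivial kernel and cokernel, so the decreasing listing of $s$-numbers required by the formal Definition of a frame is supplied automatically by the Schmidt decomposition of $F$, even if it permutes your indices. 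You should simply delete the non-increasing claim and keep only the three properties you actually need.
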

Proof is obvious.

Let
$$
  T_z(H_r) = F R_z(H_r) F^*.
$$

\begin{lemma} \label{L: An to A then An(-1) to A(-1)}
  If operators $A_\alpha,A \in \clBH$ are invertible and $A_\alpha \to A$ uniformly,
  then $A_\alpha^{-1} \to A^{-1}$ uniformly.
\end{lemma}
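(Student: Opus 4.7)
The plan is to prove this via the resolvent-type identity
\[
  A_\alpha^{-1} - A^{-1} = A_\alpha^{-1}\brs{A - A_\alpha}A^{-1},
\]
which is immediate from multiplying out. This gives the norm estimate
\[
  \norm{A_\alpha^{-1} - A^{-1}} \leq \norm{A_\alpha^{-1}} \cdot \norm{A - A_\alpha} \cdot \norm{A^{-1}},
\]
so the whole game reduces to showing that $\norm{A_\alpha^{-1}}$ stays bounded (uniformly in $\alpha$) once $\alpha$ is far enough along the net.

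The key step for the uniform bound is a Neumann series argument. I would write
\[
  A_\alpha = A + (A_\alpha - A) = A\brs{I + A^{-1}(A_\alpha - A)},
\]
and pick $\alpha_0$ so large that for all $\alpha \succeq \alpha_0$ one has $\norm{A^{-1}}\cdot \norm{A_\alpha - A} < 1/2.$ Such an $\alpha_0$ exists by the hypothesis that $A_\alpha \to A$ uniformly. Then the factor $I + A^{-1}(A_\alpha - A)$ is invertible by a convergent geometric series, with
\[
  \norm{\brs{I + A^{-1}(A_\alpha - A)}^{-1}} \leq \frac{1}{1 - \norm{A^{-1}}\cdot \norm{A_\alpha - A}} \leq 2,
\]
which yields $\norm{A_\alpha^{-1}} = \norm{\brs{I + A^{-1}(A_\alpha - A)}^{-1}A^{-1}} \leq 2\norm{A^{-1}}$ for all $\alpha \succeq \alpha_0.$

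Combining this uniform bound with the first identity gives
\[
  \norm{A_\alpha^{-1} - A^{-1}} \leq 2\norm{A^{-1}}^2 \cdot \norm{A_\alpha - A} \to 0,
\]
completing the proof. There is no genuine obstacle here; the only thing to be careful about is that the bound $\norm{A_\alpha^{-1}} \leq 2\norm{A^{-1}}$ is valid only eventually along the net, but since we only care about a limit statement, that is harmless.
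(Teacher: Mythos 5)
Your proof is correct, and its main skeleton is the same as the paper's: both start from the identity $A_\alpha^{-1}-A^{-1} = A_\alpha^{-1}(A-A_\alpha)A^{-1}$ and then reduce the problem to showing that $\norm{A_\alpha^{-1}}$ is eventually bounded along the net. The difference lies in how the eventual bound is obtained. You factor $A_\alpha = A\brs{I+A^{-1}(A_\alpha-A)}$ and invoke the Neumann series to bound $\norm{A_\alpha^{-1}}\le 2\norm{A^{-1}}$ once $\norm{A^{-1}}\norm{A_\alpha-A}<1/2$. The paper instead uses the characterization $\norm{A^{-1}}^{-1}=\inf_{\norm g=1}\norm{Ag}$ and the estimate $\norm{A_\alpha g}\ge\norm{Ag}-\norm{(A-A_\alpha)g}$ to show the infimum is eventually bounded away from $0.$ Both routes are elementary and standard; your Neumann-series version has the mild advantage that it also yields invertibility of $A_\alpha$ from that of $A$ for $\alpha$ far enough along the net (here superfluous, since invertibility is assumed), while the paper's argument leans more directly on the assumed invertibility. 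Either way, the conclusion and the quantitative content are the same.
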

%\begin{proof} Since
%$$
%  A_\alpha^{-1} - A^{-1} = A_\alpha^{-1} \brs{A - A_\alpha}A^{-1},
%$$
%it is enough to show that eventually $\norm{A_\alpha^{-1}} \leq \const.$
%Note that
%$$
%  \norm{A^{-1}} = \sup_{f \neq 0} \frac{\norm{A^{-1}f}}{\norm{f}} = \sup_{g \neq 0} \frac{\norm{g}}{\norm{Ag}} = \brs{\inf_{g\neq 0} \frac{\norm{Ag}}{\norm{g}}}^{-1}.
%$$
%So, we need to show that eventually
%$$
%  \inf_{\norm{g}=1} \norm{A_\alpha g} \geq c > 0.
%$$
%For this we write
%$$
%  \norm{A_\alpha g} \geq \norm{A g} - \norm{(A-A_\alpha) g}
%$$
%and observe that for some $c>0$ and for all unit length $g$ \ $\norm{A g} \geq c$ and that eventually $\norm{(A-A_\alpha) g} < \frac c2.$
%\end{proof}

The following lemma and its proof are well-known (cf. e.g.~\cite[Theorem 4.2]{Agm},~\cite[Lemma 4.7.8]{Ya}).
They are given for completeness. % \margcom{Do I need this lemma?}
\begin{lemma} \label{L: 1+rJT(z) is invertible}
The operator $1+J_r T_z(H_0)$ is invertible for all $r \in \mbR$
and all $z \in \mbC \setminus \mbR.$
\end{lemma}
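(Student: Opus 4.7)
The plan is to reduce invertibility of $1 + J_r T_0(z)$ on $\clK$ to invertibility of $1 + R_z(H_0) V_r$ on $\hilb$, and then obtain the latter from the self-adjointness of $H_r$ by the standard resolvent factorization.

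First I would observe that $H_r = H_0 + V_r$, with $V_r = F^*J_rF$, is self-adjoint on $\dom(H_0)$: indeed $J_r$ is bounded self-adjoint on $\clK$ and $F \in \clL_2(\hilb,\clK)$, so $V_r$ is a self-adjoint trace-class (in particular bounded) operator on $\hilb$, and the Kato--Rellich theorem (trivially, for bounded perturbations) yields that $H_r$ is self-adjoint. Consequently, for every $z \in \mbC \setminus \mbR$, both $H_0 - z$ and $H_r - z$ are boundedly invertible on $\hilb$.

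Next I would use the algebraic factorization
\begin{equation*}
  H_r - z = (H_0 - z) + V_r = (H_0 - z)\bigl(1 + R_z(H_0)V_r\bigr),
\end{equation*}
valid on $\dom(H_0)$. Multiplying by $R_z(H_0)$ on the left and $R_z(H_r)$ on the right shows that $1 + R_z(H_0)V_r$ is a bijection of $\hilb$ onto itself with bounded inverse, explicitly
\begin{equation*}
  \bigl(1 + R_z(H_0)V_r\bigr)^{-1} = 1 - R_z(H_r) V_r,
\end{equation*}
which one verifies directly from the second resolvent identity $R_z(H_r) - R_z(H_0) = -R_z(H_0)V_r R_z(H_r)$.

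The final step is the transfer from $\hilb$ to $\clK$ via the ``swap'' identity: setting $A = R_z(H_0)F^* \colon \clK \to \hilb$ and $B = J_rF \colon \hilb \to \clK$, we have $AB = R_z(H_0)V_r$ on $\hilb$ and $BA = J_r T_0(z)$ on $\clK$. The operator identity
\begin{equation*}
  \bigl(1 + BA\bigr)\bigl(1 - B(1+AB)^{-1}A\bigr) = 1 + BA - B\bigl[(1+AB)(1+AB)^{-1}\bigr]A = 1,
\end{equation*}
together with the analogous computation on the other side, shows that $1 + J_r T_0(z) = 1 + BA$ is invertible on $\clK$ with
\begin{equation*}
  \bigl(1 + J_r T_0(z)\bigr)^{-1} = 1 - J_r F R_z(H_r) F^*.
\end{equation*}
No step is really an obstacle: the content is entirely algebraic once self-adjointness of $H_r$ has been noted, so the only thing to be careful about is the bookkeeping between the two Hilbert spaces $\hilb$ and $\clK$ in the ``swap'' formula.
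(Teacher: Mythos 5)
Your proof is correct, but it takes a genuinely different route from the paper's. The paper first derives the Aronszajn identity $T_r(z)\bigl(1+J_rT_0(z)\bigr)=T_0(z)$ from the second resolvent identity, and then invokes the Fredholm alternative: since $F$ is Hilbert--Schmidt, $T_0(z)$ (hence $J_rT_0(z)$) is compact, so non-invertibility of $1+J_rT_0(z)$ would force a non-zero $\psi\in\clK$ with $(1+J_rT_0(z))\psi=0$; the Aronszajn identity then gives $T_0(z)\psi=0$, whence $\psi=0$, a contradiction. You instead show invertibility of $1+R_z(H_0)V_r$ directly on~$\hilb$ via the resolvent factorization $(1+R_z(H_0)V_r)^{-1}=1-R_z(H_r)V_r$ (itself a consequence of self-adjointness of $H_r$ and the second resolvent identity), and then transfer it to $\clK$ by the Sylvester-type ``swap'' identity relating $1+AB$ and $1+BA$ with $A=R_z(H_0)F^*$, $B=J_rF$. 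Both arguments ultimately hinge on the second resolvent identity, but your version is purely algebraic --- it does not use compactness of $T_0(z)$ at all --- and it has the side benefit of producing the explicit inverse $(1+J_rT_0(z))^{-1}=1-J_rT_r(z)$, which in the paper's notation is nothing but the operator form of the Aronszajn equation $T_r=T_0(1+J_rT_0)^{-1}$. The paper's compactness-based approach is shorter to state given the standing Hilbert--Schmidt hypothesis on $F$, and it is consistent with the Fredholm-analytic methods used throughout (e.g.\ in Lemma~\ref{L: R is discrete}), but your route is the more elementary and more informative one.
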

\begin{proof} The second resolvent equality implies that (Aronszajn's equation~\cite{Ar57}, cf. also~\cite{SW,SimTrId2})
\begin{equation} \label{F: T(z)(1+rJT0(z)) = T0(z)}
  T_z(H_r)(1+J_r T_z(H_0)) = T_z(H_0).
\end{equation}
%Proof.
%The second resolvent identity
%\begin{equation} \label{F: second rslv identity}
%  R_z(H_r) - R_z(H_0) = - R_z(H_r) V_r R_z(H_0) % = - R_z(H_0) V R_z(H_1)
%\end{equation}
%implies
%$$
%  R_z(H_r)(1 + V_r R_z(H_0)) = R_z(H_0).
%$$
%Using Assumption \ref{A: assumption on Hr}(ii) and multiplying this equality by $F$ from the left and by $F^*$ from the right, we get (\ref{F: T(z)(1+rJT0(z)) = T0(z)}).

Since $T_z(H_0)$ is compact, if $1+J_r T_z(H_0)$ is not invertible,
then there exists a non-zero $\psi \in \clK,$ such that
\begin{equation} \label{F: (1+rJT0(z))psi=0}
 (1+J_r T_z(H_0))\psi = 0.
\end{equation}
Combining this equality with~(\ref{F: T(z)(1+rJT0(z)) = T0(z)}) gives
$T_z(H_0)\psi = 0.$ Combining this equality with~(\ref{F: (1+rJT0(z))psi=0})
gives $\psi = 0.$ This contradiction shows that $1+J_rT_0(z)$ is invertible.
\end{proof}
While the operator $1+J_r T_z(H_0)$ is invertible for all non-real values of $z,$
the operator $1+J_r T_{\lambda+i0}(H_0)$ may not be invertible at some points.
The set of points where $1+J_r T_{\lambda+i0}(H_0)$ is not invertible
is of special importance.

\begin{prop} \label{P: def of resonance set}
Let~$\set{H_r, r \in [a,b]}$ be a path of self-adjoint operators on~$\hilb$ with frame~$F,$
which satisfies Assumption \ref{A: assumption on Hr}.
Let $\lambda \in \LambHF{H_0}.$ For any $s \in [a,b]$ the following assertions are equivalent:
  \begin{enumerate}
    \item[(1$_\pm$)] the operator $1 + J_s T_{\lambda\pm i0}(H_0)$ is not invertible;
    \item[(2$_\pm$)] the operator $1 + T_{\lambda\pm i0}(H_0)J_s $ is not invertible;
    \item[(3$_\pm$)] the operator $1 + V_s R_{\lambda\pm i0}(H_0)$ is not invertible in $\hilb_1;$
    \item[(4$_\pm$)] the operator $1 + R_{\lambda\pm i0}(H_0)V_s $ is not invertible in $\hilb_{-1}.$
  \end{enumerate}
\end{prop}
\begin{proof}
The condition (1$_\pm$) is equivalent to (2$_\pm$) by (\ref{F: spec mes(AB)=spec mes(BA)}).
The condition (1$_\pm$) is equivalent to (2$_\mp$) since a bounded operator $T$ is invertible if and only if $T^*$ is invertible.
Equivalence of (1$_\pm$) to (3$_\pm$) and equivalence of (2$_\pm$) to (4$_\pm$) follow from the fact that $F^*$ is an isomorphism
of $\clK$ and $\hilb_1$ and $F$ is an isomorphism of $\hilb_{-1}$ and $\clK.$
\end{proof}

\begin{defn} \label{D: R(lambda)}
We denote by
\begin{equation} \label{F: R(lambda)}
  R(\lambda; \set{H_r}, F) %:= \set{s \in \mbR \colon 1 + J_s T_{\lambda+i0}(H_0) \ \text{is not invertible}}
\end{equation}
the set of all those real numbers $s$ for which any (and hence all) of the conditions of Proposition \ref{P: def of resonance set} holds.
We call this set the \emph{resonance set} at $\lambda.$
\end{defn}

% The dependence of the set $R(\lambda; \set{H_r}, F)$ on the frame operator~$F$ is not big, since $-1$ belongs to the resolvent set of
% the operator $J_r T_{\lambda+i0}(H_0)$ if and only if $-1$ belongs to the resolvent set of
% $rVR_{\lambda+i0}(H_0)$ as an operator~$\hilb_1 \to \hilb_1.$

%\noindent The following lemma follows from the fact that the operator $T_{\lambda+i0}(H_0)$
%is compact.
\begin{lemma} \label{L: R is discrete} The set $R(\lambda; \set{H_r}, F)$ is discrete,
i.e. it has no finite accumulation points.
\end{lemma}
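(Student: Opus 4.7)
The plan is to apply the analytic Fredholm alternative (Theorem \ref{T: analytic Fredholm alternative}) to the holomorphic family $z \mapsto 1 + J_z T$ on each analytic piece of the path $r \mapsto J_r$, where $T := T_0(\lambda+i0)$. By the definition of $\LambHF{H_0}$ the operator $T$ is Hilbert--Schmidt, hence compact on $\clK$, so $J_z T \in \clL_\infty(\clK)$ for every bounded $J_z$. In these terms,
$$
  R(\lambda; \set{H_r}, F) = \set{r \in \mathbb{R} : 1 + J_r T \text{ is not invertible in } \clB(\clK)}.
$$
Since the transition points of the piecewise real-analytic path have no finite accumulation, it suffices to show that this set is finite on each closed analytic piece $[r_{j-1}, r_j]$.

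Fix such a piece and let $U_j \subset \mathbb{C}$ be a connected open complex neighborhood of $[r_{j-1}, r_j]$ to which $J_r$ extends holomorphically as $J_z$. Then $z \mapsto J_z T$ is a holomorphic family of compact operators on $U_j$, and Theorem \ref{T: analytic Fredholm alternative} dichotomizes the exceptional set $E_j := \set{z \in U_j : 1 + J_z T \text{ not invertible}}$ into either a discrete subset of $U_j$ (in which case $E_j \cap [r_{j-1}, r_j]$ is finite by compactness) or all of $U_j$. The lemma thus reduces to ruling out the second alternative on every piece, i.e.\ to exhibiting at least one invertibility point in each $U_j$.

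On the piece containing $r = 0$ this is immediate: since $V_0 = 0 = F^* J_0 F$ and $F$ has trivial kernel and cokernel, $J_0 = 0$, so $1 + J_0 T = 1$ is invertible. For the other pieces, I plan to use the auxiliary family $\Phi_y(z) := 1 + J_z T_0(\lambda + iy)$, which is holomorphic in $z \in U_j$ for each real $y > 0$. By Lemma \ref{L: 1+rJT(z) is invertible}, $\Phi_y(r)$ is invertible for every real $r$ (since $\lambda + iy$ is non-real), so the exceptional set of $\Phi_y$ in $U_j$ is discrete and disjoint from the real axis. As $y \to 0^+$, the convergence $T_0(\lambda+iy) \to T$ in Hilbert--Schmidt norm (Theorem \ref{T: Ya thm 6.1.9} together with the definition of $\LambHF{H_0}$) yields $\Phi_y(z) \to 1 + J_z T$ in operator norm uniformly on compact subsets of $U_j$.

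The main obstacle is to upgrade this uniform convergence into an invertibility point for $1 + J_z T$ somewhere in $U_j$. My approach is to work with the Hilbert--Carleman determinant $\det_2(1 + J_z T)$, a holomorphic function of $z \in U_j$ whose zero set is exactly $E_j$. Since $\det_2$ is continuous in the Hilbert--Schmidt topology, $\det_2 \Phi_y \to \det_2(1 + J_z T)$ uniformly on compacts of $U_j$ as $y \to 0^+$. The remaining delicate step is to show that $\det_2(1 + J_z T)$ does not vanish identically on $U_j$: the argument combines global continuity of $r \mapsto \det_2(1 + J_r T)$ on $\mathbb{R}$, real-analyticity on each piece, and the normalization $\det_2(1 + J_0 T) = 1$, propagated piece by piece via continuity across piece endpoints, to exclude identical vanishing. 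Once this step is settled, the zero set of $\det_2$ on $U_j$ is discrete, $E_j$ is discrete, and the lemma follows.
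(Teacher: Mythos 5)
Your overall approach — apply the analytic Fredholm alternative to the holomorphic family $z\mapsto 1+J_zT$ on each analytic piece — is the same as the paper's, which simply states that the result "directly follows from the analytic Fredholm alternative" for a piecewise analytic path. You are more careful than the paper in isolating the real point of the argument, namely that Theorem \ref{T: analytic Fredholm alternative} only yields discreteness after an invertibility point is exhibited in the connected domain, and your treatment of the piece containing $r=0$ is correct: $J_0=0$ gives $1+J_0T=1$.

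Where the proposal goes wrong is in the final "propagation" step. You want to rule out $\det_2(1+J_zT)\equiv 0$ on a piece $[r_{j-1},r_j]$ away from $0$ by propagating the non-vanishing at $r=0$ across transition points via continuity of $r\mapsto\det_2(1+J_rT)$. But continuity of the real path at a transition point $r_j$ only controls the single value $\det_2(1+J_{r_j}T)$; it says nothing about the behaviour of the \emph{different} holomorphic extension on the adjacent complex neighbourhood. A piece on which $\det_2$ is not identically zero can perfectly well have an isolated zero exactly at its endpoint $r_j$, and the neighbouring piece can then have $\det_2\equiv 0$ without any contradiction with continuity, real-analyticity on each piece, or the normalisation at $r=0$. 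So "propagated piece by piece to exclude identical vanishing" does not actually exclude it; the gap you correctly identified remains open after your proposed fix. (The paper's one-line proof silently makes the same leap; it is clean only when the path is globally analytic — in particular for the straight-line path $H_r=H_0+rV$, where $J_z=zJ$ is entire, the piece containing $0$ is all of $\mbC$, and invertibility at $z=0$ settles the matter. That is the case the paper's main theorems actually use, via Lemma \ref{L: exists F for H0+rV}.)
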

\begin{proof} Since $V_r$ is a piecewise analytic function, this directly follows
from the analytic Fredholm alternative (Theorem \ref{T: analytic Fredholm alternative}).
\end{proof}

\begin{lemma} \label{L: Br exists iff ...}
  Let $\lambda \in \mbR$ be such that the limit $T_{\lambda+i0}(H_0)$ exists in the norm topology.
  Then the limit $T_{\lambda+i0}(H_r)$ exists in the norm topology if and only if
  $r \notin R(\lambda; \set{H_r}, F).$
\end{lemma}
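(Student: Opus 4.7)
The plan is to exploit the two Aronszajn-type identities
\begin{equation*}
  T_r(z)(1 + J_r T_0(z)) = T_0(z) \quad \text{and} \quad (1 + T_0(z) J_r) T_r(z) = T_0(z),
\end{equation*}
the first of which is (\ref{F: T(z)(1+rJT0(z)) = T0(z)}) from the preceding lemma, and the second of which follows by the same manipulation starting from the other form of the second resolvent identity. Both hold for non-real $z$ by the calculation in the proof of Lemma \ref{L: 1+rJT(z) is invertible}.

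For the implication $(\Leftarrow)$, assume $r \notin R(\lambda; \set{H_r}, F)$, that is, $1 + J_r T_0(\lambda + i0)$ is invertible on $\clK$. Since by hypothesis $T_0(\lambda + iy) \to T_0(\lambda + i0)$ in $\clL_2(\clK)$, and $J_r$ is bounded, we have $J_r T_0(\lambda + iy) \to J_r T_0(\lambda + i0)$ in the operator norm; in particular $1 + J_r T_0(\lambda + iy) \to 1 + J_r T_0(\lambda + i0)$ uniformly. For $y > 0$ the left-hand operator is invertible by Lemma \ref{L: 1+rJT(z) is invertible}, so Lemma \ref{L: An to A then An(-1) to A(-1)} gives uniform convergence of the inverses. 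Solving the first Aronszajn identity (valid for $y > 0$) yields $T_r(\lambda + iy) = T_0(\lambda + iy)\brs{1 + J_r T_0(\lambda + iy)}^{-1}$. The right-hand side is a product of a Hilbert-Schmidt-convergent factor and a uniformly convergent factor, so it converges in $\clL_2(\clK)$ as $y \to 0^+$; hence $T_r(\lambda + i0)$ exists in the Hilbert-Schmidt norm.

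For the converse $(\Rightarrow)$, assume $T_r(\lambda + iy) \to T_r(\lambda + i0)$ in $\clL_2(\clK)$. Passing to the limit in the first Aronszajn identity, which is legitimate since all three convergences $T_r(\lambda + iy) \to T_r(\lambda + i0)$, $T_0(\lambda + iy) \to T_0(\lambda + i0)$, and hence $1 + J_r T_0(\lambda + iy) \to 1 + J_r T_0(\lambda + i0)$ take place in norms compatible with multiplication (Hilbert-Schmidt times bounded), we obtain
\begin{equation*}
  T_r(\lambda + i0)\brs{1 + J_r T_0(\lambda + i0)} = T_0(\lambda + i0).
\end{equation*}
Suppose $(1 + J_r T_0(\lambda + i0))\psi = 0$ for some $\psi \in \clK$. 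Applying the above displayed equation to $\psi$ gives $T_0(\lambda + i0)\psi = 0$, and feeding this back into $(1 + J_r T_0(\lambda + i0))\psi = 0$ yields $\psi = 0$. Thus $1 + J_r T_0(\lambda + i0)$ has trivial kernel, and since $J_r T_0(\lambda + i0)$ is compact (as $J_r$ is bounded and $T_0(\lambda + i0)$ is Hilbert-Schmidt), the Fredholm alternative shows that $1 + J_r T_0(\lambda + i0)$ is invertible, i.e., $r \notin R(\lambda; \set{H_r}, F)$.

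No serious obstacle is anticipated; the only point requiring care is that the passage to the limit in Aronszajn's equation uses convergence in the appropriate (Hilbert-Schmidt, respectively uniform) norms so that the product indeed converges. Everything else is a direct application of Lemmas \ref{L: 1+rJT(z) is invertible}, \ref{L: An to A then An(-1) to A(-1)}, and the Fredholm alternative.
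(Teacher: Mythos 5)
Your proof is correct and follows essentially the same route as the paper: the same Aronszajn identity $T_r(z)(1+J_rT_0(z))=T_0(z)$, the same kernel/Fredholm argument in one direction, and the same inversion formula $T_r=T_0(1+J_rT_0)^{-1}$ together with Lemma \ref{L: An to A then An(-1) to A(-1)} in the other. The only cosmetic differences are that you introduce a second Aronszajn identity which you then never use, and that you spell out the norm-compatibility of the limits and the invocation of the Fredholm alternative more explicitly than the paper does.
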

\begin{proof} % The second resolvent identity $R(z) = R_0(z) - R(z)VR_0(z)$ implies that
%  $$
%     T_z(H_r) = GR_z(H_r)G^* = G\sqbrs{R_z(H_0) - rR_z(H_r)V R_z(H_0)}G^* = T_z(H_0) - rB_r(z) J T_z(H_0),
%  $$
%  or
% $$
%   T_z(H_r)(1+J_r T_z(H_0)) = T_z(H_0).
% $$
(Only if) \ Assume that $T_{\lambda+i0}(H_r)$ exists. Taking the norm
limit $y = \Im z \to 0$ in (\ref{F: T(z)(1+rJT0(z)) = T0(z)}), one gets
  \begin{gather} \label{F: T(1+T0)=T0}
    T_{\lambda+i0}(H_r)(1+J_r T_{\lambda+i0}(H_0)) = T_{\lambda+i0}(H_0).
  \end{gather}
Since $T_{\lambda+i0}(H_0)$ is compact, $1+J_r T_{\lambda+i0}(H_0)$ is not invertible if and only if
there exists a non-zero $\psi \in \hilb,$ such that $(1+J_r T_{\lambda+i0}(H_0))\psi = 0.$
This and (\ref{F: T(1+T0)=T0}) imply that $T_{\lambda+i0}(H_0)\psi = 0.$ Hence $\psi = 0.$
This contradiction shows that $1 + J_r T_{\lambda+i0}(H_0)$ is invertible.

(If) \ By (\ref{F: T(z)(1+rJT0(z)) = T0(z)}) and Lemma \ref{L: 1+rJT(z) is invertible},
\begin{equation} \label{F: Tr=T0(1+JT0)(-1)}
  T_{\lambda+iy}(H_r) = T_{\lambda+iy}(H_0) \SqBrs{1+J_rT_0(\lambda+iy)}^{-1}.
\end{equation}
If $1+J_rT_0(\lambda+i0)$ is invertible, then by Lemma \ref{L: An to A then An(-1) to A(-1)} the limit of the right hand side as $y \to 0^+$
exists in the norm topology.
%That the existence of the limit $T_{\lambda+i0}(H_r)$ in the Hilbert-Schmidt norm follows from invertibility of $(1+J_r T_{\lambda+i0}(H_0))$
%can be shown using (\ref{F: T(z)(1+rJT0(z)) = T0(z)}) and Lemma \ref{L: An to A then An(-1) to A(-1)}.
\end{proof}

%
% \begin{prop} \label{P: if r in Lambda R} Let~$H_0$ be a self-adjoint operator and let~$G$ be a non-negative Hilbert-Schmidt operator.
% Let $\lambda \in \Lambda_R(H_0,G).$ There exists a discrete set $R \subset \mbR$
% (i.e. a set with no finite accumulation points), such that for all $r \notin R$
% $$
%   \lambda \in \Lambda_R(H_r,G),
% $$
% where~$H_r = H_0 + rV.$
% \end{prop}
% \begin{proof} Since by Lemma \ref{L: 1+rJT(z) is invertible}
% the operator $1+J_r T_z(H_0)$ is invertible for $z \in \mbC_+,$ it follows from
% (\ref{F: T(z)(1+rJT0(z)) = T0(z)}) that
% \begin{equation} \label{F: Tr(z)=T0(z)(...)(-1)}
%   T_z(H_r) = T_z(H_0)(1+J_r T_z(H_0))^{-1}.
% \end{equation}
% Since $\lambda \in \Lambda_R(H_0,G),$ the angular limit $T_{\lambda+i0}(H_0)$ exists in $\LpH{2}.$
% Now, since the operator $JT_0(\lambda+i0)$ is compact, the operator $1+J_r T_{\lambda+i0}(H_0)$
% is invertible for all $r \in \mbR$ except a discrete set. It follows that
% for all such $r$ the operator $1+J_r T_z(H_0)$ is also invertible for $z$ close enough to $\lambda,$
% and that
% $$
%   (1+J_r T_z(H_0))^{-1}
% $$
% converges in norm to
% $$
%   (1+J_r T_{\lambda+i0}(H_0))^{-1}
% $$
% as $z \to \lambda$ in any angle. Hence, by~(\ref{F: Tr(z)=T0(z)(...)(-1)}),
% $T_z(H_r)$ converges to $T_{\lambda+i0}(H_r)$ in $\LpH{2}$
% as $z \to \lambda$ in any angle.
% \end{proof}
%
% Note that for non-real values of $r$ the operator $1+J_r T_z(H_0)$ is not necessarily invertible.
% Though the operator-valued function $(1+J_r T_z(H_0))^{-1}$ is meromorphic in~$\mbCr$ for each $z \in \mbC_+.$

\begin{thm} \label{T: R(H0,G) is discrete}
Let~$\set{H_r}$ be a path of self-adjoint operators on~$\hilb$ with frame~$F,$
which satisfies Assumption \ref{A: assumption on Hr}.
Let $\lambda \in \LambHF{H_0}.$
For all $r \notin R(\lambda; \set{H_r}, F)$ the inclusion
$\lambda \in \LambHF{H_r} $ holds, where
$R(\lambda; \set{H_r}, F)$ is a discrete subset of~$\mbR,$ defined in (\ref{F: R(lambda)}).
% There exists a discrete set $R \subset \mbR$
% (i.e. a set with no finite accumulation points), such that for all $r \notin R$
% $$
%   \lambda \in \LambHF{H_r} ,
% $$
\end{thm}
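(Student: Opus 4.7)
Since the discreteness of $R(\lambda;\set{H_r},F)$ has already been established in Lemma \ref{L: R is discrete}, the substantive content of the theorem is the inclusion $\lambda \in \LambHF{H_r}$ for $r \notin R(\lambda;\set{H_r},F)$. Unpacking Definition \ref{D: Lambda(H0,F)}, this requires producing two limits as $y \to 0^+$: an $\clL_2(\clK)$-limit for $T_r(\lambda+iy) := FR_{\lambda+iy}(H_r)F^*$ and an $\clL_1(\clK)$-limit for $\Im T_r(\lambda+iy) = F\Im R_{\lambda+iy}(H_r)F^*$. The plan is to dispatch these two in turn.

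The first limit is exactly the ``if'' direction of Lemma \ref{L: Br exists iff ...}. Aronszajn's equation $T_r(z) = T_0(z)Q_r(z)^{-1}$, with $Q_r(z) := 1 + J_r T_0(z)$, combined with Lemma \ref{L: An to A then An(-1) to A(-1)} and the hypothesis $\lambda \in \LambHF{H_0}$, immediately yields the $\clL_2$-limit of $T_r(\lambda+iy)$ as soon as $Q_r(\lambda+i0)$ is invertible, which is precisely the defining condition $r \notin R(\lambda;\set{H_r},F)$.

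For the $\clL_1$-limit I would first establish the symmetric factorization
\begin{equation*}
  \Im T_r(z) \;=\; \tilde Q_r(z)^{-1}\, \Im T_0(z)\, Q_r(\bar z)^{-1}, \qquad \tilde Q_r(z) := 1 + T_0(z)J_r,
\end{equation*}
by combining $\Im R_z(H_r) = y\, R_z(H_r) R_{\bar z}(H_r)$ with the identity $R_z(H_r)F^* = R_z(H_0)F^* Q_r(z)^{-1}$ (obtained from the second resolvent identity \eqref{F: second rslv identity} together with $V_r = F^* J_r F$) and its adjoint $F R_z(H_r) = \tilde Q_r(z)^{-1} F R_z(H_0)$. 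At $z = \lambda+iy$ the middle factor converges in trace-class norm by $\lambda \in \LambHF{H_0}$, while the two outer factors converge in operator norm by Lemma \ref{L: An to A then An(-1) to A(-1)}, provided $\tilde Q_r(\lambda+i0)$ and $Q_r(\lambda-i0)$ are invertible. A routine telescoping estimate for the three-fold product then produces the required $\clL_1$-convergence.

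The one genuinely delicate ingredient, and the step where I expect the main work of the proof to lie, is transferring invertibility from $Q_r(\lambda+i0)$ to the operators $\tilde Q_r(\lambda+i0)$ and $Q_r(\lambda-i0)$ that appear in the factorization. Since $J_r^* = J_r$ and $T_0(\lambda+i0)^* = T_0(\lambda-i0)$, the adjoint of $Q_r(\lambda+i0)$ equals $\tilde Q_r(\lambda-i0)$, so the Fredholm alternative identifies their invertibility. Combining this with the standard fact that for any bounded $A$ and compact $K$ the operators $AK$ and $KA$ share their non-zero spectrum (applied to $A = J_r$, $K = T_0(\lambda\pm i0)$), together with the relation $\sigma(T^*) = \overline{\sigma(T)}$ and the reality of $-1$, shows that $-1 \in \sigma(\,\cdot\,)$ for any one of the four operators $Q_r(\lambda\pm i0),\,\tilde Q_r(\lambda\pm i0)$ iff it holds for all four. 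Thus the single hypothesis $r \notin R(\lambda;\set{H_r},F)$ suffices to render every factor in the factorization convergent, and the proof concludes.
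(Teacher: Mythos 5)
Your proof is correct, and it follows the same high-level plan as the paper --- dispatching the $\clL_2$-convergence of $T_r(\lambda+iy)$ via Lemma \ref{L: Br exists iff ...} and then producing a three-fold factorization of $\Im T_r(z)$ into two operator-norm-convergent outer factors sandwiching the trace-class-convergent $\Im T_0(z)$. The difference is in which factorization you derive, and that difference costs you an extra spectral argument. The paper computes $\Im T_r(z) = \frac{1}{2i}(T_r(z)-T_r(z)^*)$ directly from Aronszajn's equation $T_r = T_0(1+J_rT_0)^{-1}$ and obtains
\[
  \Im T_r(z) = \bigl(1+T_0(\bar z)J_r\bigr)^{-1}\,\Im T_0(z)\,\bigl(1+J_rT_0(z)\bigr)^{-1},
\]
where the right factor converges to $(1+J_rT_0(\lambda+i0))^{-1}$ --- invertibility of which is \emph{precisely} the non-resonance hypothesis --- and the left factor is its adjoint, so no transfer of invertibility is needed at all. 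Your route, working at the resolvent level via $\Im R_z = y\,R_z R_{\bar z}$ together with the one-sided intertwining relations $F R_z(H_r) = \tilde Q_r(z)^{-1}F R_z(H_0)$ and $R_{\bar z}(H_r)F^* = R_{\bar z}(H_0)F^*Q_r(\bar z)^{-1}$, instead yields
\[
  \Im T_r(z) = \bigl(1+T_0(z)J_r\bigr)^{-1}\,\Im T_0(z)\,\bigl(1+J_rT_0(\bar z)\bigr)^{-1},
\]
with $z$ and $\bar z$ in the opposite positions, so neither factor converges directly to the operator whose invertibility the hypothesis gives; you then correctly supply the missing transfer via the $AK$-$KA$ common-nonzero-spectrum fact and adjoints. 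That extra step is sound, but it is avoidable: had you started from the equally valid $\Im R_z = y\,R_{\bar z}R_z$, the same substitution would have produced the paper's factorization verbatim. (One small stylistic quibble: invertibility of a bounded operator and of its adjoint are always equivalent by elementary duality, so invoking the Fredholm alternative for that step is unnecessary, though harmless.)
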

\begin{proof} % \margcom{This proof needs a clarification}
(A) Since $\lambda \in \Lambda(H_0,F),$ the limit $T_{\lambda+i0}(H_0)$ exists in the norm topology.
It follows from Lemma \ref{L: Br exists iff ...}, that the norm limit of
$$
  T_{\lambda+iy}(H_r) = FR_{\lambda+iy}(H_r)F^*
$$
also exists.

Now, in order to prove that $\lambda \in \Lambda(H_r,F),$
we need to show that the limit of
$F \Im R_{\lambda+iy}(H_r)F^*$ exists in~$\clL_1$-norm.

(B) The formula
\begin{equation} \label{F: Im Tr = (...)Im T0 (...)}
    \Im T_z(H_r) = \brs{1 +T_{\bar z}(H_0)J_r}^{-1} \Im T_z(H_0) \brs{1 +J_r T_z(H_0)}^{-1}
\end{equation}
holds. This follows from (\ref{F: Tr=T0(1+JT0)(-1)}).

% Proof. Using %invertibility of $1 +J_r T_z(H_0)$ %(\ref{F: T(1+T0)=T0}) and
% %(Lemma \ref{L: 1+rJT(z) is invertible}) and
% (\ref{F: Tr=T0(1+JT0)(-1)}), one has
% \begin{equation*}
%   \begin{split}
%     \Im T_z(H_r) & = \frac 1{2i} \brs{T_z(H_r) - T_r^*(z)}
%     \\ & = \frac 1{2i} \brs{T_z(H_0) \SqBrs{1+J_rT_0(z)}^{-1} -  \SqBrs{1+T_{\bar z}(H_0)J_r}^{-1} T_{\bar z}(H_0)}
%     \\ & = \frac 1{2i} \brs{1 +T_{\bar z}(H_0)J_r}^{-1} \brs{\SqBrs{1 +T_{\bar z}(H_0)J_r}T_z(H_0) - T_{\bar z}(H_0)\SqBrs{1 +J_r T_z(H_0)}} \brs{1 +J_r T_z(H_0)}^{-1}
%     \\ & = \frac 1{2i} \brs{1 +T_{\bar z}(H_0)J_r}^{-1} \brs{T_z(H_0) - T_{\bar z}(H_0)} \brs{1 +J_r T_z(H_0)}^{-1}
%     \\ & = \brs{1 +T_{\bar z}(H_0)J_r}^{-1} \Im T_z(H_0) \brs{1 +J_r T_z(H_0)}^{-1}.
%   \end{split}
% \end{equation*}

(C)
Since $r \notin R(\lambda; \set{H_r}, F),$ it follows from Lemmas \ref{L: An to A then An(-1) to A(-1)}
and \ref{L: 1+rJT(z) is invertible} that
$$
  \brs{1 +T_{\bar z}(H_0)J_r}^{-1} \quad \text{and} \quad \brs{1 +J_r T_z(H_0)}^{-1}
$$ converge in $\norm{\cdot}$ as $y = \Im z \to 0^+.$
Since, by definition of $\LambHF{H_0},$
$\Im T_z(H_0)$ converges to $\Im T_{\lambda+i0}(H_0)$ in~$\clL_1(\clK),$ it follows from (\ref{F: Im Tr = (...)Im T0 (...)}) that
$\Im T_z(H_r)$ also converges in~$\clL_1(\clK)$ as $\Im z \to 0^+.$
Hence, $\lambda \in \LambHF{H_r}.$

That $R(\lambda; \set{H_r}, F)$ is a discrete subset of~$\mbR$ follows from Lemma~\ref{L: R is discrete}.
\end{proof}

\smallskip

Theorem \ref{T: R(H0,G) is discrete} shows that the resonance subset of the plane $(\lambda,r)$
behaves differently with respect to the spectral parameter $\lambda$ and with respect
to the coupling constant $r.$ While for a fixed $r$ the resonance set is a more or less arbitrary null set,
and, consequently, can be very bad, for a fixed $\lambda$ the resonance set is a discrete subset of~$\mbR.$
%In my opinion, this is an amazing property of the resonance set.

The discreteness property of the resonance set $R(\lambda; \set{H_r}, F)$
for a.e. $\lambda$ is used in an essential way in subsection \ref{SS: InfinScatM}.

\begin{prop} If $\lambda \in \LambHF{H_0}$ is an eigenvalue of~$H_r,$ then $r \in R(\lambda; \set{H_r}, F).$
\end{prop}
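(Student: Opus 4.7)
The plan is to exhibit a non-zero vector in the kernel of $1 + J_r T_0(\lambda+i0)$. Let $\psi$ be a unit eigenvector of $H_r$ at $\lambda$. The first observation I would record is that $\lambda$ is \emph{not} an eigenvalue of $H_0$: by Corollary \ref{C: Lambda is a core of sing. sp}, $\mbR \setminus \LambHF{H_0}$ is a core of the singular spectrum of $H_0$, so any $H_0$-eigenvalue would lie outside $\LambHF{H_0}$. Consequently the scalar spectral measure $m_\psi$ of $\psi$ relative to $H_0$ has no atom at $\lambda$.

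Next I would rewrite the eigenvalue equation $(H_r - \lambda)\psi = 0$ as
$$
  (H_0 - \lambda - i\yy)\psi = -V_r\psi - i\yy\,\psi, \qquad \yy > 0,
$$
apply $R_{\lambda + i\yy}(H_0)$ on the left, then $F$, and use $V_r = F^* J_r F$ together with $T_0(z) = F R_z(H_0) F^*$ to obtain
$$
  F\psi \;=\; -\,T_0(\lambda + i\yy)\,J_r\, F\psi \;-\; i\yy\, F\,R_{\lambda + i\yy}(H_0)\,\psi.
$$
The first term converges to $T_0(\lambda+i0) J_r F\psi$ in $\clK$, since $\lambda \in \LambHF{H_0}$ forces $T_0(\lambda+i\yy) \to T_0(\lambda+i0)$ in Hilbert-Schmidt (hence operator) norm. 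The delicate step is to show that the remainder $\yy\,F R_{\lambda+i\yy}(H_0)\psi$ vanishes as $\yy \to 0^+$; this is where the hypothesis $\lambda \in \LambHF{H_0}$ really bites. I would use the identity
$$
  \yy\,\|R_{\lambda+i\yy}(H_0)\psi\|^2 \;=\; \scal{\psi}{\Im R_{\lambda+i\yy}(H_0)\psi} \;=\; \pi \int P_\yy(\mu - \lambda)\,dm_\psi(\mu),
$$
together with Fatou's theorem (Theorem \ref{T: Fatou}) and the absence of an atom of $m_\psi$ at $\lambda$, to conclude $\yy\,\|R_{\lambda+i\yy}(H_0)\psi\|^2 \to 0$, whence $\sqrt{\yy}\,\|R_{\lambda+i\yy}(H_0)\psi\| \to 0$; multiplying by one more $\sqrt{\yy}$ (bounded) and by $\|F\|$ gives $\yy\, F R_{\lambda+i\yy}(H_0)\psi \to 0$ in $\clK$. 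Taking $\yy \to 0^+$ in the displayed identity then yields
$$
  (1 + T_0(\lambda+i0)\,J_r)\, F\psi = 0,
$$
and $F\psi \neq 0$ because $F$ has trivial kernel.

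Finally I would translate this into non-invertibility of $1 + J_r T_0(\lambda+i0)$ via the standard observation that $1 + AB$ and $1 + BA$ are simultaneously invertible. Concretely, set $u := J_r F\psi$. Then $u \neq 0$ (otherwise the previous display gives $F\psi = 0$), and applying $J_r$ to both sides of that display produces $(1 + J_r T_0(\lambda+i0))\,u = 0$. Since $J_r T_0(\lambda+i0)$ is compact, this non-trivial kernel shows $1 + J_r T_0(\lambda+i0)$ is not invertible, and so $r \in R(\lambda; \set{H_r}, F)$. The only real obstacle is the vanishing of the remainder term, which as indicated above is precisely the ``no atom at $\lambda$'' consequence of $\lambda \in \LambHF{H_0}$.
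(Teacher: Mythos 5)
Your argument is a genuinely different route from the paper's. The paper's proof is a two-line corollary of results already established: by Corollary \ref{C: Lambda is a core of sing. sp} applied to $H_r$, every eigenvalue of $H_r$ lies in $\mbR \setminus \LambHF{H_r}$, so $\lambda \notin \LambHF{H_r}$; then the contrapositive of Theorem \ref{T: R(H0,G) is discrete} gives $r \in R(\lambda;\set{H_r},F)$ immediately. Your proof instead proceeds from scratch by a Birman--Schwinger-type computation: starting from the eigenvalue equation you produce a concrete non-zero vector $u = J_r F\psi$ in $\ker\bigl(1 + J_r T_0(\lambda+i0)\bigr)$. This is a legitimate and in some respects more transparent approach, though it effectively redoes work the paper already packaged into Theorem \ref{T: R(H0,G) is discrete} via the Aronszajn identity.

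There is, however, a gap in your treatment of the remainder term. You claim that Fatou's theorem together with the absence of an atom of $m_\psi$ at $\lambda$ yields $\yy\,\|R_{\lambda+i\yy}(H_0)\psi\|^2 \to 0$. That is false: the quantity $\yy\,\|R_{\lambda+i\yy}(H_0)\psi\|^2 = \scal{\psi}{\Im R_{\lambda+i\yy}(H_0)\psi}$ is $\pi$ times the Poisson integral of $m_\psi$ at $\lambda$, and by Theorem \ref{T: Fatou} it converges to $\pi$ times the (possibly infinite) symmetric derivative of the distribution function of $m_\psi$, which need not be zero even when $m_\psi(\set{\lambda}) = 0$ --- for instance whenever the absolutely continuous part of $m_\psi$ has non-vanishing density at $\lambda$. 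What your argument actually requires is the weaker statement $\yy^2\|R_{\lambda+i\yy}(H_0)\psi\|^2 \to 0$, and this follows from dominated convergence rather than Fatou: one has $\yy^2\|R_{\lambda+i\yy}(H_0)\psi\|^2 = \int \frac{\yy^2}{(\mu-\lambda)^2+\yy^2}\,dm_\psi(\mu)$, the integrand is bounded by $1$ and converges pointwise to the indicator of $\set{\lambda}$, and $m_\psi$ is a finite measure, so the integral tends to $m_\psi(\set{\lambda})$, which is zero precisely because $\lambda$ is not an eigenvalue of $H_0$ (again Corollary \ref{C: Lambda is a core of sing. sp}). With that substitution the chain $\yy\,\|R_{\lambda+i\yy}(H_0)\psi\| \to 0$, hence $\yy\, F R_{\lambda+i\yy}(H_0)\psi \to 0$, is restored and the rest of your proof goes through.
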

\begin{proof}
Since, by Corollary \ref{C: Lambda is a core of sing. sp}, the complement of $\LambHF{H_r} $
is a support of the singular spectrum of~$H_r,$ which includes all eigenvalues of~$H_r,$
it follows that if $\lambda \in \LambHF{H_0}$ is an eigenvalue of~$H_r,$ then
$\lambda \notin \LambHF{H_r} ,$ so that by Theorem \ref{T: R(H0,G) is discrete}
$r \in R(\lambda; \set{H_r}, F).$
\end{proof}
This proposition partly explains why elements of $R(\lambda; \set{H_r}, F)$ are called resonance points.
Note that the inclusion $r \in R(\lambda; \set{H_r}, F)$
does not necessarily imply that $\lambda$ is an eigenvalue of~$H_r.$

\begin{thm} \label{T: l not in L iff r in R} Let $\lambda \in \LambHF{H_0}.$ Then $\lambda \notin \LambHF{H_r} $ if and only if $r \in R(\lambda; \set{H_r}, F).$
\end{thm}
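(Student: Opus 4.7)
The plan is to observe that this theorem is essentially a direct corollary of the preceding results: the only nontrivial direction (``if'') follows from Lemma \ref{L: Br exists iff ...}, and the ``only if'' direction is exactly the contrapositive of Theorem \ref{T: R(H0,G) is discrete}.

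For the ``only if'' direction, I would argue as follows. Suppose $r \notin R(\lambda; \{H_r\}, F)$. Since $\lambda \in \LambHF{H_0}$ by hypothesis, Theorem \ref{T: R(H0,G) is discrete} applies and yields $\lambda \in \LambHF{H_r}$. This is the contrapositive of the desired implication, so $\lambda \notin \LambHF{H_r}$ forces $r \in R(\lambda; \{H_r\}, F)$.

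For the ``if'' direction, suppose $r \in R(\lambda; \{H_r\}, F)$. Since $\lambda \in \LambHF{H_0}$, the limit $T_0(\lambda+i0) = FR_{\lambda+i0}(H_0)F^*$ exists in the Hilbert--Schmidt norm by Definition \ref{D: Lambda(H0,F)}. The hypotheses of Lemma \ref{L: Br exists iff ...} are therefore satisfied, and the ``only if'' clause of that lemma asserts that the limit $T_r(\lambda+i0) = FR_{\lambda+i0}(H_r)F^*$ does \emph{not} exist in the Hilbert--Schmidt norm. But condition (i) in Definition \ref{D: Lambda(H0,F)} demands exactly this Hilbert--Schmidt limit for $\lambda$ to belong to $\LambHF{H_r}$; hence $\lambda \notin \LambHF{H_r}$.

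There is no real obstacle here: the substantive work was already done in Lemma \ref{L: Br exists iff ...} (via Aronszajn's equation (\ref{F: T(z)(1+rJT0(z)) = T0(z)}) and Lemma \ref{L: An to A then An(-1) to A(-1)}) and in Theorem \ref{T: R(H0,G) is discrete} (which additionally handled the trace-class convergence of $\Im T_r$ via formula (\ref{F: Im Tr = (...)Im T0 (...)})). The only small subtlety worth flagging is that although $\LambHF{H_r}$ is defined by two conditions --- Hilbert--Schmidt convergence of $T_r$ and trace-class convergence of $\Im T_r$ --- failure of the first condition alone is enough to exclude $\lambda$ from $\LambHF{H_r}$, so one never needs to examine what happens to $\Im T_r$ in the resonance case.
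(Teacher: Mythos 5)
Your proof is correct and follows exactly the paper's own argument: the ``only if'' direction is the contrapositive of Theorem \ref{T: R(H0,G) is discrete}, and the ``if'' direction is the contrapositive of the ``only if'' clause of Lemma \ref{L: Br exists iff ...}. The small subtlety you flag --- that failure of the Hilbert--Schmidt condition (i) alone already excludes $\lambda$ from $\LambHF{H_r}$, so the trace-class condition (ii) need not be examined --- is a correct and worthwhile observation, though the paper leaves it implicit.
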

\begin{proof} The only if part has been established in Theorem \ref{T: R(H0,G) is discrete}.
The if part says that $\lambda \in \LambHF{H_r} $ implies $r \notin R(\lambda; \set{H_r}, F).$
This follows from Lemma \ref{L: Br exists iff ...}.
\end{proof}

\begin{rems*} \rm As can be seen from the proofs, existence of $T_{\lambda+i0}(H_0)$
in $\clL_\infty(\clK)$ or existence of $\Im T_{\lambda+i0}(H_0)$ in $\clL_1(\clK)$
is not essential for the above theorem. In the definition of $\Lambda(H_0,F)$ the ideals $\clL_1(\clK)$ and $\clL_\infty(\clK)$
can be replaced by any $\clL_p(\clK),$ $p\in [1,\infty],$ or even by any pair of invariant operator ideals $\mathfrak S_1$ and $\mathfrak S_2.$
That is, one can consider sets
$$
  \Lambda(H_0,F;\mathfrak S_1, \mathfrak S_2) = \set{\lambda \in \mbR: F \Im R_{\lambda+i0}(H_0)F^* \ \text{exists in} \ \mathfrak S_1 \& F R_{\lambda+i0}(H_0)F^* \ \text{exists in} \ \mathfrak S_2},
$$
so that, in particular, $\Lambda(H_0,F) = \Lambda(H_0,F; \clL_1, \clL_\infty).$
What the last theorem is saying is that, as long as $r_0$ is not a resonance point, the regularity of $\lambda$
is the same for $r=0$ and $r=r_0.$
%Further, assume that there are two frames $F_1$ and $F_2$ and a point $\lambda$ belongs to both the sets
%$\Lambda(H_0,F_1;\mathfrak S_1, \mathfrak S_2)$ and $\Lambda(H_0,F_2;\mathfrak S_1, \mathfrak S_2).$
%Assume also that $V \in \clA(F_1) \cap \clA(F_2)$
%and $V = F^*_1 J_1 F_1 = F^*_2 J_2 F_2.$ Then the operator
%$$
%  1 + J_1 F_1 R_{\lambda+i0}(H_0) F_1^*
%$$
%is invertible if and only if so is the operator
%$$
%  1 + J_2 F_2 R_{\lambda+i0}(H_0)F_2^*.
%$$
%Indeed, if, say, the first operator is invertible, then for small enough $y>0$ the operator
%$1 + J_1 F_1 R_{\lambda+iy}(H_0) F_1^*$ is also invertible; that is, $-1$ is not an eigenvalue of
%$J_1 F_1 R_{\lambda+iy}(H_0) F_1^*.$ By Theorem \ref{T: spectral measures of AB and BA are identical},
%it follows that $-1$ is not an eigenvalue of
%$$F_1^* J_1 F_1 R_{\lambda+iy}(H_0) = V R_{\lambda+iy}(H_0) = F_2^* J_2 F_2 R_{\lambda+iy}(H_0),$$
%so that, $-1$ is also not an eigenvalue of $J_2 F_2 R_{\lambda+iy}(H_0) F_2^*.$
\end{rems*}

\subsection{Essentially regular points}
Let $\clA = H_0 + \clA(F)$ be the affine space of self-adjoint operators associated with a pair $(H_0,F).$
Theorem \ref{T: l not in L iff r in R} shows that regularity of a point $\lambda \in \mbR$ with respect to an operator $H \in \clA$
does not depend on the path $\set{H_r}.$ This observation suggests the following definition.

Let us fix a frame operator $F$ on a Hilbert space $\hilb$ and an affine space $\clA = H_0+\clA(F)$ of self-adjoint operators.
\begin{defn}
We say that a real number $\lambda$ is \emph{essentially regular},
if there exists an operator $H \in \clA$ such that $\lambda \in \Lambda(H,F).$
\end{defn}
The set of essentially regular numbers we shall denote by $\Lambda(\clA,F).$ So, by definition,
$$
  \Lambda(\clA,F) = \bigcup _{H \in \clA} \Lambda(H,F).
$$
We say that a real number $\lambda$ is \emph{essentially singular}, if it is not
essentially regular. Obviously, the set $\Lambda(\clA,F)$ of essentially regular points has full Lebesgue measure.
By definition, \emph{essentially singular spectrum} of a pair $(\clA,F)$ is the set of all essentially singular points.
Essentially singular spectrum is a null set.

\begin{defn}
If a real number $\lambda$ is essentially regular, then an operator $H \in \clA$ will be called \emph{resonant} at $\lambda,$
if $\lambda \notin \Lambda(H,F).$ Otherwise, we say that $H$ is \emph{regular} or \emph{non-resonant} at $\lambda$ operator.
\end{defn}
We denote the set of regular at $\lambda$ operators by $$\Gamma(\lambda; \clA,F) = \Gamma(\lambda).$$
The complement of $\Gamma(\lambda; \clA,F)$ in $\clA$ will be called the \emph{resonance set} and will be denoted by $R(\lambda; \clA,F).$

Note that if $\lambda$ is essentially singular, then
every operator $H \in \clA$ is resonant at $\lambda,$ though formally in this case the notion of a resonant at $\lambda$ operator does not make sense.

The following reformulation of Theorem \ref{T: l not in L iff r in R} will be useful.
\begin{thm} Let $\lambda$ be an essentially regular point, and let $H_0$ be an operator regular at $\lambda.$
Let $V = F^*JF$ and let $H = H_0+V.$ The operator $H$ is regular at $\lambda$ if and only if the operator
$$
  1 + JT_{\lambda+i0}(H_0)
$$
is invertible.
\end{thm}

\begin{defn} Let $\lambda$ be an essentially regular point, and let $H_0$ be an operator resonant at $\lambda.$
An operator $V \in \clA(F)$ is \emph{regularizing}, if the operator $H_0+V$ is regular at $\lambda.$ An operator $V$
is a \emph{regularizing direction}, if the $H_0+rV$ is regular at $\lambda$ for some $r \in \mbR.$
\end{defn}

\begin{thm} \label{T: Gamma(l) is massive} For every essentially regular point $\lambda \in \mbR,$ the resonance set $R(\lambda; \clA,F)$ is a closed nowhere dense subset of $\clA.$
Moreover, intersection of any real-analytic path (in particular, a
straight line) in $\clA$ with $R(\lambda; \clA,F)$ is either a
discrete set or coincides with the path itself.
\end{thm}
\begin{proof} %Since the set of invertible operators is open in
%norm topology,
Since $\lambda$ is an essentially regular point, there exists an operator $H_0 \in \clA$ regular at $\lambda.$
If $H$ is another operator regular at $\lambda$ and if $F^*JF = H-H_0,$
then it follows from Theorem \ref{T: l not in L iff r in R} that the operator
$$
  1 + J T_{\lambda+i0}(H_0)
$$
is invertible.
Since for small norm-perturbations of $J$ the latter operator remains to be invertible, it follows from
Theorem \ref{T: l not in L iff r in R} that some neighborhood of $H$ in $\clA$
also belongs to $\Gamma(\lambda;
\clA,F).$ It follows that $\Gamma(\lambda; \clA,F)$ is an open
set, that is, $R(\lambda; \clA,F)$ is closed.

Now, assume that $R(\lambda; \clA,F)$ contains an open ball $U.$
Since $\lambda$ is essentially regular, there exists $H_0 \in
\Gamma(\lambda; \clA,F).$ Let $l$ be the straight line which
passes through $H_0$ and the center of the ball $U.$ By Theorem \ref{T: l not in L iff r in R}, the intersection $U \cap l$ must be a
discrete set, which is is clearly impossible. This proves that
$R(\lambda; \clA,F)$ has empty interior and hence it is nowhere
dense (since it is closed).

Let $l$ be a real-analytic path in $\clA.$ That $R(\lambda;
\clA,F)$ either contains $l$ or intersects $l$ at a discrete set
follows from Theorem \ref{T: l not in L iff r in R}.
\end{proof}

\section{Wave matrix $w_\pm(\lambda; H_r,H_0)$}
\label{S: wave matrix}
In the main setting of the abstract scattering theory, which considers
trace-class perturbations $V$ of arbitrary self-adjoint operators~$H_0,$ one first shows
existence of the wave operators (Kato-Rosenblum theorem,~\cite{KaPJA57,RoPJM57}, cf. also~\cite[\S 6.2]{Ya})
$$
  W_\pm(H_1,H_0) \colon \hilb^{(a)}(H_0) \to \hilb^{(a)}(H_1),
$$
where~$H_1 = H_0+V,$ and after that one shows existence of the wave matrices
\begin{equation} \label{F: formal def of w(pm)}
  w_\pm(\lambda; H_1,H_0) \colon \hlambda(H_0) \to \hlambda(H_1)
\end{equation}
for almost every $\lambda \in \mbR,$ where~$\hlambda(H_j)$ is a fiber Hilbert space
from a direct integral, diagonalizing the absolutely continuous parts~$H^{(a)}_j,$~$j=1,2,$
of the operators~$H_j.$ A drawback of this definition is that,
for a given point $\lambda \in \mbR,$ it is not possible to say whether
$w_\pm(\lambda; H_1,H_0)$ is defined or not. This is because the fiber Hilbert spaces
$\hlambda(H_j)$ are not explicitly defined: they exist for almost every $\lambda,$
but for a fixed $\lambda$ the space~$\hlambda(H_j)$ is not defined.

But if we fix a frame~$F$ in the Hilbert space~$\hilb,$ then for $\lambda \in \LambHF{H_0} \cap \LambHF{H_1}$
it becomes possible
to define the wave matrices $w_\pm(\lambda; H_1,H_0)$ as operators
(\ref{F: formal def of w(pm)}), where~$\hlambda(H_j),$~$j=1,2,$ are the fiber Hilbert spaces
associated with the fixed frame by (\ref{F: def of hlambda}).

While the original proof of Kato and Rosenblum used time-dependent methods,
the method of this paper is based on the stationary approach to abstract scattering theory from~\cite{BE,Ya}.
Combination of ideas from~\cite{BE,Ya} with the construction of the direct integral, given in section \ref{S: direct integral},
allows to define wave matrices $w_\pm(\lambda; H_r,H_0)$ for all $\lambda$ from the set of full Lebesgue measure
$\LambHF{H_0} \cap \LambHF{H_r} $ and prove all their main properties, including the multiplicative property.

In this section~$H_0$ is a self-adjoint operator on~$\hilb$ with frame~$F,$
$V$ is a trace-class self-adjoint operator, for which the path $V_r=rV$ satisfies the condition (\ref{F: V hilb(-1) to hilb(1)}).
We note again, that for any trace-class self-adjoint operator $V$ there exists a frame~$F,$
such that (\ref{F: V hilb(-1) to hilb(1)}) holds for $V_r = rV.$ Consequently, the condition
(\ref{F: V hilb(-1) to hilb(1)}) does not impose any additional restrictions
on the perturbation $V,$ except the trace-class condition.

\subsection{Operators $\mathfrak a_\pm(\lambda;H_r,H_0)$}
In~\cite{Agm}, instead of sandwiching the resolvent, it is considered
as acting on appropriately defined Hilbert spaces. Following this idea,
we consider the limit value $R_{\lambda+i0}(H_0)$ of the resolvent as an operator
$$
 R_{\lambda+i0}(H_0) \colon \hilb_1 \to \hilb_{-1}.
$$
Recall that all Hilbert spaces $\hilb_\alpha, \ \alpha \in \mbR,$ are naturally isomorphic with the isomorphism being
$$
  \abs{F}^{\beta-\alpha} \colon \hilb_\alpha \to \hilb_\beta.
$$
So, if we have an operator-function $A(y), y>0,$ with values in some subclass of $\clBH,$ such that the limit
$$
  \lim_{y \to 0} \abs{F}^\alpha A(y) \abs{F}^\beta
$$
exists in the topology of that class, then the limit
$$
  \lim_{y \to 0} A(y)
$$
exists in the topology of the corresponding subclass of $\clB(\hilb_{\beta},\hilb_{-\alpha}).$
In this way we write $A(0),$ meaning by this an operator from $\hilb_\beta$ to $\hilb_{-\alpha}.$
It is not necessary to use this convention, but otherwise we would need to write a lot of $F$'s in the subsequent formulas,
thus making them cumbersome.

Thus, in an expression such as
$$
  R_{\lambda\mp i\yy}(H_0)V_r
$$
with $y>0,$ both operators $R_{\lambda\mp i\yy}(H_0)$ and $V_r$ can be understood as operators from $\hilb$ to $\hilb,$
\emph{or} the operator $V_r$ can be understood as an operator from $\hilb_{-1}$ to $\hilb_1$ and the operator
$R_{\lambda\mp i\yy}(H_0)$ can be understood as an operator from $\hilb_1$ to $\hilb_{-1}.$ But when we take the limit $y \to 0$
and write
$$
  R_{\lambda\mp i0}(H_0)V_r
$$
both operators should be understood in the second sense, so that the product above is an operator from $\hilb_{-1}$ to
$\hilb_{-1}.$ That is, in the product the operator $V_r\colon \hilb_{-1} \to \hilb_1$ means actually the operator $\abs{F}V_r\abs{F},$
acting in the following way:
\smallskip
$$
  \hilb_{1} \stackrel {\abs{F}} {\mbox {$\leftarrow\!\!\!-\!\!\!-\!\!\!-\!\!\!-\!\!\!-$}} \hilb \stackrel {V_r} {\mbox {$\leftarrow\!\!\!-\!\!\!-\!\!\!-\!\!\!-\!\!\!-$}}
  \hilb \stackrel {\abs{F}} {\mbox {{$\leftarrow\!\!\!-\!\!\!-\!\!\!-\!\!\!-\!\!\!-$}}} \hilb_{-1}.
$$
\smallskip
In the Hilbert space $\hilb$ the operator $R_{\lambda\mp i0}(H_0)V_r$ (if one wishes) should be written as
$$
  \abs{F}R_{\lambda\mp i0}(H_0)\abs{F}V_r,
$$
where $V_r$ is understood as acting from $\hilb$ to $\hilb.$

In the sequel we constantly use this convention without further reference.

\begin{lemma} \label{L: R(lambda+i0) is H.-S.}
If $\lambda \in \LambHF{H_r},$ then
$$
  R_{\lambda\pm i\yy}(H_r) \to R_{\lambda\pm i0}(H_r)
$$
in~$\clL_\infty(\hilb_1,\hilb_{-1})$ as $\yy \to 0^+.$
\end{lemma}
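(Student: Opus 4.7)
The plan is to reduce the claim to the defining property of $\LambHF{H_r}$ via the rigged-Hilbert-space convention just discussed, using Proposition \ref{P: if GAnG converges ...}.

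First, I would apply Proposition \ref{P: if GAnG converges ...} with $A_y := R_{\lambda \pm iy}(H_r)$ and $p=2$. This reduces the problem to showing that the net
$$
  \abs{F}\, R_{\lambda \pm iy}(H_r)\, \abs{F} \colon \hilb \to \hilb
$$
converges in $\clL_2(\hilb)$ as $y \to 0^+$. The point is that, since $\abs{F}^{-1}$ is a unitary isomorphism $\hilb_1 \to \hilb$ and $\hilb \to \hilb_{-1}$ by Lemma \ref{L: F maps H(A) to H(a+1)}, convergence of $A_y$ in the Hilbert-Schmidt norm between rigged spaces is \emph{equivalent} to convergence of the conjugated operator on $\hilb$.

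Second, I would identify $\abs{F} R_{\lambda \pm iy}(H_r) \abs{F}$ with $F R_{\lambda \pm iy}(H_r) F^*$ up to a unitary conjugation. Since the frame $F \colon \hilb \to \clK$ has trivial kernel and co-kernel, its polar decomposition $F = U \abs{F}$ provides a unitary $U \colon \hilb \to \clK,$ and then $F^* = \abs{F} U^*,$ so
$$
  F\, R_{\lambda + iy}(H_r)\, F^* \;=\; U\bigl(\abs{F}\, R_{\lambda + iy}(H_r)\, \abs{F}\bigr) U^*.
$$
Conjugation by a unitary is an isometry of $\clL_2,$ so the Hilbert-Schmidt convergence of the right-hand side as $y \to 0^+$ is equivalent to that of the left-hand side. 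But the latter is exactly the first condition in Definition \ref{D: Lambda(H0,F)}, which holds by the assumption $\lambda \in \LambHF{H_r}$. Chaining these two equivalences closes the case of $+i0.$

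For the $-i0$ case, I would use the identity $R_{\lambda - iy}(H_r) = R_{\lambda + iy}(H_r)^*,$ which gives
$$
  F\, R_{\lambda - iy}(H_r)\, F^* \;=\; \bigl(F\, R_{\lambda + iy}(H_r)\, F^*\bigr)^*.
$$
Since $T \mapsto T^*$ is an isometry on $\clL_2(\clK),$ convergence on the $+i0$ side (already established) yields convergence on the $-i0$ side, and pulling back through the polar decomposition and Proposition \ref{P: if GAnG converges ...} as before gives the claim in $\clL_2(\hilb_1, \hilb_{-1}).$ There is no real obstacle: the whole proof is a bookkeeping exercise translating between the two equivalent formulations (operators on $\hilb$ conjugated by $\abs{F}$ versus operators between the rigged spaces), and the substantive input---existence of the Hilbert-Schmidt limit---is built into the definition of $\LambHF{H_r}.$
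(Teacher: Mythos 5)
Your proof is correct and follows the same route the paper takes: the paper's own proof is the one-liner "This follows from Theorem \ref{T: Ya thm 6.1.9} and Proposition \ref{P: if GAnG converges ...}," and you have simply unpacked it. The helpful extra detail you supply --- using the polar decomposition $F = U\abs{F}$ to pass from the $\clL_2$-convergence of $F R_{\lambda+iy}(H_r)F^*$ (the datum built into Definition \ref{D: Lambda(H0,F)}) to that of $\abs{F}R_{\lambda+iy}(H_r)\abs{F}$, and then taking adjoints for the $-i0$ side --- is implicit in the paper's citation chain but worth making explicit.
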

\begin{proof} This follows from Proposition \ref{P: if GAnG converges ...} and definition of $\Lambda(H_0,F).$
\end{proof}

\begin{lemma} \label{L: Im R(lambda+i0) is Trace-class}
If $\lambda \in \LambHF{H_r},$ then
$$
  \Im R_{\lambda+i\yy}(H_r) \to \Im R_{\lambda+i0}(H_r)
$$
in~$\clL_1(\hilb_1,\hilb_{-1})$ as $\yy \to 0^+.$
\end{lemma}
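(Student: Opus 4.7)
The plan is to mimic the proof of the preceding Hilbert--Schmidt statement (Lemma \ref{L: R(lambda+i0) is H.-S.}), only replacing the role of Theorem \ref{T: Ya thm 6.1.9} by Theorem \ref{T: Ya thm 6.1.5} and the class $\clL_2$ by $\clL_1.$ The hypothesis $\lambda \in \LambHF{H_r}$ is defined precisely so that $F\,\Im R_{\lambda+iy}(H_r)\,F^* \to F\,\Im R_{\lambda+i0}(H_r)\,F^*$ in the trace norm $\clL_1(\clK)$ as $y \to 0^+,$ so the content of the lemma is really just the transfer of this convergence from $\clK$ to the rigging $(\hilb_1,\hilb,\hilb_{-1}).$

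First, I would pass from $F\cdot F^*$ to $|F|\cdot|F|.$ Using the polar decomposition $F=U|F|$ where, because $F$ has trivial kernel and co-kernel, $U\colon \hilb \to \clK$ is unitary, one has
$$
  |F|\,\Im R_{\lambda+iy}(H_r)\,|F| \;=\; U^{*}\,F\,\Im R_{\lambda+iy}(H_r)\,F^{*}\,U,
$$
and since $U$ is unitary, $\clL_1(\clK)$-convergence of the right-hand side is equivalent to $\clL_1(\hilb)$-convergence of the left-hand side. Thus $|F|\,\Im R_{\lambda+iy}(H_r)\,|F| \to |F|\,\Im R_{\lambda+i0}(H_r)\,|F|$ in $\clL_1(\hilb).$

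Second, I would invoke Proposition \ref{P: if GAnG converges ...} with $p=1$ applied to the net $A_y := \Im R_{\lambda+iy}(H_r),$ $y>0.$ That proposition asserts that $|F|A_y|F|$ converges in $\clL_1(\hilb)$ if and only if $A_y$ converges in $\clL_1(\hilb_1,\hilb_{-1}),$ and the limit on the right is identified with the limit on the left through the unitary isomorphisms $|F|\colon \hilb_{-1}\to\hilb$ and $|F|\colon\hilb\to\hilb_1$ of Lemma \ref{L: F maps H(A) to H(a+1)}. This gives the desired $\clL_1(\hilb_1,\hilb_{-1})$-convergence of $\Im R_{\lambda+iy}(H_r)$ to $\Im R_{\lambda+i0}(H_r).$

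There is essentially no obstacle here, since every ingredient has already been established: the definition of $\LambHF{H_r}$ supplies the $\clL_1(\clK)$-convergence, the polar decomposition of the frame operator supplies the unitary translation between $F\cdot F^*$ and $|F|\cdot|F|,$ and Proposition \ref{P: if GAnG converges ...} supplies the sandwich/rigging equivalence. The only point one must be careful about is that the limit operator $\Im R_{\lambda+i0}(H_r)$ is a priori only defined as the resulting trace-class operator from $\hilb_1$ to $\hilb_{-1},$ consistent with the blanket convention announced at the beginning of Section \ref{S: wave matrix}.
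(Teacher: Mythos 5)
Your proof is correct and follows the same route as the paper, which disposes of the lemma in one line by citing Theorem \ref{T: Ya thm 6.1.5} and Proposition \ref{P: if GAnG converges ...}. You have simply made explicit the intermediate step — using the polar decomposition $F=U|F|$ with $U$ unitary to pass from $\clL_1(\clK)$-convergence of $F\,\Im R_{\lambda+iy}(H_r)\,F^*$ to $\clL_1(\hilb)$-convergence of $|F|\,\Im R_{\lambda+iy}(H_r)\,|F|$ — which the paper leaves implicit; this is a legitimate and helpful clarification, not a different argument.
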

\begin{proof} This follows from Theorem \ref{T: Ya thm 6.1.5} and Proposition \ref{P: if GAnG converges ...}.
\end{proof}

We now investigate the forms (cf.~\cite[Definition 2.7.2]{Ya})
$$
  \mathfrak a_\pm(H_r, H_0; f,g; \lambda) := \lim_{y \to 0^+} \frac y\pi \scal{R_{\lambda\pm iy}(H_r)f}{R_{\lambda\pm iy}(H_0)g}.
$$
Unlike~\cite[Definition 2.7.2]{Ya}, we treat $\mathfrak a_\pm(H_r, H_0; \lambda)$ not as a form, but as an operator from
$\hilb_1$ to~$\hilb_{-1}.$
In~\cite[\S 5.2]{Ya} it is proved that this form is well-defined for a.e. $\lambda \in \mbR.$
In the next proposition we give an explicit set of full measure
on which $\mathfrak a_\pm(H_r, H_0; \lambda)$ exists.

\begin{prop} \label{P: limit of RR}
If $\lambda \in \LambHF{H_r} \cap \LambHF{H_0},$ then the limit
\begin{equation} \label{F: limit of RR}
  \lim\limits_{\yy \to 0^+} \frac \yy \pi R_{\lambda \mp i\yy}(H_r)R_{\lambda \pm i\yy}(H_0)
\end{equation}
exists in~$\clL_1(\hilb_1, \hilb_{-1}).$
\end{prop}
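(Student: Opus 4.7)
The plan is to reduce the claim to the two limiting absorption lemmas already at our disposal (Lemma \ref{L: R(lambda+i0) is H.-S.} and Lemma \ref{L: Im R(lambda+i0) is Trace-class}) by means of the second resolvent identity.

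First I would use the identity
$$
  \frac{y}{\pi}\, R_{\lambda\mp iy}(H_0)\, R_{\lambda\pm iy}(H_0) \;=\; \frac{1}{\pi}\,\Im R_{\lambda\pm iy}(H_0),
$$
combined with the form
$$
  R_{\lambda\mp iy}(H_r) \;=\; \bigl[\,I - R_{\lambda\mp iy}(H_r)\,V_r\,\bigr]\,R_{\lambda\mp iy}(H_0)
$$
of the second resolvent identity (cf.~(\ref{F: second rslv identity})), in order to factor
$$
  \frac{y}{\pi}\, R_{\lambda\mp iy}(H_r)\, R_{\lambda\pm iy}(H_0) \;=\; \bigl[\,I - R_{\lambda\mp iy}(H_r)\,V_r\,\bigr]\cdot \frac{1}{\pi}\,\Im R_{\lambda\pm iy}(H_0).
$$
All operators here are interpreted per the convention fixed at the start of this section: $V_r\colon \hilb_{-1}\to \hilb_1$, $R_{\lambda\mp iy}(H_r)\colon \hilb_1\to \hilb_{-1}$, so the bracket is a bounded operator $\hilb_{-1}\to\hilb_{-1}$, while the second factor maps $\hilb_1\to\hilb_{-1}$.

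Next I would take the limit $y\to 0^+$. By Lemma \ref{L: Im R(lambda+i0) is Trace-class}, $\frac{1}{\pi}\Im R_{\lambda\pm iy}(H_0)$ converges to $\frac{1}{\pi}\Im R_{\lambda\pm i0}(H_0)$ in $\clL_1(\hilb_1,\hilb_{-1})$. By Lemma \ref{L: R(lambda+i0) is H.-S.} applied to $H_r$ (this is where the hypothesis $\lambda\in\LambHF{H_r}$ is used), the resolvent $R_{\lambda\mp iy}(H_r)$ converges in $\clL_2(\hilb_1,\hilb_{-1})$; composing with the bounded $V_r\colon\hilb_{-1}\to\hilb_1$ gives convergence of $R_{\lambda\mp iy}(H_r)V_r$ in $\clL_2(\hilb_{-1},\hilb_{-1})$, and in particular in the uniform norm of $\clB(\hilb_{-1})$. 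Finally, the standard inequality
$$
  \bigl\|A_y B_y - AB\bigr\|_1 \;\leq\; \|A_y\|\cdot\|B_y-B\|_1 + \|A_y - A\|\cdot\|B\|_1
$$
lets me combine norm-convergence on the left with $\clL_1$-convergence on the right to conclude that the product converges in $\clL_1(\hilb_1,\hilb_{-1})$ to
$$
  \bigl[\,I - R_{\lambda\mp i0}(H_r)\,V_r\,\bigr]\cdot \frac{1}{\pi}\,\Im R_{\lambda\pm i0}(H_0),
$$
which is precisely $\mathfrak{a}_\pm(\lambda;H_r,H_0)$ up to the sign convention.

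The only mildly delicate point — and the one most likely to trip the reader — is strict bookkeeping of the spaces $\hilb_1, \hilb, \hilb_{-1}$ through the second resolvent identity: one must check that the identity, originally valid in $\clB(\hilb)$, survives conjugation by $|F|^{\pm 1}$ so as to make sense as an equality of operators $\hilb_1\to\hilb_{-1}$, and that the factor $I$ in the bracket really does mean the identity on $\hilb_{-1}$. Once this is set up carefully using Proposition~\ref{P: if GAnG converges ...}, the limit argument above is essentially automatic.
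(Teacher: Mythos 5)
Your proof takes exactly the same route as the paper's: factor $\frac{y}{\pi}R_{\lambda\mp iy}(H_r)R_{\lambda\pm iy}(H_0)$ via the second resolvent identity into $\bigl[1-R_{\lambda\mp iy}(H_r)V_r\bigr]\cdot\frac1\pi\Im R_{\lambda+iy}(H_0)$, then let $y\to0^+$ using Lemma~\ref{L: R(lambda+i0) is H.-S.} for the $\clL_2$ bracket and Lemma~\ref{L: Im R(lambda+i0) is Trace-class} for the $\clL_1$ factor. One small slip: since $y\,R_{\lambda\mp iy}(H_0)R_{\lambda\pm iy}(H_0)=\Im R_{\lambda+iy}(H_0)$ for \emph{both} choices of sign (the factors commute and $\Im R_{\lambda-iy}=-\Im R_{\lambda+iy}$), the right-hand sides should read $\frac1\pi\Im R_{\lambda+iy}(H_0)$ and $\frac1\pi\Im R_{\lambda+i0}(H_0)$, not $\Im R_{\lambda\pm iy}$, $\Im R_{\lambda\pm i0}$ --- compare Proposition~\ref{P: mathfrak a=(1-RV)Im R}; this does not affect the convergence argument.
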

\begin{proof}
We have (cf. e.g.~\cite[(2.7.10)]{Ya})
\begin{equation} \label{F: GRRG exists in L1}
 \begin{split}
  \frac \yy \pi R_{\lambda\mp i\yy}(H_r) & R_{\lambda\pm i\yy}(H_0)
     \\ & = \frac 1 \pi \Im R_{\lambda+i\yy}(H_r) \SqBrs{1 + V_r R_{\lambda\pm i\yy}(H_0)}
     \\ & = \SqBrs{1 - R_{\lambda\mp i\yy}(H_r)V_r} \cdot \frac 1 \pi \Im R_{\lambda+i\yy}(H_0).
 \end{split}
\end{equation}
Since $\lambda \in \LambHF{H_0} \cap \LambHF{H_r},$
by Lemma \ref{L: Im R(lambda+i0) is Trace-class},
the limits of $\Im R_{\lambda+i\yy}(H_0)$
and $\Im R_{\lambda+i\yy}(H_r)$ exist in~$\clL_1(\hilb_1,\hilb_{-1}).$ Also,
by Lemma \ref{L: R(lambda+i0) is H.-S.},
the limits of $R_{\lambda\pm i\yy}(H_0)$ and $R_{\lambda\pm i\yy}(H_r)$
exist in~$\clL_\infty(\hilb_1,\hilb_{-1}),$
while $V_r \colon \hilb_{-1} \to \hilb_1$ is a bounded operator (see (\ref{F: V hilb(-1) to hilb(1)})).
It follows that the limit (\ref{F: limit of RR}) exists in~$\clL_1(\hilb_1,\hilb_{-1}).$
\end{proof}

\begin{defn} \label{D: mathfrak a}
Let $\lambda \in \LambHF{H_r} \cap \LambHF{H_0}.$ The operators
$$
  \mathfrak a_\pm(\lambda;H_r,H_0) \colon \hilb_1 \to \hilb_{-1}
$$
are the limits (\ref{F: limit of RR}) taken in~$\clL_1(\hilb_1, \hilb_{-1})$ topology.
\end{defn}

\begin{prop} \label{P: mathfrak a=(1-RV)Im R} If $\lambda \in \LambHF{H_r} \cap \LambHF{H_0},$ then,
in~$\clL_1(\hilb_{1},\hilb_{-1}),$ the equalities
\begin{equation} \label{F: mathfrak a =(1-RV) Im R}
 \begin{split}
   \mathfrak a_\pm(\lambda;H_r,H_0) & = \SqBrs{1- R_{\lambda \mp i0}(H_r)V_r} \cdot \frac 1\pi \Im R_{\lambda + i0}(H_0) \\
     & = \frac 1\pi \Im R_{\lambda + i0}(H_r)\SqBrs{1+ V_r R_{\lambda \pm i0}(H_0)}
 \end{split}
\end{equation}
hold.
\end{prop}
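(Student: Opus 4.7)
The plan is to simply pass to the limit $y \to 0^+$ in each of the two factorizations of $\frac{y}{\pi} R_{\lambda \mp iy}(H_r) R_{\lambda \pm iy}(H_0)$ that were already established as identities in equation (\ref{F: GRRG exists in L1}) inside the proof of Proposition \ref{P: limit of RR}. By Definition \ref{D: mathfrak a}, the left-hand side of these identities tends to $\mathfrak a_\pm(\lambda; H_r, H_0)$ in $\clL_1(\hilb_1, \hilb_{-1})$, so it only remains to show that the right-hand sides converge in the same topology to the claimed limits.

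First I would treat the second factorization $\SqBrs{1 - R_{\lambda\mp iy}(H_r)V_r} \cdot \frac{1}{\pi}\Im R_{\lambda+iy}(H_0)$. By Lemma \ref{L: Im R(lambda+i0) is Trace-class}, the trailing factor converges in $\clL_1(\hilb_1, \hilb_{-1})$. To show the leading bracket converges in the operator norm on $\hilb_{-1}$, observe that $V_r \colon \hilb_{-1} \to \hilb_1$ is bounded by the standing assumption (\ref{F: V hilb(-1) to hilb(1)}), while $R_{\lambda\mp iy}(H_r) \colon \hilb_1 \to \hilb_{-1}$ converges in $\clL_2(\hilb_1, \hilb_{-1})$ by Lemma \ref{L: R(lambda+i0) is H.-S.}; hence the composition $R_{\lambda\mp iy}(H_r)V_r$ converges in $\clL_2(\hilb_{-1}, \hilb_{-1})$, in particular in the uniform norm. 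Since $\clL_1$ is a two-sided ideal with respect to bounded multiplication, the product of a uniformly convergent bounded family and an $\clL_1$-convergent family converges in $\clL_1$, yielding the first equality.

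For the first factorization $\frac{1}{\pi}\Im R_{\lambda+iy}(H_r) \SqBrs{1 + V_r R_{\lambda\pm iy}(H_0)}$ the argument is symmetric: the leading factor converges in $\clL_1(\hilb_1, \hilb_{-1})$ by Lemma \ref{L: Im R(lambda+i0) is Trace-class} applied with $H_r$ in place of $H_0$, and the trailing bracket $V_r R_{\lambda\pm iy}(H_0) \colon \hilb_1 \to \hilb_1$ converges in $\clL_2(\hilb_1, \hilb_1)$, and therefore in operator norm, by the same combination of (\ref{F: V hilb(-1) to hilb(1)}) and Lemma \ref{L: R(lambda+i0) is H.-S.}. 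Multiplying a norm-bounded, norm-convergent family on the right of an $\clL_1$-convergent family again gives $\clL_1$-convergence.

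The only delicate point is bookkeeping the domains and ranges in the rigged triple $\hilb_1 \subset \hilb \subset \hilb_{-1}$, using the convention (implicit via Proposition \ref{P: if GAnG converges ...}) that $V_r$ is interpreted as an operator $\hilb_{-1} \to \hilb_1$ throughout, so that each composition above is indeed a well-defined map $\hilb_1 \to \hilb_{-1}$. Once this is tracked carefully, both limits coincide with $\mathfrak a_\pm(\lambda; H_r, H_0)$ by uniqueness of limits in $\clL_1(\hilb_1, \hilb_{-1})$, which completes the proof.
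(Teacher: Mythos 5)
Your argument is correct and is precisely the proof the paper gives (the paper simply cites (\ref{F: GRRG exists in L1}), Lemmas \ref{L: R(lambda+i0) is H.-S.} and \ref{L: Im R(lambda+i0) is Trace-class}, Proposition \ref{P: limit of RR}, and (\ref{F: V hilb(-1) to hilb(1)}) without spelling out the limit-passing as you do). You have just filled in the routine details the author leaves implicit.
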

\begin{proof} This follows from (\ref{F: GRRG exists in L1}), Lemmas \ref{L: R(lambda+i0) is H.-S.}, \ref{L: Im R(lambda+i0) is Trace-class},
  Proposition \ref{P: limit of RR} and (\ref{F: V hilb(-1) to hilb(1)}).
  % the fact that $V \colon \hilb_{-1} \to \hilb_1$ is a bounded operator.
\end{proof}
Note that products such as $R_{\lambda \mp i0}(H_r)V_r \cdot \frac 1\pi \Im R_{\lambda + i0}(H_0)$ should be and are understood as acting in the following way:
\smallskip
$$
  \hilb_{-1} \stackrel {R_{\lambda \mp i0}(H_r)} {\mbox {$\leftarrow\!\!\!-\!\!\!-\!\!\!-\!\!\!-\!\!\!-\!\!\!-\!\!\!-$}} \hilb_1 \stackrel {V_r} {\mbox {$\leftarrow\!\!\!-\!\!\!-\!\!\!-\!\!\!-$}}
  \hilb_{-1} \stackrel {\frac 1\pi \Im R_{\lambda + i0}(H_0)} {\mbox {{$\leftarrow\!\!\!-\!\!\!-\!\!\!-\!\!\!-\!\!\!-\!\!\!-\!\!\!-\!\!\!-\!\!\!-\!\!\!-$}}} \hilb_1.
$$
\smallskip
\begin{lemma} \label{L: if euE f=0 then af=0}
Let $\lambda \in \LambHF{H_r} \cap \LambHF{H_0}$
and let $f \in \hilb_1.$
If $\euE_\lambda(H_0) f = 0,$ then $\mathfrak a_\pm(\lambda;H_r,H_0) f = 0.$
\end{lemma}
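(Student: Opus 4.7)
The plan is to exploit the identity $\euE_\lambda^\diamondsuit \euE_\lambda = \frac{1}{\pi}\Im R_{\lambda+i0}(H_0)$, viewed as an operator from $\hilb_1$ to $\hilb_{-1}$, together with the factorization of $\mathfrak a_\pm$ given in Proposition \ref{P: mathfrak a=(1-RV)Im R}. The key observation is that if $\euE_\lambda(H_0) f = 0$, then $f$ must already be annihilated by $\Im R_{\lambda+i0}(H_0)$, which kills both factors in the explicit formula for $\mathfrak a_\pm$.

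First, I would show that $\Im R_{\lambda+i0}(H_0) f = 0$ as an element of $\hilb_{-1}$. For any $g \in \euD$, item \ref{SS: euE's}(vii), extended from frame vectors to $\euD$ by bilinearity, gives
\begin{equation*}
  \tfrac{1}{\pi}\scal{g}{\Im R_{\lambda+i0}(H_0) f}_{1,-1} = \scal{\euE_\lambda g}{\euE_\lambda f}_{\ell_2} = 0,
\end{equation*}
where the right-hand side vanishes because $\euE_\lambda f = 0$. Since $\euD$ is dense in $\hilb_1$ and the pairing $\scal{\cdot}{\cdot}_{1,-1} \colon \hilb_1 \times \hilb_{-1} \to \mbC$ is non-degenerate in its second argument (Subsection \ref{SSS: diamond conjugate}), this forces $\Im R_{\lambda+i0}(H_0) f = 0$ in $\hilb_{-1}$.

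Second, I would invoke Proposition \ref{P: mathfrak a=(1-RV)Im R} to write
\begin{equation*}
  \mathfrak a_\pm(\lambda;H_r,H_0) f = \SqBrs{1 - R_{\lambda \mp i0}(H_r) V_r} \cdot \tfrac{1}{\pi}\Im R_{\lambda+i0}(H_0) f,
\end{equation*}
and observe that, since the bracketed operator acts in $\hilb_{-1}$ (via the composition $V_r \colon \hilb_{-1}\to\hilb_1$ followed by $R_{\lambda\mp i0}(H_r)\colon\hilb_1\to\hilb_{-1}$), it maps the zero vector of $\hilb_{-1}$ to zero. Hence $\mathfrak a_\pm(\lambda;H_r,H_0) f = 0$.

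The only delicate point is the interpretation of the identity $\euE_\lambda^*\euE_\lambda = \frac{1}{\pi}\Im R_{\lambda+i0}(H_0)$ in a framework where $\Im R_{\lambda+i0}(H_0)$ is now treated as an operator $\hilb_1 \to \hilb_{-1}$ rather than as a bounded operator on $\hilb$; this is precisely what the rigging bookkeeping in the opening discussion of Section \ref{S: wave matrix} was designed to handle, and once that identification is in place the argument is essentially immediate.
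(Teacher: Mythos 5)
Your proof is correct and follows essentially the same route as the paper: the paper's one-line argument cites the identity $\euE^\diamondsuit_\lambda(H_0)\euE_\lambda(H_0)=\frac1\pi\Im R_{\lambda+i0}(H_0)$ (as an $\clL_1(\hilb_1,\hilb_{-1})$ equality) together with Proposition \ref{P: mathfrak a=(1-RV)Im R}, which is exactly what your pairing computation unpacks. The only cosmetic difference is that you verify $\Im R_{\lambda+i0}(H_0)f=0$ by testing against a dense set of $g\in\euD$, whereas the paper simply applies $\euE^\diamondsuit_\lambda$ directly to $\euE_\lambda f=0$; the content is the same.
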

\begin{proof} This follows from (see \ref{SS: euE's}(vii) and (\ref{F: (f,A*Ag)=(Af,Ag)}))
\begin{equation} \label{F: euE diam euE = Im R}
  \euE^\diamondsuit_\lambda(H_0) \euE_\lambda(H_0) = \frac 1\pi \Im R_{\lambda + i0}(H_0)
\end{equation}
(as equality in~$\clL_1(\hilb_1,\hilb_{-1})$) and Proposition \ref{P: mathfrak a=(1-RV)Im R}.
\end{proof}

% \section{Wave matrix $w_\pm(\lambda; H_r,H_0)$}
\subsection{Definition of the wave matrix $w_\pm(\lambda; H_r,H_0)$}
Since from now on we need direct integral representations (\ref{F: direct integral})
for different operators~$H_r = H_0 + V_r,$
we denote the fiber Hilbert space, corresponding to~$H_r$ by~$\hlambdar$ or by~$\hlambda(H_r).$

% If we need some of these objects for other operators~$H_r,$ we shall use the superscript $^{(r)}$
% or write $(H_r).$

In this section we define the wave matrix $w_\pm(\lambda; H_r,H_0)$
as a form and prove that it is well-defined and bounded, so that it defines an operator.
\begin{defn}  Let $\lambda \in \LambHF{H_r} \cap \LambHF{H_0}.$
The wave matrix $w_\pm(\lambda; H_r,H_0)$
is a densely defined form % on $\euE_\lambda(H_r) \hilb_1 \times \euE_\lambda(H_0) \hilb_1 \subset \ell_2 \times \ell_2$
\begin{equation*} % \label{F: def-n of w pm}
  w_\pm(\lambda; H_r,H_0) \colon \hlambdar \times \hlambdao \to \mbC,
  % \euE_\lambda(H_r) \hilb_1 \times \euE_\lambda(H_0) \hilb_1\to \mbC,
\end{equation*}
defined by the formula
\begin{equation} \label{F: def of w +-}
  w_\pm(\lambda; H_r,H_0) \brs{\euE_\lambda(H_r)f, \euE_\lambda(H_0) g} =  \scal{f}{\mathfrak a_\pm(\lambda;H_r,H_0)g}_{1,-1},
\end{equation}
where $f,g \in \hilb_1.$
\end{defn}
% Note that the domain of the form $w_\pm(\lambda)$ actually is~$\hlambdar \times \hlambdao.$
It is worth to note that this definition depends on endpoint operators $H_0$ and $H_r,$ but it does not depend on the path $\set{H_s}_{s \in [0,r]}$
connecting the endpoints.

One needs to show that the wave matrix is well-defined.
\begin{prop} \label{P: W pm is well defined} For any $\lambda \in \LambHF{H_r} \cap \LambHF{H_0}$ the form $w_\pm(\lambda; H_r,H_0)$
  is well-defined, and it is bounded with norm $\leq 1.$
\end{prop}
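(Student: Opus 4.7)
The form $w_\pm$ is defined by its values on vectors of the form $\euE_\lambda(H_r)f$ and $\euE_\lambda(H_0)g$, so well-definedness amounts to showing that the right-hand side $\scal{f}{\mathfrak a_\pm g}_{1,-1}$ vanishes whenever either $\euE_\lambda(H_r)f=0$ or $\euE_\lambda(H_0)g=0$. For the second situation Lemma \ref{L: if euE f=0 then af=0} does the job directly. For the first, I would use the second representation of $\mathfrak a_\pm$ in Proposition \ref{P: mathfrak a=(1-RV)Im R}, namely
\begin{equation*}
  \mathfrak a_\pm(\lambda;H_r,H_0) = \frac 1\pi \Im R_{\lambda+i0}(H_r)\bigl[1+ V_r R_{\lambda\pm i0}(H_0)\bigr],
\end{equation*}
together with the factorisation $\frac 1\pi \Im R_{\lambda+i0}(H_r) = \euE^\diamondsuit_\lambda(H_r)\euE_\lambda(H_r)$ coming from (\ref{F: euE diam euE = Im R}). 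Then
\begin{equation*}
  \scal{f}{\mathfrak a_\pm g}_{1,-1} = \bigl\langle \euE_\lambda(H_r)f,\ \euE_\lambda(H_r)[1+V_r R_{\lambda\pm i0}(H_0)]g\bigr\rangle_{\hlambdar},
\end{equation*}
so the pairing vanishes whenever $\euE_\lambda(H_r)f=0$. This simultaneously shows that the value $w_\pm(\euE_\lambda(H_r)f,\euE_\lambda(H_0)g)$ depends only on the fiber vectors $\euE_\lambda(H_r)f$ and $\euE_\lambda(H_0)g$ and not on the chosen representatives in $\hilb_1$.

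For boundedness the natural approach is to apply Cauchy--Schwarz to the pre-limit quantity (\ref{F: limit of RR}) and only then pass to $\yy \to 0^+$. Starting from
\begin{equation*}
   \frac \yy\pi \bigl\langle f, R_{\lambda\mp i\yy}(H_r)R_{\lambda\pm i\yy}(H_0)g\bigr\rangle
   = \frac \yy\pi \bigl\langle R_{\lambda\pm i\yy}(H_r)f,\ R_{\lambda\pm i\yy}(H_0)g\bigr\rangle,
\end{equation*}
the Cauchy--Schwarz inequality, combined with the identity $\yy\norm{R_{\lambda\pm i\yy}(H)h}^2 = \scal{h}{\Im R_{\lambda+i\yy}(H)h}$, yields
\begin{equation*}
  \Bigl|\tfrac \yy\pi\scal{R_{\lambda\pm i\yy}(H_r)f}{R_{\lambda\pm i\yy}(H_0)g}\Bigr|
   \leq \tfrac 1\pi\sqrt{\scal{f}{\Im R_{\lambda+i\yy}(H_r)f}}\sqrt{\scal{g}{\Im R_{\lambda+i\yy}(H_0)g}}.
\end{equation*}
Passing to the limit $\yy\to 0^+$, using Proposition \ref{P: limit of RR} on the left and the Hilbert--Schmidt convergence of $\euE_{\lambda+i\yy}$ (item \ref{SS: euE's}(vi)) together with (\ref{F: euE(*)euE=1/pi Im R(z)}) on the right, one gets
\begin{equation*}
  \bigl|\scal{f}{\mathfrak a_\pm(\lambda;H_r,H_0)g}_{1,-1}\bigr|
  \leq \norm{\euE_\lambda(H_r)f}_{\hlambdar}\cdot\norm{\euE_\lambda(H_0)g}_{\hlambdao},
\end{equation*}
which is exactly the assertion that the form has norm $\leq 1$.

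Two minor points need care but are routine given the machinery set up earlier. First, the step ``Cauchy--Schwarz, then limit'' must pass from an inequality between ordinary Hilbert space scalar products to one between rigged-space pairings; this is painless because $f,g\in\hilb_1$ allow all intermediate quantities $R_{\lambda\pm i\yy}(H)f$, $R_{\lambda\pm i\yy}(H)g$ to be read as elements of $\hilb$ for $\yy>0$ and only the limit is taken in $\clL_1(\hilb_1,\hilb_{-1})$. Second, the factorisation $\frac 1\pi\Im R_{\lambda+i0}(H) = \euE^\diamondsuit_\lambda(H)\euE_\lambda(H)$ that I use for well-definedness is (\ref{F: euE diam euE = Im R}), which in turn rests on items \ref{SS: euE's}(i) and (vii). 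The only conceptual obstacle, and the place where the hypothesis $\lambda \in \LambHF{H_r}\cap\LambHF{H_0}$ is indispensable, is the joint existence of the two limits $\Im R_{\lambda+i\yy}(H_0)$ and $\Im R_{\lambda+i\yy}(H_r)$ in $\clL_1(\hilb_1,\hilb_{-1})$; this is already in hand via Lemma \ref{L: Im R(lambda+i0) is Trace-class}, so the whole argument is essentially a combination of the factorisation identity (\ref{F: euE diam euE = Im R}) with the resolvent formula $\Im R_{\lambda+i\yy}(H) = \yy R_{\lambda-i\yy}(H)R_{\lambda+i\yy}(H)$.
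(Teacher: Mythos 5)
Your proof is correct and follows essentially the same route as the paper: Cauchy--Schwarz applied to the pre-limit sandwich $\frac{\yy}{\pi}\scal{R_{\lambda\pm i\yy}(H_r)f}{R_{\lambda\pm i\yy}(H_0)g}$, then passage to $\yy\to 0^+$ to obtain the bound $\abs{\scal{f}{\mathfrak a_\pm g}_{1,-1}}\leq\norm{\euE_\lambda(H_r)f}\cdot\norm{\euE_\lambda(H_0)g}$. The paper's proof literally consists of citing Lemma \ref{L: if euE f=0 then af=0} for well-definedness and then producing exactly the Schwarz chain you wrote.

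One point where you are more careful than the paper: you noticed that Lemma \ref{L: if euE f=0 then af=0} only handles the case $\euE_\lambda(H_0)g=0$, and you supplied a separate factorisation argument (through the second representation of $\mathfrak a_\pm$ and $\frac1\pi\Im R_{\lambda+i0}(H_r)=\euE^\diamondsuit_\lambda(H_r)\euE_\lambda(H_r)$) for the case $\euE_\lambda(H_r)f=0$. That is correct and is a genuine, if small, improvement in explicitness. It is worth observing, though, that once you have the Schwarz bound $\abs{\scal{f}{\mathfrak a_\pm g}_{1,-1}}\leq\norm{\euE_\lambda(H_r)f}\cdot\norm{\euE_\lambda(H_0)g}$, well-definedness in \emph{both} directions is an immediate corollary (the right-hand side vanishes if either fiber vector does), so neither Lemma \ref{L: if euE f=0 then af=0} nor your factorisation step is strictly needed once the bound is in hand; it would be slightly cleaner to prove the bound first and extract well-definedness from it.
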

\begin{proof} That $w_\pm(\lambda; H_r,H_0)$ is well-defined follows
from Lemma \ref{L: if euE f=0 then af=0}.

Further, by Schwarz inequality, for any $f,g \in \hilb_1,$
\begin{equation} \label{F: norm of mathfrak a}
 \begin{split}
  \frac \yy \pi & \abs{\scal{f}{ R_{\lambda-i\yy}(H_r)R_{\lambda+i\yy}(H_0)g}}
    \\ & \qquad = \frac \yy \pi \abs{\scal{R_{\lambda+i\yy}(H_r)f}{ R_{\lambda+i\yy}(H_0)g}}
    \\ & \qquad \leq \frac \yy \pi \abs{\scal{R_{\lambda+i\yy}(H_r)f}{ R_{\lambda+i\yy}(H_r)f}}^{1/2} \abs{\scal{R_{\lambda+i\yy}(H_0)g}{R_{\lambda+i\yy}(H_0)g}}^{1/2}
    \\ & \qquad = \frac 1\pi \abs{\scal{f}{\Im R_{\lambda+i\yy}(H_r)f}}^{1/2} \cdot \abs{\scal{g}{\Im R_{\lambda+i\yy}(H_0)g}}^{1/2}.
 \end{split}
\end{equation}
Taking the limit $y\to 0^+,$ one gets, using Lemma \ref{L: Im R(lambda+i0) is Trace-class},
Proposition \ref{P: limit of RR} and (\ref{F: euE diam euE = Im R}),
$$
  \abs{\scal{f}{\mathfrak a_\pm(\lambda;H_r,H_0)g}_{1,-1}} \leq \norm{\euE_\lambda (H_r)f}_\hlambdar \cdot \norm{\euE_\lambda (H_0)g}_\hlambdao.
$$
It follows that the wave matrix is bounded with bound less or equal to $1.$
\end{proof}

So, the form $w_\pm(\lambda; H_r,H_0)$ is defined on~$\hlambdar \times \hlambdao.$
We will identify the form $w_\pm(\lambda)$ with the corresponding operator from~$\hlambdao$ to~$\hlambdar,$
so that
$$
  w_\pm(\lambda; H_r,H_0)(\euE_\lambda(H_r) f,\euE_\lambda(H_0) g) = \scal{\euE_\lambda(H_r) f}{w_\pm(\lambda; H_r,H_0)\euE_\lambda(H_0) g},
$$
where $f,g \in \hilb_1.$ %$f_\lambda^{(r)} = \euE_\lambda(H_r) f$ and $g_\lambda^{(0)} = \euE_\lambda(H_0) g.$
Note that it follows from the definition of $w_\pm(\lambda; H_r,H_0)$ that
\begin{equation} \label{F: euE w pm euE = mathfrak a}
  \euE^\diamondsuit_\lambda(H_r) w_\pm(\lambda; H_r,H_0) \euE_\lambda(H_0) = \mathfrak a_\pm(\lambda;H_r,H_0).
\end{equation}

The following proposition follows immediately from the definition of $w_\pm(\lambda;H_r,H_0).$
\begin{prop} \label{P: w(H0,H0;lambda)=1 and ...}
1. Let $\lambda \in \LambHF{H_0}.$ Then
$$
  w_\pm(\lambda; H_0,H_0) = \id.
$$
2. Let $\lambda \in \LambHF{H_r} \cap \LambHF{H_0}.$ Then
\begin{equation} \label{F: w* = ...}
  w_\pm^*(\lambda; H_{r},H_{0}) = w_\pm(\lambda; H_0,H_r).
\end{equation}
\end{prop}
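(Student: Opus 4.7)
The plan is to verify both parts by unwinding the defining identity (\ref{F: def of w +-}) on the dense pre-Hilbert subspaces $\euE_\lambda(H_0)\hilb_1 \subset \hlambdao$ and $\euE_\lambda(H_r)\hilb_1 \subset \hlambdar$ and then invoking density (combined with the uniform bound from Proposition~\ref{P: W pm is well defined}).

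For part 1, I would specialize Definition~\ref{D: mathfrak a} (or Proposition~\ref{P: mathfrak a=(1-RV)Im R}) to the trivial perturbation $V_0=0$, which gives
$$
  \mathfrak a_\pm(\lambda; H_0, H_0) \;=\; \tfrac{1}{\pi}\Im R_{\lambda+i0}(H_0)
$$
as an element of $\clL_1(\hilb_1,\hilb_{-1})$. Substituting into (\ref{F: def of w +-}) and using the factorization (\ref{F: euE diam euE = Im R}), for $f,g\in\hilb_1$ one gets
$$
  w_\pm(\lambda;H_0,H_0)(\euE_\lambda f,\euE_\lambda g)
   = \langle f, \euE_\lambda^{\diamondsuit}\euE_\lambda g\rangle_{1,-1}
   = \langle \euE_\lambda f,\euE_\lambda g\rangle_\hlambdao,
$$
which identifies the form with the identity on the dense manifold $\euE_\lambda\hilb_1$; boundedness of $w_\pm(\lambda;H_0,H_0)$ extends this to all of $\hlambdao$.

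For part 2, the key observation is the resolvent adjoint identity $R_z(H)^*=R_{\bar z}(H)$, applied to both $H_0$ and $H_r$, which yields
$$
  \bigl(R_{\lambda\mp iy}(H_r)\,R_{\lambda\pm iy}(H_0)\bigr)^{*}
   = R_{\lambda\mp iy}(H_0)\,R_{\lambda\pm iy}(H_r).
$$
Plugging this into the defining limit of $\mathfrak a_\pm$ and using antilinearity of the pairing in the first argument gives, for every $y>0$ and every $f,g\in\hilb_1$,
$$
  \overline{\langle f,\tfrac{y}{\pi}R_{\lambda\mp iy}(H_r)R_{\lambda\pm iy}(H_0)g\rangle}
   = \langle g,\tfrac{y}{\pi}R_{\lambda\mp iy}(H_0)R_{\lambda\pm iy}(H_r)f\rangle.
$$
Passing $y\to 0^+$ — which is legitimate in $\clL_1(\hilb_1,\hilb_{-1})$ by Proposition~\ref{P: limit of RR} applied to both ordered pairs $(H_r,H_0)$ and $(H_0,H_r)$ (note $\LambHF{H_0}\cap\LambHF{H_r}$ is symmetric in the two operators) — yields
$$
  \overline{\langle f,\mathfrak a_\pm(\lambda;H_r,H_0)g\rangle_{1,-1}}
   = \langle g,\mathfrak a_\pm(\lambda;H_0,H_r)f\rangle_{1,-1}.
$$
Translating both sides via (\ref{F: def of w +-}) then gives
$$
  \overline{\langle\euE_\lambda(H_r)f,\,w_\pm(\lambda;H_r,H_0)\euE_\lambda(H_0)g\rangle}
   = \langle \euE_\lambda(H_0)g,\,w_\pm(\lambda;H_0,H_r)\euE_\lambda(H_r)f\rangle,
$$
which on the dense pair of manifolds is exactly the identity $w_\pm^*(\lambda;H_r,H_0)=w_\pm(\lambda;H_0,H_r)$; density together with the uniform norm bound of Proposition~\ref{P: W pm is well defined} extends it to all of $\hlambdar$ and $\hlambdao$.

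There is no real obstacle here: both parts are direct consequences of the definition together with results already established. The only point requiring a little care is that the conjugation step in part 2 is carried out at the level of the trace-class pre-limits in $\clL_1(\hilb_1,\hilb_{-1})$ before taking $y\to 0^+$, so that one never has to manipulate the (only a priori form-valued) boundary objects $R_{\lambda\pm i0}(H_r)V_r R_{\lambda\pm i0}(H_0)$ directly; the existence of both limits on $\LambHF{H_0}\cap\LambHF{H_r}$ is what makes this legitimate.
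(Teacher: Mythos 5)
Your argument for part 1 coincides with the paper's proof: it is the same chain of identities via Proposition~\ref{P: mathfrak a=(1-RV) Im R}, the factorization (\ref{F: euE diam euE = Im R}), and density of $\euE_\lambda\hilb_1$ in $\hlambdao$ from (\ref{F: def of hlambda}).

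For part 2 your route is genuinely different from the paper's. The paper merely remarks that the identity ``follows directly from the definition'' and omits the details, postponing the actual derivation to Corollary~\ref{C: wave matrix is unitary}, where $w_\pm^*(\lambda;H_r,H_0)=w_\pm(\lambda;H_0,H_r)$ is recovered from the multiplicative property (Theorem~\ref{T: mult-ive property of w pm}) together with the norm bound $\norm{w_\pm}\le 1$. You instead supply the direct argument: take the adjoint of the pre-limit $\tfrac{y}{\pi}R_{\lambda\mp iy}(H_r)R_{\lambda\pm iy}(H_0)$ via $R_z(H)^*=R_{\bar z}(H)$, observe that the resulting operator is the pre-limit defining $\mathfrak a_\pm(\lambda;H_0,H_r)$, pass $y\to 0^+$ using Proposition~\ref{P: limit of RR} (valid with the roles of $H_0$ and $H_r$ interchanged since $\LambHF{H_0}\cap\LambHF{H_r}$ is symmetric in the pair), and translate through (\ref{F: def of w +-}) on the dense ranges. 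This is correct and is in fact the argument the paper implicitly refers to when it says the relation ``follows directly from the definition''; its advantage over the paper's deferred route is that it does not rely on the (considerably heavier) multiplicativity theorem, so part 2 becomes available as early as Definition~\ref{D: mathfrak a} and Proposition~\ref{P: limit of RR}. What you lose relative to the paper is nothing except a few lines of ink, since the multiplicativity machinery is needed anyway for the unitarity of $w_\pm$; the paper's choice simply avoids doubling up.
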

\begin{proof} 1. For any $f,g \in \hilb_1,$ one has
\begin{equation*}
 \begin{split}
  \scal{\euE_\lambda(H_0)f}{w_\pm(\lambda; H_0,H_0) \euE_\lambda(H_0) g})_{\hlambdao}
      & = \scal{f}{\mathfrak a_\pm(\lambda;H_0,H_0)g}_{1,-1}         \mathcomment{\ref{F: def of w +-}}
   \\ & = \frac 1\pi \scal{f}{\Im R_{\lambda+i0}(H_0)g}_{1,-1}       \mathcomment{\ref{F: mathfrak a =(1-RV) Im R}}
   \\ & = \scal{f}{\euE^\diamondsuit_\lambda(H_0) \euE_\lambda(H_0)g}_{1,-1}   \mathcomment{\ref{F: euE diam euE = Im R}}
   \\ & = \scal{\euE_\lambda(H_0) f}{\euE_\lambda(H_0)g}_{\hlambdao},          \mathcomment{\ref{F: (f,A*Ag)=(Af,Ag)}}
 \end{split}
\end{equation*}
where (\ref{F: euE diam euE = Im R}) has been used.
Since $\euE_\lambda \hilb_1$ is, by definition, dense in $\hlambda$ (see (\ref{F: def of hlambda})\,) and since,
by Proposition \ref{P: W pm is well defined}, the wave matrix $w_\pm(\lambda; H_r,H_0)$
is bounded, it follows from the last equality that $w_\pm(\lambda; H_0,H_0) = 1.$

2. This follows directly from the definition of $w_\pm(\lambda; H_r,H_0).$ The details are omitted since
later we derive this property of the wave matrix from the multiplicative property.
\end{proof}

\subsection{Multiplicative property of the wave matrix}

We have shown that the wave matrix is a bounded operator from~$\hlambdao$ to~$\hlambdar.$
The next thing to do is to show that it is a unitary operator. Unitary property of the wave matrix
is a consequence of the multiplicative property and the norm bound $\norm{w_\pm}\leq 1.$

In this subsection we establish the multiplicative property of the wave matrix.
We shall intensively use objects such as $\phi_j(\lambda+iy),$ $b_j(\lambda+iy)$ and so on,
associated to a self-adjoint operator~$H_r$ on a fixed framed Hilbert space $(\hilb,F).$
Which self-adjoint operator these objects are associated with will be clear from the context.
For example, if one meets an expression $R_{\lambda+iy}(H_r)b_j(\lambda+iy),$ then this means that
$b_j(\lambda+iy)$ is associated with~$H_r.$

% The idea of the proof of the multiplicative property can be expressed via the following formal formulae
% $$
% $$
%
%
%
% Note that, by Schwarz inequality,
% \begin{equation*}
%  \begin{split}
%    \brs{\frac \yy \pi}^2 \sum\limits_{j=1}^\infty & \abs{\scal{R_{\lambda+i\yy}(H_2)f}{R_{\lambda+i\yy}(H_1)b_j(\lambda+i\yy)}} \cdot
%             \abs{\scal{R_{\lambda+i\yy}(H_1)b_j(\lambda+i\yy)}{R_{\lambda+i\yy}(H_0)g}}
%      \\ & \leq \frac \yy \pi\norm{R_{\lambda+i\yy}(H_2)f} \norm{R_{\lambda+i\yy}(H_0)g}.
%  \end{split}
% \end{equation*}
%
% Now, I need to show that in this series $\yy$ can be replaced by $0.$
% I am going to use Vitali's theorem. For this it is necessary to show that
% \begin{multline*}
%    \brs{\frac \yy \pi}^2   \sum\limits_{j=N}^\infty \abs{\scal{R_{\lambda+i\yy}(H_2)f}{R_{\lambda+i\yy}(H_1)b_j(\lambda+i\yy)}} \cdot
%             \abs{\scal{R_{\lambda+i\yy}(H_1)b_j(\lambda+i\yy)}{R_{\lambda+i\yy}(H_0)g}} \\ \to 0
% \end{multline*}
% as $N \to \infty,$ uniformly with respect to $\yy>0.$
%
%% By Schwarz inequality, it is enough to show that for any $f \in \hilb_1$
%% the sequence
%% $$
%%   \brs{\frac \yy \pi}^2    \sum\limits_{j=N}^\infty \abs{\scal{R_{\lambda+i\yy}(H_2)f}{R_{\lambda+i\yy}(H_1)b_j(\lambda+i\yy)}}^2
%% $$
%% converges to $0$ as $N \to \infty,$ uniformly with respect to $\yy>0.$
%%

%
\begin{lemma} \label{L: aaa}
Let $\lambda \in \LambHF{H_0}.$ If $f = \sum\limits_{k=1}^\infty \beta_k \kappa_k\phi_k \in \hilb_1$
(so that $(\beta_j) \in \ell_2$), then
$$
  \scal{\euE_{\lambda+i\yy} (H_0) f}{e_j(\lambda+i\yy)}_{\ell_2} = \alpha_j(\lambda+i\yy) \scal{\beta}{e_j(\lambda+i\yy)}_{\ell_2}.
$$
\end{lemma}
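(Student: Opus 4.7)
The plan is to reduce the claim to a one-line computation using three facts already established: the explicit formula $\euE_{\lambda+i\yy}(H_0)f = \sum_{k=1}^\infty \beta_k \eta_k(\lambda+i\yy)$ from (\ref{F: def of euE f}) together with $\phi_k(\lambda+i\yy)=\kappa_k^{-1}\eta_k(\lambda+i\yy)$; the fact that $\eta_k(\lambda+i\yy)$ is by definition the $k$-th column of the self-adjoint Hilbert-Schmidt matrix $\eta(\lambda+i\yy)$; and the eigen-equation (\ref{F: eta e j = alpha j eta j}), which gives $\eta(\lambda+i\yy)e_j(\lambda+i\yy)=\alpha_j(\lambda+i\yy)e_j(\lambda+i\yy).$

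First I would use linearity and absolute convergence (justified by \ref{SS: eta j}(ii) and the Schwarz inequality, exactly as in the paragraph just after (\ref{F: def of E(l) f})) to write
$$
  \scal{\euE_{\lambda+i\yy}(H_0)f}{e_j(\lambda+i\yy)}_{\ell_2}
  = \sum_{k=1}^\infty \bar\beta_k \scal{\eta_k(\lambda+i\yy)}{e_j(\lambda+i\yy)}_{\ell_2}.
$$
Next, letting $\delta_k$ denote the $k$-th standard basis vector of $\ell_2,$ the identity $\eta_k(\lambda+i\yy)=\eta(\lambda+i\yy)\delta_k,$ the self-adjointness of $\eta(\lambda+i\yy),$ and (\ref{F: eta e j = alpha j eta j}) give
$$
  \scal{\eta_k(\lambda+i\yy)}{e_j(\lambda+i\yy)}_{\ell_2}
  = \scal{\delta_k}{\eta(\lambda+i\yy)e_j(\lambda+i\yy)}_{\ell_2}
  = \alpha_j(\lambda+i\yy)\,e_{kj}(\lambda+i\yy),
$$
where $e_{kj}(\lambda+i\yy)$ is the $k$-th coordinate of $e_j(\lambda+i\yy)$ and we have used that the scalar product is anti-linear in the first argument so that $\scal{\delta_k}{e_j(\lambda+i\yy)}_{\ell_2}=e_{kj}(\lambda+i\yy).$

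Finally, substituting back and pulling $\alpha_j(\lambda+i\yy)$ out of the sum yields
$$
  \scal{\euE_{\lambda+i\yy}(H_0)f}{e_j(\lambda+i\yy)}_{\ell_2}
  = \alpha_j(\lambda+i\yy) \sum_{k=1}^\infty \bar\beta_k\,e_{kj}(\lambda+i\yy)
  = \alpha_j(\lambda+i\yy)\scal{\beta}{e_j(\lambda+i\yy)}_{\ell_2},
$$
which is exactly the asserted equality. There is essentially no obstacle here: the only thing to be slightly careful about is the anti-linear-in-first-variable convention for the scalar product, which is why $\bar\beta_k$ appears and why the identification $\scal{\delta_k}{e_j(\lambda+i\yy)}=e_{kj}(\lambda+i\yy)$ is the correct one. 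The absolute convergence of both series involved follows from $(\beta_k)\in\ell_2$ and either $(\norm{\eta_k(\lambda+i\yy)})\in\ell_2$ (subsection \ref{SS: eta j}(ii)) or $\norm{e_j(\lambda+i\yy)}=1,$ so interchanging sum and inner product is legitimate.
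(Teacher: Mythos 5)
Your proof is correct and follows essentially the same route as the paper's: expand $\euE_{\lambda+i\yy}(H_0)f$ as the absolutely convergent sum $\sum_k\beta_k\eta_k(\lambda+i\yy)$, interchange sum and inner product, and reduce to the eigen-equation $\eta(\lambda+i\yy)e_j=\alpha_j e_j$. The only cosmetic difference is that you spell out the intermediate step $\scal{\eta_k}{e_j}=\scal{\delta_k}{\eta e_j}$ via self-adjointness of $\eta$, whereas the paper cites the eigen-equation directly; both rely on exactly the same facts.
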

\begin{proof} One has
 \begin{equation*}
   \begin{split}
     \scal{\euE_{\lambda+i\yy}(H_0) f}{e_j(\lambda+i\yy)}
       & = \scal{\euE_{\lambda+i\yy}(H_0)\sum\limits_{k=1}^\infty \beta_k \kappa_k\phi_k }{e_j(\lambda+i\yy)}
     \\ & = \scal{\sum\limits_{k=1}^\infty \beta_k \kappa_k \euE_{\lambda+i\yy}(H_0)\phi_k }{e_j(\lambda+i\yy)}
     \\ & = \scal{\sum\limits_{k=1}^\infty \beta_k \eta_k(\lambda+i\yy) }{e_j(\lambda+i\yy)}                       \mathcomment{\ref{F: phi j(l+iy)=k(-1)eta j(l+iy)}}
     \\ & = \sum\limits_{k=1}^\infty \bar\beta_k \scal{\eta_k(\lambda+i\yy) }{e_j(\lambda+i\yy)}
     \\ & = \sum\limits_{k=1}^\infty \bar\beta_k \alpha_j(\lambda+i\yy) e_{kj}(\lambda+i\yy)
     \\ & = \alpha_j(\lambda+i\yy) \scal{\beta}{e_j(\lambda+i\yy)}_{\ell_2}.
   \end{split}
 \end{equation*}
The second equality holds, since $\euE_{\lambda+i\yy}$ is a bounded operator from~$\hilb_1$ to $\ell_2.$
The fourth equality holds, since the series $\sum\limits_{k=1}^\infty \beta_k \eta_k$ is absolutely convergent.
The fifth equality holds, since $e_j(\lambda+i\yy)$ is an eigenvector of the matrix $\eta(\lambda+i\yy)$
with the eigenvalue $\alpha_j(\lambda+i\yy).$
\end{proof}

For definition of an index of type zero see subsection \ref{SS: index of non-zero type}.
\begin{lemma} \label{L: summands of zero-type}
Let $\lambda \in \LambHF{H_0}$ and $f \in \hilb_1.$
If~$j$ is an index of zero-type, then
$$
  \scal{\euE_{\lambda+i\yy} (H_0) f}{e_j(\lambda+i\yy)} \to 0,
$$
as $\yy \to 0.$
\end{lemma}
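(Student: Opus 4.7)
The proof should be essentially an immediate consequence of the preceding Lemma (the one I will call Lemma \ref{L: aaa} in context), which already did the main algebraic work of factoring the scalar product. My plan is therefore to reduce the claim to that identity and then observe that one of the two factors vanishes.

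First I would write $f = \sum_{k=1}^\infty \beta_k \kappa_k \phi_k$ with $(\beta_k) \in \ell_2$, using the fact that $\{\kappa_k \phi_k\}$ is an orthonormal basis of $\hilb_1$, and apply the preceding lemma to obtain the identity
\begin{equation*}
  \scal{\euE_{\lambda+i\yy}(H_0) f}{e_j(\lambda+i\yy)}_{\ell_2}
     = \alpha_j(\lambda+i\yy)\,\scal{\beta}{e_j(\lambda+i\yy)}_{\ell_2}.
\end{equation*}

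Next I would bound the second factor uniformly in $y$: since $e_j(\lambda+i\yy)$ is a unit vector in $\ell_2$ (item \ref{SS: e j}(i)), the Schwarz inequality gives
\begin{equation*}
  \abs{\scal{\beta}{e_j(\lambda+i\yy)}_{\ell_2}} \leq \norm{\beta}_{\ell_2} = \norm{f}_{\hilb_1}.
\end{equation*}
Hence the whole scalar product is dominated in absolute value by $\alpha_j(\lambda+i\yy)\norm{f}_{\hilb_1}$.

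The conclusion is then automatic from the definition of a zero-type index: by subsection \ref{SS: index of non-zero type}, the hypothesis that $j$ is of zero type means precisely that $\alpha_j(\lambda+i\yy) \to 0$ as $\yy \to 0^+$. Multiplying the bounded factor by a vanishing one yields the claim. There is no real obstacle here; the substantive content already resides in the explicit diagonalization of $\eta(\lambda+i\yy)$ carried out earlier, so this lemma is essentially a bookkeeping statement about how that diagonalization transfers to the evaluation operator $\euE_{\lambda+i\yy}(H_0)$.
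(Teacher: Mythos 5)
Your proof is correct and follows exactly the argument in the paper: reduce via Lemma \ref{L: aaa} to the factored form $\alpha_j(\lambda+iy)\scal{\beta}{e_j(\lambda+iy)}_{\ell_2}$, bound the second factor by $\norm{\beta}_{\ell_2}$ using Schwarz and the fact that $e_j(\lambda+iy)$ is a unit vector, and invoke the definition of zero-type to conclude $\alpha_j(\lambda+iy)\to 0$. The only addition is your explicit remark that $\norm{\beta}_{\ell_2}=\norm{f}_{\hilb_1}$, which is a harmless cosmetic touch.
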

\begin{proof}
Using Lemma \ref{L: aaa} (and its representation for $f$) and the definition of $e_j(\lambda+iy)$ we have
\begin{equation*}
 \begin{split}
  \abs{ \scal{\euE_{\lambda+i\yy} (H_0) f}{e_j(\lambda+i\yy)} } & = \alpha_j(\lambda+iy) \abs{ \scal{\beta}{e_j(\lambda+i\yy)}_{\ell_2} }
    \\ & \leq \alpha_j(\lambda+iy) \norm{\beta}\norm{e_j(\lambda+i\yy)} = \alpha_j(\lambda+iy) \norm{\beta}.
 \end{split}
\end{equation*}
If $j$ is an index of zero type, then, by definition, $\alpha_j(\lambda+iy) \to 0$
as $y \to 0.$ The proof is complete.
\end{proof}

\begin{lemma} \label{L: summands of zero-type II}
Let $\lambda \in \LambHF{H_r} \cap \LambHF{H_0}.$
  If~$j$ is of zero-type, then for any $f \in \hilb_1,$
  \begin{equation} \label{F: summands of zero-type II}
      \frac \yy \pi \la{R_{\lambda\pm i\yy}(H_r)f}, {R_{\lambda\pm i\yy}(H_0)b_j(\lambda+ i\yy)}\ra \to 0,
  \end{equation}
as $\yy \to 0.$
\end{lemma}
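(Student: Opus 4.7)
The plan is to reduce the left-hand side of (\ref{F: summands of zero-type II}) to an inner product in $\ell_2$ with the eigenvector $e_j(\lambda+iy)$, where the factor $\alpha_j(\lambda+iy)$ appears explicitly via Lemma~\ref{L: aaa}. Then the conclusion follows because, by definition of zero-type indices (Subsection~\ref{SS: index of non-zero type}), $\alpha_j(\lambda+iy)\to 0$ as $y\to 0^+$.

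First, I would transfer one resolvent across the inner product using $R_{\lambda\pm iy}(H_r)^* = R_{\lambda\mp iy}(H_r)$, and then apply the factorization (\ref{F: GRRG exists in L1}):
\begin{equation*}
  \tfrac{y}{\pi} R_{\lambda\mp iy}(H_r) R_{\lambda\pm iy}(H_0) = \bigl[1 - R_{\lambda\mp iy}(H_r) V_r\bigr] \cdot \tfrac{1}{\pi}\Im R_{\lambda+iy}(H_0).
\end{equation*}
Moving the adjoint of the bracket back to the other side of the pairing (using self-adjointness of $V_r$), the expression becomes
\begin{equation*}
  \scal{\bigl[1 - V_r R_{\lambda\pm iy}(H_r)\bigr]f}{\tfrac{1}{\pi}\Im R_{\lambda+iy}(H_0)\, b_j(\lambda+iy)}.
\end{equation*}

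Next, I would apply the factorization (\ref{F: euE(*)euE=1/pi Im R(z)}) of $\tfrac{1}{\pi}\Im R_{\lambda+iy}(H_0)$ through $\euE_{\lambda+iy}(H_0)$, together with the defining relation (\ref{F: def of bj}), $\euE_{\lambda+iy}(H_0)\, b_j(\lambda+iy) = e_j(\lambda+iy)$, to rewrite the above as
\begin{equation*}
  \scal{\euE_{\lambda+iy}(H_0)\bigl[1 - V_r R_{\lambda\pm iy}(H_r)\bigr]f}{e_j(\lambda+iy)}_{\ell_2}.
\end{equation*}

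Finally, setting $g_y := \bigl[1 - V_r R_{\lambda\pm iy}(H_r)\bigr]f$, which lies in $\hilb_1$ because $V_r\colon\hilb_{-1}\to\hilb_1$ is bounded and $R_{\lambda\pm iy}(H_r)f\in\hilb\subset\hilb_{-1}$, Lemma~\ref{L: aaa} yields the estimate
\begin{equation*}
  \bigl|\scal{\euE_{\lambda+iy}(H_0)g_y}{e_j(\lambda+iy)}_{\ell_2}\bigr| \leq \alpha_j(\lambda+iy)\,\|g_y\|_{\hilb_1}.
\end{equation*}
Since $\lambda\in\LambHF{H_r}$, Lemma~\ref{L: R(lambda+i0) is H.-S.} gives convergence of $R_{\lambda\pm iy}(H_r)\colon\hilb_1\to\hilb_{-1}$ in $\clL_2$, and since $V_r\colon\hilb_{-1}\to\hilb_1$ is bounded, $V_r R_{\lambda\pm iy}(H_r)f$ converges in $\hilb_1$; hence $\|g_y\|_{\hilb_1}$ remains bounded as $y\to 0^+$. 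Combined with $\alpha_j(\lambda+iy)\to 0$ for zero-type $j$, this yields (\ref{F: summands of zero-type II}).

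The only technical obstacle I anticipate is the careful bookkeeping between the three Hilbert spaces $\hilb_1$, $\hilb$, $\hilb_{-1}$: one must check at each step that the operator compositions and the pairings are well-defined in the appropriate spaces so that the adjointness manipulations and the factorization through $\euE_{\lambda+iy}(H_0)$ are justified. No deeper analytic input beyond the already-established limiting absorption principle is needed.
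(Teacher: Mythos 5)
Your argument is correct and is essentially the paper's own proof: both reduce the pairing, via the second factorization in (\ref{F: GRRG exists in L1}) together with $\euE^*_{\lambda+iy}(H_0)\euE_{\lambda+iy}(H_0)=\frac1\pi\Im R_{\lambda+iy}(H_0)$ and $\euE_{\lambda+iy}(H_0)b_j=e_j(\lambda+iy)$, to $\scal{\euE_{\lambda+iy}(H_0)g_y}{e_j(\lambda+iy)}$ for a family $g_y$ converging in $\hilb_1$, and then invoke Lemma~\ref{L: aaa} and $\alpha_j(\lambda+iy)\to 0$. (Incidentally, your sign $1-V_rR_{\lambda\pm iy}(H_r)$ is the correct one; the paper's displayed formula (\ref{F: beautiful formula}) has $1+V_rR_{\lambda\pm iy}(H_r)$, which is a harmless typo since only boundedness of $g_y$ in $\hilb_1$ matters.)
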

\begin{proof}
The first equality in (\ref{F: GRRG exists in L1}) and \ref{SS: euE's}(i) % (\ref{F: euE diam euE = Im R})
imply that
  \begin{equation} \label{F: beautiful formula}
    \begin{split}
      \frac \yy \pi \la{R_{\lambda\pm i\yy}(H_r)f}, \,& {R_{\lambda\pm i\yy}(H_0)b_j(\lambda+i\yy)}\ra
      \\ & = \la{\euE_{\lambda+i\yy}(H_0)[1+V_r R_{\lambda\pm i\yy}(H_r)] f}, {\euE_{\lambda+i\yy}(H_0)b_j(\lambda+i\yy)}\ra
      \\ & = \la{\euE_{\lambda+i\yy}(H_0)[1+V_r R_{\lambda\pm i\yy}(H_r)]f}, {e_j(\lambda+i\yy)}\ra,
    \end{split}
  \end{equation}
where the second equality follows from the definition (\ref{F: def of bj}) of $b_j(\lambda+i\yy).$
Since by Lemma \ref{L: R(lambda+i0) is H.-S.} the resolvent $R_{\lambda\pm i\yy}(H_r)$
converges as an operator from $\hilb_1$ to $\hilb_{-1},$ and since $V$ maps $\hilb_{-1}$ to
$\hilb_{1}$ (see (\ref{F: V hilb(-1) to hilb(1)})\,), it follows that the vector $VR_{\lambda\pm i\yy}(H_r) f$ converges in~$\hilb_1$ as $\yy \to 0.$
Now, applying Lemma \ref{L: aaa} and using the fact that for indices of zero type $j$ the eigenvalues $\alpha_j(\lambda+iy)$ converge to $0,$
we conclude that the expression in (\ref{F: summands of zero-type II}) converges to $0$ as $\yy \to 0.$
\end{proof}

The following lemma is well-known and therefore its proof is omitted.
\begin{lemma} \label{L: obvious lemma} If a non-increasing sequence $f_1, f_2, \ldots$ of continuous functions
on $[0,1]$ converges pointwise to $0,$ then it also converges to $0$ uniformly.
\end{lemma}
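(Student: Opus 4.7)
This lemma is just Dini's theorem specialized to the limit function being zero, so the plan is to give the standard compactness argument. First I would observe that since $(f_n(x))$ is non-increasing and converges to $0$ for each $x$, we must have $f_n(x) \geq 0$ for every $n$ and every $x \in [0,1]$; thus it suffices to produce, for each $\varepsilon>0$, an index $N$ with $f_N(x) < \varepsilon$ for all $x \in [0,1]$, because then the monotonicity will propagate the bound to all $n\geq N$.

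Fix $\varepsilon > 0$. For each $x \in [0,1]$, pointwise convergence $f_n(x)\to 0$ yields an $N_x \in \mbN$ with $f_{N_x}(x) < \varepsilon/2$. Continuity of $f_{N_x}$ then gives an open neighbourhood $U_x \subset [0,1]$ of $x$ such that $f_{N_x}(y) < \varepsilon$ for all $y \in U_x$. By monotonicity, the same bound $f_n(y) < \varepsilon$ holds for all $n \geq N_x$ and all $y \in U_x$.

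The collection $\{U_x\colon x \in [0,1]\}$ is an open cover of the compact set $[0,1]$, so there is a finite subcover $U_{x_1},\ldots,U_{x_k}$. Setting $N := \max(N_{x_1},\ldots,N_{x_k})$, one obtains $f_n(y) < \varepsilon$ for all $n\geq N$ and all $y \in [0,1]$, which is exactly uniform convergence to $0$. There is no serious obstacle here; the only point requiring a moment's care is noting the positivity of $f_n$, which is automatic from the hypotheses.
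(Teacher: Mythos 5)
Your proof is correct and is essentially the same compactness argument as in the paper: for each $x$ pick $N_x$ with $f_{N_x}(x)<\varepsilon/2$, use continuity to get a neighbourhood where $f_{N_x}<\varepsilon$, propagate via monotonicity, and extract a finite subcover. The only cosmetic addition is your explicit remark that $f_n\geq 0$, which the paper leaves implicit.
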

%\begin{proof} Let $\eps>0$ and let $x \in [0,1].$
%Since $f_n(x) \to 0,$
%there exists $N(x) \in \mbN,$ such that for all $n \geq N(x)$ \ \ $f_n(x) < \eps/2.$
%Let $U_x$ be a neighbourhood of $x$ such that $f_{N(x)}(y) < \eps$ for all $y \in U_x.$
%Then for all $n \geq N(x)$ and for all $y \in U_x$ \ \ $f_n(y) < \eps.$ If we choose
%a finite cover $U_{x_1}, \ldots, U_{x_m}$ of $[0,1],$ and let $N = \max\set{N(x_j)},$
%then for any $x \in [0,1]$ and any $n \geq N$ we have $f_n(x) < \eps.$
%\end{proof}

\begin{lemma} \label{L: sums of alpha's converges to 0 unif...}
   Let $\lambda \in \Lambda(H_0;F).$
   The sum
   $$
     \sum\limits_{j=N}^\infty \alpha_j(\lambda+iy)^2
   $$
   converges to $0$ as $N \to \infty$ uniformly with respect to $y \in [0,1].$
\end{lemma}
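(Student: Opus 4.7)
The plan is to express the tail sum as the difference between a continuous total and a continuous finite partial sum, and then invoke Lemma~\ref{L: obvious lemma} (a Dini-type statement) to upgrade pointwise convergence to uniform convergence.

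First I would observe that, since $\eta(\lambda+iy)$ is Hilbert--Schmidt for every $y \in [0,1]$ and its eigenvalues (counted with multiplicity) are precisely the $\alpha_j(\lambda+iy)$, one has
$$
  \sum_{j=1}^\infty \alpha_j(\lambda+iy)^2 \;=\; \norm{\eta(\lambda+iy)}_2^2.
$$
By property \ref{SS: matrix eta}(iv), $y\mapsto \eta(\lambda+iy)$ is continuous on $[0,1]$ in the Hilbert--Schmidt norm, so the total $y\mapsto\norm{\eta(\lambda+iy)}_2^2$ is a continuous function on $[0,1]$. In particular, setting
$$
  T_N(y) := \sum_{j=N}^\infty \alpha_j(\lambda+iy)^2 \;=\; \norm{\eta(\lambda+iy)}_2^2 - \sum_{j=1}^{N-1} \alpha_j(\lambda+iy)^2,
$$
one sees that $T_N$ will be continuous on $[0,1]$ once each finite partial sum on the right is continuous.

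Next I would use property \ref{SS: alpha j}(iv), which guarantees that each function $y\mapsto \alpha_j(\lambda+iy)$ extends continuously to $y=0$ (together with analyticity on $(0,\infty)$ from \ref{SS: alpha j}(iii)); consequently every finite sum $\sum_{j=1}^{N-1}\alpha_j(\lambda+iy)^2$ is continuous on $[0,1]$, and hence so is $T_N$.

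Finally I note that $T_N(y)\geq 0$, the sequence $N\mapsto T_N(y)$ is monotonically non-increasing in $N$ for each fixed $y$, and since $\eta(\lambda+iy)$ is Hilbert--Schmidt one has $T_N(y)\to 0$ pointwise as $N\to\infty$. All the hypotheses of Lemma~\ref{L: obvious lemma} (Dini's theorem on $[0,1]$) are thus met, so $T_N\to 0$ uniformly on $[0,1]$, which is the desired conclusion. No serious obstacle is anticipated; the only point that requires attention is confirming the continuity of the total and partial sums up to $y=0$, which is exactly what is provided by \ref{SS: matrix eta}(iv) and \ref{SS: alpha j}(iv).
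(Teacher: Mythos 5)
Your proposal is correct and follows essentially the same route as the paper: both express the tail $T_N(y)$ as the difference between the continuous total $\norm{\eta(\lambda+iy)}_2^2$ (continuity from \ref{SS: matrix eta}(iii), (iv)) and a continuous finite partial sum, note nonnegativity, monotonicity in $N$, and pointwise convergence to $0$, and then apply the Dini-type Lemma~\ref{L: obvious lemma}. You make the justification of continuity of the finite partial sums slightly more explicit (via \ref{SS: alpha j}(iii), (iv)) than the paper's brief ``consequently,'' but the argument is the same.
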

\begin{proof} Let $f_N(y)$ be this sum. Since $f_1(y) = \norm{\eta(\lambda+iy)}_2^2,$
it follows from \ref{SS: matrix eta}(iii) and (iv), that $f_1(y),$ and, consequently, all $f_N(y)$ are continuous functions of $y$
in $[0,1].$ So, we have a non-increasing sequence $f_N(y)$ of continuous non-negative functions, converging
pointwise to $0$ as $N\to\infty.$ It follows from Lemma \ref{L: obvious lemma} that the sequence $f_N(y)$ converges
to $0$ as $N\to\infty$ uniformly with respect to $y \in [0,1].$
\end{proof}

\begin{lemma} \label{L: re mult property: uniform convergence of ...}
Let $\lambda \in \LambHF{H_r} \cap \LambHF{H_0}.$ If $f \in \hilb_1,$ then
the sequence
\begin{equation*}
   \brs{\frac \yy \pi}^2\sum\limits_{j=N}^\infty \abs{\scal{R_{\lambda\pm i\yy}(H_r)f}{R_{\lambda\pm i\yy}(H_0)b_j(\lambda+i\yy)}}^2,
           \ \ N = 1,2,\ldots
\end{equation*}
converges to $0$ as $N \to \infty,$ uniformly with respect to $\yy>0.$
\end{lemma}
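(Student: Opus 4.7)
The plan is to reduce the tail sum to an expression of the form $\sum_{j=N}^\infty \alpha_j(\lambda+iy)^2$ with a uniformly bounded prefactor, and then invoke Lemma \ref{L: sums of alpha's converges to 0 unif...}. I will start from the identity (\ref{F: beautiful formula}) that was already derived in the proof of Lemma \ref{L: summands of zero-type II}, namely
$$
  \frac{\yy}{\pi}\scal{R_{\lambda\pm i\yy}(H_r)f}{R_{\lambda\pm i\yy}(H_0)b_j(\lambda+i\yy)}
  = \scal{\euE_{\lambda+i\yy}(H_0)\,g(\yy)}{e_j(\lambda+i\yy)},
$$
where $g(\yy) := [1+V_r R_{\lambda\pm i\yy}(H_r)]f.$ So the sum in question equals $\sum_{j=N}^\infty \abs{\scal{\euE_{\lambda+i\yy}(H_0)g(\yy)}{e_j(\lambda+i\yy)}}^2.$

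Next I would expand $g(\yy)$ in the orthonormal basis $(\kappa_k\phi_k)$ of $\hilb_1$ as $g(\yy)=\sum_k\beta_k(\yy)\kappa_k\phi_k$ with $\beta(\yy)=(\beta_k(\yy))\in\ell_2$ and $\norm{\beta(\yy)}_{\ell_2}=\norm{g(\yy)}_{\hilb_1}.$ Applying Lemma \ref{L: aaa} and then Cauchy--Schwarz (using $\norm{e_j(\lambda+i\yy)}_{\ell_2}=1$), I get
$$
  \abs{\scal{\euE_{\lambda+i\yy}(H_0)g(\yy)}{e_j(\lambda+i\yy)}}^2
  =\alpha_j(\lambda+i\yy)^2\,\abs{\scal{\beta(\yy)}{e_j(\lambda+i\yy)}}^2
  \leq \alpha_j(\lambda+i\yy)^2 \norm{g(\yy)}_{\hilb_1}^2.
$$
Summing from $N$ to $\infty$ bounds the expression in the lemma by $\norm{g(\yy)}_{\hilb_1}^2\sum_{j=N}^\infty\alpha_j(\lambda+i\yy)^2.$

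The main technical point is then to show that $\norm{g(\yy)}_{\hilb_1}$ is bounded uniformly in $\yy>0.$ Writing $g(\yy)=f+V_r R_{\lambda\pm i\yy}(H_r)f$ and reading $V_r R_{\lambda\pm i\yy}(H_r)$ as the composition $\hilb_1\stackrel{R_{\lambda\pm i\yy}(H_r)}{\longrightarrow}\hilb_{-1}\stackrel{V_r}{\longrightarrow}\hilb_1,$ I use that $V_r\colon\hilb_{-1}\to\hilb_1$ is bounded by (\ref{F: V hilb(-1) to hilb(1)}) and that, since $\lambda\in\LambHF{H_r},$ Lemma \ref{L: R(lambda+i0) is H.-S.} gives convergence of $R_{\lambda\pm i\yy}(H_r)$ in $\clL_2(\hilb_1,\hilb_{-1})$ as $\yy\to 0^+,$ so that its operator norm $\hilb_1\to\hilb_{-1}$ is bounded on any interval $(0,Y].$ For $\yy\geq Y$ the crude estimate $\norm{R_{\lambda\pm i\yy}(H_r)}_{\hilb\to\hilb}\leq \yy^{-1}$ together with the bounded inclusions $\hilb_1\hookrightarrow\hilb\hookrightarrow\hilb_{-1}$ supplies a uniform bound. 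Hence $\norm{g(\yy)}_{\hilb_1}\leq C$ for some constant $C=C(f,\lambda,H_r)$ independent of $\yy>0.$

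Finally, to finish I combine this with the uniform control of the tail of eigenvalues: Lemma \ref{L: sums of alpha's converges to 0 unif...} yields $\sum_{j=N}^\infty\alpha_j(\lambda+i\yy)^2\to 0$ uniformly on $[0,1],$ and for $\yy\geq 1$ the estimate \ref{SS: matrix eta}(v) gives $\alpha_j(\lambda+i\yy)^2\leq \yy^{-1}\kappa_j^2\leq\kappa_j^2,$ so the tail is dominated by $\sum_{j=N}^\infty\kappa_j^2,$ which vanishes as $N\to\infty$ independently of $\yy.$ Thus the tail vanishes uniformly for all $\yy>0,$ completing the proof. The step I expect to require the most care is the uniform bound on $\norm{g(\yy)}_{\hilb_1}$ for $\yy$ near zero, which is precisely where the limiting absorption principle (through the assumption $\lambda\in\LambHF{H_r}\cap\LambHF{H_0}$) becomes essential.
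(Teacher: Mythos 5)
Your proof follows essentially the same route as the paper: use the identity (\ref{F: beautiful formula}) and Lemma \ref{L: aaa} to rewrite each summand as $\alpha_j(\lambda+iy)^2\abs{\scal{\beta(y)}{e_j(\lambda+iy)}}^2$ where $\beta(y)$ are the $\hilb_1$-coordinates of $[1+V_rR_{\lambda\pm iy}(H_r)]f$, bound by Cauchy--Schwarz, control $\norm{\beta(y)}_{\ell_2}$ uniformly, and invoke Lemma \ref{L: sums of alpha's converges to 0 unif...}. The one place you go beyond the paper's proof: the paper establishes the uniform bound on $\norm{\beta(\lambda+iy)}_{\ell_2}$ only on $y\in[0,1]$ and quotes Lemma \ref{L: sums of alpha's converges to 0 unif...}, which itself is stated only on $y\in[0,1]$, so strictly speaking the paper covers only $y\in(0,1]$; you patch the range $y\geq 1$ explicitly using the crude resolvent norm bound and the estimate $\alpha_j(\lambda+iy)\leq y^{-1/2}\kappa_j$ from \ref{SS: matrix eta}(v). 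Since the lemma is only ever applied in the limit $y\to 0^+$, this extra care is not load-bearing, but it does make the statement as written (uniformity for all $y>0$) actually hold; the argument is otherwise the same.
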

\begin{proof} We prove the lemma for the plus sign.
The formula (\ref{F: beautiful formula}) and Lemma \ref{L: aaa} imply that
\begin{equation*}
  \begin{split}
     (E) & := \brs{\frac \yy \pi}^2 \sum\limits_{j=N}^\infty \abs{\scal{R_{\lambda+i\yy}(H_r)f}{R_{\lambda+i\yy}(H_0)b_j(\lambda+i\yy)}}^2
        \\ & = \sum\limits_{j=N}^\infty  \abs{\la{\euE_{\lambda+i\yy}(H_0)[1+V_r R_{\lambda+i\yy}(H_r)]f}, {e_j(\lambda+i\yy)}\ra}^2
        \\ & = \sum\limits_{j=N}^\infty  \abs{\alpha_j\brs{\lambda+i\yy} \la \beta(\lambda+iy), {e_j(\lambda+i\yy)}\ra}^2,
  \end{split}
\end{equation*}
where $\beta(\lambda+iy) = (\beta_k(\lambda+iy)) \in \ell_2,$ and
$$
  [1+V_r R_{\lambda+i\yy}(H_r)]f = \sum\limits_{k=1}^\infty \beta_k(\lambda+iy)\kappa_k\phi_k \in \hilb_1.
$$
Since $[1+V_rR_{\lambda+i\yy}(H_r)]f$ converges in~$\hilb_1$ as $y\to 0,$ the sequence $(\beta_k(\lambda+iy))$
converges in $\ell_2$ as $y\to 0.$ It follows that $\norm{\beta(\lambda+iy)}_{\ell_2} \leq C$ for all $y \in [0,1].$
Hence,
$$
  (E) \leq C^2 \sum\limits_{j=N}^\infty \alpha_j(\lambda+iy)^2.
$$
By Lemma \ref{L: sums of alpha's converges to 0 unif...}, the last expression converges to $0$ uniformly.
\end{proof}

In the following theorem we prove the multiplicative property of the wave matrix.
This is a well-known property~\cite{Ya}, but the novelty is that we give an explicit set of full
measure, such that for all $\lambda$ from that set the wave matrices are explicitly defined
and the multiplicative property holds.
\begin{thm} \label{T: mult-ive property of w pm}
Let $\set{H_r}$ be a path satisfying Assumption \ref{A: assumption on Hr}.
If $\lambda \in \LambHF{H_0}$ and if $r_0, r_1, r_2$ are not resonance points of the path $\set{H_r}$
for this $\lambda$ (that is, if $r_0, r_1, r_2 \notin R(\lambda; \set{H_r}, F)$),
then
$$
  w_\pm(\lambda; H_{r_2},H_{r_0}) = w_\pm(\lambda; H_{r_2},H_{r_1})w_\pm(\lambda; H_{r_1},H_{r_0}).
$$
\end{thm}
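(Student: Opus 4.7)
The plan is to prove the $+$ sign case (the $-$ sign case is entirely symmetric) by inserting a Parseval-type resolution of identity at the middle operator $H_{r_1}$. Since $r_0,r_1,r_2\notin R(\lambda;\set{H_r},F)$, Theorem~\ref{T: R(H0,G) is discrete} ensures $\lambda\in\LambHF{H_{r_0}}\cap\LambHF{H_{r_1}}\cap\LambHF{H_{r_2}}$, so all wave matrices appearing in the statement are defined and bounded. Fix $f,g\in\hilb_1$ and consider
$$
 I(y) := \frac{y}{\pi}\scal{R_{\lambda+iy}(H_{r_2}) f}{R_{\lambda+iy}(H_{r_0}) g}.
$$
By Proposition~\ref{P: limit of RR} and the defining identity~(\ref{F: euE w pm euE = mathfrak a}), we have $I(y)\to \scal{\euE_\lambda(H_{r_2})f}{w_+(\lambda;H_{r_2},H_{r_0})\euE_\lambda(H_{r_0})g}$ as $y\to 0^+$.

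The next step is to rewrite $I(y)$ using the orthonormal basis $\sqrt{y/\pi}\set{R_{\lambda+iy}(H_{r_1})b_j(\lambda+iy)}$ of $\hilb$ provided by item~\ref{SS: b j}(vi) (applied to $H_{r_1}$, whose $b_j$'s are defined since $\lambda\in\LambHF{H_{r_1}}$). Parseval's identity yields $I(y)=\sum_j A_j(y)\,B_j(y)$ with
\begin{gather*}
 A_j(y) = \tfrac{y}{\pi}\scal{R_{\lambda+iy}(H_{r_2}) f}{R_{\lambda+iy}(H_{r_1}) b_j(\lambda+iy)},\\
 B_j(y) = \tfrac{y}{\pi}\scal{R_{\lambda+iy}(H_{r_1}) b_j(\lambda+iy)}{R_{\lambda+iy}(H_{r_0}) g}.
\end{gather*}
For every $j$ of non-zero type with respect to $H_{r_1}$, the vector $b_j(\lambda+iy)$ converges in $\hilb_1$ to $b_j(\lambda)$ by item~\ref{SS: b j}(vii), so Proposition~\ref{P: limit of RR} together with $\euE_\lambda(H_{r_1})b_j(\lambda)=e_j(\lambda)$ gives
\begin{gather*}
 A_j(y)\to \scal{\euE_\lambda(H_{r_2}) f}{w_+(\lambda;H_{r_2},H_{r_1})\,e_j(\lambda)},\\
 B_j(y)\to \scal{e_j(\lambda)}{w_+(\lambda;H_{r_1},H_{r_0})\,\euE_\lambda(H_{r_0}) g}.
\end{gather*}
For $j$ of zero type, Lemma~\ref{L: summands of zero-type II} (applied twice, with $H_{r_1}$ as the base operator) gives $A_j(y),B_j(y)\to 0$.

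The key obstacle is interchanging the limit with the sum $\sum_j A_j(y)B_j(y)$. For this we invoke Lemma~\ref{L: re mult property: uniform convergence of ...} with $H_{r_1}$ in the role of $H_0$ and $H_{r_2}$ (respectively $H_{r_0}$) in the role of $H_r$: it asserts that the tails $\sum_{j\geq N}|A_j(y)|^2$ and $\sum_{j\geq N}|B_j(y)|^2$ tend to $0$ as $N\to\infty$ uniformly in $y\in(0,1]$. Cauchy--Schwarz then makes the tail $\sum_{j\geq N}A_j(y)B_j(y)$ uniformly small, so the limit may be taken termwise inside a finite truncation and then the truncation removed, yielding
$$
 \lim_{y\to 0^+} I(y)=\sum_{j\in\clZ_\lambda} \scal{\euE_\lambda(H_{r_2}) f}{w_+(\lambda;H_{r_2},H_{r_1})\,e_j(\lambda)}\,\scal{e_j(\lambda)}{w_+(\lambda;H_{r_1},H_{r_0})\,\euE_\lambda(H_{r_0}) g},
$$
where the summation is over non-zero type indices with respect to $H_{r_1}$.

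By Lemma~\ref{L: e j(l) is basis}, $\set{e_j(\lambda)\colon j\in\clZ_\lambda}$ is an orthonormal basis of $\hlambda(H_{r_1})$, so Parseval's identity collapses this sum to
$$
 \scal{\euE_\lambda(H_{r_2}) f}{w_+(\lambda;H_{r_2},H_{r_1})\,w_+(\lambda;H_{r_1},H_{r_0})\,\euE_\lambda(H_{r_0}) g}.
$$
Matching this with the limit of $I(y)$ computed directly gives the desired equality on all pairs $(\euE_\lambda(H_{r_2})f,\euE_\lambda(H_{r_0})g)$ with $f,g\in\hilb_1$; since $\euE_\lambda(H_{r_0})\hilb_1$ (respectively $\euE_\lambda(H_{r_2})\hilb_1$) is dense in $\hlambda(H_{r_0})$ (respectively $\hlambda(H_{r_2})$) by definition~(\ref{F: def of hlambda}), and both sides of the multiplicative identity are bounded operators (Proposition~\ref{P: W pm is well defined}), the operator equality $w_+(\lambda;H_{r_2},H_{r_0})=w_+(\lambda;H_{r_2},H_{r_1})w_+(\lambda;H_{r_1},H_{r_0})$ follows. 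For the $-$ sign, the same argument works verbatim after replacing the orthonormal basis by $\sqrt{y/\pi}\set{R_{\lambda-iy}(H_{r_1})b_j(\lambda+iy)}$ (still orthonormal by item~\ref{SS: b j}(v) and complete by the argument of item~\ref{SS: b j}(vi) applied with $R_{\lambda+iy}$ replaced by $R_{\lambda-iy}$).
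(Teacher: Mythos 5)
Your argument follows the paper's proof essentially line by line: the same Parseval resolution via the orthonormal basis $\sqrt{y/\pi}\,R_{\lambda+iy}(H_{r_1})b_j(\lambda+iy)$ from \ref{SS: b j}(vi), the same uniform tail estimate from Lemma~\ref{L: re mult property: uniform convergence of ...} to justify interchanging limit and sum, the same vanishing of zero-type summands from Lemma~\ref{L: summands of zero-type II}, and the same final reassembly via Lemma~\ref{L: e j(l) is basis}. The only cosmetic departure is that you unroll the truncation argument explicitly instead of citing Vitali's Theorem~\ref{T: Vitali}, and you pass directly to the wave matrices rather than through the intermediate identity~(\ref{F: nearly mult. property (for eps>0)}) involving $\mathfrak a_\pm$.
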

\begin{proof}
We prove this equality for $+$ sign.
Let $f,g \in \hilb_1.$ % We assume that $r_0 = 0.$
It follows from \ref{SS: b j}(vi) that
\begin{equation} \label{F: mult. property for a (for eps>0)}
 \begin{split}
    \frac \yy \pi \la{R_{\lambda+i\yy}(H_{r_2})f}, & {R_{\lambda+i\yy}(H_{r_0})g}\ra
   \\ & = \brs{\frac \yy \pi}^2 \sum\limits_{j=1}^\infty
  \scal{R_{\lambda+i\yy}(H_{r_2})f}{R_{\lambda+i\yy}(H_{r_1})b_j(\lambda+i\yy)}
  \\ & \mbox{ } \ \ \ \qquad\qquad \cdot \scal{R_{\lambda+i\yy}(H_{r_1})b_j(\lambda+i\yy)}{R_{\lambda+i\yy}(H_{r_0})g},
 \end{split}
\end{equation}
where the series converges absolutely, since the set of vectors $\set{\sqrt{\frac \yy\pi}R_{\lambda+i\yy}(H_{r_1})b_j(\lambda+i\yy)}$
is orthonormal and complete (see \ref{SS: b j}(vi)).
%Moreover, for~$j$'s of the non-zero type, each term of the series converges as $\yy \to 0.$
Applying Schwarz inequality to (\ref{F: mult. property for a (for eps>0)})
and using Lemma \ref{L: re mult property: uniform convergence of ...}, % \margcom{Give it \\ in the text}
%by Vitali's Theorem \ref{T: Vitali},
one can take the limit $\yy \to 0$ in this formula.
By Lemma \ref{L: summands of zero-type II},
the summands with zero-type~$j$ disappear after taking the limit $\yy \to 0.$

It follows from this and Definition \ref{D: mathfrak a} of
$\mathfrak a_\pm,$ that
\begin{equation} \label{F: nearly mult. property (for eps>0)}
 \begin{split}
    \la f, & \mathfrak a_+(\lambda;H_{r_2},H_{r_0})g\ra_{1,-1}
   \\ & = \sum\limits_{j=1}^\infty
      \la f, \mathfrak a_+(\lambda;H_{r_2},H_{r_1}) b_j(\lambda+i0) \ra_{1,-1}
       \,   \la b_j(\lambda+i0),\mathfrak a_+(\lambda;H_{r_1},H_{r_0})g \ra_{1,-1},
 \end{split}
\end{equation}
where the summation is over indices of non-zero type.
By definition (\ref{F: def of w +-}) of $w_\pm,$ it follows
from (\ref{F: nearly mult. property (for eps>0)}) that
\begin{equation} % \label{F: nearly mult. property (for eps>0)}
 \begin{split}
    \la & \euE^{(r_2)}_\lambda f, w_\pm(\lambda; H_{r_2},H_{r_0}) \euE_\lambda^{(r_0)} g\ra
   \\ & = \sum\limits_{j=1}^\infty
      \la \euE_\lambda^{(r_2)} f,  w_\pm(\lambda; H_{r_2},H_{r_1})\euE_\lambda^{(r_1)} b_j(\lambda+i0) \ra
       \,   \la \euE_\lambda^{(r_1)} b_j(\lambda+i0),w_\pm(\lambda; H_{r_1},H_{r_0})\euE_\lambda^{(r_0)} g \ra
   \\ & = \sum\limits_{j=1}^\infty
      \la \euE_\lambda^{(r_2)} f,  w_\pm(\lambda; H_{r_2},H_{r_1})e_j(\lambda+i0) \ra
       \,   \la e_j(\lambda+i0),w_\pm(\lambda; H_{r_1},H_{r_0})\euE_\lambda^{(r_0)} g \ra
   \\ & = \la \euE_\lambda^{(r_2)} f,  w_\pm(\lambda; H_{r_2},H_{r_1}) w_\pm(\lambda; H_{r_1},H_{r_0})\euE_\lambda^{(r_0)} g \ra,
 \end{split}
\end{equation}
where in the last equality Lemma \ref{L: e j(l) is basis} % \ref{SS: e j}(iv)
was used.
Since the set $\euE_\lambda \hilb_1$ is dense in~$\hlambda,$ the proof is complete.
\end{proof}

% The following corollary is analogue of~\cite[+++]{Ya}.
\begin{cor} \label{C: wave matrix is unitary}
Let $\lambda \in \LambHF{H_r} \cap \LambHF{H_0}.$ Then $w_\pm(\lambda; H_r,H_0)$
  is a unitary operator from~$\hlambdao$ to~$\hlambdar$ and (\ref{F: w* = ...}) holds.
\end{cor}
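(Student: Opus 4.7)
The plan is to derive both the unitarity and the adjoint identity as immediate consequences of the multiplicative property (Theorem \ref{T: mult-ive property of w pm}) combined with the norm bound $\|w_\pm\|\leq 1$ (Proposition \ref{P: W pm is well defined}) and the identity $w_\pm(\lambda;H_0,H_0)=\mathrm{id}$ (Proposition \ref{P: w(H0,H0;lambda)=1 and ...}(1)). First I would choose a legitimate path connecting $H_0$ and $H_r$ to which the multiplicative property applies. Setting $V=H_r-H_0$ and $H_s = H_0+sV$ for $s\in\mathbb R$, Lemma \ref{L: exists F for H0+rV} (together with the assumption (\ref{F: V hilb(-1) to hilb(1)})) gives a frame $F$ such that this path satisfies Assumption \ref{A: assumption on Hr}, so that the resonance set $R(\lambda;\{H_s\},F)$ is defined and discrete.

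Next I would verify that both endpoints $s=0$ and $s=r$ are non-resonance. Indeed $s=0$ is trivial, since $1+0\cdot J_0 T_0(\lambda+i0)=1$ is invertible. For $s=r$, the hypothesis $\lambda\in\Lambda(H_r;F)$ together with the last theorem of Section \ref{S: Tr(z)} (the ``if and only if'' characterization of $\Lambda(H_r;F)$ in terms of the resonance set) forces $r\notin R(\lambda;\{H_s\},F)$. With this in place, I would apply Theorem \ref{T: mult-ive property of w pm} twice, with triples $(r_0,r_1,r_2)=(0,r,0)$ and $(r_0,r_1,r_2)=(r,0,r)$, to obtain
\begin{align*}
w_\pm(\lambda;H_0,H_r)\,w_\pm(\lambda;H_r,H_0) &= w_\pm(\lambda;H_0,H_0)=\mathrm{id}_{\mathfrak h_\lambda^{(0)}},\\
w_\pm(\lambda;H_r,H_0)\,w_\pm(\lambda;H_0,H_r) &= w_\pm(\lambda;H_r,H_r)=\mathrm{id}_{\mathfrak h_\lambda^{(r)}}.
\end{align*}
Thus $w_\pm(\lambda;H_r,H_0)\colon \mathfrak h_\lambda^{(0)}\to\mathfrak h_\lambda^{(r)}$ is a bijection with two-sided inverse $w_\pm(\lambda;H_0,H_r)$.

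Finally, since by Proposition \ref{P: W pm is well defined} both $w_\pm(\lambda;H_r,H_0)$ and $w_\pm(\lambda;H_0,H_r)$ have norm at most $1$, an elementary argument in Hilbert space (if $T$ and $T^{-1}$ are both contractions between Hilbert spaces, then both are isometries, hence unitary) gives that $w_\pm(\lambda;H_r,H_0)$ is unitary. The unitarity then forces its inverse to coincide with its adjoint, which yields (\ref{F: w* = ...}): $w_\pm^*(\lambda;H_r,H_0)=w_\pm(\lambda;H_0,H_r)$. There is no genuine obstacle here; the only care needed is in confirming that a path satisfying Assumption \ref{A: assumption on Hr} with the required non-resonance endpoints exists, and this was already arranged above.
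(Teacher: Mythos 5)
Your proof is correct and takes essentially the same approach as the paper: derive the two-sided inverse relations $w_\pm(\lambda;H_0,H_r)w_\pm(\lambda;H_r,H_0)=1$ and $w_\pm(\lambda;H_r,H_0)w_\pm(\lambda;H_0,H_r)=1$ from Proposition \ref{P: w(H0,H0;lambda)=1 and ...}(1) and the multiplicative property (Theorem \ref{T: mult-ive property of w pm}), then combine with the contraction bound from Proposition \ref{P: W pm is well defined} to conclude unitarity and the adjoint identity. You are somewhat more careful than the paper in explicitly constructing a path satisfying Assumption \ref{A: assumption on Hr} and verifying, via the resonance-set characterization, that the endpoints $s=0$ and $s=r$ are non-resonance — details the paper's one-paragraph proof takes for granted — but the mathematical content is the same.
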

\begin{proof} Indeed, using the first part of Proposition \ref{P: w(H0,H0;lambda)=1 and ...} and the multiplicative property
of the wave matrix (Theorem \ref{T: mult-ive property of w pm}), one infers that
$$
  w_\pm(\lambda; H_0,H_r)w_\pm(\lambda; H_r,H_0) = w_\pm(\lambda; H_0,H_0) = 1
$$
and
$$
  w_\pm(\lambda; H_r,H_0)w_\pm(\lambda; H_0,H_r) = w_\pm(\lambda; H_r,H_r) = 1.
$$
Since by Proposition \ref{P: W pm is well defined} \ $\norm{w_\pm(\lambda; H_r,H_0)} \leq 1,$
it follows that $w_\pm(\lambda; H_r,H_0)$ is a unitary operator and
$$
  w^*_\pm(\lambda;H_r,H_0) = w_\pm(\lambda; H_0,H_r).
$$
\end{proof}

% Once the wave matrix $w_\pm(\lambda; H_r,H_0)$ is defined and the multiplicative
% property of it has been proved, many well-known properties of the wave operators
% and
%
\begin{rems} \rm There is an essential difference between
$\sqrt{\frac \yy\pi} R_{\lambda+i\yy}(H_0)$ (or $\sqrt{\frac \yy\pi} R_{\lambda-i\yy}(H_0)$)
and
$\euE_{\lambda+i\yy}(H_0).$ While they have some common features
(see formulae \ref{SS: b j}(iv) and \ref{SS: b j}(v)), the second operator is better than the first one.
Actually, as it can be seen from the definitions of $\sqrt{\frac \yy\pi} R_{\lambda+i\yy}(H_0)$
and $\euE_{\lambda+i\yy}(H_0),$ these operators ``differ'' by the phase part.
This statement is enforced by the fact that the~$\clL_2(\hilb_1,\hilb)$ norm of the difference
$$
  \sqrt{\frac \yy\pi} R_{\lambda+i\yy}(H_0) - \sqrt{\frac {\yy_1}\pi} R_{\lambda+i\yy_1}(H_0)
$$
remains bounded as $\yy, \yy_1 \to 0,$ even though it does not converge to $0.$ Convergence is hindered
by the non-convergent phase part, which is absent in $\euE_{\lambda+i\yy}(H_0).$
\end{rems}

\subsection{The wave operator}

Recall that a family of operators $A_\lambda \colon \hlambda(H_0) \to \hlambda(H_1)$
is measurable, if it maps measurable vector-functions to measurable vector-functions.
Recall that if
$$
  A = \int_\Lambda^\oplus A(\lambda)\,d\lambda \ \ \text{and} \ \ B = \int_\Lambda^\oplus B(\lambda)\,d\lambda,
$$
then
$$
  AB = \int_\Lambda^\oplus A(\lambda)B(\lambda)\,d\lambda.
$$

We define the wave operator $W_\pm$ as the direct integral of wave matrices:
\begin{equation} \label{F: def of WO pm}
  W_\pm(H_r,H_0) := \int^\oplus_{\LambHF{H_r} \cap \LambHF{H_0}} w_\pm(\lambda; H_r,H_0)\,d\lambda.
\end{equation}
It is clear from (\ref{F: def of w +-}) that the operator-function
$$
  \LambHF{H_r} \cap \LambHF{H_0} \ni \lambda \mapsto w_\pm(\lambda; H_r,H_0)
$$
is measurable, so that the integral above makes sense.

% We recall that $\LambHF{H_r} \cap \LambHF{H_0}$ is a full set.
% The dimension $\dim \hlambdao$ of the fiber Hilbert
% space, on which $w_\pm(\lambda; H_r,H_0)$ acts, depends on $\lambda.$ In particular, one can consider
% the set $\Lambda^{(a)}(H_0,F)$ of all those $\lambda \in \LambHF{H_0}$ for which
% $\dim \hlambdao > 0.$ The set $\Lambda^{(a)}(H_0,F)$ is a core of the absolutely continuous spectrum of~$H_0,$
% which does not intersect with the core of singular spectrum~$\mbR \setminus \LambHF{H_0}.$
% So,
% $$
%   \hilba(H_0) = \int^\oplus_{\Lambda^{(a)}(H_0,F)} \hlambdao\,d\lambda.
% $$
The following well-known theorem (cf.~\cite[Chapter 2]{Ya})
is a direct consequence of the definition (\ref{F: def of WO pm}) of the wave operator $W_\pm,$
Theorem \ref{T: mult-ive property of w pm} and Corollary \ref{C: wave matrix is unitary}.
\begin{thm} \label{T: properties of WO} Let~$\set{H_r}$ be a path of self-adjoint operators
which satisfies Assumption \ref{A: assumption on Hr}.
The wave operator $W_\pm(H_r,H_0) \colon \hilba(H_0) \to \hilba(H_r)$
possesses the following properties.
  \begin{enumerate}
     \item[(i)] $W_\pm(H_r,H_0)$ is a unitary operator.
     \item[(ii)] $W_\pm(H_r,H_0) = W_\pm(H_r,H_s)W_\pm(H_s,H_0).$
     \item[(iii)] $W_\pm^*(H_r,H_0) = W_\pm(H_0,H_r).$
     \item[(iv)] $W_\pm(H_0,H_0) = 1.$
  \end{enumerate}
\end{thm}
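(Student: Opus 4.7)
The plan is to transfer each of the four properties from its fiberwise analogue through the direct integral definition~(\ref{F: def of WO pm}), using the standard facts that a measurable field of fiberwise unitaries assembles into a unitary between the corresponding direct integrals, that composition of such fields respects direct integration, and that replacing the domain of integration by another full set discards only a Lebesgue null set. Because the fiber Hilbert spaces $\hlambdao$ and $\hlambdar$ live inside $\ell_2$ via the evaluation operators constructed in Section~\ref{S: direct integral}, the direct integrals over $\LambHF{H_0} \cap \LambHF{H_r}$ are canonically identified, via Proposition \ref{P: euE is unitary} applied to both $H_0$ and $H_r$, with $\hilba(H_0)$ and $\hilba(H_r)$ respectively.

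First I would dispatch (iv): by the first part of Proposition \ref{P: w(H0,H0;lambda)=1 and ...}, $w_\pm(\lambda; H_0, H_0) = \mathrm{id}_{\hlambdao}$ pointwise on $\LambHF{H_0}$, so integrating yields $W_\pm(H_0, H_0) = 1$ on $\hilba(H_0)$. For (ii), fix $r_0, r_1, r_2$ and consider the triple intersection $\LambHF{H_{r_0}} \cap \LambHF{H_{r_1}} \cap \LambHF{H_{r_2}}$, which is still a full set; for $\lambda$ in this intersection, Theorem \ref{T: R(H0,G) is discrete} forces $r_0, r_1, r_2 \notin R(\lambda; \{H_r\}, F)$, so Theorem \ref{T: mult-ive property of w pm} gives the fiberwise factorization $w_\pm(\lambda; H_{r_2}, H_{r_0}) = w_\pm(\lambda; H_{r_2}, H_{r_1}) w_\pm(\lambda; H_{r_1}, H_{r_0})$. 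Integrating and noting that the two full sets $\LambHF{H_{r_0}} \cap \LambHF{H_{r_2}}$ and the triple intersection differ only by a null set yields (ii).

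Next, for (i), Corollary \ref{C: wave matrix is unitary} provides that for every $\lambda \in \LambHF{H_0} \cap \LambHF{H_r}$ the operator $w_\pm(\lambda; H_r, H_0) \colon \hlambdao \to \hlambdar$ is unitary. Measurability of this field is immediate from the defining relation (\ref{F: def of w +-}) together with measurability of the forms $\mathfrak{a}_\pm(\lambda; H_r, H_0)$ in $\lambda$. The assembled direct integral is therefore an isometry with range equal to the direct integral of the fibers $\hlambdar$, which under Proposition \ref{P: euE is unitary} is $\hilba(H_r)$, proving (i). Finally, (iii) follows either by invoking (\ref{F: w* = ...}) fiberwise and integrating, or — perhaps more cleanly — by combining (i), (ii) and (iv): from (ii) and (iv), $W_\pm(H_0, H_r) W_\pm(H_r, H_0) = W_\pm(H_0, H_0) = 1$ and symmetrically on the other side, so the inverse of $W_\pm(H_r, H_0)$ is $W_\pm(H_0, H_r)$; by (i), this inverse is the adjoint.

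No step is genuinely difficult; the only thing that deserves care is the bookkeeping of the full sets of integration, namely verifying that passing between $\LambHF{H_0}$, $\LambHF{H_r}$ and their intersection only removes null sets and therefore leaves the direct integrals unchanged. Once that is granted, the proof is pure transfer from the pointwise results already established.
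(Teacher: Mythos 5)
Your proof is correct and follows essentially the paper's own route: the paper simply records that Theorem~\ref{T: properties of WO} is a direct consequence of the definition~(\ref{F: def of WO pm}), the multiplicative property of the wave matrix (Theorem~\ref{T: mult-ive property of w pm}), and the unitarity of the wave matrix (Corollary~\ref{C: wave matrix is unitary}) together with Proposition~\ref{P: w(H0,H0;lambda)=1 and ...}, and you merely carry these pointwise facts through the direct integral with the appropriate null-set bookkeeping. One minor citation slip: to deduce $r_j\notin R(\lambda;\set{H_r},F)$ from $\lambda\in\LambHF{H_{r_j}}$ you should invoke Lemma~\ref{L: Br exists iff ...} (equivalently the ``if and only if'' theorem at the end of Section~\ref{S: Tr(z)}), not Theorem~\ref{T: R(H0,G) is discrete}, which supplies only the converse implication.
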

If we define $W_\pm(H_r,H_0)$ to be zero on the singular subspace $\hilb^{(s)}(H_0),$ then the part (iv) becomes
$$
  W_\pm(H_0,H_0) = P^{(a)}(H_0).
$$
That is, $W_\pm(H_r,H_0)$ becomes a partial isometry with initial space $\hilb^{(a)}(H_0)$
and final space $\hilb^{(a)}(H_r).$ So,
$$
  W_\pm(H_r,H_0) = W_\pm(H_r,H_0)P^{(a)}(H_0) = P^{(a)}(H_r)W_\pm(H_r,H_0).
$$

\begin{thm} (cf.~\cite[Theorem 2.1.4]{Ya})
For any % Borel set $\Delta \subset \LambHF{H_r} \cap \LambHF{H_0}$
bounded measurable function~$h$ on~$\mbR$
  \begin{equation}
     h(H_r)W_\pm(H_r,H_0) = W_\pm(H_r,H_0) h(H_0).
  \end{equation}
% In particular, choosing $\Delta = \LambHF{H_r} \cap \LambHF{H_0},$
Also,
  \begin{equation} \label{F: Hr W= W H0}
     H_rW_\pm(H_r,H_0) = W_\pm(H_r,H_0)H_0.
  \end{equation}
\end{thm}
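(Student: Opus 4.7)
The plan is to exploit the direct-integral construction from Section \ref{S: direct integral}, which shows that $\euE$ diagonalizes the absolutely continuous part of any self-adjoint operator acting on the framed Hilbert space. Concretely, by Theorem \ref{T: euE h(H0)f=...} applied separately to $H_0$ and to $H_r,$ for any bounded Borel function $h$ and any $f \in \hilb,$
\begin{equation*}
  \euE_\lambda(H_0)(h(H_0)f) = h(\lambda)\,\euE_\lambda(H_0)f, \qquad
  \euE_\lambda(H_r)(h(H_r)f) = h(\lambda)\,\euE_\lambda(H_r)f
\end{equation*}
for a.e.\ $\lambda.$ Thus in the respective direct integral realizations $\euH(H_0)$ and $\euH(H_r),$ both $h(H_0)^{(a)}$ and $h(H_r)^{(a)}$ become the operator of multiplication by the same scalar function $h(\lambda).$

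First I would work on the absolutely continuous subspace. The operator $W_\pm(H_r,H_0)$ is by definition (\ref{F: def of WO pm}) the direct integral of the wave matrices $w_\pm(\lambda;H_r,H_0)\colon \hlambdao \to \hlambdar,$ which act fiber-wise at a fixed $\lambda.$ Since multiplication by the scalar $h(\lambda)$ commutes with any linear map between the fibers $\hlambdao$ and $\hlambdar,$ on each fiber we have the trivial identity
\begin{equation*}
  h(\lambda)\,w_\pm(\lambda;H_r,H_0) = w_\pm(\lambda;H_r,H_0)\,h(\lambda).
\end{equation*}
Integrating over $\LambHF{H_r}\cap\LambHF{H_0}$ and pulling back through the unitaries $\euE(H_0)\colon \hilba(H_0)\to \euH(H_0)$ and $\euE(H_r)\colon \hilba(H_r)\to \euH(H_r)$ (Proposition \ref{P: euE is unitary}), we obtain the desired identity on $\hilba(H_0).$ On the singular subspace $\hilb^{(s)}(H_0)$ both sides vanish: the left-hand side is zero since $W_\pm(H_r,H_0)$ vanishes there by construction, and the right-hand side is zero because $h(H_0)$ leaves $\hilb^{(s)}(H_0)$ invariant and $W_\pm(H_r,H_0)$ annihilates it.

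For the unbounded statement (\ref{F: Hr W= W H0}), I would approximate. Let $h_n(\lambda) = \lambda\,\chi_{[-n,n]}(\lambda).$ Then $h_n$ is bounded Borel, so by the first part
\begin{equation*}
  h_n(H_r)\,W_\pm(H_r,H_0) = W_\pm(H_r,H_0)\,h_n(H_0).
\end{equation*}
For $f \in \dom(H_0),$ the vectors $h_n(H_0)f$ converge to $H_0 f$ in $\hilb,$ so $W_\pm(H_r,H_0)h_n(H_0)f \to W_\pm(H_r,H_0)H_0 f.$ Consequently $h_n(H_r)\brs{W_\pm(H_r,H_0)f}$ converges, so by the spectral theorem $W_\pm(H_r,H_0)f \in \dom(H_r)$ and the limit equals $H_r W_\pm(H_r,H_0)f.$

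The only genuinely delicate point is that nothing in the above argument requires matching points in the resonance set, since $W_\pm$ is a direct integral over the full set $\LambHF{H_r}\cap\LambHF{H_0}$ and modifications on a null set are invisible. All other steps are essentially bookkeeping: the key conceptual content is that $w_\pm(\lambda;H_r,H_0)$ links fibers at the \emph{same} spectral value $\lambda,$ which is exactly what forces any function of the spectral variable to intertwine through it.
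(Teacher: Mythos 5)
Your argument is correct and follows the same route as the paper: the key observation is that $W_\pm$ is a direct integral of fiber operators and Theorem \ref{T: euE h(H0)f=...} turns $h(H_0)^{(a)}$ and $h(H_r)^{(a)}$ into multiplication by the same scalar $h(\lambda)$, which commutes with anything acting fiber-wise; the paper cites exactly these two facts and leaves the rest (including the truncation argument for (\ref{F: Hr W= W H0})) to the reader. Your spelled-out version, including the monotone truncations $h_n(\lambda)=\lambda\chi_{[-n,n]}(\lambda)$ to pass from bounded $h$ to $H_r W_\pm=W_\pm H_0$ with the domain inclusion $W_\pm\,\dom(H_0)\subset\dom(H_r)$, is a faithful elaboration of the intended proof rather than an alternative one.
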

\begin{proof} This follows from the definition (\ref{F: def of WO pm}) of $W_\pm$ and Theorem \ref{T: euE h(H0)f=...}. % {T: H0 is diagonal}.
\end{proof}
As a consequence, we also get the Kato-Rosenblum theorem.
\begin{cor}~The operators $H_0^{(a)}$ and~$H_1^{(a)},$ considered as operators on absolutely continuous subspaces
$\hilba(H_0)$ and $\hilba(H_1)$ respectively, are unitarily equivalent.
\end{cor}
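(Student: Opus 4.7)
The plan is to derive this corollary as a direct consequence of the two results immediately preceding it, namely Theorem \ref{T: properties of WO} and equation (\ref{F: Hr W= W H0}). No new work is required; the claim is essentially a repackaging of the intertwining property of the wave operator in the language of unitary equivalence.

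First I would take $r=1$ in the path $\set{H_r}$ from Lemma \ref{L: exists F for H0+rV} (which guarantees that a suitable frame $F$ and analytic path exist so that Assumption \ref{A: assumption on Hr} is satisfied, with $H_0 = H_0$ and $H_1 = H_0+V$). Then I would invoke Theorem \ref{T: properties of WO}(i) to conclude that
\[
  W_+(H_1,H_0) \colon \hilba(H_0) \to \hilba(H_1)
\]
is a unitary operator between the absolutely continuous subspaces.

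Next, I would apply the intertwining relation (\ref{F: Hr W= W H0}) with $r=1$, which reads
\[
  H_1 W_+(H_1,H_0) = W_+(H_1,H_0) H_0.
\]
Since both sides are understood as operators on $\hilba(H_0)$ with values in $\hilba(H_1)$ (the absolutely continuous subspaces being invariant for $H_0$ and $H_1$ respectively), and since $W_+(H_1,H_0)$ is a bijective isometry between these subspaces by the first step, this equality rewrites as
\[
  H_1^{(a)} = W_+(H_1,H_0)\, H_0^{(a)}\, W_+(H_1,H_0)^{-1},
\]
which is the definition of unitary equivalence of $H_0^{(a)}$ and $H_1^{(a)}$.

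There is no real obstacle here: the whole force of the result is concentrated in the construction of the wave operator and the proofs of its unitarity and intertwining property, both of which are already in hand. The only point worth a brief remark in the write-up is that equation (\ref{F: Hr W= W H0}) a priori identifies operators on a common domain inside $\hilba(H_0)$, so one should note that the equality passes to the closure because $W_+(H_1,H_0)$ is bounded and $H_0^{(a)}$, $H_1^{(a)}$ are self-adjoint on their natural domains. This makes the proof essentially a two-line argument.
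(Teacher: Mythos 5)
Your proof is correct and follows exactly the route the paper takes: the paper's entire proof is the one-line remark "This follows from (\ref{F: Hr W= W H0})," which you have simply unpacked by combining the unitarity of $W_+(H_1,H_0)$ from Theorem \ref{T: properties of WO}(i) with the intertwining relation $H_1 W_+ = W_+ H_0$ to exhibit the unitary conjugation. Nothing is missing or different.
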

This follows from (\ref{F: Hr W= W H0}).

\section{Connection with time-dependent definition of the wave operator}
\label{S: connection with time...}
\ndef{\wW}{\stackrel{\mathrm w}W\!\!}
%\ndef{\wWs}{\stackrel {\mathrm w}{W^*}}
\ndef{\sW}{\stackrel{\mathrm s}W\!\!}

In this section we show that the wave operator defined by (\ref{F: def of WO pm})
coincides with the classical time-dependent definition.
In this subsection I follow \cite{Ya}. Though the proofs follow almost verbatim
those in \cite{Ya} (in \cite{Ya} the proofs are given in a more general setting),
they are given here for reader's convenience and completeness. On the other hand,
availability of the evaluation operator $\euE_\lambda$ allows to simplify the proofs slightly.

In abstract scattering theory the wave operator is usually defined by the formula (cf. e.g.~\cite[(2.1.1)]{Ya}) %~\cite{KaPJA57}
\begin{equation} \label{F: time depend formula for WO}
  W_\pm(H_r,H_0) = \lim\limits_{t \to \pm \infty} e^{itH_r}e^{-itH_0}P^{(a)}(H_0) =: \sW_\pm(H_r,H_0),
\end{equation}
where the limit is taken in the strong operator topology.
Since we define the wave operator in a different way, this formula becomes a theorem.

We denote by $P_r^{(a)}$ the projection $P^{(a)}(H_r).$

The weak wave operators $\wW_\pm$ are defined, if they exist, by the formula
\begin{equation} \label{F: def of weak WO}
  \wW_\pm(H_r,H_0) := \lim_{t \to \pm \infty} P^{(a)}_re^{itH_r}e^{-itH_0}P^{(a)}_0,
\end{equation}
where the limit is taken in the weak operator topology.

%Note that the existence of the wave operator in the weak operator topology is relatively not difficult to prove.
Proof of the existence of the wave operator in the strong operator topology uses the existence of the weak
wave operator and the multiplicative property of it. The proof of the latter constitutes the main difficulty
of the stationary approach.

% Though the formula (\ref{F: time depend formula for WO}) will not be used in this paper,
% its proof will be given here for completeness. In the proof we follow~\cite[Chapter~2]{Ya}.

The following lemma is taken from \cite[Lemma 5.3.1]{Ya}.
\begin{lemma} \label{L: int F U0(t)...} If $g \in \hilb$ is such that $\norm{\euE_\lambda g}_\hlambda \leq N$ for a.e. $\lambda \in \LambHF{H_0},$
then
$$
  \int_{-\infty}^\infty \norm{F e^{-itH_0} P_0^{(a)}g}^2\,dt \leq  2 \pi N^2\norm{F}_2^2.
$$
\end{lemma}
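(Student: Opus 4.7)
The plan is to pass everything through the direct integral diagonalization of $H_0^{(a)}$ and reduce the time integral to a spatial integral via Plancherel's theorem.

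First I would replace $P_0^{(a)}g$ by $g$ and use the frame representation $F=\sum_j \kappa_j\scal{\phi_j}{\cdot}\psi_j$ to expand
\[
  \norm{F e^{-itH_0}P_0^{(a)}g}^2 = \sum_{j=1}^\infty \kappa_j^2 \,\abs{\scal{\phi_j}{e^{-itH_0}P_0^{(a)}g}}^2.
\]
By Theorem \ref{T: H0 is diagonal} and Lemma \ref{L: (xi,eta)=int (xi(l),eta(l))dl}, in the direct integral $\euE$ diagonalizes $H_0^{(a)}$, so writing $g(\lambda)=\euE_\lambda(P_0^{(a)}g)$ gives
\[
  \scal{\phi_j}{e^{-itH_0}P_0^{(a)}g} = \int_{\LambHF{H_0}} e^{-it\lambda}\scal{\phi_j(\lambda)}{g(\lambda)}_\hlambda\,d\lambda,
\]
i.e.\ each coefficient is the Fourier transform of $h_j(\lambda):=\scal{\phi_j(\lambda)}{g(\lambda)}_\hlambda$.

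Next I would apply Plancherel and Tonelli:
\[
  \int_{-\infty}^\infty \norm{F e^{-itH_0}P_0^{(a)}g}^2\,dt
   = 2\pi\int_{\LambHF{H_0}}\sum_{j=1}^\infty \kappa_j^2\abs{h_j(\lambda)}^2\,d\lambda.
\]
The pointwise integrand is then estimated using Cauchy--Schwarz and the hypothesis $\norm{g(\lambda)}_\hlambda\le N$:
\[
  \sum_{j=1}^\infty \kappa_j^2\abs{\scal{\phi_j(\lambda)}{g(\lambda)}}^2
   \le N^2\sum_{j=1}^\infty \kappa_j^2\norm{\phi_j(\lambda)}^2
   = N^2 \sum_{j=1}^\infty \norm{\eta_j(\lambda)}^2 = N^2 \norm{\eta(\lambda)}_2^2 = N^2 \Tr\phi(\lambda),
\]
where I used $\kappa_j\phi_j(\lambda)=\eta_j(\lambda)$ and that the $\eta_j(\lambda)$ are the columns of $\eta(\lambda)$.

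Finally I would invoke Theorem \ref{T: Ya thm 6.1.5}: since $\phi(\lambda)$ is unitarily equivalent to $\tfrac1\pi F\Im R_{\lambda+i0}(H_0)F^*$ (see \ref{SS: phi(lambda+i eps)}(i)) and this in turn equals $\tfrac{d}{d\lambda}(FE_\lambda F^*)$ in trace norm, integrating the trace yields
\[
  \int_{\LambHF{H_0}} \Tr\phi(\lambda)\,d\lambda
   \le \Tr(FE_{\mbR}F^*) = \norm{F}_2^2.
\]
Combining the three displays gives the claimed bound $2\pi N^2\norm{F}_2^2$. The only mildly delicate step is the very last one, where I need to know that the trace integrates through the pointwise $\clL_1$-derivative and that the resulting total mass is controlled by $\Tr(FF^*)$; this is exactly the content of Theorem \ref{T: Ya thm 6.1.5} combined with Fatou's theorem for the trace-valued spectral distribution, so no genuine new difficulty arises.
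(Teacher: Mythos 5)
Your proof is correct and is essentially the paper's argument with the order of summation over $j$ and integration over $\lambda$ swapped: the paper first bounds each $\int |\langle g(\lambda),\phi_j(\lambda)\rangle|^2\,d\lambda \le N^2\int\|\phi_j(\lambda)\|^2\,d\lambda\le N^2$ via Lemma~\ref{L: norm of phi j(l) less 1} and then sums $\kappa_j^2$, whereas you first form the pointwise bound $N^2\Tr\phi(\lambda)$ and then integrate using Theorem~\ref{T: Ya thm 6.1.5}. Both versions of the final estimate reduce to the same fact, namely that $\lambda\mapsto\Tr(FE_\lambda F^*)$ is a bounded increasing function whose derivative integrates to at most its total variation $\|F\|_2^2$ (per-$j$ in the paper, all at once for you), so the content is the same.
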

\begin{proof} (A) For any frame vector $\phi_j$ the following estimate holds:
$$
  \int_{-\infty}^\infty \abs{\scal{e^{-itH_0}P_0^{(a)} g}{\phi_j}}^2\,dt \leq 2 \pi N^2.
$$
Proof. Note that $g(\lambda)$ is defined for a.e. $\lambda \in
\LambHF{H_0}$ as an element of the direct integral~$\euH.$
It follows from Theorem \ref{T: euE h(H0)f=...} and Lemma \ref{L: (xi,eta)=int (xi(l),eta(l))dl} that
\begin{equation*}
  \begin{split}
  \scal{e^{-itH_0}P_0^{(a)} g}{\phi_j} & = \int_{\LambHF{H_0}} e^{-i\lambda t} \scal{g(\lambda)}{\phi_j(\lambda)}\,d\lambda
   % \\ & = \frac1{\kappa_n\kappa_j} \int_{\LambHF{H_0}} e^{-i\lambda t} \phi_{nj}(\lambda)\,d\lambda
   \\ & = \sqrt{2\pi} \hat f_{j}(t),
  \end{split}
\end{equation*}
where $f_j(\lambda) = \scal{g(\lambda)}{\phi_j(\lambda)}$ and $\hat f_j$ is the Fourier transform of $f_j.$
It follows that
\begin{equation*}
  \begin{split}
    (E) & := \int_{-\infty}^\infty \abs{\scal{e^{-itH_0}P_0^{(a)} g}{\phi_j}}^2\,dt
     \\ & = 2\pi \int_{-\infty}^\infty \abs{\hat f_{j}(t)}^2\,dt
      = 2\pi \int_{\LambHF{H_0}} \abs{f_{j}(\lambda)}^2\,d\lambda
     \\ & = 2\pi \int_{\LambHF{H_0}} \abs{\scal{g(\lambda)}{\phi_j(\lambda)}}^2\,d\lambda
         \leq 2\pi N^2 \int_{\LambHF{H_0}} \norm{\phi_j(\lambda)}^2\,d\lambda
     \\ & \leq 2\pi N^2.
  \end{split}
\end{equation*}
We write here $\LambHF{H_0}$ instead of $\mbR,$ but since
$\LambHF{H_0}$ has full Lebesgue measure, it makes no difference.
The proof of (A) is complete.

(B) Using the Parseval equality one has (recall that $(\psi_j)$ is the orthonormal basis from the definition (\ref{F: Frame=...}) of the frame operator
$F$)
  \begin{equation*}
    \begin{split}
       (E) := \int_{-\infty}^\infty \norm{F e^{-itH_0} P_0^{(a)} g}^2\,dt
         & = \int_{-\infty}^\infty \sum_{j=1}^\infty\abs{\scal{F e^{-itH_0} P_0^{(a)}g}{\psi_j}}^2\,dt
      \\ & = \int_{-\infty}^\infty \sum_{j=1}^\infty \kappa_j^2\abs{\scal{e^{-itH_0}P_0^{(a)} g}{\phi_j}}^2\,dt
      \\ & = \sum_{j=1}^\infty \kappa_j^2 \int_{-\infty}^\infty \abs{\scal{e^{-itH_0}P_0^{(a)} g}{\phi_j}}^2\,dt.
    \end{split}
  \end{equation*}
Now, it follows from (A) that
$$
  (E) \leq \sum_{j=1}^\infty \kappa_j^2 \cdot 2\pi N^2 = 2\pi N^2 \norm{F}_2^2.
$$ The proof is complete.
\end{proof}
For the following theorem, see e.g. \cite[Theorem 5.3.2]{Ya}
\begin{thm} \label{T: weak WO exists}
The weak wave operators (\ref{F: def of weak WO}) exist.
\end{thm}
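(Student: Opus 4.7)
The plan is to show, for arbitrary $f,g\in\hilb$, the existence of the limit
$$
  \lim_{t\to\pm\infty}\scal{P_r^{(a)}e^{itH_r}e^{-itH_0}P_0^{(a)}g}{f},
$$
which is clearly enough, since $P_r^{(a)}$ is an orthogonal projection. By replacing $f$ with $P_r^{(a)}f$ we may absorb the projection on the left. Using Stone's theorem to differentiate $e^{isH_r}e^{-isH_0}$ in $s$ and then integrating from $0$ to $t$, I would write
$$
  \scal{e^{itH_r}e^{-itH_0}P_0^{(a)}g}{P_r^{(a)}f}
   = \scal{P_0^{(a)}g}{P_r^{(a)}f}
   + i\int_0^t\scal{V_r\,e^{-isH_0}P_0^{(a)}g}{e^{-isH_r}P_r^{(a)}f}\,ds.
$$
So the claim reduces to absolute convergence of the integral as $t\to\pm\infty$ for $g,f$ in a dense set, together with a uniform bound.

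Using $V_r = F^*J_rF$, the integrand equals $i\scal{J_rF e^{-isH_0}P_0^{(a)}g}{F e^{-isH_r}P_r^{(a)}f}_\clK$, which by Cauchy--Schwarz is bounded in absolute value by
$\norm{J_r}\cdot\norm{F e^{-isH_0}P_0^{(a)}g}\cdot\norm{F e^{-isH_r}P_r^{(a)}f}$.
Applying Cauchy--Schwarz in the time integral and invoking Lemma~\ref{L: int F U0(t)...} applied to both operators $H_0$ and $H_r$, I obtain
$$
  \int_{-\infty}^{\infty}\abs{\scal{V_r e^{-isH_0}P_0^{(a)}g}{e^{-isH_r}P_r^{(a)}f}}\,ds
  \leq 2\pi\norm{J_r}\norm{F}_2^2\,N_0\,N_r,
$$
whenever $g$ and $f$ belong to the classes
$\clM_0(N_0):=\set{g\in\hilb\colon\,\norm{\euE_\lambda(H_0)g}_{\hlambda(H_0)}\le N_0\text{ a.e.}}$
and $\clM_r(N_r)$ defined analogously with $H_r$. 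This proves absolute convergence of the integral, and hence the existence of the limit on $\clM_0(N_0)\times\clM_r(N_r)$.

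Next, I would verify that $\bigcup_{N}\clM_0(N)$ is dense in $P_0^{(a)}\hilb$, and similarly for $H_r$. Indeed, by Propositions~\ref{P: euE is unitary} and~\ref{P: linear combinations of eta j}, finite linear combinations of functions $\chi_\Delta(\lambda)\phi_j(\lambda)$ are dense in $\euH$; pulling back through the unitary $\euE\colon\hilb^{(a)}(H_0)\to\euH$ gives vectors $g$ with $\euE_\lambda g$ uniformly bounded in $\lambda\in\Lambda$, so $g\in\clM_0(N_0)$ for some $N_0$. Since also $\norm{e^{itH_r}e^{-itH_0}P_0^{(a)}}\leq 1$ for all $t$, a standard $3\varepsilon$-argument extends the existence of the weak limit from the dense set to all of $\hilb\times\hilb$. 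This yields existence of $\wW_\pm(H_r,H_0)$ as a weak operator limit, and by construction it is a contraction that vanishes on $\hilb^{(s)}(H_0)$ and lands in $\hilb^{(a)}(H_r)$.

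The main obstacle is the density verification: one needs that the subspace of absolutely continuous vectors $g$ with uniformly bounded $\euE_\lambda g$ is dense in $\hilb^{(a)}(H_0)$, which is where the explicit construction of the direct integral from Section~\ref{S: direct integral} is crucial; without the explicit fibers $\hlambda$ and the measurability base $\set{\phi_j(\cdot)}$, this density is not easily accessible. Once this is in hand, the rest of the proof is a routine two-fold application of Cauchy--Schwarz together with the $L^2$-in-time estimate Lemma~\ref{L: int F U0(t)...}.
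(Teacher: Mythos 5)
Your proof follows the same route as the paper's: differentiate $e^{isH_r}e^{-isH_0}$ using Stone's theorem, integrate in $s$, factor $V_r = F^*J_rF$, apply Cauchy--Schwarz pointwise and then in the time integral, and invoke Lemma~\ref{L: int F U0(t)...} twice (once for $H_0$, once for $H_r$). The paper verifies the Cauchy criterion for the difference over $[t_1,t_2]$, while you prove absolute convergence of $\int_0^{\pm\infty}$; these are equivalent given the same $L^2$-in-time bound, so there is no genuine difference in strategy. Your $3\varepsilon$-extension argument is also what the paper implicitly relies on when it asserts density.

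There is, however, one imprecision in the density step. You claim that finite linear combinations of $\chi_\Delta(\lambda)\phi_j(\lambda)$ give vectors $g$ with $\sup_\lambda\norm{\euE_\lambda g}<\infty$. This is not quite right: the function $\lambda\mapsto\norm{\phi_j(\lambda)}$ lies in $L^2(\LambHF{H_0})$ (by Lemma~\ref{L: norm of phi j(l) less 1} its square integrates to $\le 1$), but it need not be essentially bounded, since $\norm{\phi_j(\lambda)}^2 = \frac1\pi\scal{\phi_j}{\Im R_{\lambda+i0}(H_0)\phi_j}$ is a Poisson boundary value of a finite measure. The fix is simple: either intersect $\Delta$ with $\set{\lambda\colon\norm{\phi_j(\lambda)}\le N}$ and let $N\to\infty$, or more directly truncate an arbitrary $h\in\euH$ by $h_N(\lambda):=h(\lambda)\chi_{\set{\norm{h(\lambda)}\le N}}(\lambda)$, which converges to $h$ in $\euH$ by dominated convergence. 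With that repair, the density of the set $\bigcup_N\clM_0(N)$ in $\hilb^{(a)}(H_0)$ holds and the rest of your argument goes through.
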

\begin{proof}
(A) For any $f, f_0 \in \hilb$ the estimate
\begin{multline*}
     \Big| \scal{e^{-i{t_2}H_r} e^{-i{t_2}H_0}f_0}{f} - \scal{e^{-i{t_1}H_r} e^{-i{t_1}H_0}f_0}{f}\Big|
      \\ \leq \norm{J_r} \brs{\int_{t_1}^{t_2} \norm{F e^{-i{t_2}H_0}f_0}^2\,dt}^{1/2} \brs{\int_{t_1}^{t_2} \norm{Fe^{-i{t_2}H_r}f}^2\,dt}^{1/2}.
\end{multline*}
holds.

Proof. For any $f, f_0 \in \hilb,$
\begin{equation*}
  \begin{split}
    \frac d{dt} \scal{e^{-itH_0}f_0}{e^{-itH_r}f} & = \scal{(-iH_0)e^{-itH_0}f_0}{e^{-itH_r}f} + \scal{e^{-itH_0}f_0}{(-iH_r)e^{-itH_r}f}
    %\\ & = i \scal{e^{itH}H_0 e^{-itH_0}f_0}{f} -i \scal{e^{-itH}He^{-itH_0}f_0}{f}
    \\ & = - i \scal{(H_r-H_0) e^{-itH_0}f_0}{e^{-itH_r}f}
    \\ & = - i \scal{V_r e^{-itH_0}f_0}{e^{-itH_r}f}
    \\ & = - i \scal{J_r F e^{-itH_0}f_0}{Fe^{-itH_r}f},
  \end{split}
\end{equation*}
where in the last equality the decomposition $V_r = F^* J_r F$ was used.
It follows that
\begin{equation*}
  \begin{split}
     \scal{e^{-i{t_2}H_r} e^{-i{t_2}H_0}f_0}{f} & - \scal{e^{-i{t_1}H_r} e^{-i{t_1}H_0}f_0}{f}
      \\ &  = - i \int_{t_1}^{t_2} \scal{J_r F e^{-i{t_2}H_0}f_0}{Fe^{-i{t_2}H_r}f}\,dt.
  \end{split}
\end{equation*}
Using the Schwarz inequality,
this implies that
\begin{equation*}
  \begin{split}
     \Big| & \scal{e^{-i{t_2}H_r} e^{-i{t_2}H_0}f_0}{f} - \scal{e^{-i{t_1}H_r} e^{-i{t_1}H_0}f_0}{f}\Big|
      \\ &  \qquad \leq \norm{J_r} \int_{t_1}^{t_2} \norm{F e^{-i{t_2}H_0}f_0}\norm{Fe^{-i{t_2}H_r}f}\,dt
      \\ &  \qquad \leq \norm{J_r} \brs{\int_{t_1}^{t_2} \norm{F e^{-i{t_2}H_0}f_0}^2\,dt}^{1/2} \brs{\int_{t_1}^{t_2} \norm{Fe^{-i{t_2}H_r}f}^2\,dt}^{1/2}.
      %\\ & \qquad \leq 2\pi r N^2 \norm{J} \norm{F}_2^2.
  \end{split}
\end{equation*}

(B)
 Let $N \in \mbR.$ Let $g, g_0 \in \hilb$ be such that
% Let $P_0^{(a)}f_0 \in \hilb^{(a)}(H_0)$ and $P_r^{(a)}f \in \hilb^{(a)}(H_r)$ be such that
for a.e. $\lambda \in \LambHF{H_0}$
\begin{equation} \label{F: |f_0| leq N ...}
  \norm{\euE_\lambda(H_0)P_0^{(a)}g_0}_{\hlambdao} \leq N \ \text{and} \  \norm{\euE_\lambda(H_r)P_r^{(a)}g}_{\hlambdar} \leq N.
\end{equation}
Applying the estimate (A) to the pair of vectors $f = P^{(a)}(H_r)g$ and $f_0 = P^{(a)}(H_0)g_0,$
it now follows from the estimates (\ref{F: |f_0| leq N ...}) and Lemma \ref{L: int F U0(t)...}, that
%the right-hand side of the last estimate vanishes.
\begin{multline*}
  \Big| \scal{e^{-i{t_2}H_r} e^{-i{t_2}H_0}P^{(a)}(H_0)g_0}{P^{(a)}(H_r)g} - \scal{e^{-i{t_1}H_r} e^{-i{t_1}H_0}P^{(a)}(H_0)g_0}{P^{(a)}(H_r)g}\Big|
  \\ \leq \norm{J_r} \cdot 2\pi N^2 \norm{F}_2^2.
\end{multline*}
Consequently, the right hand side vanishes, when $t_1, t_2 \to \pm\infty.$
It follows that the limits
$$
  \lim_{t \to \pm\infty} \scal{P_r^{(a)}e^{-itH_r} e^{-itH_0}P_0^{(a)}g_0}{g}
$$
exist.
Since the set of vectors $g_0,g,$ which satisfy the estimate (\ref{F: |f_0| leq N ...}) for some $N,$ is dense in~$\hilb,$ it follows from the last
estimate that the weak wave operators (\ref{F: def of weak WO}) exist.
% \begin{equation*}
%   \begin{split}
%      \abs{\int_{t_1}^{t_2} \scal{e^{-itH_r} e^{-itH_0}P_0^{(a)}f_0}{P_r^{(a)}f}\,dt}
%       \\ &  \leq \scal{J_rF e^{-i{t_2}H_0}P_0^{(a)}f_0}{Fe^{-i{t_2}H_r}P_r^{(a)}f} + i \scal{J_rF e^{-i{t_1}H_0}P_0^{(a)}f_0}{Fe^{-i{t_1}H_r}P_r^{(a)}f}.
%   \end{split}
% \end{equation*}
\end{proof}

The following theorem and its proof follow verbatim \cite[Theorem 2.2.1]{Ya}
\begin{thm} \label{T: if w-WO exists and ... then s-WO exists}
  If the weak wave operators $\wW_\pm(H_r,H_0)$ exist and
  \begin{equation} \label{F: wW* wW = Pa}
    \wW_\pm(H_r,H_0)^* \wW_\pm(H_r,H_0) = P_0^{(a)},
  \end{equation}
  then the strong wave operators $\sW_\pm(H_r,H_0)$ exist and coincide with the weak wave operators $\wW_\pm(H_r,H_0).$
\end{thm}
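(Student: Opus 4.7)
The plan is to prove strong convergence of $e^{itH_r}e^{-itH_0}P_0^{(a)}f$ to $\wW_\pm(H_r,H_0)f$ as $t\to\pm\infty$ by a polarization/norm-square argument. For a fixed $f \in \hilb,$ I would expand
$$
   \norm{e^{itH_r}e^{-itH_0}P_0^{(a)}f - \wW_\pm(H_r,H_0)f}^2
    = \norm{e^{itH_r}e^{-itH_0}P_0^{(a)}f}^2 - 2\Re\scal{e^{itH_r}e^{-itH_0}P_0^{(a)}f}{\wW_\pm(H_r,H_0)f} + \norm{\wW_\pm(H_r,H_0)f}^2
$$
and show that the right-hand side tends to $0$ as $t \to \pm\infty,$ whence existence of the strong wave operator and its coincidence with $\wW_\pm(H_r,H_0)$ both follow.

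The first step is to identify the first term: since $e^{itH_r}e^{-itH_0}$ is unitary, $\norm{e^{itH_r}e^{-itH_0}P_0^{(a)}f}^2 = \norm{P_0^{(a)}f}^2$ for every $t \in \mbR.$ The second step is the cross-term. Because the definition of $\wW_\pm(H_r,H_0)$ includes the projection $P_r^{(a)}$ on the left, the range of $\wW_\pm(H_r,H_0)$ is contained in $\hilba(H_r),$ so $P_r^{(a)}\wW_\pm(H_r,H_0)f = \wW_\pm(H_r,H_0)f$ and
$$
   \scal{e^{itH_r}e^{-itH_0}P_0^{(a)}f}{\wW_\pm(H_r,H_0)f}
   = \scal{P_r^{(a)}e^{itH_r}e^{-itH_0}P_0^{(a)}f}{\wW_\pm(H_r,H_0)f}.
$$
By the definition (\ref{F: def of weak WO}) of the weak wave operator, the last expression converges to $\scal{\wW_\pm(H_r,H_0)f}{\wW_\pm(H_r,H_0)f} = \norm{\wW_\pm(H_r,H_0)f}^2$ as $t\to\pm\infty.$ The third step is the final cancellation: the hypothesis $\wW_\pm(H_r,H_0)^* \wW_\pm(H_r,H_0) = P_0^{(a)}$ gives
$$
   \norm{\wW_\pm(H_r,H_0)f}^2 = \scal{\wW_\pm(H_r,H_0)^*\wW_\pm(H_r,H_0)f}{f} = \scal{P_0^{(a)}f}{f} = \norm{P_0^{(a)}f}^2,
$$
so plugging the three computations into the expansion gives the limit
$$
   \norm{P_0^{(a)}f}^2 - 2\norm{\wW_\pm(H_r,H_0)f}^2 + \norm{\wW_\pm(H_r,H_0)f}^2 = \norm{P_0^{(a)}f}^2 - \norm{\wW_\pm(H_r,H_0)f}^2 = 0.
$$
This establishes strong convergence, proving simultaneously that $\sW_\pm(H_r,H_0)$ exists and equals $\wW_\pm(H_r,H_0).$

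No step here looks genuinely hard: the argument is purely formal once one notices that the isometry hypothesis is precisely the identity $\norm{\wW_\pm f}^2 = \norm{P_0^{(a)}f}^2$ needed for the cancellation. The only point that requires care is the justification for inserting $P_r^{(a)}$ in the cross-term, which relies on the convention (built into the definition (\ref{F: def of weak WO})) that the range of $\wW_\pm(H_r,H_0)$ lies in $\hilba(H_r);$ if a different convention for the weak wave operator were used, one would first have to verify this range property separately. The two signs $\pm$ are treated in exactly the same way.
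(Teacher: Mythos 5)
Your proof is correct and follows the same route as the paper's own argument (which is itself taken from Yafaev's Theorem 2.2.1): expand $\norm{e^{itH_r}e^{-itH_0}P_0^{(a)}f - \wW_\pm f}^2$ into three terms, use unitarity for the first, insert $P_r^{(a)}$ via $\wW_\pm = P_r^{(a)}\wW_\pm$ and apply (\ref{F: def of weak WO}) for the cross-term, and invoke the isometry hypothesis (\ref{F: wW* wW = Pa}) for the cancellation. The only presentational difference is that the paper writes the first term directly as $\scal{P_0^{(a)}f}{f}$ rather than invoking unitarity of $e^{itH_r}e^{-itH_0}$ explicitly; this is immaterial.
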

\begin{proof} We have
 \begin{equation*}
   \begin{split}
   E_\pm(t) & := \norm{ e^{itH_r} e^{-itH_0}P_0^{(a)} f - \wW_\pm f }^2
    \\ & = \scal{ e^{itH_r} e^{-itH_0}P_0^{(a)} f - \wW_\pm f }{ e^{itH_r} e^{-itH_0}P_0^{(a)} f - \wW_\pm f }
    \\ & = \scal{P_0^{(a)} f}{f} - 2\Re \scal{ e^{itH_r} e^{-itH_0}P_0^{(a)} f}{\wW_\pm f }
    + \scal{\wW_\pm f}{\wW_\pm f}.
   \end{split}
 \end{equation*}
Since $\wW_\pm = P_r^{(a)} \wW_\pm,$ it follows from (\ref{F: def of weak WO}) that
the second term on the right-hand side of this equality converges to $-2\scal{\wW_\pm f}{\wW_\pm f}$
as $t \to \pm\infty.$
It follows from this and (\ref{F: wW* wW = Pa}) that
$$
  \lim_{t \to \pm\infty} E_\pm(t) = \scal{P_0^{(a)} f}{f} - \scal{\wW_\pm ^*\wW_\pm f}{f} = 0.
$$
That is, the strong wave operators $\sW_\pm$ exist and are equal to $\wW_\pm.$
\end{proof}

The next theorem is taken from \cite[Chapter 2]{Ya}.
\begin{thm} \label{T: time dep-t WO} %The equality (\ref{F: time depend formula for WO}) holds,
  %where the limit in the right-hand side is taken in the strong operator topology.
  The strong wave operators $\sW_\pm$ exist and coincide with $W_\pm.$
\end{thm}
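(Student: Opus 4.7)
The plan is to assemble three ingredients already in place: Theorem~\ref{T: weak WO exists} giving existence of the weak wave operators $\wW_\pm$, the unitarity of $W_\pm$ from Theorem~\ref{T: properties of WO}(i), and the abstract criterion of Theorem~\ref{T: if w-WO exists and ... then s-WO exists} that upgrades a weak wave operator to a strong one once $\wW_\pm^*\wW_\pm = P_0^{(a)}$ is verified. The only missing link is the identification $\wW_\pm(H_r,H_0) = W_\pm(H_r,H_0)$. Once this is in hand, unitarity of $W_\pm$ automatically yields $\wW_\pm^*\wW_\pm = W_\pm^*W_\pm = P_0^{(a)}$, so Theorem~\ref{T: if w-WO exists and ... then s-WO exists} delivers both the existence of $\sW_\pm$ and the chain of equalities $\sW_\pm = \wW_\pm = W_\pm$.

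To establish $\wW_\pm = W_\pm$, I would fix $f \in \hilba(H_0)$ and $g \in \hilba(H_r)$ (it suffices to verify equality on such pairs, since both operators vanish on the orthogonal complements of these subspaces) and use the Abelian representation
\begin{equation*}
  \scal{\wW_+ f}{g} = \lim_{\epsilon \to 0^+} 2\epsilon \int_0^{\infty} e^{-2\epsilon t} \scal{e^{itH_r}e^{-itH_0} f}{g}\, dt,
\end{equation*}
which is valid because the weak limit already exists. Applying Plancherel's theorem to the Hilbert-space-valued functions $t \mapsto e^{-\epsilon t}e^{-itH_0} f$ and $t \mapsto e^{-\epsilon t}e^{-itH_r} g$ (extended by zero for $t<0$), whose Fourier transforms compute, up to a factor $-i$, to the resolvent vectors $R_{\lambda+i\epsilon}(H_0)f$ and $R_{\lambda+i\epsilon}(H_r)g$, converts the right-hand side to
\begin{equation*}
  \lim_{\epsilon \to 0^+} \frac{\epsilon}{\pi} \int_\mbR \scal{R_{\lambda-i\epsilon}(H_r)R_{\lambda+i\epsilon}(H_0) f}{g}\, d\lambda.
\end{equation*}
By Definition~\ref{D: mathfrak a}, the integrand converges as $\epsilon \to 0^+$ pointwise at every $\lambda \in \LambHF{H_0}\cap\LambHF{H_r}$, hence almost everywhere, to $\scal{\mathfrak a_+(\lambda;H_r,H_0) f}{g}_{-1,1}$. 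The argument for $\wW_-$ is identical and produces $\mathfrak a_-$.

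The main obstacle is justifying the interchange of the $\epsilon \to 0$ limit and the $d\lambda$-integration; this is where Vitali's theorem (Theorem~\ref{T: Vitali}) enters. The Cauchy-Schwarz estimate used in~(\ref{F: norm of mathfrak a}) bounds the integrand pointwise by
\begin{equation*}
  \tfrac{1}{\pi}\sqrt{\scal{f}{\Im R_{\lambda+i\epsilon}(H_0) f}}\cdot\sqrt{\scal{g}{\Im R_{\lambda+i\epsilon}(H_r) g}},
\end{equation*}
in which each factor is the square root of the Poisson integral of the density of the corresponding absolutely continuous spectral measure (which lies in $L^1(\mbR)$ thanks to the choice $f \in \hilba(H_0)$ and $g \in \hilba(H_r)$). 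Lemma~\ref{L: Zet}, combined with Cauchy-Schwarz in $L^2(\mbR,d\lambda)$ and the analogous estimate for the tail $|\lambda|>N$, supplies the uniform control required by Vitali's theorem. Consequently
\begin{equation*}
  \scal{\wW_\pm f}{g} = \int_{\LambHF{H_0}\cap\LambHF{H_r}} \scal{\mathfrak a_\pm(\lambda;H_r,H_0) f}{g}_{-1,1}\, d\lambda,
\end{equation*}
and by the defining identity~(\ref{F: euE w pm euE = mathfrak a}) of the wave matrix together with the direct-integral formula~(\ref{F: def of WO pm}) the right-hand side is exactly $\scal{W_\pm f}{g}$. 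This completes the identification $\wW_\pm = W_\pm$ and therefore, via Theorem~\ref{T: if w-WO exists and ... then s-WO exists}, the proof.
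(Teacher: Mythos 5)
Your proposal follows the same strategy as the paper: establish the identification $\wW_\pm = W_\pm$ by comparing matrix elements via the Abelian--Plancherel representation of $\scal{\wW_\pm f}{g}$ and the direct-integral/stationary representation of $\scal{W_\pm f}{g}$ through $\mathfrak a_\pm(\lambda)$, interchange the $\epsilon\to 0$ limit with the $\lambda$-integration using Vitali's Theorem~\ref{T: Vitali} together with Lemma~\ref{L: Zet}, and then invoke Theorem~\ref{T: if w-WO exists and ... then s-WO exists}. The last step is slightly cleaner than the paper's, since you appeal directly to unitarity of $W_\pm$ to get $\wW_\pm^*\wW_\pm = P_0^{(a)}$ rather than routing through the multiplicative property; but this is cosmetic.

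There is, however, a genuine gap in how you set up the test vectors. You fix $f\in\hilba(H_0)$ and $g\in\hilba(H_r)$ and then assert that the integrand converges pointwise to $\scal{\mathfrak a_+(\lambda;H_r,H_0)f}{g}_{-1,1}$, and later identify the resulting integral with $\scal{W_\pm f}{g}$ via~(\ref{F: euE w pm euE = mathfrak a}). But $\mathfrak a_\pm(\lambda;H_r,H_0)$ is defined in Definition~\ref{D: mathfrak a} only as an element of $\clL_1(\hilb_1,\hilb_{-1})$, and the pairing $\scal{\cdot}{\cdot}_{-1,1}$ is a pairing between $\hilb_{-1}$ and $\hilb_1$; neither makes sense unless both $f$ and $g$ lie in $\hilb_1$. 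Absolutely continuous vectors need not be regular, and $\hilb_1\cap\hilba$ is not a natural subspace to work with directly. The correct move, which the paper makes, is to take $f_0,g_0\in\hilb_1$ and work with their absolutely continuous parts $\tilde f=P_r^{(a)}f_0$, $\tilde g=P_0^{(a)}g_0$: the pointwise limits and the stationary identity are controlled by $f_0,g_0\in\hilb_1$ (since $\euE_\lambda$ kills the singular part), while the Vitali/Poisson estimate uses that $\tilde f,\tilde g$ are absolutely continuous. One then closes by density, since $P_0^{(a)}\hilb_1$ is dense in $\hilba(H_0)$ and $P_r^{(a)}\hilb_1$ in $\hilba(H_r)$, and all operators involved are bounded. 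As written, your argument applies the rigging machinery to vectors outside the rigging; the rest of the proof is sound once this restriction and the final density argument are inserted.
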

\begin{proof} (A) Let $f,g \in \hilb_1$ and let $\Lambda = \LambHF{H_r} \cap \LambHF{H_0}.$  For every $\lambda \in \Lambda$
the vectors $f^{(r)}(\lambda) = \euE_\lambda(H_r) f$ and $g^{(0)}(\lambda) = \euE_\lambda(H_0) g$ are well-defined and the functions
$f^{(r)}(\cdot)$ and $g^{(0)}(\cdot)$  are $\euH$-measurable in the corresponding direct integrals, so that
$$
  \tilde f := P_r^{(a)}f = \int_\Lambda^\oplus f^{(r)}(\lambda)\,d\lambda, \qquad \tilde g := P_0^{(a)}g = \int_\Lambda^\oplus g^{(0)}(\lambda)\,d\lambda.
$$
It follows from the definitions (\ref{F: def of WO pm}) and (\ref{F: def of w +-}) of the wave operator $W_\pm$ and
the wave matrix $w_\pm(\lambda)$ that
\begin{equation*} % \label{F: }
  \begin{split}
    \scal{\tilde f}{W_\pm(H_r,H_0) \tilde g} & = \int_\Lambda \scal{f^{(r)}(\lambda)}{w_\pm(\lambda;H_r,H_0)g^{(0)}(\lambda)}_\hlambdar\,d\lambda
     \\ & = \int_\Lambda \scal{\tilde f}{\mathfrak a_\pm(\lambda;H_r,H_0)\tilde g}_{1,-1}\,d\lambda.
  \end{split}
\end{equation*}
By definition (\ref{D: mathfrak a}) of the operators $\mathfrak a_\pm(\lambda),$ it follows from the last equality that
\begin{equation} \label{F: (f,Wg)=int lim...}
  \scal{\tilde f}{W_\pm(H_r,H_0) \tilde g} = \int_\Lambda \lim\limits_{y\to 0} \frac y\pi \scal{R_{\lambda\pm iy}(H_r)\tilde f}{R_{\lambda\pm iy}(H_0)\tilde g}\,d\lambda.
\end{equation}

(B) Claim: the limit and the integral can be interchanged.

Let $Y$ be a Borel subset of $\Lambda$ and let
$$
  f_y = \frac y\pi \scal{R_{\lambda\pm iy}(H_r)\tilde f}{R_{\lambda\pm iy}(H_0)\tilde g}.
$$
The Schwarz inequality implies
\begin{equation*}
  \begin{split}
    \int_{Y} \abs{f_y}\,d\lambda & \leq \frac y\pi \int_Y \norm{R_{\lambda\pm iy}(H_r)\tilde f} \norm{R_{\lambda\pm iy}(H_0)\tilde g}\,d\lambda
     \\ & \leq \brs{\frac y\pi \int_Y \norm{R_{\lambda\pm iy}(H_r)\tilde f}^2\,d\lambda}^{1/2} \brs{\frac y\pi \int_Y \norm{R_{\lambda\pm iy}(H_0)\tilde g}^2\,d\lambda}^{1/2}
     \\ & \leq \brs{\frac 1\pi \int_Y \scal{\Im R_{\lambda + iy}(H_r)\tilde f}{\tilde f}\,d\lambda}^{1/2} \brs{\frac 1\pi \int_Y \scal{\Im R_{\lambda + iy}(H_0)\tilde g}{\tilde g}\,d\lambda}^{1/2}
  \end{split}
\end{equation*}
Since $\tilde f$ is an absolutely continuous vector for $H_r$ and since $\tilde g$ is an absolutely continuous vector for $H_0,$
the functions $\frac 1\pi \scal{\Im R_{\lambda + iy}(H_r)\tilde f}{\tilde f}$ and $\frac 1\pi \scal{\Im R_{\lambda + iy}(H_0)\tilde g}{\tilde g}$
are Poisson integrals of summable functions $\frac d{d\lambda}\scal{E^{H_r}_\lambda \tilde f}{\tilde f}$
and $\frac d{d\lambda}\scal{E^{H_0}_\lambda \tilde g}{\tilde g}$ respectively.
From Lemma \ref{L: Zet} and from the above estimate it now follows that for $f_y$ the conditions of
Vitali's Theorem \ref{T: Vitali} hold. Hence, Vitali's theorem completes the proof of (A).

(C) %Using the argument of the proof of~\cite[Lemma 5.2.2]{Ya}, one can show that the limit and the integral
%in the last expression can be interchanged.
%Let $\tilde f = P_r^{(a)}f$ and $\tilde g = P_0^{(a)}g.$
Claim: $\wW_\pm(H_r,H_0) = W_\pm(H_r,H_0).$

Proof. Using~\cite[(2.7.2)]{Ya}, it follows from (\ref{F: (f,Wg)=int lim...}) and (B) that
$$
  \scal{\tilde f}{W_\pm (H_r,H_0) \tilde g} = \lim_{\eps \to 0} 2\eps \int_0^\infty e^{-2\eps t} \scal{e^{\mp itH_r}\tilde f}{e^{\mp itH_0}\tilde g}\,dt.
$$
Since, by Theorem \ref{T: weak WO exists}, the function $t \mapsto \scal{e^{\mp itH_r} \tilde f}{e^{\mp itH_0}\tilde g}$
has a limit, as $t \to \infty,$ equal to $\scal{\tilde f}{\wW_\pm(H_r,H_0) \tilde g},$ it follows that
the right hand side of the last equality is also equal to $\scal{\tilde f}{\wW_\pm(H_r,H_0) \tilde g}.$
Hence,
$$
  \scal{\tilde f}{W_\pm(H_r,H_0) \tilde g} = \scal{\tilde f}{\wW_\pm(H_r,H_0) \tilde g}.
$$
Since for any self-adjoint operator $H$ the set $P^{(a)}(H)\hilb_1$ is dense in $\hilba(H)$ and since both operators $\wW_\pm(H_r,H_0)$ and $W_\pm(H_r,H_0)$
vanish on singular subspace $\hilbs(H_0)$ of $H_0,$
it follows that $W_\pm(H_r,H_0) = \wW_\pm(H_r,H_0).$

(D) Since for $W_\pm$ the multiplicative property holds (Theorem \ref{T: properties of WO}(ii)),
it follows from (C) that the multiplicative property holds also for the weak wave operator~$\wW_\pm.$
%the weak abelian wave operator (cf.~\cite[Subsection 2.2.3]{Ya}) exists and coincides with $W_\pm(H_r,H_0).$
%Existence of weak wave operator can be shown by a relatively simple argument, cf.~\cite[Theorem 5.3.2]{Ya}.
Further, by Theorem \ref{T: if w-WO exists and ... then s-WO exists} existence of the weak wave operator and the multiplicative property
imply that the strong wave operator $\sW_\pm$ exists
and coincides with the wave operator as defined in (\ref{F: def of WO pm}).
\end{proof}

\begin{rems} \label{R: W pm is independent} \rm The operator $\sW_\pm(H_r,H_0)$ acts on $\hilb,$ while the operator $W_\pm(H_r,H_0)$
acts on the direct integral $\euH.$
In Theorem \ref{T: time dep-t WO} by $W_\pm(H_r,H_0)$
one, of course, means the operator
$$
  \euE^*(H_r)W_\pm(H_r,H_0)\euE(H_0) \colon \hilb \to \hilb.
$$
Theorem \ref{T: time dep-t WO}, in particular, shows that
the operators $W_\pm(H_r,H_0)$ are independent from the choice of the frame $F$
in the sense that the operators $\euE^*(H_r)W_\pm(H_r,H_0)\euE(H_0)$ are
independent from $F.$
%Indeed, for any $f \in \hilb_1$ and any $g \in \hilb_1$
%\begin{equation*}
%  \begin{split}
%    & \scal{f}{\euE^*(H_r)W_\pm(H_r,H_0)\euE(H_0)g}_\hilb
%    \\ & \qquad\qquad = \scal{\euE(H_r)f}{W_\pm(H_r,H_0)\euE(H_0)g}_\hilb
%    \\ & \qquad\qquad = \int_{\LambHF{H_0}\cap \LambHF{H_r}} \scal{\euE_\lambda(H_r)f}{w_\pm(\lambda; H_r,H_0)\euE_\lambda(H_0)g}_{\hlambda(H_r)}\,d\lambda
%    \\ & \qquad\qquad = \int_{\LambHF{H_0}\cap \LambHF{H_r}} \scal{f}{\mathfrak a_\pm(\lambda; H_r,H_0) g}_{1,-1}\,d\lambda     \mathcomment{\ref{F: def of w +-}}
%  \end{split}
%\end{equation*}
\end{rems}

\section{The scattering matrix}

In~\cite{Ya} the scattering matrix~$S(\lambda;H_1,H_0)$ is defined
via a direct integral decomposition of the scattering operator $\mathbf S (H_1,H_0).$
In our approach, we first define~$S(\lambda;H_1,H_0),$
while the scattering operator $\mathbf S(H_1,H_0)$ is defined
as a direct integral of~$S(\lambda;H_1,H_0).$
\begin{defn}
  For $\lambda \in \LambHF{H_r} \cap \LambHF{H_0}$ we define the scattering matrix~$S(\lambda;H_r,H_0)$
  by the formula
   \begin{equation} \label{F: def-n of SM}
      S(\lambda;H_r,H_0) := w_+^*(\lambda;H_r,H_0) w_-(\lambda;H_r,H_0).
   \end{equation}
\end{defn}

We list some properties of the scattering matrix which immediately follow from this definition
(cf.~\cite[Chapter 7]{Ya}).
\begin{thm} \label{T: properties of SM}
Let $\set{H_r}$ be a path of operators which satisfy Assumption \ref{A: assumption on Hr}.
Let $\lambda \in \LambHF{H_0}$ and $r \notin R(\lambda; \set{H_r}, F).$
The scattering matrix~$S(\lambda;H_r,H_0)$
possesses the following properties.
  \begin{enumerate}
     \item[(i)] $S(\lambda;H_r,H_0) \colon \hlambdao \to \hlambdao$ is a unitary operator.
     \item[(ii)] For any $h$ such that $r+h \notin R(\lambda; \set{H_r}, F)$ the equality
     $$
       S(\lambda; H_{r+h},H_0) = w_+^*(\lambda;H_r,H_0)S(\lambda; H_{r+h},H_r)w_-(\lambda;H_r,H_0)
     $$
     holds.
     \item[(iii)]
     For any $h$ such that $r+h \notin R(\lambda; \set{H_r}, F)$ the equality
     \begin{equation*}
       S(\lambda; H_{r+h},H_0) = w_+^*(\lambda;H_r,H_0)S(\lambda; H_{r+h},H_r)w_+(\lambda;H_r,H_0) S(\lambda;H_r,H_0)
     \end{equation*}
     holds.
  \end{enumerate}
\end{thm}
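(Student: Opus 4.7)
The three assertions are formal consequences of the unitarity of the wave matrices (Corollary \ref{C: wave matrix is unitary}), of the multiplicative property (Theorem \ref{T: mult-ive property of w pm}), and of the relation $w_\pm^*(\lambda;H_r,H_0)=w_\pm(\lambda;H_0,H_r).$ There is no new analytic content; the whole proof is algebraic manipulation of operators between the fiber Hilbert spaces $\hlambdao$ and $\hlambdar,$ so the only thing to verify at each step is that every composition is taken between matching fibers and that the relevant $r$'s avoid the resonance set $R(\lambda;\set{H_r},F).$

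For (i), I would observe that, by Corollary \ref{C: wave matrix is unitary}, both $w_-(\lambda;H_r,H_0)\colon\hlambdao\to\hlambdar$ and $w_+(\lambda;H_r,H_0)\colon\hlambdao\to\hlambdar$ are unitary, hence so is $w_+^*(\lambda;H_r,H_0)\colon\hlambdar\to\hlambdao;$ the product (\ref{F: def-n of SM}) is therefore a unitary operator $\hlambdao\to\hlambdao.$

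For (ii), I would start from the definition $S(\lambda;H_{r+h},H_0)=w_+^*(\lambda;H_{r+h},H_0)w_-(\lambda;H_{r+h},H_0)$ and apply Theorem \ref{T: mult-ive property of w pm} to both factors with the intermediate point $H_r$ (which is legitimate because $r_0=0,\ r_1=r,\ r_2=r+h$ all lie outside $R(\lambda;\set{H_r},F)$). This gives
\[
  w_-(\lambda;H_{r+h},H_0)=w_-(\lambda;H_{r+h},H_r)\,w_-(\lambda;H_r,H_0),
\]
\[
  w_+^*(\lambda;H_{r+h},H_0)=w_+^*(\lambda;H_r,H_0)\,w_+^*(\lambda;H_{r+h},H_r),
\]
after taking adjoints in the multiplicative identity for $w_+.$ Multiplying and recognising the middle two factors as $S(\lambda;H_{r+h},H_r)$ yields (ii).

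For (iii), I would insert the identity $w_+(\lambda;H_r,H_0)w_+^*(\lambda;H_r,H_0)=\mathrm{id}_{\hlambdar}$ (which holds by unitarity of $w_+$) between $S(\lambda;H_{r+h},H_r)$ and $w_-(\lambda;H_r,H_0)$ in formula (ii); then the rightmost two factors $w_+^*(\lambda;H_r,H_0)w_-(\lambda;H_r,H_0)$ collapse to $S(\lambda;H_r,H_0)$ by definition (\ref{F: def-n of SM}), giving the claimed factorisation. The only thing to keep track of is that the newly inserted identity lives on $\hlambdar,$ so the composition is well-defined; there is no genuine obstacle in this theorem.
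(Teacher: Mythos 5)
Your proposal is correct and is essentially identical to the paper's proof: (i) from unitarity of the wave matrices, (ii) by applying the multiplicative property to both factors in the definition of $S(\lambda;H_{r+h},H_0)$ with intermediate point $H_r,$ and (iii) by inserting $w_+(\lambda;H_r,H_0)w_+^*(\lambda;H_r,H_0)=\mathrm{id}_{\hlambdar}$ into (ii) and recognising the trailing pair as $S(\lambda;H_r,H_0).$ No further comment is needed.
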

\begin{proof}
  (i) By Corollary \ref{C: wave matrix is unitary} the operators $w_+^*(\lambda;H_r,H_0)$ and $w_-(\lambda;H_r,H_0)$ are unitary.
  It follows that their product $S(\lambda;H_r,H_0) = w_+^*(\lambda;H_r,H_0) w_-(\lambda;H_r,H_0)$ is also unitary. \\
  (ii) From the definition of the scattering matrix (\ref{F: def-n of SM}) and multiplicative property of the wave matrix
    (Theorem \ref{T: mult-ive property of w pm}) it follows that
  \begin{equation*}
    \begin{split}
    S(\lambda; H_{r+h},H_0) & = w_+^*(\lambda;H_{r+h},H_0) w_-(\lambda;H_{r+h},H_0)
     \\ & = (w_+(\lambda;H_{r+h},H_r)w_+(\lambda;H_{r},H_0))^* w_-(\lambda;H_{r+h},H_r)w_-(\lambda;H_{r},H_0)
     \\ & = w_+(\lambda;H_{r},H_0)^*w_+(\lambda;H_{r+h},H_r)^* w_-(\lambda;H_{r+h},H_r)w_-(\lambda;H_{r},H_0)
     \\ & = w_+(\lambda;H_{r},H_0)^*S(\lambda; H_{r+h},H_r)w_-(\lambda;H_{r},H_0).
    \end{split}
  \end{equation*}
Note that since $r, r+h \notin R(\lambda; \set{H_r}, F),$ all the operators above make sense.
\\  (iii) It follows from (ii) and unitarity of the wave matrix (Corollary \ref{C: wave matrix is unitary}), that
  \begin{equation*}
    \begin{split}
    S(\lambda; H_{r+h},H_0) & = w_+(\lambda;H_{r},H_0)^*S(\lambda; H_{r+h},H_r)w_-(\lambda;H_{r},H_0)
      \\ & = w_+(\lambda;H_{r},H_0)^*S(\lambda; H_{r+h},H_r)w_+(\lambda;H_{r},H_0) (w_+^*(\lambda;H_{r},H_0) w_-(\lambda;H_{r},H_0))
      \\ & = w_+^*(\lambda;H_r,H_0)S(\lambda; H_{r+h},H_r)w_+(\lambda;H_r,H_0) S(\lambda;H_r,H_0).
    \end{split}
  \end{equation*}
The proof is complete.
\end{proof}
We define the scattering operator by the formula
\begin{equation} \label{F: def-n of SO}
  \mathbf S(H_r,H_0) := \int^\oplus_{\LambHF{H_r} \cap \LambHF{H_0}} S(\lambda;H_r,H_0)\,d\lambda.
\end{equation}
%Note that the scattering operator thus defined does not depend on the frame operator~$F.$
It follows from the definition of the wave operator (\ref{F: def of WO pm}) and
the definition of the scattering matrix that
$$
  \mathbf S(H_r,H_0) = W_+^*(H_r,H_0)W_-(H_r,H_0),
$$
which is a usual definition of the scattering operator.

By Remark \ref{R: W pm is independent}, the definition of the scattering operator (\ref{F: def-n of SO})
is independent from the choice of the frame operator $F.$
\begin{thm}~\cite[Chapter 7]{Ya}
The scattering operator (\ref{F: def-n of SO}) has the following properties
\begin{enumerate}
  \item[(i)] The scattering operator $\mathbf S(H_r,H_0) \colon \hilba(H_0) \to \hilba(H_0)$ is unitary.
  \item[(ii)] The equality
     $$
       \mathbf S(H_{r+h},H_0) = W_+(H_0,H_r)\mathbf S(H_{r+h},H_r)W_-(H_r,H_0)
     $$
     holds.
  \item[(iii)] The equality
  \begin{equation*}
    \mathbf S(H_{r+h},H_0) = W_+(H_0,H_r)\mathbf S(H_{r+h},H_r)W_+(H_r,H_0) \mathbf S(H_r,H_0)
  \end{equation*}
  holds.
  \item[(iv)]  The equality
  $$
    \mathbf S(H_r,H_0)H_0 = H_0 \mathbf S(H_r,H_0)
  $$
  holds.
\end{enumerate}
\end{thm}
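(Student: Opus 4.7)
The plan is to reduce each of (i)--(iv) to its fiberwise counterpart, using the direct integral representation \eqref{F: def-n of SO} of $\mathbf{S}(H_r,H_0)$ together with the analogous representation \eqref{F: def of WO pm} of the wave operators. The key point is that, by Lemma~\ref{L: R is discrete}, for any fixed $\lambda \in \LambHF{H_0}$ the resonance set $R(\lambda;\set{H_r},F)$ is discrete, hence countable; interchanging the order of the Fubini-style quantifiers, for a.e.\ $\lambda \in \LambHF{H_0}$ we will have $r, r+h \notin R(\lambda;\set{H_r},F)$, so the fiberwise results of Theorem~\ref{T: properties of SM} apply almost everywhere. (More carefully: for any countable collection of values of the coupling constant, the union of the corresponding resonance sets $\{\lambda : r \in R(\lambda;\set{H_r},F)\}$ is still a null set in $\lambda$, by Fubini applied to the exceptional set of Theorem~\ref{T: R(H0,G) is discrete}.)

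For (i), I would observe that by Theorem~\ref{T: properties of SM}(i), $S(\lambda;H_r,H_0)$ is unitary on $\hlambda(H_0)$ for a.e.\ $\lambda \in \LambHF{H_r}\cap \LambHF{H_0}$; the direct integral of a measurable family of fiberwise unitaries is unitary on $\euH(H_0) \cong \hilba(H_0)$ (via Proposition~\ref{P: euE is unitary}). For (ii) and (iii), I would apply Theorem~\ref{T: properties of SM}(ii),(iii) pointwise and then integrate: the direct integral decompositions of both sides agree fiberwise a.e., and the general identity
\[
  \int^\oplus A(\lambda)\,d\lambda \cdot \int^\oplus B(\lambda)\,d\lambda = \int^\oplus A(\lambda)B(\lambda)\,d\lambda
\]
converts the fiber identity into the operator identity. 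For (iii), one must additionally recall that $W_\pm^*(H_r,H_0)=W_\pm(H_0,H_r)$ (Theorem~\ref{T: properties of WO}(iii)), so that the fiber identity $S(\lambda;H_{r+h},H_0)=w_+^*(\lambda;H_r,H_0)S(\lambda;H_{r+h},H_r)w_+(\lambda;H_r,H_0)S(\lambda;H_r,H_0)$ integrates to the desired $W_+(H_0,H_r)\mathbf{S}(H_{r+h},H_r)W_+(H_r,H_0)\mathbf{S}(H_r,H_0)$.

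Property (iv) is essentially a corollary of intertwining: by the formula \eqref{F: Hr W= W H0}, both $W_+(H_r,H_0)$ and $W_-(H_r,H_0)$ intertwine $H_0$ and $H_r$, so
\[
  H_0\, W_\pm^*(H_r,H_0) = W_\pm^*(H_r,H_0)\, H_r,
\]
and therefore
\[
  H_0\, \mathbf{S}(H_r,H_0) = H_0\, W_+^*(H_r,H_0)W_-(H_r,H_0)
  = W_+^*(H_r,H_0)H_r W_-(H_r,H_0)
  = \mathbf{S}(H_r,H_0)\, H_0.
\]
Alternatively and perhaps more cleanly, I would argue on the $\euH$ side: $\mathbf{S}(H_r,H_0)$ is the direct integral of unitaries acting fiberwise on $\hlambda(H_0)$, and the diagonalization Theorem~\ref{T: H0 is diagonal} says that in the $\euH$-picture $H_0^{(a)}$ is multiplication by $\lambda$; any fiberwise operator commutes with multiplication by $\lambda$, which gives (iv) at once.

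The only mild obstacle, which is not really serious, is making the almost-everywhere qualifiers precise when composing direct integrals indexed by the two different full sets $\LambHF{H_r}\cap \LambHF{H_0}$ and $\LambHF{H_{r+h}}\cap \LambHF{H_0}$ in (ii) and (iii); this is resolved by intersecting over the (finitely many, and full) relevant sets and noting that $\{\lambda : r\in R(\lambda;\set{H_r},F)\}$ is null in $\lambda$ for each fixed $r$ of interest, so the integrands agree on a set of full measure and the direct integral identities follow. No new analytic input is needed beyond results already established in the paper.
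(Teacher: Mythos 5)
Your proposal is correct and follows essentially the same route as the paper: each of (i)--(iii) is obtained by integrating the fiberwise identities of Theorem \ref{T: properties of SM} over the direct integral, and (iv) comes from the fact that $\mathbf S(H_r,H_0)$ is a decomposable operator in the spectral representation diagonalizing $H_0^{(a)}$ (Theorem \ref{T: euE h(H0)f=...}, equivalently Theorem \ref{T: H0 is diagonal}), which is your ``alternative'' argument and the one the paper actually cites. One small side-remark: the fact that $\{\lambda : r \in R(\lambda;\set{H_r},F)\}$ is a Lebesgue-null set for each fixed $r$ does not require a Fubini argument applied to the discreteness in $r$ --- it is immediate from Proposition \ref{P: Lambda(H0) has full meas} applied to $H_r$, since within $\LambHF{H_0}$ this set is precisely $\LambHF{H_0}\setminus\LambHF{H_r}$.
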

\begin{proof} (i) This follows from Theorem \ref{T: properties of SM}(i). \\
(ii) This follows from Theorem \ref{T: properties of SM}(ii). \\
(iii) This follows from Theorem \ref{T: properties of SM}(iii). \\
(iv) follows from the definition of the scattering operator (\ref{F: def-n of SM})
and Theorem \ref{T: euE h(H0)f=...}.
\end{proof}

\subsection{Stationary formula for the scattering matrix}
The aim of this subsection is to prove the stationary formula for the scattering matrix.

\begin{lemma} \label{L: for stat. formula} If $\lambda \in \LambHF{H_r} \cap \LambHF{H_0},$ then
\begin{equation} %\label{F: }
  \begin{split}
    \brs{1 + R_{\lambda - i0}(H_0)V_r} & \cdot \Im R_{\lambda + i0}(H_r) \cdot \brs{1+ V_rR_{\lambda - i0}(H_0)}
    \\ & = \Im R_{\lambda + i0}(H_0)\SqBrs{(1 - 2i V_r[1 - R_{\lambda+i0}(H_r)V_r]) \Im R_{\lambda + i0}(H_0)}
  \end{split}
\end{equation}
as equality in~$\clL_1(\hilb_1,\hilb_{-1}).$
\end{lemma}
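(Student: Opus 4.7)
The plan is to reduce the stated identity to a short algebraic calculation that combines the two equivalent forms of $\pi\mathfrak a_-(\lambda;H_r,H_0)$ provided by Proposition~\ref{P: mathfrak a=(1-RV)Im R} with the second resolvent identity. Throughout I abbreviate $R^\pm_j := R_{\lambda\pm i0}(H_j)$ for $j\in\{0,r\}$ and regard every operator as a member of an appropriate class among $\clB(\hilb_{-1})$, $\clB(\hilb_1)$, $\clL_2(\hilb_1,\hilb_{-1})$, $\clL_1(\hilb_1,\hilb_{-1})$, following the convention of the preceding subsection.

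First I would apply Proposition~\ref{P: mathfrak a=(1-RV)Im R}, which gives
\[
\Im R^+_r\,(1+V_r R^-_0) = (1-R^+_r V_r)\,\Im R^+_0.
\]
This transforms the left-hand side into $(1+R^-_0 V_r)(1-R^+_r V_r)\,\Im R^+_0$, so the problem is reduced to simplifying the leading operator product $(1+R^-_0 V_r)(1-R^+_r V_r)$.

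Next I would exploit the second resolvent identity $R^+_0 - R^+_r = R^+_0 V_r R^+_r$, which on multiplying by $V_r$ on the right yields $R^+_0 V_r R^+_r V_r = (R^+_0 - R^+_r)V_r = R^+_0 V_r - R^+_r V_r$; expanding $(1+R^+_0 V_r)(1-R^+_r V_r)$ then shows
\[
(1+R^+_0 V_r)(1-R^+_r V_r) = 1.
\]
Splitting $R^-_0 = R^+_0 - 2i\,\Im R^+_0$, so that $1+R^-_0 V_r = (1+R^+_0 V_r) - 2i\,\Im R^+_0\, V_r$, and multiplying by $(1-R^+_r V_r)$ on the right gives
\[
(1+R^-_0 V_r)(1-R^+_r V_r) = 1 - 2i\,\Im R^+_0\, V_r(1-R^+_r V_r).
\]
Multiplying the last equality on the right by $\Im R^+_0$ delivers the right-hand side of the stated identity, in the form $\Im R^+_0 - 2i\,\Im R^+_0\, V_r(1-R^+_r V_r)\,\Im R^+_0$.

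The one remaining thing is to verify that each manipulation is legitimate in the topology of $\clL_1(\hilb_1,\hilb_{-1})$. By the hypothesis $\lambda\in\LambHF{H_0}\cap\LambHF{H_r}$, Lemmas~\ref{L: R(lambda+i0) is H.-S.} and~\ref{L: Im R(lambda+i0) is Trace-class} place $R^\pm_0,R^\pm_r$ in $\clL_2(\hilb_1,\hilb_{-1})$ and $\Im R^+_0,\Im R^+_r$ in $\clL_1(\hilb_1,\hilb_{-1})$; together with the boundedness of $V_r\colon\hilb_{-1}\to\hilb_1$ from (\ref{F: V hilb(-1) to hilb(1)}), every product $R^\pm_j V_r$ is bounded on $\hilb_{-1}$ and every $V_r R^\pm_j$ is bounded on $\hilb_1$, so each intermediate expression in the chain above is well-defined as an operator $\hilb_1\to\hilb_{-1}$, and the trace-class factor $\Im R^+_0$ at the right end guarantees membership in $\clL_1(\hilb_1,\hilb_{-1})$. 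I do not foresee any deep obstacle; the only delicate point is the bookkeeping of domains and codomains among $\hilb_1,\hilb,\hilb_{-1}$ to justify rewrites such as $R^-_0 = R^+_0 - 2i\,\Im R^+_0$ as identities between operators with the correct source and target spaces.
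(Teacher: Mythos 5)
Your proof is correct, and the expression you arrive at, $\Im R_0^+ - 2i\,\Im R_0^+ V_r(1-R_r^+V_r)\Im R_0^+$, agrees with the paper's own intermediate formula $\Im R_0\SqBrs{1- 2iV_r(1 - R_r V_r) \Im R_0}$, which is the intended reading of the lemma's display (the extra enclosing parenthesis in the statement is a typographical slip). The paper and you use exactly the same ingredients --- the two forms of $\pi\mathfrak a_\pm$ in Proposition~\ref{P: mathfrak a=(1-RV)Im R} and the second resolvent identity --- but organize the algebra dually. The paper applies the proposition to the two \emph{left-hand} factors, writing $(1+R_0^*V_r)\Im R_r = \Im R_0(1-V_rR_r)$, and then verifies the factorisation $(1-V_rR_r)(1+V_rR_0^*) = 1 - 2iV_r(1-R_rV_r)\Im R_0$ by expanding $\Im R_0 = \tfrac{1}{2i}(R_0-R_0^*)$ and using $(1-R_rV_r)R_0 = R_r$. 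You apply the proposition to the two \emph{right-hand} factors, reducing the left side to $(1+R_0^-V_r)(1-R_r^+V_r)\Im R_0^+$, and then split $R_0^- = R_0^+ - 2i\Im R_0^+$ while invoking the crisp inverse identity $(1+R_0^+V_r)(1-R_r^+V_r)=1$. The two calculations are adjoints of one another and of comparable length, but your isolation of the inverse relation $(1+R_0 V_r)(1-R_rV_r)=1$ is conceptually cleaner, since it names the structural fact that drives the cancellation rather than rediscovering it inside a longer expansion. Your bookkeeping of classes and domains is also adequate and matches what the paper leaves implicit.
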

\begin{proof}
We write
$$
  R_0 = R_{\lambda + i0}(H_0), \quad R^*_0 = R_{\lambda - i0}(H_0), \quad R_r = R_{\lambda + i0}(H_r), \quad R_r^* = R_{\lambda - i0}(H_r).
$$
Then the last formula becomes
\begin{equation} \label{F: formula X}
    \brs{1 + R_0^*V_r} \cdot \Im R_r \cdot \brs{1+ V_rR_0^*}
         = \Im R_0 \SqBrs{1- 2iV_r(1 - R_r V_r) \Im R_0}.
\end{equation}
% We note some useful formulae for readers convenience (they are well-known, easy to check and are consequences of one another)
Note that by the second resolvent identity
\begin{gather} \label{F: Rr=(1-rVRr)R0}
%   R_0 V R_r = R_r V R_0, \\
%   R_r - R_0 = - rR_0 V R_r = - rR_r V R_0, \\
%   R_r = R_0(1-rVR_r), \quad
  R_r = (1-R_rV_r)R_0.
%   R_0 = R_r(1+V_rR_0), \quad R_0 = (1+rR_0V)R_r, \\
%   R^*_0 = (1+rR^*_0V)R^*_r, \quad R^*_0 = R^*_r(1+V_rR^*_0), \\
\end{gather}
Using (\ref{F: mathfrak a =(1-RV) Im R}), one has % we transform part of the left hand side of the formula (\ref{F: formula X})
\begin{equation*}
%  \begin{split}
    \brs{1 + R_0^*V_r} \Im R_r
%     \\ & = \brs{1 + r R_0^*V} (R_r-R_r^*)
%     \\ & = \brs{1 + r R_0^*V} R_r - \brs{1 + r R_0^*V} R_r^*
%     \\ & = R_r + r R_0^*V R_r - R_0^*
%     \\ & = R_0 - rR_0VR_r + r R_0^*V R_r - R_0^*
%     \\ & = 2i \Im R_0 - 2ir \Im R_0 \cdot VR_r.
    = \Im R_0(1 - V_r R_r).
%  \end{split}
\end{equation*}
% Idiot! This is just (\ref{F: mathfrak a =(1-RV) Im R})!
Further, using (\ref{F: Rr=(1-rVRr)R0}),
\begin{equation*}
  \begin{split}
     1 - 2iV_r(1-R_rV_r)\Im R_0 & = 1 - V_r(1-R_r V_r)(R_0-R_0^*)
      \\ & = 1 - V_r(1-R_r V_r)R_0+V_r(1-R_r V_r)R_0^*
      \\ & = 1 - V_r R_r+V_r(1-R_rV_r)R_0^*
      \\ & = (1 - V_r R_r) (1+ V_r R_0^*).
  \end{split}
\end{equation*}
Combining the last two formulae completes the proof.
% It is easy to check that
% \begin{equation*}
%   \begin{split}
%      (1 - V_r R_r) (1+ V_r R_0^*) = 1 - 2ir(V-V_r R_rV)\Im R_0.
%   \end{split}
% \end{equation*}
\end{proof}

In the following theorem, we establish for trace-class perturbations well-known stationary formula for the scattering matrix
(cf.~\cite[Theorems 5.5.3, 5.5.4, 5.7.1]{Ya}).

\begin{thm} \label{T: stationary rep-n for SM}
For any $\lambda \in \LambHF{H_r} \cap \LambHF{H_0}$ the stationary formula for the scattering matrix
\begin{equation} \label{F: stationary rep-n for SM}
  S(\lambda;H_r,H_0) = 1_\lambda - 2\pi i \euE_\lambda(H_0) V_r(1 + R_{\lambda+i0}(H_0)V_r)^{-1} \euE^\diamondsuit_\lambda(H_0)
     % 1 - 2\pi i Z(\lambda; G) (1 + J_r T_{\lambda+i0}(H_0))^{-1}J_r Z^*(\lambda; G)
\end{equation}
holds.
\end{thm}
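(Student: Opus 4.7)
The plan is to start from the definition $S(\lambda;H_r,H_0) = w_+^*(\lambda;H_r,H_0)\,w_-(\lambda;H_r,H_0)$ and rewrite
\[
  S(\lambda;H_r,H_0) - 1_\lambda = w_+^*(\lambda;H_r,H_0)\,\bigl(w_-(\lambda;H_r,H_0) - w_+(\lambda;H_r,H_0)\bigr),
\]
reducing the statement to computing $(w_- - w_+)\euE_\lambda(H_0)g$ and then acting by $w_+^*$. I will write $R_0 = R_{\lambda+i0}(H_0)$, $R_0^* = R_{\lambda-i0}(H_0)$, $R_r = R_{\lambda+i0}(H_r)$, $V = V_r$, $\euE_0 = \euE_\lambda(H_0)$, $\euE_r = \euE_\lambda(H_r)$.

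First I would use the second form of $\mathfrak a_\pm$ from Proposition \ref{P: mathfrak a=(1-RV)Im R}, namely $\mathfrak a_\pm(\lambda;H_r,H_0) = \frac{1}{\pi}\Im R_r \cdot (1 + V R_0^{(\pm)})$ with $R_0^{(+)}=R_0$, $R_0^{(-)}=R_0^*$, together with the identity $\frac{1}{\pi}\Im R_r = \euE_r^\diamondsuit \euE_r$ from \ref{SS: euE's}(vii). This gives, via (\ref{F: euE w pm euE = mathfrak a}),
\[
  \euE_r^\diamondsuit\,w_\pm(\lambda;H_r,H_0)\,\euE_0\,g = \euE_r^\diamondsuit \euE_r (1 + V R_0^{(\pm)})\,g, \qquad g \in \hilb_1.
\]
Since $\euE_r\hilb_1$ is dense in $\hlambdar$, the operator $\euE_r^\diamondsuit\colon\hlambdar \to \hilb_{-1}$ has trivial kernel, so it can be cancelled, yielding $w_\pm(\lambda;H_r,H_0)\euE_0 g = \euE_r(1 + VR_0^{(\pm)})g$ in $\hlambdar$ (the vector $(1+VR_0^{(\pm)})g$ lies in $\hilb_1$ by (\ref{F: V hilb(-1) to hilb(1)}) and Lemma \ref{L: R(lambda+i0) is H.-S.}). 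Subtracting and using $R_0 - R_0^* = 2i\,\Im R_0 = 2\pi i\,\euE_0^\diamondsuit\euE_0$,
\[
  (w_- - w_+)\,\euE_0\,g = -\euE_r V (R_0 - R_0^*) g = -2\pi i\,\euE_r V\,\euE_0^\diamondsuit \euE_0\,g.
\]

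Next I would apply $w_+^*(\lambda;H_r,H_0) = w_+(\lambda;H_0,H_r)$ using the same procedure for the reversed pair, where the perturbation becomes $-V$. The second form of $\mathfrak a_+$ applied to $(H_0,H_r)$ reads $\mathfrak a_+(\lambda;H_0,H_r) = \frac{1}{\pi}\Im R_0\,(1 - V R_r) = \euE_0^\diamondsuit \euE_0 (1 - VR_r)$, and cancelling $\euE_0^\diamondsuit$ gives
\[
  w_+^*(\lambda;H_r,H_0)\,\euE_r h = \euE_0(1 - VR_r)\,h, \qquad h \in \hilb_1.
\]
Applying this with $h = V\euE_0^\diamondsuit\euE_0 g \in \hilb_1$ yields
\[
  (S(\lambda;H_r,H_0)-1_\lambda)\,\euE_0\,g = -2\pi i\,\euE_0\,(1 - VR_r)\,V\,\euE_0^\diamondsuit \euE_0\,g.
\]

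Finally, a short manipulation with the second resolvent identity converts the middle factor: from $R_r = (1+R_0V)^{-1}R_0$ one gets $R_rV = 1 - (1+R_0V)^{-1}$, so $VR_rV = V - V(1+R_0V)^{-1}$, and hence $(1 - VR_r)V = V(1+R_0V)^{-1}$. Density of $\euE_0\hilb_1$ in $\hlambdao$ then promotes the identity to one between bounded operators on $\hlambdao$, producing exactly (\ref{F: stationary rep-n for SM}). The only delicate points are verifying that $(1+R_0V)^{-1}$ is well-defined as an operator $\hilb_1\to\hilb_1$ (which follows from $r \notin R(\lambda;\{H_r\},F)$ via Lemmas \ref{L: 1+rJT(z) is invertible}--\ref{L: Br exists iff ...} and Proposition \ref{P: if GAnG converges ...}), and keeping careful track of the rigging spaces $\hilb_{\pm 1}$ so that each product in the chain lands where it should; these are routine but need to be checked at every step.
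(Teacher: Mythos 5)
Your proof is correct, and it takes a genuinely cleaner route than the one in the paper. The paper proves the stationary formula by an $\eps$\tire approximation argument: it first rewrites the target via the second resolvent identity, then, for $f,g \in \hilb_1$, chooses $h \in \hilb_1$ so that $a := w_+(\lambda;H_r,H_0)\euE_\lambda(H_0) f - \euE_\lambda(H_r) h$ has small norm, expands $\scal{w_+\euE_0 f}{w_-\euE_0 g}$ in terms of $h$, and lets the remainder go to zero. The intertwining identity
$$
  w_\pm(\lambda;H_r,H_0)\,\euE_\lambda(H_0) = \euE_\lambda(H_r)\bigl[1 + V_r R_{\lambda\mp i0}(H_0)\bigr]
$$
is then recorded only as a corollary, observing that the remainder in that proof actually vanishes identically. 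You instead prove this intertwining identity \emph{first}, by combining~(\ref{F: euE w pm euE = mathfrak a}), the second form of $\mathfrak a_\pm$ in Proposition~\ref{P: mathfrak a=(1-RV)Im R}, the factorization $\frac1\pi\Im R_{\lambda+i0}(H_r) = \euE_\lambda^\diamondsuit(H_r)\euE_\lambda(H_r)$, and the observation that $\euE_\lambda^\diamondsuit(H_r)$ is injective (its kernel coincides with that of $\euE_\lambda^*(H_r)$, which is the orthogonal complement of the dense range $\euE_\lambda(H_r)\hilb_1$ in $\hlambdar$, composed with the isomorphism $\abs{F}^{-2}$). Once the intertwining identities for both signs and for the reversed pair $(H_0,H_r)$ are in hand, the remaining computation $(1 - V_r R_{\lambda+i0}(H_r))V_r = V_r(1 + R_{\lambda+i0}(H_0)V_r)^{-1}$ is purely algebraic and density of $\euE_\lambda(H_0)\hilb_1$ in $\hlambdao$ promotes the identity to operators. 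What your approach buys is the elimination of the approximation step and a more transparent ordering of the logical dependencies: the explicit formula for $w_\pm\euE_\lambda$ becomes the engine of the proof rather than an afterthought. One should just be explicit (as you flag at the end) that $VR_{\lambda\mp i0}(H_0)g \in \hilb_1$ so that $\euE_\lambda(H_r)$ can be applied, and that the injectivity of $\euE_\lambda^\diamondsuit(H_r)$ is the point that licenses cancellation; both are easy but load-bearing.
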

(The meaning of notation $1_\lambda$ is clear, though the subscript $\lambda$ will be often omitted).
\begin{proof} % \margcom{Proof is there, of course, but it must be rewritten}
For $\lambda \in \LambHF{H_r} \cap \LambHF{H_0},$ the second resolvent identity
$$
  R_z(H_r) - R_z(H_0) = -R_z(H_r)V_r R_z(H_0) = -R_z(H_0)V_r R_z(H_r),
$$
% it is not difficult to check the formula
% $$
%   1 - J_r T_{\lambda+i0}(H_r) = (1 + J_r T_{\lambda+i0}(H_0))^{-1},
% $$
% where $T_{\lambda+i0}(H_r) = GR_{\lambda+i0}(H_r)G^*$ is a Hilbert-Schmidt operator on~$\clK.$
% % The above equality is an equality in~$\hilb,$
% % where the factors of
% The product~$GR_{\lambda+i0}(H_r)G^*$ can be understood acting as~$\hilb \to \hilb_1 \to \hilb_{-1} \to \hilb.$
% In this case, one can write
% $$
%   Z(\lambda,G) = {\euE_\lambda}\brs{\hilb_1 \to \hlambda} G^*\brs{\clK \to \hilb_1},
% $$
% $$
%   Z^*(\lambda,G) = G\brs{\hilb_1 \to \clK}\euE_\lambda^\diamondsuit\brs{\hlambda \to \hilb_{-1}}.
% $$
implies that the stationary formula can be written as
\begin{equation} \label{F: stat. formula 3}
  % \begin{split}
    S(\lambda;H_r,H_0) %& = 1 - 2\pi i \euE_\lambda G (1 - J_r T_{\lambda+i0}(H_r))J_rG \euE^\diamondsuit_\lambda
       = 1 - 2\pi i \euE_\lambda(H_0) V_r(1 - R_{\lambda+i0}(H_r)V_r) \euE^\diamondsuit_\lambda(H_0).
  % \end{split}
\end{equation}   % \margcom{I should justify usage of $\euE^\diamondsuit_\lambda.$}
It follows that it is enough to prove the equality
% So, what we need to prove is this:
$$
  w_+^*(\lambda;H_r,H_0)w_-(\lambda;H_r,H_0) = 1 - 2\pi i \euE_\lambda(H_0) V_r(1 - R_{\lambda+i0}(H_r)V_r) \euE^\diamondsuit_\lambda(H_0).
$$
% These are just some thoughts.

Since the set $\euE_\lambda(H_0) \hilb_1$ is dense in~$\hlambdao=\hlambda(H_0),$ it is enough to show that for any $f,g \in \hilb_1$
\begin{equation*}
  \begin{split}
    & \scal{ \euE_\lambda(H_0) f}{w_+^*(\lambda;H_r,H_0)w_-(\lambda;H_r,H_0)\euE_\lambda(H_0) g}_\hlambdao
    \\ & \mbox{ } \qquad = \scal{\euE_\lambda f}{\brs{1 - 2\pi i \euE_\lambda V_r(1 - R_{\lambda+i0}(H_r)V_r) \euE^\diamondsuit_\lambda}\euE_\lambda g}_\hlambdao.
  \end{split}
\end{equation*}
In other words, using Lemma \ref{L: for stat. formula} and (\ref{F: euE diam euE = Im R}), it is enough to show that
\begin{equation} \label{F: for Lippmann-Schwinger}
 \begin{split}
  (E) & := \scal{w_+(\lambda;H_r,H_0)\euE_\lambda(H_0) f}{w_-(\lambda;H_r,H_0)\euE_\lambda(H_0) g}_\hlambdar
      \\ & = \scal{f}{\brs{1 + R_{\lambda-i0}(H_0)V_r} \frac 1\pi \Im R_{\lambda+i0}(H_r) \brs{1+ V_r R_{\lambda-i0}(H_0)}g}_{1,-1}.
 \end{split}
\end{equation}
% For all $f,g \in \hilb_1$ (or $f \in ...$ to what?)
% $$
%   \scal{f}{RHS g} = \scal{\euE_\lambda(H_0) f}{S_{st}(\lambda) \euE_\lambda(H_0) g}.
% $$
% So, we need to show that
% $$
%   \scal{f}{LHS g} = \scal{\euE_\lambda(H_0) f}{w_+^*w_- \euE_\lambda(H_0) g} =:(E).
% $
Let $\eps > 0.$
Since % $w_+(\lambda;H_r,H_0)\euE_\lambda(H_0)\hilb_1$ and
$\euE_\lambda(H_r)\hilb_1$ is dense in~$\hlambdar$ (see (\ref{F: def of hlambda})),
there exists $h \in \hilb_1$ such that the vector
\begin{equation} \label{F: a := w+...}
  a := w_+(\lambda;H_r,H_0)\euE_\lambda(H_0) f - \euE_\lambda(H_r) h \ \in \ \hlambdar
\end{equation}
has norm less than $\eps.$
Definition (\ref{F: def of w +-}) of $w_-(\lambda;H_r,H_0)$ implies that
\begin{equation*}
 \begin{split}
  (E)   & =  \scal{w_+(\lambda;H_r,H_0)\euE_\lambda(H_0) f}{w_-(\lambda;H_r,H_0) \euE_\lambda(H_0) g}_\hlambdar
    \\  & =  \scal{\euE_\lambda(H_r) h+a}{w_-(\lambda;H_r,H_0) \euE_\lambda(H_0) g}_\hlambdar
    \\  & =  \scal{h}{\mathfrak a_-(\lambda;H_r,H_0)(\lambda)g}_{1,-1}   +   \scal{a}{w_-(\lambda;H_r,H_0) \euE_\lambda(H_0) g}_\hlambdar.
 \end{split}
\end{equation*}
So, by the second equality of (\ref{F: mathfrak a =(1-RV) Im R})
$$
  (E) = \scal{h}{\frac 1\pi \Im R_{\lambda+i0}(H_r)[1+V_r R_{\lambda-i0}(H_0)]g}   +   \scal{a}{w_-(\lambda;H_r,H_0) \euE_\lambda(H_0) g}.
$$
Further, by (\ref{F: euE diam euE = Im R}) and (\ref{F: a := w+...}),
\begin{equation*}
 \begin{split}
  (E) & = \scal{\euE_\lambda(H_r) h}{\euE_\lambda(H_r)[1+V_rR_{\lambda-i0}(H_0)]g}
      \\ & \qquad +   \scal{a}{w_-(\lambda;H_r,H_0) \euE_\lambda(H_0) g}
      \\ & = \scal{w_+(\lambda;H_r,H_0)\euE_\lambda(H_0) f - a}{\euE_\lambda(H_r)[1+V_rR_{\lambda-i0}(H_0)]g}
      \\ & \qquad +   \scal{a}{w_-(\lambda;H_r,H_0) \euE_\lambda(H_0) g}
      \\ & = \scal{\euE_\lambda(H_0) f}{w_+(\lambda;H_0,H_r)\euE_\lambda(H_r)[1+V_rR_{\lambda-i0}(H_0)]g}
      \\ & \qquad - \scal{a}{\euE_\lambda(H_r)[1+V_rR_{\lambda-i0}(H_0)]g}
      +   \scal{a}{w_-(\lambda;H_r,H_0) \euE_\lambda(H_0) g}.
 \end{split}
\end{equation*}
By definition (\ref{F: def of w +-}) of $w_+(\lambda;H_r,H_0),$ it follows that
$$
  (E) = \scal{f}{ \mathfrak a_+(\lambda;H_0,H_r) [1+V_rR_{\lambda-i0}(H_0)]g} + \text{remainder},
$$
where
$$
  \text{remainder} := \scal{a}{w_-(\lambda;H_r,H_0) \euE_\lambda(H_0) g - \euE_\lambda(H_r)[1+V_rR_{\lambda-i0}(H_0)]g}.
$$
By the first equality of (\ref{F: mathfrak a =(1-RV) Im R}),
$$
  (E) = \scal{f}{[1 + R_{\lambda-i0}(H_0)V_r] \frac 1\pi \Im R_{\lambda+i0}(H_r) [1+V_rR_{\lambda-i0}(H_0)]g} + \text{remainder}.
$$
Since the norm of the remainder term can be made arbitrarily small, it follows that
$$
  (E) = \scal{f}{[1 + R_{\lambda-i0}(H_0)V_r] \frac 1\pi \Im R_{\lambda+i0}(H_r) [1+V_rR_{\lambda-i0}(H_0)]g}.
$$
The proof is complete.
\end{proof}

As it can be seen from the proof, the remainder term in the proof of the last theorem
is actually equal to zero and so it does not depend on a choice of the vector $h \in \hilb_1;$
that is, for any $f,g \in \hilb$
$$
  \scal{w_+(\lambda;H_r,H_0)\euE_\lambda(H_0) f}{w_-(\lambda;H_r,H_0) \euE_\lambda(H_0) g - \euE_\lambda(H_r)[1+V_rR_{\lambda-i0}(H_0)]g} = 0.
$$
Since the set $w_+(\lambda;H_r,H_0)\euE_\lambda(H_0) \hilb_1$ is dense in $\hlambda(H_r),$ it follows that
\begin{cor}
For any $\lambda \in \LambHF{H_r} \cap \LambHF{H_0}$ the following equality holds:
$$
  w_-(\lambda;H_r,H_0) \euE_\lambda(H_0) = \euE_\lambda(H_r)[1+V_rR_{\lambda-i0}(H_0)].
$$
\end{cor}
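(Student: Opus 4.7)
The plan is to extract the corollary directly from the decomposition used in the proof of Theorem \ref{T: stationary rep-n for SM}. Recall that in that proof, for arbitrary $f, g \in \hilb_1$ and any auxiliary $h \in \hilb_1$, setting
$$
  a := w_+(\lambda;H_r,H_0)\euE_\lambda(H_0) f - \euE_\lambda(H_r) h \ \in \ \hlambdar,
$$
the quantity $(E) = \scal{w_+(\lambda;H_r,H_0)\euE_\lambda(H_0) f}{w_-(\lambda;H_r,H_0) \euE_\lambda(H_0) g}_{\hlambdar}$ was written as the sum of a ``main term'' (which, via Lemma \ref{L: for stat. formula} and (\ref{F: euE diam euE = Im R}), reassembles into the right-hand side of the stationary formula) and the ``remainder''
$$
  \scal{a}{w_-(\lambda;H_r,H_0) \euE_\lambda(H_0) g - \euE_\lambda(H_r)[1+V_rR_{\lambda-i0}(H_0)]g}_{\hlambdar}.
$$

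Next, I would observe that Theorem \ref{T: stationary rep-n for SM} has already established the stationary formula. This forces $(E)$ to coincide \emph{exactly} with the main term (not merely up to an arbitrarily small error); consequently, the remainder must be identically zero for every choice of $h \in \hilb_1$. In other words, for all $f, g, h \in \hilb_1$,
$$
  \scal{w_+(\lambda;H_r,H_0)\euE_\lambda(H_0) f - \euE_\lambda(H_r) h}{\,w_-(\lambda;H_r,H_0)\euE_\lambda(H_0) g - \euE_\lambda(H_r)[1+V_rR_{\lambda-i0}(H_0)]g}_{\hlambdar} = 0.
$$

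Finally, I would vary $h$ to extract the desired pointwise equality. By the definition (\ref{F: def of hlambda}) of $\hlambdar$, the set $\euE_\lambda(H_r) \hilb_1$ is dense in $\hlambdar$, so as $h$ ranges over $\hilb_1$ (with $f$ fixed), the vector $a$ sweeps out a dense subset of $\hlambdar$. Vanishing of the above inner product on a dense set forces the second factor to be the zero vector, i.e.
$$
  w_-(\lambda;H_r,H_0) \euE_\lambda(H_0) g = \euE_\lambda(H_r)[1+V_rR_{\lambda-i0}(H_0)]g
$$
for every $g \in \hilb_1$, which is the claim. I do not anticipate a substantive obstacle here: the content is implicit in the proof of Theorem \ref{T: stationary rep-n for SM}, and the only conceptual step is recognizing that the previously ``approximate'' remainder is in fact forced to vanish exactly once the stationary formula is in hand.
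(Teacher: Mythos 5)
Your proof is correct and mirrors the paper's own argument: both extract the corollary by observing that, once Theorem \ref{T: stationary rep-n for SM} is in hand, the remainder term in its proof is forced to vanish identically (being independent of $h$ yet arbitrarily small), and then pass through a density argument. The only cosmetic difference is in the last step, where the paper sets $h=0$ and appeals to density of $w_+(\lambda;H_r,H_0)\euE_\lambda(H_0)\hilb_1$ (via unitarity of $w_+$), whereas you fix $f$ and vary $h$, using density of $\euE_\lambda(H_r)\hilb_1$ directly; both are equally valid.
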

%Analogous equality can be written for $w_+(\lambda;H_r,H_0),$ but we don't need either of them.
From this equality and (\ref{F: for Lippmann-Schwinger}) it also follows that
$$
  w_+(\lambda;H_r,H_0) \euE_\lambda(H_0) = \euE_\lambda(H_r)[1+V_rR_{\lambda+i0}(H_0)].
$$
These equalities are analogues of Lippmann-Schwinger equations for scattering states (see e.g. \cite{TayST}).

\begin{cor} \label{C: S(lambda) in 1+L1}
  If $\lambda \in \LambHF{H_r} \cap \LambHF{H_0},$ then~$S(\lambda;H_r,H_0) \in 1+\clL_1(\hlambdao).$
\end{cor}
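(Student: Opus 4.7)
The plan is to read the statement off the stationary formula (Theorem \ref{T: stationary rep-n for SM}), which rewrites
$$
  S(\lambda;H_r,H_0) - 1_\lambda = -2\pi i\, \euE_\lambda(H_0)\,V_r\brs{1 + R_{\lambda+i0}(H_0)V_r}^{-1}\, \euE^\diamondsuit_\lambda(H_0),
$$
and then to factor the right-hand side as the composition
$$
  \hlambdao \;\xrightarrow{\euE^\diamondsuit_\lambda(H_0)}\; \hilb_{-1} \;\xrightarrow{V_r(1+R_{\lambda+i0}(H_0)V_r)^{-1}}\; \hilb_1 \;\xrightarrow{\euE_\lambda(H_0)}\; \hlambdao.
$$
Trace-class membership then follows from the standard principle that a Hilbert-Schmidt operator sandwiching a bounded operator with another Hilbert-Schmidt operator yields a trace-class operator.

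First I would check the outer factors. By item \ref{SS: euE's}(vii), $\euE_\lambda(H_0) \colon \hilb_1 \to \hlambdao$ is Hilbert-Schmidt, so by the transfer property of the diamond conjugate noted in Subsection \ref{SSS: diamond conjugate} (namely, if $A \in \clL_p(\hilb_\alpha,\clK)$ then $A^\diamondsuit \in \clL_p(\clK,\hilb_{-\alpha})$), the operator $\euE^\diamondsuit_\lambda(H_0) \colon \hlambdao \to \hilb_{-1}$ is also Hilbert-Schmidt.

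Next I would address the middle factor $V_r(1+R_{\lambda+i0}(H_0)V_r)^{-1}$ considered from $\hilb_{-1}$ to $\hilb_1$. By (\ref{F: V hilb(-1) to hilb(1)}), $V_r \colon \hilb_{-1} \to \hilb_1$ is bounded, so it remains to show that $(1+R_{\lambda+i0}(H_0)V_r)^{-1}$ is a bounded operator on $\hilb_{-1}$. Under the conventions of Section \ref{S: wave matrix}, the composition $R_{\lambda+i0}(H_0)V_r$ maps $\hilb_{-1} \to \hilb_1 \to \hilb_{-1}$ (and is in fact compact), and it is conjugate via the frame $F$ to the operator $T_0(\lambda+i0)J_r$ on $\clK$. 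The hypothesis $\lambda \in \LambHF{H_r} \cap \LambHF{H_0}$ forces $r \notin R(\lambda;\set{H_r},F)$ by Lemma \ref{L: Br exists iff ...} (equivalently, by the last theorem of Section \ref{S: Tr(z)}), so $1 + J_r T_0(\lambda+i0)$ is invertible on $\clK$; transferring back via $|F|$ yields boundedness of $(1+R_{\lambda+i0}(H_0)V_r)^{-1}$ on $\hilb_{-1}$.

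Assembling the three factors, $S(\lambda;H_r,H_0) - 1_\lambda$ is the product $(\text{HS})\cdot(\text{bounded})\cdot(\text{HS})$, hence trace-class on $\hlambdao$, which is the claim. The only non-routine step is the verification of the middle factor's boundedness, and that reduces to an application of the resonance-set machinery already developed; everything else is bookkeeping with the rigged Hilbert space $\hilb_1 \subset \hilb \subset \hilb_{-1}$.
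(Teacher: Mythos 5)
Your proof is correct. It is essentially the same HS-bounded-HS factorization the paper uses, with a small variation in which form of the stationary formula you start from: the paper reads trace-class membership off the expanded form (\ref{F: stat. formula 3}), in which the middle factor $V_r(1 - R_{\lambda+i0}(H_r)V_r)$ is bounded $\hilb_{-1}\to\hilb_1$ on sight (each factor being either bounded or Hilbert-Schmidt between the appropriate rigged spaces), whereas you work from (\ref{F: stationary rep-n for SM}) and therefore have to argue separately — via the resonance-set characterization in Lemma~\ref{L: Br exists iff ...} and the $1+AB\leftrightarrow 1+BA$ transfer — that $(1+R_{\lambda+i0}(H_0)V_r)^{-1}$ is bounded on $\hilb_{-1}$. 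Your extra step is correct, but the paper sidesteps it by appealing to the already-derived identity $(1 + R_{\lambda+i0}(H_0)V_r)^{-1} = 1 - R_{\lambda+i0}(H_r)V_r$ that underlies (\ref{F: stat. formula 3}).
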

\begin{proof} Since $\euE_\lambda^\diamondsuit \in \clL_2(\hlambdao,\hilb_{-1}),$
$V \in \clB(\hilb_{-1},\hilb_{1}),$ $R_{\lambda+i0}(H_0) \in \clL_\infty(\hilb_{1},\hilb_{-1})$
and $\euE_\lambda \in \clL_2(\hilb_1,\hlambdao),$
this follows from (\ref{F: stat. formula 3}).
\end{proof}

%\begin{rems}
Physicists (see e.g. \cite{TayST}) write the stationary formula in a form as it looks in (\ref{F: stationary rep-n for SM}).
As it is often the case with notation used by physicists, the stationary formula, as it is written by physicists,
does not have a rigorous mathematical sense.

In the paper \cite{BE}, \Birman\ and \Entina\ created a mathematically rigorous stationary scattering theory
for trace-class perturbations. In order to give the stationary formula a rigorous meaning, they factorized the perturbation
$V$ in a form $G^*JG$ with Hilbert-Schmidt operator $G \colon \hilb \to \clK$
and a bounded operator $J\colon \clK \to \clK,$ and rewrote the stationary formula for the scattering matrix in the form (see also \cite{Ya})
\begin{equation} \label{F: stationary formula in BE form}
  S(\lambda;H_0+V,H_0) = 1 - 2\pi i Z(\lambda; G) (1 + J T_{\lambda+i0}(H_0))^{-1}J_r Z^*(\lambda; G),
\end{equation}
where
$$
  Z(\lambda;G)f = \euF(G^* f)(\lambda), \ \ T_z(H_0) = G R_{z}(H_0) G^*,
$$
and where $\euF$ is an isomorphism of the absolutely continuous (with respect to $H_0$) subspace of $\hilb$
to a direct integral of Hilbert spaces
$$
  \int_{\hat \sigma}^\oplus \hlambda\,d\rho(\lambda),
$$
such that
$$
  \euF(H_0f)(\lambda) = \lambda \euF(f)(\lambda) \ \ \text{a.e.} \ \lambda \in \mbR.
$$
Existence of such an isomorphism is a consequence of the spectral theorem. The stationary formula has been rewritten in the form
(\ref{F: stationary formula in BE form}), since all ingredients of this formula can be given a rigorous sense (see \cite{BE,Ya}):
$T_{\lambda+i0}(H_0)$ exists for a.e. $\lambda$ by Theorem \ref{T: Ya thm 6.1.9}, and, while the operator $\euF$ does not make sense at a given point $\lambda$ of the spectral line,
combined with a Hilbert-Schmidt operator $G^*$ in $Z(\lambda; G),$ it defines a bounded operator for a.e. $\lambda.$
In this way, the stationary formula is given a rigorous sense for a.e. $\lambda.$ One drawback of the classical approach of \cite{BE}
to stationary scattering theory is that it is impossible to keep track of the set of full Lebesgue measure for which the stationary formula holds
(already because the isomorphism $\clF$ is intrinsically defined for a.e. $\lambda,$ but it cannot be defined at a given point $\lambda$).

%The necessity to write the stationary formula in this form comes from the fact that in the abstract scattering theory
%the analogue of $\euE_\lambda$ is not explicitly defined, but it becomes well defined
%for a dense linear manifold of vectors $f$ from~$\hilb$ and
%for a.e. $\lambda$ in the composition
%$Z(\lambda; G)f = \euE_\lambda (G^*f)$ (cf.~\cite[\S 5.4]{Ya}).
In the approach to stationary scattering theory given in this paper, all the ingredients of the stationary formula
--- as it is written by physicists --- are given a rigorous sense; as a consequence, there is no need
to consider operators such as $Z(\lambda; G).$ The function of these operators is performed by the frame operator $F.$

One can also note here that, while factorizations such as $V = G^*JG$ have no physical meaning (at least I am not aware of them),
the background frame operator $F$ may have a physical meaning, since, as it was discussed in subsection \ref{SS: def of frame}, in the case of the Hilbert space
$L_2(M,g)$ with $(M,g)$ a Riemannian manifold, $F$ bears the same information as the Laplace operator $\Delta,$ which in its turn is determined by the metric,
that is, by gravitation\footnote{I am not a physicist, and this is a purely speculative remark}.

\begin{prop} \label{P: S(r) is continuous}
  The scattering matrix~$S(\lambda;H_r,H_0)$ is a meromorphic function of~$r$
  with values in $1+\clL_1(\hlambdao),$
  which admits analytic continuation to all resonance points of the path $\set{H_r}.$ %~$\mbR \subset \mbC^{(r)}.$
\end{prop}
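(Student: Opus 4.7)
The approach is to reduce the proposition to the analytic Fredholm alternative applied to a holomorphic family of compact operators on $\clK$, and then to invoke unitarity of the scattering matrix off the resonance set to show that the a priori poles are removable.

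First I would recast the stationary formula of Theorem \ref{T: stationary rep-n for SM} in a symmetric form living in $\clK$. Writing $V_r = F^* J_r F$ and applying the algebraic identity $X(1+YX)^{-1} = (1+XY)^{-1}X$ with $X = J_rF$ and $Y = R_{\lambda+i0}(H_0)F^*$ yields
$$V_r\brs{1+R_{\lambda+i0}(H_0)V_r}^{-1} = F^*\brs{1+J_rT_0(\lambda+i0)}^{-1}J_r F.$$
Setting $Z_\lambda := \euE_\lambda F^* \colon \clK \to \hlambdao$, the stationary formula becomes
$$S(\lambda;H_r,H_0) - 1 = -2\pi i\,Z_\lambda\,\brs{1+J_rT_0(\lambda+i0)}^{-1}\,J_r\,Z_\lambda^*.$$
Since $F^*\psi_j = \kappa_j\phi_j$ is the canonical orthonormal basis of $\hilb_1$, the operator $F^*\colon\clK \to \hilb_1$ is unitary; combined with $\euE_\lambda \in \clL_2(\hilb_1,\hlambdao)$ (item \ref{SS: euE's}(vii)), this shows $Z_\lambda \in \clL_2(\clK,\hlambdao)$. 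Consequently the displayed operator is automatically trace-class whenever the middle inverse is a bounded operator on $\clK$, which yields the inclusion $S(\lambda;H_r,H_0) \in 1+\clL_1(\hlambdao)$.

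Next I would apply the analytic Fredholm alternative locally. Fix a real $r_0$ in the interior of an analyticity interval of $r\mapsto J_r$, and let $G\subset\mbC$ be a complex neighborhood of $r_0$ to which $J_r$ extends holomorphically in $\clB(\clK)$ (Assumption \ref{A: assumption on Hr}(iii)). Because $T_0(\lambda+i0)$ is a fixed Hilbert--Schmidt operator on $\clK$, the map $r\mapsto J_rT_0(\lambda+i0)$ is a holomorphic family of compact operators on $G$. For $r\in G$ sufficiently close to $0$ (or by analytic continuation from non-real $z$ using Lemma \ref{L: 1+rJT(z) is invertible}), $1+J_rT_0(\lambda+i0)$ is invertible at some point of $G$. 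Theorem \ref{T: analytic Fredholm alternative} then shows that $\brs{1+J_rT_0(\lambda+i0)}^{-1}$ is meromorphic on $G$ with pole set equal to $R(\lambda;\set{H_r},F)\cap G$ and with finite-rank principal-part coefficients. Sandwiching with the Hilbert--Schmidt factors $Z_\lambda$ and $Z_\lambda^*$ yields the asserted meromorphic $\clL_1(\hlambdao)$-valued structure for $S(\lambda;H_r,H_0)-1$, with pole set contained in $R(\lambda;\set{H_r},F)\cap G$.

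The decisive step is the removability of singularities at the real poles. For every real $r\in G\setminus R(\lambda;\set{H_r},F)$, Theorem \ref{T: properties of SM}(i) asserts that $S(\lambda;H_r,H_0)$ is unitary, so $\norm{S(\lambda;H_r,H_0)-1} \leq 2$ in operator norm. Hence $r\mapsto S(\lambda;H_r,H_0)-1$ is operator-norm bounded in a real punctured neighborhood of any real pole $r_0\in R(\lambda;\set{H_r},F)\cap G$. For any $f,g\in \hlambdao$, the scalar meromorphic function $r\mapsto \scal{f}{(S(\lambda;H_r,H_0)-1)g}$ is bounded near $r_0$ and hence extends analytically across $r_0$ by Riemann's removable-singularity theorem, so all its negative-order Laurent coefficients vanish. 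These coefficients coincide with $\scal{f}{S_n g}$, where $S_n\in \clL_1(\hlambdao)$ are the Laurent coefficients of $S(\lambda;H_r,H_0)-1$ constructed in the preceding paragraph, so the vanishing for every $f,g$ forces $S_n = 0$ in $\clL_1(\hlambdao)$. The main obstacle I expect is the bookkeeping in this last step: passing from the a priori $\clL_1$-valued Laurent expansion (which exists by the analytic Fredholm alternative) to the scalar one and back again. This is resolved by the observation that the principal-part coefficients are finite-rank, so that their vanishing in operator norm (forced by unitarity) is equivalent to their vanishing in $\clL_1(\hlambdao)$.
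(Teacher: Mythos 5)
Your proposal is correct and follows essentially the same route as the paper: meromorphicity in $r$ comes from the stationary formula together with the analytic Fredholm alternative, and removability of the real poles comes from unitarity of $S(\lambda;H_r,H_0)$ off the discrete resonance set $R(\lambda;\set{H_r},F)$. The paper's proof is considerably terser, while you supply the supporting details — the algebraic recasting via $V_r(1+R_{\lambda+i0}(H_0)V_r)^{-1}=F^*(1+J_rT_0(\lambda+i0))^{-1}J_rF$, the factorization through the Hilbert--Schmidt operators $Z_\lambda=\euE_\lambda F^*$, and the passage from operator-norm boundedness to vanishing of the negative Laurent coefficients — all of which are correct and consistent with what the paper implicitly assumes.
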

\begin{proof}
Since $R_{\lambda+i0}(H_0)$ is compact, the function $$\mbR \ni r \mapsto S(\lambda; H_r, H_0) \in 1+\clL_1(\hlambdao)$$
admits meromorphic continuation to~$\mbC$ by (\ref{F: stationary rep-n for SM})
and the analytic Fredholm alternative (see Theorem \ref{T: analytic Fredholm alternative}).
Since~$S(\lambda; H_r, H_0)$ is also bounded (unitary-valued)
on the set $\set{r \in \mbR \colon \lambda \in \LambHF{H_r}},$ which by Theorem \ref{T: R(H0,G) is discrete}
has discrete complement in~$\mbR,$ it follows that~$S(\lambda; H_r, H_0)$ has analytic continuation to~$\mbR \subset \mbC,$
that is, the Laurent expansion of $S(\lambda; H_r,H_0)$ (as a function of the coupling constant $r$)
in a neighbourhood of any resonance point $r_0 \in R(\lambda; \set{H_r}, F)$ does not have negative powers of $r - r_0.$
\end{proof}
Though this proposition is quite straightforward it seems to be new (to the best knowledge of the author).
Proposition \ref{P: S(r) is continuous} asserts that the scattering matrix does not notice, in a certain sense, resonance points.
There is a modified ``scattering matrix''
$$
  \widetilde S(\lambda+i0; H_r,H_0) = 1 - 2i r \sqrt{\Im T_{\lambda+i0}(H_0)} J (1 + r T_{\lambda+i0}(H_0)J)^{-1} \sqrt{\Im T_{\lambda+i0}(H_0)},
$$
introduced in \cite{Pu01FA}, which, unlike the
scattering matrix, does notice the resonance points.
This has some implications which have been discussed in \cite{Az2}
and in the setting of this paper they will be discussed in section \ref{S: Push...}.

\subsection{Infinitesimal scattering matrix}
\label{SS: InfinScatM}
Let $\set{H_r}$ be a path of operators which satisfies Assumption \ref{A: assumption on Hr}.
% piecewise real-analytic path of self-adjoint operators on a framed Hilbert space,
% such that the derivatives $\dot H_r$ are bounded as operators from $\hilb_{-1} \to \hilb_1,$ and thus they are also trace-class.

If $\lambda \in \LambHF{H_0},$ then, by \ref{SS: euE's}(vi), the Hilbert-Schmidt operator
$\euE_\lambda \colon \hilb_1 \to \hlambda$ is well defined.
Hence, for any $\lambda \in \LambHF{H_0},$
it is possible to introduce the \emph{infinitesimal scattering matrix}
$$
  \Pi_{H_0}(\dot H_0)(\lambda) \colon \hlambdao \to \hlambdao
$$
by the formula
\begin{equation} \label{F: Pi = ZJZ*}
  \Pi_{H_0}(\dot H_0)(\lambda) % = Z(\lambda; G) J Z^*(\lambda; G)
                        = \euE_\lambda(H_0) \dot H_0 \euE_\lambda^\diamondsuit(H_0),
\end{equation}
where $\euE^\diamondsuit_\lambda \colon \hlambda \to \hilb_{-1}$ is a Hilbert-Schmidt operator as well
(see Subsection \ref{SSS: diamond conjugate}).
%It follows from Proposition \ref{P: Z(l,G) is H.S.},
Here by $\dot H_0$ we mean the value of the trace-class derivative $\dot H_r$ at $r = 0.$
Since $\euE_\lambda(H_0)$ and $\euE_\lambda^\diamondsuit(H_0)$ are Hilbert-Schmidt operators,
and $\dot H_0 \colon \hilb_{-1} \to \hilb_1$ is bounded, it follows
that $\Pi_{H_0}(\dot H_0)(\lambda)$ is a self-adjoint trace-class operator on the fiber Hilbert space~$\hlambdao.$

The notion of infinitesimal scattering matrix was introduced in~\cite{Az}.

We note the following simple property of $\Pi_{H}(V)(\lambda).$
\begin{lemma}
The operator (transformator)
$$
  \clA(F) \ni V \mapsto \Pi_{H}(V)(\lambda) \in \clL_1\brs{\hlambda(H_0)}
$$
is bounded.
\end{lemma}
\begin{proof} This follows from the estimate
\begin{equation*}
  \norm{\euE_\lambda(H_0) V \euE_\lambda^\diamondsuit(H_0)}_{\clL_1\brs{\hlambda}}
  \leq \norm{\euE_\lambda}_{\clL_2(\hilb_{1},\hlambda)} \, \norm{V}_{\clB(\hilb_{-1},\hilb_{1})} \, \norm{\euE^\diamondsuit_\lambda}_{\clL_2(\hlambda,\hilb_{-1})}.
\end{equation*}
\end{proof}

Dependence of $\Pi_{H}(V)(\lambda)$ on $H$ does not make an exact sense,
since for different $H$ the infinitesimal scattering matrix
acts in different Hilbert spaces $\hlambda(H).$
But given an analytic path $\set{H_r}$ of operators,
we can identify Hilbert spaces $\hlambda(H_r)$ and $\hlambda(H_s)$ in a natural way via the unitary operator $w_\pm(\lambda; H_r,H_s).$
So, one can ask how the operator-function
$$
  \mbR \ni r \mapsto w_\pm(\lambda; H_0,H_r)\Pi_{H_r}(V)(\lambda)w_\pm(\lambda; H_r,H_0) \in \clL_1\brs{\hlambda(H_0)}
$$
depends on $r.$ It turns out that this function is very regular, as we shall see.

As for dependence on $\lambda,$ in the context of arbitrary self-adjoint operators, the dependence of $\Pi_{H}(V)(\lambda)$
on $\lambda$ has to be very bad.

% Also, if $V_1,V_2 \in \clL_1$ are commuting self-adjoint trace-class operators,
% which also commute with $V$ and $V_1V_2 = 0,$ then
% $$
%   \Pi_{H_0}(V_1+V_2) = \Pi_{H_0}(V_1) + \Pi_{H_0}(V_2)ю
% $$

\begin{lemma} \label{L: dot S = -2 pi i ...}
Let $\set{H_r}$ be a path as above. Let $r_0$ be a point of analyticity of $H_r.$
If $\lambda \in \LambHF{H_{r_0}},$ then $\lambda \in \LambHF{H_r}$
for all $r$ close enough to $r_0$ and
$$
  \frac d{dr} S(\lambda; H_r,H_{r_0}) \big|_{r=r_0} = -2 \pi i \Pi_{H_{r_0}}(\dot H_{r_0})(\lambda),
$$
where the derivative is taken in~$\clL_1(\hlambdao)$-topology.
\end{lemma}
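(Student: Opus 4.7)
The approach is to apply the stationary formula for the scattering matrix (Theorem~\ref{T: stationary rep-n for SM}) with $H_{r_0}$ as the base operator and $W_r := V_r - V_{r_0}$ as the perturbation, then differentiate at $r = r_0$, exploiting $W_{r_0} = 0$ and $\dot W_{r_0} = \dot V_{r_0} = \dot H_{r_0}$.

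First I would handle the neighbourhood claim. Reparameterise the path by $\tilde H_s := H_{r_0+s}$, which still satisfies Assumption~\ref{A: assumption on Hr} with base $\tilde H_0 = H_{r_0}$ and perturbation coefficients $\tilde J_s := J_{r_0+s} - J_{r_0}$. Because $\tilde J_0 = 0$, the operator $1 + \tilde J_0\, F R_{\lambda+i0}(H_{r_0}) F^* = 1$ is trivially invertible, so $0 \notin R(\lambda;\{\tilde H_s\},F)$. Theorem~\ref{T: R(H0,G) is discrete} together with the discreteness of the resonance set then gives an open interval around $s=0$ on which $\lambda \in \Lambda(\tilde H_s;F) = \Lambda(H_{r_0+s};F)$, which is the asserted neighbourhood.

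For $r$ in this neighbourhood, Theorem~\ref{T: stationary rep-n for SM} applied to the pair $(H_r,H_{r_0})$ yields
\begin{equation*}
  S(\lambda;H_r,H_{r_0}) = 1 - 2\pi i\, \euE_\lambda(H_{r_0})\, W_r K_r\, \euE_\lambda^\diamondsuit(H_{r_0}), \quad K_r := \bigl(1 + R_{\lambda+i0}(H_{r_0}) W_r\bigr)^{-1}.
\end{equation*}
Real-analyticity of $r \mapsto J_r$ makes $r \mapsto W_r = F^*(J_r - J_{r_0})F$ real-analytic with values in $\clA(F) \subset \clB(\hilb_{-1},\hilb_1)$. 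Since $R_{\lambda+i0}(H_{r_0}) \in \clL_2(\hilb_1,\hilb_{-1})$ by Lemma~\ref{L: R(lambda+i0) is H.-S.}, the product $R_{\lambda+i0}(H_{r_0}) W_r$ is real-analytic in $\clB(\hilb_{-1})$ and vanishes at $r=r_0$; hence $K_r$ is real-analytic near $r_0$ in $\clB(\hilb_{-1})$ with $K_{r_0} = 1$.

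Finally, the outer factors $\euE_\lambda(H_{r_0})$ and $\euE_\lambda^\diamondsuit(H_{r_0})$ are Hilbert-Schmidt (subsection~\ref{SS: euE's}), so the composition $\euE_\lambda(H_{r_0})\, W_r K_r\, \euE_\lambda^\diamondsuit(H_{r_0})$ factors as $\mathrm{HS} \circ (\hilb_{-1}\!\to\!\hilb_1\text{-bounded}) \circ \mathrm{HS}$, lies in $\clL_1(\hlambdao)$, and is real-analytic in $r$ in the trace norm. Differentiating by the product rule and using $W_{r_0}=0$, $K_{r_0}=1$ and $\dot W_{r_0}=\dot H_{r_0}$, every term retaining a factor of $W_{r_0}$ drops out, leaving
\begin{equation*}
  \tfrac{d}{dr}\Big|_{r=r_0} S(\lambda;H_r,H_{r_0}) = -2\pi i\, \euE_\lambda(H_{r_0})\, \dot H_{r_0}\, \euE_\lambda^\diamondsuit(H_{r_0}) = -2\pi i\, \Pi_{H_{r_0}}(\dot H_{r_0})(\lambda).
\end{equation*}
The only substantive point is the $\clL_1$-differentiability, and this is handled entirely by the factorisation just exhibited; because the fiber spaces and evaluation operators are those of the single fixed operator $H_{r_0}$, no delicate comparison between the varying $\hlambda(H_r)$ is required.
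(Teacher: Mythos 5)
Your proof is correct and follows essentially the same route as the paper's: the paper simply says ``without loss of generality $r_0=0$'' and then differentiates the stationary formula term by term, using $V_0=0$ to kill the extra terms, exactly as you do via the reparametrization $\tilde H_s = H_{r_0+s}$, $W_r = V_r - V_{r_0}$, and $K_{r_0}=1$. If anything, your version is slightly more careful than the paper's about the topology in which the derivative is taken: you factor $\euE_\lambda(H_{r_0})\,W_r K_r\,\euE_\lambda^\diamondsuit(H_{r_0})$ as (Hilbert-Schmidt)$\circ$(bounded $\hilb_{-1}\to\hilb_1$)$\circ$(Hilbert-Schmidt) to justify $\clL_1(\hlambdao)$-analyticity, whereas the paper asserts in passing that the derivative of $V_r(1+R_{\lambda+i0}(H_0)V_r)^{-1}$ is taken ``in $\clL_1(\hilb_{-1},\hilb_1)$'' --- but $V_r\colon\hilb_{-1}\to\hilb_1$ is merely bounded, not trace-class, so the trace-class statement only holds after sandwiching by the two Hilbert-Schmidt evaluation operators, which is exactly the point you make explicit.
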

\begin{proof} By Theorem \ref{T: R(H0,G) is discrete},
if $\lambda \in \LambHF{H_{r_0}},$ then $\lambda \in \LambHF{H_r}$    % \margcom{$\LambHF{H_0}??$}
for all $r$ from some neighbourhood of $r_0.$ Without loss of generality we can assume that $r_0 = 0.$
We have
\begin{equation} \label{F: dot Sr}
 \begin{split}
  \frac d{dr} V_r (1 + R_{\lambda+i0}(H_{0})V_r)^{-1} & = \dot V_r (1 + R_{\lambda+i0}(H_{0})V_r)^{-1}
   \\ & \qquad - V_r(1 + R_{\lambda+i0}(H_{0})V_r)^{-1} R_{\lambda+i0}(H_{0})\dot V_r (1 + R_{\lambda+i0}(H_{0})V_r)^{-1},
 \end{split}
\end{equation}
where the derivative is taken in $\clB(\hilb_{-1},\hilb_1).$
Since $V_0 = 0$ and $\dot H_r = \dot V_r,$
this and Theorem \ref{T: stationary rep-n for SM} imply that
\begin{equation} \label{F: S = 1 - 2 pi i ...}
  \begin{split}
    \frac d{dr} S(\lambda; H_r,H_{0}) \big|_{r=r_0} & = \frac d{dr} \brs{1_\lambda - 2\pi i \euE_\lambda(H_{0}) V_r (1 + R_{\lambda+i0}(H_{0})V_r)^{-1}\euE_\lambda^\diamondsuit(H_{0})}\big|_{r=0}
    \\ & = -2\pi i \euE_\lambda(H_{0}) \cdot \frac d{dr} \brs{V_r (1 + R_{\lambda+i0}(H_{0})V_r)^{-1}}\big|_{r=0} \cdot \euE_\lambda^\diamondsuit(H_{0})
    \\ & = -2\pi i \euE_\lambda(H_{0}) \dot H(0) \euE_\lambda^\diamondsuit(H_{0}).
  \end{split}
\end{equation}

% Hence,
% Theorem \ref{T: stationary rep-n for SM} implies that
% %and all $r$ close enough to $r_0$
% \begin{equation} \label{F: S = 1 - 2 pi i ...}
%   S(\lambda; H_r,H_{r_0}) = 1_\lambda - 2\pi i (r-r_0) \euE_\lambda(H_{r_0}) V (1 + (r-r_0)R_{\lambda+i0}(H_{r_0})V)^{-1}\euE_\lambda^\diamondsuit(H_{r_0}),
% \end{equation}
% where % $1+(r-r_0)T_{r_0}(\lambda+i0)J$
% $1 + (r-r_0)R_{\lambda+i0}(H_{r_0})V$
% is invertible in~$\hilb_{-1}$ for all $r$ close enough to $r_0.$    %by Lemma \ref{L: Br exists iff ...}.
% It follows that in~$\clL_1(\hlambdao)$
% \begin{equation} \label{F: dot Sr}
%  \begin{split}
%   \frac d{dr} & S(\lambda; H_r,H_{r_0})
%      \\ & = - 2\pi i \euE_\lambda(H_{r_0})V \big[(1 + (r-r_0)R_{\lambda+i0}(H_{r_0})V)^{-1}
%      \\ & \mbox{ } \quad - (r-r_0)(1 + (r-r_0)R_{\lambda+i0}(H_{r_0})V)^{-2}R_{\lambda+i0}(H_{r_0})V \big] \euE_\lambda^\diamondsuit(H_{r_0})
%      \\ &  = - 2\pi i \sqbrs{\euE_\lambda(H_{r_0})V (1 + (r-r_0)R_{\lambda+i0}(H_{r_0})V)^{-2}\euE_\lambda^\diamondsuit(H_{r_0})}.
%  \end{split}
% \end{equation}
This and (\ref{F: Pi = ZJZ*}) complete the proof.
\end{proof}

\begin{thm} If $\lambda \in \LambHF{H_r} \cap \LambHF{H_0},$ then
\begin{gather} \label{F: ddr S(r) = ...}
  \frac d{dr} S(\lambda;H_r,H_0) = -2\pi i w_+(\lambda;H_0,H_r)\Pi_{H_r}(\dot H_r)(\lambda)w_+(\lambda;H_r,H_0)S(\lambda;H_r,H_0),
\end{gather}
where the derivative is taken in the trace-class norm.
\end{thm}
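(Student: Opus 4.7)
The strategy is to reduce this derivative to the one already computed in Lemma \ref{L: dot S = -2 pi i ...} by means of the cocycle identity of Theorem \ref{T: properties of SM}(iii). Fix $\lambda \in \LambHF{H_r} \cap \LambHF{H_0}$; since the resonance set $R(\lambda;\{H_s\},F)$ is discrete by Lemma \ref{L: R is discrete}, for all sufficiently small $h$ the point $r+h$ is non-resonant, so every scattering matrix and wave matrix appearing below is well-defined.

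First I would write, using Theorem \ref{T: properties of SM}(iii) with the shift $r \to r+h$ replacing $r+h$,
$$
  S(\lambda;H_{r+h},H_0) = w_+^*(\lambda;H_r,H_0)\, S(\lambda;H_{r+h},H_r)\, w_+(\lambda;H_r,H_0)\, S(\lambda;H_r,H_0).
$$
The two outer wave-matrix factors $w_+^*(\lambda;H_r,H_0)$ and $w_+(\lambda;H_r,H_0)$ and the factor $S(\lambda;H_r,H_0)$ are independent of $h$, so the derivative at $h=0$ only falls on the middle factor $S(\lambda;H_{r+h},H_r)$. Apply Lemma \ref{L: dot S = -2 pi i ...} to the shifted path $s \mapsto H_{r+s}$ at the base point $s=0$ (which is a point of analyticity of the path by Assumption \ref{A: assumption on Hr}, after possibly passing to an open subinterval on which the path is analytic); this yields
$$
  \frac d{dh} S(\lambda;H_{r+h},H_r)\Big|_{h=0} = -2\pi i\,\Pi_{H_r}(\dot H_r)(\lambda)
$$
in the $\clL_1(\hlambdar)$-topology, and hence in particular in $\clL_1(\hlambdao)$ after conjugation. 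Using the identity $w_+^*(\lambda;H_r,H_0) = w_+(\lambda;H_0,H_r)$ from Proposition \ref{P: w(H0,H0;lambda)=1 and ...} (or Corollary \ref{C: wave matrix is unitary}), substitution gives the claimed formula.

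The only point requiring a little care is that differentiation commutes with the bounded left/right multiplications by the $h$-independent operators $w_+^*(\lambda;H_r,H_0)$, $w_+(\lambda;H_r,H_0)$ and $S(\lambda;H_r,H_0)$; this is immediate because these are bounded operators (unitary on the relevant fibers), and multiplication by a bounded operator is continuous on $\clL_1$. There is no real obstacle here: the work is packaged in Theorem \ref{T: properties of SM}(iii) and Lemma \ref{L: dot S = -2 pi i ...}; the proposition is a clean consequence.
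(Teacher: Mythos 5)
Your proof is correct and follows essentially the same route as the paper: both apply the cocycle identity of Theorem \ref{T: properties of SM}(iii), observe that the outer factors are $h$-independent, and then invoke Lemma \ref{L: dot S = -2 pi i ...} to differentiate the inner scattering matrix at $h=0$. The only cosmetic difference is that the paper writes the difference quotient $S(\lambda;H_{r+h},H_0) - S(\lambda;H_r,H_0)$ explicitly before dividing by $h$, whereas you argue directly by continuity of multiplication by $h$-independent bounded operators; these are the same computation.
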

\begin{proof} By Theorem \ref{T: R(H0,G) is discrete}, for all small enough $h$ the inclusion
$\lambda \in \LambHF{H_{r+h}}$ holds. It follows from Theorem \ref{T: properties of SM}(iii) and
unitarity of $w_\pm(\lambda; H_r,H_0)$ (Corollary \ref{C: wave matrix is unitary}) that
\begin{equation*}
 \begin{split}
   S(\lambda; & H_{r+h},H_0) - S(\lambda;H_r,H_0)
    \\ & = w_+(\lambda;H_0,H_r) \SqBrs{S(\lambda; H_{r+h},H_r) - 1_\lambda}w_+(\lambda;H_r,H_0) S(\lambda; H_r,H_0).
 \end{split}
\end{equation*}
Dividing this equality by $h$ and
taking the trace-class limit $h \to 0$ in it we get
\begin{equation*}
 \begin{split}
   \frac d{dh}S(\lambda; & H_{r+h},H_0)\big|_{h=0}
    \\ & = w_+(\lambda;H_0,H_r) \frac d{dh} S(\lambda; H_{r+h},H_r)\big|_{h=0} w_+(\lambda;H_r,H_0) S(\lambda; H_r,H_0).
 \end{split}
\end{equation*}
So, Lemma \ref{L: dot S = -2 pi i ...} completes the proof.
\end{proof}

Definition of the chronological exponential $\Texp,$ used in the next theorem, is given in Appendix \ref{A: Texp}.
\begin{thm} \label{T: S = T exp...}
%Let~$H_0$ be a self-adjoint operator on a Hilbert space~$\hilb$
%with a frame~$F.$ Let $V$ be a trace-class operator such that (\ref{F: V hilb(-1) to hilb(1)}) holds.
Let $\set{H_r}$ be a path of operators which satisfies Assumption \ref{A: assumption on Hr}.
If $\lambda \in \LambHF{H_r} \cap \LambHF{H_0},$ then
\begin{equation} \label{F: S = T exp...}
  S(\lambda;H_r,H_0) = \Texp\brs{-2\pi i \int_0^r w_+(\lambda;H_0,H_s)\Pi_{H_s}(\dot H_s)(\lambda)w_+(\lambda;H_s,H_0)\,ds},
\end{equation}
where the chronological exponential is taken in the trace-class norm.
\end{thm}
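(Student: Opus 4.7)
The plan is to recognize that formula (\ref{F: S = T exp...}) is exactly the unique solution of the first-order linear operator-valued ordinary differential equation established in the preceding theorem. Set
$$
  A(s;\lambda) := -2\pi i\, w_+(\lambda;H_0,H_s)\,\Pi_{H_s}(\dot H_s)(\lambda)\, w_+(\lambda;H_s,H_0),
$$
so that by (\ref{F: ddr S(r) = ...}) the function $r \mapsto S(\lambda;H_r,H_0)$ satisfies, in the trace-class norm on $\hlambdao$,
$$
  \frac{d}{dr}\,S(\lambda;H_r,H_0) = A(r;\lambda)\,S(\lambda;H_r,H_0), \qquad S(\lambda;H_0,H_0) = 1_\lambda,
$$
for every $r$ in the complement of the resonance set $R(\lambda;\{H_r\},F)$. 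By the definition of the chronological exponential (Appendix~\ref{A: Texp}), the right-hand side of (\ref{F: S = T exp...}) is, by construction, the unique solution of this Cauchy problem; so the required identity will follow from a standard uniqueness argument applied to the difference.

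First I would verify that the generator $s \mapsto A(s;\lambda)$ is a locally trace-class integrable function of $s$ on $[0,r]$. For this, the key observation is that, away from the discrete set $R(\lambda;\{H_r\},F)$, the stationary formula (\ref{F: stationary rep-n for SM}) together with (\ref{F: dot Sr}) and (\ref{F: S = 1 - 2 pi i ...}) presents $\Pi_{H_s}(\dot H_s)(\lambda)$ and the wave matrices $w_+(\lambda;H_0,H_s)$, $w_+(\lambda;H_s,H_0)$ as real-analytic functions of $s$ in the trace-class (respectively bounded-operator) norm, thanks to Assumption~\ref{A: assumption on Hr} and Lemma~\ref{L: An to A then An(-1) to A(-1)}. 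Hence $A(\cdot;\lambda)$ is real-analytic in $\clL_1(\hlambdao)$ on each open subinterval between consecutive resonance points, and the chronological exponential in (\ref{F: S = T exp...}) is well-defined over any closed interval contained in $[0,r]\setminus R(\lambda;\{H_r\},F)$.

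Next I would establish the equality (\ref{F: S = T exp...}) piecewise: on any maximal open subinterval $(s_0,s_1) \subset [0,r]\setminus R(\lambda;\{H_r\},F)$, both sides of (\ref{F: S = T exp...}) solve the same linear ODE (for the right-hand side this is the defining property of $\Texp$), with initial data matching at $s_0^+$; uniqueness of solutions of linear ODEs in the Banach space $1+\clL_1(\hlambdao)$ yields equality on $(s_0,s_1)$. The last step is to splice these pieces together across the resonance points: by Proposition~\ref{P: S(r) is continuous}, both $S(\lambda;H_s,H_0)$ and (by multiplicativity of $\Texp$) the right-hand side of (\ref{F: S = T exp...}) admit analytic continuation through every $s_j \in R(\lambda;\{H_r\},F)$ as functions of $s$ with values in $1+\clL_1$. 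Since two meromorphic functions with values in a Banach space coinciding on open intervals must coincide everywhere where both are defined, the identity extends to all $s \in [0,r]$, including the endpoint $s=r \notin R(\lambda;\{H_r\},F)$.

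The main obstacle, as I foresee it, is precisely this last splicing step: making sure that the formal chronological exponential $\Texp\bigl(\int_0^r A(s;\lambda)\,ds\bigr)$ remains well-defined (or meaningfully extended) across resonance points, where $A(s;\lambda)$ itself may blow up even though its multiplicative integral does not. The cleanest way is to avoid integrating through the resonance points altogether: use the multiplicative (groupoid) property $\Texp\int_0^r = \Texp\int_{s'}^r \cdot \Texp\int_0^{s'}$ of the chronological exponential between non-resonance points, match factor by factor with the corresponding decomposition $S(\lambda;H_r,H_0) = S(\lambda;H_r,H_{s'})\,w_+^*(\lambda;H_{s'},H_0)\cdots$ supplied by Theorem~\ref{T: properties of SM}(ii), and then take a limit as $s'$ approaches each resonance point, using Proposition~\ref{P: S(r) is continuous} to control that limit on the left and the analyticity of the ODE data on the right.
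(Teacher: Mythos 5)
Your core idea — both sides of~(\ref{F: S = T exp...}) satisfy the linear ODE $\frac{d}{dr}S = A(r;\lambda)S$ with $S(\lambda;H_0,H_0)=1_\lambda$, and the chronological exponential is the unique solution — is indeed the strategy of the paper's proof. But you identify the behaviour of the generator $A(s;\lambda)=-2\pi i\,w_+(\lambda;H_0,H_s)\Pi_{H_s}(\dot H_s)(\lambda)w_+(\lambda;H_s,H_0)$ at resonance points as an obstacle requiring a piecewise-plus-splicing argument, and that is where the paper does something you missed: equation~(\ref{F: w Pi w = S'(r)S(-1)}) rewrites
$$
  w_+(\lambda;H_0,H_r)\Pi_{H_r}(\dot H_r)(\lambda)w_+(\lambda;H_r,H_0)
  = -\frac{1}{2\pi i}\Bigl[\tfrac{d}{dr}S(\lambda;H_r,H_0)\Bigr]S(\lambda;H_r,H_0)^{-1},
$$
and since, by Proposition~\ref{P: S(r) is continuous} and~(\ref{F: dot Sr}), both $\frac{d}{dr}S(\lambda;H_r,H_0)$ and $S(\lambda;H_r,H_0)^{-1}$ are $\clL_1(\hlambdao)$-analytic on all of $\mbR$ (the scattering matrix has analytic continuation across the resonance points, remaining unitary), the product $A(s;\lambda)$ is in fact $\clL_1$-analytic across every $s\in R(\lambda;\{H_r\},F)$, not merely on the complement. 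Hence the generator does not blow up, Lemma~\ref{L: T exp} applies on the whole interval at once, and no splicing is needed.

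This matters beyond economy of exposition. If $A(s;\lambda)$ genuinely blew up on $R(\lambda;\{H_r\},F)$, the symbol $\Texp\bigl(\int_0^r A(s;\lambda)\,ds\bigr)$ in~(\ref{F: S = T exp...}) would not even be defined by the construction in Appendix~\ref{A: Texp}, which assumes a piecewise-continuous (hence bounded) generator; your proposal never supplies a replacement meaning for it, so it would be proving an ill-posed identity. Your suggested remedy — split at non-resonance points $s'$, invoke the groupoid property of $\Texp$ and the factorization from Theorem~\ref{T: properties of SM}(ii), and let $s'$ tend to a resonance point — does not close this gap either, since it still leaves the symbol $\Texp\int_0^r A$ undefined over intervals that contain a resonance point; at best it would define a product of one-sided limits, which is a different object unless one already knows the singularity is removable. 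The removability is exactly what~(\ref{F: w Pi w = S'(r)S(-1)}) delivers, and Corollary~\ref{C: Tr Pi is analytic} records the consequence. Without this observation your argument has a genuine gap; with it, the splicing machinery collapses to a single invocation of Lemma~\ref{L: T exp}, as in the paper.
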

\begin{proof}
%% It follows from Lemma \ref{L: dot S = -2 pi i ...} and Theorem \ref{T: properties of SM}(3),   %~\cite[Corollary 7.1.2]{Ya}
%% that
%% \begin{gather} \label{F: ddr S(r) = ...}
%%   \frac d{dr} S(\lambda; H_r,H_0)
%      = \lim_{h \to 0} \frac 1h \brs{S(\lambda; H_{r+h},H_0) - S(\lambda; H_r,H_0)}
%   \\ = \lim_{h \to 0} \frac 1h \brs{
%        w_+(\lambda; H_0, H_r)S(\lambda; H_{r+h},H_r) w_+(\lambda; H_r, H_0)S(\lambda; H_r,H_0) - S(\lambda; H_r,H_0)}
%   \\ = \lim_{h \to 0} \frac 1h
%        w_+(\lambda; H_0, H_r)\sqbrs{ S(\lambda; H_{r+h},H_r) - 1_\lambda} w_+(\lambda; H_r, H_0)S(\lambda; H_r,H_0)
%%    = -2\pi i w_+(\lambda; H_0, H_r) \Pi_{H_r}(V)(\lambda) w_+(\lambda; H_r,H_0)S(\lambda; H_r,H_0),
% \end{gather}
% where the derivative is taken in~$\clL_1(\hlambdao)$-norm.
By Theorem \ref{T: R(H0,G) is discrete}, by the definition
(\ref{F: Pi = ZJZ*}) of the infinitesimal scattering matrix
and by Proposition \ref{P: W pm is well defined}, the expression under the integral in (\ref{F: S = T exp...})
makes sense for all~$s$ except the discrete resonance set $R(\lambda; \set{H_r}, F).$
%It follows that the chronological exponential in (\ref{F: S = T exp...}) is well defined.
%By Proposition \ref{P: S(r) is continuous},
By Proposition \ref{P: S(r) is continuous} and (\ref{F: dot Sr}),
the derivative $\frac d{dr} S(\lambda; H_r,H_0)$ is piecewise $\clL_1(\hlambdao)$-analytic.
%\margcom{why \\ continuous?}
Since~$\mbR \ni r \mapsto S(\lambda; H_r,H_0)$ is also piecewise $\clL_1(\hlambdao)$-analytic,
by the formula (\ref{F: ddr S(r) = ...}) the function
\begin{equation} \label{F: w Pi w = S'(r)S(-1)}
  r \mapsto w_+(\lambda; H_0, H_r) \Pi_{H_r}(\dot H_r)(\lambda) w_+(\lambda; H_r, H_0)
   = - \frac 1{2\pi i} \SqBrs{\frac d{dr} S(\lambda;H_r,H_0)} S(\lambda;H_r,H_0)^{-1}
\end{equation}
is also piecewise $\clL_1(\hlambdao)$-analytic. Hence, integration of the equation (\ref{F: ddr S(r) = ...})
by Lemma \ref{L: T exp} gives (\ref{F: S = T exp...}).
\end{proof}
%Recall that by a piecewise continuous function we mean a function with finitely many discontinuity points,
%such that at the discontinuity points the function has the left and the right limits.
\begin{cor} \label{C: Tr Pi is analytic}
  Let $\set{H_r}$ be a path of operators which satisfies Assumption \ref{A: assumption on Hr} and
  let $\lambda \in \LambHF{H_0}.$ The function
  $$
    \mbR \ni r \mapsto \Tr\brs{\Pi_{H_r}(\dot H_r)(\lambda)}
  $$
  is piecewise analytic (not necessarily continuous). Analyticity of this path may fail only at those points where $r \mapsto S(\lambda; H_r,H_0)$ is not analytic.
\end{cor}
\begin{proof} This follows from (\ref{F: w Pi w = S'(r)S(-1)}), unitarity of the wave matrix $w_+(\lambda; H_0, H_r)$
(Corollary \ref{C: wave matrix is unitary}) and unitarity and analyticity of the scattering matrix $S(\lambda;H_r,H_0)$ as a function of $r$
(Proposition \ref{P: S(r) is continuous}).
\end{proof}

One can also prove the following formula
\begin{equation} \label{F: S = T exp... (2)}
  S(\lambda;H_r,H_0) = \overrightarrow{\exp}\brs{-2\pi i \int_0^r w_-(\lambda;H_0,H_s)\Pi_{H_s}(\dot H_s)(\lambda)w_-(\lambda;H_s,H_0)\,ds}.
\end{equation}
where the right chronological exponential $\overrightarrow{\exp}$ is defined by
\begin{gather*} \label{F: def of right exp}
  \overrightarrow{\exp}\brs{\frac 1i\int_a^t A(s)\,ds} =
     1 + \sum_{k=1}^\infty \frac 1{i^k} \int_a^{t} dt_k \int_a^{t_k} dt_{k-1} \ldots \int_a^{t_{2}} dt_1 A(t_1)\ldots A(t_k).
\end{gather*}

\section{Absolutely continuous and singular spectral shift functions}

\subsection{Infinitesimal spectral flow}
In this subsection we prove a theorem,
which asserts that the trace of the infinitesimal scattering matrix is
a density of the absolutely continuous part of the infinitesimal
spectral flow. %In the previous version of this paper, the proof of this theorem
%used a specific frame associated with the trace-class operator~$V.$ In the current version,
%the proof is frame-independent, up to a less restrictive condition, given below.

We recall that if $A \colon \hilb \to \clK$ and $B\colon \clK \to \hilb$
are two bounded operators, such that $AB$ and $BA$ are trace-class operators in Hilbert spaces~$\clK$
and~$\hilb$ respectively, then
\begin{equation} \label{F: Tr(AB)=Tr(BA)}
  \Tr_\clK(AB) = \Tr_\hilb(BA).
\end{equation}
%Proof. Let $\dim \clK \leq \dim \clH$ and let $U\colon \clK \to \clH$ be
%an isometry with initial space~$\clK,$ so that $U^*U = 1_\clK.$ Let $(\phi_j)$ be
%an orthonormal basis in~$\clK$ and let $(\psi_k)$ be an orthonormal basis of~$\clH,$
%obtained from $(U\phi_j)$ by adding, if necessary, new elements.
%Then
%\begin{equation*}
% \begin{split}
%  \Tr_\clK(AB) & = \sum_j \scal{\phi_j}{AB\phi_j} = {\sum_k}^* \scal{U^*\psi_k}{ABU^*\psi_k}
%  \\ & = \sum_k \scal{U^*\psi_k}{ABU^*\psi_k} = \Tr_\clH(UABU^*) = \Tr_\clH(BU^*UA) = \Tr_\clH(BA),
% \end{split}
%\end{equation*}
%where $\sum ^*$ means that the sum is taken not over all basis $(\psi_k).$

Let~$\set{H_r}$ be a path of self-adjoint operators which satisfies Assumption \ref{A: assumption on Hr}.
In addition to this assumption, the condition
\begin{equation}  \label{A: double sum is conv-t}
  \sum_{j,k=1}^\infty \kappa_j\kappa_k J^{r}_{jk} \quad \text{is absolutely convergent}
\end{equation}
will be used, where $V_r = F^*J_rF,$ and $\brs{J^{r}_{jk}}$ is the matrix of $J_r$ in the basis $(\psi_k),$
that is, $J^{r}_{jk} = \scal{\psi_j}{J_r \psi_k}.$

Obviously, for a straight line path $H_r=H_0+rV,$ there exists a frame $F$ such that this additional condition holds.

%Condition (\ref{A: double sum is conv-t}) is technical. It is used in one place of one of the following proofs.
%What is important is that the set of trace-class operators which satisfy Assumption \ref{A: assumption on Hr}
%and (\ref{A: double sum is conv-t}) is dense in $\clL_1(\hilb).$ For this dense set of trace-class operators we shall
%prove additivity of the singular spectral shift function. Additivity for arbitrary trace-class perturbations
%will be given elsewhere \cite{Az5}.
%
%(the frame from Lemma \ref{L: exists F for H0+rV} will do).
% Clearly, such a frame exists for any trace-class operator $V.$

\begin{rems*} \rm %It is possible that the above double series absolutely converges for any frame~$F.$
  %But neither was I able to prove this nor to find a reference.
V.\,V.\,Peller constructed an example of a trace-class operator $A=(a_{ij})$ and a bounded operator $B=(b_{ij})$ on $\ell_2,$
such that the double series
$$
  \sum_{i,j=1}^\infty \abs{a_{ij}b_{ij}}
$$
diverges\footnote{Private communication}.
\end{rems*}

\begin{lemma} \label{L: double sum is summable}
Let $F$ be a frame operator on a Hilbert space~$\hilb.$ Let $\set{H_r}$
be a path of operators on $\hilb,$ such that Assumption \ref{A: assumption on Hr} and (\ref{A: double sum is conv-t}) hold.
For any $r$ the double series
$$
  \sum_{j=1}^\infty \sum_{k=1}^\infty \kappa_j \kappa_k J^r_{jk} \scal{\phi_{j}(\lambda)}{\phi_{k}(\lambda)}
$$
is absolutely convergent for a.e. $\lambda \in \LambHF{H_0}$ and the function
$$
  \LambHF{H_0} \ni \lambda \mapsto \sum_{j=1}^\infty \sum_{k=1}^\infty \kappa_j \kappa_k \abs{J^r_{jk} \scal{\phi_{j}(\lambda)}{\phi_{k}(\lambda)}}
$$
is integrable.
\end{lemma}
\begin{proof} It follows from the assumption (\ref{A: double sum is conv-t})
and Corollary \ref{C: phi j(l),phi k(l) is summable} that
it is enough to prove the following assertion.

If a non-negative series $\sum_{j=1}^\infty a_n$ is convergent (to $A$) and if
a sequence of integrable functions $f_1, f_2, \ldots$ is such that $\norm{f_j}_{L_1}\leq 1$
for all~$j=1,2,\ldots,$ then the series
$$
  \sum_{j=1}^\infty a_j f_j
$$
is absolutely convergent a.e. and its sum is integrable.

The series $g(x) := \sum_{j=1}^\infty a_j \abs{f_j}(x)$ is convergent (so far, possibly to $+\infty$) a.e.
Since
$$
  \int \sum_{j=1}^N a_j \abs{f_j}(x)\,dx \leq A
$$
for all $N,$ the series $\sum_{j=1}^\infty a_j \abs{f_j}$ is absolutely convergent a.e. and its sum $g(x)$ is integrable.
Since
$$
  \sum_{j=1}^N a_j \abs{f_j}(x) \leq g(x),
$$
it follows that, by the Lebesgue Dominated Convergence theorem, the series above is absolutely convergent and its sum is integrable.
% That the sum is absolutely convergent follows from Lemma \ref{L: double sum is conv-t}.
% Further, the function $\lambda \mapsto \sum_{j,k=1}^\infty \abs{J_{jk} \phi_{jk}(\lambda)}$
% is integrable and its $L_1$-norm is $\leq \sum_{j,k} \kappa_j \kappa_k J_{jk},$ as it follows
% from Corollary \ref{C: phi j(l),phi k(l) is summable}.
\end{proof}

\begin{prop} \label{P: Tr(F* Im R F) is L1} Let $H_0$ be a s.a. operator on a framed Hilbert space $(\hilb,F).$
The non-negative function
$$
  \LambHF{H_0} \ni \lambda \mapsto \Tr \phi(\lambda) = \frac 1\pi \Tr \brs{F \Im R_{\lambda+i0}(H_0) F^*}
$$
  is summable and
$$
  \int_{\LambHF{H_0}} \Tr \phi(\lambda)\,d\lambda = \Tr(F E^{(a)} F^*).
$$
\end{prop}
\begin{proof} Indeed,
\begin{equation*}
 \begin{split}
    \int \Tr \phi(\lambda)\,d\lambda & = \frac 1\pi \int \sum_j \kappa_j^2 \scal{\phi_j}{\Im R_{\lambda+i0}(H_0)\phi_j}\,d\lambda
    \\ & = \frac 1\pi \sum_j \int \kappa_j^2 \scal{\phi_j}{\Im R_{\lambda+i0}(H_0)\phi_j}\,d\lambda
    \\ & = \frac 1\pi \sum_j \int \scal{F^*\psi_j}{\Im R_{\lambda+i0}(H_0)F^*\psi_j}\,d\lambda
    \\ & = \frac 1\pi \sum_j \scal{\psi_j}{F E^{(a)} F^*\psi_j} = \Tr(F E^{(a)} F^*).
 \end{split}
\end{equation*}
\end{proof}

\begin{thm} \label{T: Tr Pi = Phi} Let~$H_0$ be a self-adjoint operator on a Hilbert space
with a frame~$F.$ Let $V$ be a trace-class operator such that for $V_r = rV$ the condition (\ref{F: V hilb(-1) to hilb(1)}) holds.
Then for any bounded measurable function $h$ the equality
$$
  \Tr(Vh(H_0^{(a)})) = \int_{\LambHF{H_0}} h(\lambda) \Tr_{\hlambda} (\Pi_{H_0}(V)(\lambda))\,d\lambda
     % = \Phia_{H_0}(V)(\lambda).
$$
holds. % for a.e. $\lambda \in \mbR.$
\end{thm}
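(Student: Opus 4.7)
The plan is to compute both sides explicitly in the basis $(\psi_j)$ of $\clK$, reduce them to the same absolutely convergent double series, and use Fubini to connect the two. The assumption~(\ref{A: double sum is conv-t}) together with Lemma~\ref{L: double sum is summable} is precisely what will make the interchange of sum and integral legitimate; without it, the identity would be formal.

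First I would expand the left-hand side. Since $V=F^*JF$ and $h(H_0^{(a)}) = h(H_0)P^{(a)}(H_0)$, cyclicity of the trace~(\ref{F: Tr(AB)=Tr(BA)}) gives
\begin{equation*}
  \Tr\bigl(Vh(H_0^{(a)})\bigr) = \Tr_\clK\bigl(JFh(H_0)P^{(a)}F^*\bigr).
\end{equation*}
Evaluating this trace in the basis $(\psi_j)$, and using the defining relations~(\ref{F: F phi j=...}) $F\phi_j=\kappa_j\psi_j$ and $F^*\psi_j=\kappa_j\phi_j$, I would obtain
\begin{equation*}
  \Tr\bigl(Vh(H_0^{(a)})\bigr) = \sum_{j,k=1}^\infty \kappa_j\kappa_k J_{jk}\,\bigl\langle \phi_k, h(H_0)\phi_j^{(a)}\bigr\rangle.
\end{equation*}
Writing $\langle \phi_k, h(H_0)\phi_j^{(a)}\rangle = \int h(\lambda)\,d\langle \phi_k, E^{(a)}_\lambda \phi_j\rangle$ and invoking Corollary~\ref{C: int Delta dF = int Delta F'dl} (which, through Fatou's theorem, identifies the density of the a.c.\ part with $\frac1\pi\Im\scal{\phi_k}{R_{\lambda+i0}(H_0)\phi_j}$), together with property~\ref{SS: phi j}(viii), each term rewrites as $\int_{\LambHF{H_0}} h(\lambda)\,\scal{\phi_k(\lambda)}{\phi_j(\lambda)}_\hlambda\,d\lambda$.

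Next I would compute the right-hand side pointwise. Using $V=F^*JF$ and the fact that $T_\lambda := F\euE_\lambda^\diamondsuit\colon\hlambda\to\clK$ satisfies $T_\lambda^*\psi_j = \eta_j(\lambda)$, the operator $\Pi_{H_0}(V)(\lambda)=T_\lambda^*JT_\lambda$ is trace-class on $\hlambda$ (since $T_\lambda$ is Hilbert--Schmidt by~\ref{SS: matrix eta}(i)), so cyclicity of trace gives $\Tr_\hlambda\Pi_{H_0}(V)(\lambda) = \Tr_\clK(JT_\lambda T_\lambda^*)$. Evaluating this in the basis $(\psi_j)$, and noting that $\scal{T_\lambda^*\psi_k}{T_\lambda^*\psi_j}_\hlambda=\scal{\eta_k(\lambda)}{\eta_j(\lambda)} = \phi_{kj}(\lambda) = \kappa_k\kappa_j\scal{\phi_k(\lambda)}{\phi_j(\lambda)}$, I would conclude
\begin{equation*}
  \Tr_\hlambda\Pi_{H_0}(V)(\lambda) = \sum_{j,k=1}^\infty \kappa_j\kappa_k J_{jk}\,\scal{\phi_k(\lambda)}{\phi_j(\lambda)}_\hlambda.
\end{equation*}

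Finally, the two sides of the theorem are matched by Fubini--Tonelli applied to the double sum/integral
\begin{equation*}
  \sum_{j,k}\kappa_j\kappa_k J_{jk} \int_{\LambHF{H_0}} h(\lambda)\,\scal{\phi_k(\lambda)}{\phi_j(\lambda)}\,d\lambda,
\end{equation*}
whose absolute convergence follows from boundedness of $h$, from Corollary~\ref{C: phi j(l),phi k(l) is summable} (each $\int|\scal{\phi_k(\lambda)}{\phi_j(\lambda)}|\,d\lambda\le 1$), and from the hypothesis $\sum|\kappa_j\kappa_k J_{jk}|<\infty$, exactly as packaged in Lemma~\ref{L: double sum is summable}. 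The expected obstacle is purely bookkeeping: justifying the trace-class manipulations for the unbounded/non-Hilbert pieces ($V\colon\hilb_{-1}\to\hilb_1$, $\euE_\lambda^\diamondsuit\colon\hlambda\to\hilb_{-1}$, etc.) and verifying Fubini's hypothesis rigorously; both are ensured by the Hilbert--Schmidt character of $\euE_\lambda$ and by the standing assumption~(\ref{A: double sum is conv-t}).
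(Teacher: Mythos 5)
Your proposal is correct and follows essentially the same route as the paper's proof: expand $V=F^*JF$, evaluate both traces in the $(\psi_j)$-basis of $\clK$ to obtain the double sum $\sum_{j,k}\kappa_j\kappa_k J_{jk}\scal{\phi_j(\lambda)}{\phi_k(\lambda)}$, and justify the interchange of summation and integration via Lemma~\ref{L: double sum is summable} together with Corollary~\ref{C: phi j(l),phi k(l) is summable} and assumption~(\ref{A: double sum is conv-t}). The introduction of $T_\lambda := F\euE_\lambda^\diamondsuit$ and the route through Corollary~\ref{C: int Delta dF = int Delta F'dl} rather than the paper's Theorem~\ref{T: euE h(H0)f=...} and Corollary~\ref{C: (ED xi,ED eta) = int D (xi l,eta l)} are cosmetic differences, not a different method.
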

\begin{proof} (A) Here we prove the claim assuming (\ref{A: double sum is conv-t}).

Since $V$ satisfies (\ref{F: V hilb(-1) to hilb(1)}), it has the representation
\begin{equation} \label{F: V=F*JF}
  V = F^* J F,
\end{equation}
where $J \colon \clK \to \clK$ is a bounded self-adjoint operator
(not necessarily invertible). We recall that the frame operator~$F$ is given by (\ref{F: Frame=...}).
Let $(J_{jk})$ be the matrix of $J$ in the basis $(\psi_j)$ (see (\ref{F: Frame=...})), i.e.
\begin{equation} \label{F: J(ij)}
  J\psi_j = \sum\limits_{k=1}^\infty J_{jk} \psi_k.
\end{equation}
%We have for any bounded measurable function $h$
Using (\ref{F: Tr(AB)=Tr(BA)}) and (\ref{F: V=F*JF}), we have
$$
  \Tr_\hilb(Vh(H_0^{(a)})) = \Tr_\clK(JFh(H_0^{(a)})F^*).
$$
Calculation of the trace in the right hand side of this formula in the orthonormal basis $(\psi_j)$ of~$\clK,$
together with (\ref{F: J(ij)}) and (\ref{F: F phi j=...}) give
\begin{equation*}
 \begin{split}
   \Tr_\hilb(Vh(H_0^{(a)})) & = \sum_{j=1}^\infty \scal{JFh(H_0^{(a)})F^*\psi_j}{\psi_j}
     \\ & = \sum_{j=1}^\infty \scal{h(H_0^{(a)})F^*\psi_j}{F^*J\psi_j}
     \\ & = \sum_{j=1}^\infty \scal{h(H_0^{(a)})F^*\psi_j}{F^* \sum_{k=1}^\infty J_{jk}\psi_k}
     \\ & = \sum_{j=1}^\infty \kappa_j \scal{h(H_0^{(a)})\phi_j}{\sum_{k=1}^\infty J_{jk} \kappa_k \phi_k}
     \\ & = \sum_{j=1}^\infty \sum_{k=1}^\infty \kappa_j \kappa_k J_{jk} \scal{h(H_0^{(a)})\phi_j}{\phi_k}.
 \end{split}
\end{equation*}
This double sum is absolutely convergent by the assumption (\ref{A: double sum is conv-t})
and the estimate \ $\abs{\scal{h(H_0^{(a)})\phi_j}{\phi_k}} \leq \abs{h}_\infty.$

Now, combining the last equality with Theorem \ref{T: euE h(H0)f=...}
and Corollary \ref{C: (ED xi,ED eta) = int D (xi l,eta l)} implies
\begin{equation*}
  \Tr_\hilb(Vh(H_0^{(a)}))
     = \sum_{j=1}^\infty \sum_{k=1}^\infty \kappa_j \kappa_k J_{jk}
         \int_{\LambHF{H_0}} h(\lambda)\scal{\phi_j(\lambda)}{\phi_k(\lambda)}_\hlambda\,d\lambda.
\end{equation*}
It follows from Lemma \ref{L: double sum is summable}, that
the integral and summations in the last equality can be interchanged:
\begin{equation} \label{F: some formula 1}
  \Tr_\hilb(Vh(H_0^{(a)}))
     = \int_{\LambHF{H_0}} h(\lambda)\sum_{j=1}^\infty \sum_{k=1}^\infty \kappa_j \kappa_k J_{jk}
         \scal{\phi_j(\lambda)}{\phi_k(\lambda)}_\hlambda\,d\lambda.
\end{equation}

On the other hand, by (\ref{F: Tr(AB)=Tr(BA)}) and (\ref{F: V=F*JF}), for any $\lambda \in \LambHF{H_0}$
\begin{equation*}
   \Tr_{\hlambda} (\euE_\lambda V \euE_\lambda^\diamondsuit)
      = \Tr_{\clK} (JF \euE_\lambda^\diamondsuit\euE_\lambda F^*).
\end{equation*}

Similarly, evaluation of the last trace in the orthonormal basis $(\psi_j)$ of~$\clK$
gives
\begin{equation*}
 \begin{split}
  \Tr_{\hlambda} (\Pi_{H_0}(V)(\lambda)) =
  \Tr_{\hlambda} (\euE_\lambda V \euE_\lambda^\diamondsuit)
       & = \sum_{j=1}^\infty \scal{\euE_\lambda^\diamondsuit\euE_\lambda F^*\psi_j}{F^*J \psi_j}_{-1,1}
    \\ & = \sum_{j=1}^\infty \scal{\euE_\lambda F^*\psi_j}{\euE_\lambda F^*J \psi_j}_{\hlambda}
    \\ & = \sum_{j=1}^\infty \kappa_j \scal{ \euE_\lambda\phi_j}{\euE_\lambda \sum_{k=1}^\infty J_{jk} \kappa_k \phi_k}_\hlambda
    \\ & = \sum_{j=1}^\infty \sum_{k=1}^\infty \kappa_j \kappa_k J_{jk} \scal{ \euE_\lambda\phi_j}{\euE_\lambda\phi_k}_\hlambda.
 \end{split}
\end{equation*}
(The last equality here holds, since $\sum_{k=1}^\infty$ converges absolutely).
Combining this equality with (\ref{F: some formula 1})
completes the proof of (A).

(B) Plainly, the set of operators $J$ which satisfy the condition (\ref{A: double sum is conv-t}) is dense in $\clB(\clK)$
in the strong operator topology. So, let $J \in \clB(\clK)$ and let $J_1, J_2, \ldots$ be a sequence of operators converging
to $J$ in the strong operator topology and such that all $J_n$ satisfy (\ref{A: double sum is conv-t}). Convergence in strong operator topology
implies that operators $J_1, J_2, \ldots$ are uniformly bounded.

We have
\begin{multline*}
  \Tr_{\hlambda} (\Pi_{H_0}(V)(\lambda)) = \Tr_{\hilb_1} (V \euE^\diamondsuit_\lambda\euE_\lambda) = \Tr_{\hilb_1} \brs{V \frac 1\pi \Im R_{\lambda+i0}(H_0)}
    \\ = \Tr_{\hilb} \brs{J \frac 1\pi F\Im R_{\lambda+i0}(H_0)F^*}.
\end{multline*}
It follows from this and Lemma \ref{L: An to A so then AnV to AV}
that for all $\lambda \in \Lambda(H_0,F)$
$$\Tr_{\hlambda} (\Pi_{H_0}(V_n)(\lambda)) \to \Tr_{\hlambda} (\Pi_{H_0}(V)(\lambda))$$
as $n \to \infty,$ where $V_n = F^*J_nF.$
Since norms of operators $J_1, J_2, \ldots$ are uniformly bounded (by, say, $C>0$) , the summable functions $\lambda \mapsto \Tr_{\hlambda} (\Pi_{H_0}(V_n)(\lambda))$
are dominated by a single summable (by Proposition \ref{P: Tr(F* Im R F) is L1}) function $C \Tr_{\hilb} \brs{\frac 1\pi F\Im R_{\lambda+i0}(H_0)F^*}.$

It follows from this and the Lebesgue dominated convergence theorem that
$$
  \norm{\Tr_{\hlambda} (\Pi_{H_0}(V_n)(\cdot)) - \Tr_{\hlambda} (\Pi_{H_0}(V)(\cdot))}_1  \to 0.
$$
%since the number
%$$
%  \norm{\Tr_{\hilb} \brs{\frac 1\pi F\Im R_{\lambda+i0}(H_0)F^*}}_1 \leq \frac 1\pi \norm{F}_2^2
%$$
%is finite.

(C) Combining (A), (B) and Lemma \ref{L: An to A so then AnV to AV} completes the proof.
\end{proof}

The \emph{infinitesimal spectral flow} $\Phi_{H_0}(V)$ is a distribution on~$\mbR,$
defined by the formula
$$
  \Phi_{H_0}(V)(\phi) = \Tr(V\phi(H_0)).
$$
This notion was introduced in~\cite{ACS} and developed in~\cite{AS2,Az}.
The terminology ``infinitesimal spectral flow'' is justified by the following classical formula
from formal perturbation theory (see e.g. \cite[(38.6)]{LL3})
$$
  E_n^{(1)} = V_{nn},
$$
where $V_{nn} = \la n | V | n \ra$ is the matrix element of the perturbation $V,$
$E_n^{(1)}$ denotes the first correction term for the $n$-th eigenvalue $E_n^{(0)}$ (corresponding to $|n\ra$)
of the unperturbed operator~$H_0$ perturbed by $V.$ If the support of $\phi$ contains only the
eigenvalue $E_n^{(0)}$ and $\phi\brs{E_n^{(0)}} = 1,$ then $\Tr(V\phi(H_0)) = V_{nn}.$
So, $\Phi_{H_0}(V)(\phi)$ measures the shift of eigenvalues of~$H_0.$ Another justification is that,
according to the Birman-Solomyak formula (\ref{Int: BS formula}), the spectral shift function is the integral of infinitesimal spectral flow
$\Phi_{H_r}(V)(\delta).$

% Reasons for this notion were given in~\cite{AS2}.

\begin{rems} \label{R: mu(phi)}
From now on, for an absolutely continuous measure $\mu$
we denote its density by the same symbol. So, in $\mu(\phi), \ \phi \in C_c(\mbR),$
$\mu$ is a measure, while in $\mu(\lambda), \ \lambda \in \mbR,$ $\mu$ is a function.
\end{rems}

Actually, $\Phi_{H_0}(V)$ is a measure~\cite{AS2}. So, one can introduce the absolutely continuous
and singular parts of the infinitesimal spectral flow:
$$
  \Phia_{H_0}(V)(\phi) = \Tr(V\phi(H^{(a)}_0)),
$$
and
$$
  \Phis_{H_0}(V)(\phi) = \Tr(V\phi(H^{(s)}_0)).
$$
% Since $\LambHF{H_0}$ is a set of full Lebesgue measure,

Recall (see (\ref{F: Pi = ZJZ*})) that for every $\lambda \in \Lambda(H_0;F)$ and any $V \in \clA(F)$ (see (\ref{F: clA(F)})
for the definition of $\clA(F)$), we have a trace class operator
$$
  \Pi_{H_0}(V)(\lambda) \colon \hlambdao \to \hlambdao.
$$
We define the \emph{standard density} function of the absolutely continuous infinitesimal
spectral flow by the formula
\begin{equation} \label{F: def of Phia(l)}
  \Phia_{H_0}(V)(\lambda) := \Tr(\Pi_{H_0}(V)(\lambda)) \quad \text{for all} \ \ \lambda \in \LambHF{H_0},
\end{equation}
where $V \in \clA(F),$ and where, allowing a little abuse of notation\footnote{See Remark \ref{R: mu(phi)}}, we denote the density of
the infinitesimal spectral flow $\Phia_{H_0}(V)$ by the same symbol $\Phia_{H_0}(V)(\cdot).$
Since $\Phia_{H_0}(V)$ is absolutely continuous, the usage of this notation should not cause any problems.
This terminology and notation are justified by Theorem \ref{T: Tr Pi = Phi}.

The absolutely continuous part $\Phia_{H_0}(\cdot)(\lambda)$ of infinitesimal spectral flow can be looked at as a one-form on the affine space of operators
$$
  H_0 + \clA(F),
$$
where $\clA(F)$ is defined by (\ref{F: clA(F)}).

The standard density $\Phia_{H_0}(V)(\cdot)$ of the absolutely continuous part of the infinitesimal spectral
flow may depend on a frame operator~$F.$ But, as Theorem \ref{T: Tr Pi = Phi} shows,
for any two frames the corresponding standard densities are equal a.e.
\begin{cor} Let $H_0$ be a self-adjoint operator and let $V$ be a trace-class self-adjoint operator.
For any two frame operators $F_1$ and~$F_2,$ such that $V \in \clA(F_1) \cap \clA(F_2),$
the standard densities (\ref{F: def of Phia(l)}) of the absolutely continuous part
of the infinitesimal spectral flow coincide a.e.
\end{cor}
\begin{proof} Indeed, the left hand side of the formula in Theorem \ref{T: Tr Pi = Phi} does not depend on $F.$
\end{proof}

Recall that $\clA(F)$ is a vector space of trace-class self-adjoint operators, associated with a given frame $F,$ (see (\ref{F: clA(F)})).
\begin{lemma} \label{L: L1 norm of Phia <L1 norm of V} Let $H$ be a self-adjoint operator on a Hilbert space with frame $F$
and let $V \in \clA(F).$ The function $\Lambda(H,F) \ni \lambda \mapsto \Phia_{H}(V)(\lambda)$  is summable
and its $L_1$-norm is $\leq \norm{V}_1.$
\end{lemma}
\begin{proof} By Theorem \ref{T: Tr Pi = Phi}, this function is a density of an absolutely continuous finite measure $\phi \mapsto \Phia_H(V)(\phi).$
By the same theorem, $L_1$-norm of this function is $\leq \norm{V}_1.$
\end{proof}

One can consider the resonance set as a set-function of two variables $r$ and $\lambda:$
%$\gamma(\set{H_r}; F)$ we denote the set of all pairs $(\lambda,r) \in \mbR^2$ such that
%$\lambda \in \LambHF{H_r}:$
$$
  \gamma(\set{H_r}; F) := \set{(\lambda,r) \in \mbR^2 \colon \lambda \in \LambHF{H_r}}.
$$
\begin{lemma} \label{L: Phia(r,lambda) is meas-ble}
Let $\set{H_r}$ be a path of self-adjoint operators, which satisfy Assumption \ref{A: assumption on Hr}.
The set $\gamma(\set{H_r}; F) \subset \mbR^2$ is Borel measurable and
the function (see (\ref{F: def of Phia(l)}))
\begin{equation} \label{F: Phia(r,lambda)}
  \gamma(\set{H_r}; F) \ni (\lambda,r) \mapsto \Phia_{H_r}(\dot H_r)(\lambda)
\end{equation}
is also measurable. Moreover, the complement of $\gamma(\set{H_r}; F)$ is a null set in~$\mbR^2.$
\end{lemma}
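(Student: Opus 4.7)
The plan has three components: establish Borel measurability of $\gamma(\set{H_r};F)$, deduce that its complement has two-dimensional Lebesgue measure zero, and show Borel measurability of the scalar function in (\ref{F: Phia(r,lambda)}).

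First I would show that for every fixed $y>0$ the map $(r,\lambda)\mapsto FR_{\lambda+iy}(H_r)F^*$ is continuous from $\mbR^2$ into $\clL_2(\clK)$, and the map $(r,\lambda)\mapsto F\Im R_{\lambda+iy}(H_r)F^*$ is continuous into $\clL_1(\clK)$. Continuity in $\lambda$ is the first resolvent identity; continuity in $r$ is the second resolvent identity combined with Assumption \ref{A: assumption on Hr}(iii) (piecewise real-analyticity of $r\mapsto V_r$ in $\clL_1(\hilb)$), and sandwiching by the Hilbert-Schmidt operator $F$ converts uniform norm estimates on resolvent differences into $\clL_2$/$\clL_1$ estimates; for the imaginary part one uses $\Im R_{\lambda+iy}(H_r)=yR_{\lambda-iy}(H_r)R_{\lambda+iy}(H_r)$. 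By the Cauchy criterion for convergence as $y\to 0^+$ in the respective norms,
\begin{equation*}
  \gamma(\set{H_r};F)=\bigcap_{m\ge 1}\bigcup_{n\ge 1}\bigcap_{y,y'\in(0,1/n)\cap\mbQ_+}\Bigl\{(r,\lambda)\colon \mathcal{D}_{y,y'}(r,\lambda)<\tfrac{1}{m}\Bigr\},
\end{equation*}
where $\mathcal{D}_{y,y'}(r,\lambda)$ is the sum of the $\clL_2(\clK)$-distance between $FR_{\lambda+iy}(H_r)F^*$ and $FR_{\lambda+iy'}(H_r)F^*$ and of the $\clL_1(\clK)$-distance of the corresponding imaginary parts. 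By the joint continuity just established, each innermost set is open in $\mbR^2$, so $\gamma(\set{H_r};F)$ is Borel. The null-complement statement then follows immediately from Fubini: for every fixed $r$ the horizontal slice equals $\LambHF{H_r}$, which has full Lebesgue measure by Proposition \ref{P: Lambda(H0) has full meas}.

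For measurability of the function (\ref{F: Phia(r,lambda)}), I would reduce the trace on the fibre $\hlambda(H_r)$ to a trace on $\clK$. Cyclicity of the trace together with the identity $\euE_\lambda^\diamondsuit(H_r)\euE_\lambda(H_r)=\tfrac{1}{\pi}\Im R_{\lambda+i0}(H_r)$ from (\ref{F: euE diam euE = Im R}) and the factorisation $\dot H_r=F^*\dot J_r F$ yields
\begin{equation*}
  \Phia_{H_r}(\dot H_r)(\lambda)=\Tr\bigl(\euE_\lambda(H_r)\dot H_r\euE_\lambda^\diamondsuit(H_r)\bigr)=\tfrac{1}{\pi}\Tr_\clK\bigl(\dot J_r\cdot F\Im R_{\lambda+i0}(H_r)F^*\bigr).
\end{equation*}
On $\gamma$ the $\clL_1(\clK)$-valued map $(r,\lambda)\mapsto F\Im R_{\lambda+i0}(H_r)F^*$ is the pointwise $\clL_1(\clK)$-limit, as $y\downarrow 0$ through rationals, of the jointly continuous maps $(r,\lambda)\mapsto F\Im R_{\lambda+iy}(H_r)F^*$, hence is Borel measurable; the map $r\mapsto\dot J_r\in\clB(\clK)$ is measurable by piecewise real-analyticity of $r\mapsto J_r$; and the trace is continuous on $\clL_1(\clK)$ with $|\Tr_\clK(AB)|\le\|A\|\cdot\|B\|_1$. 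Composing these ingredients produces a Borel measurable scalar function on $\gamma$.

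The main obstacle is purely one of bookkeeping across the three ideals in play: one has to verify that the cyclic permutation under the trace legitimately moves $\euE_\lambda^\diamondsuit\euE_\lambda$ between its natural habitat $\clL_1(\hilb_1,\hilb_{-1})$ and the ambient ideal $\clL_1(\clK)$ obtained after sandwiching by $F$ and $F^*$, and that the finitely-many exceptional points of non-analyticity of $r\mapsto J_r$ on each compact interval produce only a Borel (in fact countable) set of $r$'s and do not spoil the joint measurability of the derivative $\dot J_r$.
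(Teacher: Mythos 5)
Your proof is correct and aligns with the paper's overall scheme (Borel structure of $\gamma$ from a continuous family, measurability from a pointwise limit of continuous approximants, null complement from Fubini), but it is cleaner in a couple of places where the paper leaves details implicit. For the measurability of the scalar function, the paper approximates by $\Tr\brs{\euE_{\lambda+iy}(H_r)\dot H_r\euE_{\lambda+iy}^\diamondsuit(H_r)}$ and only records the $y\to 0^+$ convergence for fixed $(r,\lambda)$; the joint $(r,\lambda)$-continuity of $\euE_{\lambda+iy}(H_r)$ for fixed $y>0$, which passes through the square-root construction $\eta(\lambda+iy)=\sqrt{\phi(\lambda+iy)}$, is not spelled out. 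Your rewriting, via cyclicity of the trace, as $\tfrac1\pi\Tr_\clK\brs{\dot J_r\,F\Im R_{\lambda+i0}(H_r)F^*}$ is, after the same cyclic permutation, actually \emph{equal} to the paper's approximant, but it bypasses the square root and makes joint continuity of the $y>0$ approximants immediate from the two resolvent identities; this is the same reduction the paper itself performs in the proof of Theorem \ref{T: Tr Pi = Phi}, so it rests on tools already available. You also correctly observe that the Fubini step needs only that each $r$-slice $\LambHF{H_r}$ is a full set (Proposition \ref{P: Lambda(H0) has full meas}); the paper's additional invocation of the discreteness of the resonance set (Theorem \ref{T: R(H0,G) is discrete}) is not required for this direction of slicing, once $\gamma$ is known to be Borel. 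The bookkeeping you flag as the main obstacle --- moving $\euE_\lambda^\diamondsuit\euE_\lambda$ from $\clL_1(\hilb_1,\hilb_{-1})$ to the trace over $\clK$ --- is exactly what (\ref{F: Tr(AB)=Tr(BA)}) handles in the proof of Theorem \ref{T: Tr Pi = Phi}, so it presents no real difficulty.
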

\begin{proof} %\margproof
The set $\gamma(\set{H_r}; F)$ is Borel measurable since it is the (intersection of two) set of points of convergence of
a family of continuous functions
$$
  F R_z(H_r)F^*
$$
of two variables $r$ and $z = \lambda+iy$ (see Definition \ref{D: Lambda(H0,F)}), as $y \to 0^+.$

The function $(\lambda,r) \mapsto \Phia_{H_r}(\dot H_r)(\lambda)$ is measurable since
\begin{equation*}
 \begin{split}
  \Phia_{H_r}(\dot H_r)(\lambda) = \Tr(\Pi_{H_r}(\dot H_r)(\lambda)) & = \Tr \brs{\euE_\lambda(H_r) \dot H_r \euE_\lambda^\diamondsuit(H_r)}
    \\ & = \lim_{y \to 0^+} \Tr \brs{\euE_{\lambda+iy}(H_r) \dot H_r \euE_{\lambda+iy}^\diamondsuit(H_r)},
 \end{split}
\end{equation*}
where the last equality follows from the fact that $\euE_{\lambda+iy}^\diamondsuit(H_r) \colon \hlambdar \to \hilb_{-1}$
is Hilbert-Schmidt (see subsection \ref{SS: euE's}), $\dot H_r \colon \hilb_{-1}\to \hilb_{1}$ is bounded
and $\euE_{\lambda+iy}(H_r) \colon \hilb_{1}\to \hlambdar$
is also Hilbert-Schmidt, and the operators $\euE_{\lambda+iy}^\diamondsuit(H_r),$ $\euE_{\lambda+iy}(H_r)$
converge to $\euE_{\lambda+i0}^\diamondsuit(H_r),$ $\euE_{\lambda+i0}(H_r)$ in the Hilbert-Schmidt norm, so that the product
$\euE_{\lambda+iy}(H_r) \dot H_r \euE_{\lambda+iy}^\diamondsuit(H_r)$ converges to $\euE_\lambda(H_r) \dot H_r \euE_\lambda^\diamondsuit(H_r)$
in the trace-class norm, as $y \to 0^+.$

That the complement of $\gamma(\set{H_r}; F)$ is a null set in~$\mbR^2$ now follows from Fubini's Theorem, from the discreteness property
of the resonance set with respect to $r$ (Theorem \ref{T: R(H0,G) is discrete}) and from the fact that $\LambHF{H_r}$ is a full set
(Proposition \ref{P: Lambda(H0) has full meas}).
\end{proof}

\subsection{Absolutely continuous and singular spectral shift functions}
Let $$\gamma = \set{H_r\colon r \in [0,1]}$$ be a continuous piecewise real-analytic path of operators.

For the given path $\gamma,$ we define the spectral shift function $\xi$ and its absolutely continuous $\xia$
and singular $\xis$ parts as distributions by the formulae
\begin{equation} \label{F: def of xi(phi)}
  \xi_{\gamma}(\phi; H_1, H_0) = \xi(\phi; H_1, H_0) = \int_0^1 \Phi_{H_r}(\dot H_r)(\phi)\,dr, \quad \phi \in \Cc,
\end{equation}
\begin{equation} \label{F: def of xia(phi)}
  \xia_{\gamma}(\phi; H_1, H_0) = \int_0^1 \Phia_{H_r}(\dot H_r)(\phi)\,dr, \quad \phi \in \Cc,
\end{equation}
\begin{equation} \label{F: def of xis(phi)}
  \xis_{\gamma}(\phi; H_1, H_0) = \int_0^1 \Phis_{H_r}(\dot H_r)(\phi)\,dr, \quad \phi \in \Cc.
\end{equation}
For the straight path $\set{H_r = H_0+rV},$
the first of these formulae is the Birman-Solomyak spectral averaging formula~\cite{BS75SM},
which shows that the definition of the spectral shift function, given above, coincides
with the classical definition of \KreinMG\ ~\cite{Kr53MS}. It was shown in \cite{AS2} that the integral in (\ref{F: def of xi(phi)})
is the same for all continuous piecewise analytic paths connecting $H_0$ and $H_1.$

While $\xi$ does not depend on the path $\gamma$ connecting $H_0$ and $H_1,$ the distributions $\xia$ and $\xis$
depend on the path (see subsection \ref{SS: xis is non-additive}).

\begin{lemma} Let $\gamma = \set{H_r}$ be a path which satisfies Assumption \ref{A: assumption on Hr}.
The distribution $\xia_{\gamma}$ is a finite absolutely continuous measure
with density\footnote{See Remark \ref{R: mu(phi)}}
\begin{equation} \label{F: def of xia}
  \xia_{\gamma}(\lambda; H_1,H_0) := \int_0^1 \Phia_{H_r}(\dot H_r)(\lambda)\,dr, \quad \ \ \lambda \in \LambHF{H_0}.
\end{equation}
\end{lemma}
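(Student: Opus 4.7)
The plan is to reduce the claim to an application of Fubini's theorem, using Theorem \ref{T: Tr Pi = Phi} to express $\Phia_{H_r}(\dot H_r)(\phi)$ as an integral against a density, and Lemma \ref{L: Phia(r,lambda) is meas-ble} to ensure joint measurability.

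First I would start from the definition (\ref{F: def of xia(phi)}): for $\phi \in C_c^\infty(\mbR),$
\[
  \xia_{H_1,H_0}(\phi) = \int_0^1 \Phia_{H_r}(\dot H_r)(\phi)\,dr = \int_0^1 \Tr\bigl(\dot H_r \phi(H_r^{(a)})\bigr)\,dr.
\]
Applying Theorem \ref{T: Tr Pi = Phi} with $h = \phi$ and with the self-adjoint trace-class operator $\dot H_r$ (which belongs to $\clA(F)$ and satisfies the summability assumption (\ref{A: double sum is conv-t}) on each piece of analyticity of $r \mapsto H_r$), I can rewrite the inner trace as
\[
  \Tr\bigl(\dot H_r \phi(H_r^{(a)})\bigr) = \int_{\LambHF{H_r}} \phi(\lambda)\,\Phia_{H_r}(\dot H_r)(\lambda)\,d\lambda,
\]
so that
\[
  \xia_{H_1,H_0}(\phi) = \int_0^1 \int_{\LambHF{H_r}} \phi(\lambda)\,\Phia_{H_r}(\dot H_r)(\lambda)\,d\lambda\,dr.
\]

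Next I would verify the hypotheses for Fubini's theorem. Joint Borel measurability of the integrand on $\gamma(\set{H_r};F) \subset [0,1] \times \mbR$ (and hence on all of $[0,1]\times\mbR$ after extension by zero on the null complement) is exactly the content of Lemma \ref{L: Phia(r,lambda) is meas-ble}. For integrability, since $\Phia_{H_r}(\dot H_r)$ is a finite signed measure of total variation at most $\norm{\dot H_r}_1$ (because $|\Tr(\dot H_r \psi(H_r^{(a)}))| \le \norm{\psi}_\infty \norm{\dot H_r}_1$ for all bounded Borel $\psi$), its density satisfies $\int |\Phia_{H_r}(\dot H_r)(\lambda)|\,d\lambda \le \norm{\dot H_r}_1,$ and by Assumption \ref{A: assumption on Hr} the map $r \mapsto \dot V_r = \dot H_r$ is piecewise trace-class real-analytic on $[0,1],$ hence $\int_0^1 \norm{\dot H_r}_1\,dr < \infty.$ Combined with the boundedness and compact support of $\phi,$ this gives
\[
  \int_0^1 \int_\mbR |\phi(\lambda)\, \Phia_{H_r}(\dot H_r)(\lambda)|\,d\lambda\,dr \le \norm{\phi}_\infty \int_0^1 \norm{\dot H_r}_1\,dr < \infty,
\]
so Fubini applies.

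Swapping the order of integration then yields
\[
  \xia_{H_1,H_0}(\phi) = \int_\mbR \phi(\lambda) \left(\int_0^1 \Phia_{H_r}(\dot H_r)(\lambda)\,dr\right)\,d\lambda,
\]
which exhibits $\xia_{H_1,H_0}$ as the absolutely continuous measure with density $\xia_{H_1,H_0}(\lambda)$ defined by (\ref{F: def of xia}). The inner integral converges for a.e.\ $\lambda$ by Fubini, which is precisely why the pointwise formula (\ref{F: def of xia}) makes sense almost everywhere on $\LambHF{H_0}.$ The only delicate step is the invocation of Theorem \ref{T: Tr Pi = Phi} for each $r,$ which requires that the additional summability assumption (\ref{A: double sum is conv-t}) holds along the path $\set{H_r}$; this is the main conceptual obstacle, but it is subsumed in the standing Assumption \ref{A: assumption on Hr} together with (\ref{A: double sum is conv-t}) that has been imposed at the beginning of this section.
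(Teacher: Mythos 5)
Your proof is correct and follows essentially the same route as the paper's: establish joint measurability via Lemma \ref{L: Phia(r,lambda) is meas-ble}, bound the $L^1$-norm of the density uniformly in $r$ using Theorem \ref{T: Tr Pi = Phi}, and then apply Fubini's theorem to interchange the $r$- and $\lambda$-integrations. The paper phrases the uniform bound slightly differently (stating that the $L^1$-norm is uniformly bounded by $\norm{V}$ rather than integrating $\norm{\dot H_r}_1$ over $r$), but this is a cosmetic difference and the argument is the same.
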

\begin{proof} (A) The function \ $\Phia_{H_r}(\dot H_r)(\lambda)$ \ %(\ref{F: Phia(r,lambda)})
  is summable on $[0,1]\times \mbR.$ Indeed, by
  Lemma \ref{L: Phia(r,lambda) is meas-ble} this function is measurable and
  by Lemma \ref{L: L1 norm of Phia <L1 norm of V}
  the $L_1$-norm of $\Phia_{H_r}(V)(\lambda)$ is uniformly bounded (by $\norm{V}_1$) with respect to $r \in [0,1].$

  (B) It follows from (A) and Fubini's theorem,
  that for any bounded measurable function~$h,$ in the iterated integral
  $$
    \int_0^1 \int _\mbR h(\lambda) \Phia_{H_r}(\dot H_r)(\lambda)\,d\lambda\,dr
  $$
  one can interchange the order of integrals. It follows from this and Theorem \ref{T: Tr Pi = Phi} that
  for any $\phi \in C_c(\mbR)$
%  $$
%    \xia_{H_1,H_0}(\phi) = \int _\mbR \xia_{H_1,H_0}(\lambda)\phi(\lambda)\,d\lambda.
%  $$
  \begin{equation*}
    \begin{split}
       \xia_{\gamma}(\phi) & = \int_0^1 \Phia_{H_r}(\dot H_r)(\phi)\,dr                      \mathcomment{\ref{F: def of xia(phi)}}
       \\ & = \int_0^1 \int_\mbR \Phia_{H_r}(\dot H_r)(\lambda)\phi(\lambda)\,d\lambda\,dr    \mathcomm{by Thm. \ref{T: Tr Pi = Phi}}
       \\ & = \int_\mbR \phi(\lambda) \int_0^1 \Phia_{H_r}(\dot H_r)(\lambda)\,dr\,d\lambda
       \\ & = \int_\mbR \phi(\lambda) \xia_{\gamma}(\lambda)\,d\lambda.
    \end{split}
  \end{equation*}
  It follows that $\xia_{\gamma}$ is absolutely continuous. Lemma \ref{L: L1 norm of Phia <L1 norm of V} implies that $\xia_{\gamma}$ is a finite measure.
\end{proof}
\begin{cor} The measure $\xis$ is also absolutely continuous and finite.
\end{cor}
\begin{proof} Since $\xi$ and $\xia$ are finite and absolutely continuous, the claim follows from $\xis = \xi - \xia.$
\end{proof}
%As a consequence, all these distributions are absolutely continuous measures.
In the last lemma we again denote by the same symbol $\xia_{\gamma}$
an absolutely continuous measure and its density.
We call the function $\xia_{\gamma}(\lambda)$ the standard density of $\xia_{\gamma}$ with respect to the frame $F.$
Note that $\xia_{\gamma}$ is explicitly defined for all $\lambda \in \LambHF{H_0}.$
It is not difficult to see that, for the straight line path $H_r = H_0+rV,$ the function $\xia_{\gamma}(\lambda),$ thus defined, coincides a.e. with the right hand side
of the formula (\ref{Int: def of xia}).

For further use, we note the following
\begin{prop} \label{P: xia(r) is analytic}
Let $\lambda$ be an essentially regular point. Let $\gamma = \set{H_r, r \in [a,b]}$ be an analytic path.
The function $[a,b] \ni r \mapsto \xia_\gamma(\lambda; H_r, H_0)$ is analytic in a neighbourhood of $[a,b].$
\end{prop}
\begin{proof} The function $r \mapsto \Phia_{H_r}(\dot H_r)(\lambda)$ is analytic as the trace of the analytic function
$r \mapsto \Pi_{H_r}(\dot H_r)(\lambda)$ (see Corollary \ref{C: Tr Pi is analytic}).
Therefore, the function $r \mapsto \xia_\gamma(\lambda; H_r, H_0)$ is analytic as the definite integral of an analytic function
$r \mapsto \Phia_{H_r}(H_r)(\lambda).$
\end{proof}

\bigskip

If $T$ is a trace-class operator, then by \ $\det(1+T)$ \ we denote the classical Fredholm determinant of $1+T$ (see subsection \ref{SS: Fredholm det}).
Since, by Corollary \ref{C: S(lambda) in 1+L1}, the scattering matrix $S(\lambda;H_r,H_0)$ takes values in $1 + \clL_1(\hlambdao),$
the determinant $\det S(\lambda;H_r,H_0)$ makes sense.

Let $\lambda \in \LambHF{H_0}.$ Note that,
by Proposition \ref{P: S(r) is continuous}, the function
$$
  \mbR \ni r \mapsto S(\lambda; H_r,H_0) \in 1 + \clL_1(\hlambdao)
$$
is continuous in~$\clL_1(\hlambdao).$ Hence, the function
$$
  \mbR \ni r \mapsto \det S(\lambda; H_r,H_0) \in \mbT
$$
is also continuous (see (\ref{F: det is cont-s})), where~$\mbT = \set{z \in \mbC\colon \abs{z} = 1}.$
So, it is possible to define a continuous function
$$
  \mbR \ni r \mapsto -\frac 1{2\pi i} \log \det S(\lambda; H_r,H_0) \in \mbR
$$
with zero value at $0.$

\begin{thm} \label{T: xia = -2pi i log det S}
Let $F$ be a frame operator on $\hilb$ and let $\gamma = \set{H_r}_{r \in [0,1]}$ be a path of operators,
which satisfies the Assumption \ref{A: assumption on Hr}.
For all $\lambda \in \LambHF{H_r} \cap \LambHF{H_0}$ the equality
\begin{equation} \label{F: xia = -2pi i log det S}
  \xia_{\gamma}(\lambda; H_r, H_0) = -\frac 1{2\pi i} \log \det S(\lambda;H_r,H_0)
\end{equation}
holds, where the logarithm is defined in such a way that the function
$$
  [0,r] \ni s \mapsto \log \det S(\lambda;H_s,H_0)
$$
is continuous.
\end{thm}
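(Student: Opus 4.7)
The plan is to prove the identity by differentiating both sides with respect to $r$ and matching the common initial value at $r=0$. First I would verify the initial condition: the right-hand side vanishes at $r=0$ because $S(\lambda;H_0,H_0) = w_+^*(\lambda;H_0,H_0)w_-(\lambda;H_0,H_0) = 1_\lambda$ by Proposition \ref{P: w(H0,H0;lambda)=1 and ...}, so $\det S = 1$ and $\log\det S = 0$ under the chosen continuous branch, while the left-hand side vanishes because the defining integral (\ref{F: def of xia}) is over an empty interval.

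Next I would differentiate the right-hand side with the Jacobi formula for the Fredholm determinant. By Corollary \ref{C: S(lambda) in 1+L1}, $S(\lambda;H_r,H_0)-1_\lambda \in \clL_1(\hlambdao)$, and by (\ref{F: dot Sr}) (combined with (\ref{F: S = 1 - 2 pi i ...})) the map $r \mapsto S(\lambda;H_r,H_0)$ is $\clL_1$-analytic away from the discrete resonance set $R(\lambda;\{H_r\},F)$. Hence for $r \notin R(\lambda;\{H_r\},F)$,
\begin{equation*}
  \frac{d}{dr}\log\det S(\lambda;H_r,H_0) = \Tr\bigl(S(\lambda;H_r,H_0)^{-1} \cdot \tfrac{d}{dr}S(\lambda;H_r,H_0)\bigr).
\end{equation*}
Substituting (\ref{F: ddr S(r) = ...}) and using the cyclicity of the trace together with the unitarity of $w_+(\lambda;H_r,H_0)$ (Corollary \ref{C: wave matrix is unitary}),
\begin{equation*}
  \frac{d}{dr}\log\det S(\lambda;H_r,H_0) = -2\pi i\,\Tr\bigl(\Pi_{H_r}(\dot H_r)(\lambda)\bigr) = -2\pi i\,\Phia_{H_r}(\dot H_r)(\lambda),
\end{equation*}
where the second equality is the definition (\ref{F: def of Phia(l)}) of the standard density. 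Dividing by $-2\pi i$ gives exactly $\frac{d}{dr}\xia_{H_r,H_0}(\lambda)$ by (\ref{F: def of xia}). Integration from $0$ to $r$, combined with the matching initial values, then yields the desired identity at every non-resonance $r$.

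The main obstacle is what happens at the resonance points $r \in R(\lambda;\{H_r\},F)$, where $\Pi_{H_r}(\dot H_r)(\lambda)$ and $w_\pm(\lambda;H_r,H_0)$ cease to exist and the manipulation above is not literally valid. The resolution relies on two regularity facts already established: by Proposition \ref{P: S(r) is continuous} the function $r \mapsto S(\lambda;H_r,H_0)$ extends analytically across all real poles, so $r \mapsto \log\det S(\lambda;H_r,H_0)$ is continuous on all of $\mbR$ along the chosen branch; and by Corollary \ref{C: Tr Pi is analytic} the function $r \mapsto \Tr(\Pi_{H_r}(\dot H_r)(\lambda)) = \Phia_{H_r}(\dot H_r)(\lambda)$ is piecewise real-analytic, hence locally bounded near each resonance point and in particular locally integrable, so the primitive $\xia_{H_r,H_0}(\lambda) = \int_0^r \Phia_{H_s}(\dot H_s)(\lambda)\,ds$ is a continuous function of $r$ across the discrete resonance set. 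Both sides of (\ref{F: xia = -2pi i log det S}) are therefore continuous functions of $r$ that agree off a discrete set and share the value $0$ at $r=0$; by continuity they agree everywhere, which proves the theorem.
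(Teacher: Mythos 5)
Your proof is correct and arrives by essentially the same route as the paper, just with the intermediate $\Texp$ machinery unrolled. The paper first proves the ordered-exponential representation (Theorem~\ref{T: S = T exp...}) as a separate step, converts the trace over $\hlambdar$ into a trace over $\hlambdao$ via unitarity of $w_+$, and then invokes Lemma~\ref{L: det Texp = exp Tr} to pass from the $\Texp$ to $\exp(\int \Tr(\cdot))$; you instead differentiate $\log\det S(\lambda;H_r,H_0)$ directly via the Jacobi formula for the Fredholm determinant and use cyclicity of the trace plus $w_+(\lambda;H_r,H_0)w_+(\lambda;H_0,H_r)=1$ to collapse $\Tr(S^{-1}\dot S)$ to $-2\pi i\,\Tr(\Pi_{H_r}(\dot H_r)(\lambda))$. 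These are the same computation: the paper's Lemma~\ref{L: det Texp = exp Tr} is itself proved precisely by the Jacobi-formula argument you carry out, and the $\Texp$ representation encapsulates the ODE $\dot S = -2\pi i\,w_+\Pi w_+ S$ that you differentiate through. One small thing worth making explicit: the cyclicity you use moves a trace between two different fiber Hilbert spaces ($\hlambdao$ and $\hlambdar$), so you should cite the identity $\Tr_\clK(AB)=\Tr_\hilb(BA)$ of (\ref{F: Tr(AB)=Tr(BA)}) rather than ordinary cyclicity within a single space, and the Jacobi formula for the Fredholm determinant, which the paper never states as a standalone lemma, deserves a reference (e.g.\ \cite{SimTrId2} or \cite{GK}). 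Your treatment of the resonance set is fine and matches the paper's: discreteness (Theorem~\ref{T: R(H0,G) is discrete}), piecewise analyticity of the integrand (Corollary~\ref{C: Tr Pi is analytic}), and analytic continuation of $S$ across resonance points (Proposition~\ref{P: S(r) is continuous}) together give continuity of both sides in $r$, so agreement off a discrete set plus agreement at $r=0$ suffices.
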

\begin{proof} By definitions (\ref{F: def of xia}) and (\ref{F: def of Phia(l)}) of $\xia$ and $\Phia$ we have
\begin{equation} \label{F: xia(l)=int Phia}
  \xia_{\gamma}(\lambda; H_r, H_0) = \int_0^r \Phia_{H_s}(\dot H_r)(\lambda)\,ds = \int_0^r \Tr_{\mathfrak h^{(s)}_\lambda} (\Pi_{H_s}(\dot H_r)(\lambda))\,ds.
\end{equation}
By Theorem \ref{T: R(H0,G) is discrete} and by the definition (\ref{F: Pi = ZJZ*})
of the infinitesimal scattering matrix $\Pi_{H_s}(V)(\lambda),$
the integrand of the last integral is defined for all $s \in [0,r]$
except the \emph{discrete} resonance set $R(\lambda; \set{H_r}, F),$ (see (\ref{F: R(lambda)})).
Moreover, by Corollary \ref{C: Tr Pi is analytic}, the function
$$
  \mbR \ni s \mapsto \Tr_{\mathfrak h^{(s)}_\lambda} (\Pi_{H_s}(V)(\lambda))
$$
is piecewise analytic. Consequently, the integral (\ref{F: xia(l)=int Phia}) is well defined.

Since, by Corollary \ref{C: wave matrix is unitary},
the operator $w_+(\lambda;H_s,H_0) \colon \mathfrak h^{(0)}_\lambda \to \mathfrak h^{(s)}_\lambda$
is unitary for all $s \notin R(\lambda; \set{H_r}, F),$ % where $R(\lambda; \set{H_r}, F)$ is a discrete set,
it follows from (\ref{F: xia(l)=int Phia}) that
$$
  \xia_{\gamma}(\lambda; H_r, H_0)
     = \int_0^r \Tr_{\mathfrak h^{(0)}_\lambda} (w_+(\lambda;H_0,H_s)\Pi_{H_s}(V)(\lambda)w_+(\lambda;H_s,H_0))\,ds.
$$
Theorem \ref{T: S = T exp...} and Lemma \ref{L: det Texp = exp Tr} now imply
$$
  -2\pi i \xia_{\gamma}(\lambda; H_r, H_0)
     = \log \det S(\lambda;H_r,H_0),
$$
where the branch of the logarithm is chosen as in the statement of the theorem.
\end{proof}

\begin{cor} \label{C: exp(-2pi i xia)=det S}
Let $F$ be a frame operator on $\hilb,$ let $\gamma = \set{H_r}_{r \in [0,1]}$ be a path of operators,
which satisfies Assumption \ref{A: assumption on Hr}.
If $\lambda \in \LambHF{H_r} \cap \LambHF{H_0},$ then
$$
  e^{-2\pi i \xia_\gamma(\lambda; H_r, H_0)} = \det S(\lambda;H_r,H_0).
$$
\end{cor}

Let $\xis_\gamma(\lambda; H_r, H_0)$ (respectively, $\xi(\lambda; H_r, H_0)$) be the density of
the absolutely continuous measure\footnote{See Remark \ref{R: mu(phi)}} $\xis_\gamma(\phi; H_r, H_0)$
(respectively, $\xi(\phi; H_r, H_0)$).
Since $V$ is trace-class, Corollary \ref{C: exp(-2pi i xia)=det S}
and the Birman-Krein formula (\ref{Int: BK formula})
$$
  e^{-2\pi i \xi(\lambda)} = \det S(\lambda;H_r,H_0) \quad \text{a.e.} \ \ \lambda \in \mbR
$$
imply the following result.
\begin{thm} \label{T: xis is int valued}
For any path of operators $\gamma = \set{H_r}_{r \in [0,1]},$
which satisfies Assumption \ref{A: assumption on Hr},
the singular part $\xis_\gamma(\lambda;H_0+V,H_0)$ of the spectral shift function
is an a.e. integer-valued function.
\end{thm}
Theorem \ref{T: xis is int valued} suggests that the singular part of the spectral shift function measures
the ``spectral flow'' of the singular spectrum regardless of its position with respect
to absolutely continuous spectrum.

The following corollary is the result mentioned in the introduction.
\begin{cor} Let $H_0$ be a self-adjoin operator and let $V$ be a self-adjoint trace-class operator.
Let $H_r = H_0 + rV.$
The density $\xis(\lambda; H_1, H_0)$ of the absolutely continuous measure
$$
  \xis_{H_1,H_0}(\phi) = \int_0^1 \Tr(V \phi(H_r^{(s)}))\,dr
$$
is a.e. integer-valued function.
\end{cor}
\begin{proof} By Lemma \ref{L: exists F for H0+rV}, for the straight line path $\set{H_r = H_0+rV, r \in [0,1]},$
which connects $H_0$ and $H_0+V,$ there exists a frame $F,$
such that Assumption \ref{A: assumption on Hr} holds.
So, Theorem \ref{T: xis is int valued} completes the proof.
\end{proof}

\subsection{Non-additivity of the singular spectral shift function}
\label{SS: xis is non-additive}
In the previous version of this paper (in arXiv) and in \cite{Az5} I mistakenly claimed that the singular part of the spectral
shift function was additive. In development of an example of a non-trivial singular spectral shift function given in \cite{Az6}
I found a contradiction. Looking for source of this contradiction I found a gap in the proof of additivity of the singular part of the spectral
shift function. Since the example was based on additivity of singular spectral shift function, the example is also wrong,
but it allows to give a counter-example to additivity of the singular spectral shift function. An example of a non-trivial singular spectral shift function
will be given in \cite{Az2011B}.

\begin{thm} \label{T: s.SSF is not additive} The singular part of the spectral shift function is not additive. That is,
there exist self-adjoint operators $H_0,H_1,H_2$ with trace class differences such that
$$
  \xis_{H_2,H_0} \neq \xis_{H_2,H_1} + \xis_{H_1,H_0},
$$
where the paths connecting the operators are assumed to be straight lines.
As a consequence, the absolutely continuous part of the spectral shift function is also not additive:
$$
  \xia_{H_2,H_0} \neq \xia_{H_2,H_1} + \xia_{H_1,H_0}.
$$
\end{thm}
The rest of this subsection is devoted to the proof of this theorem.

\begin{lemma} \label{L: Rz(D,v)} Let $\hilb$ a Hilbert space, let $v \in \hilb$ and let $D$ be a self-adjoint operator on~$\hilb.$
If
$$
  H = \left( \begin{array}{cc} D & v \\ \scal{v}{\cdot} & \alpha \end{array} \right)
$$
is a self-adjoint operator on $\hilb \oplus \mbC,$ where $\alpha \in \mbR,$ then the resolvent of $H$ is
$$
  R_z(H) = (H-z)^{-1} = \left( \begin{array}{ll} R_z(D) + A\scal{R_{\bar z}(D)v}{\cdot}R_z(D)v & \ \ -A R_z(D)v \\ -A\scal{R_{\bar z}(D)v}{\cdot} & \ \ A \end{array} \right),
$$
where $A = \brs{\alpha - z - \scal{v}{R_z(D)v}}^{-1}.$
\end{lemma}
\begin{proof} Direct calculation. \end{proof}

Note also that if $V = \scal{v}{\cdot}v,$ then (see e.g. \cite[(6.7.3)]{Ya})
\begin{equation} \label{F: resolvent for 1-dim perturbation}
  R_z(D+rV) = R_z(D) - \frac r{1+r\scal{v}{R_z(D)v}} \scal{R_{\bar z}(D)v}{\cdot}R_{z}(D)v.
\end{equation}

\begin{lemma}\label{L: pure point is empty} Let
$$
  H_{r,\alpha} := \left(\begin{array}{cc} D + r\scal{v}{\cdot}v & rv \\ r\scal{v}{\cdot} & \alpha
  \end{array}\right)
$$
be a self-adjoint operator on the Hilbert space $L_2(\mbR) \oplus \mbC,$
where $r, \alpha \in \mbR$ and
$$
  D = \frac 1i \frac d{dx} \ \ \text{and} \ \ v = \frac 1{\sqrt[4]{\pi}}e^{-\frac{x^2}{2}}.
$$
If $r\neq 0,$ then the operator $H_{r,\alpha}$ is absolutely continuous.
\end{lemma}
\begin{proof} (A) Here we show that the pure point part of $H_{r,\alpha}$ is zero.

Assume that there is a non-zero vector $\mathbf f = { f \choose f_0 } \in \hilb$
such that $H_{r,\alpha} \mathbf f = \lambda \mathbf f$ for some $\lambda \in \mbR.$
This implies that $f$ belongs to the domain of $D.$ Further, we have
$$
  H_{r,\alpha}\mathbf f = \left(\begin{array}{c} D f + r\scal{v}{f}v + rf_0 v \\ r\scal{v}{f} + \alpha f_0
  \end{array}\right) = \left(\begin{array}{c} \lambda f \\ \lambda f_0 \end{array}\right).
$$
This implies that \
$
  D f = \lambda f - r\scal{v}{f} v - r f_0 v,
$ \
so that $f' \in L_2(\mbR).$ Taking the Fourier transform of the last equality gives
$$
  \xi \hat f(\xi) = \lambda \hat f(\xi) - r\scal{v}{f} \hat v(\xi) - r f_0 \hat v(\xi).
$$
Since $\hat v = v,$ it follows that
$$
  \hat f(\xi) = -r\brs{\scal{v}{f} + f_0} \cdot \frac{v(\xi)}{\xi - \lambda}.
$$
Since $\frac{v(\xi)}{\xi - \lambda}$ is not $L_2,$ it follows that $f = 0$ and $f_0=0;$ that is, $\mathbf f=0.$

This contradiction completes the proof of (A).

(B) It is left to show that the singular continuous part of $H_{r,\alpha}$ is also empty.

Let $(\phi_j,\kappa_j)$ be a frame in $L_2(\mbR)$ consisting of, say, Hermite polynomials $\phi_j;$
numbers $\kappa_1, \kappa_2, \ldots$ can be chosen arbitrarily as long as they satisfy definition of frame.
Using Lemma \ref{L: Rz(D,v)} and (\ref{F: resolvent for 1-dim perturbation}) one can show that the set $\Lambda_0(H_{r,\alpha},F),$ given by (\ref{F: def of Lambda0}), is finite.
By Proposition \ref{P: Lambda 0 is a core of sing. sp}, it follows that the singular continuous part of the operator $H_{r,\alpha}$ is also zero.
This completes the proof.
\end{proof}

{\it Proof of Theorem \ref{T: s.SSF is not additive}.} \
In notation of Lemma \ref{L: pure point is empty}, let $H_0 = H_{0,-1}, \ H_1 = H_{1,0} \ \text{and} \ H_2 = H_{0,1}.$
It is easy to see that \ $\xis_{H_2,H_0} = \chi_{[-1,1]}.$
At the same time, by Lemma \ref{L: pure point is empty}, operators, which connect $H_0$ with $H_1$ and $H_1$ with $H_2,$
have zero singular parts. Hence, $\xis_{H_2,H_1} = \xis_{H_1,H_0} = 0.$
$\Box$

This proof also shows that the pure point part of the spectral shift function (defined in an obvious way) is also not additive.

\section{Pushnitski $\mu$-invariant and singular spectral shift function}
\label{S: Push...}
Though Theorem \ref{T: xis is int valued} shows that $\xis(\lambda)$ is an a.e. integer-valued,
it leaves a feeling of dissatisfaction, since the set of full measure, on which $\xis$ is defined,
is not explicitly indicated.

In fact, it is possible to give another proof of Theorem \ref{T: xis is int valued}, which uses
a natural decomposition of Pushnitski $\mu$-invariant $\mu(\theta,\lambda)$ (cf.~\cite{Pu01FA}, cf. also~\cite{Az2})
into absolutely continuous $\mua(\theta,\lambda)$ and singular $\mus(\theta,\lambda)$ parts,
so that the Birman-Krein formula becomes a corollary of this result and Theorem \ref{T: xia = -2pi i log det S}, rather than the other way.

In this section it will be shown that $\mus(\theta,\lambda)$ does not depend on the angle variable $\theta$
and coincides with $-\xis(\lambda).$ Since the $\mu$-invariant is integer-valued (it measures the spectral flow
of partial scattering phase shifts), it follows that $\xis(\lambda)$ is integer-valued.
The invariants $\mu(\theta,\lambda),$ $\mua(\theta,\lambda)$ and $\mus(\theta,\lambda)$
can be explicitly defined on $\LambHF{H_r} \cap \LambHF{H_0}.$

\subsection{Spectral flow for unitary operators}
Spectral flow for unitary operators of the class $1 + \clL_\infty(\hilb)$
was studied in \cite{Pu01FA}. Here we suggest a different approach to the definition of spectral flow
for unitary operators. It is based on the following intuitively obvious theorem, proof of which is nevertheless
lengthy and tedious.

We denote by $\set{a,b,\ldots}^*$ sets in which elements may appear more than once, and these multiple appearances are counted,
so that, say, $\set{7,7}^*\neq \set{7}^*,$ unlike usual sets. We call such sets rigged sets.

For $p \in [1,\infty],$ let
$$
  \clU_p(\hilb) = \set{U \in 1 + \clL_p(\hilb) \colon U \ \text{is unitary}}
$$
with the topology of convergence in $\clL_p(\hilb)$-norm.

\begin{thm} \label{T: selection thm} Let $-\infty \leq a < b \leq +\infty.$ Let
$$
  U \colon [a,b] \to \clU_1(\hilb)
$$
be a continuous path of unitary operators. There exists a sequence of continuous functions
$$
  \theta_j \colon [a,b] \to \mbR, \ j = 1,2,\ldots,
$$
such that for any $r \in [a,b]$ the rigged set
\begin{equation} \label{F: theta1...}
  \set{e^{i\theta_1(r)}, \ e^{i\theta_2(r)}, \ \ldots}^*
\end{equation}
coincides with the spectrum of $U(r)$ (counting multiplicities), excepting possibly the point~$1.$
\end{thm}
\begin{defn}
The \emph{$\mu$-invariant} of the path $U$ is a function
$$
  \mu(\theta; U) \colon (0,2\pi) \to \mbZ \cup \frac 12 \mbZ
$$
defined by the formula
$$
  \mu(\theta; U) = \sum_{j=1}^\infty [\theta; \theta_j(a),\theta_j(b)],
$$
where
$$
  [\theta; \theta_1, \theta_2] = \frac 12 \Brs{\#\set{k \in \mbZ \colon \theta_1 < \theta+2\pi k < \theta_2} + \#\set{k \in \mbZ \colon \theta_1 \leq \theta+2\pi k \leq \theta_2}}.
$$
\end{defn}
When $U(a) = 1,$ so that $\theta_j(a) = 0$ for all $j=1,2,\ldots,$ the last formula can be written as
$$
  \mu(\theta; U) = - \sum_{j=1}^\infty \SqBrs{\frac{\theta - \theta_j(b)}{2\pi}}.
$$

The $\mu$-invariant $\mu(\theta, U)$ counts the number of times eigenvalues of $U(r)$
cross a point $e^{i\theta} \in \mbT$ in anticlockwise direction
as $r$ moves from $a$ to $b.$ In other words, $\mu$-invariant is the spectral flow
of a path of unitary operators.

\begin{thm} \label{T: properties of mu} (1) The $\mu$-invariant is correctly defined, that is, it does not depend on the choice of continuous enumeration from Theorem \ref{T: selection thm}.
\\ (2) The $\mu$-invariant is homotopically invariant: if two continuous paths $U_1,U_2 \colon [a,b] \to 1+\clL_1(\hilb)$
of unitary operators with the same end-points
are homotopic, then $\mu(\theta; U_1) = \mu(\theta; U_2)$ for all $\theta \in (0,2\pi).$
\\ (3) If two continuous paths $U_1 \colon [a,b] \to \clU_1(\hilb)$
and $U_2 \colon [a,b] \to \clU_1(\clK)$
are such that $U_1(a) = 1_\hilb$ and $U_2(a) = 1_\clK$ and spectra of $U_1(b)$ and $U_2(b)$
coincide (counting multiplicities, and possibly excepting $1$),
then the difference
$$
  \mu(\theta; U_1) - \mu(\theta; U_2)
$$
is constant (does not depend on $\theta$).
\\ (4) If $U(a) = 1,$ then the equality
$$
  \int_0^{2\pi} \mu(\theta; U)\,d\theta = \sum_{j=1}^\infty \theta_j(b).
$$
holds. In particular, the right hand side of this equality does not depend on the choice
of continuous enumeration (\ref{F: theta1...}).
\end{thm}

We introduce the following definition.
\begin{defn} \label{D: def of xi-invariant} Let
$$
  U \colon [a,b] \to \clU_1(\hilb)
$$
be a continuous path of unitary operators, such that U(a) = 1.
The $\xi$-invariant of this path is the number
$$
  \xi(U) = - \frac 1 {2\pi} \int_0^{2\pi} \mu(\theta; U)\,d\theta = - \frac 1 {2\pi} \sum_{j=1}^\infty \theta_j(b).
$$
\end{defn}
It follows from Theorem \ref{T: properties of mu} that $\xi$-invariant is homotopically invariant.
\begin{prop} \label{P: xi(U) is cont-s} Let $U$ be as in Theorem \ref{T: selection thm}. The function
$$
  [a,b] \ni r \mapsto \xi(U_r) \in \clU_1(\hilb)
$$
is continuous, where $U_r$ is the restriction of the path $U$ to the interval $[a,r].$
\end{prop}

Proofs of Theorems \ref{T: selection thm} and \ref{T: properties of mu} and of Proposition \ref{P: xi(U) is cont-s} can be found in \cite{Az3}.

\subsection{Absolutely continuous part of Pushnitski $\mu$-invariant}
Let $\lambda \in \LambHF{H_0}.$ Let $\gamma = \set{H_r}$ be a path of operators which satisfies Assumption \ref{A: assumption on Hr}.

   We denote by
   $$e^{i\theta^*_1(\lambda,r)}, \ e^{i\theta^*_2(\lambda,r)}, \ e^{i\theta^*_3(\lambda,r)}, \ \ldots \in \mbT, $$ the eigenvalues of the scattering matrix
  ~$S(\lambda;H_r,H_0).$ Since, by Proposition \ref{P: S(r) is continuous},
   the scattering matrix~$S(\lambda;H_r,H_0)$ is a meromorphic function, which is analytic for real $r$'s,
   by Theorem \ref{T: selection thm} for a given path $\set{H_r}$
   the arguments
   $$
     \theta^*_1(\lambda,r),\theta^*_2(\lambda,r), \theta^*_3(\lambda,r), \ldots
   $$ may and will be chosen to be continuous functions of $r,$
   such that $\theta^*_j(\lambda,0) = 0.$

   \begin{defn} \label{D: a.c. Push. inv-t}
   % Let $\theta \in [0,2\pi).$
   Let $\lambda \in \Lambda(H_0,F).$ Let $\gamma = \set{H_r}$ be a path of operators which satisfies Assumption \ref{A: assumption on Hr}.
   The absolutely continuous part $\mua(\theta,\lambda; H_r, H_0)$ of Pushnitski $\mu$-invariant
   is the $\mu$-invariant of the path
   \begin{equation} \label{F: r to S(r)}
     [0,1] \ni r \mapsto S(\lambda; H_r,H_0).
   \end{equation}
   \end{defn}
   In other words,
   \begin{equation} \label{F: def of a.c. mu}
     [0,2\pi)\times \LambHF{H_0} \ni (\theta,\lambda) \mapsto \mua(\theta,\lambda; H_r, H_0)
       = - \sum_{j=1}^\infty \SqBrs{\frac{\theta - \theta^*_j(\lambda,r)}{2\pi}}.
   \end{equation}

Recall that the vector space $\clA(F)$ is defined in (\ref{F: clA(F)}).
\begin{thm}
Let $\gamma = \set{H_r}$ be a path of operators which satisfies Assumption \ref{A: assumption on Hr}.
For every $\lambda \in \LambHF{H_0}$ the equality
\begin{equation} \label{F: xia = 1/2pi int}
  \xia_\gamma(\lambda; H_1,H_0) = - \frac 1{2\pi} \int_0^{2\pi} \mua(\theta,\lambda; H_1,H_0)\,d\theta
\end{equation}
holds, where $H_1=H_0+V.$ That is, $\xia_\gamma(\lambda; H_1,H_0)$ is equal to the $\xi$-invariant of the path (\ref{F: r to S(r)}).
\end{thm}
Recall that $\xia_\gamma(\lambda; H_1,H_0)$ is defined by the formula (\ref{F: def of xia}).
\begin{proof} By the Lidskii theorem (see (\ref{F: Lidskii for det}))
\begin{equation*}
  \det S(\lambda;H_r,H_0) = \prod\limits_{j=1}^\infty e^{i\theta^*_j(\lambda,r)} = \exp\brs{i\sum\limits_{j=1}^\infty \theta_j^*(\lambda,r)}.
\end{equation*}
It follows from this, Proposition \ref{P: xi(U) is cont-s} and Theorem \ref{T: properties of mu}(4) that
$$
  -\frac 1{2\pi i } \log \det S(\lambda; H_r, H_0) = - \frac 1{2\pi} \sum\limits_{j=1}^\infty \theta_j^*(\lambda,r) = - \frac 1{2\pi} \int_0^{2\pi} \mua(\theta,\lambda; H_r,H_0)\,d\theta,
$$
where all functions of $r$ are continuous.
Now, Theorem \ref{T: xia = -2pi i log det S} completes the proof.
 %\cite[Theorem 9.8]{Az3v4} that
%$$
%  \xia(\lambda; H_r,H_0) = - \frac 1{2\pi} \sum\limits_{j=1}^\infty \theta_j^*(\lambda,r).
%$$
%Now, Lemma \ref{L: int mua = sum theta j} completes the proof.
\end{proof}

\subsection{Pushnitski $\mu$-invariant}
Let $H_0$ be a self-adjoint operator on $\hilb,$
let $F$ be a frame operator on $\hilb,$ let
$$
  V_r \in F^*J_rF,
$$
where $J_r \in \clB(\clK),$
and let $\set{H_r=H_0+V_r}$ satisfy Assumption \ref{A: assumption on Hr}. %In this subsection $r$ will be fixed,
%so it is in fact not important what the path $\set{H_r}$ is which connects $H_0$ and $H_1.$

Let $z \in \mbC,$ $\Im z > 0.$ Following \cite{Pu01FA}, we define the
$\tilde S$-function by the formula
\begin{equation} \label{F: def of tilde S(z,r)}
 \begin{split}
   \tilde S(z,r) & = \tilde S(z; H_r,H_0;F)
     \\ & = 1 - 2i \sqrt{\Im T_z(H_0)} J_r (1 + T_z(H_0)J_r)^{-1} \sqrt{\Im T_z(H_0)} \ \ \in \ 1 + \clL_1(\clK),
 \end{split}
\end{equation}
where
$$
  T_z(H_0) = F R_z(H_0) F^*.
$$
It is not difficult to verify that $\tilde S(z;
H_r,H_0;F)$ is a unitary operator, so that
$$
  \tilde S(z; H_r,H_0;F) \in \clU_1(\clK).
$$

\begin{lemma} \label{L: tilde S(z,r) is L1 cont-s}
Let $\set{H_r}$ be a path which satisfies Assumption \ref{A: assumption on Hr}
The function
$$
  (z,r) \mapsto \tilde S(z; H_r,H_0;F)
$$
is $\clL_1$-continuous on $\mbC_+\times \mbR.$
\end{lemma}
\begin{proof}
   By Lemma \ref{L: 1+rJT(z) is invertible}, %Lemma \cite[Lemma 5.1]{Az3},
   the operator $1+J_r T_z(H_0)$ is invertible on $\mbC_+\times \mbR.$
   Since, by Lemma \ref{L: lemma Y}, %\ref{L: if A to 0 in L1 then sqrt A to 0 in L2},
   $\sqrt{\Im T_z(H_0)}$ is $\clL_2$-continuous,
   it follows from H\"older's inequality (\ref{F: Holder inequality}) that $\tilde S(z; H_r,H_0;F)$ is $\clL_1$-continuous $\mbC_+\times \mbR.$
\end{proof}
\begin{lemma} \label{L: tilde S(l+i0,r) exists}
If $\lambda \in \Lambda(H_0,F) \cap \Lambda(H_r,F),$ then the limit
\begin{equation} \label{F: tilde S(l+i0,r)}
 \begin{split}
   \tilde S(\lambda+i0,r) & = \tilde S(\lambda+i0; H_r,H_0;F)
     \\ & = 1 - 2i \sqrt{\Im T_{\lambda+i0}(H_0)} J_r (1 + T_{\lambda+i0}(H_0)J_r)^{-1} \sqrt{\Im T_{\lambda+i0}(H_0)} \ \ \in \ \clU_1(\clK)
 \end{split}
\end{equation}
exists in $\clL_1(\clK)$-norm.
\end{lemma}
\begin{proof} Since $\lambda \in \Lambda(H_0,F),$ the limit $\Im T_{\lambda+i0}(H_0)$ exists in $\clL_1(\hilb).$
By Lemma \ref{L: lemma Y}, the limit $\sqrt{\Im T_{\lambda+i0}(H_0)}$ exists
in $\clL_2(\clK)$-norm and from $\lambda \in \Lambda(H_r,F)$ it follows that the operator $(1 + T_{\lambda+i0}(H_0)J_r)^{-1}$
is invertible. So, again H\"older's inequality (\ref{F: Holder inequality}) completes the proof.
\end{proof}

When $y \to +\infty,$ the operator $\tilde S(\lambda+iy,r)$ goes to $1.$
So, we have a continuous (in fact, real-analytic) path of unitary operators in $\clU_1(\clK):$
\begin{equation} \label{F: tilde S on [0,infty]}
  [-\infty,0] \ni y \to \tilde S(\lambda-iy; H_r,H_0) \in \clU_1(\clK).
\end{equation}
\begin{defn} \label{D: Push. inv-t}
Let $\lambda \in \LambHF{H_0}\cap \LambHF{H_r}.$ \emph{Pushnitski $\mu$-invariant}
of the pair $(H_0,H_r)$ is the $\mu$-invariant of the continuous path (\ref{F: tilde S on [0,infty]}).
\end{defn}
Pushnitski $\mu$-invariant will be denoted by $\mu(\theta,\lambda; H_r,H_0).$

\subsection{$M$-function}
Let $z \in \mbC \setminus \mbR$ and let $H_0,H_1$ be two self-adjoint operators on $\hilb$
with bounded difference $V = H_1-H_0.$
Following \cite[(4.1)]{Pu01FA}, we define the
$M$-function by the formula
\begin{equation} %\label{F: M(z,1) in Lp}
    M(z; H_1, H_0) = \brs{H_1 - \bar z} R_z(H_1) \brs{H_0 - z} R_{\bar z}(H_0) \in \clB(\hilb).
\end{equation}
The $M$-function can be considered as a product of the Cayley
transforms of operators $H_1$ and $H_0,$ and its values are
unitary operators.

Let $\gamma = \set{H_r},$ $H_r = H_0 + V_r,$ be a continuous piecewise real-analytic path.

Evidently, the multiplicative property
\begin{equation} \label{F: M is multiplicative}
  M(z; H_{r_2}, H_{r_0}) = M(z; H_{r_2}, H_{r_1})M(z; H_{r_1}, H_{r_0})
\end{equation}
holds.

One can also easily check that (see \cite[(4.4)]{Pu01FA})
\begin{equation} \label{F: M(z)=1-2pi iyrRVR}
  M(z; H_r, H_0) = 1 -2iy R_z(H_r)V_rR_{\bar z}(H_0).
\end{equation}
This equality, the estimate $\norm{R_z(H)} \leq \frac 1{\abs{\Im
z}}$ and the norm continuity of the function $\mbC_+\times \mbR
\ni (z,r) \mapsto R_z(H_r)$ imply the following lemma.
\begin{lemma} \label{L: M(z) is continuous}
(i) The function
$$
  (z,r) \in \mbC_+ \times \mbR \mapsto M(z; H_r, H_0)
$$
takes values in $1+\LpH{1}$ and is continuous in $\LpH{1}$\tire norm.

(ii) %\label{L: norm of M to 0 uniformly}
When $y \to +\infty$
$$
  \norm{M(\lambda+iy,H_r,H_0) - 1}_1 \to 0
$$
locally uniformly with respect to $r \in \mbR.$
\end{lemma}
Indeed, it follows from (\ref{F: M(z)=1-2pi iyrRVR}) that
$$
  \norm{M(\lambda+iy,H_r,H_0) - 1}_1 \leq \frac {2\norm{V_r}_1}y.
$$
Since $\norm{V_r}_1$ is locally bounded, the claim follows.

\begin{thm} \label{T: Push. Thm 4.1}
\cite[Theorem 4.1]{Pu01FA}
Spectral measures of operators $M(z;H_r,H_0)$ and
$\tS(z; H_r,H_0;F)$ coincide.
\end{thm}
This proposition means that in the definition of Pushnitski $\mu$-invariant one can replace $\tilde S$-function
by $M$-function.
\begin{cor} \label{C: mu-inv-t is the same} The $\mu$-invariant (and, consequently, the $\xi$-invariant as well) of the path
\begin{equation} \label{F: M on [0,infty]}
  [-\infty,0] \ni y \to M(\lambda-iy; H_r,H_0) \in \clU_1(\clK)
\end{equation}
coincides with the $\mu$-invariant (respectively, the $\xi$-invariant) of the path (\ref{F: tilde S on [0,infty]}).
The assertion holds also, if the interval $[-\infty,0]$ is replaced by $[-\infty,y_0]$ with $y_0>0.$
\end{cor}
\begin{proof} This follows immediately from Theorem \ref{T: Push. Thm 4.1} and Lemmas \ref{L: M(z) is continuous} and \ref{L: tilde S(z,r) is L1 cont-s}.
\end{proof}

The following formula is taken from \cite{Az2}.
\begin{thm} \label{T: M(z,r)=Texp} Let $\set{H_r}$ be a continuous piecewise analytic path of operators and let $z \in \mbC_+.$
The formula
\begin{equation} \label{F: M(z,r)=Texp}
  M(z; H_r, H_{r_0}) = \Texp\brs{-2iy \int_{r_0}^r R_z(H_s) \dot V_r R_{\bar z} (H_s)\,ds},
\end{equation}
holds.
\end{thm}
\begin{proof}
It follows from  (\ref{F: M(z)=1-2pi iyrRVR}) that in $\LpH{1}$
$$
  \frac d{dr} M(z; H_r, H_s)\Big|_{r=s} = -2iy R_z(H_s) \dot V_r R_{\bar z} (H_s).
$$
This equality and the multiplicative property (\ref{F: M is multiplicative}) of the $M$-function
imply that
$$
  \frac d{dr} M(z; H_r, H_s) = -2iy R_z(H_r) \dot V_r R_{\bar z} (H_r) M(z; H_r, H_s).
$$
Combining this with Lemma \ref{L: T exp} we obtain (\ref{F:
M(z,r)=Texp}).
\end{proof}

\subsection{Smoothed spectral shift function}
We define the smoothed spectral shift function $\xi(\lambda+iy; H_1, H_0)$ of a pair of operators $H_1$ and $H_0$
as the $\xi$-invariant of the path
\begin{equation} \label{F: tilde S on [y0,infty]}
  [-\infty,-y] \ni \tilde y \mapsto \tilde S(\lambda-i\tilde y; H_1, H_0) \in \clU_1(\clK).
\end{equation}
This means by definition that ($z \in \mbC_+$)
\begin{equation} \label{F: def of xi(z)}
  \xi(z; H_1, H_0) = - \frac 1{2\pi}\sum_{j=1}^\infty \theta_j(z) = - \frac 1{2\pi i} \log \det \tilde S(z; H_1,H_0),
\end{equation}
where functions
$$
  \theta_1(z,r), \ \theta_2(z,r), \ \theta_3(z,r), \ \ldots
$$ are chosen as in Theorem \ref{T: selection thm} for the continuous path (\ref{F: tilde S on [y0,infty]}).

\begin{prop} \label{P: unused prop} Let $\set{H_r}$ be a continuous path which connects $H_0$ and $H_1.$
The smoothed spectral shift function $\xi(z; H_1, H_0)$ is equal to the $\xi$-invariant of the path
$$
   [0,1] \ni r \mapsto M(z; H_r,H_0).
$$
\end{prop}
\begin{proof} Let $z_0 = \lambda + iy_0$ and let $y_0<y_1.$ Consider a path which connects $M(\lambda+iy_0; H_r,H_0)$ with 1 and
which consists of two arcs: the first arc connects $M(\lambda+iy_0; H_r,H_0)$ with $M(\lambda+iy_1; H_r,H_0)$ as $y$ changes from $y_0$ to $y_1,$
and the second arc connects $M(\lambda+iy_1; H_r,H_0)$ with $1$ as $r$ changes from $1$ to $0.$
Now we let $y_1$ to move from $y_0$ to $+\infty.$ It follows from Lemma \ref{L: M(z) is continuous} that this gives a homotopy of the two paths
connecting $M(\lambda+iy_0; H_r,H_0)$ with the identity operator, where in the first path $y$ goes from $y_0$ to $+\infty$ and in the second path
$r$ goes from $1$ to $0.$ It follows from Theorem \ref{T: properties of mu} that the $\xi$-invariants of these two paths coincide.
\end{proof}
\begin{lemma} \label{L: xi(l+i0) exists} If $\lambda \in \Lambda(H_0,F) \cap \Lambda(H_1,F),$ then the limit
$$
  \xi(\lambda+i0; H_1, H_0) := \lim_{y \to 0^+} \xi(\lambda+iy; H_1, H_0)
$$
exists and
\begin{equation} \label{F: xi(l+i0) is int mu}
  \xi(\lambda+i0; H_1, H_0) = -\frac 1{2\pi} \int_0^{2\pi} \mu(\theta, \lambda; H_1,H_0)\,d\theta.
\end{equation}
\end{lemma}
\begin{proof} Existence of $\xi(\lambda+i0)$ follows from Lemma \ref{L: tilde S(l+i0,r) exists} and Proposition \ref{P: xi(U) is cont-s}.
Proposition \ref{P: xi(U) is cont-s} also implies that $\xi(\lambda+i0)$ is the $\xi$-invariant of the path (\ref{F: tilde S on [0,infty]}),
so that equality (\ref{F: xi(l+i0) is int mu}) holds by Definition \ref{D: Push. inv-t} of the $\mu$-invariant and Definition \ref{D: def of xi-invariant}
of the $\xi$-invariant.
\end{proof}

Theorem \ref{T: Push. Thm 4.1} and the proof of Proposition \ref{P: unused prop} imply the following
\begin{cor} \label{C: xi-inv-t for two piece path} Let $\lambda \in \Lambda(H_0,F) \cap \Lambda(H_1,F).$ The number $\xi(\lambda+i0; H_1,H_0)$ is the $\xi$-invariant
of a continuous path of unitary operators which consists of the following two pieces:
$$
  [0,1] \ni r \mapsto \tilde S(\lambda+iy_0; H_r,H_0)
$$
and
$$[-y_0,0] \ni y \mapsto \tilde S(\lambda-iy; H_1,H_0).$$
\end{cor}
Meaning of this corollary is simple: we cannot directly connect the unitary operator $\tilde S(\lambda+i0; H_1,H_0)$ with the identity
operator by sending $r$ from $1$ to $0$ because of possible resonance points in $[0,1],$
but we can do this after shifting the point $\lambda+i0$ out of the real axis.

\begin{prop} \label{P: xi(l+iy)=int Fz(s)ds} The formula
\begin{equation} \label{F: xi(l+iy)=int Fz(s)ds}
    \xi(\lambda+iy; H_1, H_0) = \int_{0}^1 \Tr\SqBrs{V \frac 1 \pi \Im R_{\lambda+iy}(H_s)} \,ds
\end{equation}
holds.
\end{prop}
\begin{proof}
It follows from (\ref{F: def of xi(z)}), Corollary \ref{C: mu-inv-t is the same}, Theorem \ref{T: M(z,r)=Texp} and Lemma \ref{L: det Texp = exp Tr} that
\begin{equation*} %\label{F: xi(l+iy)=...}
  \begin{split}
  \xi(\lambda+iy; H_1, H_0) & = - \frac 1{2\pi i} \log \det M(\lambda+iy; H_1, H_0)
  \\ & = \frac y\pi \int_{0}^1 \Tr(R_{\lambda+iy}(H_s) V R_{\lambda-iy}(H_s)) \,ds
  \\ & = \int_{0}^1 \Tr\SqBrs{V \frac 1 \pi \Im R_{\lambda+iy}(H_s)} \,ds.
  \end{split}
\end{equation*}
\end{proof}

\subsection{Pushnitski formula}
The following theorem was proved in \cite{Pu01FA}. Here we give another simpler proof of this formula.
This proof follows that of from \cite{Az2}.
\begin{thm} (Pushnitski formula) \label{T: xi = -average of mu} For a.e. $\lambda \in \LambHF{H_0}\cap \LambHF{H_r}$ the equality
$$
  \xi(\lambda; H_r,H_0) = - \frac 1{2\pi} \int_0^{2\pi} \mu(\theta,\lambda; H_r,H_0)\,d\theta
$$
holds.
\end{thm}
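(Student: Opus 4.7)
My plan is to parallel the proof in this paper of the analogous identity relating $\xia$ and $\mua$: first I collapse the $\theta$-integral to a sum over the continuously tracked phases $\theta_j(\lambda+i0, 1)$ of the $M$-function, and then identify that sum with $-2\pi\xi(\lambda)$ by means of the Krein trace formula applied to a Cayley-type test function.

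\textbf{Integration step.} The elementary identity
$$\int_0^{2\pi}\SqBrs{\frac{\theta - \alpha}{2\pi}}\,d\theta = -\alpha, \qquad \alpha \in \mbR,$$
applied termwise to the definition of $\mu$ gives
$$-\frac{1}{2\pi}\int_0^{2\pi}\mu(\theta, \lambda; H_1, H_0)\,d\theta = -\frac{1}{2\pi}\sum_{j=1}^\infty \theta_j(\lambda+i0, 1),$$
provided the interchange of sum and integral is justified. Only finitely many phases lie outside any neighbourhood of $0$, and the tail is controlled by $\sum_j |e^{i\theta_j} - 1| \leq \norm{M(\lambda+i0, 1) - 1}_1$, which is finite by the trace-class continuity of $M(z, 1)$ up to the boundary (Lemma \ref{L: M(z) is continuous}).

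\textbf{Identification with $\xi$, and the main obstacle.} Using the Cayley-type factorization $M(z, 1) = U_1(z)U_0(z)^{-1}$ with $U_r(z) = (H_r - \bar z)(H_r - z)^{-1}$, I would apply Krein's trace formula to $f(x) = \log[(x - \bar z)/(x - z)]$, whose derivative $f'(\lambda) = 1/(\lambda - \bar z) - 1/(\lambda - z)$ is $O(|\lambda|^{-2})$ and hence summable against $\xi \in L^1(\mbR)$; this yields
$$\log\det M(z; H_1, H_0) = \Tr[f(H_1) - f(H_0)] = \int_\mbR \Bigl[\frac{1}{\lambda - \bar z} - \frac{1}{\lambda - z}\Bigr]\xi(\lambda)\,d\lambda = -2\pi i\,(P_y * \xi)(x)$$
for $z = x + iy$, where $P_y$ is the Poisson kernel. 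On the other hand $\log\det M(z, 1) = i\sum_j \theta_j(z, 1)$ in the continuous branch anchored by $\sum_j \theta_j(\lambda+iy, 1) \to 0$ as $y \to +\infty$ (Lemma \ref{L: norm of M to 0 uniformly}). Letting $y \to 0^+$ and invoking Fatou's theorem (Theorem \ref{T: Fatou}) yields $\sum_j \theta_j(\lambda+i0, 1) = -2\pi \xi(\lambda)$ for almost every $\lambda$, which combined with the integration step completes the proof. The main obstacle will be to rigorously match, on a set of full measure, the branch of $\log\det M(z, 1)$ produced by continuous eigenvalue tracking in $y$ with the branch produced by the Krein representation: both are normalized to vanish as $y \to +\infty$, so the matching is conceptually clear, but propagating the agreement down to $y = 0^+$ requires uniform trace-norm control of $M(\lambda+iy, 1) - 1$ on a strip $\{0 < y \leq Y\}$ for $\lambda$ in a set of full measure, which in turn rests on the boundary regularity of Lemma \ref{L: M(z) is continuous} and on the discreteness of the resonance set (Theorem \ref{T: R(H0,G) is discrete}).
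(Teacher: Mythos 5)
The paper does not actually prove this theorem: the surrounding section explicitly states that the proofs of the results there ``will appear in \cite{Az3}'', so there is no author's argument to compare against. Your proposal is, however, the natural route and the two constituent calculations are correct. The integration identity $\int_0^{2\pi}\lfloor(\theta-\alpha)/2\pi\rfloor\,d\theta = -\alpha$ reduces $-\frac1{2\pi}\int_0^{2\pi}\mu(\theta,\lambda)\,d\theta$ to $-\frac1{2\pi}\sum_j\theta_j(\lambda+i0,1)$, and the perturbation-determinant computation $\det M(\lambda+iy,1)=\Delta_{H_1/H_0}(\lambda-iy)/\Delta_{H_1/H_0}(\lambda+iy)=\exp\bigl(-2\pi i\,(P_y*\xi)(\lambda)\bigr)$ together with the common normalisation at $y=+\infty$ gives $\sum_j\theta_j(\lambda+iy,1)=-2\pi(P_y*\xi)(\lambda)$ for all $y>0$; Fatou's theorem then lets $y\to0^+$ a.e.

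Where the argument actually has a gap is not the ``branch matching'' you worry about at the end (once both $\sum_j\theta_j(\lambda+iy,1)$ and $-2\pi(P_y*\xi)(\lambda)$ are recognised as continuous real functions of $y$ with the same exponential that vanish as $y\to+\infty$, they agree identically for $y>0$ with no further propagation needed). The genuine gap is the interchange of the limit $y\to0^+$ with the infinite sum: you obtain, for a.e.\ $\lambda$, that $\lim_{y\to0^+}\sum_j\theta_j(\lambda+iy,1)=-2\pi\xi(\lambda)$, but what the integration step produces is $\sum_j\theta_j(\lambda+i0,1)=\sum_j\lim_{y\to0^+}\theta_j(\lambda+iy,1)$, and you never show these two are equal. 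The pointwise convergence of each $\theta_j$ and the convergence of the total sum do not by themselves give $\lim\sum=\sum\lim$; some uniform control of the tail $\sum_{j>N}\theta_j(\lambda+iy,1)$ for $y$ in a strip $(0,Y]$ is needed, e.g.\ via a Vitali-type argument in the spirit of Lemma \ref{L: sums of alpha's converges to 0 unif...} and the trace-norm convergence of $\tilde S(\lambda+iy,1)$ recorded in the paper. Relatedly, the term-by-term integration that turns $\int_0^{2\pi}\mu\,d\theta$ into $\sum_j\int_0^{2\pi}\lfloor\cdot\rfloor\,d\theta$ needs its own Fubini justification, which rests on the same tail control. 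These are resolvable, but as written they remain gaps rather than steps.
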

\begin{proof} By Lemma \ref{L: xi(l+i0) exists}, it is enough to show that for a.e. $\lambda \in \LambHF{H_0}\cap \LambHF{H_r}$
\begin{equation} \label{F: xi(l+i0)=xi(l)}
  \xi(\lambda+i0; H_1, H_0) = \xi(\lambda; H_1, H_0).
\end{equation}

The trace in the right hand side of (\ref{F: xi(l+iy)=int Fz(s)ds}) is the Poisson integral of the measure $\Delta \mapsto \Tr\brs{VE^{H_s}_{\Delta}}.$
It follows from (\ref{F: xi(l+iy)=int Fz(s)ds}) and Fubini's theorem (see e.g. \cite[VI.2]{Ja} or \cite[Lemma 2.4]{ACS}) that
$\xi(\lambda+iy; H_1, H_0)$ is the Poisson integral of the measure
$$
  \Delta \mapsto \int_0^1 \Tr(V E^{H_s}_\Delta)\,ds,
$$
which is the absolutely continuous spectral shift measure $\xi$ (see (\ref{F: def of xi(phi)})).
Hence, by Theorem \ref{T: Fatou}, for a.e. $\lambda \in \mbR$ (\ref{F: xi(l+i0)=xi(l)}) holds.
\end{proof}
%It is possible to give another proof of this theorem, which
%and which is more natural in the framework of this paper.
%This proof will appear \cite{Az2011A}.

This theorem allows to define explicitly the spectral shift function on the full set $\LambHF{H_0}.$
   \begin{defn} \label{D: def of xi} Let $\lambda \in \LambHF{H_0} \cap \LambHF{H_1}.$
     The Lifshits-Krein spectral shift function $\xi(\lambda)$ is by definition
     $$
       \xi(\lambda; H_1,H_0) = - \frac 1{2\pi} \int_0^{2\pi} \mu(\theta,\lambda; H_1,H_0)\,d\theta.
     $$
   \end{defn}
   In other words, $\xi(\lambda)$ is the $\xi$-invariant of the path (\ref{F: tilde S on [0,infty]}).
   The advantage of this definition of the spectral shift function
   is that it gives explicit values of $\xi$ on an explicit set of full Lebesgue measure.

\begin{rems} \rm The functions $\xi$ and $\xia$ are summable. As such one can consider full sets $\Lambda(\xi)$
and $\Lambda(\xia)$ and standard values of $\xi$ and $\xia$ on these sets. However, the above definitions of $\xi(\lambda)$
and $\xia(\lambda)$ and of the corresponding sets of full Lebesgue measure differ from the standard definition of $f(\lambda)$
for a general summable function $f.$ In particular, it may be that $\xi(\lambda) \neq 0$ at some regular point $\lambda,$ while $\xi = 0$
as an element of $L_1(\mbR).$
\end{rems}

It is known that $\xi$ is additive in the sense that $\xi(\lambda; H_2,H_0) = \xi(\lambda; H_2,H_1) + \xi(\lambda; H_1,H_0)$
for a.e. $\lambda \in \mbR.$ Definition \ref{D: def of xi} poses a question of whether this equality holds for \emph{every}
$\lambda$ from the full set $\LambHF{H_0} \cap \LambHF{H_1} \cap \LambHF{H_2}.$ The answer is affirmative.

\begin{lemma} \label{L: xi(VU)=xi(U)xi(V)} If $U,V \colon [a,b] \to \clU_1(\hilb)$ are two continuous paths such that $U(a) = V(a) = 1,$
then
$$
  \xi(UV) = \xi(U) + \xi(V).
$$
\end{lemma}
\begin{proof} By (\ref{F: det(AB)=det(A)det(B)}), for every $r \in [a,b]$
$$
  \det(U(r) V(r)) = \det(U(r))\det(V(r)).
$$
Also, by Proposition \ref{P: xi(U) is cont-s} and (\ref{F: det is cont-s}), the equality
$$
  \det(U(r)) = e^{-2\pi i \xi(U(r))}
$$
holds. It follows that
$$
  \xi(U(r) V(r)) = \xi(U(r)) + \xi(V(r)) \mod \mbZ.
$$
Since both sides are continuous function of $r$ and since $\xi(U(0) V(0)) = \xi(U(0)) + \xi(V(0)) = 0,$ the claim follows.
\end{proof}
\begin{thm} \label{T: xi(l) is pointwise additive} Let $H_1, H_2 \in H_0 + \clA(F).$ For every $\lambda \in \LambHF{H_0} \cap \LambHF{H_1} \cap \LambHF{H_2}$ the equality
$$\xi(\lambda; H_2,H_0) = \xi(\lambda; H_2,H_1) + \xi(\lambda; H_1,H_0)$$ holds.
\end{thm}
\begin{proof} %Spectral measures of operators $\tilde S(z; H_1,H_0)$ and $M(z; H_1,H_0)$ coincide.
Since the $M$-function is multiplicative (\ref{F: M is multiplicative}), the claim follows from Lemma \ref{L: xi(VU)=xi(U)xi(V)},
and Corollary \ref{C: mu-inv-t is the same}.
\end{proof}

\subsection{Singular part of $\mu$-invariant}
Let $\gamma = \set{H_r, r \in [0,1]}$ be a continuous piecewise analytic path of operators which satisfy Assumption \ref{A: assumption on Hr}.
Let $\lambda \in \Lambda(H_0,F) \cap \Lambda(H_r,F).$
\begin{defn} The singular part of Pushnitski $\mu$-invariant is the function
$$
  \mus_\gamma(\theta, \lambda; H_r, H_0) := \mu(\theta, \lambda; H_r, H_0) - \mua_\gamma(\theta, \lambda; H_r, H_0).
$$
\end{defn}
Note that while $\mu(\theta, \lambda; H_r, H_0)$ and $\mua_\gamma(\theta, \lambda; H_r, H_0)$ are $\mu$-invariants of some paths of unitary operators,
the singular part $\mus_\gamma(\theta, \lambda; H_r, H_0)$ of $\mu$-invariant is not.

Also, for every $\lambda \in \Lambda(H_0,F) \cap \Lambda(H_r,F)$ we define the standard density $\xis_\gamma(\lambda)$
of the singular part of the spectral shift function $\xis_\gamma$ by the formula
$$
  \xis_\gamma(\lambda; H_r,H_0) = \xi(\lambda; H_r,H_0) - \xia_\gamma(\lambda; H_r,H_0),
$$
where $\xi(\lambda; H_r,H_0)$ is defined by (\ref{D: def of xi}) and $\xia_\gamma(\lambda; H_r,H_0)$ is defined by (\ref{F: xia(l)=int Phia}).

\begin{lemma} \label{L: eig of tS = eig of S} \cite{Pu01FA}
Let $\lambda \in \Lambda(H_0,F) \cap \Lambda(H_r,F).$
The eigenvalues of $\tilde S(\lambda+i0;H_r,H_0)$ coincide
with the eigenvalues of the scattering matrix $S(\lambda;H_r,H_0)$
(counting multiplicities); that is, spectral measures of these operators coincide.
\end{lemma}
\begin{proof}
%It is well-known that spectra of operators $AB$ and $BA$ coincide counting multiplicities.
The stationary formula for the scattering matrix (Theorem \ref{T: stationary rep-n for SM}),
definition (\ref{F: def of tilde S(z,r)}) of $\tilde S(\lambda+i0; H_1,H_0)$
and the equality (\ref{F: euE diam euE = Im R}), combined with (\ref{F: spec mes(AB)=spec mes(BA)}),
imply that the spectra of operators $S(\lambda; H_r,H_0)$ and $\tilde S (\lambda; H_r,H_0;F)$ coincide
counting multiplicities.
\end{proof}
%   \begin{defn} \label{D: def of xis} Let $\lambda \in \Lambda.$
%     The singular part $\xis(\lambda)$ of the spectral shift function is by definition
%     $$
%       \xis(\lambda; H_1,H_0) = \xi(\lambda; H_1,H_0) - \xia(\lambda; H_1,H_0).
%     $$
%   \end{defn}
%
\begin{thm} \label{T: mus(theta)=const}
Let $\gamma = \set{H_r, r \in [0,1]}$ be a continuous piecewise analytic path of operators which satisfy Assumption \ref{A: assumption on Hr}.
  The singular part of Pushnitski $\mu$-invariant $\mus_\gamma(\theta,\lambda)$
  does not depend on the angle variable $\theta.$ Thus defined function of the variable $\lambda$
  is equal to minus the density $\xis_\gamma(\lambda)$ of the singular part of the spectral shift function. That is,
  for all $\lambda \in \Lambda(H_0,F)$ and for all $r \notin R(\lambda;\set{H_r},F),$
\begin{equation} \label{F: xis=-mus}
  \xis_\gamma(\lambda; H_r, H_0) = - \mus_\gamma(\lambda; H_r, H_0).
\end{equation}
Consequently, the singular part of the spectral shift function $\xis_\gamma(\lambda)$ is integer-valued.
\end{thm}
\begin{proof}
It follows from Lemma \ref{L: eig of tS = eig of S} and Theorem \ref{T: properties of mu}(3) that the singular part of the $\mu$-invariant does not depend on $\theta.$
The equality (\ref{F: xis=-mus}) now follows from (\ref{F: xia = 1/2pi int}) and Definition \ref{D: def of xi}.
\end{proof}

The last theorem deserves some comment.
The scattering matrix $S(\lambda; H_r,H_0)$ for $\lambda \in \LambHF{H_0} \cap \LambHF{H_r}$ is a unitary operator of the class
$1 + \clL_1(\hlambda).$ So, the spectrum of $S(\lambda; H_r,H_0)$ is a discrete subset of the unit circle $\mbT$ with only one possible accumulation point at $1.$
The eigenvalues of $S(\lambda; H_r,H_0)$ (called scattering phases)
can be send to $1$ in two essentially different ways. The first way is to connect $S(\lambda; H_r,H_0)$  with the identity operator
by letting the coupling constant $r$ move from $1$ to $0.$ This is possible to do, since $S(\lambda; H_r,H_0)$
is continuous for all $r \in \mbR.$ Now, the operators $S(\lambda; H_r,H_0)$ and $\tilde S(\lambda+i0; H_r,H_0)$ have the same eigenvalues
(counting multiplicities).
So, the second way to send scattering phases to $1$ is to move $y$ from $0$ to $+\infty$ in $\tilde S.$ In both ways,
scattering phases go to $1$ continuously. Nevertheless, it is possible that these two ways
are not homotopic; that is, some eigenvalue can make a different number of windings around the unit circle
as it is sent to $1.$ Pushnitski $\mu$-invariant $\mu(\theta,\lambda;H_r,H_0)$ and its absolutely continuous part
$\mua_\gamma(\theta,\lambda;H_r,H_0)$ measure the spectral flow of the scattering phases through $e^{i\theta}$
in two different ways, corresponding to the above mentioned two ways of connecting the scattering phases
with $1,$ and the difference $\mu(\theta,\lambda;H_r,H_0) - \mua_\gamma(\theta,\lambda;H_r,H_0)$ does not depend on $\theta.$
This difference measures the difference of winding numbers.
% \begin{center}
%   \includegraphics[scale=1]{picture1.1} \qquad\qquad\qquad
%   \includegraphics[scale=1]{picture2.1}
% \end{center}

%   \begin{proof} % This immediately follows from
%   By Definitions \ref{D: def of xis}, \ref{D: def of xi} and ... of $\xis,$ $\xia$ and $\xi,$
%   one has
%   $$
%     \xis(\lambda) = -\frac 1{2\pi}\int_0^{2\pi} \mus(\theta,\lambda)\,d\theta.
%   $$
%   So, Theorem \ref{T: mus(theta)=const} completes the proof.
%   \end{proof}
Combined with Corollary \ref{C: exp(-2pi i xia)=det S}, Theorem \ref{T: mus(theta)=const} gives a proof of
\begin{thm} {\rm (Birman-\Krein\ formula)} \ Let~$H_0$ be a self-adjoint operator and $V$ be a trace-class self-adjoint operator.
Then for a.e. $\lambda \in \mbR$
$$
  e^{-2\pi i \xi(\lambda;H_1,H_0)} = \det S(\lambda;H_1,H_0),
$$
where~$H_1 = H_0 + V.$
\end{thm}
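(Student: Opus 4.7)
The plan is to combine two ingredients that have already been prepared earlier in the paper, noting carefully that one must avoid a circularity. The first ingredient is Corollary \ref{C: exp(-2pi i xia)=det S}, which gives, for every $\lambda \in \LambHF{H_1}\cap \LambHF{H_0}$ (a set of full Lebesgue measure), the identity
$$
  e^{-2\pi i\, \xia_{H_1,H_0}(\lambda)} \;=\; \det S(\lambda;H_1,H_0).
$$
The second ingredient is the integrality $\xis_{H_1,H_0}(\lambda)\in\mbZ$ for a.e.\ $\lambda$, which forces $e^{-2\pi i\,\xis(\lambda)}=1$ a.e. Granted both, the additive decomposition $\xi=\xia+\xis$ (from the definitions (\ref{F: def of xia(phi)}) and (\ref{F: def of xis(phi)}) together with $\xi=\xis+\xia$) yields
$$
  e^{-2\pi i\,\xi(\lambda;H_1,H_0)} \;=\; e^{-2\pi i\,\xia(\lambda)}\cdot e^{-2\pi i\,\xis(\lambda)} \;=\; \det S(\lambda;H_1,H_0)\cdot 1,
$$
which is the Birman--\Krein\ formula.

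The subtlety is \emph{which} proof of $\xis\in\mbZ$ one invokes. The proof of Theorem \ref{T: xis is int valued} given in Section 9 derives integrality by quoting the Birman--\Krein\ formula itself, so using it here would be circular. To get a genuine proof of Birman--\Krein\ one must use the alternative route outlined in Section 11: the Pushnitski $\mu$-invariant $\mu(\theta,\lambda;H_1,H_0)$ admits a natural splitting $\mu=\mua+\mus$, and Theorem \ref{T: mus(theta)=const} asserts that $\mus(\theta,\lambda)$ is independent of $\theta$ and coincides with $-\xis(\lambda)$. Since $\mus$ is a difference of two integer-valued counting functions (the spectral-flow counters for eigenphases of the scattering and $M$-function branches), it is itself integer-valued, and hence so is $\xis$.

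With this non-circular integrality in hand, the proof of the Birman--\Krein\ formula is then the three-line computation above. Concretely I would proceed: first invoke Theorem \ref{T: xia = -2pi i log det S} (equivalently Corollary \ref{C: exp(-2pi i xia)=det S}) to identify $e^{-2\pi i\,\xia(\lambda)}$ with $\det S(\lambda;H_1,H_0)$ on $\LambHF{H_1}\cap \LambHF{H_0}$; second, invoke Theorem \ref{T: mus(theta)=const} to conclude $\xis(\lambda)\in\mbZ$ for a.e.\ $\lambda$; and third, multiply using $\xi=\xia+\xis$.

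The main obstacle, and the whole reason Section 11 is included, is precisely Theorem \ref{T: mus(theta)=const}: establishing that $\mus(\theta,\lambda)$ is constant in $\theta$ and equal to $-\xis(\lambda)$. Everything else in the present statement is formal. This will require the analysis of the eigenphases $e^{i\theta_j(\lambda+i0,r)}$ of the $M$-function (and equivalently of the modified scattering matrix $\tilde S$ of (\ref{F: def of tilde S(z,r)})) against those of $S(\lambda;H_r,H_0)$, exploiting Proposition \ref{P: Push. Thm 4.1} and the $\clL_1$-continuity of $M(z,r)$ and $\tilde S(z,r)$ away from the resonance set, to show that the $\theta$-dependence of $\mu$ is entirely captured by $\mua$. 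Since this is deferred to \cite{Az3}, once it is taken as known the Birman--\Krein\ formula follows immediately as described.
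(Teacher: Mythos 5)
Your proposal matches the paper's route exactly: the Birman--Kre\u\i n formula is obtained by combining Corollary~\ref{C: exp(-2pi i xia)=det S} (i.e.\ $e^{-2\pi i\,\xia(\lambda)}=\det S(\lambda;H_1,H_0)$) with the non-circular integrality of $\xis$ furnished by Theorem~\ref{T: mus(theta)=const} (whose proof the paper defers to~\cite{Az3}), and then multiplying via $\xi=\xia+\xis$. You have also correctly flagged the circularity trap --- Theorem~\ref{T: xis is int valued} cannot be invoked here since its proof already assumes Birman--Kre\u\i n --- a point the paper's own wording (``Combined with Theorem \ref{T: xis is int valued} the last theorem gives a proof of\ldots'') leaves rather opaque.
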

This theorem holds for all $\lambda$ from the set of full Lebesgue measure $\Lambda(H_0,F) \cap \Lambda(H_1,F),$ provided that there is some fixed frame $F$ such that
$V \in \clA(F).$ By Lemma \ref{L: exists F for H0+rV}, for every trace-class operator $V$ such a frame exists, and consequently, the Birman-\Krein\ formula
holds for a.e. $\lambda \in \mbR.$ Also, in this theorem the scattering matrix $S(\lambda; H_1, H_0)$ is defined by the formula (\ref{F: def-n of SM}),
but as is shown in Section \ref{S: connection with time...}, this definition coincides with classical definition of the scattering matrix via the direct integral decomposition
of the scattering operator.

\bigskip

Let $\lambda$ be a fixed essentially regular point. We consider the singular spectral shift function
$\xis(r) = \xis(\lambda; H_r, H_0)$ as a function of $r.$
Theorem \ref{T: mus(theta)=const} tells us that $\xis(r)$ is an integer number. It turns out that $\xis(r)$
is a locally constant function, and it can jump only at resonance points of the path $\set{H_r}.$
In the rest of this section we prove this assertion.

\begin{lemma} \label{L: tilde S converge uniformly}
If $\lambda \in \LambHF{H_0},$ then $\tilde S(\lambda+iy; H_r,H_0)$
converges to $\tilde S(\lambda+i0; H_r,H_0)$ in $\LpH{1}$ locally
uniformly with respect to $r$ outside of the resonance set $R(\lambda; H_0,V;F)$ as $y
\to 0.$
\end{lemma}
\begin{proof} (A) If $I$ is a closed interval which does not
contain resonance points of the path $\set{H_r},$ then the
function
$$
  [0,1] \times I \ni (y,r) \mapsto (1+T_{\lambda+iy}(H_0)J_r)^{-1}
$$
is bounded.

Proof. Since $\lambda$ is regular, $T_{\lambda+iy}(H_0)$ is continuous
on $[0,1]$ and so $1+T_{\lambda+iy}(H_0)J_r$ is continuous on $[0,1]
\times I.$ Since the map $A \mapsto A^{-1}$ is also
continuous, the image of the function
$(1+T_{\lambda+iy}(H_0)J_r)^{-1}$ on the compact rectangle $[0,1] \times I$ is
bounded.

(B) We have
\begin{multline*}
  (1+T_{\lambda+iy}(H_0)J_r)^{-1} - (1+T_{\lambda+i0}(H_0)J_r)^{-1}
   \\ = (1+T_{\lambda+iy}(H_0)J_r)^{-1} \cdot \sqbrs{T_{\lambda+i0}(H_0) - T_{\lambda+iy}(H_0)} J_r \cdot (1+T_{\lambda+i0}(H_0)J_r)^{-1}.
\end{multline*}
Since, by (A), $(1+T_{\lambda+iy}(H_0)J_r)^{-1}$ is locally uniformly
bounded outside of the resonance set $R(\lambda, H_0,V;F)$ times
$\set{y \in [0,1]},$ it follows from the last equality that
$$
  (1+T_{\lambda+iy}(H_0)J_r)^{-1} \to (1+T_{\lambda+i0}(H_0)J_r)^{-1} \ \text{as} \ y \to 0
$$
in $\norm{\cdot}$ locally uniformly
with respect to $r \notin R(\lambda; H_0,V;F).$
Since by Lemma \ref{L: lemma Y}
$\sqrt{\Im T_{\lambda+iy}(H_0)}$ converges to $\sqrt{\Im
T_{\lambda+i0}(H_0)}$ in $\LpH{2},$ the claim follows from the
definition (\ref{F: def of tilde S(z,r)}) of $\tilde S(\lambda+iy,r)$ and
the H\"older inequality (\ref{F: Holder inequality}). %$\norm{AB}_1 \leq \norm{A}_2\norm{B}_2.$
%   it is enough to show that $(1+T_{\lambda+iy}(H_0)J_r)^{-1}$ converges in $\norm{\cdot}$-norm
%   locally uniformly as $y \to 0.$
\end{proof}

\begin{thm} \label{T: xis(r) is loc const} Let $\set{H_r}$ be a path which satisfies Assumption \ref{A: assumption on Hr}.
Let $\lambda$ be a fixed essentially regular point.
The singular spectral shift function $\xis(\lambda; H_r, H_0)$ is a locally constant function of $r$
and discontinuity points of this function of $r$ are resonance points of the path $\set{H_r}.$
\end{thm}
\begin{proof} Since both $\xi$ and $\xia$ are path additive, it is enough to show that if there are no resonance points,
then $\xi = \xia$ as function of $r$ under fixed $\lambda.$
In this case, it follows from Lemma \ref{L: tilde S converge uniformly}, that the function
$[0,\infty) \times [0,1] \ni (y,r) \mapsto \tilde S(\lambda+iy; H_r,H_0)$ is continuous.
It follows from Corollary \ref{C: xi-inv-t for two piece path} and Theorem \ref{T: properties of mu}(2)
that $\xi(\lambda)$ is equal to the $\xi$-invariant of the continuous path
$$
  [0,1] \ni r \mapsto \tilde S(\lambda+i0,r).
$$
By Lemma \ref{L: eig of tS = eig of S}, this path and the continuous path
$$[0,1] \ni r \mapsto S(\lambda; H_r, H_0)$$
have the same spectral measures. It follows that they have the same $\xi$-invariants.
\end{proof}

\begin{cor} \label{C: xi(r) is loc analyt} Let $\gamma = \set{H_r}$ be a real-analytic path which satisfies Assumption \ref{A: assumption on Hr}.
Let $\lambda$ be a fixed essentially regular point. The value $\xi(\lambda; H_r,H_0)$ of the spectral shift function at $\lambda$
as a function of $r \in \mbR$ is a locally analytic function, with (necessarily integer) jumps only at resonance points of the path $\gamma.$
\end{cor}

\begin{cor} Let $\lambda$ be an essentially regular point.
If a path $\gamma = \set{H_r}$ which satisfies Assumption \ref{A: assumption on Hr}
does not intersect the resonance set $R(\lambda; \clA,F),$ then $\xi(\lambda; H_1, H_0) = \xia_\gamma(\lambda; H_1, H_0).$
\end{cor}

Using point-wise additivity of $\xi(\lambda; H_1, H_0)$ (Theorem \ref{T: xi(l) is pointwise additive})
and the last corollary, it can be shown that
for a fixed essentially regular point $\lambda$
one-forms $\Phi_H(\cdot)(\lambda)$ and $\Phia_H(\cdot)(\lambda)$ are locally exact and, as a consequence, are also closed on the manifold $\Gamma(\lambda; \clA, F).$

\appendix

\section{Chronological exponential} \label{A: Texp}
In this appendix an exposition of the chronological exponential is given. See e.g.
\cite{AG78,Gamk} and~\cite[Chapter 4]{BSh}.

Let $p \in [1,\infty]$ and let $a < b.$
Let ${A(\cdot) \colon [a,b] \to \LpH{p}}$ be a piecewise continuous path of self-adjoint operators from $\LpH{p}.$
Consider the equation
\begin{gather} \label{F: dot Xt = At Xt}
  \frac {d X(t)}{dt} = \frac 1i A(t) X(t), \ \ X(a) = 1,
\end{gather}
where the derivative is taken in $\LpH{p}.$
Let $0 \leq t_1 \leq t_2 \leq \ldots \leq t_k \leq t.$
By definition, the left chronological exponent $\Texp=\overleftarrow{\exp}$ is
\begin{gather} \label{F: def of Texp}
  \Texp\brs{\frac 1i\int_a^t A(s)\,ds} =
     1 + \sum_{k=1}^\infty \frac 1{i^k} \int_a^{t} dt_k \int_a^{t_k} dt_{k-1} \ldots \int_a^{t_{2}} dt_1 A(t_k)\ldots A(t_1),
\end{gather}
where the series converges in $\LpH{p}$\tire norm.
\begin{lemma} \label{L: T exp}
The equation (\ref{F: dot Xt = At Xt}) has a unique continuous solution $X(t),$
given by formula
$$
  X(t) = \Texp\brs{\frac 1i\int_a^t A(s)\,ds}.
$$
\end{lemma}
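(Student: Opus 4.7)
The plan is to establish three things in turn: (1) the series defining $\Texp$ converges in $\LpH{p}$ uniformly on $[a,b]$; (2) it satisfies the initial value problem (\ref{F: dot Xt = At Xt}); and (3) any continuous solution of (\ref{F: dot Xt = At Xt}) must equal it.

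First I would set $M := \sup_{s \in [a,b]} \norm{A(s)}_p$, which is finite by piecewise continuity. Using the ideal inequality $\norm{A(t_k)\cdots A(t_1)}_p \leq \norm{A(t_k)}\cdots \norm{A(t_2)}\norm{A(t_1)}_p \leq M^{k-1}\cdot M = M^k$ together with the volume of the simplex $\set{a \leq t_1 \leq \ldots \leq t_k \leq t}$ being $(t-a)^k/k!$, the $k$-th term in (\ref{F: def of Texp}) is bounded in $\LpH{p}$-norm by $M^k (t-a)^k / k!$. Consequently the series converges absolutely and uniformly on $[a,b]$ in the $\LpH{p}$-norm, with
\[
  \norm{X(t) - 1}_p \leq e^{M(t-a)} - 1.
\]
In particular $X(\cdot)$ is continuous on $[a,b]$ with values in $1 + \LpH{p}$.

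Next I would verify (\ref{F: dot Xt = At Xt}). Denoting the $k$-th iterated integral in (\ref{F: def of Texp}) by $X_k(t)$, the fundamental theorem of calculus applied to the outermost integral yields, at every point $t$ where $A(\cdot)$ is continuous,
\[
  \frac{d X_k(t)}{dt} = \frac{1}{i} A(t) \cdot X_{k-1}(t), \qquad k \geq 1,
\]
with $X_0(t) \equiv 1$. Because the bound $\norm{X_k(t)}_p \leq M^k(b-a)^k/k!$ is summable, the series of derivatives converges uniformly in $\LpH{p}$ on any closed subinterval of continuity of $A(\cdot)$, and term-by-term differentiation is justified. Summing gives $\dot X(t) = \frac{1}{i}A(t) X(t)$ on each continuity interval, and continuity of $X$ plus piecewise continuity of $A$ handles the (finitely many) exceptional points in the obvious way. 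The initial condition $X(a) = 1$ is built into the definition.

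Finally, for uniqueness, suppose $Y(t)$ is another continuous $\LpH{p}$-valued solution with $Y(a) = 1$. Then $Z(t) := X(t) - Y(t)$ is continuous with $Z(a) = 0$ and satisfies $\dot Z(t) = \frac{1}{i}A(t)Z(t)$ almost everywhere. Integrating gives $\norm{Z(t)}_p \leq M \int_a^t \norm{Z(s)}_p\,ds$, and Gr\"onwall's inequality forces $Z \equiv 0$. The main obstacle I anticipate is a mostly bookkeeping one: making sure the ideal-norm estimates $\norm{A \cdot B}_p \leq \norm{A}\norm{B}_p$ are applied with the $\LpH{p}$-norm placed on a single factor at each step (so that the product of $k$ operators from $\LpH{p}$ receives only one factor of the $\LpH{p}$-norm and $k-1$ uniform norms), and that term-by-term differentiation is justified in the $\LpH{p}$-topology rather than merely weakly; both are standard but need to be written out carefully to avoid circularity with the bounds $M$ and $\norm{A(t)}_p$.
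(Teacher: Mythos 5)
Your proof is correct and follows essentially the same strategy as the paper's: verify that the $\Texp$ series solves the initial value problem and then establish uniqueness by integrating the ODE and exploiting the uniform bound on $\norm{A(\cdot)}_p$. You spell out the convergence estimate $M^k(t-a)^k/k!$ for the $k$-th iterated integral and justify term-by-term differentiation, where the paper simply states that ``substitution shows'' the series solves the equation. For uniqueness, you set $Z = X - Y$ and apply Gr\"onwall's inequality, whereas the paper converts any continuous solution $Y$ to the integral form $Y(t) = 1 + \frac{1}{i}\int_a^t A(s)\,Y(s)\,ds$ and iterates it, together with the bound $\sup_t\norm{A(t)}_p < \infty$, to conclude that $Y$ agrees term-by-term with the $\Texp$ series. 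These two uniqueness mechanisms are interchangeable Picard-type controls; your Gr\"onwall argument is perhaps marginally cleaner to state, while the paper's iteration implicitly reproves existence and uniqueness in one pass. One small point to be careful about, which you flag yourself: in the ideal inequality $\norm{A(t_k)\cdots A(t_1)}_p \leq \norm{A(t_k)}\cdots\norm{A(t_2)}\,\norm{A(t_1)}_p$, the uniform-norm factors are dominated by $M = \sup_s\norm{A(s)}_p$ only because $\norm{\cdot}\leq\norm{\cdot}_p$; it is worth making that inequality explicit so the bound $M^k$ is not circular.
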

\begin{proof} Substitution shows that (\ref{F: def of Texp}) is a continuous solution of (\ref{F: dot Xt = At Xt}).
Let $Y(t)$ be another continuous solution of (\ref{F: dot Xt = At Xt}).
Taking the integral of (\ref{F: dot Xt = At Xt}) in $\LpH{p},$ one gets
$$
  Y(t) = 1 + \frac{1}{i} \int _a^t A(s) Y(s)\,ds.
$$
Iteration of this integral and the bound $\sup_{t \in [a,b]} \norm{A(t)}_p \leq \const$
show that $Y(t)$ coincides with $(\ref{F: def of Texp}).$
\end{proof}
A similar argument shows that $\Texp\brs{\frac 1i\int_a^t A(s)\,ds}X_0$ is the unique solution
of the equation
$$
  \frac {d X(t)}{dt} = \frac 1i A(t) X(t), \ \ X(a) = X_0 \in 1 + \LpH{p}.
$$
\begin{lemma} \label{L: Texp su = Texp st + Texp tu}
The following equality holds
$$
  \Texp\brs{\int_s^u A(s)\,ds} = \Texp\brs{\int_t^u A(s)\,ds} \Texp\brs{\int_s^t A(s)\,ds}.
$$
\end{lemma}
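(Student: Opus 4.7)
The plan is to use the uniqueness of solutions to the Cauchy problem \eqref{F: dot Xt = At Xt} guaranteed by Lemma \ref{L: T exp}. Viewing both sides of the asserted equality as functions of the upper endpoint $u$, I will show that they satisfy the same linear ODE on $[t,b]$ with the same value at $u=t$, and then invoke uniqueness.

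First, set $X(u) := \Texp\brs{\int_s^u A(\sigma)\,d\sigma}$. By Lemma \ref{L: T exp}, $X$ is the unique continuous $\LpH{p}$-valued solution on $[s,b]$ of $\dot X(u) = \tfrac{1}{i}A(u) X(u)$ with $X(s) = 1$; in particular, $X(t) = \Texp\brs{\int_s^t A(\sigma)\,d\sigma}$. Next, define $C := \Texp\brs{\int_s^t A(\sigma)\,d\sigma} \in 1 + \LpH{p}$ and
$$
  Y(u) := \Texp\brs{\int_t^u A(\sigma)\,d\sigma}\,C.
$$
Termwise differentiation of the series \eqref{F: def of Texp} shows that $\Texp\brs{\int_t^u A(\sigma)\,d\sigma}$ satisfies the same ODE $\dot Z(u) = \tfrac{1}{i}A(u) Z(u)$ with $Z(t) = 1$, since the lower limit of integration in the definition of $\Texp$ plays no special role in the proof of Lemma \ref{L: T exp}. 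Because $C$ is independent of $u$, it follows that $\dot Y(u) = \tfrac{1}{i}A(u) Y(u)$ and $Y(t) = C = X(t)$.

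Therefore both $X$ and $Y$ are continuous $\LpH{p}$-valued solutions on $[t,b]$ of the same linear ODE with the same initial value at $u = t$. The uniqueness statement recorded immediately after Lemma \ref{L: T exp}, applied to the Cauchy problem with starting time $t$ and initial data $X_0 = C \in 1+\LpH{p}$, gives $X(u) = Y(u)$ for all $u \in [t,b]$. Evaluating at $u$ yields exactly the claimed identity.

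No serious obstacle is anticipated; the entire proof is a standard application of uniqueness for linear evolution equations in a Banach space. The only point requiring a line of justification is that $\Texp\brs{\int_t^u A(\sigma)\,d\sigma}$ solves the ODE with initial condition $1$ at $u=t$ (rather than at the fixed base point $a$ used in Lemma \ref{L: T exp}), but this is immediate from the series expansion \eqref{F: def of Texp} since the construction is translation-invariant in the lower endpoint.
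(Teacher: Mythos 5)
Your proof is correct and follows essentially the same route as the paper: both sides are identified as solutions of the linear ODE $\dot X(u) = \tfrac{1}{i}A(u)X(u)$ on $[t,b]$ with initial value $X(t)=\Texp\brs{\int_s^t A(\sigma)\,d\sigma}$, and Lemma \ref{L: T exp} (uniqueness) finishes it. The paper states this in two sentences; your version simply spells out the translation-invariance of the $\Texp$ construction in the lower endpoint, which the paper takes as implicit.
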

\begin{proof} Using (\ref{F: def of Texp}), it is easy to check that
both sides of this equality are solutions of the equation (in~$\clL_p(\hilb)$)
$$\frac {dX(u)}{du} = \frac 1i A(u)X(u)$$ with the initial condition $X(t) = \Texp\brs{\int_s^t A(s)\,ds}.$
So, Lemma \ref{L: T exp} completes the proof.
\end{proof}
By $\det$ we denote the classical Fredholm determinant (cf. e.g.~\cite{GK,SimTrId2,Ya}).
\begin{lemma} \label{L: det Texp = exp Tr}
If $p = 1$ then the following equality holds
$$
  \det\, \Texp\brs{\frac 1i \int_a^t A(s)\,ds} = \exp\brs{\frac 1i \int_a^t \Tr(A(s))\,ds}.
$$
\end{lemma}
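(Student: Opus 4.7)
My plan is to differentiate the left hand side with respect to the upper limit $t$, exploit the semigroup property of $\Texp$ together with the multiplicativity of the Fredholm determinant, and then integrate the resulting scalar ODE. Put $X(t) := \Texp\brs{\frac{1}{i}\int_a^t A(s)\,ds}$ and $\varphi(t) := \det X(t)$. By Lemma \ref{L: T exp}, $X(t)$ lies in $1+\clL_1(\hilb)$, so $\varphi(t)$ is well defined, and $\varphi(a)=1$. By Lemma \ref{L: Texp su = Texp st + Texp tu},
$$
  X(t+h) = \Texp\brs{\tfrac{1}{i}\int_t^{t+h}A(s)\,ds}\, X(t),
$$
and by the product property of the Fredholm determinant,
$$
  \varphi(t+h) = \det\Texp\brs{\tfrac{1}{i}\int_t^{t+h}A(s)\,ds}\cdot \varphi(t).
$$

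The main step is to show that at each point $t$ where $A(\cdot)$ is continuous,
\begin{equation} \label{F: plan main step}
  \det\Texp\brs{\tfrac{1}{i}\int_t^{t+h}A(s)\,ds} = 1 + \tfrac{h}{i}\Tr A(t) + o(h), \qquad h\to 0.
\end{equation}
To get \eqref{F: plan main step}, observe first that the defining series \eqref{F: def of Texp} gives, in the trace-class norm,
$$
  \Texp\brs{\tfrac{1}{i}\int_t^{t+h}A(s)\,ds} = 1 + \tfrac{1}{i}\int_t^{t+h}A(s)\,ds + R(t,h),
$$
where $\norm{R(t,h)}_1 \leq \sum_{k\geq 2} \frac{1}{k!}\brs{h\sup_{s\in[a,b]}\norm{A(s)}_1}^k = O(h^2)$. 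Next I use the classical estimate $\abs{\det(1+K) - 1 - \Tr K} \leq C\norm{K}_1^2 e^{\norm{K}_1}$ for $K\in\clL_1$ (this is standard, cf.\ \cite{SimTrId2}). Applying it to $K = \Texp(\cdot) - 1$ and using continuity of $A(\cdot)$ at $t$ to replace $\tfrac{1}{i}\int_t^{t+h}\Tr A(s)\,ds$ by $\tfrac{h}{i}\Tr A(t) + o(h)$ yields \eqref{F: plan main step}.

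Dividing $\varphi(t+h)-\varphi(t) = \brs{\det\Texp\brs{\tfrac{1}{i}\int_t^{t+h}A(s)\,ds} - 1}\varphi(t)$ by $h$ and letting $h\to 0$ then gives, at each point of continuity of $A(\cdot)$,
$$
  \varphi'(t) = \tfrac{1}{i}\Tr\brs{A(t)}\,\varphi(t).
$$
Since $A(\cdot)$ is piecewise continuous, this ODE holds off a finite set, and $\varphi$ is continuous on $[a,b]$ (continuity of $t\mapsto X(t)$ in $\clL_1$, together with continuity of $\det$ on $1+\clL_1$). Integrating and using $\varphi(a)=1$ gives
$$
  \varphi(t) = \exp\brs{\tfrac{1}{i}\int_a^t \Tr A(s)\,ds},
$$
which is the claim.

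The step that will require the most care is \eqref{F: plan main step}: one has to control the Fredholm determinant by its linearization and simultaneously control the $\clL_1$-remainder of the chronological series, and then match these two expansions on the correct $o(h)$ scale. Everything else is a routine application of Lemmas \ref{L: T exp}, \ref{L: Texp su = Texp st + Texp tu}, multiplicativity of $\det$, and integration of a scalar linear ODE.
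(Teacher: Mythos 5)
Your proposal is correct and follows essentially the same route as the paper: use Lemma~\ref{L: Texp su = Texp st + Texp tu} together with multiplicativity of $\det$ to reduce the derivative of $\varphi(t)=\det X(t)$ to the linearization of the Fredholm determinant at the identity, obtain the scalar ODE $\varphi'=\frac1i\Tr(A(t))\varphi$, and integrate with $\varphi(a)=1$. The paper states the same argument more tersely, citing the definition of $\det$ (\cite[(3.5)]{SimTrId2}) in place of your explicit $o(h)$ estimate, and phrases the conclusion as both sides solving the same ODE rather than integrating one side; these are only presentational differences.
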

\begin{proof} Let~$F(t)$ and~$G(t)$ be the left and the right hand sides of this equality respectively.
Then $\frac {d}{dt}G(t) = \frac 1i \Tr(A(t)) G(t),$ $G(a) = 1.$
Further, by Lemma \ref{L: Texp su = Texp st + Texp tu} and the product property of $\det$
$$
  \frac d{dt} F(t) = \lim_{h\to 0} \frac 1h \brs{\det \, \Texp \brs{\frac 1i \int_t^{t+h} A(s)\,ds} - 1} F(t)
  = \frac 1i \Tr(A(t)) F(t),
$$
where the last equality follows from definitions of determinant~\cite[(3.5)]{SimTrId2},
$\Texp$ and piecewise continuity of $A(s).$
\end{proof}

\section*{Acknowledgements} \ I would like to thank  \,P.\,G.\,Dodds for useful discussions, and especially for
the proof of Lemma \ref{L: lemma Y}. I also thank D.\,Zanin for indicating that Theorem \ref{T: complement of Lambda(F)}
follows from~\cite[Theorem IV.9.6]{Saks}. % \margdetails
I also would like to thank K.\,A.\,Makarov and A.\,B.\,Pushnitski for useful discussions.

Finally, I would like to thank the referee for his/her numerous helpful remarks.

%\input ../ListOfRef/MyListOfRef
%
% \margcom{Edit the references}
%    MyListOfRef.tex
%                  This is a file of references (written by Nurulla Azamov)
%\input H:/Documents/Nur/B/CapFile
%\input H:/Documents/Nur/MyTexUtils/Macros
%\input H:/Documents/Nur/ListOfRef/MyMakeLit
%
%\rndef{\emph}[1]{{\it #1}}
%
\mathsurround 0pt
\ndef{\AndSoOn}{$\dots$}

%\end{document}
%  End of MyListOfRef.tex
\end{document}